\theoremstyle{plain}
\newtheorem{theorem}{Theorem}[section]
\newtheorem{corollary}[theorem]{Corollary}
\newtheorem{lemma}[theorem]{Lemma}
\newtheorem{proposition}[theorem]{Proposition}
\newtheorem{question}[theorem]{Open Question}
\theoremstyle{definition}
\newtheorem{definition}[theorem]{Definition}
\theoremstyle{remark}
\newtheorem{remark}[theorem]{Remark}
\numberwithin{equation}{section}
\newcommand{\diff}{\mathop{}\!\mathrm{d}}
\DeclareMathOperator{\tr}{tr}
\DeclareMathOperator{\spn}{span}
\DeclareMathOperator{\supp}{supp}
\title{Finite-time blowup for the Fourier-restricted Euler and hypodissipative Navier--Stokes model equations}
\author[1]{Evan Miller}
\affil[1]{University of Maine, Department of Mathematics and Statistics

evan.miller1@maine.edu}
\begin{document}

\maketitle

\begin{abstract}
In this paper, we introduce the Fourier-restricted Euler and hypodissipative Navier--Stokes equations. These equations are analogous to the Euler and hypodissipative Navier--Stokes equations respectively, but with the Helmholtz projection replaced by a projection onto a more restrictive constraint space; the $(u\cdot\nabla)u$ nonlinearity is otherwise unchanged. The constraint space restricts the divergence-free velocity to specific Fourier modes, which have a dyadic shell structure, and are constructed iteratively using permutations.

In the inviscid case---and in the hypo-viscous case when $\alpha<\frac{\log(3)}{6\log(2)} \approx .264$---we prove finite-time blowup for a set of solutions with a discrete group of symmetries. Our blowup Ansatz is odd, permutation symmetric, and mirror symmetric about the plane $x_1+x_2+x_3=0$. The Fourier-restricted Euler and hypodissipative Navier--Stokes equations respect both the energy equality and the identity for enstrophy growth from the full Euler and hypodissipative Navier--Stokes equations respectively, which is a substantial advance over the previous literature on Euler and Navier--Stokes model equations.
\end{abstract}

\tableofcontents

\section*{Acknowledgements}

This material is based on work supported by the Pacific Institute for the Mathematical Sciences, while the author was a PIMS postdoctoral fellow at the University of British Columbia. 
This postdoctoral fellowship was supported by the Natural Sciences and Engineering Research Council of Canada (NSERC) under grant no. 568577-2022.
This material is also based on work supported by the National Science Foundation (USA) under grant no. DMS-1928930 while the author was in residence at the Simons Laufer Mathematical Sciences Institute (formerly MSRI) during the Summer of 2023.
This material is also based upon work supported by the Swedish Research Council under grant no. 2021-06594 while the author was in residence at Institut Mittag-Leffler in Djursholm, Sweden during the Autumn 2023 semester.

\section{Introduction} \label{IntroSection}

The incompressible Euler equation is among the oldest PDEs, and yet much about its solutions, including whether smooth solutions in three dimensions can form singularities in finite-time, remains unknown. The incompressible Euler equation with no external force is given by
\begin{align}
    \partial_t u +(u\cdot\nabla) u+\nabla p
    &=0 \\
    \nabla\cdot u&= 0,
\end{align}
where $u\in\mathbb{R}^3$ is the velocity and $p$ is the pressure.
The pressure does not evolve independently; rather it is determined entirely by the velocity by making use of the divergence free constraint. This allows the Euler equation to be expressed in terms of the Helmholtz projection onto the space divergence free vector fields:
\begin{equation}
    \partial_t u+
    \mathbb{P}_{df}((u\cdot\nabla)u)
    =0.
\end{equation}

The Euler equation governs the motion of an inviscid fluid, in which there is no internal friction.
When the viscous effects of the internal friction of the fluid are considered, 
it is necessary to add a dissipative term, giving the Navier--Stokes equation:
\begin{equation}
    \partial_t u-\nu\Delta u 
    +\mathbb{P}_{df}((u\cdot\nabla)u)=0,
\end{equation}
where $\nu>0$ is the kinematic viscosity.

The hypodissipative Navier--Stokes equation interpolates between the Euler equation and the Navier--Stokes equation.
This equation has a fractional Laplacian dissipative term and is given by
\begin{equation}
    \partial_t u+\nu (-\Delta)^\alpha u
    +\mathbb{P}_{df}((u\cdot\nabla)u)
    =0,
\end{equation}
where $0<\alpha<1$. When $\alpha>1$, we refer to this as the hyperdissipative Navier--Stokes equation. This family of equations has an energy equality for strong solutions with
\begin{equation}
    \frac{1}{2}\|u(\cdot,t)\|_{L^2}^2
    +\nu\int_0^t 
    \|u(\cdot,\tau)\|_{\dot{H}^\alpha}^2 \diff\tau
    =\frac{1}{2}\left\|u^0\right\|_{L^2}^2,
\end{equation}
and for the Euler equation we have the energy equality
\begin{equation}
    \frac{1}{2}\|u(\cdot,t)\|_{L^2}^2
    =\frac{1}{2}\left\|u^0\right\|_{L^2}^2.
\end{equation}
The energy equality for the hyperdissipative Navier--Stokes equation is scale critical when $\alpha=\frac{5}{4}$, and there is global regularity for all solutions of the hyperdissipative Navier--Stokes equation when $\alpha\geq \frac{5}{4}$, and in fact logarithmically into the supercritical range \cites{TaoLog,BMRlog}. Furthermore, Colombo and Haffter recently proved that for any fixed initial data, $u^0\in H^\delta, \delta>0$, the initial data $u^0$ gives rise to a global smooth solution for all $\alpha>\frac{5}{4}-\epsilon$, where $\epsilon>0$ depends only on $\delta$ and $\left\|u^0\right\|_{H^\delta}$, the size of the initial data \cite{ColomboHaffter}.

In this paper, we will introduce a model equation for the Euler and hypo-dissipative Navier--Stokes equations that exhibits finite-time blowup. This equation will have the same nonlinearity involving the self-advection of the velocity, $(u\cdot\nabla)u$, but the Helmholtz projection will be replaced by a projection onto a more restrictive constraint space $\dot{H}^s_\mathcal{M}\subset \dot{H}^s_{df}$.

The Fourier-restricted Euler equation will be given by
\begin{equation}
    \partial_t u+
    \mathbb{P}_{\mathcal{M}}
    ((u\cdot\nabla)u)
    =0,
\end{equation}
and the Fourier-restricted, hypodissipative Navier--Stokes equation will be given by
\begin{equation}
    \partial_t u+\nu (-\Delta)^\alpha u
    +\mathbb{P}_{\mathcal{M}}
    ((u\cdot\nabla)u)
    =0.
\end{equation}
Note that the divergence free constraint still holds, because our new constraint space $\dot{H}^s_{\mathcal{M}}$ is a subspace of the space of divergence free vector fields.
In particular, this means these Fourier-restricted Euler and hypodissipative Navier--Stokes equations can be expressed in divergence form as
\begin{align}
    \partial_t u+ \mathbb{P}_\mathcal{M}
    \nabla\cdot(u\otimes u) &=0 \\
    \partial_t u +\nu(-\Delta)^{\alpha}
    + \mathbb{P}_\mathcal{M}
    \nabla\cdot(u\otimes u) &=0,
\end{align}
respectively.

Moreover, the $(u\cdot\nabla)u$ nonlinearity has not been altered; only the Helmholtz projection has been changed. This is a substantial improvement on existing model equations for Navier--Stokes, which required the nonlinearity to be changed in a more fundamental way \cites{TaoModifiedNS,TaoModifiedEuler,KatzPavlovic,FriedlanderPavlovic,MontgomerySmith,GallagherPaicuToy,MillerStrainModel,ConstantinLaxMajda,DeGregorio,Constantin}. In particular, both the structure of energy conservation/dissipation and enstrophy growth are preserved by this model equation in a way that was not the case for previous models.

In order to give the model equation explicitly, we need to define the constraint space $\dot{H}^s_{\mathcal{M}}$, which we will do in the next section. Our Ansatz for blowup will be odd, permutation symmetric, and mirror symmetric about the plane $x_1+x_2+x_3=0$. As a result of these symmetries, it will have a geometric structure that involves planar stretching and axial compression at the origin, with $\spn\{(1,1,1)\}$ acting as the axis of compression. 
Using this Ansatz, we will prove finite-time blowup for the Fourier-restricted Euler equation and the Fourier-restricted, hypodissipative Navier--Stokes equation when $\alpha<\frac{\log(3)}{6\log(2)} \approx
.264$.

\subsection{Definition of the constraint space} \label{bunchofdefs}

First, we must define a number of important vectors. Let
\begin{equation}
    \sigma =
    \left(\begin{array}{c}
         1  \\
         1 \\
         1
    \end{array}\right).
\end{equation}
For all $m \in\mathbb{Z}^+$, we will define the frequencies $k^m, h^m$, and $j^m$ by
\begin{align}
    k^m
    &=
    2^{2m}\sigma +3^m\left(\begin{array}{c}
         1 \\
         0 \\
         -1
    \end{array}\right) \\
    h^m
    &=
    2^{2m+1}\sigma +3^m\left(\begin{array}{c}
         1 \\
         1 \\
         -2
    \end{array}\right) \\
    j^m
    &=
    2^{2m+1}\sigma +3^m\left(\begin{array}{c}
         2 \\
         -1 \\
         -1
    \end{array}\right).
    \end{align}

A permutation $P\in\mathcal{P}_3$ is, of course, a bijection 
$P:\{1,2,3\} \to \{1,2,3\}$.
We will define the permutation of a vector $v\in\mathbb{R}^3$ to be the permutation of its entries
\begin{equation}
    P(v)_i=v_{P(i)},
\end{equation}
and we will take $\mathcal{P}[v]$ to be the set of all permutations of this vector. 
We will take the set of frequencies for our model equation to be the permutations of $\pm k^m,\pm h^m, \pm j^m$, using these vectors to define a series of frequency shells.
For all $n\in\mathbb{Z}^+$ even, $n=2m$, let
\begin{equation}
\mathcal{M}^+_n=
\mathcal{P}[k^m],
\end{equation}
and for all $n\in\mathbb{Z}^+$ odd, $n=2m+1$, let
\begin{equation}
    \mathcal{M}^+_n=
    \mathcal{P}[h^m]\cup \mathcal{P}[j^m].
\end{equation}
We use these shells to define all of our positive frequencies,
\begin{align}
    \mathcal{M}^+ 
    &=\bigcup_{m=0}^\infty
    \mathcal{P}\left[k^m\right]
    \cup 
    \mathcal{P}\left[h^m\right]
    \cup 
    \mathcal{P}\left[j^m\right] \\
    &=\bigcup_{n=0}^\infty
    \mathcal{M}^+_n.
    \end{align}
    We define our negative frequencies by reflection
    \begin{align}
    \mathcal{M}^- &=-\mathcal{M}^+ \\
    \mathcal{M}_n^-&=-\mathcal{M}_n^+,
    \end{align}
    and putting it all together, we let
    \begin{equation}
    \mathcal{M}=
    \mathcal{M}^+ \cup \mathcal{M}^-.
    \end{equation}

To illustrate, the first several shells are given by:
\begin{align}
    \mathcal{M}^+_0
    &=
    \left\{
    \left(\begin{array}{c}
         2 \\ 1 \\ 0
    \end{array}\right),
    \left(\begin{array}{c}
         2 \\ 0 \\ 1
    \end{array}\right),
    \left(\begin{array}{c}
         1 \\ 2 \\ 0
    \end{array}\right),
    \left(\begin{array}{c}
         1 \\ 0 \\ 2
    \end{array}\right),
    \left(\begin{array}{c}
         0 \\ 2 \\ 1
    \end{array}\right),
    \left(\begin{array}{c}
         0 \\ 1 \\ 2
    \end{array}\right)
    \right\} \\
    \mathcal{M}^+_1
    &=
    \left\{
    \left(\begin{array}{c}
         3 \\ 3 \\ 0
    \end{array}\right),
    \left(\begin{array}{c}
         3 \\ 0 \\ 3
    \end{array}\right),
    \left(\begin{array}{c}
         0 \\ 3 \\ 3
    \end{array}\right),
    \left(\begin{array}{c}
         4 \\ 1 \\ 1
    \end{array}\right),
    \left(\begin{array}{c}
         1 \\ 4 \\ 1
    \end{array}\right),
    \left(\begin{array}{c}
         1 \\ 1 \\ 4
    \end{array}\right)
    \right\} \\
    \mathcal{M}^+_2
    &=
    \left\{
    \left(\begin{array}{c}
         7 \\ 4 \\ 1
    \end{array}\right),
    \text{  and permutations}
    \right\} \\
    \mathcal{M}^+_3
    &=
    \left\{
    \left(\begin{array}{c}
         11 \\ 11 \\ 2
    \end{array}\right),
    \left(\begin{array}{c}
         14 \\ 5 \\ 5
    \end{array}\right),
    \text{  and permutations}
    \right\}.
    \end{align}
Note that each shell has 6 frequencies, but that the odd shell require 2 canonical frequencies rather than 1, because the repeated index means each canonical frequency only has 3 permutations.

For all $k \in \mathcal{M}$, we construct our constraint space by restricting the Fourier amplitudes of the velocity to the span of $v^k$ at each frequency $k\in\mathcal{M}$, where
\begin{align}
    v^k
    &=
    \frac{P_k^\perp(\sigma)}
    {\left|P_k^\perp(\sigma)\right|} \\
    P_k^\perp (\sigma)
    &=
    \sigma-\frac{k\cdot \sigma}{|k|^2}k.
\end{align}

\begin{definition}
    For all $s\geq 0$ and for all
    $u\in \dot{H}^s\left(\mathbb{T}^3;\mathbb{R}^3\right)$,
    we will say that 
    $u\in \dot{H}^s_{\mathcal{M}}
    \left(\mathbb{T}^3\right)$ if:
    \begin{equation}
        \supp{\hat{u}}\subset \mathcal{M},
    \end{equation}
    and for all $k\in \mathcal{M},$
    \begin{equation}
        \hat{u}(k)\in 
        \spn\left\{v^k\right\}.
    \end{equation}
\end{definition}

\begin{remark}
   Note that these frequencies can be constructed dyadically using permutations because for all $m\in\mathbb{Z}^+$,
\begin{align}
    h^m &=
    k^m+P_{12}\left(k^m\right) \\
    j^m &=
    k^m+P_{23}\left(k^m\right) \\
    k^{m+1}&=h^m+j^m.
\end{align} 
\end{remark}

\begin{remark}
    Note that by definition $k\cdot v^k=0$ for all $k\in \mathcal{M}$. This guarantees that $\nabla\cdot u=0$ for all $u\in H^s_\mathcal{M}$, because the divergence free condition can be expressed in Fourier space as $k\cdot \hat{u}(k)=0$.
\end{remark}

\begin{definition}
    For any vector field $u\in \dot{H}^s\left(\mathbb{T}^3;
    \mathbb{R}^3\right)$,
    and any permutation $P\in \mathcal{P}_3$,
    let
    \begin{equation}
        u^P(x)=Pu(P^{-1}x).
    \end{equation}
    We will likewise take the permutation of the Fourier transform to be given by
    \begin{equation}
    \hat{u}^P(k)=P\hat{u}(P^{-1}k).
    \end{equation}
     We will say that a vector field $u$ is permutation symmetric if 
     for all $P\in \mathcal{P}_3$,
     \begin{equation}
         u=u^P.
     \end{equation}
\end{definition}

\begin{remark}
    Note that $u\in \dot{H}^s$ is permutation symmetric if and only if for all $x\in\mathbb{T}^3$,
    and for all $P\in\mathcal{P}_3$,
    \begin{equation}
    u(Px)=Pu(x).
    \end{equation}
\end{remark}

\begin{remark}
In addition to permutation symmetry, we will consider $\sigma$-mirror symmetry, vector fields which have a mirror symmetry about the plane $\sigma\cdot x=0$.
Formally, a vector field is $\sigma$-mirror symmetric if for all $x\in\mathbb{T}^3$,
\begin{equation}
    u(x)=M_\sigma u(M_\sigma x),
\end{equation}
where $M_\sigma=I_3
-\frac{2}{3}\sigma\otimes\sigma$.
We call symmetry with respect to this map $\sigma$-mirror symmetry, because if 
$x=\lambda \sigma+y$, where $\sigma\cdot y=0$,
then
\begin{equation}
M_\sigma x=-\lambda \sigma+ y.
\end{equation}
\end{remark}

\subsection{Statement of the main results}

We will establish a general local and global wellposedness theory, and we will prove finite-time blowup for a certain class initial data with the symmetries described above.
For the Fourier-restricted Euler equation, we prove local wellposedness. For the Fourier-restricted hypodissipative Navier--Stokes equation with $0<\alpha<\frac{\log(3)}{4\log(2)}$, we prove local wellposedness for general initial data and global wellposedness for small initial data.
For the Fourier-restricted hypodissipative Navier--Stokes equation with 
$\alpha \geq \frac{\log(3)}{4\log(2)}$, we prove global well posedness for general initial data. 
For the Fourier-restricted Euler equation, and the Fourier-restricted hypodissipative Navier--Stokes equation with 
$0<\alpha<\frac{\log(3)}{6\log(2)}$,
we prove finite-time blowup for a carefully constructed class of odd, permutation symmetric, $\sigma$-mirror symmetric initial data.
This leaves the range 
$\frac{\log(3)}{6\log(2)}\leq \alpha < \frac{\log(3)}{4\log(2)}$ where the global regularity problem remains open for large initial data.

\begin{theorem} \label{RestrictedEulerExistenceThmIntro}
For all $u^0\in \dot{H}^s_{\mathcal{M}}, 
    s\geq \frac{\log(3)}{2\log(2)}$, 
    there exists a unique solution of the Fourier-restricted Euler equation $u\in C^1\left([0,T_{max});
    \dot{H}^s_{\mathcal{M}}\right)$.
    We have a lower bound on the time of existence
    \begin{equation}
    T_{max}\geq \frac{1}{\mathcal{C}_*
    \left\|u^0\right\|_{\dot{H}
    ^\frac{\log(3)}{2\log(2)}}},
    \end{equation}
    where $\mathcal{C}_*>0$ is an absolute constant independent of $u^0$ and $s$,
    and consequently if $T_{max}<+\infty$, 
    then for all $0\leq t<T_{max}$,
    \begin{equation}
    \|u(\cdot,t)\|_{\dot{H}^\frac{\log(3)}{2\log(2)}}
    \geq 
    \frac{1}{\mathcal{C}_*(T_{max}-t)}.
    \end{equation}
Furthermore, for all $0\leq t<T_{max}$, this solution satisfies
    the energy equality
    \begin{equation}
        \|u(\cdot,t)\|_{L^2}
        =
        \left\|u^0\right\|_{L^2}.
    \end{equation}
\end{theorem}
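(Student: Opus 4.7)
The plan is to follow the standard local-wellposedness template---a sharp trilinear estimate, Galerkin approximation, uniform a priori bounds, limit passage, uniqueness, energy equality---with the main work concentrated in a single trilinear estimate that exploits both the sparse dyadic structure of $\mathcal{M}$ and the geometric constraint $\hat{u}(k)\in\spn\{v^k\}$.

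The central estimate I would prove is: for $u\in \dot{H}^s_{\mathcal{M}}$ with $s\geq s_c := \frac{\log(3)}{2\log(2)}$,
\begin{equation*}
    \bigl|\bigl\langle \mathbb{P}_{\mathcal{M}}((u\cdot\nabla)u),\,u\bigr\rangle_{\dot{H}^s}\bigr|
    \;\leq\; \mathcal{C}_*\,\|u\|_{\dot{H}^{s_c}}\|u\|_{\dot{H}^s}^2.
\end{equation*}
The mechanism behind the exponent $s_c$ is that the one-dimensional constraint $\hat{u}(k)\in\spn\{v^k\}$ drastically weakens the effective gradient coupling the Fourier modes. Since $k^m = 2^{2m}\sigma + 3^m(1,0,-1)$, the $\sigma$-component of $k^m$ dominates its magnitude, so $v^{k^m} = P_{k^m}^\perp(\sigma)/|P_{k^m}^\perp(\sigma)|$ is essentially a unit vector in the plane perpendicular to $\sigma$; consequently, for any $k,k'\in\mathcal{M}$ in shells of respective indices $n, n'$ one computes $|k\cdot v^{k'}| \lesssim 3^{n/2} = 2^{s_c n}$, rather than the naive $|k|\sim 2^n$. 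Together with the dyadic cascade relations $h^m = k^m + P_{12}k^m$, $j^m = k^m + P_{23}k^m$, and $k^{m+1} = h^m + j^m$, which restrict the admissible triples $(k_1,k_2,k_3)\in\mathcal{M}^3$ with $k_1+k_2+k_3=0$ to nearly adjacent shells, the trilinear in Fourier variables reduces to a sum of the form $\sum_n 2^{(2s+s_c)n} a_n a_{n-1}^2$, where $a_n^2$ is the $\ell^2$ mass of $\hat{u}$ on shell $n$; Cauchy--Schwarz then closes the bound.

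From there the standard machinery applies. I would construct Galerkin approximations $u^N$ by truncating to shells of index $\leq N$; each is a locally Lipschitz ODE on a finite-dimensional constraint space, and the trilinear estimate at $s=s_c$ yields the Riccati inequality $\frac{d}{dt}\|u^N\|_{\dot{H}^{s_c}}^2 \leq 2\mathcal{C}_*\|u^N\|_{\dot{H}^{s_c}}^3$, uniformly in $N$. Integrating gives the stated lower bound on $T_{\max}$ and, by time translation, the blow-up criterion. The same estimate at general $s\geq s_c$ propagates $\dot{H}^s$ control by Gr\"onwall with $\|u^N(\cdot,t)\|_{\dot{H}^{s_c}}$ as coefficient, a standard compactness argument yields a limit $u\in C([0,T_{\max});\dot{H}^s_{\mathcal{M}})$, and the ODE itself upgrades continuity in time to $C^1$. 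Uniqueness follows by testing $w=u-v$ in $L^2$: the $(u\cdot\nabla)w$ contribution vanishes after integration by parts because $\nabla\cdot u=0$, while the $(w\cdot\nabla)v$ contribution is absorbed via Gr\"onwall using a variant of the trilinear estimate together with the cheap Sobolev-type bounds available on the sparse support of $\mathcal{M}$.

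The energy equality is immediate: since $\mathbb{P}_{\mathcal{M}}$ is a self-adjoint orthogonal projection and $\mathbb{P}_{\mathcal{M}} u = u$, pairing the equation with $u$ in $L^2$ yields $\langle u,\mathbb{P}_{\mathcal{M}}((u\cdot\nabla)u)\rangle_{L^2} = \langle u,(u\cdot\nabla)u\rangle_{L^2} = \frac{1}{2}\int_{\mathbb{T}^3}u\cdot\nabla|u|^2\,\diff x = 0$ by integration by parts with $\nabla\cdot u=0$. The main obstacle will be making the trilinear estimate rigorous at the precise exponent $s_c = \frac{\log(3)}{2\log(2)}$: this requires tracking the bound $|k\cdot v^{k'}|\lesssim 2^{s_c n}$ uniformly across all admissible triples---including ``backward'' interactions and the contributions of permutations and sign reversals---and controlling the scalar products $v^{k_i}\cdot v^{k_j}$ induced by the one-dimensional Fourier constraint. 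The rigid permutation-and-dyadic construction of $\mathcal{M}$ should make this bookkeeping tractable, but it is where the bulk of the proof's technical work lies.
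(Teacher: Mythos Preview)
Your proposal is correct and rests on the same geometric mechanism as the paper---the depletion of the nonlinearity from $|k|\sim 2^n$ to $|k\cdot v^{k'}|\lesssim 3^{n/2}=2^{s_c n}$ coming from the constraint $\hat u(k)\in\spn\{v^k\}$ and the dyadic interaction structure of $\mathcal M$. However, the paper packages this more efficiently: instead of a trilinear pairing estimate, it proves the stronger \emph{bilinear} bound
\[
\|B(u,w)\|_{\dot H^s}\;\le\;\mathcal C_s\,\|u\|_{\dot H^{\frac{s}{2}+\frac{s_c}{2}}}\,\|w\|_{\dot H^{\frac{s}{2}+\frac{s_c}{2}}},
\]
which at $s=s_c$ says $B:\dot H^{s_c}_{\mathcal M}\times \dot H^{s_c}_{\mathcal M}\to \dot H^{s_c}_{\mathcal M}$ is bounded. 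Consequently the Fourier-restricted Euler equation is literally a locally Lipschitz ODE on the Banach space $\dot H^{s_c}_{\mathcal M}$, and the paper runs a Banach fixed-point (Picard) argument directly on the Duhamel map $Q[u](t)=u^0+\int_0^t B(u,u)\,\diff\tau$, obtaining existence and uniqueness in one stroke without Galerkin truncation, compactness, or a separate uniqueness step. The differential inequality $\frac{d}{dt}\|u\|_{\dot H^{s_c}}\le\mathcal C_*\|u\|_{\dot H^{s_c}}^2$ then sharpens the existence time and yields the blow-up criterion, and for $s>s_c$ the interpolated form $\|B(u,u)\|_{\dot H^s}\le\mathcal C_s\|u\|_{\dot H^{s_c}}\|u\|_{\dot H^s}$ propagates higher regularity by Gr\"onwall, just as you do. Your Galerkin-plus-compactness route is the classical Euler template and works fine here, but it is more machinery than the problem requires: the bilinear bound is strong enough that the equation behaves like a finite-dimensional ODE, and the paper exploits this. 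One small technical point: at $s_c\approx 0.79$ the solution need not be smooth enough to justify the integration by parts in the energy identity directly; the paper handles this (and you should too) by first truncating in Fourier space, integrating by parts on the smooth truncation, and passing to the limit.
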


\begin{theorem} \label{RestrictedHypoExistenceThmIntro}
    Fix the fractional dissipation 
    $0<\alpha< \frac{\log(3)}{4\log(2)},$ 
    and the viscosity $\nu>0$.
    Suppose $u^0\in \dot{H}^s_\mathcal{M}$, where
    $\frac{\log(3)}{2\log(2)}-2\alpha
    <s\leq \frac{\log(3)}{2\log(2)}$.
    Then there exists a unique smooth solution of the Fourier-restricted, hypodissipative Navier--Stokes equation, $u\in C\left([0,T_{max});
    \dot{H}^s_{\mathcal{M}}\right)$, where
    \begin{equation}
    T_{max}\geq 
    \frac{\nu^{\frac{1}{\rho}-1}}
    {\left(C_{s,\alpha}\left\|u^0\right\|_{\dot{H}^s}
    \right)^\frac{1}{\rho}}
    \end{equation}
    with $C_{s,\alpha}>0$ is an absolute constant independent of $u^0$ and $\nu$, and
    \begin{equation}
    \rho=1-\frac{\frac{\log(3)}{2\log(2)}-s}{2\alpha}.
    \end{equation}
    Furthermore, if $T_{max}<+\infty$, 
    then for all $0\leq t<T_{max}$,
    \begin{equation}
    \left\|u(\cdot,t)\right\|_{\dot{H}^s}
    \geq 
    \frac{\nu^{1-\rho}}{C_{s,\alpha}
    \left(T_{max}-t\right)^\rho}.
    \end{equation}
    For $s=\frac{\log(3)}{2\log(2)}$,
    we have the lower bound on the existence time
    \begin{equation}
    T_{max}\geq \frac{1}{\mathcal{C}_*
    \left\|u^0\right\|_{\dot{H}
    ^\frac{\log(3)}{2\log(2)}}},
    \end{equation}
    and consequently if $T_{max}<+\infty$, 
    then for all $0\leq t<T_{max}$,
    \begin{equation}
    \|u(\cdot,t)\|_{\dot{H}^\frac{\log(3)}{2\log(2)}}
    \geq 
    \frac{1}{\mathcal{C}_*(T_{max}-t)}.
    \end{equation}
    Furthermore, for all $0\leq t<T_{max}$, this solution satisfies 
    the energy equality
        \begin{equation}
        \frac{1}{2}\|u(\cdot,t)\|_{L^2}^2
        +
        \nu \int_0^t
        \|u(\cdot,\tau)\|_{\dot{H}^\alpha}^2
        \diff\tau 
        =
        \frac{1}{2}\left\|u^0\right\|_{L^2}^2.
        \end{equation}
     Finally, for all positive times we have higher regularity, with
    $u\in C^\infty\left((0,T_{max});C^\infty
    \left(\mathbb{T}^3\right)\right)$,
    or equivalently
    $u\in C^\infty\left((0,T_{max})\times 
    \mathbb{T}^3)\right)$.
\end{theorem}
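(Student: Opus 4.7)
The plan is to work with the Duhamel (mild) formulation
\[
u(t) = e^{-\nu t(-\Delta)^\alpha} u^0 - \int_0^t e^{-\nu(t-\tau)(-\Delta)^\alpha}\mathbb{P}_\mathcal{M}\nabla\cdot (u\otimes u)(\tau)\diff\tau
\]
and run a Banach fixed-point iteration. The pivotal ingredient is a bilinear estimate for $\mathbb{P}_\mathcal{M}\nabla\cdot(u\otimes v)$ at the critical Sobolev index $s_c := \frac{\log(3)}{2\log(2)}$, which I will take as established earlier in the paper from the combinatorial structure of $\mathcal{M}$: each shell $\mathcal{M}_n$ is a permutation orbit (or a union of two) of uniformly bounded cardinality, and the identities $h^m=k^m+P_{12}(k^m)$, $j^m=k^m+P_{23}(k^m)$, $k^{m+1}=h^m+j^m$ constrain which triples of shells can couple. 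The exponent $s_c$ is exactly where the resulting dyadic sum over admissible triples balances.

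For the critical case $s=s_c$, a Picard iteration in $C([0,T];\dot{H}^{s_c}_\mathcal{M})$ produces a contraction on a ball of radius $2\|u^0\|_{\dot{H}^{s_c}}$ whenever $T\lesssim 1/\|u^0\|_{\dot{H}^{s_c}}$, mirroring Theorem \ref{RestrictedEulerExistenceThmIntro}. For the subcritical range $s_c-2\alpha<s<s_c$, I would switch to the Kato-type time-weighted space
\[
X_T = \Bigl\{u\in C\bigl([0,T];\dot{H}^s_\mathcal{M}\bigr) : \sup_{0\le t\le T}\|u(t)\|_{\dot{H}^s} + \sup_{0<t\le T}t^{(s_c-s)/(2\alpha)}\|u(t)\|_{\dot{H}^{s_c}}<\infty\Bigr\},
\]
combining the semigroup smoothing $\|e^{-\nu t(-\Delta)^\alpha}f\|_{\dot{H}^{s_c}}\lesssim (\nu t)^{-(s_c-s)/(2\alpha)}\|f\|_{\dot{H}^s}$ with the bilinear estimate at the critical scale. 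The hypothesis $s>s_c-2\alpha$ is precisely the threshold at which the time-weight exponent $(s_c-s)/(2\alpha)$ is strictly less than $1$, so that the resulting beta-function integrals converge; evaluating them produces the scaling $T^\rho\|u^0\|_{\dot{H}^s}\lesssim\nu^{1-\rho}$ with $\rho=1-(s_c-s)/(2\alpha)$, and hence the stated lower bound on $T_{max}$. The blow-up criterion follows by contraposition: restarting the local theory from times $t\nearrow T_{max}$ forces $T_{max}-t$ to be at least the lower bound applied to $\|u(\cdot,t)\|_{\dot{H}^s}$.

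The energy equality is essentially automatic: by construction $k\cdot v^k=0$, so every $u\in \dot{H}^s_\mathcal{M}$ is divergence-free, forcing $\langle\nabla\cdot(u\otimes u),u\rangle_{L^2}=0$ by integration by parts; because $\mathbb{P}_\mathcal{M}$ is the orthogonal projection onto $\dot{H}^0_\mathcal{M}$ and $u$ itself lies in that space, the projector can be moved freely in the pairing. Taking $\langle\cdot,u\rangle_{L^2}$ of the evolution equation and integrating in time yields the identity, with the $\nu\|u(\cdot,\tau)\|_{\dot{H}^\alpha}^2$ term coming from the dissipative piece.

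The most delicate remaining step is the parabolic-smoothing upgrade to $u\in C^\infty((0,T_{max})\times\mathbb{T}^3)$, which I view as the main obstacle. The plan is an iterated bootstrap: the Kato construction already furnishes $u(t)\in \dot{H}^{s_c}_\mathcal{M}$ for every $t>0$, so fixing $t_0\in(0,T_{max})$ I restart the local theory from $u(t_0)\in\dot{H}^{s_c}$ and use the semigroup smoothing once more to gain $\dot{H}^{s_c+\beta}$ regularity for some fixed $\beta>0$ just after $t_0$. Iterating produces $\dot{H}^{s_c+k\beta}_\mathcal{M}$ regularity at every positive time for every $k\in\mathbb{N}$, hence spatial $C^\infty$. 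Time regularity is then read off the equation itself, $\partial_t u=-\nu(-\Delta)^\alpha u-\mathbb{P}_\mathcal{M}\nabla\cdot(u\otimes u)$, by induction on the order of time derivative. The subtle point is verifying that this bootstrap propagates right up to $T_{max}$ without shrinking, which follows from the critical-norm blow-up criterion already established: any loss of spatial regularity before $T_{max}$ would already be detected in the $\dot{H}^{s_c}$ norm.
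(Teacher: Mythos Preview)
Your outline is sound and would work, but the paper takes a more direct route in the subcritical range that is worth knowing. You propose a Kato-type space $X_T$ carrying both the $\dot{H}^s$ norm and a time-weighted $\dot{H}^{s_c}$ norm (with $s_c=\frac{\log 3}{2\log 2}$). The paper instead runs the contraction entirely in $C([0,T];\dot{H}^s_\mathcal{M})$, with no auxiliary weighted norm. What makes this possible is the specific shape of the bilinear bound (Lemma~\ref{BilinearBoundLemma}): $\|B(u,w)\|_{\dot{H}^\sigma}\le \mathcal{C}_\sigma\|u\|_{\dot{H}^{\sigma/2+s_c/2}}\|w\|_{\dot{H}^{\sigma/2+s_c/2}}$. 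Choosing $\sigma=2s-s_c$ makes the right-hand side exactly $\|u\|_{\dot{H}^s}\|w\|_{\dot{H}^s}$, so $B(u,u)$ lands in $\dot{H}^{2s-s_c}$ using only the $\dot{H}^s$ norm of $u$, and one then lets the semigroup smooth by $s_c-s$ derivatives at cost $(\nu(t-\tau))^{-b}$ with $b=(s_c-s)/(2\alpha)$. The single integral $\int_0^t(t-\tau)^{-b}\diff\tau$ converges exactly when $b<1$, i.e.\ $s>s_c-2\alpha$, and it produces the factor $T^{1-b}=T^\rho$ without any Beta-function bookkeeping. Your approach buys generality (it is the standard method when the bilinear estimate is not this tame); the paper's buys simplicity.

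One caution about your scheme: if by ``bilinear estimate at the critical scale'' you mean bounding $\|B(u,u)\|_{\dot{H}^{s_c}}$ by $\|u\|_{\dot{H}^{s_c}}^2$ and then controlling the latter via the time weight, the resulting integrand carries a $\tau^{-2b}$ singularity, which is integrable only when $b<\tfrac12$, i.e.\ $s>s_c-\alpha$ --- not the full stated range. To close for all $s>s_c-2\alpha$ in the Kato framework you must estimate $B$ at an intermediate level and interpolate, e.g.\ $\|u\|_{\dot{H}^{(s+s_c)/2}}\le\|u\|_{\dot{H}^s}^{1/2}\|u\|_{\dot{H}^{s_c}}^{1/2}$, which produces the genuine Beta integral $\int_0^t(t-\tau)^{-b}\tau^{-b}\diff\tau$ and closes for $b<1$. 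The paper's direct route sidesteps this issue entirely.

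Your treatment of the smoothing bootstrap to $C^\infty$, the energy equality, and the critical case $s=s_c$ are all in line with what the paper does (the paper obtains the sharp $\mathcal{C}_*$ constant at $s=s_c$ via the differential inequality $\frac{\diff}{\diff t}\|u\|_{\dot{H}^{s_c}}\le \mathcal{C}_*\|u\|_{\dot{H}^{s_c}}^2$ rather than the Picard step, but this is cosmetic).
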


\begin{theorem} \label{GwpThmIntro}
    Fix the degree of dissipation $\alpha\geq \frac{\log(3)}{4\log(2)}$, the regularity $s>\frac{\log(3)}{2\log(2)}-2\alpha, s\geq 0$, and the viscosity $\nu>0$.
    Then for all $u^0\in \dot{H}^s_\mathcal{M}$, there exists a unique, global smooth solution of the Fourier-restricted hypodissipative Navier--Stokes equation $u\in C\left([0,+\infty);\dot{H}^s_\mathcal{M}\right)$,
    and for all $0\leq t<+\infty$,
    \begin{equation}
    \|u(\cdot,t)\|_{\dot{H}^s}^2
    \leq
    \left\|u^0\right\|_{\dot{H}^s}^2
    \exp\left(\frac{\mathcal{C}_s^2}{4(2\pi)^{
    4\alpha-\frac{\log(3)}{2\log(2)}}}
    \frac{\left\|u^0\right\|_{L^2}^2}{\nu^2}
    \right),
    \end{equation}
    where $\mathcal{C}_s>0$ is an absolute constant depending only on $s$.
    Furthermore, for all $0\leq t<+\infty$, this solution satisfies 
    the energy equality
        \begin{equation}
        \frac{1}{2}\|u(\cdot,t)\|_{L^2}^2
        +
        \nu \int_0^t
        \|u(\cdot,\tau)\|_{\dot{H}^\alpha}^2
        \diff\tau 
        =
        \frac{1}{2}\left\|u^0\right\|_{L^2}^2.
        \end{equation}
     Finally, for all positive times we have higher regularity, with
    $u\in C^\infty\left((0,+\infty);C^\infty
    \left(\mathbb{T}^3\right)\right)$,
    or equivalently
    $u\in C^\infty\left((0,+\infty)\times 
    \mathbb{T}^3)\right)$.
\end{theorem}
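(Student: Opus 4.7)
The plan is to close a high-regularity energy estimate by Grönwall, using the $L^2$ energy equality to control the integrating factor. Set $s_c=\frac{\log(3)}{2\log(2)}$. I would first extend Theorem \ref{RestrictedHypoExistenceThmIntro} from its natural range up to arbitrary $s\geq 0$ with $s>s_c-2\alpha$ by persistence of regularity (a standard bootstrap on the mild formulation using the parabolic smoothing of $(-\Delta)^\alpha$), so that there is a local-in-time solution with maximal time of existence $T_{\max}$ and a blowup criterion at regularity $s_c$. Taking the $L^2$ inner product of the equation with $u$ and using that $\langle (u\cdot\nabla)v,v\rangle_{L^2}=0$ for divergence-free $v$ yields the stated energy equality; in particular
\begin{equation*}
\int_0^t\|u(\cdot,\tau)\|_{\dot H^\alpha}^2\diff\tau\leq\frac{\|u^0\|_{L^2}^2}{2\nu}.
\end{equation*}

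The heart of the argument is a critical trilinear estimate adapted to $\mathcal{M}$. Each shell $\mathcal{M}_n^+$ contains at most $12$ frequencies of magnitude comparable to $2^n$, and the only admissible Fourier triples $j+l=k$ with $j,l,k\in\mathcal{M}$ are exactly those generated by the identities $h^m=k^m+P_{12}k^m$, $j^m=k^m+P_{23}k^m$, $k^{m+1}=h^m+j^m$; this dyadic structure lets one decompose the weight $|k|^s|l|$ from $\widehat{(u\cdot\nabla)u}(k)$ between the three factors of $u$ and apply Plancherel shell-by-shell. I expect to obtain an estimate of the form
\begin{equation*}
\left|\langle (u\cdot\nabla)u,u\rangle_{\dot H^s}\right|
\leq \frac{\mathcal{C}_s}{(2\pi)^{2\alpha-\frac{s_c}{2}}}
\|u\|_{\dot H^\alpha}\|u\|_{\dot H^s}\|u\|_{\dot H^{s+\alpha}},
\end{equation*}
with the resulting geometric sum over shells converging precisely when $2\alpha\geq s_c$, i.e., $\alpha\geq\frac{\log(3)}{4\log(2)}$. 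This is the main technical obstacle: the convergence is borderline at the threshold, and a careful accounting of the allowed interactions and of the Bernstein-type constants associated with the shell structure is required.

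Once the trilinear estimate is available, the rest is routine. Young's inequality and absorption of $\nu\|u\|_{\dot H^{s+\alpha}}^2$ into the dissipation gives
\begin{equation*}
\frac{d}{dt}\|u\|_{\dot H^s}^2
\leq \frac{\mathcal{C}_s^2}{2\nu(2\pi)^{4\alpha-s_c}}\|u\|_{\dot H^\alpha}^2\|u\|_{\dot H^s}^2,
\end{equation*}
and Grönwall combined with the integrated $\dot H^\alpha$ energy bound yields precisely the claimed exponential estimate on $\|u\|_{\dot H^s}^2$. Applied at $s=s_c$ this rules out finite-time blowup via the Theorem \ref{RestrictedHypoExistenceThmIntro} criterion, so $T_{\max}=+\infty$, and the same estimate at general $s$ furnishes the claimed global-in-time $\dot H^s$ bound. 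Finally, $C^\infty$ smoothness for positive times follows by iterating the instantaneous parabolic smoothing of the mild formulation to reach arbitrarily high spatial regularity on $[\varepsilon,+\infty)$ for any $\varepsilon>0$, and then differentiating the evolution equation in $t$.
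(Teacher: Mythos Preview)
Your outline is correct and tracks the paper's proof closely: energy equality to bound $\int_0^t\|u\|_{\dot H^\alpha}^2$, a trilinear estimate placing one factor in $\dot H^\alpha$, Young's inequality to absorb $\|u\|_{\dot H^{s+\alpha}}^2$ into the dissipation, then Gr\"onwall. The smoothing and $T_{\max}=+\infty$ conclusions are handled as you describe.

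The one organizational difference worth noting is how the trilinear estimate is obtained. The paper does not prove it directly; it invokes the bilinear bound of Lemma~\ref{BilinearBoundLemma},
\[
\|B(u,u)\|_{\dot H^s}\le \mathcal{C}_s\,\|u\|_{\dot H^{\frac{s}{2}+\frac{s_c}{2}}}^2,
\]
then Cauchy--Schwarz, then the interpolation $\|u\|_{\dot H^{(s+s_c)/2}}^2\le \|u\|_{\dot H^{s_c-\alpha}}\|u\|_{\dot H^{s+\alpha}}$, and finally the torus embedding $\|u\|_{\dot H^{s_c-\alpha}}\le (2\pi)^{-(2\alpha-s_c)}\|u\|_{\dot H^\alpha}$, valid precisely when $\alpha\ge s_c/2$. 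This clarifies two points you should sharpen. First, the threshold $\alpha\ge s_c/2$ does \emph{not} arise from borderline convergence of a geometric sum in the shell decomposition; the bilinear bound holds for all $s\ge0$ with no condition on $\alpha$, and the threshold enters only through the clean Sobolev comparison $\dot H^\alpha\hookrightarrow\dot H^{s_c-\alpha}$. Second, the weight you call ``$|k|^s|l|$'' is not quite right: after projecting onto $\dot H^s_{\mathcal M}$ the interaction coefficient carries $(\sqrt{3})^n\sim|l|^{s_c}$ rather than $|l|$ (this is the anisotropic depletion of nonlinearity discussed around the definition of $\mathcal M$), and it is exactly this depletion that produces the exponent $\frac{s}{2}+\frac{s_c}{2}$ in Lemma~\ref{BilinearBoundLemma} and hence the threshold $s_c/2$ rather than $1/2$. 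Your plan would work once you incorporate this depletion in the shell-by-shell computation; the paper simply packages it once and for all in the bilinear lemma.
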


\begin{theorem} \label{SmallDataGwpIntro}
    Suppose $u^0\in \dot{H}^s_{\mathcal{M}}$, where $s>\frac{\log(3)}{2\log(2)}-2\alpha$ and $0<\alpha<\frac{\log(3)}{4\log(2)}$. Further suppose that 
    \begin{equation}
    \left\|u^0\right\|_{\dot{H}^{\frac{\log(3)}{2\log(2)}-2\alpha}}
    <
    M_{\alpha,s} \nu,
    \end{equation}
    where $M_{\alpha,s}>0$ is an absolute constant depending on only $\alpha$ and $s$.
    Then there exists a global smooth solution of the Fourier-restricted hypodissipative Navier--Stokes equation
    $u\in C\left([0,+\infty);\dot{H}^s_\mathcal{M}\right)$,
    and for all $0< t<+\infty$,
    \begin{align}
    \|u(\cdot,t)\|_{
    \dot{H}^{\frac{\log(3)}{2\log(2)}-2\alpha}}
    &\leq
    \left\|u^0\right\|_{
    \dot{H}^{\frac{\log(3)}{2\log(2)}-2\alpha}} \\
    \|u(\cdot,t)\|_{\dot{H}^s}
    &\leq
    \left\|u^0\right\|_{\dot{H}^s}.
    \end{align}
\end{theorem}

Both the Fourier-restricted Euler and hypodissipative Navier--Stokes equations have smooth solutions that blowup in finite-time. In the inviscid case, such blowup is generic for odd, permutation symmetric, $\sigma$-mirror symmetric solutions, while in the viscous case, there is only finite-time blowup when the dissipation is sufficiently weak and the viscosity is sufficiently small relative to a Lyapunov functional.

\begin{theorem} 
\label{RestrictedEulerBlowupIntro}
Suppose $u^0\in \dot{H}^\frac{\log(3)}{2\log(2)}
_{\mathcal{M}}$ is odd, permutation symmetric, and $\sigma$-mirror symmetric. Then $u^0$ can be expressed of the form
    \begin{equation}
    u^0(x)
    =
    -2\sum_{n=0}^{+\infty}
    \psi_n(0)\sum_{k\in\mathcal{M}^+_n}
    v^k\sin(2\pi k\cdot x).
    \end{equation}
    These symmetries are preserved by the dynamics of the Fourier-restricted Euler equation, and so for all $0\leq t<T_{max}$,
    \begin{equation}
    u(x,t)
    =
    -2\sum_{n=0}^{+\infty}
    \psi_n(t)\sum_{k\in\mathcal{M}^+_n}
    v^k\sin(2\pi k\cdot x),
    \end{equation}
    where $u\in C^1\left([0,T_{max});
    \dot{H}^\frac{\log(3)}{2\log(2)}
    _{\mathcal{M}}\right)$
    is the unique solution of the Fourier-restricted Euler equation.
    
    Furthermore, for all $0\leq t<T_{max}$ and for all $n\in\mathbb{Z}^+,$ the Fourier coefficients satisfy the system of ODEs
    \begin{equation} \label{EulerODEintro}
        \partial_t \psi_n
        =
        \sqrt{2}\pi\beta_{n-1}
        \left(\sqrt{3}\right)^{n}
        \psi_{n-1}^2
        -\sqrt{2}\pi\beta_n
        \left(\sqrt{3}\right)^{n+1}
        \psi_n \psi_{n+1},
    \end{equation}
    where for all $n\in\mathbb{Z}^+$,
    \begin{equation}
        \beta_n=\frac{1}{\left(
        1+\frac{1}{2}
        \left(\frac{3}{4}\right)^n
        \right)^\frac{1}{2}},
    \end{equation}
    and by convention
    \begin{equation}
        \psi_{-1},\beta_{-1}:=0.
    \end{equation}

    If $u^0$ is not identically zero, then this solution blows up in finite-time with
    \begin{equation}
    T_{max}\leq 
    \left(\left(\frac{3^\frac{1}{4}}
    {12\left(3^\frac{1}{4}-1\right)}
    \right)^\frac{1}{2}
    \left\|u^0\right\|_{L^2}
    -H(0) \right)
    \frac{6\sqrt{3}}{\pi\left(
    3^\frac{3}{8}-3^\frac{5}{16}\right)
    \left\|u^0\right\|_{L^2}^2},
    \end{equation}
    where
    \begin{equation}
    H(0)
    =
    \sum_{n=0}^\infty
    \left(\sqrt{3}\right)^{-\frac{n}{4}}
    \psi_n(0).    
    \end{equation}
\end{theorem}

\begin{theorem} 
\label{RestrictedHypoBlowupIntro}
    Suppose $u^0\in \dot{H}^\frac{\log(3)}
    {2\log(2)}_{\mathcal{M}}$, is odd, permutation symmetric, and $\sigma$-mirror symmetric; and can therefore be expressed in the form
    \begin{equation}
    u^0(x)
    =
    -2\sum_{n=0}^{+\infty}
    \psi_n(0)\sum_{k\in\mathcal{M}^+_n}
    v^k\sin(2\pi k\cdot x).
    \end{equation}
    Then, as in the inviscid case, these symmetries are preserved by the dynamics of the Fourier-restricted hypodissipative Navier--Stokes equation, and for all $0\leq t<T_{max}$,
    \begin{equation}
    u(x,t)
    =
    -2\sum_{n=0}^{+\infty}
    \psi_n(t)\sum_{k\in\mathcal{M}^+_n}
    v^k\sin(2\pi k\cdot x),
    \end{equation}
    where $u\in C\left([0,T_{max});
    \dot{H}^\frac{\log(3)}{2\log(2)}_{\mathcal{M}}\right)$
    is the unique solution of the Fourier-restricted, hypodissipative Navier--Stokes equation.
    Furthermore, for all $0<t<T_{max}$ and for all $n\in\mathbb{Z}^+,$ the Fourier coefficients satisfy the system of ODEs
    \begin{equation} \label{NavierStokesODEintro}
        \partial_t \psi_n
        =
        -(12\pi^2)^\alpha \mu_n^\alpha \nu
        \left(\sqrt{3}\right)^{2\Tilde{\alpha}n}
        \psi_n
        +\sqrt{2}\pi\beta_{n-1}
        \left(\sqrt{3}\right)^{n}
        \psi_{n-1}^2
        -\sqrt{2}\pi\beta_n
        \left(\sqrt{3}\right)^{n+1}
        \psi_n \psi_{n+1},
    \end{equation}
    where
    \begin{equation}
        \Tilde{\alpha}=\frac{2\log(2)}{\log(3)}\alpha
    \end{equation}
    and for all $n\in\mathbb{Z}^+$,
    \begin{align}
        \beta_n &=
        \frac{1}{\left(1+\frac{1}{2}
        \left(\frac{3}{4}\right)^n
        \right)^\frac{1}{2}} \\
        \mu_n &=
        \frac{1}{\left(1+\frac{2}{3}
        \left(\frac{3}{4}\right)^n
        \right)^\frac{1}{2}}
    \end{align}
    and by convention
    \begin{equation}
        \psi_{-1},\beta_{-1}:=0.
    \end{equation}

    Further suppose that $\alpha<\frac{\log(3)}{6\log(2)}$, and for some $\Tilde{\alpha}<s<\frac{1}{3}$
    \begin{equation}
    H_s(0)=\sum_{n=0}^\infty 
    \left(\sqrt{3}\right)^{-sn}\psi_n(0)
    > 
    C_{\alpha,s} \nu,
    \end{equation}
    where
    \begin{equation}
    C_{\alpha,s }=
    \frac{\sqrt{3}\left(12\pi^2\right)^\alpha}
    {\pi\left(1-\left(\sqrt{3}\right)^{-(1+s-4\Tilde{\alpha})}\right)^\frac{1}{2}
    \left(\left(\sqrt{3}\right)^{1-s}-
    \left(\sqrt{3}\right)^\frac{1+s}{2}\right)
    \left(1-
    \left(\sqrt{3}\right)^{-(1+s)}
    \right)^\frac{1}{2}}.
    \end{equation}
    Then the solution blows up in finite-time with
    \begin{equation}
        T_{max}\leq
        \frac{1}
        {\kappa_s H_s(0)},
    \end{equation}
    where
    \begin{equation}
    \kappa_s
    =
    \frac{\pi}{\sqrt{3}} \left(
    \left(\sqrt{3}\right)^{1-s}
    -\left(\sqrt{3}\right)^\frac{1+s}{2}\right)
    \left(1-\left(\sqrt{3}\right)^{-(1+s)}\right).
    \end{equation}
    Note that this finite-time blowup is implied by a singular lower bound on the Lyapunov functional
    \begin{equation}
    H_s(t)>
    \frac{H_s(0)}
    {1-\kappa_s H_s(0)t}.
    \end{equation}
\end{theorem}

\begin{remark}
    Note that \Cref{RestrictedHypoBlowupIntro} guarantees finite-time blowup within the set of odd, permutation-symmetric, $\sigma$-mirror symmetric initial data with positive coefficients $\psi_n$ as long as the viscosity is sufficiently small.
    In the inviscid case, blowup is generic in the symmetry class for all nontrivial initial data. 
    In the hypo-viscous case with the degree of dissipation in the range $0<\alpha<\frac{\log(3)}{6\log(2)}$, blowup is generic at high Reynolds number subject to these symmetry and sign constraints. This leaves the range $\frac{\log(3)}{6\log(2)}\leq \alpha<\frac{\log(3)}{4\log(2)}$ where the problem of finite-time blowup remains open for large initial data. We summarize the regularity theory in a table below.
    \end{remark}

\begin{center}
 \begin{tabular}{| |c| c| |} 
 \hline
  Parameter $\alpha$ 
 & Behaviour of solutions  \\ [0.5ex] 
 \hline\hline
  $\alpha=0, \nu=0$ & Finite-time blowup generic subject to symmetry and sign conditions  \\ [.5ex]
 \hline
   $0<\alpha<\frac{\log(3)}{6\log(2)}$ &
  Finite-time blowup for large initial data
  (subject to above conditions)\\ [.5ex]
 \hline
  $\alpha\geq \frac{\log(3)}{4\log(2)}$ & Global regularity for generic initial data \\ [.5ex]
 \hline
  $0<\alpha<\frac{\log(3)}{4\log(2)}$ & Global regularity for small initial data\\ [.5ex] 
  \hline
 $\frac{\log(3)}{6\log(2)} \leq \alpha
 <\frac{\log(3)}{4\log(2)}$ &
  Finite-time blowup open for large initial data \\ [.5ex]
 \hline
\end{tabular}
\end{center}

\begin{remark}
While the dynamics of the Fourier-restricted model equations for which we have proven finite-time blowup are dramatically simpler than the full Euler and hypodissipative Navier--Stokes equations, it does nonetheless show that the $(u\cdot\nabla)u$ nonlinearity can result in a cascade of energy to arbitrarily high modes fast enough to lead to finite-time blowup, even for divergence free vector fields. Because the symmetry class we considered for the restricted model equation---odd, permutation symmetric, and $\sigma$-mirror symmetric---is also preserved by the full Euler and Navier--Stokes equation, this subspace of solutions is an intriguing candidate for the finite-time blowup problem.
\end{remark}

\begin{question} \label{FullEulerOpenQuestion}
    Do there exist odd, permutation symmetric solutions of the incompressible Euler equation $u\in C\left([0,T_{max});C^\infty\right)$, that blow up in finite-time $T_{max}<+\infty$?
\end{question}

\begin{remark}
    While the choice of Fourier modes for the Fourier-restricted model equation is very specifically motivated to yield a nonlinear term with a dyadic structure, it is also based on geometric structures which are relevant to the full Euler equation as well. In addition to considering data satisfying the conditions of \Cref{RestrictedHypoBlowupIntro} as initial data for the full Euler equation, this analysis suggests a fairly broad class of initial data as possible candidates for finite-time blowup for the full Euler equation:
    \begin{equation}
    u^0(x)=-\sum_{\sigma\cdot k>0} 
    \gamma_k P_k^\perp(\sigma) \sin(2\pi k\cdot x),
    \end{equation}
    where $\gamma$ is nonnegative and permutation symmetric in the sense that for all $P\in\mathcal{P}_3$ and $k\in\mathbb{Z}^3$, 
    \begin{equation}
        \gamma_k=\gamma_{P(k)}.
    \end{equation}
    See \Cref{GeometricSection} and particularly \Cref{FullEulerRemark} for further discussion.
\end{remark}

\subsection{Organization}

In \Cref{FurtherResultsSection}, we will state several regularity criteria for the restricted model equations and discuss the relationship of the results in this paper to the previous literature.
In \Cref{Prelims}, we will define a number of important spaces and establish their key properties.
In \Cref{WellPosedSection}, we will prove local wellposedness for the Fourier-restricted Euler and hypodissipative Navier--Stokes equations, as well as global regularity in the viscous case for generic initial data when $\alpha\geq \frac{\log(3)}{4\log(2)}$ and for small initial data when $\alpha< \frac{\log(3)}{4\log(2)}$.
In \Cref{PermutationSection}, we will discuss permutation symmetry for both the Euler and hypodissipative Navier--Stokes equations and their respective restricted model equations.

In \Cref{DynamicsSection}, we will study the dynamics of the Fourier-restricted Euler and hypodissipative Navier--Stokes equations for odd, permutation symmetric solutions. We will use this symmetry class to reduce the Fourier-restricted Euler and hypodissipative Navier--Stokes equations to an infinite system of ODEs very similar to the Friedlander-Katz-Pavlovi\'c dyadic Euler and Navier--Stokes equations introduced in \cites{FriedlanderPavlovic,KatzPavlovic}. 
In \Cref{BlowupSection}, we will use this infinite system of ODEs to prove \Cref{RestrictedEulerBlowupIntro,RestrictedHypoBlowupIntro}, showing finite-time blowup both for the Fourier-restricted Euler equation and for the Fourier-restricted, hypodissipative Navier--Stokes equation when $\alpha<\frac{\log(3)}{6\log(2)}$. This argument gives a new proof of finite-time blowup for the dyadic Navier--Stokes equation. Although the threshold for finite-time blowup $\Tilde{\alpha}<\frac{1}{3}$ is the same as that found by Cheskidov \cite{Cheskidov}, a different choice of Lyapunov functional allows for the class of data covered to be slightly more generic.

In \Cref{AppendixModeInteraction}, we will go through some elementary, but also long and tedious vector calculus computations. These are essential to the construction---indeed finding the correct subset of modes with the necessary properties was the core difficulty of the work---but the calculations themselves are left for the appendices to avoid clogging up the body of the paper.
In \Cref{AppendixNonlinearBound}, we will use these computations to prove a key bound on the nonlinearity.
In \Cref{GeometricSection}, we will discuss the geometry of our blowup solutions and the relationship of this blowup with the existing literature on the regularity of the Euler and Navier--Stokes equations, including a precise geometric description of the blowup at the origin. 
In \Cref{AppendixFourierLimitations}, we will discuss some limitations to the method of working on a subset of the frequencies in Fourier space when returning to the full Euler and Navier--Stokes equations.

\section{Further results and discussion}
\label{FurtherResultsSection}

In this section, we will state a number of regularity criteria for the Fourier-restricted Euler and hypodissipative Navier--Stokes equations. We will also discuss the relationship of this work to the previous literature, in particular 
addressing the geometric significance of the blowup Ansatz considered in \Cref{RestrictedEulerBlowupIntro,RestrictedHypoBlowupIntro}.

\subsection{Regularity criteria}

The Fourier-restricted Euler and hypodissipative Navier--Stokes equations have a regularity criterion in terms of the positive part of the middle eigenvalue of the strain matrix. These results are directly analogous to the regularity criteria on $\lambda_2^+$ proven by Neustupa and Penel for the Navier--Stokes equation \cite{NeustupaPenel1}. In fact, for the blowup Anstaz considered in \Cref{RestrictedEulerBlowupIntro,RestrictedHypoBlowupIntro}, we can even prove that this blowup must occur at the origin.

\begin{theorem} \label{ViscousStrainRegCritIntro}
    Suppose $u\in C\left([0,T_{max});
    \dot{H}^1_{\mathcal{M}}\right)$,
is a solution of the Fourier-restricted hypodissipative Navier--Stokes equation, that $\alpha<\frac{\log(3)}{4\log(2)}$,
and suppose $\frac{1}{p}+\frac{3}{2\alpha q}=1, 
\frac{3}{2\alpha}<q\leq +\infty$.
Then for all $0\leq t<T_{max}$,
\begin{equation} 
    \|S(\cdot,t)\|_{L^2}^2
    \leq \|S^0\|_{L^2}^2
    \exp\left(\frac{C_{\alpha,q}}
    {\nu^\frac{p-1}{p^2}}
    \int_0^t
    \left\|\lambda_2^+(\cdot,\tau)\right\|_{L^q}^p
    \diff\tau \right),
\end{equation}
where $C_{\alpha,q}>0$ is an absolute constant independent of $\nu,s$ and $u^0$ depending only on $\alpha$ and $q$; and 
$\lambda_1(x,t)\leq 
\lambda_2(x,t)\leq 
\lambda_3(x,t)$
are the eigenvalues of $S(x,t)$, the strain matrix.
In particular, if $T_{max}<+\infty$, then
\begin{equation}
    \int_0^{T_{max}}
    \left\|\lambda_2^+(\cdot,t)\right\|_{L^q}^p
    \diff t
    =+\infty.
\end{equation}
\end{theorem}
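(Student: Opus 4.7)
The plan is to follow the classical Neustupa--Penel argument for the Navier--Stokes equation, adapted to the Fourier-restricted, hypodissipative setting. Because the model is engineered so that the full enstrophy identity is preserved (this is emphasised in the abstract and should be available from earlier sections), I would start from
\begin{equation}
    \frac{d}{dt}\|S(\cdot,t)\|_{L^2}^2 + 2\nu\|S(\cdot,t)\|_{\dot H^\alpha}^2 = -4\int_{\mathbb T^3}\tr(S^3)\,\diff x.
\end{equation}
Since $\tr(S)=0$ by the divergence-free constraint, the eigenvalues $\lambda_1\leq\lambda_2\leq\lambda_3$ of $S$ satisfy $\lambda_1+\lambda_2+\lambda_3=0$, so $\tr(S^3)=3\lambda_1\lambda_2\lambda_3$. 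A short sign analysis (treating $\lambda_2\geq 0$ and $\lambda_2<0$ separately) yields the pointwise bound $-\tr(S^3)\leq C\lambda_2^+|S|^2$, which is the mechanism behind any regularity criterion phrased only in terms of the middle eigenvalue.

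I would then control $\int\lambda_2^+|S|^2\,\diff x$ by H\"older with exponents $q$ and $q/(q-1)$, followed by the 3D Sobolev embedding $\dot H^{3/(2q)}\hookrightarrow L^{2q/(q-1)}$ and interpolation between $L^2$ and $\dot H^\alpha$, producing
\begin{equation}
    \int\lambda_2^+|S|^2\,\diff x \leq C\,\|\lambda_2^+\|_{L^q}\,\|S\|_{L^2}^{2/p}\,\|S\|_{\dot H^\alpha}^{2(p-1)/p},
\end{equation}
where I have used the algebraic identity $1/p = 1 - 3/(2\alpha q)$ from the hypothesis; the assumption $q>3/(2\alpha)$ is precisely what puts the interpolation exponent $3/(2\alpha q)$ strictly inside $(0,1)$. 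A weighted Young's inequality with dual exponents $p$ and $p/(p-1)$ then absorbs the $\|S\|_{\dot H^\alpha}^{2(p-1)/p}$ factor into the dissipation $\nu\|S\|_{\dot H^\alpha}^2$, leaving
\begin{equation}
    \frac{d}{dt}\|S\|_{L^2}^2 \leq C_{\alpha,q}\,\nu^{-\gamma}\,\|\lambda_2^+\|_{L^q}^p\,\|S\|_{L^2}^2
\end{equation}
for an explicit power $\gamma$ of $\nu^{-1}$; the statement records $\gamma=(p-1)/p^2$, which I would confirm by carefully tracking the weight in the Young step (and which may reflect a sharper Bernstein-type inequality available in the Fourier-sparse space $\dot H^s_\mathcal M$, where each dyadic shell $\mathcal M^+_n$ contains only $O(1)$ frequencies).

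Gronwall's lemma applied to the last differential inequality immediately produces the exponential bound in the statement, and the blowup criterion follows at once: if $\int_0^{T_{max}}\|\lambda_2^+\|_{L^q}^p\,\diff t<+\infty$, then $\|u\|_{\dot H^1}\sim\|S\|_{L^2}$ stays bounded up to $T_{max}$, which combined with the continuation criterion in \Cref{RestrictedHypoExistenceThmIntro} forces $T_{max}=+\infty$. The main technical obstacle I anticipate is verifying that the enstrophy identity with the $\tr(S^3)$ term really holds in the Fourier-restricted setting; this hinges on $\mathbb P_\mathcal M$ being the $L^2$-orthogonal projection onto $\dot H^s_\mathcal M$, so that testing the nonlinearity against $-\Delta u$ is insensitive to the presence of $\mathbb P_\mathcal M$, which in turn requires $-\Delta u\in\dot H^s_\mathcal M$ whenever $u$ is---a fact that should follow because both the frequency set $\mathcal M$ and the direction vectors $v^k$ are Laplacian-invariant by construction.
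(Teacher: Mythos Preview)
Your proposal is correct and matches the paper's proof essentially step for step: the paper invokes the enstrophy identity (Proposition~\ref{HypoEnstrophyProp}), the pointwise bound $-\det(S)\le \tfrac12\lambda_2^+|S|^2$, then H\"older, Sobolev embedding into $L^{2q/(q-1)}$, interpolation between $L^2$ and $\dot H^\alpha$, Young's inequality, and Gr\"onwall. Two minor remarks: the identity reads $-4\int\det(S)$ rather than $-4\int\tr(S^3)$ (these differ by a factor of $3$ since $\tr(S^3)=3\det(S)$ when $\tr S=0$), and no Bernstein-type improvement from the sparsity of $\mathcal{M}$ is used---the argument is the completely standard one, and the recorded exponent $\nu^{-(p-1)/p^2}$ appears to be a slip in the paper (the Young step as written actually gives $\nu^{-(p-1)}$).
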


\begin{theorem} \label{EulerStrainRegCritIntro}
     Suppose $u\in C^1\left([0,T_{max});
    \dot{H}^s_{\mathcal{M}}\right),
    s>\frac{5}{2}$
is a solution of the Fourier-restricted Euler equation.
Then for all $0\leq t<T_{max}$,
\begin{equation} 
    \|S(\cdot,t)\|_{L^2}^2
    \leq 
    \left\|S^0\right\|_{L^2}^2
    \exp\left( 2\int_0^t
    \left\|\lambda_2^+(\cdot,\tau)
    \right\|_{L^\infty}
    \diff\tau\right).
\end{equation}
In particular, if $T_{max}<+\infty$, then
\begin{equation}
    \int_0^{T_{max}}
    \left\|\lambda_2^+(\cdot,t)\right\|_{L^\infty}
    \diff t
    =+\infty.
\end{equation}
\end{theorem}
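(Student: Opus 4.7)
The plan is to derive an enstrophy growth identity whose integrand is controlled pointwise by $\lambda_{2}^{+}|S|^{2}$, and then close via Gr\"onwall's inequality.

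First, I would verify that the classical Euler enstrophy identity survives the Fourier restriction. Pairing the equation with $-\Delta u$ and noting that the Fourier multiplier $|k|^{2}$ preserves both the support $\mathcal{M}$ and the per-mode direction $\spn\{v^{k}\}$, one has $\mathbb{P}_{\mathcal{M}}(-\Delta u)=-\Delta u$ whenever $u\in \dot H^{s}_{\mathcal{M}}$. Combining this with the self-adjointness of the orthogonal projection $\mathbb{P}_{\mathcal{M}}$, I obtain
\[
\frac{d}{dt}\|S(\cdot,t)\|_{L^{2}}^{2}
=\langle (u\cdot\nabla)u,\,\Delta u\rangle,
\]
which is the usual Euler enstrophy growth formula. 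A standard integration-by-parts computation for divergence-free vector fields collapses this to
\[
\frac{d}{dt}\|S\|_{L^{2}}^{2}
=-\tfrac{4}{3}\int \tr(S^{3})\diff x
=-4\int \lambda_{1}\lambda_{2}\lambda_{3}\diff x,
\]
using Newton's identity $\lambda_{1}^{3}+\lambda_{2}^{3}+\lambda_{3}^{3}=3\lambda_{1}\lambda_{2}\lambda_{3}$ on the trace-free matrix $S$.

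Next, I would establish the pointwise inequality
\[
-\lambda_{1}\lambda_{2}\lambda_{3}\;\leq\;\tfrac{1}{2}\,\lambda_{2}^{+}\,|S|^{2}
\]
by a case split on the sign of $\lambda_{2}$. If $\lambda_{2}<0$, then $\lambda_{1}\leq\lambda_{2}<0<\lambda_{3}$ by the sum-to-zero condition, so $\lambda_{1}\lambda_{2}\lambda_{3}>0$ while $\lambda_{2}^{+}=0$. If $\lambda_{2}\geq 0$, substitute $\lambda_{1}=-(\lambda_{2}+\lambda_{3})$, giving $|S|^{2}=2(\lambda_{2}^{2}+\lambda_{2}\lambda_{3}+\lambda_{3}^{2})$, and the claim reduces algebraically to $\lambda_{2}^{3}\geq 0$.

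Integrating and applying $\int \lambda_{2}^{+}|S|^{2}\diff x\leq \|\lambda_{2}^{+}\|_{L^{\infty}}\|S\|_{L^{2}}^{2}$ yields
\[
\frac{d}{dt}\|S\|_{L^{2}}^{2}\;\leq\;2\,\|\lambda_{2}^{+}(\cdot,t)\|_{L^{\infty}}\,\|S\|_{L^{2}}^{2},
\]
and Gr\"onwall's inequality delivers the stated exponential bound. The blowup criterion then follows by contrapositive: finiteness of $\int_{0}^{T_{max}}\|\lambda_{2}^{+}\|_{L^{\infty}}\diff t$ would keep $\|S\|_{L^{2}}$ bounded, hence $\|u\|_{\dot H^{\log(3)/(2\log(2))}}$ bounded via interpolation against the conserved $L^{2}$ energy, contradicting the lower bound in \Cref{RestrictedEulerExistenceThmIntro}. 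The main obstacle is recognizing the pointwise inequality on $\lambda_{2}^{+}$, which exploits the sum-to-zero structure of the strain spectrum; once it is in hand, everything else is bookkeeping.
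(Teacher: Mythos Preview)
Your proposal is correct and follows essentially the same route as the paper: both pair the equation with $-\Delta u$ (using $\mathbb{P}_{\mathcal{M}}(-\Delta u)=-\Delta u$ and self-adjointness of $\mathbb{P}_{\mathcal{M}}$) to obtain the enstrophy identity $\frac{d}{dt}\|S\|_{L^2}^2=-4\int\det(S)$, then bound $-\det(S)\le\tfrac{1}{2}\lambda_2^+|S|^2$ pointwise, apply H\"older in $L^\infty$--$L^1$, and close with Gr\"onwall. The only cosmetic differences are that the paper verifies the pointwise bound via $-\lambda_1\lambda_3\ge 0$ and AM--GM rather than your case split, and for the blowup criterion the paper directly invokes the embedding $\dot H^1\hookrightarrow\dot H^{\log(3)/(2\log 2)}$ (since $\log(3)/(2\log 2)<1$) rather than interpolation against the conserved energy, but your version is equally valid.
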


\begin{theorem} \label{PsiSupRegCritIntro}
    Suppose $u\in C\left([0,T_{max});
    \dot{H}^\frac{\log(3)}{2\log(2)}_{\mathcal{M}}
    \right)$,
is an odd, permutation symmetric, $\sigma$-mirror symmetrtic solution of the Fourier-restricted Euler or hypodissipative Navier--Stokes equation.
Then for all $0\leq t<T_{max}$,
\begin{equation} 
    \|\psi(t)\|_{\mathcal{H}^1}^2
    \leq 
    \left\|\psi^0\right\|_{\mathcal{H}^1}^2
    \exp\left(4\sqrt{2}\pi \int_0^t
    \sup_{n\in\mathbb{Z}^+}
    \left(\sqrt{3}\right)^n \psi_n(\tau)
    \diff\tau\right).
\end{equation}
In particular, if $T_{max}<+\infty$, then
\begin{equation}
    \int_0^{T_{max}}
    \sup_{n\in\mathbb{Z}^+}
    \left(\sqrt{3}\right)^n \psi_n(t)
    \diff t
    =
    +\infty.
\end{equation}
\end{theorem}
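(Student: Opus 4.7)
My plan is to work directly with the FKP-type ODE system that governs $\{\psi_n(t)\}$ under the stated symmetries (as derived in \Cref{RestrictedEulerBlowupIntro} and \Cref{RestrictedHypoBlowupIntro}), run an energy-style estimate at the $\mathcal{H}^1$ level, and close with Gr\"onwall's inequality. The weighted sequence space $\mathcal{H}^1$ corresponds to weight $(\sqrt{3})^{2n}$ at shell $n$, matching the critical Sobolev exponent $s=\tfrac{\log 3}{2\log 2}$ at which local wellposedness is obtained in \Cref{RestrictedEulerExistenceThmIntro} and \Cref{RestrictedHypoExistenceThmIntro}.

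I would multiply $\partial_t\psi_n$ by $(\sqrt{3})^{2n}\psi_n$ and sum over $n\in\mathbb{Z}^+$ to compute $\tfrac{1}{2}\tfrac{d}{dt}\|\psi\|_{\mathcal{H}^1}^2$. The linear dissipative contribution is $-(12\pi^2)^\alpha \mu_n^\alpha \nu (\sqrt{3})^{2\Tilde{\alpha}n}(\sqrt{3})^{2n}\psi_n^2$, which is manifestly non-positive (both $\mu_n$ and the weight factors are strictly positive), so I would discard it; this makes the argument work uniformly across the Euler ($\nu=0$) and hypo-viscous cases. For the cubic contribution, I would shift the ``gain'' term $\sqrt{2}\pi\beta_{n-1}(\sqrt{3})^n \psi_{n-1}^2$ by $n\mapsto n+1$, using the convention $\psi_{-1}=\beta_{-1}=0$ to drop the boundary term. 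This shift changes its weight from $(\sqrt{3})^{2n}$ to $(\sqrt{3})^{2(n+1)}=3\cdot(\sqrt{3})^{2n}$, while the ``loss'' term $-\sqrt{2}\pi\beta_n(\sqrt{3})^{n+1}\psi_n\psi_{n+1}$ retains weight $(\sqrt{3})^{2n}$. After the $(3-1)$ cancellation, the net cubic contribution is
\begin{equation*}
2\sqrt{2}\pi \sum_{n\geq 0}(\sqrt{3})^{2n}\beta_n (\sqrt{3})^{n+1}\psi_n^2\psi_{n+1}.
\end{equation*}
Since $\beta_n<1$ from its definition and $(\sqrt{3})^{n+1}\psi_{n+1}\leq \sup_{m\in\mathbb{Z}^+}(\sqrt{3})^m\psi_m$ for every $n\geq 0$, while $\beta_n\psi_n^2\geq 0$ is unsigned in $\psi$, the sum is bounded by $2\sqrt{2}\pi \bigl(\sup_m(\sqrt{3})^m\psi_m\bigr)\|\psi\|_{\mathcal{H}^1}^2$. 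Gr\"onwall's inequality then delivers the claimed exponential bound with constant $4\sqrt{2}\pi$, and combining with the $\mathcal{H}^1$-norm continuation alternative from the local existence theorems forces the time integral of $\sup_n(\sqrt{3})^n\psi_n$ to diverge whenever $T_{max}<+\infty$.

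The main technical point I expect to handle carefully is justifying the term-by-term differentiation of the infinite sum defining $\|\psi\|_{\mathcal{H}^1}^2$. In the hypo-viscous setting the parabolic smoothing from \Cref{RestrictedHypoExistenceThmIntro} gives higher regularity for $t>0$, which legitimises the formal manipulation at once; in the inviscid case I would pass through a truncation at modes $n\leq N$, derive the bound for the truncated system with constants independent of $N$, and take $N\to\infty$ using the fact that the $\mathcal{H}^1$ solution class is preserved by the flow. A secondary check, which is immediate but worth recording, is that $\sup_n(\sqrt{3})^n\psi_n$ is automatically finite for $\psi(t)\in\mathcal{H}^1$, since summability of $(\sqrt{3})^{2n}\psi_n^2$ forces $(\sqrt{3})^n\psi_n\to 0$.
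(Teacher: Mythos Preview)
Your proposal is correct and follows essentially the same approach as the paper: compute $\tfrac{d}{dt}\|\psi\|_{\mathcal{H}^1}^2$ from the ODE system, drop the non-positive dissipative term, shift the index in the gain sum to produce the $(3-1)$ cancellation, use $\beta_n<1$ together with $(\sqrt{3})^{n+1}\psi_{n+1}\leq \sup_m(\sqrt{3})^m\psi_m$, and close with Gr\"onwall. Your additional remarks on justifying the term-by-term differentiation via truncation and on the finiteness of the supremum are reasonable technical refinements that the paper leaves implicit.
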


\begin{theorem} \label{StrainOriginBlowupThmIntro}
    Suppose $u\in C\left([0,T_{max});
    \dot{H}^s_{\mathcal{M}}\right), 
    s>\frac{5}{2}$
is an odd, permutation symmetric, $\sigma$-mirror symmetric solution of the Fourier-restricted hypodissipative Navier--Stokes equation or Fourier-restricted Euler equation.
Further suppose that for all $n\in\mathbb{Z}^+$, 
we have $\psi_n(0)\geq 0$.
Then for all $0\leq t<T_{max}$,
\begin{equation}
    \nabla u(\Vec{0},t)
    =
    \lambda(t)
    \left(\begin{array}{ccc}
         0 & -1 & -1  \\
         -1 & 0 & -1 \\
         -1 & -1 & 0
    \end{array}\right),
\end{equation}
where
\begin{equation}
    \lambda(t)=
    12\sqrt{2}\pi \sum_{n=0}^{+\infty}
    \psi_n(t)
    \frac{\left(\sqrt{3}\right)^n}
    {\left(1+\frac{2}{3}
    \left(\frac{3}{4}\right)^n
    \right)^\frac{1}{2}} 
    \geq 0
    \end{equation}

    Furthermore, for all $0\leq t<T_{max}$,
\begin{equation}
    \|\psi(t)\|_{\mathcal{H}^1}^2
    \leq 
    \left\|\psi^0\right\|_{\mathcal{H}^1}^2
    \exp\left(\frac{\sqrt{5}}{3\sqrt{3}}
    \int_0^t \lambda(\tau) 
    \diff\tau\right).
    \end{equation}
    In particular, if $T_{max}<+\infty$,
    then
    \begin{equation}
    \int_0^{T_{max}}\lambda(t) \diff t
    =
    +\infty.
    \end{equation}
    \end{theorem}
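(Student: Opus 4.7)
The plan is to differentiate the Fourier ansatz at the origin, use the symmetries to reduce $\nabla u(\Vec{0}, t)$ to a scalar multiple of $I - \sigma \otimes \sigma$, compute that scalar explicitly by evaluating one off-diagonal entry, and then derive the regularity criterion by direct comparison with Theorem~\ref{PsiSupRegCritIntro}.

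First, I would differentiate the expansion and evaluate at $x = \Vec{0}$ to obtain
\begin{equation*}
    \nabla u(\Vec{0}, t) = -4\pi \sum_{n=0}^{+\infty} \psi_n(t) \sum_{k \in \mathcal{M}_n^+} v^k \otimes k.
\end{equation*}
Permutation symmetry $u(Px) = Pu(x)$, differentiated at $x = \Vec{0}$, yields $\nabla u(\Vec{0},t) = P \nabla u(\Vec{0},t) P^{-1}$ for every $P \in \mathcal{P}_3$, so this matrix lies in the commutant of the permutation representation on $\mathbb{R}^3$ and must take the form $aI + b\,\sigma \otimes \sigma$. The divergence-free constraint $\tr(\nabla u(\Vec{0},t)) = 0$ forces $a = -b$, and the $\sigma$-mirror symmetry condition is then automatic since $M_\sigma \sigma = -\sigma$ implies $M_\sigma(\sigma \otimes \sigma)M_\sigma = \sigma \otimes \sigma$. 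Setting $\lambda(t) = -b$ produces $\nabla u(\Vec{0},t) = \lambda(t)(I - \sigma \otimes \sigma)$, which is exactly the claimed matrix structure (zeros on the diagonal, $-\lambda(t)$ off-diagonal).

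To identify $\lambda(t)$, I would extract the $(1,2)$ entry, reducing the problem to computing
\begin{equation*}
    \lambda(t) = 4\pi \sum_{n=0}^{+\infty} \psi_n(t) \sum_{k \in \mathcal{M}_n^+} v^k_1 k_2.
\end{equation*}
On each shell, both $k \cdot \sigma$ and $|k|^2$ are constant along permutation orbits, so substituting $v^k = |P_k^\perp \sigma|^{-1}(\sigma - (k \cdot \sigma / |k|^2)k)$ collapses the inner sum to the elementary combinatorial identities $\sum_P (Pk)_2 = 2(k\cdot\sigma)$ and $\sum_P (Pk)_1(Pk)_2 = (k\cdot\sigma)^2 - |k|^2$. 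Plugging in the explicit formulas for $k^m$, $h^m$, $j^m$ and simplifying produces the uniform closed form $\sum_{k \in \mathcal{M}_n^+} v^k_1 k_2 = 3\sqrt{2}(\sqrt{3})^n \mu_n$, yielding the claimed expression for $\lambda(t)$. Nonnegativity $\lambda(t) \geq 0$ is then immediate from the coefficient-positivity preservation $\psi_n(t) \geq 0$ established in Theorems~\ref{RestrictedEulerBlowupIntro} and \ref{RestrictedHypoBlowupIntro}.

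For the regularity criterion, I would exploit that $\mu_n = (1 + \tfrac{2}{3}(\tfrac{3}{4})^n)^{-1/2}$ is increasing in $n$ with $\mu_0 = \sqrt{3/5}$, so positivity of the $\psi_n$ gives
\begin{equation*}
    \sup_{n \in \mathbb{Z}^+}(\sqrt{3})^n \psi_n(t) \leq \sum_{n=0}^{+\infty} (\sqrt{3})^n \psi_n(t) \leq \frac{1}{12\sqrt{2}\pi \sqrt{3/5}}\, \lambda(t).
\end{equation*}
Substituting into Theorem~\ref{PsiSupRegCritIntro} produces the exponential constant $4\sqrt{2}\pi / (12\sqrt{2}\pi\sqrt{3/5}) = \sqrt{5}/(3\sqrt{3})$, and the $L^1$-blowup of $\lambda$ at $T_{max}$ follows from the corresponding $L^1$-blowup of $\sup_n(\sqrt{3})^n\psi_n$. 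The main obstacle is verifying the orbit-sum identity uniformly in $n$; this is essentially a specialization of the mode-interaction formulas developed in Appendix~\ref{AppendixModeInteraction}, with the odd-$n$ shells needing extra bookkeeping because $\mathcal{P}[h^m] \cup \mathcal{P}[j^m]$ splits into two orbits of size $3$ (owing to the repeated entries of $h^m$ and $j^m$) rather than a single orbit of size $6$.
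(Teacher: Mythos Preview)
Your proposal is correct and follows essentially the same route as the paper. The paper packages the gradient computation through an auxiliary lemma (Proposition~\ref{MatrixProp}): it introduces $B^{k,v}(x)=-\sum_{P}P(v)\sin(2\pi P(k)\cdot x)$, proves $\nabla B^{k,v}(\Vec{0})=2\pi(\sigma\cdot k)(\sigma\cdot v)\,M$ via the identity $\sum_{i\neq j}k_iv_j=(\sigma\cdot k)(\sigma\cdot v)$ (using $k\cdot v=0$), and then sums the contributions $(\sigma\cdot k^m)(\sigma\cdot v^{k^m})$, $(\sigma\cdot h^m)(\sigma\cdot v^{h^m})$, $(\sigma\cdot j^m)(\sigma\cdot v^{j^m})$ using the explicit values from Propositions~\ref{CanonicalComputations} and~\ref{BasisComputations}. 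Your commutant argument for the matrix form and your orbit-sum identities $\sum_P(Pk)_2=2(k\cdot\sigma)$, $\sum_P(Pk)_1(Pk)_2=(k\cdot\sigma)^2-|k|^2$ are just a repackaging of the same computation, arriving at the identical $(\sigma\cdot k)(3-(\sigma\cdot k)^2/|k|^2)$ expression after substituting $v^k$. The regularity-criterion step---bounding $\mu_n\geq\mu_0=\sqrt{3/5}$ and feeding into Theorem~\ref{PsiSupRegCritIntro}---is verbatim what the paper does.
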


\begin{remark}
    We should note that this last regularity criterion can be seen in terms of scaling as an analogue of the Beale-Kato-Majda criterion, because it implies that $\nabla u$ must blowup in $L^1_t L^\infty_x$ in order for a solution to blowup in finite-time.
    The classic Beale-Kato-Majda regularity criteria \cite{BKM} requires the vorticity to blowup in $L^1_t L^\infty_x$, but Kato and Ponce \cite{KatoPonce} proved the analogous result for the strain, showing that if a smooth solution of the Euler equation blows up in finite-time $T_{max}<+\infty$, then
    \begin{equation}
    \int_0^{T_{max}} \|S(\cdot,t)\|_{L^\infty}
    \diff t=+\infty.
    \end{equation}
    Because the velocity gradient is symmetric at the origin, \Cref{StrainOriginBlowupThmIntro} is analogous to this latter result, although
    it is substantially stronger, as it guarantees blowup at a specific point, $x=\Vec{0}$, and it provides geometric information about the structure of the stagnation point blowup at the origin.

    We do not state the standard Beale-Kato-Majda criterion in terms of the vorticity in this section, because we already have something much stronger in terms of scaling in the local wellposedness theory.
    We have shown that for the Fourier-restricted Euler equation, 
    if $T_{max}<+\infty$, then
    \begin{equation}
    \|u(\cdot,t)\|_{\dot{H}^\frac{\log(3)}{2\log(2)}}
    \geq
    \frac{1}{\mathcal{C}_* (T_{max}-t)},
    \end{equation}
    which clearly implies that
    \begin{equation}
    \int_0^{T_{max}}
    \|u(\cdot,t)\|_{\dot{H}^{\frac{\log(3)}{2\log(2)}}} \diff t
    =+\infty.
    \end{equation}
    On the torus, we know that
    \begin{equation}
    (2\pi)^{1-\frac{\log(3)}{2\log(2)}}
    \|u\|_{\dot{H}^{\frac{\log(3)}{2\log(2)}}}
    \leq 
    \|u\|_{\dot{H}^1}
    =
    \|\omega\|_{L^2}
    \leq
    \|\omega\|_{L^\infty},
    \end{equation}
    and so we can see that 
    \begin{equation}
    \int_0^{T_{max}}
    \|\omega(\cdot,t)\|_{L^\infty} \diff t
    =+\infty.
    \end{equation}
    The dyadic structure of the Fourier-restricted Euler equation means that we have regularity criteria that are much stronger than the Beale-Kato-Majda criterion in terms of scaling.
\end{remark}

\subsection{Relationship to previous literature}
\label{PreviousLit}

The blowup results in this paper will be substantially based around the blowup results for the Friedlander-Katz-Pavlovi\'c model, the dyadic Euler and Navier--Stokes equations.
This model equation can be reduced to an infinite system of ODEs, where for all $n\in\mathbb{Z}^+$
\begin{equation}
    \partial_t u_n=-c_1\nu\lambda^{2n\Tilde{\alpha}} u_n
    +c_2\lambda^n u_{n-1}^2
    -c_2\lambda^{n+1} u_n u_{n+1},
\end{equation}
with $c_1,c_2>0$, the nonlinearity parameter $\lambda>1$, the viscosity $\nu\geq 0$, the degree of dissipation $\Tilde{\alpha}> 0$, and with $u_{-1}:=0$ by convention.
Note that this equation is often taken with indices in $\mathbb{N}$, starting with $u_1$, but re-indexing to start with $u_0$ is not a problem. Also, it is standard to fix the constants $c_1=c_2=1$, but it is easy to see this is equivalent to the general case $c_1,c_2>0$ by a simple rescaling of time and viscosity. The inviscid case, where $\nu=0$, is the dyadic Euler equation, while the viscous case, $\nu>0$, is the dyadic Navier--Stokes equation. It is simple to check that if the factors $\beta_n,\mu_n$ are dropped, the infinite system of ODEs in \Cref{RestrictedEulerBlowupIntro,RestrictedHypoBlowupIntro} are exactly the dyadic Euler and dyadic Navier--Stokes equations respectively. Because the factors $\beta_n$ and $\mu_n$ increase from $\frac{\sqrt{2}}{\sqrt{3}}$ and $\frac{\sqrt{3}}{\sqrt{5}}$ respectively to $1$ exponentially fast, the blowup dynamics of the dyadic model can be exploited to prove finite-time blowup for the Fourier-restricted Euler and hypodissipative Navier--Stokes equations when considering solutions that are odd, permutation-symmetric, and $\sigma$-mirror symmetric.

Friedlander and Pavlovi\'c proved finite-time blowup for the inviscid case \cite{FriedlanderPavlovic}, and independently around the same time Katz and Pavlovi\'c proved finite-time blowup for the inviscid case and for the viscous case when
$\Tilde{\alpha}<\frac{1}{4}$. Cheskidov refined this analysis in \cite{Cheskidov}, proving finite-time blowup for $\Tilde{\alpha}<\frac{1}{3}$, and global regularity for $\Tilde{\alpha}\geq \frac{1}{2}$.
Additionally, Barbato, Morandin, and Romito \cite{BarbatoMorandinRomito} also proved a global regularity result for nonnegative solutions in the range $\frac{2}{5}\leq \Tilde{\alpha}<\frac{1}{2}$ when $\lambda=2^{\frac{1}{\Tilde{\alpha}}}$, beyond the energy threshold. This result, however, cannot be adapted straightforwardly to the Fourier-restricted hypodissipative Navier--Stokes equation, because any generalization would apply only to the Fourier-restricted equation in the symmetry class where we prove blowup. In the general case, the dynamics are much more complicated, and the dynamical systems argument from \cite{BarbatoMorandinRomito} will not apply.

In this paper, we prove finite-time blowup for the Fourier-restricted hypodissipative Navier--Stokes equation
when $\Tilde{\alpha}<\frac{1}{3}$, which corresponds to $\alpha<\frac{\log(3)}{6\log(2)}$,
and global regularity when 
$\Tilde{\alpha}\geq \frac{1}{2}$,
which corresponds to 
$\alpha\geq\frac{\log(3)}{4\log(2)}$.
For the proof of global regularity, we will closely follow the methods of Cheskidov in \cite{Cheskidov}. For the proof of finite-time blowup, we will use a different Lyapunov functional that gives finite-time blowup for a somewhat broader class of initial data than the Lyapunov functional in
\cite{Cheskidov}, but with the same threshold for blowup, $\Tilde{\alpha}<\frac{1}{3}$. In particular, we can relax the requirement from \cite{Cheskidov} that the $u_n(0)\geq 0$, for all $n \in\mathbb{Z}^+$, giving the following result.

\begin{theorem} \label{DyadicNSblowupIntro}
    Suppose $u\in C\left([0,T_{max};\mathcal{H}^1\right)$
    is a solution of the dyadic Navier--Stokes equation, and that for some $0<\alpha<s<\frac{1}{3}$,
    \begin{equation}
    H_s(0)=\sum_{n=0}^\infty
    \lambda^{-sn}u_n(0)
    >C_{\alpha,s}\nu,
    \end{equation}
    where
    \begin{equation}
    C_{\alpha,s}
    =
    \frac{2}{\left(1-\lambda^{-(1+s)}
    \right)^\frac{1}{2}
    \left(\lambda^{1-s}
    -\lambda^{\frac{1+s}{2}}\right)
    \left(1-\lambda^{
    -(1+s-4\alpha)}\right)^\frac{1}{2}}.
    \end{equation}
    Then this solution blows up in finite-time
    with
    \begin{equation}
    T_{max}<\frac{1}{\kappa_s H_s(0)},
    \end{equation}
    where
    \begin{equation}
    \kappa_s
    =
    \frac{1}{2}\left(\lambda^{1-s}
    -\lambda^{\frac{1+s}{2}}\right)
    \left(1-\lambda^{-(1+s)}
    \right).
    \end{equation}
\end{theorem}

It is important to note that these methods adapted from the analysis of the dyadic model will only be available once the class of symmetries---odd, permutation symmetric, $\sigma$-mirror symmetric---has been used to dramatically simplify the dynamics Fourier-restricted Euler and hypodissipative Navier--Stokes equations, which in the general case are much more complicated than the dyadic Euler/Navier--Stokes equations. Without these symmetries, there would be 24 equations for each frequency shell $\mathcal{M}_n$, rather than 1.

The factor of $\frac{\log(3)}{2\log(2)}$ in our results comes from a partial depletion of nonlinearity at high modes due to anisotropy. Heuristically based on scaling we would expect that
\begin{equation}
    (u\cdot\nabla)u=
    \nabla\cdot(u\otimes u)
    \sim |\nabla| u^2.
\end{equation}
Instead, at high frequencies we find that
\begin{equation}
    (u\cdot\nabla)u=
    \nabla\cdot(u\otimes u)
    \sim |\nabla|^\frac{\log(3)}{\log(4)} u^2.
\end{equation}
This is because the nonlinearity grows at a rate of $3^m$, while the frequencies grow at a rate of $4^m$.
This can be seen for all the interactions from the computations in \Cref{AppendixModeInteraction}, but we will briefly consider one of the computations here.
Consider two modes in the shell $\mathcal{P}[k^m]$:
\begin{align}
    u&=i v^{k^m}e^{2\pi i k^m\cdot x} \\
    \Tilde{u}&=
    i v^{P_{12}(k^m)}e^{2\pi i P_{12}(k^m)\cdot x}.
\end{align}
Then we have 
\begin{equation}
    (\Tilde{u}\cdot\nabla)u+(u\cdot\nabla)\Tilde{u}
    =
    -i\left((k^m\cdot 
    P_{12}\left(v^{k^m}\right)\right)
    \left(v^{k^m}+P_{12}\left(v^{k^m}\right)\right)
    e^{2\pi i h^m\cdot x},
\end{equation}
using the fact that $h^m=k^m+P_{12}(k^m)$,
that $P_{12}(v^{k^m})=v^{P_{12}(v^{k^m})}$, and that
\begin{equation}
    P_{12}\left(k^m\right)\cdot v^{k^m}
    =
    k^m\cdot P_{12}\left(v^{k^m}\right).
\end{equation}
It would appear that the nonlinearity grows like $|k^m|$, but there is an additional cancellation.
The divergence free constraint implies that
$P_{12}(k^m)\cdot P_{12}(v^{k^m})=0$,
and so we can see that
\begin{equation}
    (\Tilde{u}\cdot\nabla)u+(u\cdot\nabla)\Tilde{u}
    =
    -i\left(\left(k^m-P_{12}\left(k^m\right)\right)
    \cdot P_{12}\left(v^{k^m}\right)\right)
    \left(v^{k^m}+P_{12}\left(v^{k^m}\right)\right)
    e^{2\pi i h^m\cdot x}.
\end{equation}
Observe that
\begin{equation}
    k^m-P_{12}(k^m)=
    3^m\left(
    \begin{array}{c}
         1 \\ -1  \\ 0
    \end{array}\right),
\end{equation}
and so
\begin{equation}
    |k^m-P_{12}(k^m)| = \sqrt{2} \left(3^m\right),
\end{equation}
but asymptotically as $m \to +\infty$,
\begin{equation}
    |k^m| \sim \sqrt{3} \left(4^m\right),
\end{equation}
and the partial depletion of nonlinearity is apparent.

The fact that this depletion of nonlinearity is due to anisotropy can be observed from the fact that $k^m,h^m,j^m$ each converge to conically to $\spn(\sigma)$. In particular, we will show in \Cref{DynamicsSection} that
\begin{equation}
    \lim_{m\to+\infty}
    \frac{\sigma\cdot k^m}{|\sigma||k^m|}=1,
\end{equation}
and likewise for $h^m$ and $j^m$.
This implies that the angle between $\sigma$ and $k^m,h^m,j^m$ shrinks to zero as $m\to +\infty$, and so the frequency basis is becoming highly anisotropic.

\begin{remark}
    Note that because anisotropy grows larger at higher frequencies, the Fourier-restricted hypodissipative Navier--Stokes equation does not have any scale invariance.
    The full hypodissipative Navier--Stokes equation has the scaling invariance
    \begin{equation}
    u^\lambda(x,t)=
    \lambda^{2\alpha-1} u
    \left(\lambda x, \lambda^{2\alpha}t\right).
    \end{equation}
    If $u$ is a solution of the hypodissipative Navier--Stokes equation on the whole space, then $u^\lambda$ is also a solution for all $\lambda> 0$.
    If $u$ is a solution of the hypodissipative Navier--Stokes equation on the torus, then $u^\lambda$ is also a solution for all $\lambda\in \mathbb{N}$.
    This scale invariance is broken for the Fourier-restricted hypodissipative Navier--Stokes equation due to anisotropy; the angle between $k^m,h^m,j^m$ and $\sigma$ goes to zero as $m\to \infty$,
    which implies that large frequencies cannot be obtained by rescaling small frequencies.
\end{remark}

\begin{remark}
    There are also significant similarities between the Fourier-restricted hypodissipative Navier--Stokes equation and the Fourier averaged Navier--Stokes equation introduced by Tao in \cite{TaoModifiedNS}.
    Both of these models involve the replacement of the nonlinear term from the (hypodissipative) Navier--Stokes equation
    \begin{equation}
    B(u,u)=\mathbb{P}_{df}((u\cdot\nabla) u),
    \end{equation}
    with an adjusted nonlinear term $\Tilde{B}(u,u)$. The nonlinear term in Tao's model equation has a key bound that is identical to the full Navier--Stokes nonlinearity
    \begin{equation}
        \left\|\Tilde{B}(u,u)\right\|_{L^2}
        \leq 
        \|u\|_{L^4} \|\nabla u\|_{L^4}.
    \end{equation}
    This bound also holds for the nonlinear term in the restricted model equation introduced in this paper, but for the restricted model, this bound is far from being sharp.
    For the nonlinearity in the Fourier-restricted hypodissipative Navier--Stokes equation, we have the much stronger bound
    \begin{equation}
    \left\|\Tilde{B}(u,u)\right\|_{L^2}
        \leq 
    C\|u\|_{\dot{H}^\frac{\log(3)}{4\log(2)}}^2,
    \end{equation}
    and it is this bound that means, in both the viscous and inviscid case, we can consider much rougher data for the restricted model equations, and that we have regularity criteria that are much stronger than the Beale-Kato-Majda criterion in terms of scaling.
    Crucially, the nonlinear terms in both model equations are skew-symmetric with
    \begin{equation}
        \left<\Tilde{B}(u,u),u\right>=0,
    \end{equation}
    just as for the actual Navier--Stokes equation,
    which results in an energy equality.
    
    The essential difference is that the model introduced by Tao in \cite{TaoModifiedNS} alters the structure of the self-advection nonlinearity $(u\cdot\nabla)u$, which is very important both physically and mathematically.
    The model equations considered in this paper maintain the self-advection 
    structure by taking the adjusted nonlinear term $\Tilde{B}$ to be given by
    \begin{equation}
        \Tilde{B}(u,u)=
        \mathbb{P}_\mathcal{M}((u\cdot\nabla)u).
    \end{equation}
    Because only the projection has been altered, the identities for enstrophy growth are the same in the hypodissipative Navier--Stokes equation and the Fourier-restricted hypodissipative Navier--Stokes equation. Both have the following equivalent identities:
    \begin{align}
        \frac{\diff}{\diff t}
        \frac{1}{2}\|\nabla u(\cdot,t)\|_{L^2}^2
        &=
        -\nu\|\nabla u\|_{\dot{H}^\alpha}^2
        -\left<-\Delta u,(u\cdot\nabla)u\right> \\
        \frac{\diff}{\diff t}
        \frac{1}{2}\|\omega(\cdot,t)\|_{L^2}^2
        &=
        -\nu\|\omega\|_{\dot{H}^\alpha}^2
        +\left<S,\omega\otimes\omega\right> \\
        \frac{\diff}{\diff t}
        \|S(\cdot,t)\|_{L^2}^2
        &= \label{EnstrophyIdentityIntro}
        -2\nu\|S\|_{\dot{H}^\alpha}^2
        -4\int\det(S),
    \end{align}
and analogous statements hold for the Euler and Fourier-restricted Euler equations with $\nu=0$. This does not hold for Tao's model equation in \cite{TaoModifiedNS}, which breaks the structure of vortex stretching.

    Because the growth of enstrophy is a key feature of the Navier--Stokes regularity problem, this is an important advance on the Fourier averaged model in \cite{TaoModifiedNS}.
    This is the first (hypodissipative) Navier--Stokes model equation that exhibits finite-time blowup and respects both the energy equality and the identity for enstrophy growth. The author previously proved finite-time blowup for the strain self-amplification model equation, which respects the identities for enstrophy growth above, but not the energy equality \cite{MillerStrainModel}. Furthermore, the structure of the nonlinearity was altered much more drastically for the strain self-amplification model equation than for the restricted model equations in this paper.
    
    The main weakness of our blowup result for the Fourier-restricted, hypodissipative Navier--Stokes equation relative to Tao's blowup result for the Fourier averaged Navier--Stokes equation is that we only prove blowup for the Fourier-restricted equation when $\alpha<\frac{\log(3)}{6\log(2)}\approx .264$,
    whereas the Fourier averaged Navier--Stokes equation exhibits blowup with the standard Laplacian dissipation term, corresponding to $\alpha=1$.
    Compared with the model equation introduced by Tao in \cite{TaoModifiedNS},
    the Fourier-restricted hypodissipative Navier--Stokes equation is arguably closer to the full Navier--Stokes equation in the sense of fluid mechanics, but farther from the full Navier--Stokes equation in the sense of harmonic analysis.
\end{remark}

\begin{remark}
    Campolina and Mailybaev previously considered a model equation for the Euler and Navier--Stokes equations restricting the Fourier modes to a logarithmic lattice \cites{CampolinaMailybaevPhysLett,CampolinaMailybaevNon}; 
    their work represents the first attempt at studying the finite-time blowup problem for the incompressible fluid equations by restricting the velocity to certain Fourier modes.
    In particular, they consider Fourier modes supported on $\Lambda^3$, where
    \begin{equation}
    \Lambda =\left\{0, \pm 1, 
    \pm \lambda 
    \pm \lambda^2, \pm \lambda^3, ...\right\},
    \end{equation}
    and $\lambda>1$.
    Note that $\lambda$ can only take certain values in order to obtain interactions between the Fourier modes: $\lambda=2$ is the only possible integer value; the golden mean and the plastic number are also considered in \cites{CampolinaMailybaevNon}, although in these cases the resulting velocity field is not periodic. 
    This has certain similarities in structure with the model taken under consideration here, because $\mathcal{M}$ also exhibits exponential growth of frequencies, although the lattice $\mathcal{M}$ has no scale invariance due to anisotropy, while the lattice $\Lambda^3$ has the scale invariance
    $\lambda^n \Lambda^3 \subset \Lambda^3$, for all $n\in\mathbb{N}$.

    There are also more crucial differences between the models. In order to exploit the existing blowup results for the dyadic model for Euler and Navier--Stokes, it is necessary that a frequency interact with a frequency at the same level in order to produce a frequency approximately double, which yields the dyadic structure. The incompressibility condition is a barrier to achieving this, because it guarantees that no Fourier mode can interact with itself in the $(u\cdot\nabla)u$ nonlinearity.
    We get around this difficulty by considering shells of frequencies with the same magnitude and a permutation invariance. This allows for frequencies to interact with other frequencies in the same shell creating a frequency in the next shell. For permutation and $\sigma$-mirror symmetric solutions, this can be treated as the self-interaction of each frequency shell, even if each individual frequency has no self-interaction. 
    For the Euler equation on log lattices, this would correspond to $\lambda=2$; however, in this case ``the spacing factor $\lambda=2$ does not provide a reliable model for the
blowup study, because the incompressibility condition together with a small number of triad
interactions cause degeneracies in coupling of different modes" \cites{CampolinaMailybaevNon}.
    For example, if we take a frequency with all positive components $k=(2^a,2^b,2^c)$,
    then the only interaction with another frequency with all positive coefficients would be
    $k+k=2k$, and this interaction cannot produce anything from the nonlinearity $(u\cdot\nabla)u$,
    due to the incompressibility constraint $k\cdot\hat{u}(k)=0$.

    It is worth pointing out that Waleffe \cite{Waleffe} showed that the 1D Burgers equation on the one dimensional log lattice $k_n=2^n$ reduces exactly to the dyadic model (this was also observed by Campolina and Mailybaev \cite{CampolinaMailybaevNon}). 
    This is similar in spirit to the reduction of the Fourier-restricted Euler and hypodissipative Navier--Stokes equations to an infinite system of ODEs very similar to the dyadic model in \Cref{RestrictedEulerBlowupIntro,RestrictedHypoBlowupIntro}, albeit in a dramatically simpler setting.
    All of the technical difficulties in the construction of blowup for the Fourier-restricted Euler/Navier--Stokes model equation on the lattice $\mathcal{M}$ in this sense come from the difficulty of getting a Burgers-shock type of blowup for the Fourier coefficients while maintaining the incompressibility constraint.

    The importance of permutation symmetry in our construction is that it allows the blowup via the self-interaction of Fourier modes from the scalar dyadic model to be lifted to the $(u\cdot\nabla)u$ nonlinearity of the Euler and Navier--Stokes equations. In particular, if $\nabla\cdot u=0$, then for all $k\in\mathbb{Z}^3, k\cdot\hat{u}(k)=0$.
    This implies that if
    \begin{equation}
        v(x)=\hat{u}(k)e^{2\pi i k\cdot x},
    \end{equation}
    then
    \begin{equation}
        (v\cdot\nabla)v=0,
    \end{equation}
    and so there is no self-interaction in the nonlinearity of a single Fourier mode. This is also true of a single sine wave.

    The self-interaction of the $n$-th order mode to produce the $n+1$-th order mode is what drives blowup for the dyadic model, so we need a way to get around this lack of a self-interaction term in the nonlinearity while retaining both the incompressibility constraint and the $(u\cdot\nabla)u$ nonlinearity. The key is that for a permutation-symmetric vector field, the Fourier mode $\hat{u}(2,1,0)$ fully determines the Fourier modes at all permutations of $(2,1,0)$. There is an interaction between the modes $\hat{u}(2,1,0)$ and $\hat{u}(2,0,1)$, for example, and we can treat this as a self-interaction, because the $(2,1,0)$ mode is interacting with a different mode that it completely determines by permutation symmetry, producing the next order Fourier mode $\hat{u}(4,1,1)$. Without making use of permutation symmetry, the dyadic model could be derived rigourously in terms of a Fourier restriction only from the 1D Burgers equation \cites{Waleffe,CampolinaMailybaevNon}---to frequencies that are powers of two in this case---not for the 3D Euler equation, so this is a significant step forward.
The derivation of the dyadic Euler and Navier--Stokes equations from a vector valued model in \cites{FriedlanderPavlovic,KatzPavlovic} is based on a nonlinearity that mimics the $(u\cdot\nabla)u$ nonlinearity in terms of the interaction of Littlewood-Paley coefficients heuristically in terms of scaling, but the material derivative of the Littlewood-Paley decomposition is not used directly. To use the language of \cite{KatzPavlovic}, the nonlinearity in the dyadic Navier--Stokes equation is not based on computing $(u^Q\cdot\nabla) u^{Q'}$, where $u^Q$ and $u^{Q'}$ are elements of an orthonormal, Littlewood-Paley dyadic decomposition.
By contrast, the whole basis of the nonlinearity for the Fourier-restricted model in this paper is computing $(u^k\cdot\nabla) u^{k'}$, where $u^k,u^{k'}$ are Fourier modes with $k,k'\in \mathcal{M}$. These computations are made in \Cref{AppendixModeInteraction}, and underlie all of the proofs in \Cref{DynamicsSection,BlowupSection}.

     Taking $\lambda$ to be the golden mean and the plastic number, Campolina and Mailybaev provide robust numerical evidence for the finite-time blowup of the Euler equation on a log lattice \cite{CampolinaMailybaevNon}, although in both of these cases the finite-time blowup problem remains open from the viewpoint of rigourous analysis. 
    Interestingly, Figure 4 in \cite{CampolinaMailybaevNon} suggests that the $\sigma$-axis in frequency space ($k_1=k_2=k_3$)
    seems to play the same role in the energy cascade for the Navier--Stokes equation on log lattices that it does for the Fourier-restricted hypodissipative Navier--Stokes equation considered in this paper. This is quite natural, because log lattices are symmetric with respect to permutation, but not with respect to generic rotations, making the $\sigma$-axis a special invariant.

  I would like to thank Prof. Miguel Bustamante for bringing the references \cites{CampolinaMailybaevNon,CampolinaMailybaevPhysLett} to my attention.
\end{remark}

\begin{remark}
    We have already noted that the
    regularity criteria in \Cref{ViscousStrainRegCritIntro,EulerStrainRegCritIntro} are the direct analogues of a regularity criterion proven by Neustupa and Penel \cite{NeustupaPenel1} for the Navier--Stokes equation.
    This regularity criterion is a direct motivation for the present work. The Anstaz for blowup is specifically chosen to yield
    an eigenvalue structure of $-2\lambda,\lambda,\lambda$ for the strain matrix at the origin.
    This is exactly what the \Cref{StrainOriginBlowupThmIntro} tells us occurs, because the matrix
    \begin{equation}
    \lambda
    \left(\begin{array}{ccc}
         0 & -1 & -1  \\
         -1 & 0 & -1 \\
         -1 & -1 & 0
    \end{array}\right),
\end{equation}
has eigenvalues $-2\lambda,\lambda,\lambda$, where axis $\spn(\sigma)$ is the eigenspace corresponding to the eigenvalue $-2\lambda$ and the plane $\spn(\sigma)^\perp$ is the eigenspace corresponding to the multiplicity two eigenvalue $\lambda$.
Our analysis permits us to understand blowup at the origin (including rates) very precisely, because $\nabla u$ has a Fourier cosine series, which of course simplifies dramatically at the origin, where each cosine function has value one.
\end{remark}

\begin{remark}
    The blowup Anstaz considered in this paper also has a similar geometric structure to the blowup Anstaz used by Elgindi to prove finite-time blowup for $C^{1,\alpha}$ solutions of the Euler equation \cite{Elgindi} and by Elgindi, Ghoul, and Masmoudi to prove finite-time blowup for $L^2 \cap C^{1,\alpha}$ solutions of the Euler equation \cite{ElgindiGhoulMasmoudi}. In both cases, there is stagnation point blowup at the origin, with $u(\Vec{0},t),\omega(\Vec{0},t)=0$, and with the strain matrix yielding axial compression and planar stretching at the origin. In their case, the Anstaz is axisymmetric about the $x_3$-axis, and the $x_3$-axis is the axis of compression. In our case, the Anstaz is permutation symmetric, and the axis of compression is the $\sigma$-axis.

    One crucial difference is that the symmetry considered in this paper is discrete, and does not reduce the dimension of the solution, whereas axisymmetry is a continuous symmetry that does reduce the dimension of the solution, which only depends on $r=\sqrt{x_1^2+x_2^2}$ and $z=x_3$ when the vector field is expressed in cylindrical coordinates.
    The blowup argument for $C^{1,\alpha}$ solutions of the Euler equation that are axisymmetric and swirl-free cannot possibly be extended to smooth solutions of the Euler equation, because Ukhovskii and Yudovich proved, in one of the classical results in the incompressible fluid mechanics literature, that smooth axisymmetric, swirl-free solutions of the Euler equation cannot form singularities in finite-time \cite{Yudovich}. The key to global regularity is the fact that $\frac{\omega_\theta}{r}$ is transported by the flow, which is related to the reduction of dimension. The fact that odd symmetry, permutation symmetry, and $\sigma$-mirror symmetry form a discrete symmetry group, with no corresponding reduction in dimension, means there is no such barrier for the blowup of smooth solutions, although of course the analysis for the full Euler and hypodissipative Navier--Stokes equations is dramatically more complicated than for the model equations considered in this paper, in which the nonlinearity is simplified drastically by the dyadic structure imposed by $\mathbb{P}_\mathcal{M}$. 
\end{remark}

\begin{remark}
    There is previous work on the Euler equation under discrete symmetries that has direct bearing on the problem of finite-time blowup for odd, permutation symmetric, $\sigma$-mirror symmetric solutions of Euler equation raised in \Cref{FullEulerOpenQuestion}.
    Elgindi and Jeong studied the Euler equation under octahedral symmetry \cite{ElgindiJeongOctahedral}. In this paper, they considered solutions of the Euler equation that are permutation symmetric and mirror symmetric about the planes $x_1=0, x_2=0, x_3=0$. Note that these three mirror symmetries combined imply the solution is odd.
Elgindi and Jeong prove finite-time blowup for solutions of the Euler equation on the infinite tetrahedron $x_1>x_2>x_3>0$, which is directly related to the group of symmetries considered \cite{ElgindiJeongOctahedral}. These solutions have $C^\alpha$ vorticity, but when they are extended to the whole space via symmetry, these solutions have bounded, but not continuous, vorticity. This means that, unlike the $C^{1,\alpha}$ blowup solutions considered by Elgindi in \cite{Elgindi}, these solutions are too rough on the whole space to be in a class where strong local wellposedness is known.
 There is a straightforward geometric reason, when considering permutation symmetric solutions, that oddness and mirror symmetry about the plane $x_1+x_2+x_3=0$ is a more likely symmetry class to form a stagnation point singularity at the origin than mirror symmetries for each of the coordinate planes. A smooth solution with the symmetries considered in \cite{ElgindiJeongOctahedral} must satisfy $\nabla u(\Vec{0},t)=0$; this makes proving blowup at the origin---which is the vertex of the tetrahedron corresponding to the symmetry group and therefore the natural candidate---much more difficult.
\end{remark}

\begin{remark}
    While the large majority of blowup results for model equations for the Euler and Navier--Stokes equations do not have a direct bearing on a possible Ansatz for finite-time blowup for the full Euler or Navier--Stokes equations, there are a series of papers on model equations for axisymmetric flows with swirl by Hou and a number of collaborators that have a direct relationship to a specific blowup configuration for the full Euler equation. Hou and Luo conjectured that smooth solutions of the axisymmetric Euler equations with swirl on a periodic cylinder could develop finite-time singularities at the boundary. In particular, they considered an odd ``tornado"-type blowup, where the swirl goes one direction in when $z>0$ and the opposite direction when $z<0$, and a singularity forms in finite-time at $r=1, z=0$, and provided substantial numerical evidence that such a blowup actually occurs \cite{HouLuoNumerical}. 
    
    Hou and Luo also proposed a one dimensional model equation that models the behaviour of this class of solutions at the $r=1$ boundary \cite{HouLuoModel}. Choi et al. proved finite-time blowup for this model equation \cite{ChoiEtAl}, providing further evidence of for this blowup scenario for the axisymmetric, 3D Euler equations with swirl on a periodic cylinder. 
    Most recently, Chen and Hou proved finite-time blowup for smooth solutions of the full 3D Euler equation based around this configuration \cite{HouChenEulerBlowup}.
    Among a large number of other results, too numerous to discuss here, it also bears mentioning that Hou and Lei proposed a model equation for the axisymmetric Euler equation with swirl where the effect of advection is neglected \cite{HouLei}, and Hou et al. proved that smooth solutions of this model equation can blowup in finite-time \cite{HouEtAl}.
    For a more complete list of references related to that line of research, see the references in \cite{HouChenEulerBlowup}.
\end{remark}

\section{Definitions and preliminaries}
\label{Prelims}

\begin{definition}
    For all $s\geq 0$, we will define the Hilbert space on the torus by
    \begin{equation}
    H^s\left(\mathbb{T}^3\right)
    =
    \left\{f\in L^2\left(\mathbb{T}^3\right):
    \|f\|_{H^s}<+\infty\right\},
    \end{equation}
    where the  $H^s\left(\mathbb{T}^3\right)$ norm is given by
    \begin{equation}
    \|f\|_{H^s}
    =
    \left(\sum_{k\in\mathbb{Z}^3}
    (1+4\pi^2|k|^2)^s |\hat{f}(k)|^2\right)^\frac{1}{2}
    =
    \left\|(1-\Delta)^\frac{s}{2} f\right\|_{L^2}.
    \end{equation}
    For all $s\geq 0$, we will define the mean-free Hilbert space on the torus by
    \begin{equation}
    \dot{H}^s\left(\mathbb{T}^3\right)
    =
    \left\{f\in H^s\left(\mathbb{T}^3\right):
    \int_{\mathbb{T}^3}f(x)\diff x=0 \right\},
    \end{equation}
    where the homogeneous $\dot{H}^s\left(\mathbb{T}^3\right)$ norm associated with this space is given by
    \begin{equation}
    \|f\|_{\dot{H}^s}
    =
    \left(\sum_{\substack{k\in\mathbb{Z}^3 \\ k\neq 0}}
    (4\pi^2|k|^2)^s |\hat{f}(k)|^2\right)^\frac{1}{2}
    =
    \left\|(-\Delta)^\frac{s}{2} f\right\|_{L^2}.
    \end{equation}
\end{definition}

\begin{remark}
    Note that the mean-free condition is a standard condition (sometimes described as drift-free) in the analysis of the Euler and Navier--Stokes equations on the torus, as it is convenient for inverting the Laplacian and for the Sobolev embedding
    $L^{q*} \hookrightarrow \dot{H}^{s}$, where $s>0$ and
    $\frac{1}{q*}=\frac{1}{2}-\frac{s}{3}$.
    We will therefore build this condition into our constraint space. The dyadic structure of the restricted model equations means that the results will be more convenient to prove in the homogeneous $\dot{H}^s$ norm, but it is fine to interchange these norms as convenient, because they are equivalent for mean-free functions.
    \end{remark}

    \begin{proposition} \label{SobolevEquivalentProp}
        Fix $s\geq 0$. For all $f\in \dot{H}^s\left(\mathbb{T}^3\right)$
        \begin{equation}
        \|f\|_{\dot{H}^s}
        \leq 
        \|f\|_{H^s}
        \leq 
        \left(1+\frac{1}{4\pi^2}\right)^\frac{s}{2}
        \|f\|_{\dot{H}^s}.
        \end{equation}
    \end{proposition}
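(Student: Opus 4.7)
The plan is to observe that the mean-free condition kills the $k=0$ Fourier coefficient, so both norms can be written as sums over $k \in \mathbb{Z}^3 \setminus \{0\}$, and then to compare the two weights $(1+4\pi^2|k|^2)^s$ and $(4\pi^2|k|^2)^s$ pointwise in $k$. For such integer frequencies we have $|k|^2 \geq 1$, which is the only arithmetic fact that will be needed.

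First I would write
\begin{equation*}
\|f\|_{H^s}^2 = \sum_{\substack{k\in\mathbb{Z}^3 \\ k\neq 0}} (1+4\pi^2|k|^2)^s |\hat{f}(k)|^2,
\end{equation*}
using $\hat{f}(0)=0$. The lower bound $\|f\|_{\dot{H}^s} \leq \|f\|_{H^s}$ is then immediate from the termwise inequality $(4\pi^2|k|^2)^s \leq (1+4\pi^2|k|^2)^s$.

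For the upper bound, I would show that for every $k \in \mathbb{Z}^3$ with $k \neq 0$,
\begin{equation*}
1 + 4\pi^2|k|^2 \leq \left(1+\tfrac{1}{4\pi^2}\right) 4\pi^2|k|^2,
\end{equation*}
which after expanding the right-hand side reduces to the inequality $1 \leq |k|^2$; this holds since $k$ is a nonzero integer vector. Raising to the $s$-th power (allowed because both sides are positive and $s \geq 0$), multiplying by $|\hat{f}(k)|^2$, and summing over $k \neq 0$ gives
\begin{equation*}
\|f\|_{H^s}^2 \leq \left(1+\tfrac{1}{4\pi^2}\right)^s \|f\|_{\dot{H}^s}^2,
\end{equation*}
and taking square roots yields the claim.

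There is no real obstacle here; the only small point to be careful about is handling $s=0$ (where both norms coincide with $\|f\|_{L^2}$ and the inequalities are trivial) and noting that raising a termwise inequality to a nonnegative power $s$ preserves direction.
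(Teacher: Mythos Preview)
Your proof is correct and follows essentially the same approach as the paper: both arguments reduce to the pointwise comparison of the weights $(4\pi^2|k|^2)^s$ and $(1+4\pi^2|k|^2)^s$ over nonzero integer frequencies, using only the fact that $|k|^2 \geq 1$ for $k \in \mathbb{Z}^3 \setminus \{0\}$. The paper phrases the key inequality as a bound on the ratio $\frac{(1+4\pi^2|k|^2)^s}{(4\pi^2|k|^2)^s}$, while you expand and rearrange, but this is purely cosmetic.
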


    \begin{proof}
    Observe that for all $k\in\mathbb{Z}^3, k\neq 0$,
    we have $|k|\geq 1$, and therefore
    \begin{equation}
    1\leq 
    \frac{\left(1+4\pi^2|k|^2\right)^s}
    {\left(4\pi^2 |k|^2\right)^s}
    \leq
    \left(1+\frac{1}{4\pi^2}\right)^s.
    \end{equation}
    Therefore, we may conclude that
    \begin{equation}
    \sum_{\substack{k\in\mathbb{Z}^3 \\ k\neq 0}}
    (4\pi^2|k|^2)^s |\hat{f}(k)|^2
    \leq 
    \sum_{\substack{k\in\mathbb{Z}^3 \\ k\neq 0}}
    (1+4\pi^2|k|^2)^s |\hat{f}(k)|^2
    \leq 
    \left(1+\frac{1}{4\pi^2}\right)^s
    \sum_{\substack{k\in\mathbb{Z}^3 \\ k\neq 0}}
    (4\pi^2|k|^2)^s |\hat{f}(k)|^2,
    \end{equation}
    and this completes the proof.
    \end{proof}

    We will define the fractional Laplacian in terms of its Fourier symbol by
    \begin{equation}
        \mathcal{F}\left((-\Delta)^s f\right)(k)
        =\left(4\pi^2 |k|^2\right)^s \hat{f}(k).
    \end{equation}
    Note that this implies that 
    \begin{equation}
        (-\Delta)^s f(x)
        =
        \sum_{k\in\mathbb{Z}^3}
        \left(4\pi^2 |k|^2\right)^s \hat{f}(k)
        e^{2\pi i k\cdot x}.
    \end{equation}
    Also note that this is only well defined for $s<0$ when $\hat{f}(0)=0$.

\begin{proposition} \label{HomogenousSobolevProp}
    For all $0\leq s'< s$, we have the continuous embedding $\dot{H}^s \hookrightarrow \dot{H}^{s'}$,
    and in particular for all $f\in \dot{H}^s$,
    \begin{equation}
    \|f\|_{\dot{H}^{s'}}\leq \frac{1}{(2\pi)^{s-s'}}
    \|f\|_{\dot{H}^s}.
    \end{equation}
\end{proposition}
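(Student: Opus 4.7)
The proposition is a direct consequence of the fact that the sum defining the $\dot{H}^s$ norm ranges over $k \in \mathbb{Z}^3 \setminus \{0\}$, where $|k| \geq 1$. My plan is to compare the two Fourier multipliers pointwise on this range and then sum.

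Concretely, I would write
\[
(4\pi^2 |k|^2)^{s'} \;=\; (4\pi^2 |k|^2)^{s} \cdot (4\pi^2|k|^2)^{s'-s},
\]
and observe that since $s'-s<0$ and $4\pi^2|k|^2 \geq 4\pi^2$ for every $k \in \mathbb{Z}^3 \setminus\{0\}$, the second factor is bounded above by $(4\pi^2)^{s'-s} = (2\pi)^{-2(s-s')}$. This gives the pointwise (in $k$) estimate
\[
(4\pi^2|k|^2)^{s'} |\hat f(k)|^2 \;\leq\; \frac{1}{(2\pi)^{2(s-s')}}(4\pi^2|k|^2)^{s}|\hat f(k)|^2.
\]

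Summing over $k \in \mathbb{Z}^3 \setminus\{0\}$, using that $\hat f(0)=0$ by the mean-free condition built into $\dot H^s$ and $\dot H^{s'}$, and taking square roots yields
\[
\|f\|_{\dot H^{s'}} \leq \frac{1}{(2\pi)^{s-s'}} \|f\|_{\dot H^s},
\]
which is the claimed embedding bound. There is no real obstacle here; the only subtlety is that the estimate fails on the whole of $\mathbb{R}^3$-Fourier side and relies crucially on the lower bound $|k|\geq 1$ provided by the integer lattice together with the mean-free condition, so I would make sure to point out explicitly where each of these two ingredients is used.
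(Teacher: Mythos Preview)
Your proof is correct and essentially identical to the paper's own argument: both exploit that $|k|\geq 1$ for $k\in\mathbb{Z}^3\setminus\{0\}$ (together with the mean-free condition $\hat f(0)=0$) to bound the ratio of Fourier multipliers pointwise, then sum. The only cosmetic difference is that the paper pulls out the factor $(4\pi^2)^{s'}$ and bounds $(|k|^2)^{s'}\leq (|k|^2)^{s}$, whereas you keep $(4\pi^2|k|^2)$ together; these are the same computation.
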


\begin{proof}
    It suffices to prove the inequality, and then the embedding must be continuous by definition.
    Using the fact that $\hat{f}(0)=0$, we can see that $|k|^2\geq 1$, for all
    $k\in\supp\left(\hat{f}\right)$, and therefore
    \begin{align}
    \|f\|_{\dot{H}^{s'}}^2
    &=
    (4\pi^2)^{s'}
    \sum_{\substack{k\in\mathbb{Z}^3 \\ k\neq 0}}
    (|k|^2)^{s'}|\hat{f}(k)|^2 \\
    &\leq 
    (4\pi^2)^{s'}
   \sum_{\substack{k\in\mathbb{Z}^3 \\ k\neq 0}}
    (|k|^2)^{s}|\hat{f}(k)|^2 \\
    &=
    (4\pi^2)^{s'-s} \|f\|_{\dot{H}^s}^2.
    \end{align}
    This completes the proof.
\end{proof}

Throughout we will use the shorthand
\begin{equation}
C_T H^s_x=
C\left([0,T];
H^s\left(\mathbb{T}^3\right)\right),
\end{equation}
and likewise
\begin{equation}
C^1_T H^s_x=
C^1\left([0,T];
H^s\left(\mathbb{T}^3\right)\right),
\end{equation}
where there spaces have the ordinary norms
\begin{equation}
    \|u\|_{C_T H^s_x}
    =
    \max_{0\leq t\leq T}\|u(\cdot,t)\|_{H^s},
\end{equation}
and
\begin{equation}
    \|u\|_{C^1_T H^s_x}
    =
    \max\left(\|u\|_{C_T H^s_x},
    \|\partial_t u\|_{C_T H^s_x}\right).
\end{equation}
We will use the shorthand $C_T \dot{H}^s_x$ and $C^1_T \dot{H}^s_x$ analogously for the mean-free Sobolev space and the corresponding homogeneous norm.

We will define the space $C^k\left(\mathbb{T}^3\right)$ by
\begin{equation}
C^k\left(\mathbb{T}^3\right)
=\left\{f\in C\left(\mathbb{T}^3\right):
D^j f\in C\left(\mathbb{T}^3\right), 
\text{for all } 0\leq j\leq k
\right\},
\end{equation}
where the $C^k$ norm is given by 
\begin{equation}
    \|f\|_{C^k}=\max_{0\leq j\leq k}
    \max_{x\in\mathbb{T}^3}
    |D^j f(x)|,
\end{equation}
and likewise for $C^k\left([0,T]\right)$ and
$C^k\left([0,T]\times \mathbb{T}^3\right)$.
We will also define spaces of smooth functions by 
\begin{align}
    C^\infty &=\bigcap_{k\in \mathbb{N}} C^k \\
    H^\infty &= \bigcap_{s\geq 0} H^s. 
\end{align}

\begin{remark}
    For all $k\in \mathbb{N}$ we have the continuous embedding $C^k\left(\mathbb{T}^3\right) \hookrightarrow H^k\left(\mathbb{T}^3\right)$, where this embedding follows immediately from the fact that
    \begin{equation}
    \|D^j f\|_{L^2\left(\mathbb{T}^3\right)}
    \leq 
    \|D^j f\|_{L^\infty\left(\mathbb{T}^3\right)}.
    \end{equation}
    We also have the continuous embedding 
    $H^s\left(\mathbb{T}^3\right)
    \hookrightarrow C^k\left(\mathbb{T}^3\right)$, 
    for all $s>k+\frac{3}{2}$, by Sobolev embedding.
    As a result of these two embeddings, $C^\infty\left(\mathbb{T}^3\right)=
    H^\infty\left(\mathbb{T}^3\right)$, and furthermore these spaces have the same topology, with the equivalent convergence conditions: i) $f_n \to f$ as $n\to +\infty$ if, for all $k\in \mathbb{N}$, $f_n \to f$ in $C^k$; ii) $f_n \to f$ as $n\to +\infty$, if, for all $s \geq 0$, $f_n \to f$ in $H^s$. 
    \end{remark}

We define the space $\mathcal{H}^s$, a dyadic analogue of the standard Hilbert space, following the conventions of 
\cites{FriedlanderPavlovic,KatzPavlovic,Cheskidov}.

\begin{definition} \label{DyadicHilbertIntro}
    For all $s\in\mathbb{R}$, the $\mathcal{H}^s$ norm will be given by
    \begin{equation}
    \|\psi\|_{\mathcal{H}^s}^2
    =\sum_{n=0}^{+\infty}
    \left(\sqrt{3}\right)^{2sn}\psi_n^2.
    \end{equation}
\end{definition}

\begin{remark}
    Note that this definition implies that
    $\psi\in\mathcal{H}^s
    \left(\mathbb{Z}^+\right)$
    if and only if
    $\left(\left(\sqrt{3}\right)^{sn}\psi_n
    \right)_{n\in\mathbb{Z}^+}
    \in L^2\left(\mathbb{Z}^+\right)$.
    We will see that for odd, permutation symmetric, $\sigma$-mirror symmetric vector fields, $u\in \dot{H}^s_\mathcal{M}$,
    the $\dot{H}^s$ norms of $u$ and the 
    $\mathcal{H}^{\Tilde{s}}$ norms of the Fourier coefficients $\psi$ are equivalent, where
    \begin{equation}
    \Tilde{s}=\frac{2\log(2)}{\log(3)}s.
    \end{equation}
\end{remark}

We will denote projections onto Hilbert spaces by $\mathbb{P}$, where $\mathbb{P}_\mathcal{M}$ is the projection onto $\dot{H}^s_\mathcal{M}$ and $\mathbb{P}_{df}$ is the projection onto $\dot{H}^s_{df}$, the space of divergence free vector fields. We have already defined $\dot{H}^s_\mathcal{M}$, but now we will define the space $\dot{H}^s_{df}$.

\begin{definition}
    For all $s\geq 0$ and for all
    $u\in \dot{H}^s\left(\mathbb{T}^3;\mathbb{R}^3\right)$,
    we will say that 
    $u\in \dot{H}^s_{df}
    \left(\mathbb{T}^3\right)$ if,
    for all $k\in \mathbb{Z}^3$,
    \begin{equation}
        k\cdot \hat{u}(k)=0.
    \end{equation}
\end{definition}

\begin{definition}
    We will say that a vector field $u\in H^s\left(\mathbb{T}^3;\mathbb{R}^3\right)$
    is odd if for all $x\in\mathbb{T}^3$
    \begin{equation}
    u(-x)=-u(x),
    \end{equation}
    and we will say that a vector field $u\in H^s\left(\mathbb{T}^3;\mathbb{R}^3\right)$
    is component-wise odd if each component $u_i$ is odd in $x_i$ and even in $x_j$ where $i\neq j$.
    Note that every component-wise odd vector field is odd, but the converse is not true.
\end{definition}

\begin{remark}
    The blowup results considered in this paper involve vector fields that are odd, but not component-wise odd. We have introduced the term component-wise odd for clarity, because this condition is sometimes used as a definition of oddness in the context of incompressible fluids. For example, the geometric setting of component-wise odd vector fields is used in Elgindi's proof of finite-time blowup for $C^{1,\alpha}$ solutions of the three dimensional Euler equation \cite{Elgindi} and by Iftime, Sideris, and Gamblin in their study of the growth of the support of vorticity for solutions of the two dimensional Euler equation \cite{IftimieSiderisGamblin}.
\end{remark}

Finally, we will go over some miscellaneous notation.
We will define the positive part of a function by
\begin{equation}
    f^+(x)=\max(f(x),0).
\end{equation}
For a divergence free vector field $u\in H^1_{df}$, we will take the strain matrix $S$ to be the symmetric gradient
\begin{equation}
    S_{ij}=\frac{1}{2}\left(
    \partial_i u_j+\partial_j u_i\right).
\end{equation}
We will refer denote the permutation that swaps the $ij$ entries of a vector and leaves the other entries alone by $P_{ij}$.
We will denote the projection onto the span of a vector $k\in \mathbb{R}^3, k\neq 0$, by
\begin{equation}
    P_{k}(v)=\frac{k\cdot v}{|k|^2}k,
\end{equation}
and the projection onto the orthogonal compliment by
\begin{equation}
    P_{k}^\perp(v)=v-\frac{k\cdot v}{|k|^2}k.
\end{equation}
In some very long calculations we will use $*$ to indicate multiplication, because the computations would be unreadable otherwise. We also use $*$ to denote convolution in other places, as is standard, but the difference will always be clear from context.

\section{Local and global wellposedness} \label{WellPosedSection}

In this section, we will prove local well posedness for the Fourier-restricted Euler and hypodissipative Navier--Stokes equations.
We will begin by defining a bilinear operator related to our nonlinearity and stating an important technical bound whose proof will be left to \Cref{AppendixNonlinearBound}.

\begin{lemma} \label{BilinearBoundLemma}
    Define the bilinear operator $B:
    \dot{H}^{\frac{s}{2}+\frac{\log(3)}{4\log(2)}}_\mathcal{M}
    \times
    \dot{H}^{\frac{s}{2}+\frac{\log(3)}{4\log(2)}}_\mathcal{M}
    \to 
    \dot{H}^s_\mathcal{M}$, for any $s\geq 0$ by
    \begin{equation}
    B(u,w)=-\frac{1}{2}\mathbb{P}_\mathcal{M}
    ((u\cdot\nabla)w+(w\cdot\nabla)u).
    \end{equation}
    This bilinear operator satisfies the bound
    \begin{equation}
    \|B(u,w)\|_{\dot{H}^s}
    \leq \mathcal{C}_s
    \|u\|_{\dot{H}^{\frac{s}{2}
    +\frac{\log(3)}{4\log(2)}}}
    \|w\|_{\dot{H}^{\frac{s}{2}
    +\frac{\log(3)}{4\log(2)}}}
    \end{equation}
    where $\mathcal{C}_s>0$ is a constant depending only on $s$.
\end{lemma}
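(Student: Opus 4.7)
The plan is to reduce the bound to Fourier space on the dyadic lattice $\mathcal{M}$ and then exploit the anisotropic depletion of the nonlinearity sketched in the introduction. Expanding $\hat{u}(k) = c_k v^k$ and $\hat{w}(k) = d_k v^k$ for $k \in \mathcal{M}$ (which is permitted because $\mathbb{P}_\mathcal{M}$ projects each mode onto $\spn\{v^k\}$), the bilinearity of $B$ reduces the problem to controlling
\begin{equation}
    \widehat{B(u,w)}(q) = \sum_{\substack{k+k' = q \\ k,k' \in \mathcal{M}}} I(k, k', q)\, c_k d_{k'}, \qquad q \in \mathcal{M},
\end{equation}
where $I(k, k', q)$ is a geometric coefficient built from $v^k, v^{k'}, k, k'$ and projected onto $\spn\{v^q\}$. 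The dyadic identities $h^m = k^m + P_{12}(k^m)$, $j^m = k^m + P_{23}(k^m)$, $k^{m+1} = h^m + j^m$, combined with the permutation and reflection symmetries built into $\mathcal{M}$, force admissible triads to respect the shell structure: the inputs must lie in $\mathcal{M}_n$ and $\mathcal{M}_n$, or in $\mathcal{M}_n$ and $\mathcal{M}_{n+1}$, up to signs, and for each output $q$ the number of admissible pairs is bounded independently of the shell of $q$. Carrying out this enumeration is the content of \Cref{AppendixModeInteraction}.

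The quantitative heart of the argument is the anisotropic depletion sketched in the introduction. The naive bound for $I$ produces a factor $|k| \sim 2^{n(q)}$, but the divergence-free identity $k' \cdot v^{k'} = 0$ together with $P_{ij}(k) \cdot v^k = k \cdot P_{ij}(v^k)$ converts this into a factor $|k - P_{ij}(k)|$, which by construction of $\mathcal{M}$ equals $\sqrt{2}\, 3^m \sim (\sqrt{3})^{n(q)}$. After verifying this cancellation on each archetype of admissible triad produced by the previous step (broadly: $(k^m, P_{ij}(k^m))$-type, $(h^m, j^m)$-type, and their mixed-shell variants), I obtain the pointwise bound $|I(k, k', q)| \leq C (\sqrt{3})^{n(q)}$.

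To assemble, by Plancherel and the bounded shell multiplicity,
\begin{equation}
    \|B(u,w)\|_{\dot{H}^s}^2 \leq C \sum_{n=0}^\infty 4^{sn} \cdot 3^n \Bigl(\sum_{k \in \mathcal{M}_n} |c_k|^2\Bigr) \Bigl(\sum_{k' \in \mathcal{M}_n \cup \mathcal{M}_{n \pm 1}} |d_{k'}|^2\Bigr),
\end{equation}
using $|q|^{2s} \sim 4^{sn}$ on $\mathcal{M}_n$. Setting $\alpha = \tfrac{s}{2} + \tfrac{\log 3}{4\log 2}$, the algebraic identity $4^{sn} \cdot 3^n = 4^{2\alpha n}$ shows that this prefactor is exactly $|k|^{2\alpha}|k'|^{2\alpha}$ up to constants, so a Cauchy-Schwarz in the shell index closes the bound with $\|u\|_{\dot{H}^\alpha}\|w\|_{\dot{H}^\alpha}$. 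The main obstacle is the case-by-case verification of the $(\sqrt{3})^{n(q)}$ depletion across all admissible triads; that tedious vector-calculus bookkeeping is exactly why the proof is deferred to \Cref{AppendixModeInteraction} and \Cref{AppendixNonlinearBound}.
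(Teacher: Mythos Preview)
Your proposal is correct and matches the paper's approach in \Cref{AppendixNonlinearBound}: reduce to Fourier modes, use the shell structure from \Cref{DyadicPowers} to get bounded triad multiplicity per output shell, extract the $(\sqrt{3})^{n}$ depletion from the mode computations in \Cref{AppendixModeInteraction}, and close via Cauchy--Schwarz over shells. The paper's only organizational difference is an explicit splitting $B = B^{+,+} + B^{+,-} + B^{-,+} + B^{-,-}$ and packaging the scaling into a per-mode lemma $\|B(u^j,w^k)\|_{\dot{H}^s} \leq C\|u^j\|_{\dot{H}^{\alpha}}\|w^k\|_{\dot{H}^{\alpha}}$ (\Cref{LemmaCascadeUpBound,LemmaCascadeOutBound}), but the content of that lemma is exactly your algebraic identity $4^{sn}\cdot 3^{n} = 4^{2\alpha n}$ with $\alpha = \tfrac{s}{2}+\tfrac{\log 3}{4\log 2}$.
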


\begin{remark}
    Throughout this section $\mathcal{C}_s$ will refer to the constant in \Cref{BilinearBoundLemma}, and we will take 
    \begin{equation}
    \mathcal{C}_*=\mathcal{C}_\frac{\log(3)}{2\log(2)}.
    \end{equation}
    Note that the Fourier-restricted Euler equation can be expressed in terms of $B$ by
    \begin{equation} \label{OdeB}
    \partial_t u= B(u,u).
    \end{equation}
\end{remark}

\begin{theorem} \label{RestrictedEulerExistenceThm}
    For all $u^0\in \dot{H}^\frac{\log(3)}
    {2\log(2)}_{\mathcal{M}}$, there exists a unique solution of the Fourier-restricted Euler equation $u\in C^1\left([0,T_{max});
    \dot{H}^\frac{\log(3)}
    {2\log(2)}_{\mathcal{M}}\right)$,
    where
    \begin{equation}
    T_{max}\geq \frac{1}{\mathcal{C}_*
    \left\|u^0\right\|_{\dot{H}
    ^\frac{\log(3)}{2\log(2)}}},
    \end{equation}
    and if $T_{max}<+\infty$, 
    then for all $0\leq t<T_{max}$,
    \begin{equation}
    \|u(\cdot,t)\|_{\dot{H}^\frac{\log(3)}{2\log(2)}}
    \geq 
    \frac{1}{\mathcal{C}_*(T_{max}-t)}.
    \end{equation}
\end{theorem}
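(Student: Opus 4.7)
The plan is to treat the Fourier-restricted Euler equation as a quadratic ODE on the Banach space $\dot{H}^s_{\mathcal{M}}$ with $s=\frac{\log(3)}{2\log(2)}$. At this regularity one has $\frac{s}{2}+\frac{\log(3)}{4\log(2)}=s$, so \Cref{BilinearBoundLemma} specializes to the critical bound
\begin{equation*}
\|B(u,w)\|_{\dot H^s}\le \mathcal{C}_*\,\|u\|_{\dot H^s}\|w\|_{\dot H^s},
\end{equation*}
making $B$ a bounded bilinear, hence locally Lipschitz, map $\dot H^s_{\mathcal{M}}\times\dot H^s_{\mathcal{M}}\to \dot H^s_{\mathcal{M}}$. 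I would apply Banach's fixed-point theorem to the Picard operator $\Phi(u)(t)=u^0+\int_0^t B(u(\tau),u(\tau))\,\diff\tau$ on the closed ball of radius $2\|u^0\|_{\dot H^s}$ in $C\bigl([0,T];\dot H^s_{\mathcal{M}}\bigr)$. The bilinear bound gives self-mapping and a Lipschitz constant of $4\mathcal{C}_* T\|u^0\|_{\dot H^s}$, so choosing $T$ a small multiple of $1/\bigl(\mathcal{C}_*\|u^0\|_{\dot H^s}\bigr)$ yields a unique local mild solution. The equation $\partial_t u=B(u,u)\in C_t\dot H^s_{\mathcal{M}}$ then upgrades regularity in time to $C^1$.

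To promote this short-time existence to the stated lower bound $T_{\max}\ge 1/\bigl(\mathcal{C}_*\|u^0\|_{\dot H^s}\bigr)$, I would pair the equation with $u$ in the $\dot H^s$ inner product, using Cauchy--Schwarz and the critical bilinear bound:
\begin{equation*}
\tfrac{1}{2}\tfrac{\diff}{\diff t}\|u(\cdot,t)\|_{\dot H^s}^2=\langle u,B(u,u)\rangle_{\dot H^s}\le \mathcal{C}_*\|u(\cdot,t)\|_{\dot H^s}^3.
\end{equation*}
This Riccati inequality integrates to $\|u(\cdot,t)\|_{\dot H^s}\le\|u^0\|_{\dot H^s}/\bigl(1-\mathcal{C}_* t\|u^0\|_{\dot H^s}\bigr)$, so the $\dot H^s$ norm stays finite on $[0,1/(\mathcal{C}_*\|u^0\|_{\dot H^s}))$. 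Combined with the standard continuation criterion (the Picard argument applied at any later initial time extends the solution as long as the $\dot H^s$ norm is bounded), this is precisely the desired lower bound on $T_{\max}$. The blowup-rate inequality then follows by contradiction: if $\|u(\cdot,t)\|_{\dot H^s}<1/\bigl(\mathcal{C}_*(T_{\max}-t)\bigr)$ at some $t<T_{\max}$, the same construction restarted at $t$ produces a solution on $\bigl[t,\,t+1/(\mathcal{C}_*\|u(\cdot,t)\|_{\dot H^s})\bigr)$ that strictly exceeds $T_{\max}$, contradicting maximality by uniqueness.

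For uniqueness I would exploit the identity $B(u,u)-B(\tilde u,\tilde u)=B(u-\tilde u,u+\tilde u)$, which follows from the symmetry of $B$ in its arguments. Setting $w=u-\tilde u$ for two solutions with identical initial data gives $\tfrac{\diff}{\diff t}\|w\|_{\dot H^s}\le \mathcal{C}_*\|w\|_{\dot H^s}\bigl(\|u\|_{\dot H^s}+\|\tilde u\|_{\dot H^s}\bigr)$, and Gronwall forces $w\equiv 0$. The only real subtlety is that the entire argument is carried out at the critical regularity $s=\frac{\log(3)}{2\log(2)}$, where no derivative slack is available: the fact that $B$ closes at this exponent is exactly the content of \Cref{BilinearBoundLemma}, which in turn rests on the anisotropic, sub-scaling depletion of the $(u\cdot\nabla)u$ nonlinearity described in the introduction. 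Once that estimate is granted, the present theorem reduces to standard Banach-space ODE theory together with a Riccati comparison, so the bilinear bound itself—deferred to \Cref{AppendixNonlinearBound}—is the only substantive analytic ingredient.
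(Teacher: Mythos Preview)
Your proposal is correct and follows essentially the same approach as the paper: Picard iteration for $\Phi(u)(t)=u^0+\int_0^t B(u,u)\,\diff\tau$ on the ball of radius $2\|u^0\|_{\dot H^s}$ in $C_T\dot H^s_x$, with contraction constant $4\mathcal{C}_*T\|u^0\|_{\dot H^s}$, followed by the Riccati inequality $\tfrac{\diff}{\diff t}\|u\|_{\dot H^s}\le \mathcal{C}_*\|u\|_{\dot H^s}^2$ to sharpen the lower bound on $T_{\max}$ and deduce the blowup rate. The only minor difference is presentational---the paper obtains uniqueness directly from the contraction argument rather than via a separate Gronwall estimate on the difference (that stability estimate appears later as \Cref{StabilityRestrictedEuler}).
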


\begin{proof}
    The proof will follow the classic Picard iteration method.
    We begin by defining the map
    $Q: C\left([0,T];\dot{H}^\frac{\log(3)}
    {2\log(2)}_\mathcal{M}\right)
    \to C\left([0,T];\dot{H}^\frac{\log(3)}
    {2\log(2)}_\mathcal{M}\right)$ 
    by
    \begin{equation}
    Q[u](\cdot,t)=
    u^0
    +\int_0^t B[u,u](\cdot,\tau)
    \diff\tau,
    \end{equation}
    where we fix
    \begin{equation}
    T<\frac{1}{4 C_* \left\|u^0
    \right\|_{\dot{H}^\frac{\log(3)}{2\log(2)}}}.
    \end{equation}
    This is map is useful, because
    \begin{equation}
    u=Q[u]
    \end{equation}
    expresses the Fourier-restricted Euler equation as a integral equation, and so existence reduces to finding a fixed point of the map $Q$. Applying \Cref{BilinearBoundLemma}, we find that
    \begin{equation}
    \left\|Q[u]\right\|_{C_T\dot{H}^\frac{\log(3)}
    {2\log(2)}_x}
    \leq 
    \left\|u^0\right\|_{
    \dot{H}^\frac{\log(3)}{2\log(2)}}
    +C_* T \left\|u\right\|_{C_T
    \dot{H}^\frac{\log(3)}{2\log(2)}_x}^2.
    \end{equation}
    Now let
    \begin{equation}
    X=\left\{u\in C\left([0,T];
    \dot{H}^\frac{\log(3)}
    {2\log(2)}_\mathcal{M}\right):
    \|u\|_{C_T\dot{H}^\frac{\log(3)}
    {2\log(2)}_x}\leq
    2\left\|u^0\right\|_{\dot{H}^
    \frac{\log(3)}{2\log(2)}}\right\}.
    \end{equation}
    Observe that for all $u\in X$,
    \begin{align}
    \left\|Q[u]\right\|_{C_T\dot{H}^\frac{\log(3)}
    {2\log(2)}_x}
    &\leq 
    \left\|u^0\right\|_{
    \dot{H}^\frac{\log(3)}{2\log(2)}}
    +4C_* T \left\|u^0\right\|_{
    \dot{H}^\frac{\log(3)}{2\log(2)}}^2 \\
    &\leq 
    2\left\|u^0\right\|_{
    \dot{H}^\frac{\log(3)}{2\log(2)}},
    \end{align}
    and so $u\in X$ implies $Q[u]\in X$, and 
    $Q:X\to X$.

    Likewise, observe that for all $u,w\in X$,
    we have
    \begin{align}
    Q[u](\cdot,t)-Q[w](\cdot,t)
    &=
    \int_0^t B[u,u](\cdot,\tau)
    -B[w,w](\cdot,\tau) \diff\tau \\
    &=
    \int_0^t B[u+w,u-w](\cdot,\tau)
    \diff\tau.
    \end{align}
    Again applying \Cref{BilinearBoundLemma},
    and letting $r=4\mathcal{C}_* T 
    \left\|u^0\right\|_{
    \dot{H}^\frac{\log(3)}{2\log(2)}}<1$, we find that
    \begin{align}
    \left\|Q[u]\right\|_{
    C_T\dot{H}^\frac{\log(3)}{2\log(2)}_x}
    &\leq 
    \mathcal{C}_* T
    \left\|u+w\right\|_{
    C_T\dot{H}^\frac{\log(3)}{2\log(2)}_x}
    \left\|u-w\right\|_{
    C_T\dot{H}^\frac{\log(3)}{2\log(2)}_x} \\
    &\leq 
    4\mathcal{C}_* T 
    \left\|u^0\right\|_{
    \dot{H}^\frac{\log(3)}{2\log(2)}}
    \left\|u-w\right\|_{
    C_T\dot{H}^\frac{\log(3)}{2\log(2)}_x} \\
    &=
    r \left\|u-w\right\|_{
    C_T\dot{H}^\frac{\log(3)}{2\log(2)}_x}.
    \end{align}
    Therefore, $Q:X\to X$ is a contraction mapping, and so by the Banach fixed point theorem, there exists a unique fixed point $u\in X$ such that
    \begin{equation}
        Q[u]=u.
    \end{equation}

    By the fundamental theorem of calculus, we can see that $\partial_t u=B(u,u)$, and so $\partial_t u\in C\left([0,T];\dot{H}^\frac{\log(3)}
    {2\log(2)}_\mathcal{M}\right)$, and consequently
    $u\in C^1\left([0,T];\dot{H}^\frac{\log(3)}
    {2\log(2)}_\mathcal{M}\right)$.
    Note that uniqueness in $X$, combined with continuity-in-time guarantees that this solution is unique in $C^1\left([0,T];\dot{H}^\frac{\log(3)}
    {2\log(2)}_\mathcal{M}\right)$.
    Recalling that $T<\frac{1}{4 C_* \left\|u^0
    \right\|_{\dot{H}^\frac{\log(3)}{2\log(2)}}}$
    is arbitrary, we have now shown that there is a unique solution of the Fourier-restricted Euler equation $u\in C^1\left([0,T_{max});
    \dot{H}^\frac{\log(3)}
    {2\log(2)}_{\mathcal{M}}\right)$,
    where
    \begin{equation}
    T_{max}\geq \frac{1}{4\mathcal{C}_*
    \left\|u^0\right\|_{\dot{H}
    ^\frac{\log(3)}{2\log(2)}}}.
    \end{equation}

    Now we will improve the constant in this bound.
    Note that if $T_{max}<+\infty$,
    then 
    \begin{equation}
    \lim_{t\to T_{max}} 
    \|u(\cdot,t)\|_{\dot{H}^\frac
    {\log(3)}{2\log(2)}}
    =
    +\infty,
    \end{equation}
    because otherwise the solution could be extended beyond $T_{max}$. 
    Observe that for all $0\leq t<T_{max}$,
    \begin{align}
    \frac{\diff}{\diff t}
    \|u(\cdot,t)\|_{\dot{H}^\frac{
    \log(3)}{2\log(2)}}^2
    &=
    2\left<u,B(u,u)\right>_{\dot{H}^\frac
    {\log(3)}{2\log(2)}} \\
    &\leq 
    2\|u\|_{\dot{H}^\frac{
    \log(3)}{2\log(2)}}
    \|B(u,u)\|_{\dot{H}^\frac{
    \log(3)}{2\log(2)}} \\
    &\leq 
    2\mathcal{C}_*
    \|u\|_{\dot{H}^\frac{
    \log(3)}{2\log(2)}}^3,
    \end{align}
    and therefore
    \begin{equation}
    \frac{\diff}{\diff t}
    \|u(\cdot,t)\|_{\dot{H}^\frac{
    \log(3)}{2\log(2)}}
    \leq \mathcal{C}_*
    \|u(\cdot,t)\|_{\dot{H}^\frac{
    \log(3)}{2\log(2)}}^2.
    \end{equation}
    Integrating this differential inequality, we find that for all $0\leq t<T_{max}$
    \begin{equation}
    \|u(\cdot,t)\|_{\dot{H}^\frac{
    \log(3)}{2\log(2)}}
    \leq 
    \frac{\left\|u^0(\cdot,t)\right\|_{
    \dot{H}^\frac{\log(3)}{2\log(2)}}}
    {1-\mathcal{C}_*
    \left\|u^0(\cdot,t)\right\|_{
    \dot{H}^\frac{\log(3)}{2\log(2)}}t},
    \end{equation}
    and so clearly
    \begin{equation}
    T_{max}\geq \frac{1}{\mathcal{C}_*
    \left\|u^0\right\|_{\dot{H}
    ^\frac{\log(3)}{2\log(2)}}}.
    \end{equation}
    Treating $u(\cdot,t)$ as initial data, and applying this estimate, we find that for all $0\leq t<T_{max}$,
    \begin{equation}
    T_{max}-t
    \geq 
    \frac{1}{\mathcal{C}_*
    \|u(\cdot,t)\|_{\dot{H}^\frac{
    \log(3)}{2\log(2)}}},
    \end{equation}
    and this completes the proof.
\end{proof}

\begin{proposition} \label{RestrictedEulerEnergyProp}
    Suppose $u\in C^1\left([0,T_{max});
    \dot{H}^\frac{\log(3)}
    {2\log(2)}_{\mathcal{M}}\right)$
    is a solution of the Fourier-restricted Euler equation.
    Then for all $0\leq t<T_{max}$,
    \begin{equation}
    \|u(\cdot,t)\|_{L^2}^2
    =
    \left\|u^0\right\|_{L^2}^2.
    \end{equation}
\end{proposition}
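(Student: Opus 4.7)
The strategy is to reduce the desired energy equality, via the self-adjointness of $\mathbb{P}_\mathcal{M}$, to the classical divergence-free cancellation $\langle u,(u\cdot\nabla)u\rangle=0$, handling the low regularity by Galerkin approximation.

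First, since $u\in C^1\bigl([0,T_{max});\dot H^{\log(3)/(2\log(2))}_\mathcal{M}\bigr)$ and $\dot H^{s_0}\hookrightarrow L^2$ on mean-free functions (with $s_0:=\tfrac{\log(3)}{2\log(2)}>0$, by \Cref{HomogenousSobolevProp}), both $u$ and $\partial_t u=B(u,u)$ lie in $C_T L^2_x$, so $t\mapsto\|u(\cdot,t)\|_{L^2}^2$ is $C^1$ on $[0,T_{max})$ with
\[
\frac{\diff}{\diff t}\|u(\cdot,t)\|_{L^2}^2
=2\bigl\langle u(\cdot,t),\,B(u,u)(\cdot,t)\bigr\rangle.
\]
It thus suffices to show this pairing vanishes identically.

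Formally, the projection $\mathbb{P}_\mathcal{M}$ is the $L^2$-orthogonal projection onto $\dot H^0_\mathcal{M}$, hence self-adjoint on $L^2$; combined with $\mathbb{P}_\mathcal{M}u=u$, valid because $u\in\dot H^{s_0}_\mathcal{M}$, this gives
\[
\bigl\langle u,B(u,u)\bigr\rangle
=-\bigl\langle u,\mathbb{P}_\mathcal{M}((u\cdot\nabla)u)\bigr\rangle
=-\bigl\langle \mathbb{P}_\mathcal{M}u,(u\cdot\nabla)u\bigr\rangle
=-\bigl\langle u,(u\cdot\nabla)u\bigr\rangle,
\]
and the remaining integral vanishes for $\nabla\cdot u=0$ via the pointwise identity $u_iu_j\partial_j u_i=\tfrac12\partial_j(u_j|u|^2)-\tfrac12(\nabla\cdot u)|u|^2$ together with the divergence theorem on $\mathbb{T}^3$.

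The main obstacle is that at the available regularity $s_0<1$ the product $(u\cdot\nabla)u$ is not classically defined as an $L^1$ function, so the second display and the integration by parts above are not directly meaningful. I would get around this by Galerkin approximation: let $u^{0,N}$ be the truncation of $u^0$ to the shells $\mathcal{M}^+_0\cup\mathcal{M}^-_0\cup\dots\cup\mathcal{M}^+_N\cup\mathcal{M}^-_N$, so that $u^{0,N}\in H^\infty_\mathcal{M}$ and $u^{0,N}\to u^0$ in $\dot H^{s_0}_\mathcal{M}$. By \Cref{RestrictedEulerExistenceThm} the corresponding solutions $u_N$ exist on a common interval $[0,T]$ with $T<T_{max}(u^0)$ once $N$ is large, and for any $s\geq s_0$ the bilinear estimate of \Cref{BilinearBoundLemma} combined with interpolation $\|u_N\|_{\dot H^{(s+s_0)/2}}\leq\|u_N\|_{\dot H^{s_0}}^{1/2}\|u_N\|_{\dot H^s}^{1/2}$ yields
\[
\frac{\diff}{\diff t}\|u_N\|_{\dot H^s}^2
\leq 2\mathcal{C}_s\|u_N\|_{\dot H^{s_0}}\|u_N\|_{\dot H^s}^2,
\]
so that Gronwall propagates arbitrary higher Sobolev regularity throughout $[0,T]$, uniformly in $N$. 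Thus each $u_N$ is smooth in $x$, the boxed computation above applies rigorously to $u_N$, and $\|u_N(\cdot,t)\|_{L^2}=\|u^{0,N}\|_{L^2}$ on $[0,T]$. The continuous dependence on initial data already established in the contraction-mapping proof of \Cref{RestrictedEulerExistenceThm} gives $u_N\to u$ in $C_T\dot H^{s_0}_x\hookrightarrow C_T L^2_x$, so the equality $\|u_N(\cdot,t)\|_{L^2}=\|u^{0,N}\|_{L^2}$ passes to the limit to yield $\|u(\cdot,t)\|_{L^2}=\|u^0\|_{L^2}$.
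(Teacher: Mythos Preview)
Your argument is correct, but it takes a heavier route than the paper's. The paper also reduces to the cancellation $\langle u,(u\cdot\nabla)u\rangle=0$ via self-adjointness of $\mathbb{P}_\mathcal{M}$, and also handles the low regularity by approximation, but it truncates $u(\cdot,t)$ itself at each fixed time rather than truncating the initial data and running separate evolutions. Concretely, with $J_N$ the sharp Fourier cutoff to $|k|\leq N$, the paper observes that $J_N u\in\dot H^\infty_\mathcal{M}$ is smooth and divergence free, so
\[
-\bigl\langle \mathbb{P}_\mathcal{M}((J_N u\cdot\nabla)J_N u),\,J_N u\bigr\rangle
=-\bigl\langle (J_N u\cdot\nabla)J_N u,\,J_N u\bigr\rangle=0
\]
by integration by parts, and then passes to the limit $N\to\infty$ using only the continuity of the bilinear map from \Cref{BilinearBoundLemma}. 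This avoids the extra ingredients your approach invokes: propagation of higher regularity for the approximate solutions, a uniform-in-$N$ existence time, and continuous dependence on the data. Those ingredients are all available (and indeed appear elsewhere in the paper), so your Galerkin argument is sound, but the paper's fixed-time truncation is both shorter and logically self-contained at this point in the development.
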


\begin{proof}
First we compute that 
\begin{equation}
\frac{\diff}{\diff t}
    \frac{1}{2}\|u\|_{L^2}^2
    =
    -\left<\mathbb{P}_\mathcal{M}((u\cdot\nabla)u),u\right>
\end{equation}
Observe that by \Cref{BilinearBoundLemma} we have $\mathbb{P}_\mathcal{M}((u\cdot\nabla)u)\in \dot{H}^\frac{\log(3)}{2\log(2)}_\mathcal{M}$, and so the inner product is well defined. The result follows from the divergence free constraint, but because we don't have sufficient regularity to integrate by parts, we will prove the bound by Fourier truncation.
Define the Fourier truncation operator
\begin{equation}
J_N(u)=\sum_{|k|\leq N}
\hat{u}(k) e^{2\pi ik\cdot x}.
\end{equation}
Observe that $J_N(u) \to u$ and $P_{\mathcal{M}}\left(J_N(u)\cdot\nabla J_N(u)\right) \to 
P_{\mathcal{M}}(u\cdot\nabla u)$
in $\dot{H}^\frac{\log(3)}{2\log(2)}$,
as $N\to \infty$,
and so
\begin{equation}
\lim_{N\to +\infty}
-\left<\mathbb{P}_\mathcal{M}((J_N u\cdot\nabla)J_N u),J_N u\right>
=
-\left<\mathbb{P}_\mathcal{M}((u\cdot\nabla)u),u\right>.
\end{equation}
Therefore, it suffices to prove that for all $N\in\mathbb{N}$,
\begin{equation}
    -\left<\mathbb{P}_\mathcal{M}((J_N u\cdot\nabla)J_N u),J_N u\right>
    =0.
\end{equation}
Observe that $J_N(u)\in \dot{H}^\infty_\mathcal{M}$ is smooth and divergence free, so dropping the projection and integrating by parts, we find that
\begin{align}
-\left<\mathbb{P}_\mathcal{M}
((J_N u\cdot\nabla)J_N u),J_N u\right>
&=
-\left<(J_N u\cdot\nabla)J_N u,J_N u\right> \\
&=
\left<J_N u,(J_N u\cdot\nabla)J_N u\right> \\
&=
0,
\end{align}
and this completes the proof.
\end{proof}

\begin{theorem}
    For all $u^0\in \dot{H}^s_{\mathcal{M}}, 
    s>\frac{\log(3)}{2\log(2)}$, there exists a unique solution of the Fourier-restricted Euler equation $u\in C^1\left([0,T_{max});
    \dot{H}^s_{\mathcal{M}}\right)$,
    where
    \begin{equation}
    T_{max}\geq \frac{1}{\mathcal{C}_*
    \left\|u^0\right\|_{\dot{H}
    ^\frac{\log(3)}{2\log(2)}}},
    \end{equation}
    and if $T_{max}<+\infty$, 
    then for all $0\leq t<T_{max}$,
    \begin{equation}
    \|u(\cdot,t)\|_{\dot{H}^\frac{\log(3)}{2\log(2)}}
    \geq 
    \frac{1}{\mathcal{C}_*(T_{max}-t)}.
    \end{equation}
\end{theorem}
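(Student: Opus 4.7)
The plan is to combine the critical local wellposedness theorem, just established at $s_* := \frac{\log(3)}{2\log(2)}$, with a propagation-of-regularity argument. By the homogeneous Sobolev embedding $\dot{H}^s_\mathcal{M} \hookrightarrow \dot{H}^{s_*}_\mathcal{M}$, the initial data $u^0 \in \dot{H}^s_\mathcal{M}$ already generates a unique critical solution on $[0, T_{max}^*)$ with $T_{max}^* \geq 1/(\mathcal{C}_* \|u^0\|_{\dot{H}^{s_*}})$. First, I would repeat the Picard iteration argument from the previous proof verbatim at regularity $s$: since $s > s_*$ implies $\tfrac{s+s_*}{2} < s$, \Cref{BilinearBoundLemma} combined with the embedding gives $\|B(u,u)\|_{\dot{H}^s} \leq C \|u\|_{\dot{H}^s}^2$, so the contraction mapping scheme produces a unique $\dot{H}^s_\mathcal{M}$-valued solution on some (a priori shorter) interval $[0,T_s]$. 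Uniqueness at critical regularity identifies this with the critical solution on their common interval of existence.

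The main step is to show that this $\dot{H}^s$ solution persists up to the critical lifetime. Differentiating $\|u(\cdot,t)\|_{\dot{H}^s}^2$, applying Cauchy--Schwarz together with \Cref{BilinearBoundLemma} at index $s$, and then using the interpolation
\begin{equation}
\|u\|_{\dot{H}^{(s+s_*)/2}}^2 \leq \|u\|_{\dot{H}^{s_*}}\|u\|_{\dot{H}^s},
\end{equation}
which is immediate from Cauchy--Schwarz in Fourier since $s_* < \tfrac{s+s_*}{2} < s$, I would obtain the linear differential inequality
\begin{equation}
\frac{d}{dt}\|u(\cdot,t)\|_{\dot{H}^s} \leq \mathcal{C}_s \|u(\cdot,t)\|_{\dot{H}^{s_*}}\|u(\cdot,t)\|_{\dot{H}^s}.
\end{equation}
Gr\"onwall then yields
\begin{equation}
\|u(\cdot,t)\|_{\dot{H}^s} \leq \|u^0\|_{\dot{H}^s}\exp\!\left(\mathcal{C}_s \int_0^t \|u(\cdot,\tau)\|_{\dot{H}^{s_*}}\,d\tau\right).
\end{equation}

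Substituting the critical-norm estimate $\|u(\cdot,\tau)\|_{\dot{H}^{s_*}} \leq \|u^0\|_{\dot{H}^{s_*}}/(1 - \mathcal{C}_*\|u^0\|_{\dot{H}^{s_*}}\tau)$ from the previous theorem shows that this integral is finite on $[0, 1/(\mathcal{C}_*\|u^0\|_{\dot{H}^{s_*}}))$; hence the $\dot{H}^s$ norm remains bounded there and $T_{max} \geq 1/(\mathcal{C}_*\|u^0\|_{\dot{H}^{s_*}})$. The blowup lower bound $\|u(\cdot,t)\|_{\dot{H}^{s_*}} \geq 1/(\mathcal{C}_*(T_{max}-t))$ then follows by applying this time-of-existence bound with $u(\cdot,t)$ as new initial data, exactly as in the final step of the previous proof. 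I expect the main subtle point to be turning the a priori $\dot{H}^s$ local existence on $[0,T_s]$ into persistence up to $T_{max}^*$, which requires a standard continuation argument: the Picard existence time at any restart depends only on the current $\dot{H}^s$ norm, and the Gr\"onwall bound above keeps this finite on the whole interval $[0, T_{max}^*)$.
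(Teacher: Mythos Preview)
Your proposal is correct and follows essentially the same approach as the paper: Picard iteration at regularity $s$ using the bound $\|B(u,u)\|_{\dot{H}^s}\le C_s\|u\|_{\dot{H}^s}^2$, identification with the critical solution by uniqueness, the interpolation $\|u\|_{\dot{H}^{(s+s_*)/2}}^2\le\|u\|_{\dot{H}^{s_*}}\|u\|_{\dot{H}^s}$ to obtain the linear differential inequality, and Gr\"onwall to conclude $T_{max}=T_{max}^*$. The paper presents the argument slightly more tersely, simply asserting $T_{max}=T^*$ from the Gr\"onwall bound rather than spelling out the continuation step, but the substance is identical.
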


\begin{proof}
    First observe that for all 
    $s>\frac{\log(3)}{2\log(2)}$,
    we have
    \begin{equation}
    \frac{s}{2}+\frac{\log(3)}{4\log(2)}
    <s,
    \end{equation}
    and so by \Cref{BilinearBoundLemma} and \Cref{HomogenousSobolevProp}, we can see that
    \begin{align}
    \|B(u,w)\|_{\dot{H}^s}
    &\leq 
    \mathcal{C}_s 
    \|u\|_{\dot{H}^{
    \frac{s}{2}+\frac{\log(3)}{4\log(2)}}}
    \|w\|_{\dot{H}^{
    \frac{s}{2}+\frac{\log(3)}{4\log(2)}}} \\
    &\leq 
    C_s \|u\|_{\dot{H}^s} \|w\|_{\dot{H}^s}.
    \end{align}
    Therefore, proceeding exactly as in \Cref{RestrictedEulerExistenceThm},
    we can see there exists a strong solution of the Fourier-restricted Euler equation 
    $u\in C^1\left([0,T_{max});
    \dot{H}^s_{\mathcal{M}}\right)$,
    where
    $T_{max}\geq \frac{1}{C_s\left\|u^0\right\|_{
    \dot{H}^s}}$.
    Because we clearly have $u^0\in \dot{H}^\frac{\log(3)}{2\log(2)}_\mathcal{M}$,
    we can apply \Cref{RestrictedEulerExistenceThm},
    and find that 
    there exists a strong solution of the Fourier-restricted Euler equation 
    $u\in C^1\left([0,T^*);
    \dot{H}^\frac{\log(3)}{2\log(2)}
    _{\mathcal{M}}\right)$,
    where
    $T^*\geq \frac{1}{\mathcal{C}_*\left\|u^0\right\|_{
    \dot{H}^\frac{\log(3)}{2\log(2)}}}$, 
    and $T^*\geq T_{max}$, and these solutions agree up to $T_{max}$ by uniqueness.
    It remains only to show that $T_{max}=T^*$.
    Compute that for all $0\leq t<T_{max}$,
    \begin{align}
    \frac{\diff}{\diff t}
    \|u\|_{\dot{H}^s}^2
    &=
    2\left<B(u,u),u\right>_{\dot{H}^s} \\
    &\leq 
    2\mathcal{C}_s \|u\|_{\dot{H}^s}
    \|u\|_{\dot{H}^{\frac{s}{2}
    +\frac{\log(3)}{4\log(2)}}}^2 \\
    &=
    2\mathcal{C}_s \|u\|_{\dot{H}^s}^2
    \|u\|_{\dot{H}^{\frac{\log(3)}{2\log(2)}}},
    \end{align}
    interpolating between $\dot{H}^\frac{
    \log(3)}{2\log(2)}$ and $\dot{H}^s$.
    Applying Gr\"onwall's inequality, we find 
    that for all $0\leq t<T_{max}$,
    \begin{equation}
    \|u(\cdot,t)\|_{\dot{H}^s}^2
    \leq
    \left\|u^0(\cdot,t)\right\|_{\dot{H}^s}^2
    \exp\left(2\mathcal{C}_s
    \int_0^t\|u(\cdot,\tau)\|_{\dot{H}^
    \frac{\log(3)}{2\log(2)}}
    \diff\tau\right).
    \end{equation}
    This implies that $T_{max}=T_*$,
    which completes the proof.
\end{proof}

\begin{remark}
    We have now proven \Cref{RestrictedEulerExistenceThmIntro}, which was broken into multiple pieces for ease of reading. The purpose of the above theorem is that the $\dot{H}^\frac{\log(3)}{2\log(2)}$ norm controls regularity even for smoother solutions, which gives us a concise definition of blowup. For any strong solution---of arbitrarily high regularity---there is blowup at $T_{max}<+\infty$ if and only if
    \begin{equation}
    \lim_{t \to T_{max}}
    \|u(\cdot,t)\|_{\dot{H}^\frac{\log(3)}{2\log(2)}}
    =+\infty.
    \end{equation}
    We also have a strong local stability result in $\dot{H}^\frac{\log(3)}{2\log(2)}$.
\end{remark}

\begin{theorem} \label{StabilityRestrictedEuler}
    Suppose $u,w \in C^1\left([0,T];
    \dot{H}^\frac{\log(3)}{2\log(2)}_{\mathcal{M}}\right)$,
    are solutions of the Fourier-restricted Euler equation.
    Then for all $0\leq t\leq T$,
    \begin{equation}
    \|(u-w)(\cdot,t)
    \|_{\dot{H}^\frac{\log(3)}{2\log(2)}}^2
    \leq 
    \left\|u^0-w^0\right\|_{\dot{H}^
    \frac{\log(3)}{2\log(2)}}^2
    \exp\left(2\mathcal{C}_*\int_0^t
    \|(u+w)(\cdot,t)\|_{\dot{H}^\frac{\log(3)}{2\log(2)}}
    \diff\tau\right).
    \end{equation}
\end{theorem}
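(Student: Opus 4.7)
The plan is to set $\delta = u - w$ and $\Sigma = u + w$, convert the stability estimate into a differential inequality for $\|\delta(\cdot,t)\|_{\dot{H}^{\log(3)/2\log(2)}}^2$, and then close by Gr\"onwall. Since both $u$ and $w$ solve \eqref{OdeB} with the same nonlinearity $B$, subtracting and exploiting the bilinearity of $B$ gives
\begin{equation}
\partial_t \delta = B(u,u) - B(w,w) = B(u+w,u-w) = B(\Sigma,\delta),
\end{equation}
where I used that $B$ is symmetric bilinear, so $B(u,u)-B(w,w) = B(u+w,u-w)$. Because $u,w \in C^1_T \dot{H}^{\log(3)/2\log(2)}_{\mathcal{M}}$, the difference $\delta$ is also in $C^1_T \dot{H}^{\log(3)/2\log(2)}_{\mathcal{M}}$, so the identity above holds pointwise in $t$ in the $\dot{H}^{\log(3)/2\log(2)}$ topology.

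Next I would differentiate the squared norm in time. With $s=\frac{\log(3)}{2\log(2)}$ one has
\begin{equation}
\frac{\diff}{\diff t}\|\delta(\cdot,t)\|_{\dot{H}^s}^2 = 2\left\langle \delta, B(\Sigma,\delta)\right\rangle_{\dot{H}^s} \leq 2\|\delta\|_{\dot{H}^s}\|B(\Sigma,\delta)\|_{\dot{H}^s}
\end{equation}
by Cauchy--Schwarz in $\dot{H}^s$. The key input is \Cref{BilinearBoundLemma}: at the exponent $s=\frac{\log(3)}{2\log(2)}$ one has $\frac{s}{2}+\frac{\log(3)}{4\log(2)} = s$, so the bilinear estimate reads
\begin{equation}
\|B(\Sigma,\delta)\|_{\dot{H}^s} \leq \mathcal{C}_*\|\Sigma\|_{\dot{H}^s}\|\delta\|_{\dot{H}^s}.
\end{equation}
This is precisely why the critical exponent $\frac{\log(3)}{2\log(2)}$ is the natural place to get a closed differential inequality without loss of derivatives.

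Combining the two displays yields
\begin{equation}
\frac{\diff}{\diff t}\|\delta(\cdot,t)\|_{\dot{H}^s}^2 \leq 2\mathcal{C}_*\|\Sigma(\cdot,t)\|_{\dot{H}^s}\,\|\delta(\cdot,t)\|_{\dot{H}^s}^2,
\end{equation}
and Gr\"onwall's inequality applied to the (continuous in $t$) function $t\mapsto \|\delta(\cdot,t)\|_{\dot{H}^s}^2$ gives exactly the stated bound with initial value $\|u^0-w^0\|_{\dot{H}^s}^2$ and exponential factor $\exp\bigl(2\mathcal{C}_*\int_0^t\|\Sigma(\cdot,\tau)\|_{\dot{H}^s}\diff\tau\bigr)$. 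There is no real obstacle here; the only mild point to address is justifying differentiation of the squared norm, which follows because $\delta\in C^1_T\dot{H}^s_{\mathcal{M}}$ and so $t\mapsto \langle\delta,\delta\rangle_{\dot{H}^s}$ is $C^1$ with derivative $2\langle \delta,\partial_t\delta\rangle_{\dot{H}^s}$. Uniqueness in $\dot{H}^s$ is an immediate corollary on taking $u^0=w^0$.
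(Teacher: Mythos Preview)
Your proof is correct and follows essentially the same approach as the paper's: subtract the equations to get $\partial_t(u-w)=B(u+w,u-w)$ via bilinearity, differentiate the squared $\dot{H}^{\log(3)/2\log(2)}$ norm, apply the bilinear estimate from \Cref{BilinearBoundLemma} (noting that $\frac{s}{2}+\frac{\log(3)}{4\log(2)}=s$ at this exponent), and close with Gr\"onwall. Your version is slightly more detailed in justifying the differentiation and making the Cauchy--Schwarz step explicit, but the argument is the same.
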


\begin{proof}
Observe that
\begin{align}
    \partial_t(u-w)
    &=
    B(u,u)-B(w,w) \\
    &=
    B(u+w,u-w).
\end{align}
Therefore we may compute that
\begin{align}
    \frac{\diff}{\diff t}
    \|(u-w)(\cdot,t)\|_{\dot{H}^s}^2
    &=
    2\Big<u-w,B(u+w,u-w)\Big>_{
    \dot{H}^\frac{\log(3)}{2\log(2)}} \\
    &\leq 
    2\mathcal{C}_* 
    \|u+w\|_{\dot{H}^\frac{\log(3)}{2\log(2)}}
    \|u-w\|_{\dot{H}^\frac{\log(3)}{2\log(2)}}^2.
\end{align}
Applying Gr\"onwall's inequality, this completes the proof.
\end{proof}

\subsection{Viscous Case}
In this section, we will develop the local wellposedness theory for the Fourier-restricted hypodissipative Navier--Stokes equation, proving \Cref{RestrictedHypoExistenceThmIntro}, which will be broken into several pieces for ease of reading.

\begin{proposition} \label{SmoothingProp}
    Suppose $s\geq 0$ and $r,\alpha,t>0$.
    Then for all $u\in \dot{H}^s$
    \begin{equation}
    \left\|e^{-t (-\Delta)^\alpha}
    u\right\|_{\dot{H}^{s+r}}
    \leq 
    \frac{C_{r,\alpha}}
    {t^\frac{r}{2\alpha}}
    \|u\|_{\dot{H}^s},
    \end{equation}
    where
    \begin{equation}
    C_{r,\alpha}
    =
    \left(\frac{r}{2\alpha}
    \right)^\frac{r}{2\alpha}
    e^{-\frac{r}{2\alpha}}.
    \end{equation}
\end{proposition}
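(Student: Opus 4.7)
The plan is to pass to Fourier space, where both the fractional heat semigroup and the homogeneous Sobolev norms diagonalize, and then reduce the estimate to a one-variable optimization problem.

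Concretely, writing $u(x) = \sum_{k\neq 0} \hat u(k) e^{2\pi i k\cdot x}$ and using the definition of $(-\Delta)^\alpha$ via its Fourier symbol, one has
\begin{equation}
    \left\|e^{-t(-\Delta)^\alpha} u\right\|_{\dot H^{s+r}}^2
    = \sum_{k\neq 0}(4\pi^2|k|^2)^{s+r}\,
    e^{-2t(4\pi^2|k|^2)^\alpha}\,|\hat u(k)|^2.
\end{equation}
I would factor out $(4\pi^2|k|^2)^s|\hat u(k)|^2$ from each term. The problem then reduces to finding a uniform bound, independent of $k$, for the scalar factor $(4\pi^2|k|^2)^r e^{-2t(4\pi^2|k|^2)^\alpha}$.

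Setting $\xi = (4\pi^2|k|^2)^\alpha \geq 0$, the factor becomes $\xi^{r/\alpha} e^{-2t\xi}$. A standard calculus argument (differentiating in $\xi$) shows this function attains its maximum on $[0,+\infty)$ at $\xi_\ast = r/(2\alpha t)$, with maximal value
\begin{equation}
    \xi_\ast^{r/\alpha} e^{-2t\xi_\ast}
    = \left(\frac{r}{2\alpha t}\right)^{r/\alpha} e^{-r/\alpha}
    = \frac{(r/(2\alpha))^{r/\alpha}\,e^{-r/\alpha}}{t^{r/\alpha}}.
\end{equation}
Substituting back and using the definition of the $\dot H^s$ norm, we conclude
\begin{equation}
    \left\|e^{-t(-\Delta)^\alpha} u\right\|_{\dot H^{s+r}}^2
    \leq \frac{(r/(2\alpha))^{r/\alpha}\,e^{-r/\alpha}}{t^{r/\alpha}}\,
    \|u\|_{\dot H^s}^2,
\end{equation}
and taking square roots yields exactly the stated inequality with $C_{r,\alpha} = (r/(2\alpha))^{r/(2\alpha)} e^{-r/(2\alpha)}$.

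There is no real obstacle here: the computation is elementary once one diagonalizes in the Fourier basis, and the explicit constant $C_{r,\alpha}$ in the statement is precisely what the one-variable optimization produces. The only minor point worth being careful about is that the bound on the scalar factor is uniform over $k\neq 0$, so it can indeed be pulled out of the sum before reassembling the $\dot H^s$ norm; this is immediate since the bound is independent of $|k|$.
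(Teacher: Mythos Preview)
Your proof is correct and follows essentially the same approach as the paper: diagonalize in Fourier space, factor out the scalar multiplier $(4\pi^2|k|^2)^r e^{-2t(4\pi^2|k|^2)^\alpha}$, and optimize it as a function of one variable. The paper uses the substitution $\tau = 2t(4\pi^2|k|^2)^\alpha$ rather than your $\xi = (4\pi^2|k|^2)^\alpha$, but this is an inessential difference leading to the same constant.
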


\begin{proof}
Begin by recalling that
\begin{equation}
    \widehat{e^{-t(-\Delta)^\alpha}u}(k)
    =e^{-\left(4\pi^2|k|^2\right)^\alpha t} 
    \hat{u}(k).
\end{equation}
    Next compute that
    \begin{align}
    \left\|e^{-t (-\Delta)^\alpha}
    u\right\|_{\dot{H}^{s+r}}
    &=
    \sum_{\substack{k\in\mathbb{Z}^3 \\
    k\neq 0}}
    \left(4\pi^2 |k|^2\right)^{s+r}
    e^{-\left(4\pi^2|k|^2\right)^\alpha 2t}
    \left|\hat{u}(k)\right|^2 \\
    &=
    \sum_{\substack{k\in\mathbb{Z}^3 \\
    k\neq 0}}
    \left(4\pi^2 |k|^2\right)^r
    e^{-\left(4\pi^2|k|^2\right)^\alpha 2t}
    \left(4\pi^2 |k|^2\right)^s
    \left|\hat{u}(k)\right|^2 \\
    &\leq 
    \left(\sup_{\substack{k\in\mathbb{Z}^3 \\
    k\neq 0}}
    \left(4\pi^2 |k|^2\right)^r
    e^{-\left(4\pi^2|k|^2\right)^\alpha 2t}
    \right)
    \sum_{\substack{k\in\mathbb{Z}^3 \\
    k\neq 0}}
    \left(4\pi^2 |k|^2\right)^s
    \left|\hat{u}(k)\right|^2 \\
    &\leq 
    \frac{1}{(2t)^\frac{r}{\alpha}}
    \left(\sup_{\tau>0}
    \tau^\frac{r}{\alpha}e^{-\tau}\right)
    \|u\|_{\dot{H}^s}^2,
    \end{align}
    where we have made the substitution $\tau= 
    \left(4\pi^2|k|^2\right)^\alpha 2t$.
    Observe that for all $a>0$,
    \begin{equation}
    \sup_{\tau>0}
    \tau^\frac{r}{a}e^{-\tau}
    = 
    a^a e^{-a},
    \end{equation}
    and therefore
    \begin{equation}
    \left\|e^{-t (-\Delta)^\alpha}
    u\right\|_{\dot{H}^{s+r}}^2
    \leq 
    \frac{\left(
    \left(\frac{r}{\alpha}\right)^{
    \left(\frac{r}{\alpha}\right)}
    e^{-\frac{r}{\alpha}}\right)}
    {(2t)^\frac{r}{\alpha}}
    \|u\|_{\dot{H}^s}^2,
    \end{equation}
    which completes the proof.
\end{proof}

\begin{definition} \label{MildDefinition}
    We will say that $u\in C\left([0,T];
    \dot{H}^s_\mathcal{M}\right)$, 
    where $s>\frac{\log(3)}{2\log(2)}-2\alpha$ and $s\geq 0$, is a mild solution of the Fourier-restricted, hypodissipative Navier--Stokes equation, if for all $0<t\leq T$,
    \begin{equation}
    u(\cdot,t)=e^{-\nu t(-\Delta)^\alpha}u^0
    -\int_0^t e^{-\nu(t-\tau)(-\Delta)^\alpha}
    B(u,u)(\cdot,\tau).
    \end{equation}
\end{definition}

\begin{theorem} \label{ExistenceRestrictedNS}
    Suppose $u^0\in \dot{H}^s_\mathcal{M}$, where
    $\frac{\log(3)}{2\log(2)}-2\alpha
    <s\leq \frac{\log(3)}{2\log(2)}$ and $s\geq 0$.
    Then there exists a unique mild solution of the Fourier-restricted, hypodissipative Navier--Stokes equation, $u\in C\left([0,T_{max});
    \dot{H}^s_{\mathcal{M}}\right)$,
    where
    \begin{equation}
    T_{max}\geq 
    \frac{\nu^{\frac{1}{\rho}-1}}
    {\left(C_{s,\alpha}\left\|u^0\right\|_{\dot{H}^s}
    \right)^\frac{1}{\rho}}
    \end{equation}
    with $C_{s,\alpha}>0$ is an absolute constant independent of $u^0$ and $\nu$, and
    \begin{equation}
    \rho=1-\frac{\frac{\log(3)}{2\log(2)}-s}{2\alpha}.
    \end{equation}
    Furthermore, if $T_{max}<+\infty$, 
    then for all $0\leq t<T_{max}$,
    \begin{equation}
    \left\|u(\cdot,t)\right\|_{\dot{H}^s}
    \geq 
    \frac{\nu^{1-\rho}}{C_{s,\alpha}
    \left(T_{max}-t\right)^\rho}.
    \end{equation}
\end{theorem}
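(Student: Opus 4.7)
\emph{Proof proposal.} The plan is to solve the mild formulation of \Cref{MildDefinition} by Picard iteration in a time-weighted Banach space, with \Cref{SmoothingProp} and \Cref{BilinearBoundLemma} as the two fundamental estimates. Introduce
$$\sigma := \frac{1}{4\alpha}\left(\frac{\log(3)}{2\log(2)}-s\right), \qquad s_1 := \frac{1}{2}\left(s+\frac{\log(3)}{2\log(2)}\right),$$
so that $\rho = 1-2\sigma$, $s_1-s = 2\alpha\sigma$, and \Cref{BilinearBoundLemma} specialises to $\|B(u,u)\|_{\dot{H}^s} \leq \mathcal{C}_s \|u\|_{\dot{H}^{s_1}}^2$; the hypothesis $s > \frac{\log(3)}{2\log(2)} - 2\alpha$ is precisely the statement $\sigma<\frac{1}{2}$, which will be what guarantees Beta-function integrability below. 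Work in
$$X_T := \left\{u \in C\left([0,T]; \dot{H}^s_\mathcal{M}\right) : \|u\|_{X_T} := \sup_{t\in[0,T]}\|u(t)\|_{\dot{H}^s} + \sup_{t\in(0,T]}(\nu t)^\sigma \|u(t)\|_{\dot{H}^{s_1}} < \infty\right\},$$
and let $\Phi[u]$ denote the right-hand side of \Cref{MildDefinition}, so that a mild solution is a fixed point of $\Phi$.

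The linear contribution $e^{-\nu t(-\Delta)^\alpha}u^0$ has $X_T$-norm bounded by $(1+C_{2\alpha\sigma,\alpha})\|u^0\|_{\dot{H}^s}$: the $\dot{H}^s$-piece because the semigroup acts by Fourier-mode multipliers in $[0,1]$, and the weighted $\dot{H}^{s_1}$-piece by \Cref{SmoothingProp} with $r=2\alpha\sigma$. For the Duhamel term, combining \Cref{SmoothingProp} with $\|B(u,u)(\tau)\|_{\dot{H}^s}\leq \mathcal{C}_s \|u(\tau)\|_{\dot{H}^{s_1}}^2$ and inserting the $X_T$-weight bound $\|u(\tau)\|_{\dot{H}^{s_1}} \leq (\nu\tau)^{-\sigma}\|u\|_{X_T}$ gives
$$\left\|\int_0^t e^{-\nu(t-\tau)(-\Delta)^\alpha} B(u,u)(\tau)\diff\tau\right\|_{\dot{H}^{s_1}} \leq C \mathcal{C}_s \nu^{-3\sigma}\|u\|_{X_T}^2 \int_0^t (t-\tau)^{-\sigma}\tau^{-2\sigma}\diff\tau,$$
and the kernel integral equals a finite multiple of $t^{1-3\sigma}$ precisely because $\sigma<\frac{1}{2}$. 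Multiplying by the weight $(\nu t)^\sigma$ and performing the analogous but simpler estimate in $\dot{H}^s$ yields
$$\|\Phi[u]\|_{X_T} \leq C'\|u^0\|_{\dot{H}^s} + C''\nu^{-2\sigma} T^{1-2\sigma}\|u\|_{X_T}^2,$$
along with the matching Lipschitz bound $\|\Phi[u]-\Phi[w]\|_{X_T}\leq C''\nu^{-2\sigma}T^{1-2\sigma}(\|u\|_{X_T}+\|w\|_{X_T})\|u-w\|_{X_T}$ obtained from the bilinearity identity $B(u,u)-B(w,w) = B(u+w,u-w)$. The Banach fixed point theorem on the ball of radius $2C'\|u^0\|_{\dot{H}^s}$ then yields a unique fixed point as long as $C''\nu^{-2\sigma}T^{1-2\sigma}\|u^0\|_{\dot{H}^s}$ is below a fixed absolute constant; solving for $T$ and using $2\sigma = 1-\rho$ produces the stated lower bound $T_{max} \geq \nu^{1/\rho-1}/(C_{s,\alpha}\|u^0\|_{\dot{H}^s})^{1/\rho}$.

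The final lower bound on $\|u(\cdot,t)\|_{\dot{H}^s}$ as $t\to T_{max}^-$ is the usual restart argument: applied to the datum $u(\cdot,t)$, the existence statement gives at least $C_{s,\alpha}^{-1}\nu^{1/\rho-1}\|u(\cdot,t)\|_{\dot{H}^s}^{-1/\rho}$ of further existence, and this must be at most $T_{max}-t$; rearranging gives the stated inequality. I expect the main technical obstacle to be the $\nu$-bookkeeping. The factor $\nu^{-\sigma}$ coming from smoothing in the Duhamel kernel combines with the $\nu^{-2\sigma}$ absorbed by the quadratic time weight in $\|u\|_{X_T}^2$ to give a prefactor $\nu^{-3\sigma}$ before the Beta integral; one power of $\nu^\sigma$ is then cancelled by the outer weight $(\nu t)^\sigma$ used to measure $\Phi[u]$ in $X_T$, leaving $\nu^{-2\sigma}T^{1-2\sigma}$, and this is exactly what produces the scaling $\nu^{1/\rho-1}$ after the algebraic manipulation $\rho = 1-2\sigma$. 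A minor secondary point is continuity of $\Phi[u]$ at $t=0$ in the $\dot{H}^s$ norm, which is standard from strong continuity of the semigroup together with the fact that the Duhamel integral vanishes as $t\to 0^+$.
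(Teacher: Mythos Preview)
Your argument is correct, but it takes a different route from the paper's. The paper does \emph{not} introduce a time-weighted auxiliary space: it works directly in $C_T\dot{H}^s_x$ by pushing the bilinear estimate to the lower index $2s-\tfrac{\log 3}{2\log 2}$, so that \Cref{BilinearBoundLemma} reads $\|B(u,u)\|_{\dot{H}^{2s-\log 3/(2\log 2)}}\leq C\|u\|_{\dot{H}^s}^2$, and then uses \Cref{SmoothingProp} with $r=\tfrac{\log 3}{2\log 2}-s=2\alpha b$ inside the Duhamel integral to return to $\dot{H}^s$. This gives a single kernel $(t-\tau)^{-b}$ with $b=2\sigma$ in your notation, so that only the condition $b<1$ is needed and the contraction closes in the ball $\{\|u\|_{C_T\dot{H}^s}\leq 2\|u^0\|_{\dot{H}^s}\}$ without any $(\nu t)^\sigma$ weight. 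Your Kato-style two-tier scheme instead keeps the bilinear estimate at level $s$ and pays for the missing $s_1-s$ regularity with the weighted norm; the resulting Beta integral requires the slightly stronger $\sigma<\tfrac12$, which is exactly the hypothesis. Both routes produce the same $T^\rho/\nu^{1-\rho}$ scaling and hence the same lower bound on $T_{\max}$ and blowup rate. The paper's approach is more economical here (one space, one exponent), while yours is the standard subcritical template and has the minor advantage of invoking \Cref{BilinearBoundLemma} only at the nonnegative index $s$ rather than at $2s-\tfrac{\log 3}{2\log 2}$, which can be negative for small $s$.
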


\begin{proof}
    We begin by fixing
    \begin{equation}
    T<\frac{\nu^{\frac{1}{\rho}-1}}
    {\left(C_{s,\alpha}\left\|u^0\right\|_{\dot{H}^s}
    \right)^\frac{1}{\rho}},
    \end{equation}
    noting that 
    \begin{equation}
        T^\rho
        <
        \frac{\nu^{1-\rho}}{C_{s,\alpha}
        \left\|u^0\right\|_{\dot{H}^s}},
    \end{equation}
    We define the map
    $Q: C\left([0,T];\dot{H}^s_\mathcal{M}\right)
    \to C\left([0,T];\dot{H}^s_\mathcal{M}\right)$ 
    by
    \begin{equation}
    Q[u](\cdot,t)=
    e^{-\nu t(-\Delta)^\alpha}u^0
    +\int_0^t e^{-\nu(t-\tau)(-\Delta)^\alpha}
    B[u,u](\cdot,\tau) \diff\tau.
    \end{equation}
    Note that $u\in C\left([0,T];
    \dot{H}^s_\mathcal{M}\right)$ is a mild solution
    if 
    \begin{equation}
        u=Q[u],
    \end{equation}
    so the existence of a mild solution immediately reduces to a fixed point argument.
    Now define
    \begin{equation}
    b=\frac{\frac{\log(3)}{2\log(2)}-s}
    {2\alpha},
    \end{equation}
    and observe that $0\leq b<1$.
    Note that $\rho=1-b$, so we can also check that $0<\rho\leq 1$.
    Applying \Cref{SmoothingProp} and \Cref{BilinearBoundLemma}, we find that
    for all $0< t\leq T$
    \begin{align}
    \|Q[u](\cdot,t)\|_{\dot{H}^s}
    &\leq
    \left\|u^0\right\|_{\dot{H}^s}
    +\int_0^t \left\|
    e^{-\nu(t-\tau)(-\Delta)^\alpha}
    B(u,u)(\cdot,\tau)\right\|_{\dot{H}^s}
    \diff\tau \\
    &\leq 
    \left\|u^0\right\|_{\dot{H}^s}
    +C\int_0^t \frac{1}{\nu^b(t-\tau)^b}
    \|B(u,u)(\cdot,\tau)\|_{\dot{H}^{s-2\alpha b}} 
    \diff\tau \\
    &=
    \left\|u^0\right\|_{\dot{H}^s}
    +C\int_0^t \frac{1}{\nu^b(t-\tau)^b}
    \|B(u,u)(\cdot,\tau)\|_{\dot{H}^{
    2s-\frac{\log(3)}{2\log(2)}}} 
    \diff\tau\\
    &\leq 
    \left\|u^0\right\|_{\dot{H}^s}
    +C\int_0^t \frac{1}{\nu^b(t-\tau)^b}
    \|u(\cdot,\tau)\|_{\dot{H}^s}^2 
    \diff\tau\\
    &\leq 
    \left\|u^0\right\|_{\dot{H}^s}
    +C\|u\|_{C_T \dot{H}^s_x}
    \int_0^t \frac{1}{\nu^b(t-\tau)^b}  
    \diff\tau \\
    &=
    \left\|u^0\right\|_{\dot{H}^s}
    +\frac{C_{s,\alpha} t^{1-b}}{4\nu^b}
    \|u\|_{C_T \dot{H}^s_x}^2.
    \end{align}
    Therefore, we can clearly see that
    \begin{equation} \label{KeyMildBound1}
    \|Q[u]\|_{C_T \dot{H}^s_x}
    \leq 
    \left\|u^0\right\|_{\dot{H}^s}
    +\frac{C_{s,\alpha} T^\rho}{4\nu^{1-\rho}}
    \|u\|_{C_T \dot{H}^s_x}^2.
    \end{equation}

    Similarly to the inviscid case, let
    \begin{equation}
    X=\left\{u\in C\left([0,T];
    \dot{H}^s_\mathcal{M}\right):
    \|u\|_{C_T\dot{H}^s_x}\leq
    2\left\|u^0\right\|_{\dot{H}^s}\right\}.
    \end{equation}
    Recall that by definition 
    \begin{equation}
    \frac{C_{s,\alpha} T^\rho}{\nu^{1-\rho}}
    <
    \frac{1}{\left\|u^0\right\|_{\dot{H}^s}},
    \end{equation}
    so we can apply \eqref{KeyMildBound1} and find that if 
    \begin{equation}
    \|u\|_{C_T \dot{H}^s_x}\leq
    2\left\|u^0\right\|_{\dot{H}^s}, 
    \end{equation}
    then
    \begin{equation}
    \|Q[u]\|_{C_T \dot{H}^s_x}\leq
    2\left\|u^0\right\|_{\dot{H}^s}.
    \end{equation}
    This implies that $Q:X \to X$.

    Next observe that for all $u,w \in X$,
    \begin{equation}
    Q[u](\cdot,t)-Q[w](\cdot,t)
    =
    \int_0^t e^{-\nu(t-\tau)(-\Delta)^\alpha}
    B(u+w,u-w)(\cdot,\tau) \diff\tau.
    \end{equation}
    Therefore, using the same bounds as in \eqref{KeyMildBound1}, we can compute that
    for all $u,w\in X$,
    \begin{align}
    \|Q[u](\cdot,t)-Q[w](\cdot,t)\|_{\dot{H}^s}
    &\leq
    \int_0^t \left\|
    e^{-\nu(t-\tau)(-\Delta)^\alpha}
    B(u+w,u-w)(\cdot,\tau)\right\|_{\dot{H}^s}
    \diff\tau \\
    &\leq 
    C\int_0^t \frac{1}{\nu^b(t-\tau)^b}
    \|B(u+w,u-w)(\cdot,\tau)\|_{\dot{H}^{
    2s-\frac{\log(3)}{2\log(2)}}} 
    \diff\tau\\
    &\leq 
    C\int_0^t \frac{1}{\nu^b(t-\tau)^b}
    \|(u+w)(\cdot,\tau)\|_{\dot{H}^s}
    \|(u-w)(\cdot,\tau)\|_{\dot{H}^s}
    \diff\tau\\
    &\leq 
    C \|u+w\|_{C_T \dot{H}^s_x}
    \|u-w\|_{C_T \dot{H}^s_x}
    \int_0^t \frac{1}{\nu^b(t-\tau)^b}
    \diff\tau\\
    &=
    \frac{C_{s,\alpha} t^\rho}{4\nu^b}
    \|u+w\|_{C_T \dot{H}^s_x}
    \|u-w\|_{C_T \dot{H}^s_x} \\
    &\leq 
    \frac{C_{s,\alpha} t^\rho}{\nu^b}
    \left\|u^0\right\|_{C_T \dot{H}^s_x}
    \|u-w\|_{C_T \dot{H}^s_x},
    \end{align}
    We may therefore conclude that for all
    $u,w\in X$
    \begin{equation}
    \left\|Q[u]-Q[w]\right\|_{C_T \dot{H}^s_x}
    \leq 
    \frac{C_{s,\alpha} T^\rho}{\nu^b}
    \left\|u^0\right\|_{C_T \dot{H}^s_x}
    \|u-w\|_{C_T \dot{H}^s_x}.
    \end{equation}
    Recall that by hypothesis, 
    \begin{equation}
    \frac{C_{s,\alpha} T^\rho}{\nu^b}
    \left\|u^0\right\|_{C_T \dot{H}^s_x}
    <1,
    \end{equation}
    and so by the $Q$ is a contraction on $X$, and by the Banach fixed point theorem, there exists a unique $u\in X$, such that
    \begin{equation}
    Q[u]=u.
    \end{equation}
    Combined with continuity-in-time, this tells us there exists a unique mild solution of the Fourier-restricted hypodissipative Navier--Stokes equation
    $C\left([0,T];\dot{H}^s_\mathcal{M}\right)$.
    
    We have now proven existence locally in time, and uniqueness up until blowup. The proof holds for any
    \begin{equation}
    T<\frac{\nu^{\frac{1}{\rho}-1}}
    {\left(C_{s,\alpha}\left\|u^0\right\|_{\dot{H}^s}
    \right)^\frac{1}{\rho}},
    \end{equation}
    and so clearly
    \begin{equation}
    T_{max}\geq 
    \frac{\nu^{\frac{1}{\rho}-1}}
    {\left(C_{s,\alpha}\left\|u^0\right\|_{\dot{H}^s}
    \right)^\frac{1}{\rho}}.
    \end{equation}
    Likewise, applying the semigroup method at any time $0<t<T_{max}$, to continue the solution, we can see that
    \begin{equation}
    T_{max}-t\geq 
    \frac{\nu^{\frac{1}{\rho}-1}}
    {\left(C_{s,\alpha}\|u(\cdot,t)\|_{\dot{H}^s}
    \right)^\frac{1}{\rho}},
    \end{equation}
    and so if $T_{max}<+\infty$, 
    then for all $0\leq t<T_{max}$,
    \begin{equation}
    \left\|u(\cdot,t)\right\|_{\dot{H}^s}
    \geq 
    \frac{\nu^{1-\rho}}{C_{s,\alpha}
    \left(T_{max}-t\right)^\rho},
    \end{equation}
    and this completes the proof.
\end{proof}

\begin{theorem}
    Suppose $u\in C\left([0,T_{max});
    \dot{H}^s_\mathcal{M}\right)$, where 
    $\frac{\log(3)}{2\log(2)}-2\alpha
    <s\leq \frac{\log(3)}{2\log(2)}$ and $s\geq 0$, is a mild solution of the Fourier-restricted, hypodissipative Navier--Stokes equation.
    Then for all positive times up until blowup, the solution is smooth with
    $u\in C^\infty\left((0,T_{max});C^\infty
    \left(\mathbb{T}^3\right)\right)$,
    or equivalently
    $u\in C^\infty\left((0,T_{max})\times 
    \mathbb{T}^3)\right)$.
\end{theorem}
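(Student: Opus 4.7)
The plan is a standard parabolic bootstrap: use the smoothing of the fractional heat semigroup in the Duhamel formula to iteratively gain spatial regularity, and then transfer that to temporal regularity via the equation itself.

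\textbf{Step 1: initial gain of regularity.} Fix $t_0\in(0,T_{max})$ and let $\epsilon>0$ be small. Starting from the Duhamel identity of \Cref{MildDefinition}, \Cref{SmoothingProp} bounds the linear term by $\left\|e^{-\nu t_0(-\Delta)^\alpha}u^0\right\|_{\dot{H}^{s+\epsilon}}\le C(\nu t_0)^{-\epsilon/(2\alpha)}\|u^0\|_{\dot{H}^s}$, which is finite for every $t_0>0$. For the nonlinear piece, set $b=\frac{\frac{\log(3)}{2\log(2)}-s}{2\alpha}\in[0,1)$ and combine smoothing with \Cref{BilinearBoundLemma}:
\begin{equation}
\left\|e^{-\nu(t_0-\tau)(-\Delta)^\alpha}B(u,u)(\cdot,\tau)\right\|_{\dot{H}^{s+\epsilon}}
\le
\frac{C}{(\nu(t_0-\tau))^{\epsilon/(2\alpha)+b}}
\|u(\cdot,\tau)\|_{\dot{H}^s}^2,
\end{equation}
where we have gained $\epsilon+2\alpha b$ derivatives (so that the Sobolev target for $B(u,u)$ is $2s-\frac{\log(3)}{2\log(2)}$, precisely where the bilinear bound applies). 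For $0<\epsilon<2\alpha\rho=2\alpha(1-b)$, the exponent $\epsilon/(2\alpha)+b<1$, so the integral converges. Hence $u(\cdot,t_0)\in\dot{H}^{s+\epsilon}_\mathcal{M}$.

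\textbf{Step 2: iteration past the critical threshold.} Treating $u(\cdot,t_0)$ as new initial data on $[t_0,T_{max})$ and repeating, each iteration raises the regularity index $s$ by at least a fixed fraction of $2\alpha\rho$ (with $\rho$ computed using the current $s$). After finitely many steps we reach some $t_1\in(t_0,T_{max})$ with $u(\cdot,t_1)\in\dot{H}^{\frac{\log(3)}{2\log(2)}}_\mathcal{M}$. For any $s'\ge\frac{\log(3)}{2\log(2)}$, the interpolation $\frac{s'}{2}+\frac{\log(3)}{4\log(2)}\le s'$ and \Cref{BilinearBoundLemma} give $\|B(u,u)\|_{\dot{H}^{s'}}\le C_{s'}\|u\|_{\dot{H}^{s'}}^2$. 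The same Duhamel estimate now lets us gain any $\epsilon<2\alpha$ derivatives with only an integrable $(t-\tau)^{-\epsilon/(2\alpha)}$ singularity; iterating a countable number of times yields $u(\cdot,t)\in\dot{H}^{s'}_\mathcal{M}$ for every $s'\ge 0$ and every $t\in(t_1,T_{max})$. Since $t_0$ was arbitrary, this holds for all $t\in(0,T_{max})$, and by Sobolev embedding $u(\cdot,t)\in C^\infty(\mathbb{T}^3)$.

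\textbf{Step 3: temporal regularity.} With $u\in C((0,T_{max});\dot{H}^{s'}_\mathcal{M})$ for every $s'$, the mild formula is differentiable in $t$ and the equation
\begin{equation}
\partial_t u=-\nu(-\Delta)^\alpha u+B(u,u)
\end{equation}
holds classically. Both terms on the right lie in $\dot{H}^{s'}$ for every $s'$, so $\partial_t u\in C((0,T_{max});\dot{H}^\infty_\mathcal{M})$. Differentiating the equation in $t$ inductively, using $\partial_t^k B(u,u)=2B(u,\partial_t^k u)+(\text{lower-order terms})$ and the already-established spatial smoothness, produces $\partial_t^k u\in C((0,T_{max});\dot{H}^\infty_\mathcal{M})$ for every $k$. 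Sobolev embedding then gives $u\in C^\infty((0,T_{max})\times\mathbb{T}^3)$.

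\textbf{Main obstacle.} The delicate point is Step 1: the bootstrap can only start because $\rho>0$, which is precisely the hypothesis $s>\frac{\log(3)}{2\log(2)}-2\alpha$. Without this, the Duhamel integrand would have a non-integrable singularity at $\tau=t_0$ and no gain in regularity would be possible. Once we have crossed $\frac{\log(3)}{2\log(2)}$, the bilinear estimate becomes subcritical in $s'$ and the remaining iterations are routine.
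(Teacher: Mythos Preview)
Your proposal is correct and follows essentially the same parabolic bootstrap as the paper: gain spatial regularity via the smoothing of $e^{-\nu t(-\Delta)^\alpha}$ applied to the Duhamel formula together with \Cref{BilinearBoundLemma}, iterate to reach $\dot{H}^\infty$, and then read off temporal smoothness from the equation $\partial_t u=-\nu(-\Delta)^\alpha u+B(u,u)$ by induction on the order of $\partial_t$. The only cosmetic difference is that the paper phrases the iteration as a single statement ``$u\in C((0,T_{max});\dot{H}^{s'})$ implies $u\in C((0,T_{max});\dot{H}^{s'+m})$ for all $m<s'+2\alpha-\frac{\log(3)}{2\log(2)}$'' (working on $[\epsilon,T_{max}-\epsilon]$ and letting $\epsilon\to 0$), whereas you split into a subcritical phase (reaching $\dot{H}^{\frac{\log(3)}{2\log(2)}}$) and a supercritical phase (gaining any $\epsilon<2\alpha$ thereafter); the underlying estimate and the integrability constraint on the Duhamel singularity are identical in both.
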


\begin{proof}
    We will start by showing that if $u\in C\left((0,T_{max});\dot{H}^{s'}
    _\mathcal{M}\right)$
    then for all $m<s'+2\alpha-\frac{\log(3)}{2\log(2)}$, we have
    $u\in C\left((0,T_{max});\dot{H}^{s'+m}
    _\mathcal{M}\right)$.
    Fix $0<\epsilon<\frac{T_{max}}{2}$ and $0<m<
    s+2\alpha-\frac{\log(3)}{2\log(2)}$. 
    Clearly we then have that 
    $u\in C\left([\epsilon,T_{max}-\epsilon]
    ;\dot{H}^{s'}
    _\mathcal{M}\right)$.
    We also know by definition that for all 
    $\epsilon<t<T_{max}-\epsilon$,
    \begin{equation}
    u(\cdot,t)
    = e^{-\nu(t-\epsilon)(-\Delta)^\alpha}
    u^0
    +\int_0^{t-\epsilon} 
    e^{-\nu(t-\epsilon-\tau)(-\Delta)^\alpha}
    B(u,u)(\cdot,\tau) \diff\tau.
    \end{equation}
    Applying \Cref{SmoothingProp},
    we find that for all 
    $\epsilon<t<T_{max}-\epsilon$,
    \begin{align}
    \|u(\cdot,t)\|_{\dot{H}^{s'+m}}
    &\leq 
    \frac{C\|u(\cdot,\epsilon)\|_{\dot{H}^s}}
    {(t-\epsilon)^\frac{m}{2\alpha}}
    +C\int_0^{t-\epsilon}
    \frac{1}{(t-\epsilon-\tau)^\frac
    {m+\frac{\log(3)}{2\log(2)}-s}{2\alpha}}
    \|B(u,u)(\cdot,\tau)\|_{\dot{H}^{2s-
    \frac{\log(3)}{2\log(2)}}} \diff\tau \\
    &\leq 
    \frac{C\|u(\cdot,\epsilon)\|_{\dot{H}^s}}
    {(t-\epsilon)^\frac{m}{2\alpha}}
    +C\int_0^{t-\epsilon}
    \frac{1}{(t-\epsilon-\tau)^\frac
    {m+\frac{\log(3)}{2\log(2)}-s}{2\alpha}}
    \|u(\cdot,\tau)\|_{\dot{H}^s} 
    \diff\tau \\
    &\leq 
    \frac{C\|u(\cdot,\epsilon)\|_{\dot{H}^s}}
    {(t-\epsilon)^\frac{m}{2\alpha}}
    +C \left(\sup_{\epsilon\leq \tau \leq 
    T_{max}-\epsilon}
    \|u(\cdot,\tau)\|_{\dot{H}^s} \right)
    (t-\epsilon)^{1-\frac
    {m+\frac{\log(3)}{2\log(2)}-s}{2\alpha}}.
    \end{align}

    We have now bounded the $H^{s'+m}$ norm for all
    $\epsilon<t<T_{max}-\epsilon$, and we may conclude that for all $0<\epsilon<\frac{T_{max}}{2}$
    $u\in C\left((\epsilon,T_{max}-\epsilon);
    \dot{H}^{s'+m}_\mathcal{M}\right)$.
    Recalling that $\epsilon>0$ can be taken arbitrarily small, we may then conclude
    $u\in C\left((0,T_{max});
    \dot{H}^{s'+m}_\mathcal{M}\right)$.
    Iterating this smoothing effect, we can prove by induction that 
    $u\in C\left((0,T_{max});
    \dot{H}^\infty\right)$,
    and equivalently
    $u\in C\left((0,T_{max});
    C^\infty\left(\mathbb{T}^3\right)\right)$.
    Observe that for all $j\in\mathbb{Z}^+$,
    \begin{equation}
    \partial_t^{j+1}u
    =
    -\nu(-\Delta)^\alpha \partial_t^j u
    -\mathbb{P}_{\mathcal{M}}\left(
    \partial_t^j\nabla\cdot(u\otimes u)\right).
    \end{equation}
    Applying the product rule $j$ times, we can therefore see that if $u \in C^j\left((0,T_{max});
    C^\infty\left(\mathbb{T}^3\right)\right)$,
    then $u\in C^{j+1}\left((0,T_{max});
    C^\infty\left(\mathbb{T}^3\right)\right)$.
    By induction we may conclude that
    $u\in C^\infty\left((0,T_{max});C^\infty
    \left(\mathbb{T}^3\right)\right)$,
    and equivalently
    $u\in C^\infty\left((0,T_{max})\times 
    \mathbb{T}^3)\right)$,
    which completes the proof.
\end{proof}

\begin{proposition}
   For all $u^0\in \dot{H}^\frac{\log(3)}
    {2\log(2)}_{\mathcal{M}}$, we have the lower bound on the time of existence
    \begin{equation}
    T_{max}\geq \frac{1}{\mathcal{C}_*
    \left\|u^0\right\|_{\dot{H}
    ^\frac{\log(3)}{2\log(2)}}},
    \end{equation}
    and if $T_{max}<+\infty$, 
    then for all $0\leq t<T_{max}$,
    \begin{equation}
    \|u(\cdot,t)\|_{\dot{H}^\frac{\log(3)}{2\log(2)}}
    \geq 
    \frac{1}{\mathcal{C}_*(T_{max}-t)}.
    \end{equation} 
\end{proposition}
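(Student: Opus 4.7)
The plan is to mirror the differential-inequality argument used in Theorem \ref{RestrictedEulerExistenceThm} for the inviscid case, exploiting the fact that the dissipative term $-\nu(-\Delta)^\alpha u$ only contributes a non-positive term to the evolution of the $\dot{H}^{\frac{\log(3)}{2\log(2)}}$ norm, so it can simply be dropped. The structural reason the critical exponent $s_*=\frac{\log(3)}{2\log(2)}$ works is that $\frac{s_*}{2}+\frac{\log(3)}{4\log(2)}=s_*$, so \Cref{BilinearBoundLemma} collapses to $\|B(u,u)\|_{\dot{H}^{s_*}}\leq \mathcal{C}_*\|u\|_{\dot{H}^{s_*}}^2$, which is precisely what drives the blowup-rate estimate.

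First I would invoke the smoothness result from the previous theorem to conclude that $u\in C^\infty((0,T_{max})\times\mathbb{T}^3)$, so that the energy-type computation is fully justified pointwise in $t$ for $t>0$. For any fixed $\epsilon\in(0,T_{max})$ and $\epsilon<t<T_{max}$, I would use the equation $\partial_t u=-\nu(-\Delta)^\alpha u+B(u,u)$ and the self-adjointness of $\mathbb{P}_\mathcal{M}$ on $\dot{H}^{s_*}_\mathcal{M}$ to write
\begin{equation*}
\frac{d}{dt}\tfrac{1}{2}\|u\|_{\dot{H}^{s_*}}^2
=-\nu\|u\|_{\dot{H}^{s_*+\alpha}}^2+\langle u,B(u,u)\rangle_{\dot{H}^{s_*}}
\leq \mathcal{C}_*\|u\|_{\dot{H}^{s_*}}^3,
\end{equation*}
which, after dividing by $\|u\|_{\dot{H}^{s_*}}$, yields the scalar ODI $\partial_t\|u\|_{\dot{H}^{s_*}}\leq \mathcal{C}_*\|u\|_{\dot{H}^{s_*}}^2$.

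Integrating this ODI from $\epsilon$ to $t$ gives
\begin{equation*}
\|u(\cdot,t)\|_{\dot{H}^{s_*}}
\leq
\frac{\|u(\cdot,\epsilon)\|_{\dot{H}^{s_*}}}
{1-\mathcal{C}_*(t-\epsilon)\|u(\cdot,\epsilon)\|_{\dot{H}^{s_*}}}
\end{equation*}
on any interval where the denominator is positive. Letting $\epsilon\to 0^+$ and invoking the $\dot{H}^{s_*}$-continuity of the mild solution at $t=0$ (already built into \Cref{MildDefinition} and \Cref{ExistenceRestrictedNS}) transfers the estimate to the initial data $u^0$, immediately yielding $T_{max}\geq 1/(\mathcal{C}_*\|u^0\|_{\dot{H}^{s_*}})$. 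Applying the same reasoning with $u(\cdot,t_0)$ in place of $u^0$ for any $0\leq t_0<T_{max}$ then gives $T_{max}-t_0\geq 1/(\mathcal{C}_*\|u(\cdot,t_0)\|_{\dot{H}^{s_*}})$, which is exactly the desired blowup lower bound.

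The only delicate point is the $\epsilon\to 0^+$ passage: the ODI itself is valid only on $(0,T_{max})$ because we needed smoothness to justify the pointwise time derivative of $\|u\|_{\dot{H}^{s_*}}^2$, while the bound we ultimately want involves the possibly-rough initial data $u^0\in \dot{H}^{s_*}_\mathcal{M}$. Continuity of the mild solution in the critical norm at $t=0$ handles this cleanly, and there is no need to invoke any further approximation argument, since the dissipative semigroup only improves the estimate and never appears with the wrong sign.
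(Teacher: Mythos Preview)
Your proposal is correct and follows essentially the same approach as the paper: drop the non-positive dissipation term, apply \Cref{BilinearBoundLemma} at the critical exponent to obtain the scalar differential inequality $\partial_t\|u\|_{\dot{H}^{s_*}}\leq \mathcal{C}_*\|u\|_{\dot{H}^{s_*}}^2$, integrate, and then shift the initial time to deduce the blowup rate. You are slightly more explicit than the paper about the $\epsilon\to 0^+$ passage and the use of smoothness on $(0,T_{max})$ together with continuity at $t=0$, but this is a matter of presentation rather than a different argument.
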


\begin{remark}
    It is somewhat surprising that this bound is the same as the inviscid case, but we should note that when $s=\frac{\log(3)}{2\log(2)}$, then $\rho=1$, and so the bound on the $T_{max}$ from \Cref{ExistenceRestrictedNS} is independent of the viscosity $\nu$, and this really quite natural.
\end{remark}

\begin{proof}
    We already know there exists a solution of the Fourier-restricted, hypodissipative Navier--Stokes equation, $u\in C\left([0,T_{max});
    \dot{H}^\frac{\log(3)}{2\log(2)}_{
    \mathcal{M}}\right)$.
    Next we compute that for all $0< t<T_{max}$,
    \begin{align}
    \frac{\diff}{\diff t}
    \|u(\cdot,t)\|_{
    \dot{H}^\frac{\log(3)}{2\log(2)}}^2
    &=
    -2\nu\|u\|_{\dot{H}^{\frac{\log(3)}{2\log(2)}
    +\alpha}}^2
    +\left<B(u,u),u\right>_{
    \dot{H}^\frac{\log(3)}{2\log(2)}} \\
    &\leq 
    2\mathcal{C}_* \|u\|_{\dot{H}^\frac{\log(3)}{2\log(2)}}^3,
    \end{align}
    neglecting the dissipation term that will not enhance our bounds.
    We can then conclude that
    \begin{equation}
    \frac{\diff}{\diff t}
    \|u(\cdot,t)\|_{
    \dot{H}^\frac{\log(3)}{2\log(2)}}
    \leq 
    \mathcal{C}_* \|u\|_{\dot{H}^\frac{\log(3)}{2\log(2)}}^2.
    \end{equation}
    Integrating this differential inequality we find that for all $0\leq t<T_{max}$,
    \begin{equation}
    \|u(\cdot,t)\|_{
    \dot{H}^\frac{\log(3)}{2\log(2)}}
    \leq 
    \frac{\left\|u^0\right\|_{
    \dot{H}^\frac{\log(3)}{2\log(2)}}}
    {1-\mathcal{C}_* \left\|u^0\right\|_{
    \dot{H}^\frac{\log(3)}{2\log(2)}}t}.
    \end{equation}
    We know that if $T_{max}<+\infty$, then
    \begin{equation}
    \lim_{t\to T_{max}}\|u(\cdot,t)
    \|_{\dot{H}^\frac{\log(3)}{2\log(2)}}
    =
    +\infty,
    \end{equation}
    and therefore,
    \begin{equation}
    T_{max}\geq \frac{1}{\mathcal{C}_*
    \left\|u^0\right\|_{\dot{H}
    ^\frac{\log(3)}{2\log(2)}}}.
    \end{equation}
    Consequently, if $T_{max}<+\infty$, 
    then for all $0\leq t<T_{max}$,
    \begin{equation}
    \|u(\cdot,t)\|_{\dot{H}^\frac{\log(3)}{2\log(2)}}
    \geq 
    \frac{1}{\mathcal{C}_*(T_{max}-t)}.
    \end{equation} 
\end{proof}

\begin{proposition}
    Suppose $u\in C\left([0,T_{max});
    \dot{H}^s_\mathcal{M}\right)$, where 
    $\frac{\log(3)}{2\log(2)}-2\alpha
    <s\leq \frac{\log(3)}{2\log(2)}$ and $s\geq 0$, is a mild solution of the Fourier-restricted, hypodissipative Navier--Stokes equation.
    Then for all $0< t<T_{max}$,
    \begin{equation}
    \frac{1}{2}\|u(\cdot,t)\|_{L^2}^2
    +\nu\int_0^t\|u(\cdot,\tau)\|_{\dot{H}^\alpha}^2
    \diff\tau
    =
    \frac{1}{2}\left\|u^0\right\|_{L^2}^2.
    \end{equation}
\end{proposition}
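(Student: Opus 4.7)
The plan is to exploit the higher regularity for $t > 0$ established in the previous theorem, namely $u \in C^\infty((0,T_{max}) \times \mathbb{T}^3)$, to carry out the energy computation classically, and then extend the resulting identity down to $\tau = 0$ using continuity in $L^2$.

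First, for every $0 < \tau < T_{max}$, smoothness and the PDE permit the classical computation
\[
\frac{d}{d\tau}\frac{1}{2}\|u(\cdot,\tau)\|_{L^2}^2
= -\nu \|u(\cdot,\tau)\|_{\dot{H}^\alpha}^2 - \left\langle \mathbb{P}_{\mathcal{M}}((u\cdot\nabla)u), u\right\rangle_{L^2}.
\]
The nonlinear term vanishes: since $\mathbb{P}_{\mathcal{M}}$ is the orthogonal projection onto a closed subspace of $L^2$ it is self-adjoint, and $u(\cdot,\tau) \in \dot{H}^s_{\mathcal{M}}$ satisfies $\mathbb{P}_{\mathcal{M}} u = u$, so the inner product reduces to $\langle (u\cdot\nabla)u, u\rangle_{L^2}$, which is zero by the standard integration by parts for smooth divergence-free fields,
\[
\langle (u\cdot\nabla)u, u\rangle_{L^2} = \tfrac{1}{2}\int_{\mathbb{T}^3} u\cdot\nabla|u|^2\,dx = -\tfrac{1}{2}\int_{\mathbb{T}^3}(\nabla\cdot u)|u|^2\,dx = 0.
\]

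Next, integrating the resulting identity $\frac{d}{d\tau}\frac{1}{2}\|u\|_{L^2}^2 = -\nu\|u\|_{\dot{H}^\alpha}^2$ over $[\epsilon, t]$ for arbitrary $0 < \epsilon < t < T_{max}$ yields
\[
\frac{1}{2}\|u(\cdot,t)\|_{L^2}^2 + \nu\int_\epsilon^t \|u(\cdot,\tau)\|_{\dot{H}^\alpha}^2\,d\tau = \frac{1}{2}\|u(\cdot,\epsilon)\|_{L^2}^2.
\]
To pass to the limit $\epsilon \to 0^+$, the continuity $u \in C([0,T_{max}); \dot{H}^s_{\mathcal{M}})$ together with the embedding $\dot{H}^s \hookrightarrow L^2$ from \Cref{HomogenousSobolevProp} (for $s > 0$) or the identification $\dot{H}^0 = L^2_{\mathrm{mean\text{-}free}}$ (for $s = 0$) gives $\|u(\cdot,\epsilon)\|_{L^2} \to \|u^0\|_{L^2}$. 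The dissipation integral is handled by the monotone convergence theorem applied to the non-negative integrand $\tau \mapsto \|u(\cdot,\tau)\|_{\dot{H}^\alpha}^2$, so $\int_\epsilon^t \|u(\cdot,\tau)\|_{\dot{H}^\alpha}^2\,d\tau \to \int_0^t \|u(\cdot,\tau)\|_{\dot{H}^\alpha}^2\,d\tau$, with finiteness of the latter emerging as a by-product.

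The main subtlety is ensuring the nonlinearity vanishes exactly rather than relying on a Fourier-truncation approximation as in the inviscid \Cref{RestrictedEulerEnergyProp}; here the instantaneous parabolic smoothing from the viscous semigroup has already upgraded $u(\cdot,\tau)$ to $C^\infty$ for each $\tau > 0$, which legitimizes the integration by parts without any mollification. The only other delicate point is the passage $\epsilon \to 0^+$, which is entirely controlled by $L^2$-continuity at $\tau = 0$ and monotone convergence on the dissipation term, so no additional a priori bound on the dissipation integral needs to be produced separately.
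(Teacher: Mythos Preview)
Your proof is correct and follows essentially the same approach as the paper: both exploit the instantaneous smoothing $u(\cdot,\tau)\in C^\infty$ for $\tau>0$ to drop the projection and integrate by parts, obtaining $\frac{d}{d\tau}\tfrac{1}{2}\|u\|_{L^2}^2=-\nu\|u\|_{\dot H^\alpha}^2$, and then integrate in time. Your treatment of the endpoint $\epsilon\to 0^+$ via $L^2$-continuity and monotone convergence is slightly more explicit than the paper's, which simply says ``integrating this differential equation completes the proof,'' but the substance is identical.
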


\begin{proof}
    We can see that for all $0<t<T_{max}$,
    \begin{equation}
    \frac{\diff}{\diff t}
    \frac{1}{2}\|u(\cdot,t)\|_{L^2}^2
    =-\nu\left<(-\Delta)^\alpha u,u\right>
    -\left<\mathbb{P}_\mathcal{M}
    ((u\cdot\nabla)u),u\right>.
    \end{equation}
    Because $u(\cdot,t)\in C^\infty$ for all $0<t<T_{max}$, we have sufficient regularity to integrate by parts using the divergence free constraint, and conclude that
\begin{align}
    -\left<\mathbb{P}_\mathcal{M}((u\cdot\nabla)u),u\right> 
    &=
    -\left<(u\cdot\nabla)u,u\right> \\
    &=
    \left<u,(u\cdot\nabla)u\right> \\
    &=
    0.
\end{align}
    Therefore, for all $0<t<T_{max}$, we have
    \begin{equation}
    \frac{\diff}{\diff t}
    \frac{1}{2}\|u(\cdot,t)\|_{L^2}^2
    =-\nu \|u\|_{\dot{H}^\alpha}^2,
    \end{equation}
    and integrating this differential equation completes the proof.
\end{proof}

\begin{remark}
    We have now proven all of the local wellposedness results in \Cref{RestrictedHypoExistenceThmIntro}, which were broken up into multiple pieces to make the proofs more readable. Before moving on to the global wellposedness theory, we will prove a stability result in 
$C_T\dot{H}^\frac{\log(3)}{2\log(2)}_x
\cap
L^2_T\dot{H}^{\frac{\log(3)}{2\log(2)}+\alpha}_x$.
\end{remark}

\begin{theorem} \label{StabilityRestrictedNS}
    Suppose $u,w \in C\left([0,T];
    \dot{H}^\frac{\log(3)}{2\log(2)}_{\mathcal{M}}\right)$,
    are solutions of the Fourier-restricted hypodissipative Navier--Stokes equation.
    Then for all $0\leq t\leq T$,
    \begin{multline}
    \|(u-w)(\cdot,t)\|_{\dot{H}^\frac{\log(3)}{2\log(2)}}^2
    +2\nu\int_0^t\|(u-w)(\cdot,\tau)\|_{\dot{H}^{
    \frac{\log(3)}{2\log(2)}+\alpha}}^2
    \diff \tau
    \leq \\
    \left\|u^0-w^0\right\|_{\dot{H}^
    \frac{\log(3)}{2\log(2)}}^2
    \exp\left(2\mathcal{C}_*\int_0^t
    \|(u+w)(\cdot,t)\|_{\dot{H}^\frac{\log(3)}{2\log(2)}}
    \diff\tau\right).
    \end{multline}
\end{theorem}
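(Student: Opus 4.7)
The plan is to run a Gr\"onwall-type argument on the difference $v=u-w$, retaining the dissipation term on the left by a standard integrating factor trick. Since $B$ is symmetric bilinear, an immediate computation gives
\begin{equation}
B(u,u)-B(w,w)=B(u+w,u-w),
\end{equation}
so $v$ satisfies the perturbation equation
\begin{equation}
\partial_t v+\nu(-\Delta)^\alpha v= B(u+w,v).
\end{equation}
For brevity write $s=\tfrac{\log(3)}{2\log(2)}$. At this specific value of $s$ we have $\tfrac{s}{2}+\tfrac{\log(3)}{4\log(2)}=s$, so \Cref{BilinearBoundLemma} reads simply
\begin{equation}
\|B(\phi,\psi)\|_{\dot H^s}\le \mathcal{C}_*\|\phi\|_{\dot H^s}\|\psi\|_{\dot H^s},
\end{equation}
which is the bilinear bound that will drive the estimate.

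Since both $u$ and $w$ are smooth for positive times by the preceding smoothness theorem, $v$ is also smooth on $(0,T]\times\mathbb{T}^3$ and we may differentiate in the $\dot H^s$ inner product to obtain
\begin{equation}
\frac{d}{dt}\tfrac{1}{2}\|v(t)\|_{\dot H^s}^2
+\nu\|v(t)\|_{\dot H^{s+\alpha}}^2
=\langle v,B(u+w,v)\rangle_{\dot H^s}.
\end{equation}
Applying Cauchy--Schwarz in $\dot H^s$ together with the bilinear bound gives
\begin{equation}
\frac{d}{dt}\|v(t)\|_{\dot H^s}^2+2\nu\|v(t)\|_{\dot H^{s+\alpha}}^2
\le 2\mathcal{C}_*\|(u+w)(t)\|_{\dot H^s}\|v(t)\|_{\dot H^s}^2.
\end{equation}

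Set $y(t)=\|v(t)\|_{\dot H^s}^2$, $D(t)=\|v(t)\|_{\dot H^{s+\alpha}}^2$, $f(t)=2\mathcal{C}_*\|(u+w)(t)\|_{\dot H^s}\ge 0$, and $F(t)=\int_0^t f(\sigma)\,d\sigma$, so that $y'+2\nu D\le fy$. Multiplying through by the integrating factor $e^{-F(t)}$ yields
\begin{equation}
\frac{d}{dt}\!\left(e^{-F(t)}y(t)\right)+2\nu e^{-F(t)}D(t)\le 0.
\end{equation}
Integrating from $\varepsilon>0$ to $t$ and using continuity of $y$ at $0$ (together with Fatou/monotone convergence on the nonnegative integrand $e^{-F}D$) to send $\varepsilon\to 0^+$ gives
\begin{equation}
e^{-F(t)}y(t)+2\nu\int_0^t e^{-F(\tau)}D(\tau)\,d\tau\le y(0).
\end{equation}
Multiplying by $e^{F(t)}$ and using $e^{F(t)-F(\tau)}\ge 1$ for $\tau\le t$ to discard that factor inside the dissipation integral produces
\begin{equation}
y(t)+2\nu\int_0^t D(\tau)\,d\tau \le y(0)\,e^{F(t)},
\end{equation}
which is the desired inequality.

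The only genuinely delicate point is the passage to the differential inequality: at $t=0$ the mild solutions only live in $C_T\dot H^s_x$, so we carry out the energy identity on $(\varepsilon,T]$ where both $u$ and $w$ are smooth, and then take $\varepsilon\to 0^+$ using continuity of $v$ in $\dot H^s$ at $t=0$ and monotone convergence for the $D$-integral. The rest is just the standard Gr\"onwall-with-dissipation integrating-factor identity; the key structural input is that at the critical regularity $s=\tfrac{\log(3)}{2\log(2)}$ the bilinear bound of \Cref{BilinearBoundLemma} closes purely at the $\dot H^s$ level, so no $\dot H^{s+\alpha}$ norm of $v$ appears on the right-hand side and the dissipation term can be kept intact on the left.
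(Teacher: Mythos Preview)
Your proof is correct and follows essentially the same approach as the paper: derive the perturbation equation $\partial_t v+\nu(-\Delta)^\alpha v=B(u+w,v)$, use the bilinear bound at $s=\tfrac{\log(3)}{2\log(2)}$ to get $y'+2\nu D\le fy$, and apply Gr\"onwall. The paper packages the argument slightly differently by setting $G(t)=y(t)+2\nu\int_0^t D$ and observing $G'\le fG$ directly (since $\|u-w\|_{\dot H^s}^2\le G$), but this is equivalent to your integrating-factor computation.
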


\begin{proof}
    We proceed as in the proof of \Cref{StabilityRestrictedEuler}, 
    observing that
    \begin{equation}
    \partial_t(u-w)
    +\nu(-\Delta)^\alpha (u-w)
    =B(u+w,u-w),
    \end{equation}
    and letting
    \begin{equation}
    G(t)= \|(u-w)(\cdot,t)\|_{
    \dot{H}^\frac{\log(3)}{2\log(2)}}^2
    +2\nu\int_0^t \|(u-w)\|_{
    \dot{H}^{\frac{\log(3)}{2\log(2)}+\alpha}}^2.
    \end{equation}
    Now we compute for all $0<t\leq T$,
    \begin{align}
    \frac{\diff}{\diff t} G(t)
    &=
    \frac{\diff}{\diff t}\|(u-w)(\cdot,t)\|_{
    \dot{H}^\frac{\log(3)}{2\log(2)}}^2
    +2\nu \|(u-w)(\cdot,t)\|_{
    \dot{H}^{\frac{\log(3)}{2\log(2)}+\alpha}}^2 \\
    &=
    2\Big<B(u+w,u-w),u-w\Big> \\
    &\leq 
    2C_* \|u+w\|_{
    \dot{H}^\frac{\log(3)}{2\log(2)}}
    \|u-w\|_{
    \dot{H}^\frac{\log(3)}{2\log(2)}}^2 \\
    &\leq 
    2C_* \|u+w\|_{
    \dot{H}^\frac{\log(3)}{2\log(2)}} G.
    \end{align}
    Applying Gr\"onwall's inequality, this completes the proof.
\end{proof}

\subsection{Global wellposedness}
In this section we will deal with global wellposedness for the Fourier-restricted hypodissipative Navier--Stokes equation: for generic initial data when $\alpha\geq 
\frac{\log(3)}{4\log(2)}$, 
and for small initial data when $0<\alpha<\frac{\log(3)}{4\log(2)}$.
We begin by proving \Cref{GwpThmIntro}, which is restated for the readers convenience.

\begin{theorem} \label{GwpThm}
    Fix the degree of dissipation $\alpha\geq \frac{\log(3)}{4\log(2)}$, and the viscosity $\nu>0$.
    If $\alpha=\frac{\log(3)}{4\log(2)}$, suppose $u^0\in \dot{H}^s_\mathcal{M}$ with $s>0$.
    If $\alpha>\frac{\log(3)}{4\log(2)}$, suppose $u^0\in \dot{H}^s_\mathcal{M}$, with $s\geq 0$.
    Then there exists a unique, global smooth solution of the Fourier-restricted hypodissipative Navier--Stokes equation $u\in C\left([0,+\infty);\dot{H}^s_\mathcal{M}\right)$,
    and for all $0\leq t<+\infty$,
    \begin{equation}
    \|u(\cdot,t)\|_{\dot{H}^s}^2
    \leq
    \left\|u^0\right\|_{\dot{H}^s}^2
    \exp\left(\frac{\mathcal{C}_s^2}{4(2\pi)^{
    4\alpha-\frac{\log(3)}{2\log(2)}}}
    \frac{\left\|u^0\right\|_{L^2}^2}{\nu^2}
    \right).
    \end{equation}
\end{theorem}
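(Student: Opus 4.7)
The plan is to extend the local mild solutions from \Cref{ExistenceRestrictedNS} to global-in-time solutions by establishing an a priori $\dot{H}^s$ bound that is uniform in $t$, which precludes the blowup criterion in that theorem. The exponential factor in the statement will come from combining a Gronwall inequality with the $L^2$ energy equality already proven in the excerpt.

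By the smoothing effect of the fractional heat semigroup (already established for mild solutions), $u$ is smooth for $t > 0$, so the energy identity
\[
\frac{d}{dt}\tfrac{1}{2}\|u(\cdot,t)\|_{\dot{H}^s}^2 + \nu\|u(\cdot,t)\|_{\dot{H}^{s+\alpha}}^2 = \langle B(u,u), u\rangle_{\dot{H}^s}
\]
holds pointwise in $t > 0$, and the nonlinear term is controlled by Cauchy--Schwarz and \Cref{BilinearBoundLemma}:
\[
|\langle B(u,u), u\rangle_{\dot{H}^s}| \leq \|B(u,u)\|_{\dot{H}^s}\|u\|_{\dot{H}^s} \leq \mathcal{C}_s \|u\|_{\dot{H}^{\frac{s}{2} + \frac{\log(3)}{4\log(2)}}}^2 \|u\|_{\dot{H}^s}.
\]

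The key step is a weighted Young inequality designed to produce the dissipation on the right and a Gronwall factor of the form $C\nu^{-1}\|u\|_{\dot{H}^\alpha}^2$. I apply the midpoint Cauchy--Schwarz interpolation $\|u\|_{\dot{H}^{\frac{s}{2}+\frac{\log(3)}{4\log(2)}}}^2 \leq \|u\|_{\dot{H}^{\frac{\log(3)}{4\log(2)}}}\|u\|_{\dot{H}^{s+\frac{\log(3)}{4\log(2)}}}$, valid because $\frac{s}{2}+\frac{\log(3)}{4\log(2)}$ is exactly the midpoint of $\frac{\log(3)}{4\log(2)}$ and $s+\frac{\log(3)}{4\log(2)}$, and I use \Cref{HomogenousSobolevProp} (together with the hypothesis $\alpha \geq \frac{\log(3)}{4\log(2)}$) to dominate each factor by the corresponding norm at regularity $\alpha$ and $s+\alpha$. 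Applying $2ab \leq a^2 + b^2$ with $a = \sqrt{2\nu}\|u\|_{\dot{H}^{s+\alpha}}$ and $b$ chosen so that the product matches, and tracking the Poincaré constants carefully, yields an inequality of the form
\[
\frac{d}{dt}\|u\|_{\dot{H}^s}^2 \leq \frac{\mathcal{C}_s^2}{2(2\pi)^{4\alpha-\frac{\log(3)}{2\log(2)}}\,\nu}\,\|u\|_{\dot{H}^\alpha}^2\,\|u\|_{\dot{H}^s}^2.
\]
Gronwall's inequality, followed by the viscous $L^2$ energy equality $\int_0^t \|u\|_{\dot{H}^\alpha}^2\,d\tau \leq \|u^0\|_{L^2}^2/(2\nu)$, then produces the stated time-independent exponential bound.

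The uniform $\dot{H}^s$ estimate rules out the blowup criterion from \Cref{ExistenceRestrictedNS}, so $T_{\max}=+\infty$. Uniqueness follows from \Cref{StabilityRestrictedNS}, the full energy equality is inherited from the earlier viscous proposition, and $C^\infty$ regularity for $t > 0$ follows from the iterated smoothing bootstrap already carried out in the paper. The step I expect to require the most care is matching the Poincaré and Young constants so that the exponent in $(2\pi)$ is exactly $4\alpha - \frac{\log(3)}{2\log(2)}$; I would carry the whole argument out on a Galerkin approximation, which also lets me sidestep the formal division by $\|u\|_{\dot{H}^{s+\alpha}}$ in the weighted Young step (the degenerate case being harmless since $\|u\|_{\dot{H}^{s+\alpha}}=0$ forces $u\equiv 0$ by the mean-free condition and there is then nothing to prove).
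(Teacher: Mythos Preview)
Your proposal is correct and follows essentially the same route as the paper: differentiate $\|u\|_{\dot H^s}^2$, apply \Cref{BilinearBoundLemma}, interpolate, use Young's inequality to absorb the $\dot H^{s+\alpha}$ term into the dissipation, invoke \Cref{HomogenousSobolevProp} to pass to $\dot H^\alpha$, then Gr\"onwall plus the energy equality. The only cosmetic difference is the choice of interpolation endpoints: the paper interpolates $\frac{s}{2}+\frac{\log 3}{4\log 2}$ between $\frac{\log 3}{2\log 2}-\alpha$ and $s+\alpha$ (so that one factor is already at level $s+\alpha$ and Poincar\'e is applied only to the other), whereas you interpolate between $\frac{\log 3}{4\log 2}$ and $s+\frac{\log 3}{4\log 2}$ and then apply Poincar\'e to both factors; the resulting differential inequality and constants are the same.
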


\begin{proof}
    We have already shown that there exists a unique mild solution $u\in C\left([0,T_{max});\dot{H}^s_\mathcal{M}\right)$.
    Next we compute that for all $0<t<T_{max}$,
    \begin{equation}
    \frac{\diff}{\diff t}
    \|u(\cdot,t)\|_{\dot{H}^s}^2
    =
    -2\nu\|u\|_{\dot{H}^{s+\alpha}}^2
    +2\Big<B(u,u),u\Big>_{\dot{H}^s}.
    \end{equation}
    Applying \Cref{BilinearBoundLemma}, interpolating between Hilbert spaces, and applying Young's inequality, we find that
    \begin{align}
    \Big<B(u,u),u\Big>_{\dot{H}^s}
    &\leq 
    \mathcal{C}_s \|u\|_{\dot{H}^\frac{s}{2}+
    \frac{\log(3)}{4\log(2)}}^2
    \|u\|_{\dot{H}^s} \\
    &\leq 
    \mathcal{C}_s \|u\|_{
    \dot{H}^{\frac{\log(3)}{2\log(2)}-\alpha}}
    \|u\|_{\dot{H}^s}
    \|u\|_{\dot{H}^{s+\alpha}} \\
    &\leq 
    \frac{\mathcal{C}_s^2}{4\nu}
    \|u\|_{\dot{H}^{\frac{\log(3)}{2\log(2)}-\alpha}}^2
    \|u\|_{\dot{H}^s}^2
    +\nu\|u\|_{\dot{H}^{s+\alpha}}^2.
    \end{align}
    Therefore, we may conclude that
    \begin{equation}
    \frac{\diff}{\diff t}
    \|u(\cdot,t)\|_{\dot{H}^s}^2
    \leq 
    \frac{\mathcal{C}_s^2}{2\nu}
    \|u\|_{\dot{H}^{\frac{\log(3)}{2\log(2)}-\alpha}}^2
    \|u\|_{\dot{H}^s}^2.
    \end{equation}
    
    Next recall that $\alpha\geq 
    \frac{\log(3)}{4\log(2)}$,
    and so 
    \begin{equation}
    \frac{\log(3)}{2\log(2)}-\alpha \leq\alpha.
    \end{equation}
    Applying \Cref{HomogenousSobolevProp}, we can see that
    \begin{equation}
    \|u\|_{\dot{H}^{\frac{\log(3)}{2\log(2)}-\alpha}}
    \leq
    \frac{1}{(2\pi)^{2\alpha-\frac{\log(3)}{2\log(2)}}}
    \|u\|_{\dot{H}^\alpha},
    \end{equation}
    and so
    \begin{equation}
    \frac{\diff}{\diff t}
    \|u(\cdot,t)\|_{\dot{H}^s}^2
    \leq 
    \frac{\mathcal{C}_s^2}{2(2\pi)^{
    4\alpha-\frac{\log(3)}{2\log(2)}}\nu}
    \|u\|_{\dot{H}^\alpha}^2
    \|u\|_{\dot{H}^s}^2.
    \end{equation}
    Applying Gr\"onwall's inequality and the energy equality, we conclude that
    for all $0< t<T_{max}$,
    \begin{align}
    \|u(\cdot,t)\|_{\dot{H}^s}^2
    &\leq 
    \left\|u^0\right\|_{\dot{H}^s}^2
    \exp\left(\frac{\mathcal{C}_s^2}{2(2\pi)^{
    4\alpha-\frac{\log(3)}{2\log(2)}}\nu}
    \int_0^t
    \|u(\cdot,\tau)\|_{\dot{H}^\alpha}^2
    \diff\tau\right)  \\
    &\leq 
    \left\|u^0\right\|_{\dot{H}^s}^2
    \exp\left(\frac{\mathcal{C}_s^2}{4(2\pi)^{
    4\alpha-\frac{\log(3)}{2\log(2)}}}
    \frac{\left\|u^0\right\|_{L^2}^2}{\nu^2}
    \right).
    \end{align}
    Note that we have already established that if $T_{max}<+\infty$, then
    \begin{equation}
    \lim_{t\to T_{max}}
    \|u(\cdot,t)\|_{\dot{H}^s}^2
    =+\infty,
    \end{equation}
    so this implies that $T_{max}=+\infty$, and this completes the proof.
\end{proof}

We will now prove the small data global wellposedness result \Cref{SmallDataGwpIntro}, but first we will prove a key lemma.

\begin{lemma} \label{SmallDataLemma}
    Suppose $u\in C\left([0,T_{max});
    \dot{H}^s_\mathcal{M}\right)$,
    is a smooth solution of the Fourier-restricted 
    hypodissipative Navier--Stokes equation.
    Then for all $0<t<T_{max}$, and for all $s'\geq 0, s'\geq \frac{\log(3)}{2\log(2)}-2\alpha$,
    \begin{equation}
    \frac{\diff}{\diff t} \frac{1}{2}
    \|u(\cdot,t)\|_{\dot{H}^{s'}}^2
    \leq 
    -\nu\|u\|_{\dot{H}^{s'+\alpha}}^2
    +\mathcal{C}_{s'}
    \|u\|_{\dot{H}^{
    \frac{\log(3)}{2\log(2)}-2\alpha}}
    \|u\|_{\dot{H}^{s'+\alpha}}^2
    \end{equation}
\end{lemma}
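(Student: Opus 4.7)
The plan is to apply the standard energy method and then to control the nonlinear contribution by combining the bilinear bound of \Cref{BilinearBoundLemma} with Sobolev interpolation, absorbing the correct number of derivatives into the dissipative term $\|u\|_{\dot{H}^{s'+\alpha}}^2$. Because the solution is smooth on $(0,T_{\max})$, differentiating under the integral and using the equation $\partial_t u = -\nu(-\Delta)^\alpha u + B(u,u)$ is unproblematic.

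First I would compute
\begin{equation}
\frac{d}{dt}\frac{1}{2}\|u(\cdot,t)\|_{\dot{H}^{s'}}^2
= -\nu\|u\|_{\dot{H}^{s'+\alpha}}^2
+\Big\langle B(u,u),u\Big\rangle_{\dot{H}^{s'}}.
\end{equation}
Next, to set up for Young's inequality and to expose the $\dot{H}^{s'+\alpha}$ norm, I would split the derivatives asymmetrically in the $\dot{H}^{s'}$ pairing via Plancherel and Cauchy--Schwarz,
\begin{equation}
\Big|\langle B(u,u),u\rangle_{\dot{H}^{s'}}\Big|
\leq \|B(u,u)\|_{\dot{H}^{s'-\alpha}} \|u\|_{\dot{H}^{s'+\alpha}}.
\end{equation}
Then I would apply \Cref{BilinearBoundLemma} at regularity $s'-\alpha$ to bound
\begin{equation}
\|B(u,u)\|_{\dot{H}^{s'-\alpha}}
\leq \mathcal{C}_{s'-\alpha}
\|u\|_{\dot{H}^{(s'-\alpha)/2+\frac{\log(3)}{4\log(2)}}}^{2}.
\end{equation}
The critical observation is that the exponent appearing here is exactly the average of the two exponents in the statement of the lemma:
\begin{equation}
\frac{s'-\alpha}{2}+\frac{\log(3)}{4\log(2)}
= \frac{1}{2}\left(\Big(\tfrac{\log(3)}{2\log(2)}-2\alpha\Big)+\big(s'+\alpha\big)\right).
\end{equation}
Hence Sobolev interpolation yields
\begin{equation}
\|u\|_{\dot{H}^{(s'-\alpha)/2+\frac{\log(3)}{4\log(2)}}}^{2}
\leq \|u\|_{\dot{H}^{\frac{\log(3)}{2\log(2)}-2\alpha}}\,
\|u\|_{\dot{H}^{s'+\alpha}},
\end{equation}
and stringing the estimates together (relabelling $\mathcal{C}_{s'-\alpha}$ as $\mathcal{C}_{s'}$) gives precisely
\begin{equation}
\Big|\langle B(u,u),u\rangle_{\dot{H}^{s'}}\Big|
\leq \mathcal{C}_{s'}\|u\|_{\dot{H}^{\frac{\log(3)}{2\log(2)}-2\alpha}}\,
\|u\|_{\dot{H}^{s'+\alpha}}^{2},
\end{equation}
which combined with the identity for $\frac{d}{dt}\frac{1}{2}\|u\|_{\dot{H}^{s'}}^2$ is the claimed inequality.

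The main thing to verify carefully is that the interpolation is admissible, that is, that the intermediate exponent lies between the two endpoints. Since $\alpha>0$, the hypothesis $s' \geq \frac{\log(3)}{2\log(2)}-2\alpha$ implies $\frac{\log(3)}{2\log(2)}-2\alpha \leq s'+\alpha$, so the two endpoints are ordered correctly and the average lies in between; this is exactly the range stipulated in the lemma. A minor technical point is that when $s'-\alpha<0$ one must interpret $\|\cdot\|_{\dot{H}^{s'-\alpha}}$ via the Fourier definition, which is well-defined for mean-free functions, and verify that \Cref{BilinearBoundLemma} extends to this range; the proof of that lemma (deferred to the appendix) is purely a Fourier-side computation on the lattice $\mathcal{M}$, and goes through verbatim for any real $s$ provided the norms on both sides make sense, so no additional obstacle arises here.
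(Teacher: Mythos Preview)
Your proof is correct and close in spirit to the paper's, but the two differ in where the derivatives are distributed. The paper applies Cauchy--Schwarz symmetrically, bounding $\langle B(u,u),u\rangle_{\dot H^{s'}}\le \|B(u,u)\|_{\dot H^{s'}}\|u\|_{\dot H^{s'}}$, then invokes \Cref{BilinearBoundLemma} at regularity $s'$ to get $\mathcal{C}_{s'}\|u\|_{\dot H^{s'/2+\frac{\log 3}{4\log 2}}}^2\|u\|_{\dot H^{s'}}$, and finally interpolates the three factors simultaneously to reach $\|u\|_{\dot H^{\frac{\log 3}{2\log 2}-2\alpha}}\|u\|_{\dot H^{s'+\alpha}}^2$. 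You instead split asymmetrically at the Cauchy--Schwarz step, apply the bilinear bound at regularity $s'-\alpha$, and then need only a single two-endpoint interpolation. Your route makes the interpolation step cleaner; the paper's route has the advantage that the bilinear lemma is invoked only at the nonnegative regularity $s'\ge 0$, which is exactly the range in which \Cref{BilinearBoundLemma} is stated. You correctly flag that your argument requires the bilinear bound at $s'-\alpha$, which can be negative in part of the parameter range (for instance at $s'=\frac{\log 3}{2\log 2}-2\alpha$ when $\alpha>\frac{\log 3}{6\log 2}$); your remark that the Fourier-side proof in the appendix goes through for all real $s$ is reasonable, but strictly speaking this is a small extension beyond what the paper states, whereas the paper's own argument stays entirely within the lemma as written.
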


\begin{proof}
    Applying \Cref{BilinearBoundLemma} and interpolating between Hilbert spaces, we find that for all $0< t<T_{max}$ and for all $s'>0$
    \begin{align}
    \frac{\diff}{\diff t} \frac{1}{2}
    \|u(\cdot,t)\|_{\dot{H}^{s'}}^2
    &\leq 
    -\nu\|u\|_{\dot{H}^{s'+\alpha}}^2
    +\mathcal{C}_{s'}
    \|u\|_{\dot{H}^{
    \frac{s'}{2}+\frac{\log(3)}{4\log(2)}}}^2
    \|u\|_{\dot{H}^{s'}} \\
    &\leq 
    -\nu\|u\|_{\dot{H}^{s'+\alpha}}^2
    +\mathcal{C}_{s'}
    \|u\|_{\dot{H}^{
    \frac{\log(3)}{2\log(2)}-2\alpha}}
    \|u\|_{\dot{H}^{s'+\alpha}}^2,
    \end{align}
    which completes the proof.
\end{proof}

\begin{theorem} \label{SmallDataGwp}
    Suppose $u^0\in \dot{H}^s_{\mathcal{M}}$, where $s>\frac{\log(3)}{2\log(2)}-2\alpha$ and $0<\alpha<\frac{\log(3)}{4\log(2)}$. Further suppose that 
    \begin{equation}
    \left\|u^0\right\|_{\dot{H}^{\frac{\log(3)}{2\log(2)}-2\alpha}}
    <
    \frac{\nu}
    {\max\left(
    \mathcal{C}_{\frac{\log(3)}{4\log(2)}-2\alpha},
    \mathcal{C}_s\right)}.
    \end{equation}
    Then there exists a global smooth solution of the Fourier-restricted hypodissipative Navier--Stokes equation
    $u\in C\left([0,+\infty);\dot{H}^s_\mathcal{M}\right)$,
    and for all $0< t<+\infty$,
    \begin{align}
    \|u(\cdot,t)\|_{
    \dot{H}^{\frac{\log(3)}{2\log(2)}-2\alpha}}
    &\leq
    \left\|u^0\right\|_{
    \dot{H}^{\frac{\log(3)}{2\log(2)}-2\alpha}} \\
    \|u(\cdot,t)\|_{\dot{H}^s}
    &\leq
    \left\|u^0\right\|_{\dot{H}^s}.
    \end{align}
\end{theorem}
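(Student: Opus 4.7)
The strategy is a two-level bootstrap built entirely on \Cref{SmallDataLemma}. Writing $s_c := \frac{\log(3)}{2\log(2)} - 2\alpha$, the smallness assumption guarantees that the coefficient $\mathcal{C}_{s'}\|u\|_{\dot H^{s_c}} - \nu$ appearing on the right-hand side of that lemma is nonpositive as soon as $s' \in \{s_c, s\}$ and $\|u\|_{\dot H^{s_c}}$ is no larger than its initial value. The plan is therefore to show first that $\|u(\cdot,t)\|_{\dot H^{s_c}}$ is non-increasing (a continuity/bootstrap argument at the critical scale), and then to deduce that $\|u(\cdot,t)\|_{\dot H^s}$ is also non-increasing, which forbids blowup via the criterion in \Cref{ExistenceRestrictedNS}.

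First I would invoke the local existence theory. Since $\alpha < \frac{\log(3)}{4\log(2)}$ we have $s_c > 0$, and since $s > s_c$ by hypothesis, \Cref{HomogenousSobolevProp} gives $u^0 \in \dot H^{s_c}_{\mathcal{M}}$ as well. The local wellposedness result \Cref{ExistenceRestrictedNS} (together with the smoothing propagation already established in this section) produces a mild solution $u \in C([0,T_{\max});\dot H^s_{\mathcal{M}}) \subset C([0,T_{\max});\dot H^{s_c}_{\mathcal{M}})$ that is $C^\infty$ in positive time, so all energy identities used below are justified on $(0,T_{\max})$.

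Next, apply \Cref{SmallDataLemma} with $s' = s_c$:
\begin{equation}
\frac{\mathrm{d}}{\mathrm{d}t}\tfrac{1}{2}\|u(\cdot,t)\|_{\dot H^{s_c}}^2 \leq \bigl(\mathcal{C}_{s_c}\|u(\cdot,t)\|_{\dot H^{s_c}} - \nu\bigr)\|u(\cdot,t)\|_{\dot H^{s_c+\alpha}}^2 .
\end{equation}
Set $T^{\ast} := \sup\{t \in [0,T_{\max}) : \|u(\cdot,\tau)\|_{\dot H^{s_c}} \leq \|u^0\|_{\dot H^{s_c}} \text{ for all } \tau \in [0,t]\}$. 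Continuity in time of $t \mapsto \|u(\cdot,t)\|_{\dot H^{s_c}}$ ensures $T^{\ast}>0$. On $[0,T^{\ast})$ the strict hypothesis $\mathcal{C}_{s_c}\|u^0\|_{\dot H^{s_c}} < \nu$ makes the bracket strictly negative, so the $\dot H^{s_c}$ norm is (weakly) decreasing and cannot exceed $\|u^0\|_{\dot H^{s_c}}$; a standard open/closed argument then gives $T^{\ast}=T_{\max}$ and $\|u(\cdot,t)\|_{\dot H^{s_c}} \leq \|u^0\|_{\dot H^{s_c}}$ on $[0,T_{\max})$. Now apply \Cref{SmallDataLemma} again with $s' = s$ to get $\frac{\mathrm{d}}{\mathrm{d}t}\tfrac{1}{2}\|u\|_{\dot H^s}^2 \leq (\mathcal{C}_s\|u\|_{\dot H^{s_c}} - \nu)\|u\|_{\dot H^{s+\alpha}}^2 \leq 0$, from which $\|u(\cdot,t)\|_{\dot H^s} \leq \|u^0\|_{\dot H^s}$ for all $t \in [0,T_{\max})$. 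The blow-up criterion of \Cref{ExistenceRestrictedNS} is thereby violated, forcing $T_{\max} = +\infty$, and smoothness for positive times has already been proven.

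The argument is genuinely short once \Cref{SmallDataLemma} is in hand; no new estimate is required. The one place requiring care is the bootstrap at the critical level: the hypothesis is given as a strict inequality precisely so that $T^{\ast}$ is not cut off at a point where the bracket $\mathcal{C}_{s_c}\|u\|_{\dot H^{s_c}} - \nu$ vanishes. The order in which the two applications of the lemma are made is also essential—one must control $\dot H^{s_c}$ first, independently of $s$, since only that norm appears on the right-hand side of the lemma as the factor whose smallness kills the nonlinearity at every higher level.
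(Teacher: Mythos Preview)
Your proposal is correct and follows essentially the same approach as the paper: apply \Cref{SmallDataLemma} first at the critical regularity $s_c=\frac{\log(3)}{2\log(2)}-2\alpha$ to obtain monotonicity of $\|u\|_{\dot H^{s_c}}$ via a continuity argument, then apply it again at level $s$ using the persistent smallness of the $\dot H^{s_c}$ norm. Your write-up is slightly more explicit than the paper's about the bootstrap (defining $T^\ast$ and running an open/closed argument), but the underlying logic is identical.
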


\begin{proof}
    We have already proven that there exists a local mild solution $u\in C\left([0,T_{max});\dot{H}^s_\mathcal{M}\right)$.
    We can see from \Cref{SmallDataLemma}, 
    that for all $0<t<T_{max}$,
    \begin{equation}
    \frac{\diff}{\diff t} \frac{1}{2}
    \|u(\cdot,t)\|_{\dot{H}^{
    \frac{\log(3)}{2\log(2)}-2\alpha}}^2
    \leq 
    -\nu\|u\|_{\dot{H}^{
    \frac{\log(3)}{2\log(2)}-\alpha}}^2
    +\mathcal{C}_{\frac{\log(3)}{2\log(2)}-2\alpha}
    \|u\|_{\dot{H}^{
    \frac{\log(3)}{2\log(2)}-2\alpha}}
    \|u\|_{\dot{H}^{
    \frac{\log(3)}{2\log(2)}-\alpha}}^2.
    \end{equation}
    This clearly implies that if 
    \begin{equation}
    \|u(\cdot,t)\|_{\dot{H}^{
    \frac{\log(3)}{2\log(2)}-2\alpha}}
    <
    \frac{\nu}
    {\mathcal{C}_{\frac{\log(3)}{2\log(2)}-2\alpha}},
    \end{equation}
    then
    \begin{equation}
    \frac{\diff}{\diff t}
    \|u(\cdot,t)\|_{\dot{H}^{
    \frac{\log(3)}{2\log(2)}-2\alpha}}
    \leq 0.
    \end{equation}
    Using the fact that
    \begin{equation}
    \left\|u^0\right\|_{\dot{H}^{
    \frac{\log(3)}{2\log(2)}-2\alpha}}
    <
    \frac{\nu}
    {\mathcal{C}_{\frac{\log(3)}{2\log(2)}-2\alpha}},
    \end{equation}
    we can conclude that for all $0< t<T_{max}$,
    \begin{equation}
    \|u(\cdot,t)\|_{\dot{H}^{
    \frac{\log(3)}{2\log(2)}-2\alpha}}
    \leq 
    \left\|u^0\right\|_{\dot{H}^{
    \frac{\log(3)}{2\log(2)}-2\alpha}}.
    \end{equation}

    Now we will control the $\dot{H}^s$ norm, also using \Cref{SmallDataLemma}.
    Observe that for all $0<t<T_{max}$,
    \begin{align}
    \mathcal{C}_s
    \|u(\cdot,t)\|_{\dot{H}^{
    \frac{\log(3)}{2\log(2)}-2\alpha}}
    &\leq 
    \mathcal{C}_s
    \left\|u^0\right\|_{\dot{H}^{
    \frac{\log(3)}{2\log(2)}-2\alpha}} \\
    &\leq 
    \nu,
    \end{align}
    and therefore, for all $0<t<T_{max}$
    \begin{align}
    \frac{\diff}{\diff t} \frac{1}{2}
    \|u(\cdot,t)\|_{\dot{H}^s}^2
    &\leq 
    -\nu\|u\|_{\dot{H}^{s+\alpha}}^2
    +\mathcal{C}_s
    \|u\|_{\dot{H}^{
    \frac{\log(3)}{2\log(2)}-2\alpha}}
    \|u\|_{\dot{H}^{s+\alpha}}^2 \\
    &\leq 
    0.
    \end{align}
    Therefore we may conclude that for all $0<t<T_{max}$,
    \begin{equation}
    \|u(\cdot,t)\|_{\dot{H}^s}
    \leq
    \left\|u^0\right\|_{\dot{H}^s},
    \end{equation}
    which also implies that $T_{max}=+\infty$
    and completes the proof.
    
\end{proof}

\section{Permutation symmetry} \label{PermutationSection}

In this section, we will conisder permutation symmetry for both the Fourier-restricted Euler and hypodissipative Navier--Stokes equations, as well as for the full Euler and Navier--Stokes equations.

    \begin{lemma} \label{DotLemma}
    For all $v,w\in\mathbb{R}^3$ and for all $P\in\mathcal{P}_3$,
    \begin{equation}
        P(v)\cdot w
        =
        v\cdot P^{-1}(w).
    \end{equation}
    \end{lemma}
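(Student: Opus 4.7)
The plan is to prove this identity by directly unpacking the definition $P(v)_i = v_{P(i)}$ and performing a change of summation index. Concretely, I would write
\[
P(v)\cdot w = \sum_{i=1}^{3} P(v)_i\, w_i = \sum_{i=1}^{3} v_{P(i)}\, w_i,
\]
and then substitute $j = P(i)$, which is legitimate because $P$ is a bijection on $\{1,2,3\}$, hence also a bijection on the summation index set. Under this substitution $i = P^{-1}(j)$, and the sum becomes
\[
\sum_{j=1}^{3} v_j\, w_{P^{-1}(j)}.
\]

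The last step is to recognize the right factor using the same defining convention, but applied to $P^{-1}$: by definition $P^{-1}(w)_j = w_{P^{-1}(j)}$. Therefore
\[
\sum_{j=1}^{3} v_j\, w_{P^{-1}(j)} = \sum_{j=1}^{3} v_j\, P^{-1}(w)_j = v\cdot P^{-1}(w),
\]
which is exactly the desired equality. There is no real obstacle here beyond bookkeeping; morally, the content of the lemma is that the map $v\mapsto P(v)$ is realized by a permutation matrix, whose transpose coincides with its inverse, so the identity is nothing more than the standard adjoint relation $\langle Pv,w\rangle = \langle v, P^\top w\rangle = \langle v, P^{-1}w\rangle$. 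Writing it out as above keeps the proof self-contained and matches the elementwise conventions used throughout \Cref{bunchofdefs}.
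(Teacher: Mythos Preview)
Your proof is correct and essentially identical to the paper's own argument: both expand the dot product, apply the change of index $j=P(i)$ using bijectivity of $P$, and then identify the resulting sum as $v\cdot P^{-1}(w)$. Your additional remark about permutation matrices and the adjoint relation is a nice conceptual gloss but not needed for the proof.
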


    \begin{proof}
    Note that every permutation is invertible and its inverse is again a permutation.
    Using the fact that permutations are bijective, we let $j=P(i)$ and compute that
    \begin{align}
    P(v)\cdot w
    &=
    \sum_{i=1}^3
    v_{P(i)} w_i \\
    &=
    \sum_{j=1}^3
    v_{j}
    w_{P^{-1}(j)} \\
    &=
    v\cdot P^{-1}(w).
    \end{align}
    \end{proof}

    \begin{proposition} \label{MaterialDerivativePermute}
         For all $u\in \dot{H}^s\left(\mathbb{T}^3\right), s\geq 1$,
        \begin{equation}
            (u^P\cdot\nabla) u^P
            =\big((u\cdot\nabla)u\big)^P
        \end{equation}
    In particular, if $u$ is permutation symmetric, then $(u\cdot\nabla) u$ is permutation symmetric.
    \end{proposition}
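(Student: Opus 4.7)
The plan is to exploit the fact that a permutation $P$, viewed as a linear map on $\mathbb{R}^3$ via $(Pv)_i = v_{P(i)}$, is an orthogonal matrix with $P^{-1} = P^T$. First I would rewrite the self-advection in matrix form as $(u\cdot\nabla)u = (\nabla u)\,u$, where $(\nabla u)_{ij} = \partial_j u_i$. This reduces the claim to the pointwise matrix identity
\begin{equation}
    (\nabla u^P)(x)\, u^P(x)
    = P\,(\nabla u)(P^{-1}x)\, u(P^{-1}x).
\end{equation}

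Next I would apply the chain rule to $u^P(x) = P\,u(P^{-1}x)$. The outer factor of $P$ acts by left multiplication on the vector-valued map $u$, while the Jacobian of the inner map $x \mapsto P^{-1}x$ contributes $P^{-1}$ on the right, yielding the conjugation formula
\begin{equation}
    (\nabla u^P)(x) = P\,(\nabla u)(P^{-1}x)\,P^{-1}.
\end{equation}
This step is essentially \Cref{DotLemma} in disguise: pairing the gradient in the $x$-variable against $P^{-1}x$ via chain rule is equivalent to transferring $P$ from one side of a dot product to the other. One may equivalently check it in indices: $\partial_j\bigl[u_{P(i)}(P^{-1}x)\bigr] = (\partial_{P(j)}u_{P(i)})(P^{-1}x)$, using the entry-level formula $(P^{-1}x)_k = x_{P^{-1}(k)}$ for the inner Jacobian.

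Finally I would multiply the conjugation formula on the right by $u^P(x) = P\,u(P^{-1}x)$. The middle factor $P^{-1}P = I$ cancels, leaving
\begin{equation}
    (\nabla u^P)(x)\,u^P(x)
    = P\,(\nabla u)(P^{-1}x)\,u(P^{-1}x)
    = P\,\bigl[(u\cdot\nabla)u\bigr](P^{-1}x),
\end{equation}
which is exactly $\bigl((u\cdot\nabla)u\bigr)^P(x)$ by definition. The permutation-symmetry corollary is then immediate: if $u = u^P$, the identity just derived gives $(u\cdot\nabla)u = \bigl((u\cdot\nabla)u\bigr)^P$.

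The only real care required is bookkeeping, keeping the conventions $(Pv)_i = v_{P(i)}$ and $(P^{-1}x)_k = x_{P^{-1}(k)}$ consistent so that the Jacobian of $x \mapsto P^{-1}x$ comes out as $P^{-1}$ rather than $P$. There is no genuine analytic obstacle; the statement is a one-line consequence of $\nabla u$ transforming as a $(1,1)$-tensor under orthogonal changes of basis, once the index conventions are unpacked.
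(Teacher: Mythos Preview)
Your proof is correct and is essentially the same chain-rule computation as the paper's, just packaged differently: the paper shows $(u^P\cdot\nabla)[f(P^{-1}x)] = ((u\cdot\nabla)f)(P^{-1}x)$ for scalars $f$ and then takes $f$ to be the components of $Pu$, whereas you compute the conjugation formula $\nabla u^P(x) = P\,(\nabla u)(P^{-1}x)\,P^{-1}$ for the full Jacobian and then multiply by $u^P$. Both arguments reduce to the same index identity $\partial_j\bigl[u_{P(i)}(P^{-1}x)\bigr] = (\partial_{P(j)}u_{P(i)})(P^{-1}x)$.
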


    \begin{proof}
        First we will note that
        \begin{equation}
        u^P\cdot\nabla =\sum_{i=1}^3 u_{P(i)}\partial_i.
        \end{equation}
        Therefore, for any differentiable function $f$,
        \begin{equation}
            (u^P\cdot\nabla) f(P^{-1}x)
            =
            \sum_{i,j=1}^3
            u_{P(i)}(y)\frac{\partial f(y)}{\partial y_j}
            \frac{\partial y_j}{\partial x_i},
        \end{equation}
        by the chain rule, where $y=P^{-1}(x)$.
        Note that
        \begin{equation}
            y_j=x_{P^{-1}(j)},
        \end{equation}
        and so
        \begin{equation}
        \frac{\partial y_j}{\partial x_i}
        =
        \begin{cases}
            1, &i=P^{-1}(j) \\
            0, & \text{otherwise}
        \end{cases},
        \end{equation}
        or equivalently
                \begin{equation}
        \frac{\partial y_j}{\partial x_i}
        =
        \begin{cases}
            1, &j=P(i) \\
            0, & \text{otherwise}
        \end{cases}.
        \end{equation}
        Therefore we may conclude that
        \begin{align}
            (u^P\cdot\nabla) f(P^{-1}x)
            &=
            \sum_{i,j=1}^3
            (\delta_{P(i)j} u_{P(i)}\partial_j f)(y) \\
            &=
            \sum_{j=1}^3 (u_j\partial_j f)(y) \\
            &=
            (u\cdot\nabla f)(P^{-1}x).
        \end{align}
        Taking $f$ as the various components of $u^P$ we then find that
        \begin{align}
        ((u^P\cdot\nabla)u^P)(x)
        &=
        ((u\cdot\nabla)Pu)(P^{-1}x))\\
        &=
        P((u\cdot\nabla) u)(P^{-1}x) \\
        &=
        ((u\cdot\nabla)u)^P(x),
        \end{align}
        and this completes the proof.
    \end{proof}

    \begin{remark}
        It is classical that the material derivative and Helmholtz projection are preserved by transformations in $O(3)$, which includes permutations, but these details are essential to the development of the model, and so are included for completeness.
    \end{remark}

    \begin{proposition} \label{FourierPermute}
    For all $u\in \dot{H}^s\left(\mathbb{T}^3\right), s\geq 0$,
    and for all permutations $P\in\mathcal{P}_3$, the Fourier transform satisfies
    \begin{align}
    \mathcal{F}\left(u^P\right) \label{PhysicalPermutation}
    &=
    (\hat{u})^P \\
    \mathcal{F}^{-1}(\hat{u}^P) \label{FourierPermutation}
    &=
    u^{P}.
    \end{align}
    In particular, $u$ is permutation symmetric if and only if $\hat{u}$ is permutation symmetric.
    \end{proposition}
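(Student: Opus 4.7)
The plan is to verify the identity \eqref{PhysicalPermutation} by a direct change of variables in the Fourier integral, then deduce \eqref{FourierPermutation} by inverting the transform, and finally read off the symmetry equivalence as a trivial corollary.

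First I would write out the Fourier coefficient from the definition:
\begin{equation*}
\mathcal{F}(u^P)(k)
= \int_{\mathbb{T}^3} P u(P^{-1}x)\, e^{-2\pi i k\cdot x}\diff x.
\end{equation*}
Since every $P\in\mathcal{P}_3$ acts as a permutation matrix, it is orthogonal, preserves $\mathbb{T}^3$, and has unit Jacobian. Substituting $y=P^{-1}x$ (so $x=Py$) and pulling the constant matrix $P$ out of the integral yields
\begin{equation*}
\mathcal{F}(u^P)(k)
= P\int_{\mathbb{T}^3} u(y)\, e^{-2\pi i k\cdot P y}\diff y.
\end{equation*}
Next I would rewrite the exponent using \Cref{DotLemma}, which gives $k\cdot Py = P^{-1}k\cdot y$. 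The integral then collapses to
\begin{equation*}
\mathcal{F}(u^P)(k) = P\hat u(P^{-1}k) = (\hat u)^P(k),
\end{equation*}
which is precisely \eqref{PhysicalPermutation}. Identity \eqref{FourierPermutation} follows by applying $\mathcal{F}^{-1}$ to both sides of \eqref{PhysicalPermutation} (with $\hat u$ playing the role of $u$), using that $\mathcal{F}$ is a bijection on $\dot H^s(\mathbb{T}^3)$; alternatively, the same change of variables can be performed directly on the inverse Fourier series $\sum_k \hat u^P(k) e^{2\pi i k\cdot x}$.

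For the final statement, suppose $u$ is permutation symmetric, so $u = u^P$ for every $P\in\mathcal{P}_3$. Then $\hat u = \mathcal{F}(u^P) = (\hat u)^P$ by \eqref{PhysicalPermutation}, so $\hat u$ is permutation symmetric; the converse follows by applying \eqref{FourierPermutation}. There is no real obstacle in this proposition beyond carefully tracking two distinct actions of $P$ — one on the vector components of the integrand and one on the frequency variable in the phase — and making sure the sign convention in \Cref{DotLemma} and the minus sign in the Fourier transform combine correctly. All steps are linear and no analytic subtlety arises.
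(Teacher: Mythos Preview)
Your proof is correct and follows essentially the same approach as the paper: both compute $\mathcal{F}(u^P)(k)$ via the change of variables $y=P^{-1}x$ and invoke \Cref{DotLemma} to rewrite $k\cdot Py$ as $P^{-1}k\cdot y$. The only minor difference is that the paper verifies \eqref{FourierPermutation} by a second direct computation on the inverse Fourier series, whereas you observe that it follows immediately from \eqref{PhysicalPermutation} by applying $\mathcal{F}^{-1}$; your route is slightly cleaner but the content is the same.
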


    \begin{proof}
        Recall that by definition 
        $\hat{u}^P(k)=P\hat{u}(P^{-1}k)$.
        Therefore we can compute that
        \begin{align}
        \mathcal{F}\left(u^P\right)(k)
        &=
        \int_{\mathbb{T}^3}
        u^P(x)e^{-2\pi i k\cdot x} \diff x \\
        &=
        \int_{\mathbb{T}^3}
        Pu(P^{-1}x)e^{-2\pi i k\cdot x} 
        \diff x \\
        &=
        \int_{\mathbb{T}^3}
        Pu(y)e^{-2\pi i k\cdot P(y)}  
        \diff y \\
        &=
        \int_{\mathbb{T}^3}
        Pu(y)e^{-2\pi i P^{-1}(k)\cdot y}  
        \diff y \\
        &=
        P\hat{u}(P^{-1}k) \\
        &=
        \hat{u}^P(k),
        \end{align}
        where we have taken the change of variables $y=P^{-1}x$, and we have applied \Cref{DotLemma}.
        This completes the proof of the identity \eqref{PhysicalPermutation}.

        We will now prove the identity \eqref{FourierPermutation}.
        We will compute that
        \begin{align}
    \mathcal{F}^{-1}(\hat{u}^P)(x)
    &=
    \sum_{k\in\mathbb{Z}^3}
    \hat{u}^P(k) e^{2\pi ik\cdot x} \\
    &=
    \sum_{k\in\mathbb{Z}^3}
    P\hat{u}(P^{-1}k) e^{2\pi ik\cdot x} \\
    &=
    \sum_{h\in\mathbb{Z}^3}
    P\hat{u}(h) e^{2\pi i P(h)\cdot x} \\
    &=
    P\sum_{h\in\mathbb{Z}^3}
    \hat{u}(h) e^{2\pi i h\cdot P^{-1}(x)} \\
    &=
    Pu(P^{-1}x) \\
    &=
    u^P(x),
        \end{align}
    where we have made the substitution $h=P^{-1}(k)$ and used the Fourier series representation.
    \end{proof}

    \begin{proposition} \label{HelmholtzPermute}
    For all $w\in \dot{H}^s\left(\mathbb{T}^3\right), s\geq 0$,
    and for all permutations $P\in\mathcal{P}_3$
        \begin{equation}
            \mathbb{P}_{df}\left(w^P\right)
            =
            \left(\mathbb{P}_{df}(w)\right)^P.
        \end{equation}
        Note that this implies that $\dot{H}^s_{df}$ is preserved under permutations of vector fields.
    \end{proposition}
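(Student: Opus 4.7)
The plan is to work entirely in Fourier space, where the Helmholtz projection has the explicit symbol $\widehat{\mathbb{P}_{df}(w)}(k) = P_k^\perp(\hat{w}(k))$ for $k\neq 0$ (and $0$ at $k=0$), and then reduce the identity to a statement about how the projection $P_k^\perp$ behaves under the action of a permutation on $k$.

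First I would apply \Cref{FourierPermute} to rewrite $\widehat{w^P}(k) = P\hat{w}(P^{-1}k)$, so that
\begin{equation*}
\widehat{\mathbb{P}_{df}(w^P)}(k)
= P_k^\perp\bigl(P\hat{w}(P^{-1}k)\bigr),
\end{equation*}
while on the other hand,
\begin{equation*}
\widehat{(\mathbb{P}_{df}(w))^P}(k)
= P\,\widehat{\mathbb{P}_{df}(w)}(P^{-1}k)
= P\bigl(P_{P^{-1}k}^\perp(\hat{w}(P^{-1}k))\bigr).
\end{equation*}
Setting $h = P^{-1}k$ and $v=\hat{w}(h)$, it therefore suffices to prove the pointwise identity $P_{Ph}^\perp(Pv) = P\bigl(P_h^\perp(v)\bigr)$ for every $h\in\mathbb{Z}^3\setminus\{0\}$ and every $v\in\mathbb{R}^3$.

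The key observation is that permutations preserve the Euclidean inner product and norm: by \Cref{DotLemma}, $Ph\cdot Pv = h\cdot P^{-1}Pv = h\cdot v$, and in particular $|Ph|=|h|$. Using this,
\begin{align*}
P_{Ph}^\perp(Pv)
&= Pv - \frac{(Ph)\cdot(Pv)}{|Ph|^2}Ph
= Pv - \frac{h\cdot v}{|h|^2}Ph \\
&= P\!\left(v - \frac{h\cdot v}{|h|^2}h\right)
= P\bigl(P_h^\perp(v)\bigr),
\end{align*}
where in the third equality we used the linearity of $P$. Plugging back in with $h=P^{-1}k$ and $v=\hat{w}(P^{-1}k)$ yields $\widehat{\mathbb{P}_{df}(w^P)}(k) = \widehat{(\mathbb{P}_{df}(w))^P}(k)$ for all $k\in\mathbb{Z}^3$ (the case $k=0$ being trivial since both sides vanish), and the result follows by uniqueness of Fourier coefficients.

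There is no real obstacle here; the entire content is the orthogonality of permutation matrices, already packaged in \Cref{DotLemma}, combined with the Fourier conjugation rule of \Cref{FourierPermute}. The preservation of $\dot{H}^s_{df}$ under permutations is then immediate, since if $w\in \dot{H}^s_{df}$ then $w=\mathbb{P}_{df}(w)$, whence $w^P = (\mathbb{P}_{df}(w))^P = \mathbb{P}_{df}(w^P)$, so $w^P\in \dot{H}^s_{df}$.
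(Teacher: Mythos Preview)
Your proof is correct and follows essentially the same approach as the paper: both arguments work in Fourier space, invoke \Cref{FourierPermute} to write $\widehat{w^P}(k)=P\hat{w}(P^{-1}k)$, and then use \Cref{DotLemma} (orthogonality of permutations) to verify that the Helmholtz symbol commutes with the permutation action. Your version isolates the pointwise identity $P_{Ph}^\perp(Pv)=P\bigl(P_h^\perp(v)\bigr)$ a bit more cleanly than the paper's direct computation, but the content and the lemmas used are identical.
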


    \begin{proof}
    Recall that the Helmholtz projection can be expressed in Fourier space by
    \begin{equation}
        \mathcal{F}(\mathbb{P}_{df}(w))(k)
        =
        \left(Id-\frac{k\otimes k}{|k|^2}\right)\hat{w}(k)
        =\hat{w}(k)-\frac{(k\cdot\hat{w}(k))k}{|k|^2}.
    \end{equation}
    Therefore we can compute that
    \begin{align}
        \mathcal{F}\left(\mathbb{P}_{df}
        \left(w^P\right)\right)(k)
        &=
        \hat{w}^P(k)
        -\frac{(k\cdot\hat{w}^P(k))k}{|k|^2} \\
        &=
        P\hat{w}(P^{-1}k)
        -\frac{(k\cdot P\hat{w}(P^{-1}k))k}{|k|^2} \\
        &=
        P\hat{w}(P^{-1}k)
        -\frac{(P^{-1}(k)\cdot \hat{w}(P^{-1}k))k}{|k|^2},
    \end{align}
    where we have applied \Cref{FourierPermute} and \Cref{DotLemma}.
    Making the substitution $h=P^{-1}(k)$, we find
    \begin{align}
       \mathcal{F}(\mathbb{P}_{df}(w^P))(k)
        &= 
        P\hat{w}(h)
        -\frac{h\cdot \hat{w}(h))P(h)}{|h|^2} \\
        &=
        P\left(\hat{w}(h)
        -\frac{h\cdot \hat{w}(h))h}{|h|^2}
        \right) \\
        &=
        P \mathcal{F}\left( \mathbb{P}_{df}(w)
        \right)(P^{-1}k) \\
        &=
        \mathcal{F}\left( \mathbb{P}_{df}(w)
        \right)^P(k) \\
        &=
        \mathcal{F}\left( \mathbb{P}_{df}(w)^P
        \right)(k),
    \end{align}
    where we have applied \Cref{FourierPermute}.
    Taking the inverse Fourier transform, we find that
    \begin{equation}
        \mathbb{P}_{df}\left(w^P\right)
        =
        \left(\mathbb{P}_{df}(w)\right)^P.
    \end{equation}

    Note in particular this implies that if 
    \begin{equation}
        w=\mathbb{P}_{df}(w),
    \end{equation}
    then
    \begin{equation}
       w^P=\mathbb{P}_{df}\left(w^P\right). 
    \end{equation}
    Therefore, for all permutations $P\in\mathcal{P}_3$, if $w\in \dot{H}^s_{df}$, then $w^P \in \dot{H}^s_{df}$.
    \end{proof}

    \begin{proposition} \label{mPermute}
        For all $w\in \dot{H}^s\left(\mathbb{T}^3\right), s\geq 0$,
        and for all permutations $P\in\mathcal{P}_3$,
        \begin{equation}
            \mathbb{P}_{\mathcal{M}}
            \left(w^P\right)
            =
            \left(\mathbb{P}_{\mathcal{M}}
            (w)\right)^P.
        \end{equation}
        Note that this implies that 
        $\dot{H}^s_{\mathcal{M}}$ is preserved under permutations of vector fields.
    \end{proposition}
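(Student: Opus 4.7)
The plan is to mimic the structure of \Cref{HelmholtzPermute}, with the key additional ingredient being two permutation-invariance facts about the pieces that define $\mathbb{P}_\mathcal{M}$. Recall that in Fourier space the projection is
\begin{equation}
\mathcal{F}(\mathbb{P}_\mathcal{M}(w))(k)
=\mathbf{1}_{\mathcal{M}}(k)\left(\hat{w}(k)\cdot v^k\right) v^k,
\end{equation}
so the claim will reduce to showing that both the frequency set $\mathcal{M}$ and the family of unit vectors $\{v^k\}_{k\in\mathcal{M}}$ intertwine correctly with $P$.

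First I would check that $\mathcal{M}$ is permutation-invariant: $P(\mathcal{M})=\mathcal{M}$ for every $P\in\mathcal{P}_3$. This is immediate from the construction, since each shell $\mathcal{M}_n^+$ is by definition a union of full permutation orbits $\mathcal{P}[k^m], \mathcal{P}[h^m], \mathcal{P}[j^m]$, and $\mathcal{M}^-=-\mathcal{M}^+$ is preserved because permutation commutes with negation. Second I would prove the identity $P(v^k)=v^{P(k)}$ for every $k\in\mathcal{M}$. Since $P(\sigma)=\sigma$ and $|P(k)|=|k|$, applying \Cref{DotLemma} gives $P(k)\cdot\sigma=k\cdot P^{-1}\sigma=k\cdot\sigma$, and hence
\begin{equation}
P_{P(k)}^\perp(\sigma)
=\sigma-\frac{P(k)\cdot\sigma}{|P(k)|^2}P(k)
=P\!\left(\sigma-\frac{k\cdot\sigma}{|k|^2}k\right)
=P\bigl(P_k^\perp(\sigma)\bigr).
\end{equation}
Normalizing yields $v^{P(k)}=P(v^k)$.

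With these two facts in hand I can compute directly. Using \Cref{FourierPermute}, $\mathcal{F}(w^P)(k)=P\hat{w}(P^{-1}k)$, and using \Cref{DotLemma} with $P(v^{P^{-1}k})=v^k$,
\begin{equation}
\mathcal{F}\bigl(\mathbb{P}_\mathcal{M}(w^P)\bigr)(k)
=\mathbf{1}_{\mathcal{M}}(k)\bigl(P\hat{w}(P^{-1}k)\cdot v^k\bigr)v^k
=\mathbf{1}_{\mathcal{M}}(k)\bigl(\hat{w}(P^{-1}k)\cdot v^{P^{-1}k}\bigr)P(v^{P^{-1}k}).
\end{equation}
On the other hand,
\begin{equation}
\mathcal{F}\bigl((\mathbb{P}_\mathcal{M}(w))^P\bigr)(k)
=P\,\mathcal{F}(\mathbb{P}_\mathcal{M}(w))(P^{-1}k)
=\mathbf{1}_{\mathcal{M}}(P^{-1}k)\bigl(\hat{w}(P^{-1}k)\cdot v^{P^{-1}k}\bigr)P(v^{P^{-1}k}).
\end{equation}
The two expressions agree because $\mathbf{1}_{\mathcal{M}}(k)=\mathbf{1}_{\mathcal{M}}(P^{-1}k)$ by the first step. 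Taking the inverse Fourier transform yields $\mathbb{P}_\mathcal{M}(w^P)=(\mathbb{P}_\mathcal{M}(w))^P$, and in particular the fixed-point characterization $w=\mathbb{P}_\mathcal{M}(w)$ is preserved under permutations, so $\dot H^s_\mathcal{M}$ is stable under $w\mapsto w^P$.

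I do not expect any serious obstacle: the only potentially delicate step is the identity $P(v^k)=v^{P(k)}$, and this is really just a bookkeeping consequence of the fact that $\sigma$ is itself permutation-invariant, which is precisely why $\sigma$ was chosen to build the frames $v^k$. The remainder is an essentially line-by-line adaptation of the Fourier-space argument used for $\mathbb{P}_{df}$ in \Cref{HelmholtzPermute}.
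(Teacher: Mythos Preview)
Your proof is correct and follows essentially the same approach as the paper: both work in Fourier space using the formula $\mathcal{F}(\mathbb{P}_\mathcal{M}(w))(k)=\mathbf{1}_{\mathcal{M}}(k)(\hat{w}(k)\cdot v^k)v^k$, and both rely on the two key facts $P(\mathcal{M})=\mathcal{M}$ and $P(v^k)=v^{P(k)}$ together with \Cref{FourierPermute} and \Cref{DotLemma}. You are slightly more explicit in isolating and proving these two facts upfront (the paper defers $P(v^k)=v^{P(k)}$ to a later proposition and simply cites ``the construction of $v^k$''), but the logical content and the computation are the same.
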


    \begin{proof}
        Observe from the construction of $\dot{H}^s_\mathcal{M}$,
        that the projection $\mathbb{P}_\mathcal{M}$ can be expressed in Fourier space by
        \begin{equation}
        \mathcal{F}(\mathbb{P}_\mathcal{M}(w))(k)
        =
        \left(v^k\cdot\hat{w}(k)\right)v^k
        \mathds{1}_\mathcal{M}(k).
        \end{equation}
        Therefore we can compute that
        \begin{align}
    \mathcal{F}\left(\mathbb{P}_\mathcal{M}
    \left(w^P\right)\right)(k) 
    &=
    \left(v^k\cdot\hat{w}^P(k)\right)v^k
    \mathds{1}_\mathcal{M}(k)\\
    &=
    \left(v^k\cdot P\hat{w}(P^{-1}k)\right)v^k
    \mathds{1}_\mathcal{M}(k)\\
    &=
    \left(P^{-1}(v^k)\cdot \hat{w}(P^{-1}k)\right)v^k
    \mathds{1}_\mathcal{M}(k)\\
    &=
    \left((v^{P^{-1}k})\cdot \hat{w}(P^{-1}k)\right)v^k
    \mathds{1}_\mathcal{M}(k),
        \end{align}
        where we have used the fact that 
        $P^{-1}(v^k)=v^{P^{-1}(k)}$, by the construction of $v^k$.
        Letting $h=P^{-1}(k)$, we can compute that
    \begin{align}
    \mathcal{F}\left(\mathbb{P}_\mathcal{M}
    \left(w^P\right)\right)(k) 
    &=
    \left(v^h\cdot\hat{w}(h)\right) v^{P(h)}
    \mathds{1}_\mathcal{M}(P(h))\\
    &=
    \left(v^h \cdot\hat{w}(h)\right)P(v^{h})
    \mathds{1}_\mathcal{M}(P(h))\\
    &=
    P\mathcal{F}\left(\mathbb{P}_\mathcal{M}(w)
    \right)(P^{-1}(k)) \\
    &=
    \left(\mathcal{F} \mathbb{P}_\mathcal{M}(w)
    \right)^P(k) \\
    &=
    \mathcal{F}\left( \mathbb{P}_\mathcal{M}(w)^P
    \right)(k),
    \end{align}
    just as in \Cref{HelmholtzPermute}.
    Taking the inverse Fourier transform, we can see that
    \begin{equation}
        \mathbb{P}_{\mathcal{M}}
            \left(w^P\right)
            =
            \left(\mathbb{P}_{\mathcal{M}}
            (w)\right)^P.
    \end{equation}
    Note in particular that this implies that if
    \begin{equation}
        w=\mathbb{P}_\mathcal{M}(w),
    \end{equation}
    then
    \begin{equation}
        w^P=\mathbb{P}_\mathcal{M}\left(w^P\right).
    \end{equation}
    Therefore, if $w\in \dot{H}^s_{\mathcal{M}}$, then
    $w^P\in \dot{H}^s_{\mathcal{M}}$.
    \end{proof}

    \begin{proposition} \label{EulerInvariantProp}
    Suppose $u\in C\left([0,T_{max});
    \dot{H}^s_{df}\right)
    \cap
    C^1\left([0,T_{max});
    \dot{H}^{s-1}_{df}\right), s>\frac{5}{2}$, is a solution of the Euler equation. Then $u^P$ is also a solution of the Euler equation
    for any permutation $P\in\mathcal{P}_3$.
    \end{proposition}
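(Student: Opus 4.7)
The plan is to verify directly that $u^P$ satisfies the Euler equation by using the three commutation results already established: permutation commutes with the material derivative (\Cref{MaterialDerivativePermute}), with the Helmholtz projection (\Cref{HelmholtzPermute}), and trivially with the time derivative. This reduces the proof to a short algebraic manipulation once the regularity and divergence-free properties of $u^P$ have been checked.

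First I would verify the membership $u^P \in C\left([0,T_{max});\dot{H}^s_{df}\right) \cap C^1\left([0,T_{max});\dot{H}^{s-1}_{df}\right)$. Since $P$ acts as an orthogonal transformation on $\mathbb{R}^3$ and precomposition with $P^{-1}$ permutes Fourier coefficients (as in \Cref{FourierPermute}), the map $u \mapsto u^P$ is an isometry of $\dot{H}^s$ for every $s$. Hence the norms of $u^P$ and $u$ agree pointwise in $t$, which preserves both the continuity-in-time and the Sobolev regularity. The divergence-free condition is preserved by \Cref{HelmholtzPermute}, which shows that $u^P = \mathbb{P}_{df}(u^P)$ whenever $u = \mathbb{P}_{df}(u)$.

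Next I would observe that the time derivative manifestly commutes with the permutation, since $P$ acts only on the spatial variable and components. Combined with \Cref{MaterialDerivativePermute} and \Cref{HelmholtzPermute}, this gives
\begin{align}
    \partial_t u^P + \mathbb{P}_{df}\bigl((u^P\cdot\nabla)u^P\bigr)
    &= (\partial_t u)^P + \mathbb{P}_{df}\bigl(((u\cdot\nabla)u)^P\bigr) \\
    &= (\partial_t u)^P + \bigl(\mathbb{P}_{df}((u\cdot\nabla)u)\bigr)^P \\
    &= \bigl(\partial_t u + \mathbb{P}_{df}((u\cdot\nabla)u)\bigr)^P \\
    &= 0,
\end{align}
because $u$ solves the Euler equation and the zero vector field is invariant under permutation.

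There is no substantive obstacle: all of the analytic content has been deferred to the preceding propositions, and what remains is a bookkeeping exercise verifying that each piece of the Euler operator intertwines with the symmetry $u \mapsto u^P$. The only point worth flagging is the need to justify that $u^P$ retains the full $C_t \dot{H}^s_x \cap C^1_t \dot{H}^{s-1}_x$ regularity, which, as noted above, is immediate from the isometric action of $P$ on each homogeneous Sobolev space.
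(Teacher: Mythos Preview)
Your proof is correct and follows essentially the same approach as the paper: both apply \Cref{MaterialDerivativePermute} and \Cref{HelmholtzPermute} to commute the permutation through each term of the Euler operator, then conclude from $\bigl(\partial_t u + \mathbb{P}_{df}((u\cdot\nabla)u)\bigr)^P = 0$. Your explicit verification that $u^P$ retains the required regularity and divergence-free structure is a nice addition that the paper leaves implicit.
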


\begin{proof}
    Applying \Cref{MaterialDerivativePermute,HelmholtzPermute}
    we can see that 
    \begin{equation}
    \mathbb{P}_{df}\left(\left(u^P\cdot\nabla
    \right)u^P\right)
    =
    \mathbb{P}_{df}\left((u\cdot\nabla u)^P\right)
    =
    \left(\mathbb{P}_{df}(u\cdot\nabla)u \right)^P.
    \end{equation}
    Therefore we can see that
    \begin{equation}
    \partial_t u^P+ 
    \mathbb{P}_{df}\left(\left(u^P\cdot\nabla
    \right)u^P\right)
    =
    \left(\partial_t u +\mathbb{P}_{df}
    ((u\cdot\nabla)u)\right)^P
    =0,
    \end{equation}
    and this completes the proof.
\end{proof}

    \begin{proposition} \label{EulerPermuteDynamicsProp}
    Suppose $u^0\in \dot{H}^s_{df}, s>\frac{5}{2}$ is permutation symmetric. Then the solution of the 
    Euler equation 
    $u\in C\left([0,T_{max});
    \dot{H}^s_{df}\right)
    \cap
    C^1\left([0,T_{max});
    \dot{H}^{s-1}_{df}\right)$
    is also permutation symmetric for all $0\leq t<T_{max}$.
    \end{proposition}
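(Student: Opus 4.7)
The plan is to use uniqueness of solutions to the Euler equation combined with the permutation invariance of the equation established in \Cref{EulerInvariantProp}.

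First, I would let $u \in C\left([0,T_{max});\dot{H}^s_{df}\right) \cap C^1\left([0,T_{max});\dot{H}^{s-1}_{df}\right)$ denote the unique solution of the Euler equation with initial data $u^0$; local wellposedness of the Euler equation in this regularity class is classical. Fix an arbitrary permutation $P \in \mathcal{P}_3$ and define $v(x,t) = u^P(x,t) = Pu(P^{-1}x,t)$. Since the map $u \mapsto u^P$ is a bounded linear isomorphism of $\dot{H}^{s'}_{df}$ for every $s' \geq 0$ (the $\dot{H}^{s'}$ norm is invariant under $P$, and $\dot{H}^{s'}_{df}$ is preserved by \Cref{HelmholtzPermute}), we have $v \in C\left([0,T_{max});\dot{H}^s_{df}\right) \cap C^1\left([0,T_{max});\dot{H}^{s-1}_{df}\right)$.

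Next, by \Cref{EulerInvariantProp}, $v = u^P$ is itself a solution of the Euler equation. Its initial condition is $v(\cdot,0) = (u^0)^P$, and by the hypothesis that $u^0$ is permutation symmetric, $(u^0)^P = u^0$. Therefore $v$ and $u$ are two solutions of the Euler equation in the same regularity class with identical initial data. By uniqueness in $C_T \dot{H}^s_x \cap C^1_T \dot{H}^{s-1}_x$ for $s > 5/2$, we conclude $v(\cdot,t) = u(\cdot,t)$ for all $0 \leq t < T_{max}$, i.e., $u^P = u$. Since $P$ was arbitrary, $u(\cdot,t)$ is permutation symmetric for all $0 \leq t < T_{max}$.

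There is no real obstacle here: the only substantive content is \Cref{EulerInvariantProp} (already proven) together with uniqueness. The small bookkeeping point is to confirm that $u^P$ has the required regularity so that uniqueness in that space applies, which follows from the $O(3)$-invariance of the Sobolev norms on the torus and \Cref{HelmholtzPermute}. An analogous argument, invoking the corresponding uniqueness statement for the hypodissipative Navier--Stokes equation together with \Cref{mPermute} in place of \Cref{HelmholtzPermute}, will yield the same conclusion for the viscous equation and for the Fourier-restricted models, since each of our wellposedness theorems (\Cref{RestrictedEulerExistenceThm}, \Cref{ExistenceRestrictedNS}) provides the required uniqueness.
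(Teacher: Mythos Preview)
Your proof is correct and follows essentially the same approach as the paper: fix an arbitrary permutation $P$, invoke \Cref{EulerInvariantProp} to see that $u^P$ is again a solution with the same initial data, and conclude $u^P=u$ by uniqueness. Your additional remark verifying that $u^P$ lies in the right regularity class is a welcome bit of bookkeeping that the paper leaves implicit.
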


\begin{proof}
    We need to show that for all $P\in\mathcal{P}_3$,
    and for all $0\leq t<T_{max}$
    \begin{equation}
        u(\cdot,t)=u^P(\cdot,t).
    \end{equation}
    Fix $P\in\mathcal{P}_3$, and let $v=u^P$.
    We know from \Cref{EulerInvariantProp} that $v$ is also a solution of the Euler equation, and
    we know by hypothesis that for all $v^0=u^0$.
    Therefore, uniqueness immediately implies that
    for all $0\leq t<T_{max}$,
    \begin{equation}
        v(\cdot,t)=u(\cdot,t),
    \end{equation}
    and this completes the proof.
\end{proof}

    \begin{proposition} 
    \label{RestrictedPermuteInvariantProp}
    Suppose $u\in 
    C^1\left([0,T_{max});
    \dot{H}^\frac{\log(3)}{2\log(2)}_{\mathcal{M}}\right)$
    is a solution of the Fourier-restricted Euler equation. Then $u^P$ is also a solution of Fourier-restricted Euler equation for any permutation $P\in\mathcal{P}_3$.
    \end{proposition}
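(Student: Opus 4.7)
The plan is to mirror the proof of \Cref{EulerInvariantProp} almost verbatim, with the Helmholtz projection $\mathbb{P}_{df}$ replaced by the restricted projection $\mathbb{P}_\mathcal{M}$, using \Cref{mPermute} in place of \Cref{HelmholtzPermute}. The key observation is that all three operations appearing in the Fourier-restricted Euler equation --- time differentiation, the self-advection $(u\cdot\nabla)u$, and the projection $\mathbb{P}_\mathcal{M}$ --- commute with the permutation action $u \mapsto u^P$, so the equation itself is invariant under this action.

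First I would verify that $u^P$ actually lives in the correct function space. By \Cref{mPermute}, $\dot{H}^{\frac{\log(3)}{2\log(2)}}_\mathcal{M}$ is preserved under the permutation action on vector fields, so
$u^P\in C^1\bigl([0,T_{max});\dot{H}^{\frac{\log(3)}{2\log(2)}}_\mathcal{M}\bigr)$. Next, since the permutation acts pointwise in $x$ and linearly in $u$, it commutes with $\partial_t$, giving $\partial_t(u^P) = (\partial_t u)^P$. By \Cref{MaterialDerivativePermute},
\begin{equation}
(u^P\cdot\nabla)u^P = \bigl((u\cdot\nabla)u\bigr)^P,
\end{equation}
and by \Cref{mPermute},
\begin{equation}
\mathbb{P}_\mathcal{M}\bigl((u^P\cdot\nabla)u^P\bigr)
= \mathbb{P}_\mathcal{M}\bigl(((u\cdot\nabla)u)^P\bigr)
= \bigl(\mathbb{P}_\mathcal{M}((u\cdot\nabla)u)\bigr)^P.
\end{equation}
Assembling these two identities,
\begin{equation}
\partial_t u^P + \mathbb{P}_\mathcal{M}\bigl((u^P\cdot\nabla)u^P\bigr)
=\bigl(\partial_t u + \mathbb{P}_\mathcal{M}((u\cdot\nabla)u)\bigr)^P
=0,
\end{equation}
so $u^P$ solves the Fourier-restricted Euler equation.

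There is no real obstacle here: every lemma needed has already been proven in this section. The only thing worth double-checking is that the regularity hypothesis $u\in C^1_t\dot{H}^{\frac{\log(3)}{2\log(2)}}_\mathcal{M}$ is strong enough to make the pointwise computation of $(u^P\cdot\nabla)u^P$ in \Cref{MaterialDerivativePermute} meaningful as an element of $\dot{H}^s_\mathcal{M}$ for suitable $s$; this is guaranteed by \Cref{BilinearBoundLemma}, which gives $B(u^P,u^P)\in \dot{H}^{\frac{\log(3)}{2\log(2)}}_\mathcal{M}$. An immediate corollary, by uniqueness in \Cref{RestrictedEulerExistenceThm}, is that permutation-symmetric initial data produces a permutation-symmetric solution --- the analogue of \Cref{EulerPermuteDynamicsProp} for the restricted equation --- and the identical argument works in the hypodissipative case.
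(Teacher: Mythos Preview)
Your proof is correct and follows essentially the same approach as the paper: both apply \Cref{MaterialDerivativePermute} and \Cref{mPermute} in sequence to push the permutation through the nonlinearity and the projection, then assemble. Your version adds a bit more care in verifying that $u^P$ lies in the correct function space and that the bilinear term is well defined, which the paper leaves implicit.
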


\begin{proof}
    Applying \Cref{MaterialDerivativePermute,mPermute}
    we can see that 
    \begin{equation}
    \mathbb{P}_\mathcal{M}\left(\left(u^P\cdot\nabla
    \right)u^P\right)
    =
    \mathbb{P}_\mathcal{M}
    \left((u\cdot\nabla u)^P \right)
    =
    \left(\mathbb{P}_\mathcal{M}(u\cdot\nabla)u \right)^P.
    \end{equation}
    Therefore we can see that
    \begin{equation}
    \partial_t u^P+ 
    \mathbb{P}_\mathcal{M}\left(\left(u^P\cdot\nabla
    \right)u^P\right)
    =
    \left(\partial_t u +\mathbb{P}_\mathcal{M}
    ((u\cdot\nabla)u)\right)^P
    =0,
    \end{equation}
    and this completes the proof.
\end{proof}

    \begin{proposition} \label{RestrictedPermuteDynamicsProp}
    Suppose $u^0\in \dot{H}^\frac{\log(3)}{2\log(2)}_{\mathcal{M}}$ is permutation symmetric. Then the solution of the 
    Fourier-restricted Euler equation 
    $u\in C^1\left([0,T_{max});
    \dot{H}^\frac{\log(3)}{2\log(2)}
    _{\mathcal{M}}\right)$
    is also permutation symmetric for all $0\leq t<T_{max}$.
    \end{proposition}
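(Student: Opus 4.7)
The plan is to mirror the proof of Proposition \ref{EulerPermuteDynamicsProp} exactly, replacing the full Euler equation with the Fourier-restricted Euler equation and substituting the corresponding invariance and uniqueness statements from the preceding sections. Since permutation symmetry of $u^0$ means $u^0 = (u^0)^P$ for every $P \in \mathcal{P}_3$, it suffices to show that $u^P$ coincides with $u$ for all $t \in [0, T_{max})$, and this will follow purely from a uniqueness argument applied to two solutions with the same initial data.

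First, I would fix an arbitrary permutation $P \in \mathcal{P}_3$ and set $v = u^P$, where $v(x,t) = P u(P^{-1}x, t)$. Since the map $w \mapsto w^P$ is an isometry on $\dot{H}^s$ (it is just a relabelling of Fourier modes via Proposition \ref{FourierPermute}) and preserves the constraint space $\dot{H}^s_{\mathcal{M}}$ by Proposition \ref{mPermute}, we have $v \in C^1([0,T_{max}); \dot{H}^{\log(3)/(2\log(2))}_{\mathcal{M}})$. By Proposition \ref{RestrictedPermuteInvariantProp}, $v$ is a solution of the Fourier-restricted Euler equation on $[0,T_{max})$. Moreover, by hypothesis $v^0 = (u^0)^P = u^0$.

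Now I would invoke the uniqueness part of Theorem \ref{RestrictedEulerExistenceThm}: both $u$ and $v$ are solutions of the Fourier-restricted Euler equation in $C^1([0,T_{max}); \dot{H}^{\log(3)/(2\log(2))}_{\mathcal{M}})$ starting from the same initial datum $u^0$, so they must agree on $[0, T_{max})$. Hence $u(\cdot, t) = u^P(\cdot, t)$ for all $0 \leq t < T_{max}$. Since $P \in \mathcal{P}_3$ was arbitrary, this gives permutation symmetry of the solution for all time up to blowup.

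There is no substantive obstacle here; the argument is a direct application of the two preparatory propositions together with the local uniqueness theorem. The only mild subtlety worth a sentence in the written proof is to verify that $v$ indeed lies in the same functional class as $u$ so that the uniqueness statement applies — this is immediate from Proposition \ref{mPermute} (preservation of $\dot{H}^s_{\mathcal{M}}$ under $w \mapsto w^P$) combined with the fact that $\partial_t v = (\partial_t u)^P$, which places $v$ in $C^1_t \dot{H}^{\log(3)/(2\log(2))}_{\mathcal{M}}$.
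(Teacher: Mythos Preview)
Your proposal is correct and follows essentially the same approach as the paper: fix $P\in\mathcal{P}_3$, set $v=u^P$, use Proposition~\ref{RestrictedPermuteInvariantProp} to see that $v$ is again a solution with $v^0=u^0$, and conclude $u=v$ by the uniqueness in Theorem~\ref{RestrictedEulerExistenceThm}. The paper's proof is slightly terser and does not spell out the membership of $v$ in the correct function class, but the argument is identical.
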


    \begin{proof}
    We need to show that for all $P\in\mathcal{P}_3$,
    and for all $0\leq t<T_{max}$
    \begin{equation}
        u(\cdot,t)=u^P(\cdot,t).
    \end{equation}
    Fix $P\in\mathcal{P}_3$, and let $v=u^P$.
    We know from \Cref{RestrictedPermuteInvariantProp} that $v$ is also a solution of the Fourier-restricted Euler equation, and
    we know by hypothesis that $v^0=u^0$.
    Therefore, uniqueness immediately implies that for all $0\leq t<T_{max}$
    \begin{equation}
        v(\cdot,t)=u(\cdot,t),
    \end{equation}
    and this completes the proof.
\end{proof}

\begin{lemma} \label{LaplacePermute}
    For all $w\in H^{2s}\left(
    \mathbb{T}^3;\mathbb{R}^3\right)$
    and for all permutations $P\in\mathcal{P}_3$,
    \begin{equation}
    (-\Delta)^s w^P=\left((-\Delta)^s w)\right)^P.
    \end{equation}
\end{lemma}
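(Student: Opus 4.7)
The plan is to prove this identity on the Fourier side, since the fractional Laplacian is defined by its Fourier symbol and permutation has already been shown to act cleanly on Fourier coefficients in Proposition \ref{FourierPermute}. The key observation is that the symbol $(4\pi^2|k|^2)^s$ depends only on $|k|$, and permutations are orthogonal transformations, so $|P^{-1}k| = |k|$ for every $k \in \mathbb{Z}^3$ and every $P \in \mathcal{P}_3$. This scalar-valued symbol therefore commutes past the permutation $P$ acting on the vector-valued Fourier coefficient.

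Concretely, I would first apply the Fourier transform to the left-hand side and use the definition of the fractional Laplacian to write
\begin{equation}
\mathcal{F}\bigl((-\Delta)^s w^P\bigr)(k)
=\bigl(4\pi^2|k|^2\bigr)^s \mathcal{F}(w^P)(k).
\end{equation}
Then Proposition \ref{FourierPermute} gives $\mathcal{F}(w^P)(k) = P\hat{w}(P^{-1}k)$, so
\begin{equation}
\mathcal{F}\bigl((-\Delta)^s w^P\bigr)(k)
= \bigl(4\pi^2|k|^2\bigr)^s P\hat{w}(P^{-1}k).
\end{equation}
Using $|P^{-1}k| = |k|$ and pulling the scalar factor inside $P$, this equals $P\bigl(4\pi^2|P^{-1}k|^2\bigr)^s \hat{w}(P^{-1}k) = P\,\mathcal{F}\bigl((-\Delta)^s w\bigr)(P^{-1}k)$, which by the definition $\hat{u}^P(k) = P\hat{u}(P^{-1}k)$ and a second application of Proposition \ref{FourierPermute} is precisely $\mathcal{F}\bigl(((-\Delta)^s w)^P\bigr)(k)$. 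Inverting the Fourier transform completes the argument.

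There is essentially no obstacle here: the only nontrivial content is that permutation matrices are orthogonal (so $|P^{-1}k|=|k|$), and all the bookkeeping needed to move $P$ past the Fourier symbol has already been set up in the preceding propositions. The lemma is included as a preparatory fact so that, combined with Proposition \ref{mPermute} and Proposition \ref{MaterialDerivativePermute}, one can establish that the full Fourier-restricted hypodissipative Navier--Stokes equation is invariant under permutations and hence that the permutation-symmetric subspace is preserved by the dynamics.
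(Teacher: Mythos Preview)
Your proof is correct and matches the paper's approach exactly: the paper states that the argument is a simple Fourier-space exercise that ``all comes down to the fact that $|k|=|P(k)|$,'' and leaves the details to the reader, which is precisely what you have filled in.
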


\begin{proof}
    In Fourier space, the proof is a simple exercise left to the reader, and all comes down to the fact that
    \begin{equation}
        |k|=|P(k)|.
    \end{equation}
\end{proof}

    \begin{proposition} 
    \label{RestrictedHypoPermuteInvariantProp}
    Suppose $u\in C\left([0,T_{max});
    \dot{H}^s_{\mathcal{M}}\right), s>\frac{\log(3)}{2\log(2)}-2\alpha, s\geq 0$,
    is a solution of the Fourier-restricted, hypodissipative Navier--Stokes equation. Then $u^P$ is also a solution of Fourier-restricted, hypodissipative Navier--Stokes equation for any permutation $P\in\mathcal{P}_3$.
    \end{proposition}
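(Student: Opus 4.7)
The plan is to mimic the proof of Proposition \ref{RestrictedPermuteInvariantProp} exactly, adding in a single new ingredient, namely Lemma \ref{LaplacePermute}, which says that the fractional Laplacian commutes with spatial permutations. Since a permutation acts only on the spatial variable (and on the target components), the time derivative commutes with it trivially: $\partial_t(u^P)=(\partial_t u)^P$. So the entire equation splits into three pieces, each of which I must verify transforms covariantly under $P$.

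First I would handle the dissipative term using Lemma \ref{LaplacePermute}:
\begin{equation}
\nu(-\Delta)^\alpha u^P=\nu\bigl((-\Delta)^\alpha u\bigr)^P.
\end{equation}
Next I would handle the nonlinear term exactly as in Proposition \ref{RestrictedPermuteInvariantProp}. By Proposition \ref{MaterialDerivativePermute} the self-advection commutes with permutation, $(u^P\cdot\nabla)u^P=((u\cdot\nabla)u)^P$, and by Proposition \ref{mPermute} the restricted projection commutes with permutation, so
\begin{equation}
\mathbb{P}_\mathcal{M}\bigl((u^P\cdot\nabla)u^P\bigr)=\mathbb{P}_\mathcal{M}\bigl(((u\cdot\nabla)u)^P\bigr)=\bigl(\mathbb{P}_\mathcal{M}((u\cdot\nabla)u)\bigr)^P.
\end{equation}

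Combining the three pieces and using linearity of $v\mapsto v^P$, I would conclude
\begin{equation}
\partial_t u^P+\nu(-\Delta)^\alpha u^P+\mathbb{P}_\mathcal{M}\bigl((u^P\cdot\nabla)u^P\bigr)=\bigl(\partial_t u+\nu(-\Delta)^\alpha u+\mathbb{P}_\mathcal{M}((u\cdot\nabla)u)\bigr)^P=0,
\end{equation}
so $u^P$ solves the Fourier-restricted hypodissipative Navier--Stokes equation. I should also note that $u^P$ remains in the correct space $C\left([0,T_{max});\dot H^s_\mathcal{M}\right)$: continuity in time is immediate since $P$ is a bounded linear operator on $\dot H^s$, and membership in $\dot H^s_\mathcal{M}$ at each time follows from Proposition \ref{mPermute} (which shows $\dot H^s_\mathcal{M}$ is preserved under permutations). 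Strictly speaking the equation should be interpreted in the mild sense when $s<\frac{\log(3)}{2\log(2)}$, but the identity passes to the mild formulation without difficulty because $e^{-\nu t(-\Delta)^\alpha}$ also commutes with $P$ in Fourier space.

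There is essentially no obstacle here beyond bookkeeping; the content of the proposition is entirely carried by the three preceding propositions (Propositions \ref{MaterialDerivativePermute}, \ref{mPermute}, and Lemma \ref{LaplacePermute}). The only subtlety worth flagging is ensuring that the regularity threshold $s>\frac{\log(3)}{2\log(2)}-2\alpha$ is respected in the mild formulation, but since this threshold is preserved by $u\mapsto u^P$ (permutations are isometries on every $\dot H^s$), this is automatic.
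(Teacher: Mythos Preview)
Your proof is correct and follows essentially the same approach as the paper: apply Lemma \ref{LaplacePermute} to the dissipative term, combine it with the already-established permutation covariance of the nonlinear term (Propositions \ref{MaterialDerivativePermute} and \ref{mPermute}), and conclude that the whole equation transforms covariantly under $P$. Your additional remarks on membership in $\dot H^s_\mathcal{M}$ and the mild formulation are not in the paper's terse proof but are appropriate clarifications.
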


    \begin{proof}
    We have already shown that
    \begin{equation}
    \mathbb{P}_\mathcal{M}
    \left((u^P\cdot\nabla)u^P\right)
    =
    \left(\mathbb{P}_\mathcal{M}
    (u\cdot\nabla)u\right)^P,
    \end{equation}
    and so applying \Cref{LaplacePermute}, we find that
    \begin{equation}
    \partial_t u^P+\nu(-\Delta)^\alpha u^P
    +\mathbb{P}_\mathcal{M}
    \left((u^P\cdot\nabla)u^P\right)
    =
    \left(\partial_t u+\nu(-\Delta)^\alpha u
    +\mathbb{P}_\mathcal{M}
    (u\cdot\nabla)u\right)^P
    =0,
    \end{equation}
    and this completes the proof.
    \end{proof}

    \begin{proposition} \label{RestrictedHypoPermuteDynamicsProp}
    Suppose $u^0\in \dot{H}^s_{\mathcal{M}}$ is permutation symmetric, where $s>\frac{\log(3)}{2\log(2)}-2\alpha, s\geq 0$. Then the solution of the 
    Fourier-restricted, hypodissipative Navier--Stokes equation 
    $u\in C\left([0,T_{max});
    \dot{H}^s_{\mathcal{M}}\right)$
    is also permutation symmetric for all $0\leq t<T_{max}$.
    \end{proposition}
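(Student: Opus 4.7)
The plan is to mirror the argument used in \Cref{RestrictedPermuteDynamicsProp} for the inviscid case, simply invoking the uniqueness of mild solutions for the hypodissipative equation in place of uniqueness for the Euler case. The invariance of the equation under permutations (\Cref{RestrictedHypoPermuteInvariantProp}) has already been established, so the only missing ingredient is to pair it with the uniqueness established in the local wellposedness theory (\Cref{ExistenceRestrictedNS}).

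More concretely, I would fix an arbitrary permutation $P\in\mathcal{P}_3$ and define $v=u^P$. By \Cref{RestrictedHypoPermuteInvariantProp}, $v$ is a mild solution of the Fourier-restricted hypodissipative Navier--Stokes equation on $[0,T_{max})$. Moreover, by \Cref{mPermute}, the constraint space $\dot{H}^s_{\mathcal{M}}$ is preserved under permutations, so $v\in C\left([0,T_{max});\dot{H}^s_{\mathcal{M}}\right)$. The initial data satisfies
\begin{equation}
    v^0=(u^0)^P=u^0,
\end{equation}
where the last equality uses the hypothesis that $u^0$ is permutation symmetric. Hence $u$ and $v$ are two mild solutions of the Fourier-restricted hypodissipative Navier--Stokes equation in $C\left([0,T_{max});\dot{H}^s_{\mathcal{M}}\right)$ with the same initial data.

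By the uniqueness part of \Cref{ExistenceRestrictedNS}, it follows that $v(\cdot,t)=u(\cdot,t)$ for all $0\leq t<T_{max}$, i.e., $u=u^P$. Since $P\in\mathcal{P}_3$ was arbitrary, $u(\cdot,t)$ is permutation symmetric at every time in $[0,T_{max})$. I do not anticipate any real obstacle here: the entire content of the argument is contained in the invariance lemma already proven and in the uniqueness of mild solutions, and the verification is essentially identical to the inviscid case. The only minor point to be careful about is that the regularity assumption $s>\frac{\log(3)}{2\log(2)}-2\alpha, s\geq 0$ matches the hypothesis of the uniqueness statement, which it does by construction.
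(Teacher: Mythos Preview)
Your proposal is correct and matches the paper's approach exactly: the paper's proof simply states that the result follows immediately from uniqueness and \Cref{RestrictedHypoPermuteInvariantProp}, exactly as in the proof of \Cref{RestrictedPermuteDynamicsProp}. You have supplied the details the paper omits, but the argument is the same.
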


    \begin{proof}
    This result follows immediately from uniqueness and \Cref{RestrictedHypoPermuteInvariantProp}
    exactly as in the proof of \Cref{RestrictedPermuteDynamicsProp}.
    \end{proof}

    \begin{remark}
        It is classical that the odd subspace is preserved by the dynamics of the Euler equation, Navier--Stokes equation, and hypodissipative Navier--Stokes equation.
        We will show that this is also true for the Fourier-restricted Euler equation, and the Fourier-restricted, hypodissipative Navier--Stokes equation.
    \end{remark}

    \begin{proposition} \label{RestrictedOddDynamicsProp}
        Suppose $u^0\in \dot{H}^\frac{\log(3)}{2\log(2)}_{\mathcal{M}}$ is odd. Then the solution of the 
        Fourier-restricted Euler equation 
    $u\in C^1\left([0,T_{max});
    \dot{H}^\frac{\log(3)}{2\log(2)}_{\mathcal{M}}\right)$
    is also odd for all $0\leq t<T_{max}$.
    \end{proposition}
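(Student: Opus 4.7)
The plan is to mimic the proof strategy used for permutation symmetry in \Cref{RestrictedPermuteDynamicsProp}, namely to exhibit a transformed solution with the same initial data and invoke uniqueness. Set
\begin{equation}
v(x,t) = -u(-x,t).
\end{equation}
The first step is to show that $v\in C^1\left([0,T_{max});\dot{H}^{\frac{\log(3)}{2\log(2)}}_{\mathcal{M}}\right)$. The regularity and time-differentiability are inherited from $u$, so the only substantive point is that the constraint space is preserved under the reflection $x\mapsto -x$. This rests on two elementary observations: by construction $\mathcal{M}^- = -\mathcal{M}^+$ so $\mathcal{M} = -\mathcal{M}$, and for every $k\in\mathcal{M}$ one has
\begin{equation}
P_{-k}^\perp(\sigma) = \sigma - \frac{(-k)\cdot\sigma}{|{-k}|^2}(-k) = P_k^\perp(\sigma),
\end{equation}
hence $v^{-k} = v^k$. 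Since $\widehat{v}(k) = -\widehat{u}(-k)$, these two facts imply $\supp\widehat{v}\subset\mathcal{M}$ and $\widehat{v}(k)\in\spn\{v^k\}$ for each $k\in\mathcal{M}$.

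The second step is to verify that $v$ satisfies the Fourier-restricted Euler equation. A direct chain rule computation gives $(\partial_j v_i)(x,t) = (\partial_j u_i)(-x,t)$, so
\begin{equation}
((v\cdot\nabla)v)(x,t) = -\bigl((u\cdot\nabla)u\bigr)(-x,t).
\end{equation}
Using the two facts from the previous paragraph ($\mathcal{M}=-\mathcal{M}$ and $v^{-k}=v^k$), a short computation in Fourier space shows that whenever $f(x) = -g(-x)$, one has $\mathbb{P}_{\mathcal{M}}(f)(x) = -\mathbb{P}_{\mathcal{M}}(g)(-x)$. Applying this with $f=(v\cdot\nabla)v$ and $g=(u\cdot\nabla)u$, together with $\partial_t v(x,t) = -(\partial_t u)(-x,t)$, yields
\begin{equation}
\partial_t v + \mathbb{P}_{\mathcal{M}}((v\cdot\nabla)v) = -\bigl[\partial_t u + \mathbb{P}_{\mathcal{M}}((u\cdot\nabla)u)\bigr](-x,t) = 0.
\end{equation}

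Finally, the oddness hypothesis on $u^0$ gives $v^0(x) = -u^0(-x) = u^0(x)$. Thus $v$ and $u$ are both strong solutions of the Fourier-restricted Euler equation with the same initial datum, and the uniqueness clause of \Cref{RestrictedEulerExistenceThm} forces $v(\cdot,t) = u(\cdot,t)$ for all $0\leq t<T_{max}$, which is exactly the statement that $u(\cdot,t)$ is odd. None of the steps is truly difficult; the only mildly delicate point is the Fourier-space lemma that $\mathbb{P}_{\mathcal{M}}$ commutes with the reflection $f\mapsto -f(-\cdot)$, and this is immediate from the symmetry properties of $\mathcal{M}$ and of the vectors $v^k$ established above.
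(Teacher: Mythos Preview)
Your proof is correct and follows exactly the same strategy as the paper: define $v(x,t)=-u(-x,t)$, verify it is again a solution of the Fourier-restricted Euler equation, and invoke uniqueness. The paper's proof simply asserts that ``it is straightforward to check that $v$ is also a solution,'' whereas you have carefully filled in the details (that $\mathcal{M}=-\mathcal{M}$, that $v^{-k}=v^k$, and that $\mathbb{P}_{\mathcal{M}}$ commutes with the reflection), so your argument is a faithful and more explicit version of the paper's.
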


    \begin{proof}
        Let $v(x,t)=-u(-x,t)$. It is straightforward to check that $v$ is also a solution of the Fourier-restricted Euler equation, in particular because $-\mathcal{M}=\mathcal{M}$. Note that $v^0=u^0$, so by uniqueness
    \begin{equation}
        v(x,t)=u(x,t),
    \end{equation}
    for all $x\in\mathbb{T}^3$
    and for all $0\leq t<T_{max}$.
    \end{proof}

    \begin{proposition} \label{RestrictedHypoOddDynamicsProp}
        Suppose $u^0\in \dot{H}^s_{\mathcal{M}}$ is odd, where $s>\frac{\log(3)}{2\log(2)}-2\alpha, s\geq 0$. Then the solution of the 
        Fourier-restricted hypodissipative Navier--Stokes equation,
        $u\in C\left([0,T_{max});
    \dot{H}^s_{\mathcal{M}}\right)$
    is also odd for all $0\leq t<T_{max}$.
    \end{proposition}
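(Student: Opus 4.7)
The plan is to mirror the argument used for \Cref{RestrictedOddDynamicsProp} in the inviscid case, namely to set $v(x,t):=-u(-x,t)$, show that $v$ is again a solution of the Fourier-restricted hypodissipative Navier--Stokes equation in the same class $C\left([0,T_{max});\dot{H}^s_{\mathcal{M}}\right)$, and then invoke the uniqueness statement from \Cref{ExistenceRestrictedNS} together with $v^0=u^0$ (which holds by the hypothesis that $u^0$ is odd) to conclude $v(\cdot,t)=u(\cdot,t)$ on $[0,T_{max})$.

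The verification that $v$ solves the equation reduces to checking that each of the three operators --- the time derivative, the fractional dissipation $\nu(-\Delta)^\alpha$, and the restricted nonlinearity $\mathbb{P}_{\mathcal{M}}((u\cdot\nabla)u)$ --- transforms covariantly under the reflection $R:x\mapsto -x$ composed with sign reversal in the target. First I would note that $\partial_t v(x,t)=-(\partial_t u)(-x,t)$ and that the chain rule gives $(v\cdot\nabla)v(x,t)=((u\cdot\nabla)u)(-x,t)$ evaluated with an overall minus sign after accounting for both the reflection in $x$ and the sign flip in $u$; a clean way to organize this is at the Fourier side, where $\hat v(k)=-\hat u(-k)$, so that $\widehat{(-\Delta)^\alpha v}(k)=(4\pi^2|k|^2)^\alpha\hat v(k)=-(\widehat{(-\Delta)^\alpha u})(-k)$, which is exactly the Fourier expression for $-((-\Delta)^\alpha u)(-x,t)$.

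For the nonlinearity I would use the explicit Fourier form
\begin{equation}
\mathcal{F}(\mathbb{P}_{\mathcal{M}}(w))(k)=\left(v^k\cdot\hat w(k)\right)v^k\,\mathds{1}_{\mathcal{M}}(k),
\end{equation}
together with the two structural symmetries $\mathcal{M}=-\mathcal{M}$ (immediate from the definition $\mathcal{M}^-=-\mathcal{M}^+$) and $v^{-k}=v^k$ (immediate from $P_{-k}^{\perp}=P_k^{\perp}$). These imply $\mathbb{P}_{\mathcal{M}}(w^R)=(\mathbb{P}_{\mathcal{M}}w)^R$ where $w^R(x):=-w(-x)$, in the same spirit as \Cref{HelmholtzPermute,mPermute}. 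Combined with the identity $((v\cdot\nabla)v)^R=((u\cdot\nabla)u)^R$ for $v=u^R$, one obtains that $v$ satisfies the equation.

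The only step requiring any care is the bookkeeping that the reflection $R$ preserves the constraint space $\dot{H}^s_{\mathcal{M}}$, so that $v$ indeed lives in the class where the uniqueness statement of \Cref{ExistenceRestrictedNS} applies; this is the analogue of the $\dot{H}^s_{\mathcal{M}}$-preservation lemmas already proved for permutations, and it is immediate once $\mathcal{M}=-\mathcal{M}$ and $v^{-k}=v^k$ are in hand. After that, uniqueness closes the argument exactly as in \Cref{RestrictedOddDynamicsProp,RestrictedHypoPermuteDynamicsProp}: both $u$ and $v$ are solutions with the same initial datum, hence $u(\cdot,t)=v(\cdot,t)=-u(-\cdot,t)$ for all $0\le t<T_{max}$, which is oddness.
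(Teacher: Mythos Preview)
Your proposal is correct and follows exactly the same approach as the paper: define $v(x,t)=-u(-x,t)$, verify it is again a solution in $C\left([0,T_{max});\dot{H}^s_{\mathcal{M}}\right)$, and conclude by uniqueness. The paper's proof simply asserts that the verification is ``straightforward to check,'' whereas you have supplied the details (the Fourier-side computation for $(-\Delta)^\alpha$, the behavior of the nonlinearity under reflection, and the facts $\mathcal{M}=-\mathcal{M}$ and $v^{-k}=v^k$ ensuring $\mathbb{P}_{\mathcal{M}}$ commutes with the reflection).
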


    \begin{proof}
        Again let $v(x,t)=-u(-x,t)$. It is again straightforward to check that $v$ is also a solution of the Fourier-restricted, hypodissipative Navier--Stokes equation, and therefore by uniqueness we can conclude that oddness must be preserved dynamically in time.
    \end{proof}

\section{Dynamics of the Fourier-restricted model equation} \label{DynamicsSection}

In this section, we will study the dynamics of odd, permutation symmetric solutions of the Fourier-restricted Euler and hypodissipative Navier--Stokes equations. We will reduce the dynamics of these equations to an infinite system of ODEs that has a similar structure to the dyadic Euler/Navier--Stokes equations. In particular, we will show that solutions with these symmetries must satisfy the infinite system of ODES given in \cref{EulerODEintro,NavierStokesODEintro}.

Recall from \Cref{bunchofdefs}, that
for all $m\in\mathbb{Z}^+$
\begin{align}
    k^m&=2^{2m}\sigma
    +3^m \left(\begin{array}{c}
         1  \\
         0 \\ 
         -1 
    \end{array}\right) \\
    h^m&=2^{2m+1}\sigma
    +3^m \left(\begin{array}{c}
         1  \\
         1 \\ 
         -2 
    \end{array}\right) \\
    j^m&=2^{2m+1}\sigma
    +3^m \left(\begin{array}{c}
         2  \\
         -1 \\ 
         -1 
    \end{array}\right).
\end{align}
We will now state some useful identities involving the canonical elements of $\mathcal{M}$, which are straightforward computations left to the reader.
\begin{proposition} \label{CanonicalComputations}
    For all $m\in\mathbb{Z}^+,$
    \begin{align}
        \sigma \cdot k^m
        &= 3*2^{2m} \\
        \sigma \cdot h^m
        &= 3*2^{2m+1} \\
        \sigma \cdot j^m
        &= 3*2^{2m+1},
    \end{align}
    and we also have
    \begin{align}
        \left|k^m\right|^2
        &=3*2^{4m}+2*3^{2m} \\
        \left|h^m\right|^2
        &=3*2^{4m+2}+2*3^{2m+1} \\
        \left|j^m\right|^2
        &=3*2^{4m+2}+2*3^{2m+1}.
    \end{align}
    Note that this can be equivalently stated as, for all $n\in\mathbb{Z}^+$ and for all $k\in\mathcal{M}^+_n$,
    \begin{align}
    \sigma\cdot k &= 3* 2^n \\
    |k|^2 &=
    3*2^{2n}+2*3^n.
    \end{align}
\end{proposition}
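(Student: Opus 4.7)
The plan is essentially to unfold the definitions of $k^m,h^m,j^m$ and compute. Since $\sigma = (1,1,1)^T$ with $|\sigma|^2 = 3$, and since each of the vectors $(1,0,-1)^T$, $(1,1,-2)^T$, $(2,-1,-1)^T$ added to the $\sigma$-component is orthogonal to $\sigma$, the cross terms in both the dot product and in $|\cdot|^2$ will vanish. So the argument is really just bookkeeping.

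For the inner products with $\sigma$, I would simply expand each $k^m, h^m, j^m$ linearly against $\sigma$. In each case the second summand contributes zero because $1+0-1 = 1+1-2 = 2-1-1 = 0$, and the first summand gives $3 \cdot 2^{2m}$, $3\cdot 2^{2m+1}$, $3\cdot 2^{2m+1}$ respectively after using $\sigma\cdot\sigma = 3$. For the squared norms I would expand via the binomial identity
\begin{equation}
\left|a\sigma + 3^m w\right|^2 = a^2 |\sigma|^2 + 2 a\, 3^m (\sigma\cdot w) + 3^{2m}|w|^2,
\end{equation}
and in each case $\sigma \cdot w = 0$, leaving only $3 a^2 + 3^{2m}|w|^2$. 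Since $|(1,0,-1)|^2 = 2$ and $|(1,1,-2)|^2 = |(2,-1,-1)|^2 = 6 = 2\cdot 3$, the stated formulas follow immediately.

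The equivalent statement indexed by $n$ is then just a reading of the definition of the shells $\mathcal{M}^+_n$: for $n=2m$ even one has $k\in\mathcal{P}[k^m]$, and since permutations preserve both $\sigma\cdot k$ (as $\sigma$ is permutation invariant) and $|k|$, the values $\sigma\cdot k = 3\cdot 2^n$ and $|k|^2 = 3\cdot 2^{2n}+2\cdot 3^n$ are constant across the shell. For $n=2m+1$ odd, $k \in \mathcal{P}[h^m]\cup\mathcal{P}[j^m]$, and the two previous computations give exactly the same values $\sigma\cdot k = 3\cdot 2^{2m+1} = 3\cdot 2^n$ and $|k|^2 = 3\cdot 2^{4m+2}+2\cdot 3^{2m+1} = 3\cdot 2^{2n}+2\cdot 3^n$, so these are also independent of the representative $k$.

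There is no real obstacle here — the only thing worth flagging is to verify once that the three ``direction vectors'' $(1,0,-1)$, $(1,1,-2)$, $(2,-1,-1)$ are all perpendicular to $\sigma$, since this is precisely what makes the cross terms disappear and forces the clean separation between the $2^{\cdot}$ scale and the $3^{\cdot}$ scale that drives the anisotropic dyadic structure later in the paper.
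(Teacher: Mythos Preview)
Your proposal is correct and is exactly the straightforward computation the paper has in mind; in fact the paper does not write out a proof at all, stating only that these are ``straightforward computations left to the reader.'' Your observation that the three direction vectors are orthogonal to $\sigma$ is precisely the point, and your treatment of the shell-indexed reformulation via permutation invariance of $\sigma\cdot k$ and $|k|$ is the intended reading.
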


We have already defined for each $k\in \mathcal{M},$ the vector
\begin{equation}
    v^k=\frac{P_{k}^\perp(\sigma)}
    {|P_{k}^\perp(\sigma)|},
\end{equation}
which gives the direction of the Fourier transform at each frequency. Now we will explicitly compute these vectors $v^{k^m},v^{h^m},v^{j^m}$, for our canonical frequencies,
but first we will show that our our frequencies are highly anisotropic, in that $k^m,h^m,j^m$ all converge conically to the $\spn(\sigma)$ exponentially fast.

\begin{definition}
    We will say that a sequence 
    $\left\{w^m\right\}_{m\in\mathbb{N}}
    \subset\mathbb{R}^3$ 
    converges conically to $\spn^+(v)$, where $v\in\mathbb{R}^3$ can be any vector $v\neq 0$, if
    \begin{equation}
        \lim_{m\to +\infty}
        \frac{w^m\cdot v}{|w^m||v|}
        =1.
    \end{equation}
    Likewise, we will define an $\epsilon$-conical neighborhood as the set
    \begin{equation}
        C_\epsilon(\spn^+(v))
        =
        \left\{w\in \mathbb{R}^3:
        (v\cdot w)>(1-\epsilon)|v||w|
        \right\}.
    \end{equation}
\end{definition}

\begin{proposition}
    For all $m\in\mathbb{Z}^+$,
    \begin{equation}
        \frac{\sigma\cdot k^m}
        {|\sigma||k^m|}
        =
        \frac{1}
        {\left(1+\frac{2}{3}\left(\frac{3}{4}\right)^{2m}\right)^\frac{1}{2}},
    \end{equation}
    and
    \begin{equation}
    \frac{\sigma\cdot h^m}
    {|\sigma||h^m|}
    =
    \frac{\sigma\cdot j^m}
    {|\sigma||j^m|}
    =
    \frac{1}{\left(1+\frac{1}{2}\left(\frac{3}{4}\right)^{2m}\right)^\frac{1}{2}}.
    \end{equation}
    We can therefore conclude that our canonical frequencies $k^m,h^m,j^m$---and by symmetry all of their permutations---converge conically to $\spn^+(\sigma)$ exponentially fast 
    as $m\to +\infty$.
\end{proposition}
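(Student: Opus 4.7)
The statement is essentially a direct arithmetic consequence of the magnitude and dot-product identities already recorded in \Cref{CanonicalComputations}, so the plan is to plug in and simplify, then handle the permutation and limit statements separately at the end.

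First I would compute the $k^m$ ratio. Using $|\sigma|=\sqrt{3}$, $\sigma\cdot k^m = 3\cdot 2^{2m}$, and $|k^m|^2 = 3\cdot 2^{4m}+2\cdot 3^{2m}$ from \Cref{CanonicalComputations}, write
\begin{equation}
\frac{\sigma\cdot k^m}{|\sigma||k^m|}
=\frac{3\cdot 2^{2m}}{\sqrt{3}\,\sqrt{3\cdot 2^{4m}+2\cdot 3^{2m}}}
=\frac{1}{\sqrt{1+\tfrac{6\cdot 3^{2m}}{9\cdot 2^{4m}}}}
=\frac{1}{\sqrt{1+\tfrac{2}{3}\bigl(\tfrac{3}{4}\bigr)^{2m}}},
\end{equation}
where the last equality uses $3^{2m}/2^{4m}=(3/4)^{2m}$. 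The computation for $h^m$ is analogous: since $\sigma\cdot h^m = 3\cdot 2^{2m+1}$ and $|h^m|^2 = 3\cdot 2^{4m+2}+2\cdot 3^{2m+1}$, the same algebraic manipulation factors out $9\cdot 2^{4m+2}$ from under the square root and yields $\bigl(1+\tfrac{1}{2}(3/4)^{2m}\bigr)^{-1/2}$. The $j^m$ ratio is identical to the $h^m$ ratio because $\sigma\cdot j^m=\sigma\cdot h^m$ and $|j^m|^2=|h^m|^2$ by \Cref{CanonicalComputations}.

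For the permutation statement, I would observe that $\sigma$ is fixed by every $P\in\mathcal{P}_3$, i.e.\ $P(\sigma)=\sigma$, and that $|P(k)|=|k|$ since permutations are orthogonal. Combined with \Cref{DotLemma}, this gives $\sigma\cdot P(k) = P^{-1}(\sigma)\cdot k = \sigma\cdot k$ for every $k$, so the cosine of the angle with $\sigma$ is invariant under permutations, and the three displayed identities therefore extend to all of $\mathcal{P}[k^m]\cup\mathcal{P}[h^m]\cup\mathcal{P}[j^m]=\mathcal{M}_n^+$.

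Finally, for the conical convergence, since $0<3/4<1$ we have $(3/4)^{2m}\to 0$ exponentially as $m\to+\infty$, so both $\bigl(1+\tfrac{2}{3}(3/4)^{2m}\bigr)^{-1/2}$ and $\bigl(1+\tfrac{1}{2}(3/4)^{2m}\bigr)^{-1/2}$ tend to $1$ exponentially fast. By the definition of conical convergence to $\spn^+(\sigma)$, this gives the stated exponential conical convergence for $k^m,h^m,j^m$, and by the permutation-invariance just established, for all of their permutations as well. There is no serious obstacle here; the only thing to be careful about is keeping track of the factors of $3$ coming from $|\sigma|^2=3$ when rationalizing under the square root, which is what turns $\tfrac{6}{9}$ into $\tfrac{2}{3}$ for $k^m$ and $\tfrac{6\cdot 3}{9\cdot 4}=\tfrac{1}{2}$ for $h^m,j^m$.
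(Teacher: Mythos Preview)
Your proposal is correct and takes essentially the same approach as the paper: the paper's proof simply states that the result follows immediately from \Cref{CanonicalComputations}, and you have spelled out exactly that computation together with the permutation-invariance and limit arguments that the paper leaves implicit.
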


\begin{proof}
    This result follows immediately from \Cref{CanonicalComputations}.
\end{proof}

\begin{proposition} \label{BasisComputations}
For all $m\in\mathbb{Z}^+$,
    \begin{align}
    v^{k^m}
    &=
    \frac{1}{\left(4*3^{4m+1}
    +2^{4m+1}*3^{2m+2}
    \right)^\frac{1}{2}}
    \left(
    2*3^{2m}\sigma
    -2^{2m}*3^{m+1}
        \left(\begin{array}{c}
             1  \\
             0 \\
             -1
        \end{array}\right)
        \right) \\
    v^{h^m}
    &=
    \frac{1}{\left(4*3^{4m+3}
    +2^{4m+3}*3^{2m+3}
    \right)^\frac{1}{2}}
        \left(
    2*3^{2m+1}\sigma
    -2^{2m+1}*3^{m+1}
    \left(
    \begin{array}{c}
         1  \\
         1 \\
         -2
    \end{array}
    \right)
    \right) \\
    v^{j^m}
    &=
    \frac{1}{\left(4*3^{4m+3}
    +2^{4m+3}*3^{2m+3}
    \right)^\frac{1}{2}}
        \left(
    2*3^{2m+1}\sigma
    -2^{2m+1}*3^{m+1}
    \left(
    \begin{array}{c}
         2  \\
         -1 \\
         -1
    \end{array}
    \right)
    \right).
    \end{align}
\end{proposition}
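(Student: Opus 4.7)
The proof is essentially a direct computation from the definition of $v^k$. My plan is to handle the three vectors separately, exploiting the common structural feature that each $k^m$, $h^m$, $j^m$ has the form $a\sigma + b w$ where $w$ is orthogonal to $\sigma$; this orthogonality is what makes the norm computations clean.

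First I would treat $v^{k^m}$. Write $k^m = 2^{2m}\sigma + 3^m e$ with $e = (1,0,-1)^T$, and note that $\sigma \cdot e = 0$. By the previous proposition I already have $\sigma\cdot k^m = 3\cdot 2^{2m}$ and $|k^m|^2 = 3\cdot 2^{4m} + 2\cdot 3^{2m}$. Substituting into $P_{k^m}^\perp(\sigma) = \sigma - \frac{\sigma\cdot k^m}{|k^m|^2} k^m$ and collecting the $\sigma$-component and the $e$-component separately, the coefficient of $\sigma$ becomes $1 - \frac{3\cdot 2^{4m}}{3\cdot 2^{4m}+2\cdot 3^{2m}} = \frac{2\cdot 3^{2m}}{3\cdot 2^{4m}+2\cdot 3^{2m}}$, and the coefficient of $e$ is $-\frac{2^{2m}\cdot 3^{m+1}}{3\cdot 2^{4m}+2\cdot 3^{2m}}$. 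After pulling out the common scalar denominator, which gets absorbed into normalization, the numerator becomes $2\cdot 3^{2m}\sigma - 2^{2m}\cdot 3^{m+1} e$.

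Next I would compute the norm of that numerator. Because $\sigma \perp e$, its square splits as $(2\cdot 3^{2m})^2 |\sigma|^2 + (2^{2m}\cdot 3^{m+1})^2 |e|^2 = 4\cdot 3^{4m}\cdot 3 + 2^{4m}\cdot 3^{2m+2}\cdot 2 = 4\cdot 3^{4m+1} + 2^{4m+1}\cdot 3^{2m+2}$, which matches the stated denominator. For $v^{h^m}$ the exact same argument goes through with $e$ replaced by $(1,1,-2)^T$, using $\sigma\cdot h^m = 3\cdot 2^{2m+1}$ and $|h^m|^2 = 3\cdot 2^{4m+2}+2\cdot 3^{2m+1}$; here $|(1,1,-2)|^2 = 6 = 2\cdot 3$, and tracking the extra factor of $2$ and the shifted exponents gives norm squared equal to $4\cdot 3^{4m+3}+2^{4m+3}\cdot 3^{2m+3}$. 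The computation for $v^{j^m}$ is formally identical because the vector $(2,-1,-1)^T$ satisfies the same two relations $\sigma\cdot (2,-1,-1)=0$ and $|(2,-1,-1)|^2=6$ that $(1,1,-2)^T$ does, and $|j^m|^2=|h^m|^2$, $\sigma\cdot j^m=\sigma\cdot h^m$.

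There is no real obstacle here beyond careful bookkeeping of exponents of $2$ and $3$; the orthogonality $\sigma\perp e$ (where $e$ denotes the ``anisotropic'' part of each of $k^m,h^m,j^m$) is what reduces the norm calculation to a two-term Pythagorean sum and prevents cross terms from appearing. The only minor care needed is to verify that the formula obtained after pulling the denominator $|k|^2$ out front simplifies to exactly the form stated, which it does once one rewrites $3 \cdot 3^{2m} = 3^{2m+1}$ and similar absorptions of small integer factors into the exponents.
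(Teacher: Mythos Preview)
Your proposal is correct and follows essentially the same direct computation as the paper: both substitute the known values of $\sigma\cdot k$ and $|k|^2$ into the definition of $P_k^\perp(\sigma)$, factor out the common scalar, and normalize. Your explicit use of the orthogonality $\sigma\perp e$ to reduce the norm computation to a Pythagorean sum is a small expository improvement over the paper's terser ``renormalizing we find that\ldots'', but the underlying argument is identical.
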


\begin{proof}
Beginning, with $k^m$, we compute that
\begin{align}
    P_{k^m}^\perp(\sigma)
    &=
    \sigma-\frac{\sigma\cdot k^m}{\left|k^m\right|^2}k^m \\
    &=
    \sigma-
    \frac{3*2^{2m}}{3*2^{4m}+2*3^{2m}}
    \left(2^{2m}\sigma+3^m
    \left(\begin{array}{c}
             1  \\
             0 \\
             -1
        \end{array}\right)\right) \\
    &=
    \frac{1}{3*2^{4m}+2*3^{2m}}
    \left(
    2*3^{2m}\sigma
    -2^{2m}*3^{m+1}
        \left(\begin{array}{c}
             1  \\
             0 \\
             -1
        \end{array}\right)
        \right).
    \end{align}
    Renormalizing we find that
    \begin{equation}
    \frac{P_{k^m}^\perp(\sigma)}
    {\left|P_{k^m}^\perp
    (\sigma)\right|}
    =
    \frac{1}{\left(4*3^{4m+1}
    +2^{4m+1}*3^{2m+2}
    \right)^\frac{1}{2}}
    \left(
    2*3^{2m}\sigma
    -2^{2m}*3^{m+1}
        \left(\begin{array}{c}
             1  \\
             0 \\
             -1
        \end{array}\right)
        \right).
\end{equation}

Now turning to $h^m$, we compute that
\begin{align}
    P_{h^m}^\perp(\sigma)
    &=
    \sigma-\frac{\sigma\cdot h^m}
    {\left|h^m\right|^2}h^m \\
    &=
    \sigma
    -
    \frac{3*2^{2m+1}}
    {3*2^{4m+2}+2*3^{2m+1}}
    \left(2^{2m+1}\sigma+3^m\left(
    \begin{array}{c}
         1  \\
         1 \\
         -2
    \end{array}
    \right)\right) \\
    &=
    \frac{1}{3*2^{4m+2}+2*3^{2m+1}}
    \left(
    2*3^{2m+1}\sigma
    -2^{2m+1}*3^{m+1}
    \left(
    \begin{array}{c}
         1  \\
         1 \\
         -2
    \end{array}
    \right)
    \right).
\end{align}
Renormalzing, we find that
\begin{equation}
    \frac{P_{h^m}^\perp(\sigma)}
    {\left|P_{h^m}^\perp
    (\sigma)\right|}
    =
    \frac{1}{\left(4*3^{4m+3}
    +2^{4m+3}*3^{2m+3}
    \right)^\frac{1}{2}}
        \left(
    2*3^{2m+1}\sigma
    -2^{2m+1}*3^{m+1}
    \left(
    \begin{array}{c}
         1  \\
         1 \\
         -2
    \end{array}
    \right)
    \right).
\end{equation}

Finally turning to $j^m$, we compute that
\begin{align}
    P_{j^m}^\perp(\sigma)
    &=
    \sigma-\frac{\sigma\cdot j^m}
    {\left|j^m\right|^2}j^m \\
    &=
    \sigma
    -
    \frac{3*2^{2m+1}}
    {3*2^{4m+2}+2*3^{2m+1}}
    \left(2^{2m+1}\sigma+3^m\left(
    \begin{array}{c}
         2  \\
         -1 \\
         -1
    \end{array}
    \right)\right) \\
    &=
    \frac{1}{3*2^{4m+2}+2*3^{2m+1}}
    \left(
    2*3^{2m+1}\sigma
    -2^{2m+1}*3^{m+1}
    \left(
    \begin{array}{c}
         2  \\
         -1 \\
         -1
    \end{array}
    \right)
    \right).
\end{align}
Renormalzing, we find that
\begin{equation}
    \frac{P_{j^m}^\perp(\sigma)}
    {\left|P_{j^m}^\perp
    (\sigma)\right|}
    =
    \frac{1}{\left(4*3^{4m+3}
    +2^{4m+3}*3^{2m+3}
    \right)^\frac{1}{2}}
        \left(
    2*3^{2m+1}\sigma
    -2^{2m+1}*3^{m+1}
    \left(
    \begin{array}{c}
         2  \\
         -1 \\
         -1
    \end{array}
    \right)
    \right).
\end{equation}
This completes the proof.
    \end{proof}

\begin{proposition} \label{BasisProp}
    Our basis vectors are permutation symmetric, with for all permutations $P\in\mathcal{P}_3$, and all $k\in \mathcal{M}$,
    \begin{equation}
        v^{P(k)}=P(v^k).
    \end{equation}
Our basis vectors are also even, with for all $k\in\mathcal{M}$, 
    \begin{equation}
        v^{-k}=v^k
    \end{equation}
\end{proposition}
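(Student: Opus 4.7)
The plan is to reduce both identities to two elementary observations about the orthogonal projector $P_k^\perp$: that $\sigma$ is fixed by every permutation, and that $P_k^\perp(\sigma)$ is invariant under $k \mapsto -k$. Since $v^k$ is just $P_k^\perp(\sigma)$ normalized, both symmetries will propagate through the normalization.

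First I would observe that $\mathcal{M}$ is closed under permutations (by definition $\mathcal{M}_n^+$ is a union of permutation orbits $\mathcal{P}[k^m]$, $\mathcal{P}[h^m]$, $\mathcal{P}[j^m]$) and under negation (since $\mathcal{M}^- = -\mathcal{M}^+$). Hence $P(k), -k \in \mathcal{M}$ whenever $k \in \mathcal{M}$, so $v^{P(k)}$ and $v^{-k}$ are well-defined. Next, since $\sigma = (1,1,1)^T$ has all equal entries, $P(\sigma) = \sigma$ for every $P \in \mathcal{P}_3$. Because every permutation is an orthogonal linear map on $\mathbb{R}^3$, it preserves both the norm and the inner product: $|P(k)| = |k|$ and $P(k) \cdot \sigma = P(k) \cdot P(\sigma) = k \cdot \sigma$. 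Then
\begin{align}
P_{P(k)}^\perp(\sigma)
&= \sigma - \frac{P(k)\cdot\sigma}{|P(k)|^2}P(k)
= P(\sigma) - \frac{k\cdot\sigma}{|k|^2}P(k) \\
&= P\!\left(\sigma - \frac{k\cdot\sigma}{|k|^2}k\right)
= P\!\left(P_k^\perp(\sigma)\right),
\end{align}
using linearity of $P$ in the second line. Since $P$ is an isometry, $|P(P_k^\perp(\sigma))| = |P_k^\perp(\sigma)|$, and dividing both sides by this common norm gives $v^{P(k)} = P(v^k)$.

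For the evenness, I would compute directly:
\begin{equation}
P_{-k}^\perp(\sigma) = \sigma - \frac{(-k)\cdot\sigma}{|-k|^2}(-k) = \sigma - \frac{k\cdot\sigma}{|k|^2}k = P_k^\perp(\sigma),
\end{equation}
so $|P_{-k}^\perp(\sigma)| = |P_k^\perp(\sigma)|$ and normalizing yields $v^{-k} = v^k$. There is really no obstacle here beyond bookkeeping; the content of the proposition is that the two symmetries of $\sigma$ (invariance under $\mathcal{P}_3$ and under the trivial action on negation of the subscript) are inherited by $v^k$, which follows purely from the linearity of the projector and the fact that $v^k$ only depends on $k$ through $k/|k|$ and $k\cdot\sigma/|k|^2$. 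The only thing worth double-checking is that the denominators never vanish, i.e.\ $P_k^\perp(\sigma) \neq 0$ for $k \in \mathcal{M}$; this follows because $\sigma$ is not parallel to any $k \in \mathcal{M}$, which is clear from the explicit expressions for $k^m, h^m, j^m$ in \Cref{BasisComputations}, where the numerators are manifestly nonzero.
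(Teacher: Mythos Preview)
Your proof is correct and follows essentially the same approach as the paper: both compute $P_{P(k)}^\perp(\sigma)=P(P_k^\perp(\sigma))$ and $P_{-k}^\perp(\sigma)=P_k^\perp(\sigma)$ directly from the definition, then normalize using the fact that permutations are isometries. Your version is slightly more thorough in checking closure of $\mathcal{M}$ and nonvanishing of the denominator, but the argument is otherwise identical.
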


\begin{proof}
    For all $k\in\mathcal{M}$, let 
    \begin{equation}
        w^k=\sigma -\frac{\sigma\cdot k}{|k|^2}k,
    \end{equation}
    noting that
    \begin{equation}
        v^k=\frac{w^k}{|w^k|}.
    \end{equation}
    It is immediate to observe that 
    \begin{align}
        w^{P(k)}
        &=
        \sigma -\frac{\sigma\cdot P(k)}
        {|P(k)|^2}P(k)\\
        &=
        \sigma -\frac{\sigma\cdot k}
        {|k|^2}P(k) \\
        &=
        P(w^k).
    \end{align}
    Clearly $|P(w^k)|=|w^k|$, because permuting the entries will not change the magnitude of a vector, and so
    \begin{equation}
        v^{P(k)}=P(v^k).
    \end{equation}
    Likewise we can see that
    \begin{equation}
        w^{-k}=w^k,
    \end{equation}
    and again normalizing, this completes the proof.
\end{proof}

\begin{proposition}  \label{DyadicPowers}
    For all $k\in \mathcal{M}_n^+$,
    \begin{equation} \label{SigmaIdentity}
    \sigma\cdot k= 3*2^n.
    \end{equation}
    Furthermore, if $k=h+j$, where $h,j\in \mathcal{M}^+$,
    then $h,j\in \mathcal{M}_{n-1}^+$.
    If  $k=h-j$, where $h,j\in\mathcal{M}^+$,
    then $h\in\mathcal{M}_{n+1}^+, j\in\mathcal{M}_n^+$. Note that 
    $-j\in\mathcal{M}_n^-$
    
    Likewise, for all $k\in \mathcal{M}_n^-$,
    \begin{equation}
    \sigma\cdot k= -3*2^n.
    \end{equation}
    Furthermore, if $k=h+j$, where $h,j\in\mathcal{M}^-$, then $h,j\in\mathcal{M}_{n-1}^-$. If $k=h+j$ where $h\in\mathcal{M}^-,j\in\mathcal{M}^+$,
    then $h\in \mathcal{M}_{n+1}^-, j\in\mathcal{M}^n_+$.
\end{proposition}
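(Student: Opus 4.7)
The plan is to reduce every claim to the single scalar identity in \eqref{SigmaIdentity}, after which the dyadic level-structure becomes an elementary fact about powers of $2$.

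\emph{First, the $\sigma$-identity.} By definition $\mathcal{M}^+_{2m} = \mathcal{P}[k^m]$ and $\mathcal{M}^+_{2m+1} = \mathcal{P}[h^m]\cup\mathcal{P}[j^m]$, and $\sigma = (1,1,1)$ is permutation invariant, so $\sigma\cdot P(v) = \sigma\cdot v$ for every $P \in \mathcal{P}_3$ and $v \in \mathbb{R}^3$. Combined with the explicit values $\sigma\cdot k^m = 3\cdot 2^{2m}$ and $\sigma\cdot h^m = \sigma\cdot j^m = 3\cdot 2^{2m+1}$ from \Cref{CanonicalComputations}, this gives $\sigma\cdot k = 3\cdot 2^n$ for every $k \in \mathcal{M}^+_n$. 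The negative case is immediate from $\mathcal{M}_n^- = -\mathcal{M}_n^+$.

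\emph{Next, the decomposition claims.} Suppose $k \in \mathcal{M}^+_n$ with $k = h+j$ for some $h \in \mathcal{M}^+_{n_h}$, $j \in \mathcal{M}^+_{n_j}$. Dotting with $\sigma$ and dividing by $3$ gives
\[ 2^{n_h} + 2^{n_j} = 2^n. \]
Assuming WLOG $n_h \leq n_j$ and factoring out $2^{n_h}$, we need $1 + 2^{n_j - n_h}$ to be a power of $2$. This quantity is odd and at least $3$ whenever $n_j > n_h$, so we must have $n_h = n_j$, and then both equal $n-1$. For the subtraction claim, if $k = h - j$ with $h \in \mathcal{M}^+_{n_h}$, $j \in \mathcal{M}^+_{n_j}$, then
\[ 2^{n_h} - 2^{n_j} = 2^n > 0 \]
forces $n_h > n_j$, and $2^{n_j}(2^{n_h - n_j} - 1) = 2^n$ requires $2^{n_h - n_j} - 1 = 1$ (the only Mersenne number that is itself a power of $2$), i.e.\ $n_h - n_j = 1$ and $n_j = n$, so $n_h = n+1$. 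The two negative-frequency versions run identically: if $k \in \mathcal{M}^-_n$ and $k = h+j$ with $h,j \in \mathcal{M}^-$, the same arithmetic applied after multiplying through by $-1$ forces $n_h = n_j = n-1$; while if $h \in \mathcal{M}^-_{n_h}$, $j \in \mathcal{M}^+_{n_j}$, the $\sigma$-identity yields $2^{n_j} - 2^{n_h} = -2^n$ (equivalently $2^{n_h} - 2^{n_j} = 2^n$), giving $n_h = n+1$, $n_j = n$.

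\emph{Main obstacle.} There is really no obstacle here beyond invoking \eqref{SigmaIdentity}; the number theory collapses immediately, because the frequency magnitudes are anisotropic and grow like $4^m$ while their projection onto $\sigma$ grows exactly like $2^n$. What makes the proposition valuable in the sequel is the rigidity it imposes: the $(u\cdot\nabla)u$ nonlinearity in Fourier space can only couple $\mathcal{M}_n$ to $\mathcal{M}_{n\pm 1}$, which is precisely the dyadic coupling structure needed to reduce the Fourier-restricted equations to an FKP-type infinite system of ODEs as in \Cref{RestrictedEulerBlowupIntro} and \Cref{RestrictedHypoBlowupIntro}.
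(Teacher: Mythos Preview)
Your proposal is correct and follows essentially the same approach as the paper: reduce everything to the scalar identity $\sigma\cdot k = 3\cdot 2^n$ (inherited from \Cref{CanonicalComputations} via permutation invariance of $\sigma$), then use the elementary facts that $2^a+2^b=2^c$ forces $a=b=c-1$ and $2^a-2^b=2^c$ forces $a=c+1,\ b=c$. The paper's own proof is a terse one-line version of exactly this argument.
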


\begin{proof}
It is only necessary to deal with the case where $k\in\mathcal{M}^+_n$, because the negative case is exactly analogous, simply with all the signs flipped.
The identity \eqref{SigmaIdentity} was already proven in \Cref{CanonicalComputations},
and the result follows from \eqref{SigmaIdentity} and the fact that $2^n+2^r=2^s$, if and only if $r=n$ and $s=n+1$; and
    $2^n-2^r=2^s$ if and only if
    $r=s$ and $n=s+1$.
\end{proof}

\begin{remark}
    Note that this result can be stated equivalently in terms of the permutations of $k^m,h^m,j^m$.
    If $a,b,a+b \in \mathcal{M}^+$, then either
    $a,b\in \mathcal{P}\left[k^m\right]$
    and $a+b\in \mathcal{P}\left[h^m\right]
    \cup
    \mathcal{P}\left[j^m\right]$;
    or
    $a,b\in
    \mathcal{P}\left[h^m\right]
    \cup
    \mathcal{P}\left[j^m\right]$
    and 
    $a+b\in \mathcal{P}\left[k^{m+1}\right]$.
    Furthermore if $a,b,a-b \in \mathcal{M}^+$, then either
    $a\in \mathcal{P}\left[k^{m+1}\right]$
    and $b,a-b\in
    \mathcal{P}\left[h^m\right]
    \cup
    \mathcal{P}\left[j^m\right]$;
    or 
    $a\in \mathcal{P}\left[h^m\right]
    \cup
    \mathcal{P}\left[j^m\right]$
    and $b,a-b \in \mathcal{P}\left[k^m\right]$.
\end{remark}

\begin{remark}
    Note that \Cref{DyadicPowers} implies that $\mathcal{M}^+$ can be decomposed into levels
    \begin{align}
        \mathcal{M}^+
        &=
        \mathcal{M}^+_0 \cup
        \mathcal{M}^+_1 \cup
        \mathcal{M}^+_2 \cup
        \mathcal{M}^+_3 \cup
        \mathcal{M}^+_4 \cup
        \mathcal{M}^+_5 \cup... \\
        &=
        \left\{
        \mathcal{P}\left[k^0\right],
        \mathcal{P}\left[h^0\right]
        \cup
        \mathcal{P}\left[j^0\right],
        \mathcal{P}\left[k^1\right],
        \mathcal{P}\left[h^1\right]
        \cup
        \mathcal{P}\left[j^1\right],
        \mathcal{P}\left[k^2\right],
        \mathcal{P}\left[h^2\right]
        \cup
        \mathcal{P}\left[j^2\right],
        ...
        \right\},
    \end{align}
    such that when we take the sum and difference of Fourier modes due to the convolution in Fourier space from the quadratic nonlinearity $(u\cdot\nabla)u$, we only get dyadic interactions involving the current level, and the level above and below, which dramatically simplifies the system in Fourier space.
\end{remark}

\begin{proposition} \label{DyadicSumDiff}
    The canonical elements of $M^+$ can be expressed as elements of
    $\mathcal{M}^+ +\mathcal{M}^+$ in only the following ways:
    for all $m\geq 1$,
    \begin{equation} \label{eq1}
        k^m=h^{m-1}+j^{m-1};
    \end{equation}
    for all $m\geq 0$,
    \begin{align}
        h^m= \label{eq2}
        k^m+P_{12}\left(k^m\right) \\
        j^m= \label{eq3}
        k^m+P_{23}\left(k^m\right).
    \end{align}
    Furthermore, the canonical elements of $M^+$ can be expressed as elements of
    $\mathcal{M}^+ -\mathcal{M}^+$ in only the following ways:
    for all $m\geq 0$,
    \begin{align}
        k^m
        &= \label{eq4}
        h^{m}-
        P_{12}\left(k^m\right) \\
        k^m
        &=  \label{eq5}
        j^{m}-
        P_{23}\left(k^m\right) \\
        h^m
        &= \label{eq6}
        k^{m+1}-j^m \\
        h^m
        &= \label{eq7}
        P_{12}\left(k^{m+1}\right)
        -P_{12}\left(j^m\right) \\
        j^m
        &= \label{eq8}
        k^{m+1}-h^m \\
        j^m
        &= \label{eq9}
        P_{23}\left(k^{m+1}\right)
        -P_{23}\left(h^m\right).
    \end{align}
\end{proposition}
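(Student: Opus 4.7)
The strategy is to split each canonical element into its $\sigma$-component and a transverse residual, and to exploit \Cref{DyadicPowers} to pin the shell indices before doing any combinatorial enumeration. Every $k \in \mathcal{M}^+_n$ has the form $2^n\sigma + 3^{\lfloor n/2\rfloor} v$ where $v$ is a permutation of one of the three base vectors $(1,0,-1)$, $(1,1,-2)$, or $(2,-1,-1)$; the $\sigma$-direction and the transverse direction transform independently under addition and subtraction, so once the shells of the summands are fixed, matching the transverse component reduces to a finite combinatorial check.

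For shell selection, I would apply \Cref{DyadicPowers} directly. If $k \in \mathcal{M}^+_n$ admits $k = a + b$ with $a \in \mathcal{M}^+_r$ and $b \in \mathcal{M}^+_s$, then $\sigma \cdot k = 3 \cdot 2^n$ forces $2^r + 2^s = 2^n$; factoring $2^{\min(r,s)}$ shows that $1 + 2^{|r-s|}$ must be a power of $2$, which only happens at $|r-s|=0$, so $r = s = n-1$. Similarly $k = a - b$ forces $2^r - 2^s = 2^n$ with $r > s$, and factoring $2^s$ gives $r-s = 1$ and $s=n$, so $r = n+1$. Applied to $k^m, h^m, j^m$ in turn, this localizes the candidate summands exactly to the shells appearing in equations \eqref{eq1}--\eqref{eq9}: the summands for $k^m$ lie in $\mathcal{P}[h^{m-1}] \cup \mathcal{P}[j^{m-1}]$; those for $h^m, j^m$ lie in $\mathcal{P}[k^m]$; and in the difference cases the minuend lies in the shell one level above the target while the subtrahend lies in the same shell as the target.

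Having pinned the shells, the $\sigma$-component matches automatically and only the transverse identity remains. For $k^m = a + b$, cancelling the common $3^{m-1}$ reduces to finding $v_a + v_b = (3,0,-3)$ with $v_a, v_b$ in the six-element pool of permutations of $(1,1,-2)$ or $(2,-1,-1)$; a short inspection leaves only $(1,1,-2)+(2,-1,-1)$, giving \eqref{eq1}. The sum cases \eqref{eq2}, \eqref{eq3} reduce to $v_a + v_b = (1,1,-2)$ or $(2,-1,-1)$ with $v_a, v_b$ permutations of $(1,0,-1)$. The difference cases \eqref{eq4}--\eqref{eq5} reduce to $v_a - v_b = (1,0,-1)$ with $v_a$ a permutation of $(1,1,-2)$ or $(2,-1,-1)$ and $v_b$ a permutation of $(1,0,-1)$; and \eqref{eq6}--\eqref{eq9} reduce to $3 v_a - v_b = (1,1,-2)$ or $(2,-1,-1)$ over the corresponding sets. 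Each is a bounded enumeration returning exactly the listed solutions.

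The only real obstacle is bookkeeping: \Cref{DyadicPowers} does all the structural work of localizing the summands, so the problem collapses to a handful of integer-vector identities in $\mathbb{R}^3$. Uniqueness in each case follows immediately from the enumeration, since the map ``base vector plus permutation index'' is injective within a shell, and for each target transverse vector the finite search terminates at either one solution (in the sum cases and for the difference decompositions of $k^m$) or exactly two solutions (in the difference decompositions of $h^m$ and $j^m$, which come in $P_{12}$- or $P_{23}$-related pairs).
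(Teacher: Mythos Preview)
Your proposal is correct and essentially the same as the paper's proof: both use \Cref{DyadicPowers} to localize the summands to the appropriate shells via the $\sigma$-component, then reduce each case to a finite enumeration over permutations of the transverse residual vectors $(1,0,-1)$, $(1,1,-2)$, $(2,-1,-1)$. The paper additionally notes that the difference identities \eqref{eq4}--\eqref{eq9} can be read off from the sum identities \eqref{eq1}--\eqref{eq3} by subtraction together with the fixed-point relations $P_{12}(h^m)=h^m$ and $P_{23}(j^m)=j^m$, but this is a cosmetic shortcut; your direct enumeration covers the same ground.
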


\begin{proof}
    First we will deal with the sum case. We begin by observing that \Cref{DyadicPowers} implies that if
    $a+b=k^m$, for $m\in \mathbb{N}$,
    then $a,b\in 
    \mathcal{P}[h^{m-1}]
    \cup\mathcal{P}[j^{m-1}]$.
    The vector $\sigma=(1,1,1)$ is invariant under all permutations, and clearly 
    \begin{equation}
        2^{2m-1}\sigma+2^{2m-1}\sigma
        =2^{2m}\sigma,
    \end{equation}
    so it suffices to observe that
    \begin{equation}
        \left(
        \begin{array}{c}
              3 \\
               0 \\
               -3 
        \end{array}
        \right)
        =
        \left(
        \begin{array}{c}
              1 \\
               1 \\
               -2 
        \end{array}
        \right)
        +\left(
        \begin{array}{c}
              2 \\
               -1 \\
               -1 
        \end{array}
        \right),
    \end{equation}
    is the only way to express $(3,0,-3)$ as a sum of permutations of $(1,1,-2)$ and $(2,-1,-1)$.
    This immediately implies $a=h^{m-1}, b=j^{m-1}$ or vice versa.
    Next we observe that if $a+b=h^m$,
    then \Cref{DyadicPowers} implies that $a,b\in \mathcal{P}[k^m]$.
    It suffices to observe that 
        \begin{equation}
        \left(
        \begin{array}{c}
              2 \\
               -1 \\
               -1 
        \end{array}
        \right)
        =
        \left(
        \begin{array}{c}
              1 \\
               0 \\
               -1 
        \end{array}
        \right)
        +\left(
        \begin{array}{c}
              1 \\
               -1 \\
               0 
        \end{array}
        \right),
    \end{equation}
    is the only way to express 
    $(2,-1,-1)$ as a sum of permutations of $(1,0,-1)$.
    This implies that $a=k^m, 
    b=P_{23}(k^m)$,
    or vice versa.
    The proof for $j^m$ is entirely analogous, and is left to the reader.

    Now we can consider the differences, expressing elements of $\mathcal{M}^+$ as elements of $\mathcal{M}^+-\mathcal{M}^-$.
    Simply by subtracting across the correct term, we can derive \eqref{eq4} from \eqref{eq2}; can derive \eqref{eq5} from \eqref{eq3};
    and can derive \eqref{eq6} and \eqref{eq8} from \eqref{eq1}.
    Observing that $h^m=P_{12}(h^m)$, we can apply $P_{12}$ to \eqref{eq6} to obtain \eqref{eq7}. Likewise observing that $j^m=P_{23}(j^m)$, we can apply $P_{23}$ to \eqref{eq8}
    to obtain \eqref{eq9}.

    It remains to show that these differences are in fact the only ways to express the canonical elements of $\mathcal{M}^+$ as elements of $\mathcal{M}^+ -\mathcal{M}^+$.
    For this it suffices to observe the following:
    \begin{equation}
        \left(
        \begin{array}{c}
             1  \\
             0 \\
             -1
        \end{array}
        \right)
        =
        \left(
        \begin{array}{c}
             2  \\
             -1 \\
             -1
        \end{array}
        \right)
        -\left(
        \begin{array}{c}
             1  \\
             -1 \\
             0
        \end{array}
        \right),
    \end{equation}
    is the only ways to express $(1,0,-1)$ as a difference of permutations of $(2,-1,-1)$ and $(1,0,-1)$;
    \begin{equation}
        \left(
        \begin{array}{c}
             1  \\
             0 \\
             -1
        \end{array}
        \right)
        =
        \left(
        \begin{array}{c}
             1  \\
             1 \\
             -2
        \end{array}
        \right)
        -\left(
        \begin{array}{c}
             0  \\
             1 \\
             -1
        \end{array}
        \right),
    \end{equation}
    is the only way to express $(1,0,-1)$ as a difference of permutations of $(1,1,-2)$ and $(1,0,-1)$;
    \begin{align}
        \left(
        \begin{array}{c}
             2  \\
             -1 \\
             -1
        \end{array}
        \right)
        &=
        \left(
        \begin{array}{c}
             3  \\
             0 \\
             -3
        \end{array}
        \right)
        -\left(
        \begin{array}{c}
             1  \\
             1 \\
             -2
        \end{array}
        \right) \\
        &=
        \left(
        \begin{array}{c}
             3  \\
             -3 \\
             0
        \end{array}
        \right)
        -\left(
        \begin{array}{c}
             1  \\
             -2 \\
             1
        \end{array}
        \right)
    \end{align}
    are the only ways to express $(2,-1,-1)$ as a difference of permutations of $(3,0,-3)$ and $(1,1,-2)$;
    \begin{align}
        \left(
        \begin{array}{c}
             1  \\
             1 \\
             -2
        \end{array}
        \right)
        &=
        \left(
        \begin{array}{c}
             3  \\
             0 \\
             -3
        \end{array}
        \right)
        -\left(
        \begin{array}{c}
             2  \\
             -1 \\
             -1
        \end{array}
        \right) \\
        &=
        \left(
        \begin{array}{c}
             0  \\
             3 \\
             -3
        \end{array}
        \right)
        -\left(
        \begin{array}{c}
             -1  \\
            2 \\
             -1
        \end{array}
        \right)
    \end{align}
    are the only ways to express $(1,1,-2)$ as a difference of permutations of $(3,0,-3)$ and $(2,-1,-1)$.
    This completes the proof.
\end{proof}

\begin{proposition} \label{ComputeTheFuckingModesProp}
    We can express the bilinear term on the canonical frequencies for the interactions of the modes listed above as follows. Let
    \begin{align}
        a_m &= 
        \frac{\sqrt{6}\pi}
    {\left(1+\frac{1}{2}
    \left( \frac{3}{4}
    \right)^{2m}\right)^\frac{1}{2}}
    3^m
    \\
    b_m &= \frac{\sqrt{2}\pi}{\left(1+\frac{3}{8}
    \left(\frac{3}{4}\right)^{2m}
    \right)^\frac{1}{2}} 3^{m+1},
    \end{align} 
    and fix $m\in\mathbb{Z}^+$.
    \begin{enumerate}
    
        \item 
    Let $u$ and $\Tilde{u}$ be given by
\begin{align}
    u&=
    iv^{k^m}e^{2\pi i k^m\cdot x} \\
    \Tilde{u}
    &=
    iP_{12}(v^{k^m})
    e^{2\pi i P_{12}(k^m)\cdot x}.
    \end{align}
Then the bilinear term in the restricted model is given by
\begin{equation}
    \mathbb{P}_{M}((\Tilde{u}\cdot\nabla) u
    +(u\cdot\nabla)\Tilde{u})
    =
    -a_m i v^{h^m}e^{2\pi i h^m\cdot x}.
\end{equation}

\item 
Let $u$ and $\Tilde{u}$ be given by
\begin{align}
    u&=
    iv^{k^m}e^{2\pi i k^m\cdot x} \\
    \Tilde{u}
    &=
    iP_{23}(v^{k^m})
    e^{2\pi i P_{23}(k^m)\cdot x}.
    \end{align}
Then the bilinear term in the restricted model is given by
\begin{equation}
    \mathbb{P}_{\mathcal{M}}((\Tilde{u}\cdot\nabla) u
    +(u\cdot\nabla)\Tilde{u})
    =
    -a_m i v^{j^m}e^{2\pi i j^m\cdot x}.
\end{equation}

\item
Let $u$ and $\Tilde{u}$ be given by
\begin{align}
    u&=
    iv^{h^m}e^{2\pi i h^m\cdot x} \\
    \Tilde{u}
    &=
    iv^{j^m}
    e^{2\pi i j^m\cdot x}.
    \end{align}
Then the bilinear term in the restricted model is given by
\begin{equation}
    \mathbb{P}_{\mathcal{M}}((\Tilde{u}\cdot\nabla) u
    +(u\cdot\nabla)\Tilde{u})
    =
    -b_m i v^{k^{m+1}}e^{2\pi i k^{m+1}\cdot x}.
\end{equation}

\item 
Let $u$ and $\Tilde{u}$ be given by
\begin{align}
    u&=
    iv^{h^m}e^{2\pi i h^m\cdot x} \\
    \Tilde{u}
    &=
    -iP_{12}(v^{k^m})
    e^{-2\pi i P_{12}(k^m)\cdot x}.
    \end{align}
Then the bilinear term in the restricted model is given by
\begin{equation}
    \mathbb{P}_{\mathcal{M}}((\Tilde{u}\cdot\nabla) u
    +(u\cdot\nabla)\Tilde{u})
    =
    \frac{a_m}{2}i v^{k^m}e^{2\pi i k^m\cdot x}.
\end{equation}

\item
Let $u$ and $\Tilde{u}$ be given by
\begin{align}
    u&=
    iv^{j^m}e^{2\pi i j^m\cdot x} \\
    \Tilde{u}
    &=
    -iP_{23}(v^{k^m})
    e^{-2\pi i P_{23}(k^m)\cdot x}.
    \end{align}
Then the bilinear term in the restricted model is given by
\begin{equation}
    \mathbb{P}_{\mathcal{M}}((\Tilde{u}\cdot\nabla) u
    +(u\cdot\nabla)\Tilde{u})
    =
    \frac{a_m}{2}i v^{k^m}e^{2\pi i k^m\cdot x}.
\end{equation}

\item 
Let $u$ and $\Tilde{u}$ be given by
\begin{align}
    u&=
    iv^{k^{m+1}}e^{2\pi i k^{m+1}\cdot x} \\
    \Tilde{u}
    &=
    -iv^{j^m}
    e^{-2\pi i j^m\cdot x}.
    \end{align}
Then the bilinear term in the restricted model is given by
\begin{equation}
    \mathbb{P}_{\mathcal{M}}((\Tilde{u}\cdot\nabla) u
    +(u\cdot\nabla)\Tilde{u})
    =
    \frac{b_m}{2} i v^{h^m}e^{2\pi i h^m\cdot x}.
\end{equation}

\item 
Let $u$ and $\Tilde{u}$ be given by
\begin{align}
    u&=
    iP_{12}(v^{k^{m+1}})e^{2\pi i 
    P_{12}(k^{m+1})\cdot x} \\
    \Tilde{u}
    &=
    -iP_{12}(v^{j^m})
    e^{-2\pi i P_{12}(j^m)\cdot x}.
    \end{align}
Then the bilinear term in the restricted model is given by
\begin{equation}
    \mathbb{P}_{\mathcal{M}}((\Tilde{u}\cdot\nabla) u
    +(u\cdot\nabla)\Tilde{u})
    =
    \frac{b_m}{2} i v^{h^m}e^{2\pi i h^m\cdot x}.
\end{equation}

\item 
Let $u$ and $\Tilde{u}$ be given by
\begin{align}
    u&=
    iv^{k^{m+1}}e^{2\pi i k^{m+1}\cdot x} \\
    \Tilde{u}
    &=
    -iv^{h^m}
    e^{-2\pi i h^m\cdot x}.
    \end{align}
Then the bilinear term in the restricted model is given by
\begin{equation}
    \mathbb{P}_{\mathcal{M}}((\Tilde{u}\cdot\nabla) u
    +(u\cdot\nabla)\Tilde{u})
    =
    \frac{b_m}{2} i v^{j^m}e^{2\pi i j^m\cdot x}.
\end{equation}

\item 
Let $u$ and $\Tilde{u}$ be given by
\begin{align}
    u&=
    iv^{P_{23}(k^{m+1})}
    e^{2\pi i P_{23}(k^{m+1})\cdot x} \\
    \Tilde{u}
    &=
    -iP_{23}(v^{h^m})
    e^{-2\pi i P_{23}(h^m)\cdot x}.
    \end{align}
Then the bilinear term in the restricted model is given by
\begin{equation}
    \mathbb{P}_{\mathcal{M}}((\Tilde{u}\cdot\nabla) u
    +(u\cdot\nabla)\Tilde{u})
    =
    \frac{b_m}{2} i v^{j^m}e^{2\pi i j^m\cdot x}.
\end{equation}

    \end{enumerate}    
\end{proposition}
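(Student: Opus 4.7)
The plan is to reduce all nine identities to a single general formula for the bilinear interaction of two single-mode vector fields, and then verify the numerical coefficients case by case using the explicit basis vectors from Proposition \ref{BasisComputations}.

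First I would establish the following fundamental formula. For two Fourier monomials $u = \varepsilon_a i v^a e^{2\pi i a\cdot x}$ and $\tilde u = \varepsilon_b i v^b e^{2\pi i b \cdot x}$ with $\varepsilon_a,\varepsilon_b \in \{\pm 1\}$, the product rule gives
\begin{equation*}
(u\cdot\nabla)\tilde u + (\tilde u\cdot\nabla) u
= -2\pi \varepsilon_a \varepsilon_b i \bigl[(v^a \cdot b) v^b + (v^b \cdot a) v^a\bigr]\, e^{2\pi i (a+b)\cdot x}.
\end{equation*}
Setting $c = a+b$, the orthogonality $v^a \cdot a = 0 = v^b \cdot b$ converts this to $v^a \cdot b = v^a \cdot c$ and $v^b \cdot a = v^b \cdot c$. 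Since the projection $\mathbb{P}_\mathcal{M}$ acts at frequency $c \in \mathcal{M}$ by $w \mapsto (v^c \cdot w)\, v^c$, each case reduces to evaluating the scalar coefficient $-2\pi \varepsilon_a \varepsilon_b \bigl[(v^a \cdot c)(v^b \cdot v^c) + (v^b \cdot c)(v^a \cdot v^c)\bigr]$ and multiplying by $i v^c e^{2\pi i c \cdot x}$.

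Next I would use the sum/difference identities of Proposition \ref{DyadicSumDiff} to identify $c \in \mathcal{M}$ in each case, combined with the permutation equivariance $v^{P(k)} = P(v^k)$ from Proposition \ref{BasisProp} and the dot-product identity $P(v)\cdot w = v \cdot P^{-1}(w)$ from Lemma \ref{DotLemma}, to reduce each inner product to a computation involving only the three canonical vectors $v^{k^m}, v^{h^m}, v^{j^m}$. Inserting the explicit formulas from Proposition \ref{BasisComputations}, together with $\sigma \cdot k^m = 3 \cdot 2^{2m}$ and $\sigma \cdot h^m = \sigma \cdot j^m = 3 \cdot 2^{2m+1}$ from Proposition \ref{CanonicalComputations}, the coefficients collapse to $a_m$ or $b_m$, with the normalisation factors $(1 + \tfrac{1}{2}(3/4)^{2m})^{-1/2}$ and $(1 + \tfrac{3}{8}(3/4)^{2m})^{-1/2}$ arising naturally from the magnitudes $|P_k^\perp(\sigma)|$ that define $v^k$.

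The main obstacle is purely bookkeeping. The nine cases involve different sign conventions (two positive-frequency modes versus one positive and one negative), different choices of which vectors play the roles of $a$ and $b$, and permutations that must be tracked carefully through the equivariance of $v^k$. Verifying that paired cases related by the symmetries $P_{12}$ or $P_{23}$ (for instance cases 6 and 7, or cases 8 and 9) produce the same constant provides a useful internal consistency check, and the factor of $\tfrac{1}{2}$ appearing in cases 4--9 but not in cases 1--3 reflects the fact that in the difference cases only one of the two terms $(v^a\cdot c)(v^b\cdot v^c)$ and $(v^b\cdot c)(v^a\cdot v^c)$ survives after the projection. Because the calculations are mechanical but lengthy, the case-by-case verification is relegated to \Cref{AppendixModeInteraction}.
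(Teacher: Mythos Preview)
Your approach is correct and is essentially the same as the paper's: both reduce to computing, for each of the nine interactions, the scalar $v^c \cdot \bigl[(v^a\cdot b)v^b + (v^b\cdot a)v^a\bigr]$ at the output frequency $c=a+b$, and then plugging in the explicit formulas for $v^{k^m},v^{h^m},v^{j^m}$ from Proposition~\ref{BasisComputations}. The paper simply carries out these nine computations one at a time in Propositions~\ref{Tedious1}--\ref{Tedious9}, without first packaging them into your unified coefficient formula, but the arithmetic is identical.

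Your organizational device of rewriting $v^a\cdot b = v^a\cdot c$ via $v^a\cdot a=0$ is a mild streamlining the paper does not make explicit, and your observation about the $\tfrac12$ in cases 4--9 is in fact structurally correct: in each difference case the paper shows directly that one of the two advective terms vanishes (e.g.\ $P_{12}(k^m)\cdot v^{h^m}=0$ in case~4, $j^m\cdot v^{k^{m+1}}=0$ in case~6), and the surviving term coincides with exactly one of the two equal summands from the corresponding sum case, so the heuristic is not merely plausible but exact. One small correction of phrasing: the vanishing in cases 4--9 happens \emph{before} the projection $\mathbb{P}_\mathcal{M}$, not because of it---the relevant dot product $v^a\cdot b$ is already zero.
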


\begin{remark}
    We will prove this proposition in \Cref{AppendixModeInteraction}. The proof is just a series of elementary, but rather tedious, multi-variable calculus computations. Note that we choose to express the modes with a multiple of $i$ in front of the positive modes---and $-i$ for the negative modes---because our blowup Ansatz is odd. 
\end{remark}

\begin{proposition} \label{PermutationSymmetricCategorizationProp}
    Suppose $u\in \dot{H}^\frac{\log(3)}{2\log(2)}_{\mathcal{M}}$
    is an odd, permutation-symmetric vector field.
    For all $m\in\mathbb{Z}^+$, define the scalars $\phi_m,\eta_m,\zeta_m\in \mathbb{R}$, by
    \begin{align}
        \phi_m &= -i v^{k^m} \cdot \hat{u}(k^m) \\
        \eta_m &= -i v^{h^m} \cdot \hat{u}(h^m) \\
        \zeta_m &= -i v^{j^m} \cdot \hat{u}(j^m).
    \end{align}
    Then for all permutations $P\in \mathcal{P}_3$,
    \begin{align}
        \hat{u}(P(k^m))&=
        i \phi_m v^{P(k^m)} \\
        \hat{u}(P(h^m))&=
        i \eta_m v^{P(h^m)} \\
        \hat{u}(P(j^m))&=
        i \zeta_m v^{P(j^m)},
    \end{align}
    and 
    \begin{align}
        \hat{u}(-P(k^m))&=
        -i \phi_m v^{P(k^m)} \\
        \hat{u}(-P(h^m))&=
        -i \eta_m v^{P(h^m)} \\
        \hat{u}(-P(j^m))&=
        -i \zeta_m v^{P(j^m)}
    \end{align}
    Furthermore, $u$ can then be expressed in terms of the scalars $\phi_m,\eta_m,\zeta_m$ as a Fourier series using complex exponentials,
    \begin{multline}
    u(x)=\sum_{m=0}^{+\infty}\bigg(
    i\phi_m\sum_{k\in \mathcal{P}[k^m]} v^k
    \left(e^{2\pi i k\cdot x}
    -e^{-2\pi i k\cdot x}\right) 
    +i\eta_m\sum_{h\in \mathcal{P}[h^m]} v^h
    \left(e^{2\pi i h\cdot x}
    -e^{-2\pi i h\cdot x}\right) \\
    +i\zeta_m\sum_{j\in \mathcal{P}[j^m]} v^j
    \left(e^{2\pi i j\cdot x}
    -e^{-2\pi i j\cdot x}\right)
    \bigg),
    \end{multline}
    or using sines,
    \begin{multline}
    u(x)=-2\sum_{m=0}^{+\infty}\bigg(
    \phi_m\sum_{k\in \mathcal{P}[k^m]} v^k
    \sin(2\pi k\cdot x)
    +\eta_m\sum_{h\in \mathcal{P}[h^m]} v^h
    \sin(2\pi h\cdot x) \\
    +\zeta_m\sum_{j\in \mathcal{P}[j^m]} v^j
    \sin(2\pi j\cdot x)
    \bigg).
    \end{multline}
\end{proposition}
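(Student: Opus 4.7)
The plan is to exploit the three assumed structural properties of $u$ in sequence: the constraint $\hat{u}(k)\in\spn\{v^k\}$ built into $\dot{H}^s_{\mathcal{M}}$, oddness, and permutation symmetry. The proposition is really three separate claims—that $\phi_m,\eta_m,\zeta_m$ are well-defined real numbers, that the Fourier transform is determined at every element of $\mathcal{M}$ by these scalars, and that the resulting series can be rewritten using sines—so I would handle them in that order.

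First, for well-definedness and reality, the $\dot{H}^s_{\mathcal{M}}$ constraint lets me write $\hat{u}(k^m)=c_m v^{k^m}$ for some complex scalar $c_m$, and analogously for $h^m,j^m$. Since $u$ is real valued, $\hat{u}(-k)=\overline{\hat{u}(k)}$. Oddness of $u$ gives $\hat{u}(-k)=-\hat{u}(k)$ (by the change of variables $y=-x$ in the Fourier integral on $\mathbb{T}^3$), and \Cref{BasisProp} tells me $v^{-k}=v^k$. Comparing the two expressions for $\hat{u}(-k^m)=\overline{c_m}v^{k^m}=-c_m v^{k^m}$ forces $c_m=-\overline{c_m}$, i.e., $c_m$ is purely imaginary. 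Writing $c_m=i\phi_m$ with $\phi_m\in\mathbb{R}$ matches the definition $\phi_m=-iv^{k^m}\cdot\hat{u}(k^m)$ (using $|v^{k^m}|=1$), and the same argument handles $\eta_m,\zeta_m$. As a byproduct, the formulas $\hat{u}(-k^m)=-i\phi_m v^{k^m}$ etc.\ are immediate.

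Second, I would apply permutation symmetry to extend these identities to all permuted frequencies. Since $u=u^P$ for every $P\in\mathcal{P}_3$, \Cref{FourierPermute} gives $\hat{u}=\hat{u}^P$, equivalently $\hat{u}(P(k))=P\hat{u}(k)$ for every $k\in\mathbb{Z}^3$. Taking $k=k^m$ and invoking the key permutation-equivariance of the basis, $v^{P(k)}=P(v^k)$ from \Cref{BasisProp}, I obtain
\begin{equation}
\hat{u}(P(k^m))=P(i\phi_m v^{k^m})=i\phi_m v^{P(k^m)},
\end{equation}
and likewise for $P(h^m)$ and $P(j^m)$. Combining this with the oddness relation $\hat{u}(-\ell)=-\hat{u}(\ell)$ yields the negative-frequency formulas. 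Because $\mathcal{M}^+_{2m}=\mathcal{P}[k^m]$, $\mathcal{M}^+_{2m+1}=\mathcal{P}[h^m]\cup\mathcal{P}[j^m]$, and $\supp\hat{u}\subset\mathcal{M}=\mathcal{M}^+\cup(-\mathcal{M}^+)$, this determines $\hat{u}$ everywhere from the three scalar sequences.

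Third, to assemble the series I would simply write the Fourier expansion
\begin{equation}
u(x)=\sum_{k\in\mathcal{M}}\hat{u}(k)e^{2\pi i k\cdot x},
\end{equation}
split the sum over $\mathcal{M}^+$ and $\mathcal{M}^-=-\mathcal{M}^+$, substitute the formulas just obtained, and collect the conjugate pairs. This gives the complex-exponential form, and the identity $e^{2\pi i k\cdot x}-e^{-2\pi i k\cdot x}=2i\sin(2\pi k\cdot x)$ converts it into the sine form. There is no serious analytic obstacle here; the only delicate point is bookkeeping—making sure that each frequency in $\mathcal{M}$ appears exactly once in the decomposition (which relies on the disjointness of the shells $\mathcal{M}^+_n$ established implicitly by \Cref{DyadicPowers}) and that the complex conjugation/oddness signs are tracked consistently. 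Once those are handled, the three displayed formulas follow directly.
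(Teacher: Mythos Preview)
Your proposal is correct and follows essentially the same route as the paper: use the constraint $\hat u(k)\in\spn\{v^k\}$ together with oddness (and reality) to see the coefficients are purely imaginary, then propagate to all of $\mathcal{M}$ via \Cref{FourierPermute} and \Cref{BasisProp}, and finally assemble the Fourier series. Your justification that $\phi_m,\eta_m,\zeta_m$ are real is in fact slightly more explicit than the paper's, which simply asserts $\hat u(k^m)=ic_{k^m}v^{k^m}$ from oddness without spelling out the conjugation argument.
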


\begin{proof}
    We will begin by observing that because $u$ is odd, and is in the constraint space $\dot{H}^s_\mathcal{M}$,
    we must have
    \begin{equation}
        \hat{u}(k^m)=ic_{k^m} v^{k^m},
    \end{equation}
    and so we may compute that
    \begin{align}
    \phi_m&= -i v^{k^m}\cdot \hat{u}(k^m) \\
    &= 
    c_{k^m}.
    \end{align}
    We can see from this that $\phi_m\in\mathbb{R}$ and that
    \begin{equation}
        \hat{u}(k^m)=i\phi_m v^{k^m},
    \end{equation}
    By hypothesis, $u$ is permutation symmetric, and so $\hat{u}$ must also be permutation symmetric by \Cref{FourierPermute}. Applying \Cref{BasisProp}, we can see that for all $P\in\mathcal{P}_3$
    \begin{align}
        \hat{u}(P(k^m))
        &=
        P(\hat{u}(k^m)) \\
        &=
        P(i\phi_m v^{k^m}) \\
        &=
        i\phi_m v^{P(k^m)}.
    \end{align}
    Likewise, because $u$---and therefore $\hat{u}$---is odd, we can conclude
    \begin{align}
        \hat{u}(-P(k^m))
        &=
        -P(\hat{u}(k^m)) \\
        &=
        -i\phi_m v^{P(k^m)}.
    \end{align}
    The computations for $h^m$ and $j^m$ are entirely analogous and are left to the reader.

    It remains only to express $u$ in terms of the scalars $\phi_m,\eta_m,\zeta_m$.
    Observe that the scalars $\phi_m,\eta_m,\zeta_m$, completely determine the Fourier transform of the vector field, $\hat{u}$, because
    \begin{equation}
        \supp(\hat{u})\subset \mathcal{M}
        =\bigcup_{m=0}^\infty
        \Big(\pm\mathcal{P}[k^m]
        \cup \pm\mathcal{P}[h^m]
        \cup \pm\mathcal{P}[j^m]\Big).
    \end{equation}
    Therefore we have
    \begin{align}
    u(x)
    &=
    \sum_{k\in\mathcal{M}}
    \hat{u}(k) e^{2\pi i k\cdot x} \\
    &=
    \sum_{k\in\mathcal{M}^+}\hat{u}(k) 
    \left(e^{2\pi i k\cdot x}
    -e^{-2\pi i k\cdot x}\right),
    \end{align}
    and plugging into our identities for $\hat{u}(k)$, the result follows.
\end{proof}

\begin{theorem} \label{GeneralRestrictedEulerODE}
    Suppose $u^0\in \dot{H}^\frac{\log(3)}{2\log(2)}_\mathcal{M}$ is odd and permutation-symmetric. Then the solution of the Fourier-restricted Euler equation 
    $u\in C^1\left([0,T_{max});
    \dot{H}^\frac{\log(3)}{2\log(2)}
    _{\mathcal{M}}\right)$
    with initial data $u^0$ is also odd and permutation symmetric for all $0\leq t<T_{max}$.
    This implies the solution can be expressed as
    \begin{multline}
    u(x,t)=-2\sum_{m=0}^{+\infty}\bigg(
    \phi_m(t)\sum_{k\in \mathcal{P}[k^m]} v^k
    \sin(2\pi k\cdot x)
    +\eta_m(t)\sum_{h\in \mathcal{P}[h^m]} v^h
    \sin(2\pi h\cdot x) \\
    +\zeta_m(t)\sum_{j\in \mathcal{P}[j^m]} v^j
    \sin(2\pi j\cdot x)
    \bigg).
    \end{multline}
    Furthermore, for all $0\leq t<T_{max}$, the scalars $\phi_m,\eta_m,\zeta_m$ satisfy the 
    infinite system of ODEs:
    \begin{equation}
        \partial_t \phi_0=
        -\frac{a_0}{2}\phi_0\eta_0
        -\frac{a_0}{2}\phi_0\zeta_0,
    \end{equation}
    and for all $m\in \mathbb{N}$,
    \begin{equation} \label{kMode}
        \partial_t\phi_m
        =
        b_{m-1}\eta_{m-1}\zeta_{m-1}
        -\frac{a_m}{2}\phi_m\eta_m
        -\frac{a_m}{2}\phi_m\zeta_m,
    \end{equation}
    and for all $m\in \mathbb{Z}^+$,
    \begin{align}
        \partial_t\eta_m  \label{hMode}
        &=
        a_m\phi_m^2
        -b_m \zeta_m \phi_{m+1} \\
        \partial_t \zeta_m  \label{jMode}
        &=
        a_m\phi_m^2
        -b_m \eta_m \phi_{m+1},
    \end{align}
    where for all $m\in \mathbb{Z}^+$,
    \begin{align}
        a_m &= 
        \frac{\sqrt{2}\pi}
    {\left(1+\frac{1}{2}
    \left( \frac{3}{4}
    \right)^{2m}\right)^\frac{1}{2}}
    3^{m+\frac{1}{2}}
    \\
    b_m &= \frac{\sqrt{2}\pi}{\left(1+\frac{1}{2}
    \left(\frac{3}{4}\right)^{2m+1}
    \right)^\frac{1}{2}} 3^{m+1},
    \end{align}
    as in \Cref{ComputeTheFuckingModesProp}.
    Note that we will take
    \begin{equation}
        b_{-1},\eta_{-1},\zeta_{-1}:=0
    \end{equation} 
    by convention, so that \eqref{kMode} holds for $m=0$ as well.
\end{theorem}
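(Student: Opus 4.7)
The plan proceeds in three stages. First I would establish that oddness and permutation symmetry are dynamically preserved. Since $u^0\in \dot H^{\log(3)/(2\log 2)}_{\mathcal M}$, the local wellposedness result \Cref{RestrictedEulerExistenceThm} gives a unique solution $u\in C^1\left([0,T_{max});\dot H^{\log(3)/(2\log 2)}_{\mathcal M}\right)$. Applying \Cref{RestrictedPermuteDynamicsProp} and \Cref{RestrictedOddDynamicsProp} shows that $u(\cdot,t)$ is permutation symmetric and odd for every $0\leq t<T_{max}$. Then \Cref{PermutationSymmetricCategorizationProp} immediately yields the expansion of $u(x,t)$ in terms of real scalar coefficients $\phi_m(t),\eta_m(t),\zeta_m(t)\in\mathbb R$ defined by $\phi_m(t)=-iv^{k^m}\cdot\hat u(k^m,t)$ and similarly for $\eta_m,\zeta_m$.

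Second, I would derive the ODE system by reading off Fourier coefficients of both sides of $\partial_t u=-\mathbb P_{\mathcal M}((u\cdot\nabla)u)$ at the canonical frequencies $k^m,h^m,j^m$. Because $u(\cdot,t)\in C^1$ with values in $\dot H^{\log(3)/(2\log 2)}_{\mathcal M}$ and the bilinear map $B$ is continuous into the same space by \Cref{BilinearBoundLemma}, the equation $\partial_t \hat u(k,t)=\widehat{B(u,u)}(k,t)$ holds for every $k\in\mathcal M$. On the left, $\partial_t\hat u(k^m,t)=i(\partial_t\phi_m)v^{k^m}$, and analogously for $h^m,j^m$. On the right, the Fourier coefficient at $k$ of $\mathbb P_{\mathcal M}((u\cdot\nabla)u)$ is a sum over pairs $(p,q)\in\mathcal M\times\mathcal M$ with $p+q=k$. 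The core combinatorial input is \Cref{DyadicSumDiff}, which enumerates exhaustively all such decompositions of $k^m$, $h^m$, $j^m$ (including decompositions with one summand in $\mathcal M^-$, obtained by rewriting the listed differences as sums). This reduces the infinite convolution to finitely many surviving pairs per target frequency.

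Third, for each surviving pair I would read off the bilinear contribution from the appropriate item in \Cref{ComputeTheFuckingModesProp}, using the oddness to convert $\hat u(-k)=-\hat u(k)$ into a sign on the corresponding exponential and the permutation symmetry to identify $\hat u(P(h^m))=i\eta_m v^{P(h^m)}$ and similarly for the other shells. For the frequency $k^{m+1}$ (with $m\geq 0$), the only positive sum decomposition $h^m+j^m$ together with its reversal contributes $b_m\eta_m\zeta_m$ via item (3); for $k^m$ with $m\geq 0$, the difference decompositions $h^m-P_{12}(k^m)$ and $j^m-P_{23}(k^m)$ each contribute $-\tfrac{a_m}{2}\phi_m\eta_m$ and $-\tfrac{a_m}{2}\phi_m\zeta_m$ respectively via items (4) and (5). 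For $h^m$, the sum decomposition $k^m+P_{12}(k^m)$ contributes $a_m\phi_m^2$ via item (1), and the difference decompositions $k^{m+1}-j^m$ and $P_{12}(k^{m+1})-P_{12}(j^m)$ contribute $-b_m\zeta_m\phi_{m+1}$ via items (6) and (7). The analogous computation using items (2), (8), (9) yields the $\zeta_m$ equation. Comparing coefficients of $v^k$ on both sides gives the stated system, with the convention $b_{-1},\eta_{-1},\zeta_{-1}:=0$ absorbing the absence of a lower shell when $m=0$.

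The main obstacle will be the bookkeeping in the third stage: I must verify that \Cref{DyadicSumDiff} genuinely captures all $(p,q)\in\mathcal M^2$ with $p+q$ equal to a canonical frequency once the sign splitting $\mathcal M=\mathcal M^+\cup\mathcal M^-$ is taken into account, and that each ordered pair $(p,q)$ and $(q,p)$ is counted exactly once with the correct sign in the symmetrized bilinear form. The items of \Cref{ComputeTheFuckingModesProp} are already stated in the symmetrized form $(u\cdot\nabla)\tilde u+(\tilde u\cdot\nabla)u$, which precisely matches summing over the two orderings, so once one checks that the listed decompositions exhaust all cases (and that pairs like $(-h^{m-1})+(-j^{m-1})$, which would live in $\mathcal M^-+\mathcal M^-$, cannot produce a target in $\mathcal M^+$), reading off the ODE coefficients is mechanical. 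The factor of $\tfrac12$ appearing in items (4)–(9) and absent from items (1)–(3) arises because the former represent asymmetric difference decompositions while the latter are symmetric sum decompositions, and tracking this asymmetry carefully is where the arithmetic requires care.
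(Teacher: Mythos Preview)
Your proposal is correct and follows essentially the same approach as the paper: invoke \Cref{RestrictedPermuteDynamicsProp} and \Cref{RestrictedOddDynamicsProp} for preservation of symmetry, apply \Cref{PermutationSymmetricCategorizationProp} for the expansion, use \Cref{DyadicSumDiff} to exhaust the interacting pairs, and read off the coefficients from the nine items of \Cref{ComputeTheFuckingModesProp} exactly as you describe (items 3,\,4,\,5 for \eqref{kMode}; 1,\,6,\,7 for \eqref{hMode}; 2,\,8,\,9 for \eqref{jMode}). Your closing remark about the origin of the $\tfrac12$ in items (4)--(9) is not quite the right explanation---those factors come out of the explicit bilinear computations in the appendix rather than from a symmetric/asymmetric decomposition count---but since you are using the stated coefficients of \Cref{ComputeTheFuckingModesProp} as given, this does not affect the argument.
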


\begin{proof}
We have already shown in \Cref{RestrictedPermuteDynamicsProp,RestrictedOddDynamicsProp} that if the initial data is permutation-symmetric and odd, then this is preserved by the dynamics, so clearly for all $0\leq t<T_{max}$, we know that $u(\cdot,t)$ is odd and permutation symmetric.
\Cref{PermutationSymmetricCategorizationProp} then immediately implies that
    \begin{multline}
    u(x,t)=\sum_{m=0}^{+\infty}\bigg(
    i\phi_m(t)\sum_{k\in \mathcal{P}[k^m]} v^k
    \left(e^{2\pi i k\cdot x}
    -e^{-2\pi i k\cdot x}\right) 
    +i\eta_m(t)\sum_{h\in \mathcal{P}[h^m]} v^h
    \left(e^{2\pi i h\cdot x}
    -e^{-2\pi i h\cdot x}\right) \\
    +i\zeta_m(t)\sum_{j\in \mathcal{P}[j^m]} v^j
    \left(e^{2\pi i j\cdot x}
    -e^{-2\pi i j\cdot x}\right)
    \bigg).
    \end{multline}
The sine series formulation may be clearer, but we leave the series in complex exponential form because it will make it easier to compute the bilinear term that way.

\Cref{DyadicSumDiff} then implies that the only interactions for the canonical frequencies from the nonlinear term are the ones computed in \Cref{ComputeTheFuckingModesProp}, yielding
\begin{multline}
    -\mathbb{P}_{\mathcal{M}}
    ((u\cdot\nabla)u)=\sum_{m=0}^{+\infty}\bigg(
    i(b_{m-1}\eta_{m-1}\zeta_{m-1}
        -\frac{a_m}{2}\phi_m\eta_m
        -\frac{a_m}{2}\phi_m\zeta_m)
        \sum_{k\in \mathcal{P}[k^m]} v^k
    \left(e^{2\pi i k\cdot x}
    -e^{-2\pi i k\cdot x}\right) \\
    +i(a_m\phi_m^2-b_m \zeta_m\phi_{m+1})
        \sum_{h\in \mathcal{P}[h^m]} v^h
    \left(e^{2\pi i h\cdot x}
    -e^{-2\pi i h\cdot x}\right) \\
    +i(a_m\phi_m^2-b_m \eta_m\phi_{m+1})
    \sum_{j\in \mathcal{P}[j^m]} v^j
    \left(e^{2\pi i j\cdot x}
    -e^{-2\pi i j\cdot x}\right)
    \bigg).
    \end{multline}
More specifically, applying \Cref{ComputeTheFuckingModesProp}, we can see that the computations 3, 4, and 5 yield \eqref{kMode}; the computations 1,6, and 7 yield \eqref{hMode};
and the computations 2, 8, and 9 yield \eqref{jMode}.
These equations simply express
\begin{equation}
    \partial_t u=-\mathbb{P}_{\mathcal{M}}((u\cdot\nabla) u),
\end{equation}
in terms of the Fourier series coefficients.
The fact that 
$\left\{e^{2\pi ik\cdot x}
\right\}_{k\in\mathcal{M}}$,
provides a complete basis of $\dot{H}^s_{\mathcal{M}}$ means that two vector fields in $\dot{H}^s_{\mathcal{M}}$ are equal if and only if their Fourier transforms are equal at each frequency $k\in\mathcal{M}$, so this completes the proof.
Note that we have changed the expressions for $a_m$ and $b_m$ from \Cref{ComputeTheFuckingModesProp} in order to symmetrize the expressions, but the values of the constants are unchanged.
\end{proof}

\begin{definition}
    We will say that an odd, permutation-symmetric vector field $u\in \dot{H}^\frac{\log(3)}{2\log(2)}
    _{\mathcal{M}}$
    has hj-parity if for all $m\in \mathbb{Z}^+$
    \begin{equation}
        \eta_m=\zeta_m,
    \end{equation}
    with $\eta_m,\zeta_m$ taken as in \Cref{PermutationSymmetricCategorizationProp}.
\end{definition}

\begin{proposition} \label{hjProp}
    Suppose initial data $u^0 \in \dot{H}^\frac{\log(3)}{2\log(2)}_{\mathcal{M}}$
    is an odd, permutation-symmetric vector field with hj-parity.
    Then the solution of the Fourier-restricted Euler equation 
    $u\in C^1\left([0,T_{max});
    \dot{H}^\frac{\log(3)}{2\log(2)}
    _{\mathcal{M}}\right)$,
    also has hj-parity for all $0\leq t<T_{max}$.
\end{proposition}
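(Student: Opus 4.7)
The plan is to give a direct ODE argument using the system from \Cref{GeneralRestrictedEulerODE}. Since the initial data $u^0$ is odd and permutation-symmetric, \Cref{RestrictedOddDynamicsProp,RestrictedPermuteDynamicsProp} guarantee that these symmetries are preserved by the dynamics, so the solution can be expanded as in \Cref{GeneralRestrictedEulerODE} with coefficients $\phi_m(t),\eta_m(t),\zeta_m(t)$ satisfying the infinite system of ODEs \eqref{kMode}--\eqref{jMode}. The entire content of the proposition is then to show that $\eta_m(0)=\zeta_m(0)$ for all $m$ implies $\eta_m(t)=\zeta_m(t)$ for all $0\leq t<T_{max}$.

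The key observation is that \eqref{hMode} and \eqref{jMode} are almost identical: the inhomogeneous term $a_m\phi_m^2$ appears symmetrically, and the only asymmetric part is the cross-coupling $-b_m\zeta_m\phi_{m+1}$ versus $-b_m\eta_m\phi_{m+1}$. Setting $\delta_m:=\eta_m-\zeta_m$ and subtracting \eqref{jMode} from \eqref{hMode}, the $a_m\phi_m^2$ terms cancel and one obtains the decoupled linear ODE
\[
\partial_t \delta_m = b_m\phi_{m+1}\delta_m,
\]
valid for each $m\in\mathbb{Z}^+$. Because $u\in C^1\left([0,T_{max});\dot{H}^{\log(3)/(2\log(2))}_\mathcal{M}\right)$, the coefficient $\phi_{m+1}(t)$ is a continuous function of $t$, so this scalar linear ODE has a unique solution
\[
\delta_m(t)=\delta_m(0)\exp\left(b_m\int_0^t \phi_{m+1}(\tau)\diff\tau\right).
\]
Under the hj-parity hypothesis $\delta_m(0)=0$, we conclude $\delta_m(t)\equiv 0$, i.e.\ $\eta_m(t)=\zeta_m(t)$ for all $0\leq t<T_{max}$.

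There is essentially no obstacle here: the identity $\partial_t\delta_m=b_m\phi_{m+1}\delta_m$ is read off from the ODE system, and the uniqueness of the scalar linear ODE with zero initial data closes the argument. An alternative, more conceptual, route would be to construct a companion vector field $\tilde u$ by swapping the roles of the $\mathcal{P}[h^m]$ and $\mathcal{P}[j^m]$ Fourier coefficients, verify that this swap corresponds to a symmetry of the Fourier-restricted Euler equation within the odd, permutation-symmetric subspace, and then invoke uniqueness of solutions; but this geometric reformulation reduces in practice to the same cancellation of the $a_m\phi_m^2$ terms, so the direct ODE argument is cleaner.
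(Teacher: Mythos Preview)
Your proof is correct and follows exactly the same approach as the paper: set $\delta_m=\eta_m-\zeta_m$, subtract \eqref{jMode} from \eqref{hMode} to obtain the decoupled linear ODE $\partial_t\delta_m=b_m\phi_{m+1}\delta_m$, and integrate. (In fact your sign is the right one; the paper's displayed formula carries a harmless sign slip in the exponent, which does not affect the conclusion since $\delta_m(0)=0$.)
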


\begin{remark}
    We will in fact prove a slightly stronger result. We will show that if
    $u\in C^1\left([0,T_{max});
    \dot{H}^\frac{\log(3)}{2\log(2)}
    _{\mathcal{M}}\right)$
    is a strong solution of the Fourier-restricted Euler equation, then for all $m\in\mathbb{Z}^+$ and for all $0\leq t<T_{max}$,
    \begin{equation}
        \eta_m(t)-\zeta_m(t)
        =
        \left(\eta_m(0)-\zeta_m(0)\right)
        \exp\left(-b_m\int_0^t
        \phi_{m+1}(\tau)\diff\tau\right),
    \end{equation}
    and in particular if $\eta_m(0)=\zeta_m(0)$, then for all $0\leq t<T_{max}$,
\begin{equation}
    \eta_m(t)=\zeta_m(t).
    \end{equation}
\end{remark}

\begin{proof}
    Let $\rho_m(t)=\eta_m(t)-\zeta_m(t)$.
    Plugging into \Cref{GeneralRestrictedEulerODE},
    we can see that for all $0\leq t<T_{max}$,
    \begin{align}
\partial_t \rho_m(t)
&=
-b_m\zeta_m\phi_{m+1}+b_m\eta_m\phi_{m+1} \\
&=
-b_m\phi_{m+1}\rho_m,
    \end{align}
and integrating this differential equation completes the proof.
\end{proof}

\begin{proposition} \label{RestrictedEulerPartiyODE}
    Suppose $u\in C^1\left([0,T_{max});
    \dot{H}^\frac{\log(3)}{2\log(2)}
    _{\mathcal{M}}\right)$
    is an odd, permutation symmetric solution of the Fourier-restricted Euler equation with hj-parity.
    For all $m\in\mathbb{Z}^+$, let
    \begin{equation}
    \psi_{2m}:=\phi_m
    \end{equation}
    and let
    \begin{equation}
    \psi_{2m+1}:=\eta_m=\zeta_m,
    \end{equation}
    Then for all $0\leq t<T_{max}$, and for all $n\in\mathbb{Z}^+$,
    \begin{equation}
        \partial_t\psi_n
        =\sqrt{2}\pi \beta_{n-1}
        \left(\sqrt{3}\right)^n
        \psi_{n-1}^2
        -\sqrt{2}\pi \beta_{n}
        \left(\sqrt{3}\right)^{n+1}
        \psi_{n}\psi_{n+1},
    \end{equation}
    where for all $n\in\mathbb{Z}^+$
    \begin{equation}
        \beta_n=\frac{1}{\left(
        1+\frac{1}{2}
        \left(\frac{3}{4}\right)^n
        \right)^\frac{1}{2}},
    \end{equation}
    and by convention 
    \begin{equation}
        \beta_{-1},\psi_{-1}:=0.
    \end{equation}
\end{proposition}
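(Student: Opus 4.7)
The plan is to simply substitute the hj-parity condition $\eta_m=\zeta_m$ into the ODE system from \Cref{GeneralRestrictedEulerODE} and then verify that the resulting two families of equations, one for $\phi_m$ (the even indexed $\psi$) and one for $\eta_m$ (the odd indexed $\psi$), can be repackaged into the single indexed system by matching the coefficients $a_m,b_m$ against $\sqrt{2}\pi\beta_n(\sqrt{3})^{n+1}$.

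First I would invoke \Cref{hjProp} to guarantee that hj-parity is preserved dynamically, so that the definition $\psi_{2m+1}:=\eta_m=\zeta_m$ is actually consistent for all $0\leq t<T_{max}$. Setting $\zeta_m=\eta_m$ in \eqref{kMode}--\eqref{jMode} collapses the system to
\begin{align}
\partial_t\phi_m &= b_{m-1}\eta_{m-1}^2-a_m\phi_m\eta_m, \\
\partial_t\eta_m &= a_m\phi_m^2-b_m\eta_m\phi_{m+1},
\end{align}
with the convention $b_{-1},\eta_{-1}:=0$. Translating to $\psi$, the first equation is $\partial_t\psi_{2m}=b_{m-1}\psi_{2m-1}^2-a_m\psi_{2m}\psi_{2m+1}$ and the second is $\partial_t\psi_{2m+1}=a_m\psi_{2m}^2-b_m\psi_{2m+1}\psi_{2m+2}$.

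It then remains only to check that these match $\partial_t\psi_n=\sqrt{2}\pi\beta_{n-1}(\sqrt{3})^n\psi_{n-1}^2-\sqrt{2}\pi\beta_n(\sqrt{3})^{n+1}\psi_n\psi_{n+1}$ for $n=2m$ and $n=2m+1$ respectively. Using the formulas for $a_m,b_m$ from \Cref{GeneralRestrictedEulerODE}, I compute
\begin{equation}
a_m=\frac{\sqrt{2}\pi}{\left(1+\tfrac{1}{2}(\tfrac{3}{4})^{2m}\right)^{1/2}}3^{m+1/2}=\sqrt{2}\pi\,\beta_{2m}\,(\sqrt{3})^{2m+1},
\end{equation}
\begin{equation}
b_m=\frac{\sqrt{2}\pi}{\left(1+\tfrac{1}{2}(\tfrac{3}{4})^{2m+1}\right)^{1/2}}3^{m+1}=\sqrt{2}\pi\,\beta_{2m+1}\,(\sqrt{3})^{2m+2},
\end{equation}
so that $a_m$ is exactly the coefficient required at $n=2m$ (playing the role of $\sqrt{2}\pi\beta_n(\sqrt{3})^{n+1}$ in the nonlinear loss term for $\psi_{2m}$, and the role of $\sqrt{2}\pi\beta_{n-1}(\sqrt{3})^n$ in the quadratic source term for $\psi_{2m+1}$) and $b_m$ plays the analogous role at the next level. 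The conventions $\psi_{-1}=\beta_{-1}=0$ reproduce $\eta_{-1}=b_{-1}=0$, handling the boundary case $n=0$.

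The only step that requires any care is the bookkeeping of indices: one has to check that the index shifted coefficient $b_{m-1}$ appearing in $\partial_t\phi_m$ equals $\sqrt{2}\pi\beta_{2m-1}(\sqrt{3})^{2m}$, so that it slots correctly into the $\psi_{n-1}^2$ term at $n=2m$. This is immediate from the displayed formula for $b_m$ by substituting $m\mapsto m-1$. Once this index matching is confirmed, the two equations for $\psi_{2m}$ and $\psi_{2m+1}$ become literally the same equation in $n$, completing the proof. There is no real analytic obstacle here; the content of the proposition is essentially notational consolidation of \Cref{GeneralRestrictedEulerODE} under the parity constraint.
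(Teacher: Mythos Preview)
Your proposal is correct and follows essentially the same approach as the paper's own proof: both substitute the hj-parity condition into the ODE system of \Cref{GeneralRestrictedEulerODE}, split into the even ($n=2m$) and odd ($n=2m+1$) cases, and then verify by direct computation that $a_m=\sqrt{2}\pi\beta_{2m}(\sqrt{3})^{2m+1}$ and $b_m=\sqrt{2}\pi\beta_{2m+1}(\sqrt{3})^{2m+2}$, so that the coefficients match the unified formula in $n$. Your write-up is in fact slightly cleaner than the paper's, which carries out the same index bookkeeping but displays the intermediate expressions for $a_m$ and $b_m$ in terms of $n$ separately in each parity case.
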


\begin{proof}
    In order to prove the result for all $n\in\mathbb{Z}^+$, we will need to deal with two separate cases, $n=2m$ and $n=2m+1$.
    Let $n=2m$.
    Then from \Cref{GeneralRestrictedEulerODE},
    we can see that for all $0\leq t<T_{max}$,
    \begin{equation}
    \partial_t\psi_n
    =
    b_{m-1}^2\psi_{n-1}^2
    -a_m \psi_n\psi_{n+1}
    \end{equation}
Recall that
\begin{align}
    b_{m-1} 
    &=
    \frac{\sqrt{2}\pi}{\left(1+\frac{1}{2}
    \left(\frac{3}{4}\right)^{2m-1}
    \right)^\frac{1}{2}} 3^m \\
    &=
    \frac{\sqrt{2}\pi}{\left(1+\frac{1}{2}
    \left(\frac{3}{4}\right)^{n-1}
    \right)^\frac{1}{2}} 3^\frac{n}{2},
\end{align}
and that
\begin{align}
    a_m &= 
    \frac{\sqrt{2}\pi}
    {\left(1+\frac{1}{2}
    \left( \frac{3}{4}
    \right)^{2m}\right)^\frac{1}{2}}
    3^{m+\frac{1}{2}} \\
    &=
    \frac{\sqrt{2}\pi}
    {\left(1+\frac{1}{2}
    \left( \frac{3}{4}
    \right)^{n}\right)^\frac{1}{2}}
    3^\frac{n+1}{2}.
\end{align}
Therefore, by \Cref{GeneralRestrictedEulerODE},
we can see that for all even $n\in \mathbb{Z}^+$,
\begin{equation}
        \partial_t\psi_n
        =\sqrt{2}\pi \beta_{n-1}
        \left(\sqrt{3}\right)^n
        \psi_{n-1}^2
        -\sqrt{2}\pi \beta_{n}
        \left(\sqrt{3}\right)^{n+1}
        \psi_{n}\psi_{n+1}.
    \end{equation}

Now let $n=2m+1$.
We can see from \Cref{GeneralRestrictedEulerODE}, that for all $0\leq t<T_{max}$,
\begin{equation}
    \partial_t\psi_n
    =
    a_m\psi_{n-1}^2
    -b_m\psi_n\psi_{n+1}.
\end{equation}
Recall that
\begin{align}
    a_m &= 
    \frac{\sqrt{2}\pi}
    {\left(1+\frac{1}{2}
    \left( \frac{3}{4}
    \right)^{2m}\right)^\frac{1}{2}}
    3^{m+\frac{1}{2}} \\
    &=
    \frac{\sqrt{2}\pi}
    {\left(1+\frac{1}{2}
    \left( \frac{3}{4}
    \right)^{n-1}\right)^\frac{1}{2}}
    3^{\frac{n}{2}}
\end{align}
and that
\begin{align}
    b_m 
    &= 
    \frac{\sqrt{2}\pi}{\left(1+\frac{1}{2}
    \left(\frac{3}{4}\right)^{2m+1}
    \right)^\frac{1}{2}} 3^{m+1} \\
    &=
    \frac{\sqrt{2}\pi}{\left(1+\frac{1}{2}
    \left(\frac{3}{4}\right)^{n}
    \right)^\frac{1}{2}} 3^{\frac{n+1}{2}}
\end{align}

Therefore, by \Cref{GeneralRestrictedEulerODE},
we can see that for all odd $n\in \mathbb{Z}^+$,
\begin{equation}
        \partial_t\psi_n
        =\sqrt{2}\pi \beta_{n-1}
        \left(\sqrt{3}\right)^n
        \psi_{n-1}^2
        -\sqrt{2}\pi \beta_{n}
        \left(\sqrt{3}\right)^{n+1}
        \psi_{n}\psi_{n+1},
    \end{equation}
and this completes the proof.
\end{proof}

Now that we have reduced the Fourier-restricted Euler equation to a system of ODEs similar to the dyadic Euler equation, we will consider an analogous reduction of the Fourier-restricted hypodissipative Navier--Stokes equation to the dyadic (hypodissipative) Navier--Stokes equation.

\begin{theorem} \label{GeneralRestrictedHypoODE}
    Suppose $u^0\in \dot{H}^\frac{\log(3)}{2\log(2)}_\mathcal{M}$ is odd and permutation-symmetric. Then the solution of the Fourier-restricted hypodissipative Navier--Stokes equation 
    $u\in C\left([0,T_{max});
    \dot{H}^\frac{\log(3)}{2\log(2)}
    _{\mathcal{M}}\right)$
    with initial data $u^0$ is also odd and permutation symmetric for all $0\leq t<T_{max}$.
    This implies the solution can be expressed as
    \begin{multline}
    u(x,t)=-2\sum_{m=0}^{+\infty}\bigg(
    \phi_m(t)\sum_{k\in \mathcal{P}[k^m]} v^k
    \sin(2\pi k\cdot x)
    +\eta_m(t)\sum_{h\in \mathcal{P}[h^m]} v^h
    \sin(2\pi h\cdot x) \\
    +\zeta_m(t)\sum_{j\in \mathcal{P}[j^m]} v^j
    \sin(2\pi j\cdot x)
    \bigg).
    \end{multline}
    Furthermore, for all $0< t<T_{max}$, the scalars $\phi_m,\eta_m,\zeta_m$ satisfy the 
    infinite system of ODEs:
    \begin{equation}
        \partial_t \phi_0=
        -(20\pi^2)^\alpha \nu \phi_0
        -\frac{a_0}{2}\phi_0\eta_0
        -\frac{a_0}{2}\phi_0\zeta_0,
    \end{equation}
    and for all $m\in \mathbb{N}$:
    \begin{equation} \label{kModeHypo}
        \partial_t\phi_m
        =
        -(4\pi^2)^\alpha \nu
        \left(3*2^{4m}+2*3^{2m}\right)^\alpha \phi_m
        +b_{m-1}\eta_{m-1}\zeta_{m-1}
        -\frac{a_m}{2}\phi_m\eta_m
        -\frac{a_m}{2}\phi_m\zeta_m,
    \end{equation}
    and for all $m\in \mathbb{Z}^+$,
    \begin{align}
        \partial_t\eta_m  \label{hModeHypo}
        &=
        -(4\pi^2)^\alpha \nu
        \left(3*2^{4m+2}+2*3^{2m+1}\right)^\alpha \eta_m
        +a_m\phi_m^2
        -b_m \zeta_m \phi_{m+1} \\
        \partial_t \zeta_m  \label{jModeHypo}
        &=
        -(4\pi^2)^\alpha \nu
        \left(3*2^{4m+2}+2*3^{2m+1}\right)^\alpha \zeta_m
        +a_m\phi_m^2
        -b_m \eta_m \phi_{m+1},
    \end{align}
    where for all $m\in \mathbb{Z}^+$,
    \begin{align}
        a_m &= 
        \frac{\sqrt{2}\pi}
    {\left(1+\frac{1}{2}
    \left( \frac{3}{4}
    \right)^{2m}\right)^\frac{1}{2}}
    3^{m+\frac{1}{2}}
    \\
    b_m &= \frac{\sqrt{2}\pi}{\left(1+\frac{1}{2}
    \left(\frac{3}{4}\right)^{2m+1}
    \right)^\frac{1}{2}} 3^{m+1},
    \end{align}
    as in \Cref{ComputeTheFuckingModesProp}.
    Note that we will take
    \begin{equation}
        b_{-1},\eta_{-1},\zeta_{-1}:=0
    \end{equation} 
    by convention, so that \eqref{kModeHypo} holds for $m=0$ as well.
\end{theorem}

\begin{proof}
We have already shown in \Cref{RestrictedHypoPermuteDynamicsProp,RestrictedHypoOddDynamicsProp} that if the initial data is permutation-symmetric and odd, then this is preserved by the dynamics, so clearly for all $0\leq t<T_{max}$, we know that $u(\cdot,t)$ is odd an permutation symmetric.
\Cref{PermutationSymmetricCategorizationProp} then immediately implies that
    \begin{multline}
    u(x,t)=\sum_{m=0}^{+\infty}\bigg(
    i\phi_m(t)\sum_{k\in \mathcal{P}[k^m]} v^k
    \left(e^{2\pi i k\cdot x}
    -e^{-2\pi i k\cdot x}\right) 
    +i\eta_m(t)\sum_{h\in \mathcal{P}[h^m]} v^h
    \left(e^{2\pi i h\cdot x}
    -e^{-2\pi i h\cdot x}\right) \\
    +i\zeta_m(t)\sum_{j\in \mathcal{P}[j^m]} v^j
    \left(e^{2\pi i j\cdot x}
    -e^{-2\pi i j\cdot x}\right)
    \bigg),
    \end{multline}
or equivalently the sine series stated in the theorem.
We already have shown in the proof of \Cref{GeneralRestrictedEulerODE} that 
\begin{multline}
    -\mathbb{P}_{\mathcal{M}}
    ((u\cdot\nabla)u)=\sum_{m=0}^{+\infty}\bigg(
    i(b_{m-1}\eta_{m-1}\zeta_{m-1}
        -\frac{a_m}{2}\phi_m\eta_m
        -\frac{a_m}{2}\phi_m\zeta_m)
        \sum_{k\in \mathcal{P}[k^m]} v^k
    \left(e^{2\pi i k\cdot x}
    -e^{-2\pi i k\cdot x}\right) \\
    +i(a_m\phi_m^2-b_m \zeta_m\phi_{m+1})
        \sum_{h\in \mathcal{P}[h^m]} v^h
    \left(e^{2\pi i h\cdot x}
    -e^{-2\pi i h\cdot x}\right) \\
    +i(a_m\phi_m^2-b_m \eta_m\phi_{m+1})
    \sum_{j\in \mathcal{P}[j^m]} v^j
    \left(e^{2\pi i j\cdot x}
    -e^{-2\pi i j\cdot x}\right)
    \bigg).
    \end{multline}
    Therefore, it suffices to observe that
    \begin{multline}
    (-\Delta)^\alpha u(x,t)=
    \sum_{m=0}^{+\infty}\bigg(
    i(4\pi^2)^\alpha
    \left(3*2^{4m}+2*3^{2m}\right)^\alpha \phi_m
    \sum_{k\in \mathcal{P}[k^m]} v^k
    \left(e^{2\pi i k\cdot x}
    -e^{-2\pi i k\cdot x}\right) \\
    +i(4\pi^2)^\alpha
    \left(3*2^{4m+2}+2*3^{2m+1}\right)^\alpha 
    \eta_m
    \sum_{h\in \mathcal{P}[h^m]} v^h
    \left(e^{2\pi i h\cdot x}
    -e^{-2\pi i h\cdot x}\right) \\
    +i(4\pi^2)^\alpha
    \left(3*2^{4m+2}+2*3^{2m+1}\right)^\alpha 
    \zeta_m
    \sum_{j\in \mathcal{P}[j^m]} v^j
    \left(e^{2\pi i j\cdot x}
    -e^{-2\pi i j\cdot x}\right)
    \bigg).
    \end{multline}
    Then we can see that \eqref{kModeHypo},\eqref{hModeHypo},\eqref{jModeHypo} express the PDE
    \begin{equation}
        \partial_t u
        =
        -\nu(-\Delta)^\alpha u
        -\mathbb{P}_{\mathcal{M}}((u\cdot\nabla)u),
    \end{equation}
    in terms of Fourier coefficients.
    Again using the fact that 
    $\left\{e^{2\pi i k\cdot x}\right\}_{k\in \mathcal{M}}$ is a complete basis for $\dot{H}^s_\mathcal{M}$,
    this implies the infinite system of ODEs
\eqref{kModeHypo},\eqref{hModeHypo},
\eqref{jModeHypo} is equivalent to the Fourier-restricted hypodissipative Navier--Stokes equation for odd, permutation symmetric solutions.
    \end{proof}

\begin{proposition} \label{hjPropHypo}
    Suppose $u^0 \in \dot{H}^\frac{\log(3)}{2\log(2)}_{\mathcal{M}}$ is an odd, permutation-symmetric vector field with hj-parity.
    Then $u\in C\left([0,T_{max});
    \dot{H}^\frac{\log(3)}{2\log(2)}_{\mathcal{M}}\right)$,
    the solution of the Fourier-restricted hypodissipative Navier--Stokes equation 
    with initial data $u^0$,
    also has hj-parity for all $0\leq t<T_{max}$.
\end{proposition}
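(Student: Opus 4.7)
The plan is to follow the integrating-factor argument from the proof of Proposition \ref{hjProp} (the inviscid case), modified to absorb the dissipation term, which contributes only a further diagonal multiplier to a scalar linear ODE. By Theorem \ref{GeneralRestrictedHypoODE}, the odd and permutation-symmetric structure is preserved by the dynamics, so the Fourier coefficients $\phi_m(t)$, $\eta_m(t)$, $\zeta_m(t)$ are defined on $[0,T_{max})$ and satisfy the infinite ODE system \eqref{kModeHypo}--\eqref{jModeHypo} on $(0,T_{max})$, where the solution is smooth. Since $u \in C([0,T_{max}); \dot{H}^{\log(3)/(2\log 2)}_{\mathcal{M}})$ and each Fourier coefficient is a bounded linear functional, the maps $t \mapsto \phi_m(t)$, $\eta_m(t)$, $\zeta_m(t)$ are continuous on $[0,T_{max})$.

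Set $\rho_m(t) := \eta_m(t) - \zeta_m(t)$ and subtract \eqref{jModeHypo} from \eqref{hModeHypo}. The dissipation terms act identically on $\eta_m$ and $\zeta_m$, so they combine into a single diagonal multiplier on $\rho_m$; the $a_m \phi_m^2$ terms cancel; and the cross term yields
\begin{equation*}
-b_m \zeta_m \phi_{m+1} - (-b_m \eta_m \phi_{m+1}) = b_m \phi_{m+1} \rho_m.
\end{equation*}
Hence for all $0 < t < T_{max}$,
\begin{equation*}
\partial_t \rho_m = \Big(-(4\pi^2)^\alpha (3 \cdot 2^{4m+2} + 2 \cdot 3^{2m+1})^\alpha \nu + b_m \phi_{m+1}(t)\Big)\, \rho_m.
\end{equation*}
This is a homogeneous, scalar, linear ODE in $\rho_m$ with continuous coefficient, so for any $0 < t_0 \leq t < T_{max}$,
\begin{equation*}
\rho_m(t) = \rho_m(t_0) \exp\left(\int_{t_0}^t \Big[-(4\pi^2)^\alpha (3 \cdot 2^{4m+2} + 2 \cdot 3^{2m+1})^\alpha \nu + b_m \phi_{m+1}(\tau)\Big] d\tau \right).
\end{equation*}

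The hj-parity hypothesis on $u^0$ gives $\rho_m(0) = 0$, and continuity of $\rho_m$ at $t=0$ implies $\rho_m(t_0) \to 0$ as $t_0 \to 0^+$. Fixing $t \in (0,T_{max})$, the exponential factor is bounded on $[t_0, t]$ uniformly in small $t_0$ (since $\phi_{m+1}$ is continuous on $[0, t]$), so letting $t_0 \to 0^+$ yields $\rho_m(t) = 0$. Combined with $\rho_m(0) = 0$, this gives $\eta_m(t) = \zeta_m(t)$ for all $t \in [0,T_{max})$ and all $m \in \mathbb{Z}^+$, which is exactly hj-parity.

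The only delicate point, and the one that distinguishes this from the inviscid case, is the regularity gap at $t = 0$: the ODE system is a priori valid only on the smoothness interval $(0,T_{max})$, not on the closed interval $[0, T_{max})$ as in Proposition \ref{hjProp}. This is handled by the continuity passage $t_0 \to 0^+$ above, which is legitimate because Fourier coefficients of an element of $\dot{H}^{\log(3)/(2\log 2)}_\mathcal{M}$ depend continuously on the vector field. No further obstacle arises, as the dissipation multiplier on $\rho_m$ does not couple to any other mode.
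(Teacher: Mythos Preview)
Your proof is correct and follows essentially the same integrating-factor argument as the paper: set $\rho_m=\eta_m-\zeta_m$, derive the scalar linear ODE from \eqref{hModeHypo}--\eqref{jModeHypo}, and conclude $\rho_m\equiv 0$ from $\rho_m(0)=0$. Your sign $+b_m\phi_{m+1}\rho_m$ is in fact the correct one (the paper's displayed $-b_m\phi_{m+1}\rho_m$ is a harmless typo), and your explicit $t_0\to 0^+$ passage to handle the smoothness gap at $t=0$ is a nice refinement of what the paper leaves implicit.
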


\begin{remark}
    We will in fact prove a slightly stronger result. We will show that
    for all $m\in\mathbb{Z}^+$ and for all $0\leq t<T_{max}$,
    \begin{multline}
        \eta_m(t)-\zeta_m(t)
        =
        \left(\eta_m(0)-\zeta_m(0)\right)
        \exp\Bigg(-(4\pi^2)^\alpha \nu
        \left(3*2^{4m+2}+2*3^{2m+1}\right)^\alpha t \\
        -b_m\int_0^t
        \phi_{m+1}(\tau)\diff\tau\Bigg),
    \end{multline}
    and in particular if $\eta_m(0)=\zeta_m(0)$, then for all $0\leq t<T_{max}$,
\begin{equation}
    \eta_m(t)=\zeta_m(t).
    \end{equation}
\end{remark}

\begin{proof}
    Let $\rho_m(t)=\eta_m(t)-\zeta_m(t)$.
    Plugging into \Cref{GeneralRestrictedHypoODE},
    we can see that for all $0< t<T_{max}$,
    \begin{align}
\partial_t \rho_m(t)
&=
 -(4\pi^2)^\alpha \nu
\left(3*2^{4m+2}+2*3^{2m+1}\right)^\alpha
(\eta_m-\zeta_m)
-b_m\zeta_m\phi_{m+1}+b_m\eta_m\phi_{m+1} \\
&=
 -(4\pi^2)^\alpha \nu
\left(3*2^{4m+2}+2*3^{2m+1}\right)^\alpha
\rho_m
-b_m\phi_{m+1}\rho_m,
    \end{align}
and integrating this differential equation completes the proof.
\end{proof}

\begin{proposition} \label{RestrictedHypoPartiyODE}
    Suppose $u\in C\left([0,T_{max});
    \dot{H}^\frac{\log(3)}{2\log(2)}
    _{\mathcal{M}}\right)$
    is an odd, permutation symmetric solution of the Fourier-restricted, hypodissipative Navier--Stokes equation with hj-parity.
    For all $m\in\mathbb{Z}^+$, let
    \begin{equation}
    \psi_{2m}:=\phi_m
    \end{equation}
    and let
    \begin{equation}
    \psi_{2m+1}:=\eta_m=\zeta_m,
    \end{equation}
    Then for all $0< t<T_{max}$, and for all $n\in\mathbb{Z}^+$,
    \begin{equation}
        \partial_t \psi_n
        =
        -(12\pi^2)^\alpha\mu_n^\alpha \nu
        \left(\sqrt{3}\right)^{2\Tilde{\alpha}n}
        \psi_n
        +\sqrt{2}\pi\beta_{n-1}
        \left(\sqrt{3}\right)^{n}
        \psi_{n-1}^2
        -\sqrt{2}\pi\beta_n
        \left(\sqrt{3}\right)^{n+1}
        \psi_n \psi_{n+1},
    \end{equation}
    where
    \begin{equation}
        \Tilde{\alpha}=\frac{2\log(2)}{\log(3)}\alpha
    \end{equation}
    and for all $n\in\mathbb{Z}^+$,
    \begin{align}
        \beta_n &=
        \frac{1}{\left(1+\frac{1}{2}
        \left(\frac{3}{4}\right)^n
        \right)^\frac{1}{2}} \\
        \mu_n &=
        \frac{1}{\left(1+\frac{2}{3}
        \left(\frac{3}{4}\right)^n
        \right)^\frac{1}{2}}
    \end{align}
    and by convention
    \begin{equation}
        \psi_{-1},\beta_{-1}:=0.
    \end{equation}
\end{proposition}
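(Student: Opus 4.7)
The plan is to follow exactly the strategy used for the inviscid case \Cref{RestrictedEulerPartiyODE}, with the only new ingredient being a bookkeeping check on the dissipative coefficient. The hj-parity propagation is already handled by \Cref{hjPropHypo}, so for all $0<t<T_{max}$ we have $\eta_m(t)=\zeta_m(t)$, and the system \eqref{kModeHypo}--\eqref{jModeHypo} collapses into a single ODE per shell, indexed by $n\in\mathbb{Z}^+$ via the substitution $\psi_{2m}:=\phi_m$ and $\psi_{2m+1}:=\eta_m=\zeta_m$. With this collapse, equations \eqref{hModeHypo} and \eqref{jModeHypo} become identical, so no consistency obstruction arises from treating them as one equation.

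Next, I would split into the two parity cases $n=2m$ and $n=2m+1$, handling the nonlinear and dissipative pieces separately. For the nonlinear piece, the computation is line-for-line identical to the one in \Cref{RestrictedEulerPartiyODE}: in the even case \eqref{kModeHypo} gives $b_{m-1}\psi_{n-1}^2-a_m\psi_n\psi_{n+1}$, and substituting the definitions of $a_m,b_{m-1}$ from \Cref{GeneralRestrictedHypoODE} rewrites the coefficients as $\sqrt{2}\pi\beta_{n-1}(\sqrt 3)^n$ and $\sqrt{2}\pi\beta_n(\sqrt 3)^{n+1}$ respectively; the odd case is analogous using \eqref{hModeHypo}. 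This reuses the algebraic identification of the $a_m,b_m$ factors with the $\beta_n(\sqrt 3)^{\cdot}$ factors that was already performed in \Cref{RestrictedEulerPartiyODE}, so nothing new needs to be done here.

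The genuinely new step is to rewrite the dissipative prefactor $(4\pi^2)^\alpha\nu|k|^{2\alpha}$, where $|k|^2=3\cdot 2^{2n}+2\cdot 3^n$ (by \Cref{CanonicalComputations}, using $n=2m$ for $\mathcal{P}[k^m]$ and $n=2m+1$ for $\mathcal{P}[h^m]\cup\mathcal{P}[j^m]$), in the advertised form $(12\pi^2)^\alpha \mu_n^\alpha\nu(\sqrt 3)^{2\Tilde\alpha n}$. The key observation is that $3^{\Tilde\alpha}=2^{2\alpha}$, which follows at once from $\Tilde\alpha=\frac{2\log 2}{\log 3}\alpha$, so that $(\sqrt 3)^{2\Tilde\alpha n}=3^{\Tilde\alpha n}=2^{2\alpha n}$. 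I would then factor
\begin{equation}
 |k|^{2\alpha}=\bigl(3\cdot 2^{2n}\bigr)^\alpha\bigl(1+\tfrac{2}{3}(3/4)^n\bigr)^\alpha,
\end{equation}
absorb the leading $3^\alpha$ into $(4\pi^2)^\alpha$ to obtain $(12\pi^2)^\alpha$, identify $2^{2n\alpha}=(\sqrt 3)^{2\Tilde\alpha n}$, and match the remaining bracket to $\mu_n^\alpha$ using the definition of $\mu_n$ supplied in the statement. Doing this for $n=2m$ uses $|k^m|^2=3\cdot 2^{4m}+2\cdot 3^{2m}$, while $n=2m+1$ uses $|h^m|^2=|j^m|^2=3\cdot 2^{4m+2}+2\cdot 3^{2m+1}=3\cdot 2^{2n}+2\cdot 3^n$, so both cases are covered by the same factorization.

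The hardest part will not really be a conceptual obstacle but rather the careful tracking of the constant structure: making sure the $2^{2n\alpha}$ from the Laplacian is correctly translated into $(\sqrt 3)^{2\Tilde\alpha n}$, and that the perturbative factor $(1+\tfrac{2}{3}(3/4)^n)^{\alpha}$ is packaged into the single quantity $\mu_n^\alpha$ appearing in the statement. Once these constants are matched in both parities, the ODE is assembled term by term, and since the nonlinear terms carry through unchanged from \Cref{RestrictedEulerPartiyODE} and the dissipative term is treated case-by-case as above, the stated infinite system of ODEs follows directly.
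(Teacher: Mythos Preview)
Your proposal is correct and follows essentially the same approach as the paper: reduce the nonlinear terms to the computation already done in \Cref{RestrictedEulerPartiyODE}, then handle the dissipative coefficient separately by the parity split $n=2m$ versus $n=2m+1$, factoring $|k|^{2\alpha}=(3\cdot 2^{2n})^\alpha(1+\tfrac{2}{3}(3/4)^n)^\alpha$ and using the identity $2^{2n\alpha}=(\sqrt{3})^{2\Tilde{\alpha}n}$ (equivalently $(\sqrt{3})^{\Tilde{\alpha}}=2^\alpha$, which is the form the paper states). The only cosmetic difference is that you phrase the logarithmic identity as $3^{\Tilde{\alpha}}=2^{2\alpha}$ rather than its square root, which is immaterial.
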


\begin{proof}
    The proof that the nonlinearity takes the form
    \begin{equation}
        \sqrt{2}\pi\beta_{n-1}
        \left(\sqrt{3}\right)^{n}
        \psi_{n-1}^2
        -\sqrt{2}\pi\beta_n
        \left(\sqrt{3}\right)^{n+1}
        \psi_n \psi_{n+1},
    \end{equation}
    is precisely the same as in \Cref{RestrictedEulerPartiyODE},
    so we will not repeat the details here.
    It suffices to prove that the dissipative term has the form
    \begin{equation}
        -(12\pi^2)^\alpha\mu_n^\alpha \nu
        \left(\sqrt{3}\right)^{2\Tilde{\alpha}n}
        \psi_n.
    \end{equation}
    First we will consider the even case. Let $n=2m$.
    Then we can see that for all $m\in\mathbb{Z}^+$,
    \begin{align}
        -(4\pi^2)^\alpha \nu
        \left(3*2^{4m}+2*3^{2m}\right)^\alpha \phi_m
        &=
        -(4\pi^2)^\alpha \nu
    \left(3*2^{2n}+2*3^{n}\right)^\alpha \psi_{n} \\ 
    &=
    -(12\pi^2)^\alpha \nu
    \left(1+\frac{2}{3}
    \left(\frac{3}{4}\right)^n\right)^\alpha
    2^{2n\alpha} \psi_n \\
    &=
    -(12\pi^2)^\alpha \nu
    \mu_n^\alpha
    2^{2n\alpha} \psi_n.
     \end{align}   
    Next we will consider the odd case. Let $n=2m+1$.
    Then we can see that for all $m\in\mathbb{Z}^+$,
    \begin{align}
    -(4\pi^2)^\alpha \nu
    \left(3*2^{4m+2}+2*3^{2m+1}\right)^\alpha \zeta_m
    &=
    -(4\pi^2)^\alpha \nu
    \left(3*2^{2n}+2*3^{n}\right)^\alpha \psi_n \\
    &=
    -(12\pi^2)^\alpha \nu
    \left(1+\frac{2}{3}
    \left(\frac{3}{4}\right)^n\right)^\alpha
    2^{2n\alpha} \psi_n \\
    &=
    -(12\pi^2)^\alpha \nu
    \mu_n^\alpha
    2^{2n\alpha} \psi_n,
    \end{align}
    and likewise for $\eta$.

    It now remains only to show that
    \begin{equation}
        \left(\sqrt{3}\right)^{\Tilde{\alpha}}
        =
        2^\alpha.
    \end{equation}
    Recall that
    \begin{equation}
    \log(2)\alpha=\frac{1}{2}\log(3)\Tilde{\alpha},
    \end{equation}
    and so
    \begin{equation}
        \log\left(2^\alpha\right)
        =
        \log\left(\left(\sqrt{3}
        \right)^{\Tilde{\alpha}}\right),
    \end{equation}
and taking the exponential of both sides, we find 
\begin{equation}
2^\alpha=\left(\sqrt{3}\right)^{\Tilde{\alpha}}.
\end{equation}
\end{proof}

Note that the positivity of the Fourier coefficients $\psi_n$ is preserved by the dynamics, which we will show now.

\begin{proposition} \label{HypoPositivity}
    Suppose $u\in C\left([0,T_{max});
    \dot{H}^\frac{\log(3)}{2\log(2)}
    _{\mathcal{M}}\right)$ 
    is an odd, permutation-symmetric solution of the Fourier-restricted, hypodissipative Navier--Stokes equation with hj-parity.
    Then for all $n\in\mathbb{Z}^+$,
    and for all $0\leq t<T_{max}$,
    \begin{equation}
        \psi_n(t)\geq \psi_n(0)
        \exp\left(-(12\pi^2)^\alpha\mu_n^\alpha \nu
        \left(\sqrt{3}\right)^{2\Tilde{\alpha}n} t
        -\sqrt{2}\pi\beta_n\left(\sqrt{3}\right)^{n+1}
        \int_0^t \psi_{n+1}(\tau)\diff\tau \right).
    \end{equation}
    Note in particular, that if $\psi_n(0)\geq 0$, then for all $0\leq t<T_{max}$, $\psi_n(t)\geq 0$, and 
    $\psi_n(0)> 0$, then for all $0\leq t<T_{max}$, $\psi_n(t)> 0$.
\end{proposition}

\begin{proof}
    We can see from \Cref{RestrictedHypoPartiyODE}, that
    for all $0<t<T_{max}$, and for all $n\in\mathbb{Z}^+$,
    \begin{equation}
        \partial_t \psi_n
        \geq 
        -(12\pi^2)^\alpha\mu_n^\alpha \nu
        \left(\sqrt{3}\right)^{2\Tilde{\alpha}n}
        \psi_n
        -\sqrt{2}\pi\beta_n\left(\sqrt{3}\right)^{n+1}
        \psi_n \psi_{n+1}.
    \end{equation}
    Integrating this differential inequality completes the proof.
\end{proof}

\begin{proposition} \label{RestrictedPositivity}
    Suppose $u\in C^1\left([0,T_{max});
    \dot{H}^\frac{\log(3)}{2\log(2)}
    _{\mathcal{M}}\right)$
    is a solution of the Fourier-restricted Euler equation with hj-parity.
    Then for all $n\in\mathbb{Z}^+$,
    and for all $0\leq t<T_{max}$,
    \begin{equation}
        \psi_n(t)\geq \psi_n(0)
        \exp\left(
        -\sqrt{2}\pi\beta_n\left(\sqrt{3}\right)^{n+1}
        \int_0^t \psi_{n+1}(\tau)\diff\tau \right).
    \end{equation}
    Note in particular, that if $\psi_n(0)\geq 0$, then for all $0\leq t<T_{max}$, $\psi_n(t)\geq 0$.
\end{proposition}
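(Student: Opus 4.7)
The plan is to rewrite the ODE from \Cref{RestrictedEulerPartiyODE} in a form that is linear in $\psi_n$ with a non-negative source term, and then apply the method of integrating factors. Specifically, we move the term involving $\psi_n\psi_{n+1}$ to the left-hand side to obtain
\begin{equation*}
\partial_t \psi_n + \sqrt{2}\pi\beta_n\left(\sqrt{3}\right)^{n+1}\psi_{n+1}(t)\,\psi_n
= \sqrt{2}\pi\beta_{n-1}\left(\sqrt{3}\right)^n \psi_{n-1}(t)^2,
\end{equation*}
where, since we are assuming that a solution exists in $C^1([0,T_{max});\dot{H}^{\log(3)/2\log(2)}_\mathcal{M})$, the coefficients $\psi_{n-1},\psi_{n+1}$ are continuous real-valued functions of $t$. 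The crucial observation is that the right-hand side is pointwise non-negative (it is a multiple of a square), so this is a first-order linear ODE in $\psi_n$ with continuous coefficient and non-negative forcing.

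Next I would introduce the integrating factor
\begin{equation*}
\mu_n(t) := \exp\!\left(\sqrt{2}\pi\beta_n\left(\sqrt{3}\right)^{n+1}\int_0^t \psi_{n+1}(\tau)\,\diff\tau\right),
\end{equation*}
which is well defined and positive by continuity of $\psi_{n+1}$ on $[0,T_{max})$. Multiplying the ODE by $\mu_n$ and using the product rule gives
\begin{equation*}
\frac{\diff}{\diff t}\bigl(\mu_n(t)\psi_n(t)\bigr)
= \mu_n(t)\,\sqrt{2}\pi\beta_{n-1}\left(\sqrt{3}\right)^n \psi_{n-1}(t)^2 \geq 0.
\end{equation*}
Integrating from $0$ to $t$ and using $\mu_n(0)=1$ yields $\mu_n(t)\psi_n(t) \geq \psi_n(0)$, which after dividing by $\mu_n(t)$ is exactly the claimed bound
\begin{equation*}
\psi_n(t) \geq \psi_n(0)\exp\!\left(-\sqrt{2}\pi\beta_n\left(\sqrt{3}\right)^{n+1}\int_0^t \psi_{n+1}(\tau)\,\diff\tau\right).
\end{equation*}

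The positivity corollary is then immediate: if $\psi_n(0)\geq 0$ then the right-hand side is non-negative for all $0\leq t<T_{max}$, regardless of the sign or size of $\psi_{n+1}$, and hence $\psi_n(t)\geq 0$. I do not expect any serious obstacle here; the only subtlety worth checking is that the hypothesis $u\in C^1_tH^{\log(3)/2\log(2)}_x$ really gives enough regularity to guarantee continuity (and hence local integrability) of each $\psi_{n+1}$ as a function of $t$, so that the integrating factor is finite. This follows since $\psi_{n+1}(t)$ is, up to normalization, a single Fourier coefficient of $u(\cdot,t)$ paired with the fixed unit vector $v^{h^m}$ (or $v^{j^m}$), and the map $u\mapsto \hat u(k)$ is bounded from $\dot{H}^{\log(3)/2\log(2)}$ to $\mathbb{C}^3$ for each fixed $k$, so continuity in time of $u$ transfers to continuity in time of each $\psi_n$.
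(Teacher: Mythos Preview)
Your proof is correct and is essentially the same as the paper's: the paper simply drops the non-negative square term to obtain the differential inequality $\partial_t\psi_n\geq -\sqrt{2}\pi\beta_n(\sqrt{3})^{n+1}\psi_n\psi_{n+1}$ and then says ``integrating this differential inequality completes the proof,'' which is exactly the integrating-factor computation you wrote out in full. Your version is more detailed (and your check that continuity of $u$ in $\dot H^{\log(3)/2\log(2)}$ gives continuity of each $\psi_{n+1}$ is a nice point the paper leaves implicit), but the argument is the same.
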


\begin{proof}
    We can see from \Cref{RestrictedEulerPartiyODE}, that
    for all $0\leq t<T_{max}$, and for all $n\in\mathbb{Z}^+$,
    \begin{equation}
        \partial_t \psi_n
        \geq 
        -\sqrt{2}\pi\beta_n\left(\sqrt{3}\right)^{n+1}
        \psi_n \psi_{n+1}.
    \end{equation}
    Integrating this differential inequality completes the proof.
\end{proof}

\begin{remark}
    We have now reduced the Fourier-restricted Euler and hypodissipative Navier--Stokes equations to an ODE system fundamentally similar to the dyadic Euler and Navier--Stokes  equations introduced in \cites{FriedlanderPavlovic,KatzPavlovic}---modulo the factor $\beta_n$ (and $\mu_n$ in the viscous case) which does change the dynamics in any fundamental way.
    This is the first part of \Cref{RestrictedEulerBlowupIntro}.
    Note that this reduction only applies to odd, permutation-symmetric solutions with hj-parity. In general, the dynamics of the Fourier-restricted Euler equation are much more complicated, because the general case requires a separate ODE for each of the 12 frequencies in each shell, rather than the one ODE per shell for odd, permutation-symmetric solutions with hj-parity.
    In the next section, we will use this reduction to prove finite-time blowup for the Fourier-restricted Euler equation by using an appropriately chosen Lyapunov functional on the scalar Fourier coefficients $\psi_n$,
    which will complete the proof of \Cref{RestrictedEulerBlowupIntro,RestrictedHypoBlowupIntro}.
\end{remark}

\section{Finite-time blowup} \label{BlowupSection}

In this section, we will prove there exist smooth solutions of the Fourier-restricted Euler and hypodissipative Navier--Stokes equations (with weak enough dissipation) that blowup in finite-time.
We will now complete the proof of \Cref{RestrictedHypoBlowupIntro}, showing finite-time blowup for the Fourier-restricted hypodissipative Navier--Stokes equation when $\alpha<\frac{\log(3)}{6\log(2)}$. The proof will be broken into several pieces. Throughout this section we will follow the notation in \Cref{RestrictedHypoPartiyODE} that and odd, permutation, hj-parity solution of the Fourier-restricted hypodissipative Navier--Stokes equation will be expressed as
\begin{equation}
    u(x,t)=-2\sum_{m=0}^{+\infty}\left(
    \psi_n(t)\sum_{k\in\mathcal{M}_n^+}
    v^k\sin(2\pi k\cdot x)
    \right).
\end{equation}

For this reason it will be more convenient to work with directly the $\mathcal{H}^{\Tilde{s}}$ norms of the coefficients $\psi_n$ than with the $H^s$ norms $u$.
Recall that in \Cref{DyadicHilbertIntro} we have defined the space $\mathcal{H}^s$ as a dyadic analogue of the standard Hilbert space, following the conventions of 
\cites{FriedlanderPavlovic,KatzPavlovic,Cheskidov} by
    \begin{equation}
    \|\psi\|_{\mathcal{H}^s}^2
    =\sum_{n=0}^{+\infty}
    \left(\sqrt{3}\right)^{2sn}\psi_n^2.
    \end{equation}

\begin{remark}
    We will note here that for odd, permutation symmetric vector fields with hj-parity, $u\in \dot{H}^{s}_\mathcal{M}$, the norms 
    $\|u\|_{\dot{H}^s}$ and 
    $\|\psi\|_{\mathcal{H}^{\Tilde{s}}}$ are equivalent, where 
    $\Tilde{s}=\frac{2\log(2)}{\log(3)}s$.
    \end{remark}

    \begin{proposition} \label{DyadicComparableProp}
    Fix $s\in\mathbb{R}$ and let 
    $\Tilde{s}=\frac{2\log(2)}{\log(3)}s$.
    For all vector fields $u\in \dot{H}^{s}_\mathcal{M}$,
    odd, permutation symmetric, with hj-parity,
    \begin{equation}
        12\left(12\pi^2\right)^s
        \|\psi\|_{\mathcal{H}^{\Tilde{s}}}^2
        \leq
        \|u\|_{\dot{H}^s}^2
        \leq 
        12\left(20\pi^2\right)^s
        \|\psi\|_{\mathcal{H}^{\Tilde{s}}}^2.
    \end{equation}
    \end{proposition}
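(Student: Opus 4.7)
The plan is to expand $u$ in Fourier series using the coefficients $\psi_n$ and then compare the resulting sum against $\|\psi\|_{\mathcal{H}^{\tilde s}}^2$ shell by shell. By \Cref{PermutationSymmetricCategorizationProp} combined with the hj-parity hypothesis, at every frequency $k \in \pm\mathcal{M}_n^+$ the Fourier coefficient has the form $\hat u(k) = \pm i \psi_n v^k$, so $|\hat u(k)|^2 = \psi_n^2$ since $|v^k|=1$. The next observation is that each shell $\mathcal{M}_n$ contains exactly 12 frequencies: for $n=2m$ the set $\mathcal{P}[k^m]$ has 6 permutations and we double for signs, while for $n=2m+1$ the two sets $\mathcal{P}[h^m]$ and $\mathcal{P}[j^m]$ each contribute 3 permutations, again doubled by sign. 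Finally, by \Cref{CanonicalComputations} every $k \in \mathcal{M}_n^+$ satisfies $|k|^2 = 3 \cdot 2^{2n} + 2 \cdot 3^n$.

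Combining these facts, I would write
\begin{equation}
\|u\|_{\dot H^s}^2
= \sum_{k \in \mathcal{M}} (4\pi^2|k|^2)^s |\hat u(k)|^2
= 12 (4\pi^2)^s \sum_{n=0}^{+\infty}(3 \cdot 2^{2n} + 2\cdot 3^n)^s \psi_n^2,
\end{equation}
and then factor $3 \cdot 2^{2n}$ out of the bracketed expression to get $(3\cdot 2^{2n})^s (1 + \tfrac{2}{3}(\tfrac{3}{4})^n)^s$. Since $0 \leq \tfrac{2}{3}(\tfrac{3}{4})^n \leq \tfrac{2}{3}$ for all $n \in \mathbb{Z}^+$, the factor in parentheses lies in $[1,\tfrac{5}{3}]$, which gives two-sided control
\begin{equation}
3^s \cdot 2^{2ns} \leq (3\cdot 2^{2n} + 2\cdot 3^n)^s \leq 5^s \cdot 2^{2ns}.
\end{equation}

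The last step is to recognize $2^{2ns}$ as the correct weight for the $\mathcal{H}^{\tilde s}$ norm. By definition $\tilde s = \tfrac{2\log 2}{\log 3} s$, so $(\sqrt{3})^{2\tilde s n} = 3^{\tilde s n} = e^{2sn \log 2} = 2^{2ns}$, and hence $\|\psi\|_{\mathcal{H}^{\tilde s}}^2 = \sum_{n=0}^{+\infty} 2^{2ns} \psi_n^2$. Substituting the two-sided bound into the Fourier sum and absorbing the constants $12 (4\pi^2)^s \cdot 3^s = 12(12\pi^2)^s$ on the lower side and $12(4\pi^2)^s \cdot 5^s = 12(20\pi^2)^s$ on the upper side then yields the claim.

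There is no serious obstacle here: every ingredient (the shell structure, the frequency magnitudes, the counting of permutations, the identity $2^\alpha = (\sqrt 3)^{\tilde\alpha}$) has already been established earlier in the paper, and the only analytic content is the elementary estimate $1 \leq 1 + \tfrac{2}{3}(\tfrac{3}{4})^n \leq \tfrac{5}{3}$. The main thing to be careful about is bookkeeping of the counting factor 12 and the simultaneous treatment of the even and odd shells, which is handled uniformly since both contribute the same number of frequencies and both satisfy $|k|^2 = 3 \cdot 2^{2n} + 2\cdot 3^n$.
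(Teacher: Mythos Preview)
Your proposal is correct and follows essentially the same argument as the paper's proof: expand $\|u\|_{\dot H^s}^2$ shell by shell using the 12-frequency count and $|k|^2 = 3\cdot 2^{2n}+2\cdot 3^n$, factor out $(3\cdot 2^{2n})^s$, bound $1 \leq (1+\tfrac{2}{3}(\tfrac{3}{4})^n)^s \leq (\tfrac{5}{3})^s$, and identify $2^{2ns}=(\sqrt3)^{2\tilde s n}$.
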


    \begin{proof}
    We can see as above that
    \begin{align}
    \frac{1}{12}\|u\|_{\dot{H}^s}^2
    &=
    \sum_{n=0}^{+\infty}
    (4\pi^2)^s
    \left(3*2^{2n}+2*3^{n}\right)^s \psi_n^2 \\
    &=
    \sum_{n=0}^{+\infty}
    \left(12\pi^2\right)^s
    \left(1+\frac{2}{3}
    \left(\frac{3}{4}\right)^n\right)^s
    2^{2ns} \psi_n^2 \\
    &=
    \left(12\pi^2\right)^s\sum_{n=0}^{+\infty}
    \left(1+\frac{2}{3}
    \left(\frac{3}{4}\right)^n\right)^s
    \left(\sqrt{3}\right)^{2n\Tilde{s}} \psi_n^2.
    \end{align}
    Note that the factor of $\frac{1}{12}$ comes from the fact that each $\psi_n$ corresponds to twelve distinct frequencies: six in $\mathcal{M}_n^+$ and six in $\mathcal{M}_n^-$.
    Next observe that
    for all $n\in\mathbb{Z}^+$
    \begin{equation}
    1 \leq 
    \left(1+\frac{2}{3}
    \left(\frac{3}{4}\right)^n\right)^s
    \leq 
    \left(\frac{5}{3}\right)^s,
    \end{equation}
    and so
    \begin{equation}
        (12\pi^2)^s
        \|\psi\|_{\mathcal{H}^{\Tilde{s}}}^2
        \leq
        \frac{1}{12}\|u\|_{\dot{H}^s}^2
        \leq 
        \left(\frac{5}{3}\right)^s
        (12\pi^2)^s
        \|\psi\|_{\mathcal{H}^{\Tilde{s}}}^2.
    \end{equation}
    Rearranging terms, this completes the proof.
    \end{proof}

    \begin{corollary} \label{DyadicHilbertBlowupRateCor}
    Suppose $u\in C\left([0,T_{max});\dot{H}^
    \frac{\log(3)}{2\log(2)}_\mathcal{M}\right)$
    is a solution of the Fourier-restricted Euler or hypodissipative Navier--Stokes equation.
    If $T_{max}<+\infty$, then
    \begin{equation}
    \|\psi(t)\|_{\mathcal{H}^1}
    \geq
    \frac{1}{\sqrt{12}(20\pi^2)^\frac{\log(3)}{4\log(2)}
    \mathcal{C}_*(T_{max}-t)}.
    \end{equation}
    \end{corollary}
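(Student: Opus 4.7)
The plan is to read this as a direct consequence of two results already in hand: the local blowup rate in the critical Sobolev norm (from Theorems \ref{RestrictedEulerExistenceThmIntro} and \ref{RestrictedHypoExistenceThmIntro}) and the norm equivalence between $\dot{H}^s_\mathcal{M}$ and $\mathcal{H}^{\tilde s}$ proven in Proposition \ref{DyadicComparableProp}. The corollary is implicitly about odd, permutation symmetric solutions with hj-parity, since only in that case are the scalar coefficients $\psi_n$ (and hence $\|\psi\|_{\mathcal{H}^1}$) defined.

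First I would choose the exponent that matches. Setting $s=\frac{\log(3)}{2\log(2)}$ in the identity $\tilde s=\frac{2\log(2)}{\log(3)}s$ gives exactly $\tilde s=1$, so $\|\psi\|_{\mathcal{H}^1}$ is the dyadic analogue of the critical norm. Applying the upper half of Proposition \ref{DyadicComparableProp} at this exponent yields
\begin{equation}
\|u(\cdot,t)\|_{\dot{H}^{\frac{\log(3)}{2\log(2)}}}^2
\leq 12\,(20\pi^2)^{\frac{\log(3)}{2\log(2)}}\,\|\psi(t)\|_{\mathcal{H}^1}^2,
\end{equation}
which rearranges to
\begin{equation}
\|\psi(t)\|_{\mathcal{H}^1}
\geq \frac{1}{\sqrt{12}\,(20\pi^2)^{\frac{\log(3)}{4\log(2)}}}\,\|u(\cdot,t)\|_{\dot{H}^{\frac{\log(3)}{2\log(2)}}}.
\end{equation}

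Second, I would invoke the blowup rate at the critical exponent. Both Theorem \ref{RestrictedEulerExistenceThmIntro} (inviscid case) and Theorem \ref{RestrictedHypoExistenceThmIntro} (hypoviscous case, $s=\frac{\log(3)}{2\log(2)}$) give the identical conclusion that if $T_{max}<+\infty$, then for all $0\leq t<T_{max}$,
\begin{equation}
\|u(\cdot,t)\|_{\dot{H}^{\frac{\log(3)}{2\log(2)}}}\geq \frac{1}{\mathcal{C}_*(T_{max}-t)}.
\end{equation}
Chaining this into the inequality from the previous step produces exactly the claimed bound.

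There is no genuine obstacle; every ingredient has been established previously, and the only arithmetic to verify is the exponent identity $\tilde s=1$ and the factor $(20\pi^2)^{\log(3)/(4\log(2))}$ coming out as the square root of $(20\pi^2)^{\log(3)/(2\log(2))}$. Because the same critical-norm blowup rate holds in both the inviscid and hypoviscous settings, the statement is genuinely unified and requires no case split.
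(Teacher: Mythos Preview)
Your proof is correct and takes essentially the same approach as the paper, which simply cites Proposition~\ref{DyadicComparableProp} together with the wellposedness theory in $\dot{H}^{\frac{\log(3)}{2\log(2)}}_\mathcal{M}$ from Section~\ref{WellPosedSection}. Your observation that the statement tacitly assumes the odd, permutation symmetric, hj-parity setting (so that $\psi$ is defined) is a useful clarification the paper leaves implicit.
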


    \begin{proof}
    This follows immediately from \Cref{DyadicComparableProp} and the wellposedness theory in 
    $\dot{H}^\frac{\log(3)}{2\log(2)}_\mathcal{M}$ developed in \Cref{WellPosedSection}.
    \end{proof}

    \begin{remark}
    We should note that \Cref{DyadicComparableProp} implies that a solution $u\in C^1_T \dot{H}^\frac{\log(3)}{2\log(2)}_x$ of the Fourier-restricted Euler or hypodissipative Navier--Stokes equations in our symmetry class is equivalent to a solution
    $\psi \in C^1_T \mathcal{H}^1$ satisfying the infinite system of ODEs in \Cref{RestrictedEulerPartiyODE,RestrictedHypoPartiyODE}.
    \end{remark}

\begin{proposition} \label{BlowupIdProp}
    Suppose $u\in C\left([0,T_{max});
    \dot{H}^\frac{\log(3)}{2\log(2)}
    _{\mathcal{M}}\right)$ 
    is an odd, permutation-symmetric solution of the Fourier-restricted, hypodissipative Navier--Stokes equation with hj-parity. Further suppose that $\alpha<\frac{\log(3)}{6\log(2)}$ and that $\Tilde{\alpha}<s<\frac{1}{3}$. 
    Define the Lyapunov functional as
    \begin{equation}
    H_s(t)
    =
    \sum_{n=0}^\infty
    \left(\sqrt{3}\right)^{-sn} \psi_n(t).
    \end{equation}
    Then for all $0\leq t<T_{max}$,
    \begin{multline} 
    \frac{\diff H_s}{\diff t}
    =
    -\left(12 \pi^2\right)^\alpha \nu
    \sum_{n=0}^\infty \mu_n^\alpha 
    \left(\sqrt{3}\right)^{(2\Tilde{\alpha}-s)n} \psi_n
    +\sqrt{2}\pi \left(\sqrt{3}\right)^{1-s}
    \sum_{n=0}^\infty 
    \beta_n\left(\sqrt{3}\right)^{(1-s)n}
    \psi_n^2 \\
    -\sqrt{2}\pi \sum_{n=0}^\infty \beta_n
    \left(\sqrt{3}\right)^{-sn+n+1}\psi_n\psi_{n+1}.
    \end{multline}
\end{proposition}

\begin{proof}
    We can see immediately from \Cref{RestrictedHypoPartiyODE} that
    \begin{multline} 
    \frac{\diff H_s}{\diff t}
    =
    -\left(12 \pi^2\right)^\alpha \nu
    \sum_{n=0}^\infty \mu_n^\alpha 
    \left(\sqrt{3}\right)^{(2\Tilde{\alpha}-s)n} \psi_n
    +\sqrt{2}\pi \sum_{n=1}^\infty \beta_{n-1} 
    \left(\sqrt{3}\right)^{(1-s)n}
    \psi_{n-1}^2 \\
    -\sqrt{2}\pi \sum_{n=0}^\infty \beta_n
    \left(\sqrt{3}\right)^{-sn+n+1}\psi_n\psi_{n+1}.
    \end{multline}
    Then observe that
    \begin{equation}
    \sqrt{2}\pi \sum_{n=1}^\infty \beta_{n-1} 
    \left(\sqrt{3}\right)^{(1-s)n}
    \psi_{n-1}^2
    =
    \sqrt{2}\pi \left(\sqrt{3}\right)^{1-s}
    \sum_{n=0}^\infty 
    \beta_n\left(\sqrt{3}\right)^{(1-s)n}
    \psi_n^2,
    \end{equation}
    and this completes the proof.
\end{proof}

\begin{proposition} \label{PropBoundA}
    Suppose $u\in \dot{H}^\frac{\log(3)}{2\log(2)}
    _{\mathcal{M}}$ 
    is odd, permutation-symmetric, and has hj-parity. 
    Then
    \begin{equation}
    \sqrt{2}\pi \sum_{n=0}^\infty \beta_n 
    \left(\sqrt{3}\right)^{-sn+n+1}\psi_n\psi_{n+1}
    \leq 
    \sqrt{2}\pi \left(\sqrt{3}\right)^{\frac{1+s}{2}}
    \sum_{n=0}^\infty \beta_n\left(\sqrt{3}\right)^{(1-s)n}
    \psi_n^2.
    \end{equation}
\end{proposition}

\begin{proof}
    Observe that
    \begin{equation}
    \sqrt{2}\pi \sum_{n=1}^\infty \beta_{n-1} 
    \left(\sqrt{3}\right)^{(1-s)n}
    =
    \sqrt{2}\pi \left(\sqrt{3}\right)^{1-s}
    \sum_{n=0}^\infty 
    \beta_n\left(\sqrt{3}\right)^{(1-s)n}
    \psi_n^2.
    \end{equation}
    Next observe that
    \begin{equation}
    \sqrt{2}\pi \sum_{n=0}^\infty \beta_n 
    \left(\sqrt{3}\right)^{-sn+n+1}\psi_n\psi_{n+1}
    =
    \sqrt{2}\pi \left(\sqrt{3}\right)^{\frac{1+s}{2}}\sum_{n=0}^\infty 
    \beta_n \left(\sqrt{3}
    \right)^{n-ns+\frac{1}{2}-\frac{s}{2}}\psi_n
    \psi_{n+1},
    \end{equation}
    and apply H\"older's inequality to
    $\beta_n^\frac{1}{2} \left(\sqrt{3}\right)^{\frac{(1-s)n}{2}}\psi_n$
    and $\beta_n^\frac{1}{2} 
    \left(\sqrt{3}\right)^{\frac{(1-s)(n+1)}{2}}
    \psi_{n+1}$ to conclude that
    \begin{multline} \label{BlowupBound1}
    \sqrt{2}\pi \sum_{n=0}^\infty \beta_n 
    \left(\sqrt{3}\right)^{-sn+n+1}\psi_n\psi_{n+1}
    \leq 
    \sqrt{2}\pi \left(\sqrt{3}\right)^{\frac{1+s}{2}}
    \left(\sum_{n=0}^\infty \beta_n
    \left(\sqrt{3}\right)^{(1-s)n}\psi_n^2\right)^\frac{1}{2} \\
    \left(\sum_{n=0}^\infty \beta_n
    \left(\sqrt{3}\right)^{(1-s)(n+1)}\psi_{n+1}^2\right)^\frac{1}{2}.
    \end{multline}
    Using the fact that $\beta_n<\beta_{n+1}$, we can see that
    \begin{align}
    \sum_{n=0}^\infty \beta_n
    \left(\sqrt{3}\right)^{(1-s)(n+1)}\psi_{n+1}^2
    &\leq 
    \sum_{n=0}^\infty \beta_{n+1}
    \left(\sqrt{3}\right)^{(1-s)(n+1)}\psi_{n+1}^2 \\
    &=
    \sum_{n=1}^\infty 
    \beta_n \left(\sqrt{3}\right)^{(1-s)n}
    \psi_n^2 \\
    &\leq 
    \sum_{n=0}^\infty \beta_n\left(\sqrt{3}\right)^{(1-s)n}
    \psi_n^2.
    \end{align}
    Plugging this back into \eqref{BlowupBound1},
    this implies that 
    \begin{equation}
    \sqrt{2}\pi \sum_{n=0}^\infty \beta_n 
    \left(\sqrt{3}\right)^{-sn+n+1}\psi_n\psi_{n+1}
    \leq 
    \sqrt{2}\pi \left(\sqrt{3}\right)^{\frac{1+s}{2}}
    \sum_{n=0}^\infty \beta_n\left(\sqrt{3}\right)^{(1-s)n}
    \psi_n^2.
    \end{equation}
\end{proof}

\begin{proposition} \label{PropBoundB}
    Suppose $u\in \dot{H}^\frac{\log(3)}{2\log(2)}
    _{\mathcal{M}}$ 
    is odd, permutation-symmetric, and has hj-parity, and further suppose that $\alpha<\frac{\log(3)}{6\log(2)}$ and that $\Tilde{\alpha}<s<\frac{1}{3}$. Then
    \begin{equation}
    \sum_{n=0}^\infty \mu_n^\alpha 
    \left(\sqrt{3}\right)^{(2\Tilde{\alpha}-s)n} \psi_n
    \leq 
    \left(\frac{1}{1-\left(\sqrt{3}
    \right)^{-(1+s-4\Tilde{\alpha})}}
    \right)^\frac{1}{2}
    \|\psi\|_{\mathcal{H}^\frac{1-s}{2}}
    \end{equation}
\end{proposition}

\begin{proof}
    We begin by applying Young's inequality, finding that
    \begin{align}
    \sum_{n=0}^\infty \mu_n^\alpha 
    \left(\sqrt{3}\right)^{(2\Tilde{\alpha}-s)n} \psi_n
    &=
    \sum_{n=0}^\infty \mu_n^\alpha 
    \left(\sqrt{3}\right)^{\frac{(1-s)n}{2}} \psi_n
    \left(\sqrt{3}\right)^{-\left(\frac{1}{2}
    +\frac{s}{2}-2\Tilde{\alpha}\right)n} \\
    &\leq 
    \left(\sum_{n=0}^\infty \mu_n^{2\alpha}
    \left(\sqrt{3}\right)^{(1-s)n} 
    \psi_n^2\right)^\frac{1}{2}
    \left(\sum_{n=0}^\infty 
    \left(\sqrt{3}\right)^{-(1+s-4\Tilde{\alpha})n} 
    \right)^\frac{1}{2} \\
    &\leq 
     \left(\sum_{n=0}^\infty
    \left(\sqrt{3}\right)^{(1-s)n} 
    \psi_n^2\right)^\frac{1}{2}
    \left(\frac{1}{1-\left(\sqrt{3}
    \right)^{-(1+s-4\Tilde{\alpha})}}
    \right)^\frac{1}{2}.
    \end{align}
    Note that we have used the fact that $\mu_n<1$,
    and that $0<\Tilde{\alpha}<s<\frac{1}{3}$ implies that $1+s-4\Tilde{\alpha}>0$.
\end{proof}

\begin{lemma} \label{BlowupLemmaA}
    Suppose $u\in C\left([0,T_{max});
    \dot{H}^\frac{\log(3)}{2\log(2)}
    _{\mathcal{M}}\right)$ 
    is an odd, permutation-symmetric solution of the Fourier-restricted, hypodissipative Navier--Stokes equation with hj-parity. Further suppose that $\alpha<\frac{\log(3)}{6\log(2)}$ and that $\Tilde{\alpha}<s<\frac{1}{3}$. 
    Then for all $0\leq t<T_{max}$,
    \begin{equation}
    \frac{\diff H_s}{\diff t}
    \geq 
    -\left(\frac{\left(12\pi^2\right)^\alpha}
    {\left(1-\left(\sqrt{3}
    \right)^{-(1+s-4\Tilde{\alpha})}\right)^\frac{1}{2}}\right)\nu 
    \|\psi\|_{\mathcal{H}^\frac{1-s}{2}}
    +\frac{2\pi}{\sqrt{3}} \left(
    \left(\sqrt{3}\right)^{1-s}
    -\left(\sqrt{3}\right)^\frac{1+s}{2}\right)
    \|\psi\|_{\mathcal{H}^\frac{1-s}{2}}^2.
    \end{equation}
\end{lemma}

\begin{proof}
    Applying \Cref{BlowupIdProp,PropBoundA,PropBoundB}
    We find that
    \begin{multline}
    \frac{\diff H_s}{\diff t}
    \geq 
    -\left(\frac{\left(12\pi^2\right)^\alpha}
    {\left(1-\left(\sqrt{3}
    \right)^{-(1+s-4\Tilde{\alpha})}\right)^\frac{1}{2}}\right)\nu 
    \|\psi\|_{\mathcal{H}^\frac{1-s}{2}} \\
    +\sqrt{2}\pi \left(
    \left(\sqrt{3}\right)^{1-s}
    -\left(\sqrt{3}\right)^\frac{1+s}{2}\right)
    \sum_{n=0}^\infty \beta_n 
    \sqrt{3}^{(1-s)n}\psi_n^2.
    \end{multline}
    Recalling that $\beta_n\geq \frac{\sqrt{2}}{\sqrt{3}}$ for all $n\in\mathbb{Z}^+$, we can conclude that 
    \begin{equation}
    \sum_{n=0}^\infty \beta_n 
    \sqrt{3}^{(1-s)n}\psi_n^2
    \geq 
    \frac{\sqrt{2}}{\sqrt{3}}
    \|\psi\|_{\mathcal{H}^\frac{1-s}{2}}^2
    \end{equation}
    Note that $s<\frac{1}{3}$ implies that 
    $1-s>\frac{1+s}{2}$, and therefore that
    $\left(\sqrt{3}\right)^{1-s}
    -\left(\sqrt{3}\right)^\frac{1+s}{2}>0$.
\end{proof}

\begin{proposition} \label{PropBoundC}
    Suppose $u\in \dot{H}^\frac{\log(3)}{2\log(2)}
    _{\mathcal{M}}$ 
    is a odd, permutation-symmetric, and has hj-parity, and that $s>-1$. Then 
    \begin{equation}
    H_s=\sum_{n=0}^\infty 
    \left(\sqrt{3}\right)^{-sn} \psi_n
    \leq 
    \frac{1}{\left(1-\left(\sqrt{3}\right)^{-(1+s)}\right)^\frac{1}{2}} \|\psi\|_{\mathcal{H}^\frac{1-s}{2}}
    \end{equation}
\end{proposition}

\begin{proof}
    Applying H\"older's inequality we find that 
    \begin{align}
    \sum_{n=0}^\infty 
    \left(\sqrt{3}\right)^{-sn}\psi_n
    &=
    \sum_{n=0}^\infty
    \left(\sqrt{3}\right)^{\left(\frac{1-s}{2}\right)n} \psi_n
    \left(\sqrt{3}\right)^{-\left(\frac{1+s}{2}\right)n}  \\
    &\leq 
    \left(\sum_{n=0}^\infty
    \left(\sqrt{3}\right)^{(1-s)n)} \psi_n^2
    \right)^\frac{1}{2}
    \left(\sum_{n=0}^\infty
    \left(\sqrt{3}\right)^{-(1+s)n}
    \right)^\frac{1}{2}  \\
    &=
    \frac{1}{\left(1-\left(\sqrt{3}\right)^{-(1+s)}\right)^\frac{1}{2}} \|\psi\|_{\mathcal{H}^\frac{1-s}{2}},
    \end{align}
    and this completes the proof.
\end{proof}

\begin{lemma} \label{BlowupLemmaB}
    Suppose $u\in C\left([0,T_{max});
    \dot{H}^\frac{\log(3)}{2\log(2)}
    _{\mathcal{M}}\right)$ 
    is an odd, permutation-symmetric solution of the Fourier-restricted, hypodissipative Navier--Stokes equation with hj-parity. Further suppose that $\alpha<\frac{\log(3)}{6\log(2)}$, and that $0<\Tilde{\alpha}<s<\frac{1}{3}$, and that
    \begin{equation}
    H_s(0)>C_{\alpha,s}\nu,
    \end{equation}
    where 
    \begin{equation}
    C_{\alpha,s }=
    \frac{\sqrt{3}\left(12\pi^2\right)^\alpha}
    {\pi\left(1-\left(\sqrt{3}\right)^{-(1+s-4\Tilde{\alpha})}\right)^\frac{1}{2}
    \left(\left(\sqrt{3}\right)^{1-s}-
    \left(\sqrt{3}\right)^\frac{1+s}{2}\right)
    \left(1-
    \left(\sqrt{3}\right)^{-(1+s)}
    \right)^\frac{1}{2}}.
    \end{equation}
    Then for all $0\leq t<T_{max}$,
    \begin{equation}
    H_s(t)> C_{\alpha,s}\nu,
    \end{equation}
    and furthermore
    \begin{equation}
    \frac{\diff H_s}{\diff t}
    >
    \kappa_s H_s^2,
    \end{equation}
    where 
    \begin{equation}
    \kappa_s
    =
    \frac{\pi}{\sqrt{3}} \left(
    \left(\sqrt{3}\right)^{1-s}
    -\left(\sqrt{3}\right)^\frac{1+s}{2}\right)
    \left(1-\left(\sqrt{3}\right)^{-(1+s)}\right).
    \end{equation}
\end{lemma}

\begin{proof}
    We will begin by showing that $H_s$ is increasing in time.
    Suppose that for some time $0\leq t<T_{max}$, $H_s(t)>C_{\alpha,s}\nu$, 
    then by \Cref{PropBoundC}
    \begin{equation}
    \|\psi(t)\|_{\mathcal{H}^\frac{1-s}{2}}
    >
    \frac{\sqrt{3}\left(12\pi^2\right)^\alpha}
    {\pi\left(1-\left(\sqrt{3}\right)^{-(1+s-4\Tilde{\alpha})}\right)^\frac{1}{2}
    \left(\left(\sqrt{3}\right)^{1-s}-
    \left(\sqrt{3}\right)^\frac{1+s}{2}\right)}
    \nu.
    \end{equation}
    Applying \Cref{BlowupLemmaA}, this implies that 
    \begin{equation}
    \frac{\diff}{\diff t} H_s(t)>0.
    \end{equation}
    By hypothesis $H_s(0)>C_{\alpha,s}\nu$, so we may conclude that for all $0\leq t<T_{max}$,
    \begin{equation}
    H_s(t)>C_{\alpha,s}\nu,
    \end{equation}
    and that 
    \begin{equation}
    \frac{\diff}{\diff t} H_s(t)>0.
    \end{equation}

    Now we require a singular lower bound.
    Again applying \Cref{PropBoundC}, we can see that for all $0<t<T_{max}$,
    \begin{equation}
    \|\psi(t)\|_{\mathcal{H}^\frac{1-s}{2}}
    >
    \frac{\sqrt{3}\left(12\pi^2\right)^\alpha}
    {\pi\left(1-\left(\sqrt{3}\right)^{-(1+s-4\Tilde{\alpha})}\right)^\frac{1}{2}
    \left(\left(\sqrt{3}\right)^{1-s}-
    \left(\sqrt{3}\right)^\frac{1+s}{2}\right)}
    \nu.
    \end{equation}
    This implies that for all $0\leq t<T_{max}$,
    \begin{equation}
    -\left(\frac{\left(12\pi^2\right)^\alpha}
    {\left(1-\left(\sqrt{3}
    \right)^{-(1+s-4\Tilde{\alpha})}\right)^\frac{1}{2}}\right)\nu 
    \|\psi\|_{\mathcal{H}^\frac{1-s}{2}}
    +\frac{\pi}{\sqrt{3}} \left(
    \left(\sqrt{3}\right)^{1-s}
    -\left(\sqrt{3}\right)^\frac{1+s}{2}\right)
    \|\psi\|_{\mathcal{H}^\frac{1-s}{2}}^2
    >0.
    \end{equation}
    Applying \Cref{BlowupLemmaA} and \Cref{PropBoundC},
    we find that for all $0\leq t<T_{max}$,
    \begin{align}
    \frac{\diff H_s}{\diff t}
    &\geq
    \frac{\pi}{\sqrt{3}} \left(
    \left(\sqrt{3}\right)^{1-s}
    -\left(\sqrt{3}\right)^\frac{1+s}{2}\right)
    \|\psi\|_{\mathcal{H}^\frac{1-s}{2}}^2 \\
    &\geq \frac{\pi}{\sqrt{3}} \left(
    \left(\sqrt{3}\right)^{1-s}
    -\left(\sqrt{3}\right)^\frac{1+s}{2}\right)
    \left(1-\left(\sqrt{3}\right)^{-(1+s)}\right)
    H_s^2,
    \end{align}
    and this completes the proof.
\end{proof}

\begin{theorem} \label{RestrictedHypoBlowup}
    Suppose $u\in C\left([0,T_{max});
    \dot{H}^\frac{\log(3)}{2\log(2)}
    _{\mathcal{M}}\right)$ 
    is an odd, permutation-symmetric solution of the Fourier-restricted, hypodissipative Navier--Stokes equation with hj-parity. 
    Further suppose that $\alpha<\frac{\log(3)}{6\log(2)}$, and for some $0<\Tilde{\alpha}<s<\frac{1}{3}$
    \begin{equation}
    H_s(0)=\sum_{n=0}^\infty 
    \left(\sqrt{3}\right)^{-sn}\psi_n(0)
    > 
    C_{\alpha,s} \nu,
    \end{equation}
    where
    \begin{equation}
    C_{\alpha,s }=
    \frac{\sqrt{3}\left(12\pi^2\right)^\alpha}
    {\pi\left(1-\left(\sqrt{3}\right)^{-(1+s-4\Tilde{\alpha})}\right)^\frac{1}{2}
    \left(\left(\sqrt{3}\right)^{1-s}-
    \left(\sqrt{3}\right)^\frac{1+s}{2}\right)
    \left(1-
    \left(\sqrt{3}\right)^{-(1+s)}
    \right)^\frac{1}{2}}.
    \end{equation}
    Then for all $0\leq t<T_{max}$,
    \begin{equation}
    H_s(t)>
    \frac{H_s(0)}
    {1-\kappa_s H_s(0)t},
    \end{equation}
    where
    \begin{equation}
    \kappa_s
    =
    \frac{\pi}{\sqrt{3}} \left(
    \left(\sqrt{3}\right)^{1-s}
    -\left(\sqrt{3}\right)^\frac{1+s}{2}\right)
    \left(1-\left(\sqrt{3}\right)^{-(1+s)}\right).
    \end{equation}
    Note in particular this implies that 
    \begin{equation}
        T_{max}\leq
        \frac{1}
        {\kappa_s H_s(0)}.
    \end{equation}
\end{theorem}

\begin{proof}
    The result follows immediately by integrating the differential inequality in \Cref{BlowupLemmaB}.
\end{proof}

\begin{theorem} \label{RestrictedEulerBlowup}
    Suppose $u\in C\left([0,T_{max});
    \dot{H}^\frac{\log(3)}{2\log(2)}
    _{\mathcal{M}}\right)$ 
    is an odd, permutation-symmetric solution of the Fourier-restricted, hypodissipative Euler equation with hj-parity. If $u^0$ is not identically zero, then this solution blows up in finite-time with
    \begin{equation}
    T_{max}\leq 
    \left(\left(\frac{3^\frac{1}{4}}
    {12\left(3^\frac{1}{4}-1\right)}
    \right)^\frac{1}{2}
    \left\|u^0\right\|_{L^2}
    -H(0) \right)
    \frac{6\sqrt{3}}{\pi\left(
    3^\frac{3}{8}-3^\frac{5}{16}\right)
    \left\|u^0\right\|_{L^2}^2},
    \end{equation}
    where
    \begin{equation}
    H(0)
    =
    \sum_{n=0}^\infty
    \left(\sqrt{3}\right)^{-\frac{n}{4}}
    \psi_n(0).    
    \end{equation}
\end{theorem}

\begin{proof}
    Let $H(t)$ be given by
    \begin{equation}
    H(t)
    =
    \sum_{n=0}^\infty
    \left(\sqrt{3}\right)^{-\frac{n}{4}}
    \psi_n(t).
    \end{equation}
    Applying \Cref{BlowupLemmaA}, with $\nu=0$ and $s=\frac{1}{4}$, we can see that
    \begin{align}
    \frac{\diff H}{\diff t}
    &\geq 
    \frac{2\pi}{\sqrt{3}} \left(
    \left(\sqrt{3}\right)^\frac{3}{4}
    -\left(\sqrt{3}\right)^\frac{5}{8}\right)
    \|\psi\|_{\mathcal{H}^\frac{1-s}{2}}^2 \\
    &\geq 
    \frac{2\pi}{\sqrt{3}} \left(
    \left(\sqrt{3}\right)^\frac{3}{4}
    -\left(\sqrt{3}\right)^\frac{5}{8}\right)
    \|\psi\|_{L^2}^2 \\
    &=
    \frac{\pi}{6\sqrt{3}} \left(
    \left(\sqrt{3}\right)^\frac{3}{4}
    -\left(\sqrt{3}\right)^\frac{5}{8}\right)
    \|u\|_{L^2}^2 \\
    &=
    \frac{\pi}{6\sqrt{3}} \left(
    3^\frac{3}{8}-3^\frac{5}{16}\right)
    \left\|u^0\right\|_{L^2}^2,
    \end{align}
    where we have used the energy equality and the fact that $\|u\|_{L^2}^2=12\|\psi\|_{L^2}^2$.
    Integrating this differential inequality implies that for all $0<t<T_{max}$,
    \begin{equation}
    H(t)
    \geq 
    H(0)+\frac{\pi}{6\sqrt{3}} \left(
    3^\frac{3}{8}-3^\frac{5}{16}\right)
    \left\|u^0\right\|_{L^2}^2 t.
    \end{equation}

    Next we observe that applying H\"older's inequality, we find that for all $0<t<T_{max}$
    \begin{align}
    H(t)
    &= 
    \sum_{n=0}^\infty
    \left(\sqrt{3}\right)^{-\frac{n}{4}}
    \psi_n(t) \\
    & \leq 
    \left(\sum_{n=0}^\infty
    3^{-\frac{n}{4}}\right)^\frac{1}{2}
    \|\psi(t)\|_{L^2} \\
    &=
    \left(\frac{1}{1-\frac{1}{3^\frac{1}{4}}}\right)^\frac{1}{2}
    \frac{1}{\sqrt{12}}
    \|u(\cdot,t)\|_{L^2} \\
    &=
    \left( \frac{3^\frac{1}{4}}
    {3^\frac{1}{4}-1}\right)^\frac{1}{2}
    \frac{1}{\sqrt{12}}
    \left\|u^0\right\|_{L^2}.
    \end{align}
    This implies that for all $0<t<T_{max}$,
    \begin{equation}
    H(0)+\frac{\pi}{6\sqrt{3}} \left(
    3^\frac{3}{8}-3^\frac{5}{16}\right)
    \left\|u^0\right\|_{L^2}^2 t
    \leq
    \left( \frac{3^\frac{1}{4}}
    {12\left(3^\frac{1}{4}-1\right)}
    \right)^\frac{1}{2}
    \left\|u^0\right\|_{L^2},
    \end{equation}
    and therefore
    \begin{equation}
    T_{max}\leq 
    \left(\left(\frac{3^\frac{1}{4}}
    {12\left(3^\frac{1}{4}-1\right)}
    \right)^\frac{1}{2}
    \left\|u^0\right\|_{L^2}
    -H(0) \right)
    \frac{6\sqrt{3}}{\pi\left(
    3^\frac{3}{8}-3^\frac{5}{16}\right)
    \left\|u^0\right\|_{L^2}^2}.
    \end{equation}
\end{proof}

\begin{remark}
    We should note that we have proven \Cref{RestrictedEulerBlowup,RestrictedHypoBlowup}---the blowup results for both the Fourier-restricted Euler and hypodissipative Navier--Stokes equations respectively---for odd, permutation symmetric, hj-parity solutions, while in the introduction the blowup result was stated for odd, permutation symmetric, $\sigma$-mirror symmetric solutions. These two sets of conditions are in fact equivalent, as we will prove in \Cref{MirrorHJequivThm}.
\end{remark}

\subsection{Finite-time blowup for Dyadic Navier--Stokes}

In this section, we will prove \Cref{DyadicNSblowupIntro}, giving a finite time blowup result for the dyadic Navier--Stokes equation when $\alpha<\frac{1}{3}$ that is somewhat more general than the result in \cite{Cheskidov}, albeit still applying in the same range of dissipation.
Recall that the dyadic Navier--Stokes equation is given by
\begin{equation} \label{DyadicNSeqn}
    \partial_t u_n
    =
    -\nu \lambda^{2\alpha n}u_n
    +\lambda^n u_{n-1}^2
    -\lambda^{n+1}u_n u_{n+1},
\end{equation}
for all $n\in\mathbb{Z}^+$ with $\lambda>1$, and where $u_{-1}=0$ by convention. The proof of finite-time blowup will follow the structure of the proof of \Cref{RestrictedHypoBlowup}, and so the details will only be sketched. Note that throughout this section we will take
\begin{equation}
    \|u\|_{\mathcal{H}^s}^2
    =
    \sum_{n=0}^\infty 
    \lambda^{2sn} u_n^2.
\end{equation}
Previously, we only considered the case $\lambda=\sqrt{3}$, as this is the $\lambda$ that corresponds the the Fourier-restricted hypodissipative Navier--Stokes equation.

\begin{proposition} \label{DyadicPropA}
    Suppose $u\in C\left([0,T_{max});\mathcal{H}^1\right)$
    is a solution of the dyadic Navier--Stokes equation and that $0<\alpha<s<\frac{1}{3}$,
    and let
    \begin{equation}
    H_s(t)=\sum_{n=0}^\infty \lambda^{-sn} u_n.
    \end{equation}
    Then for all $0\leq t<T_{max}$,
    \begin{equation}
    \frac{\diff H_s}{\diff t}
    \geq 
    -\frac{\nu}{\left(1-\lambda^{
    -(1+s-4\alpha)}\right)^\frac{1}{2}}
    \|u\|_{\mathcal{H}^\frac{1-s}{2}}
    +\left(\lambda^{1-s}
    -\lambda^{\frac{1+s}{2}}\right)
    \|u\|_{\mathcal{H}^\frac{1-s}{2}}^2
    \end{equation}
\end{proposition}

\begin{proof}
    Differentiating $H_s$ term-by-term using \eqref{DyadicNSeqn}, we find that
    \begin{align}
    \frac{\diff H_s}{\diff t}
    &=
    -\nu \sum_{n=0}^\infty
    \lambda^{(2\alpha-s)n}u_n
    +\sum_{n=1}^\infty \lambda^{(1-s)n} u_{n-1}^2
    -\sum_{n=0}^\infty \lambda^{-sn+n+1}
    u_n u_{n+1} \\
    &=
    -\nu \sum_{n=0}^\infty
    \lambda^{(2\alpha-s)n}u_n
    +\lambda^{1-s}\sum_{n=0}^\infty \lambda^{(1-s)n} u_{n-1}^2
    -\lambda^{\frac{1+s}{2}}\sum_{n=0}^\infty \lambda^{-sn+n+\frac{1}{2}-\frac{s}{2}}
    u_n u_{n+1} \\
    &=
    -\nu \sum_{n=0}^\infty
    \lambda^{(2\alpha-s)n}u_n
    +\lambda^{1-s}\|u\|_{\mathcal{H}^\frac{1-s}{2}}^2
    -\lambda^{\frac{1+s}{2}}\sum_{n=0}^\infty \lambda^{-sn+n+\frac{1}{2}-\frac{s}{2}}
    u_n u_{n+1}.
    \end{align}
    Next obeserve that
    \begin{align}
    \sum_{n=0}^\infty \lambda^{-sn+n+\frac{1}{2}-\frac{s}{2}}
    u_n u_{n+1}
    &\leq 
    \left(\sum_{n=0}^\infty 
    \lambda^{(1-s)n} u_n^2\right)^\frac{1}{2} 
    \left(\sum_{n=0}^\infty 
    \lambda^{(1-s)(n+1)}
    u_{n+1}^2\right)^\frac{1}{2} \\
    &\leq 
    \left(\sum_{n=0}^\infty 
    \lambda^{(1-s)n} u_n^2\right)^\frac{1}{2} 
    \left(\sum_{n=1}^\infty 
    \lambda^{(1-s)n}
    u_n^2\right)^\frac{1}{2} \\
    &\leq 
    \|u\|_{\mathcal{H}^\frac{1-s}{2}}^2.
    \end{align}
    Finally observe that
    \begin{align}
    \sum_{n=0}^\infty
    \lambda^{(2\alpha-s)n}u_n
    &\leq 
    \sum_{n=0}^\infty
    \lambda^{\left(2\alpha-\frac{s}{2}-\frac{1}{2}\right)}
    \lambda^{\left(\frac{1-s}{2}\right)n}
    u_n \\
    &\leq 
    \left(\sum_{n=0}^\infty \lambda^{-(1+s-4\alpha)n}\right)^\frac{1}{2}
    \left(\sum_{n=0}^\infty
    \lambda^{(1-s)n}u_n^2
    \right)^\frac{1}{2} \\
    &=
    \frac{1}{\left(1-\lambda^{
    -(1+s-4\alpha)}\right)^\frac{1}{2}}
    \|u\|_{\mathcal{H}^\frac{1-s}{2}},
    \end{align}
    and this completes the proof. Note that 
    $0<\alpha<s<\frac{1}{3}$ implies that
    $1-s>\frac{1+s}{2}$ and $1+s-4\alpha>0$.
\end{proof}

\begin{proposition} \label{DyadicPropB}
    Suppose $u\in \mathcal{H}^1$ and $0<s<\frac{1}{3}$.
    Then 
    \begin{equation}
    \sum_{n=0}^\infty \lambda^{-sn}u_n
    \leq 
    \frac{1}{\left(1-\lambda^{-(1+s)}
    \right)^\frac{1}{2}}
    \|u\|_{\mathcal{H}^\frac{1-s}{2}}.
    \end{equation}
\end{proposition}

\begin{proof}
    Compute that
    \begin{align}
    \sum_{n=0}^\infty
    \lambda^{-sn}u_n
    &=
    \sum_{n=0}^\infty
    \lambda^{\left(\frac{1-s}{2}\right)n}u_n
    \lambda^{-\left(\frac{1+s}{2}\right)n} \\
    &\leq 
    \left(\sum_{n=0}^\infty
    \lambda^{(1-s)n}u_n^2
    \right)^\frac{1}{2}
    \left(\sum_{n=0}^\infty
    \lambda^{-(1+s)n}
    \right)^\frac{1}{2} \\
    &=
    \frac{1}{\left(1-\lambda^{-(1+s)}
    \right)^\frac{1}{2}}
    \|u\|_{\mathcal{H}^\frac{1-s}{2}}.
    \end{align}
\end{proof}

Now we can prove \Cref{DyadicNSblowupIntro}, which is restated for the reader's convenience.

\begin{theorem} \label{DyadicNSblowup}
    Suppose $u\in C\left([0,T_{max};\mathcal{H}^1\right)$
    is a solution of the dyadic Navier--Stokes equation, and that for some $0<\alpha<s<\frac{1}{3}$,
    \begin{equation}
    H_s(0)=\sum_{n=0}^\infty
    \lambda^{-sn}u_n(0)
    >C_{\alpha,s}\nu,
    \end{equation}
    where
    \begin{equation}
    C_{\alpha,s}
    =
    \frac{2}{\left(1-\lambda^{-(1+s)}
    \right)^\frac{1}{2}
    \left(\lambda^{1-s}
    -\lambda^{\frac{1+s}{2}}\right)
    \left(1-\lambda^{
    -(1+s-4\alpha)}\right)^\frac{1}{2}}.
    \end{equation}
    Then this solution blows up in finite-time
    with
    \begin{equation}
    T_{max}<\frac{1}{\kappa_s H_s(0)},
    \end{equation}
    where
    \begin{equation}
    \kappa_s
    =
    \frac{1}{2}\left(\lambda^{1-s}
    -\lambda^{\frac{1+s}{2}}\right)
    \left(1-\lambda^{-(1+s)}
    \right).
    \end{equation}
\end{theorem}

\begin{proof}
    We begin by showing that $H_s(t)>C_{\alpha,s}\nu$ implies that
    $\frac{\diff}{\diff t}H_s(t)>0$.
    Note that if $H_s(t)>C_{\alpha,s}\nu$, then by \Cref{DyadicPropB} we can see that
    \begin{equation}
    \|u(t)\|_{\mathcal{H}^\frac{1-s}{2}}
    > 
    \frac{2\nu}
    {\left(\lambda^{1-s}
    -\lambda^{\frac{1+s}{2}}\right)
    \left(1-\lambda^{
    -(1+s-4\alpha)}\right)^\frac{1}{2}},
    \end{equation}
    and by \Cref{DyadicPropA}, we can see that
    \begin{align}
    \frac{\diff H_s}{\diff t}
    \geq
    -\frac{\nu}{\left(1-\lambda^{
    -(1+s-4\alpha)}\right)^\frac{1}{2}}
    \|u\|_{\mathcal{H}^\frac{1-s}{2}}
    +\left(\lambda^{1-s}
    -\lambda^{\frac{1+s}{2}}\right)
    \|u\|_{\mathcal{H}^\frac{1-s}{2}}^2
    >0.
    \end{align}
    Because $H_s(0)>C_{\alpha,s}\nu$ by hypothesis, this implies that for all $0\leq t<T_{max}$,
    \begin{equation}
    H_s(t)>C_{\alpha,s}\nu.
    \end{equation}

    This in turn implies that
    for all $0\leq t<T_{max}$,
    \begin{equation}
    \|u(t)\|_{\mathcal{H}^\frac{1-s}{2}}
    > 
    \frac{2\nu}
    {\left(\lambda^{1-s}
    -\lambda^{\frac{1+s}{2}}\right)
    \left(1-\lambda^{
    -(1+s-4\alpha)}\right)^\frac{1}{2}}.
    \end{equation}
    Again applying \Cref{DyadicPropA,DyadicPropB}, we find that
    for all $0\leq t<T_{max}$,
    \begin{align}
    \frac{\diff H_s}{\diff t}
    &\geq 
    -\frac{\nu}{\left(1-\lambda^{
    -(1+s-4\alpha)}\right)^\frac{1}{2}}
    \|u(t)\|_{\mathcal{H}^\frac{1-s}{2}}
    +\left(\lambda^{1-s}
    -\lambda^{\frac{1+s}{2}}\right)
    \|u(t)\|_{\mathcal{H}^\frac{1-s}{2}}^2 \\
    &\geq 
    \frac{1}{2}\left(\lambda^{1-s}
    -\lambda^{\frac{1+s}{2}}\right)
    \|u(t)\|_{\mathcal{H}^\frac{1-s}{2}}^2 \\
    &\geq 
    \frac{1}{2}\left(\lambda^{1-s}
    -\lambda^{\frac{1+s}{2}}\right)
    \left(1-\lambda^{-(1+s)}
    \right)
    H_s(t)^2 \\
    &=
    \kappa_s H_s(t)^2.
    \end{align}
    Integrating this differential inequality, we find that for all $0\leq t<T_{max}$,
    \begin{equation}
    H_s(t)
    \geq 
    \frac{H_s(0)}
    {1-\kappa_s H_s(0)t}.
    \end{equation}
    This implies that
    \begin{equation}
    T_{max}<\frac{1}{\kappa_s H_s(0)},
    \end{equation}
    and this completes the proof.
\end{proof}

\begin{remark}
    The finite-time blowup argument given by Cheskidov in \cite{Cheskidov} is also based on a singular lower bound on a Lyapunov functional, but has the additional restriction that $u_n(0)\geq 0$ for all $n\in\mathbb{Z}^+$, and so the class of data covered by \Cref{DyadicNSblowup} is slightly more broad. Note that we still have a positivity condition, in that if for an initial data $u^0$, we have $H_s(0)>0$ for some $0<\alpha<s<\frac{1}{3}$, then there is finite-time blowup for the solution with sufficiently small viscosity $\nu<\frac{H_s(0)}{C_{\alpha,s}}$. However, the positivity condition is now only that a weighted sum of the $u_n$ must be positive; we do not require that each of the $u_n$ be nonnegative.
\end{remark}

\appendix

\section{Mode interaction computations} \label{AppendixModeInteraction}

In this appendix, we will go through the computations from \Cref{ComputeTheFuckingModesProp}, breaking the nine different mode interactions into nine different propositions for ease of reading.

\begin{proposition} \label{Tedious1}
Fix $m\in\mathbb{Z}^+$, and let $u$ and $\Tilde{u}$ be given by
\begin{align}
    u
    &=
    iv^{k^m}e^{2\pi i k^m\cdot x} \\
    \Tilde{u}
    &=
    iP_{12}(v^{k^m})
    e^{2\pi i P_{12}(k^m)\cdot x}.
    \end{align}
Then the bilinear term in the restricted model is given by
\begin{equation}
    \mathbb{P}_{M}((\Tilde{u}\cdot\nabla) u
    +(u\cdot\nabla)\Tilde{u})
    =
    -a_m i v^{h^m}e^{2\pi i h^m\cdot x},
\end{equation}
where 
\begin{equation}
    a_m=
    \frac{\sqrt{6}\pi}
    {\left(1+\frac{1}{2}
    \left( \frac{3}{4}
    \right)^{2m}\right)^\frac{1}{2}}
    3^m
\end{equation}
\end{proposition}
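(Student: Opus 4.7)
The plan is to compute $(\tilde u\cdot\nabla)u + (u\cdot\nabla)\tilde u$ directly and then read off its $v^{h^m}$-component. Since both $u$ and $\tilde u$ are single Fourier modes, the gradient pulls down $2\pi i$ times the wavevector, and the product of the exponentials collapses to $e^{2\pi i h^m\cdot x}$ by the identity $h^m = k^m + P_{12}(k^m)$ from \Cref{DyadicSumDiff}. A direct computation gives
\begin{equation}
(\tilde u\cdot\nabla)u + (u\cdot\nabla)\tilde u = -2\pi i\bigl[(P_{12}(v^{k^m})\cdot k^m)\, v^{k^m} + (v^{k^m}\cdot P_{12}(k^m))\, P_{12}(v^{k^m})\bigr]\, e^{2\pi i h^m\cdot x},
\end{equation}
and by \Cref{DotLemma} together with $P_{12}^{-1}=P_{12}$ the two scalar coefficients are equal, so the expression collapses to
\begin{equation}
-2\pi i\bigl(P_{12}(v^{k^m})\cdot k^m\bigr)\bigl(v^{k^m} + P_{12}(v^{k^m})\bigr)\, e^{2\pi i h^m\cdot x}.
\end{equation}

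Next I apply the projection $\mathbb{P}_{\mathcal{M}}$. Since $h^m\in\mathcal{M}_1^+\subset\mathcal{M}$, the projection at frequency $h^m$ extracts the component along $v^{h^m}$, so the proposition reduces to the scalar identity
\begin{equation}
2\pi\bigl(P_{12}(v^{k^m})\cdot k^m\bigr)\bigl(v^{h^m}\cdot(v^{k^m}+P_{12}(v^{k^m}))\bigr) = a_m.
\end{equation}

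I then evaluate the two inner products using the explicit formulas from \Cref{BasisComputations}. For the first, since $k^m\cdot v^{k^m}=0$, I rewrite $P_{12}(v^{k^m})\cdot k^m = v^{k^m}\cdot(P_{12}(k^m)-k^m) = 3^m\, v^{k^m}\cdot(-1,1,0)^T$, which after substitution yields $3\cdot 2^{2m}\cdot 3^{2m}/Z_m$, where $Z_m$ denotes the normalization of $v^{k^m}$. For the second, both vectors lie in the two-dimensional subspace $\mathrm{span}\{\sigma,(1,1,-2)^T\}$; since $\sigma\cdot(1,1,-2)^T = 0$, all cross terms vanish, and using $|\sigma|^2=3$ and $|(1,1,-2)^T|^2=6$, the inner product telescopes neatly to $6Z_m^2/(Z_mW_m)=6Z_m/W_m$, where $W_m$ denotes the normalization of $v^{h^m}$. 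Multiplying the two factors yields $36\pi\cdot 2^{2m}\cdot 3^{2m}/W_m$.

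The final step is to rewrite $W_m$: directly from \Cref{BasisComputations}, $W_m^2 = 4\cdot 3^{4m+3} + 2^{4m+3}\cdot 3^{2m+3} = 216\cdot 2^{4m}\cdot 3^{2m}(1+\tfrac{1}{2}(3/4)^{2m})$, hence $W_m = 6\sqrt{6}\cdot 2^{2m}\cdot 3^m(1+\tfrac{1}{2}(3/4)^{2m})^{1/2}$, and the quotient collapses exactly to $a_m$. The main obstacle is the algebraic bookkeeping in the normalization factors; conceptually, the partial depletion factor $(1+\tfrac{1}{2}(3/4)^{2m})^{-1/2}$ in $a_m$ arises entirely from the ratio $Z_m/W_m$, reflecting the anisotropic clustering of the frequency basis around $\mathrm{span}(\sigma)$.
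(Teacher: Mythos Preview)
Your proof is correct and follows essentially the same direct-computation approach as the paper: write out the two advection terms mode by mode, note that the scalar coefficients agree, project onto $v^{h^m}$, and simplify. Your algebra is somewhat more streamlined than the paper's---you exploit $k^m\cdot v^{k^m}=0$ to reduce the first inner product to $3^m\,v^{k^m}\cdot(-1,1,0)^T$, and you observe that the numerator of the second inner product is exactly $6Z_m^2$ so that the $Z_m$ factors cancel cleanly---whereas the paper simply expands everything by brute force; but the underlying strategy is identical.
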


\begin{proof}
    First observe that
    \begin{align}
    (\Tilde{u}\cdot\nabla)u
    &=
    -2\pi i k^m\cdot P_{12}(v^{k^m}) 
    v^{k^m}
    e^{2\pi i h^m\cdot x} \\
    (u\cdot\nabla)\Tilde{u}
    &=
    -2\pi i P_{12}(k^m)\cdot v^{k^m} 
    P_{12}(v^{k^m})
    e^{2\pi i h^m\cdot x},
    \end{align}
recalling that $h^m=k^m+P_{12}(k^m)$.
Next compute that
\begin{align}
    k^m\cdot P_{12}(v^{k^m})
    &=
    P_{12}(k^m)\cdot v^{k^m} \\
    &=
    \left(2^{2m}\sigma+3^m\left(
    \begin{array}{c}
         1  \\ 0 \\ -1
    \end{array}
    \right)\right)
    \cdot
    \frac{2*3^{2m}\sigma- 2^{2m}*3^{m+1}\left(
    \begin{array}{c}
         0  \\ 1 \\ -1
    \end{array}\right)}
    {\left(4*3^{4m+1}+2^{4m+1}*3^{2m+2}
    \right)^\frac{1}{2}} \\
    &=
    \frac{2^{2m+1}*3^{2m+1}-2^{2m}*3^{2m+1}}
    {\left(4*3^{4m+1}+2^{4m+1}*3^{2m+2}
    \right)^\frac{1}{2}} \\
    &=
     \frac{2^{2m}*3^{2m+1}}
    {\left(4*3^{4m+1}+2^{4m+1}*3^{2m+2}
    \right)^\frac{1}{2}},
\end{align}
and that
\begin{equation}
    v^{k^m}+P_{12}(v^{k^m})
    =
    \frac{4*3^{2m}\sigma- 2^{2m}*3^{m+1}\left(
    \begin{array}{c}
         1  \\ 1 \\ -2
    \end{array}\right)}
    {\left(4*3^{4m+1}+2^{4m+1}*3^{2m+2}
    \right)^\frac{1}{2}}.
\end{equation}
Therefore we can conclude that
\begin{equation*}
    (\Tilde{u}\cdot\nabla)u
    +(u\cdot\nabla)\Tilde{u}
    =
    \frac{-2\pi i*2^{2m}*3^{2m+1}}
    {4*3^{4m+1}+2^{4m+1}*3^{2m+2}}
    \left(
    4*3^{2m}\sigma- 2^{2m}*3^{m+1}\left(
    \begin{array}{c}
         1  \\ 1 \\ -2
    \end{array}\right)
    \right)
    e^{2\pi i h^m\cdot x}.
\end{equation*}
Note that because we only have a single Fourier mode, we can compute the projection onto the constraint space by
\begin{equation}
    \mathbb{P}_{\mathcal{M}}
    ((\Tilde{u}\cdot\nabla)u
    +(u\cdot\nabla)\Tilde{u})
    =
    v^{h^m}\cdot((\Tilde{u}\cdot\nabla)u
    +(u\cdot\nabla)\Tilde{u}) v^{h^m}.
\end{equation}
All that is left is to compute that
\begin{align}
    v^{h^m}\cdot
    \left(
    4*3^{2m}\sigma- 2^{2m}*3^{m+1}\left(
    \begin{array}{c}
         1 \\ 1 \\ -2
    \end{array}\right)
    \right)
    &=
    \frac{2*3^{2m+1}\sigma-
    2^{2m+1}3^{m+1}\left(
    \begin{array}{c}
         1 \\ 1 \\ -2
    \end{array}\right)}
    {\left(4*3^{4m+3}+2^{4m+3}*3^{2m+3}
    \right)^\frac{1}{2}} \\
    \notag &\quad\quad \cdot \left(
    4*3^{2m}\sigma- 2^{2m}*3^{m+1}\left(
    \begin{array}{c}
         1  \\ 1 \\ -2
    \end{array}\right)\right)\\
    &=
    \frac{8*3^{4m+2}+2^{4m+2}*3^{2m+3}}
    {\left(4*3^{4m+3}+2^{4m+3}*3^{2m+3}
    \right)^\frac{1}{2}},
\end{align}
and we may conclude that
\begin{equation}
    \mathbb{P}_{\mathcal{M}}
    ((\Tilde{u}\cdot\nabla)u
    +(u\cdot\nabla)\Tilde{u})
    =
    -i\frac{2\pi *2^{2m}*3^{2m+1}
    \left(8*3^{4m+2}+2^{4m+2}*3^{2m+3}\right)
    v^{h^m} e^{2\pi i h^m\cdot x}}
    {\left(4*3^{4m+1}+2^{4m+1}*3^{2m+2}\right)
    \left(4*3^{4m+3}+2^{4m+3}*3^{2m+3}
    \right)^\frac{1}{2}}.
\end{equation}
We have now shown the result with
\begin{equation}
    a_m=
    \frac{2\pi *2^{2m}*3^{2m+1}
    \left(8*3^{4m+2}+2^{4m+2}*3^{2m+3}\right)}
    {\left(4*3^{4m+1}+2^{4m+1}*3^{2m+2}\right)
    \left(4*3^{4m+3}+2^{4m+3}*3^{2m+3}
    \right)^\frac{1}{2}}.
\end{equation}
To complete the proof, observe that
\begin{equation}
    \frac{8*3^{4m+2}+2^{4m+2}*3^{2m+3}}
    {4*3^{4m+1}+2^{4m+1}*3^{2m+2}}
    =6,
\end{equation}
and so
\begin{align}
    a_m
    &=
    \frac{12\pi*2^{2m}*3^{2m+1}}
    {\left(4*3^{4m+3}+2^{4m+3}*3^{2m+3}
    \right)^\frac{1}{2}} \\
    &=
    \frac{2^{2m+2}*3^{2m+2}\pi}
    {\left(1+2^{-4m-1}*3^{2m}
    \right)^\frac{1}{2}
    2^{2m+3/2}*3^{m+3/2}} \\
    &=
    \frac{\sqrt{6}\pi}
    {\left(1+\frac{1}{2}
    \left( \frac{3}{4}
    \right)^{2m}\right)^\frac{1}{2}}
    3^m.
\end{align}
\end{proof}

\begin{proposition} \label{Tedious2}
Fix $m\in\mathbb{Z}^+$, and let $u$ and $\Tilde{u}$ be given by
\begin{align}
    u&=
    iv^{k^m}e^{2\pi i k^m\cdot x} \\
    \Tilde{u}
    &=
    iP_{23}(v^{k^m})
    e^{2\pi i P_{23}(k^m)\cdot x}.
    \end{align}
Then the bilinear term in the restricted model is given by
\begin{equation}
    \mathbb{P}_{\mathcal{M}}((\Tilde{u}\cdot\nabla) u
    +(u\cdot\nabla)\Tilde{u})
    =
    -a_m i v^{j^m}e^{2\pi i j^m\cdot x},
\end{equation}
where 
\begin{equation}
    a_m=
    \frac{\sqrt{6}\pi}
    {\left(1+\frac{1}{2}
    \left( \frac{3}{4}
    \right)^{2m}\right)^\frac{1}{2}}
    3^m
\end{equation}
\end{proposition}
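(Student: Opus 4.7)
The plan is to mirror the proof of \Cref{Tedious1} line by line, with $P_{12}$ replaced by $P_{23}$ throughout, exploiting the fact that $(k^m, P_{23}(k^m), j^m)$ plays the same structural role as $(k^m, P_{12}(k^m), h^m)$. First I would use the identity $j^m = k^m + P_{23}(k^m)$ from \Cref{DyadicSumDiff} to rewrite
\begin{align*}
(\tilde{u}\cdot\nabla)u
&= -2\pi i \left(k^m\cdot P_{23}(v^{k^m})\right) v^{k^m} e^{2\pi i j^m\cdot x}, \\
(u\cdot\nabla)\tilde{u}
&= -2\pi i \left(P_{23}(k^m)\cdot v^{k^m}\right) P_{23}(v^{k^m}) e^{2\pi i j^m\cdot x}.
\end{align*}

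Next I would use \Cref{DotLemma} together with $P_{23} = P_{23}^{-1}$ to note the scalar coefficients agree: $k^m\cdot P_{23}(v^{k^m}) = P_{23}(k^m)\cdot v^{k^m}$. Computing explicitly from \Cref{BasisComputations},
\begin{equation*}
P_{23}(v^{k^m}) = \frac{2\cdot 3^{2m}\sigma - 2^{2m}\cdot 3^{m+1}(1,-1,0)^T}{\left(4\cdot 3^{4m+1}+2^{4m+1}\cdot 3^{2m+2}\right)^{1/2}},
\end{equation*}
and pairing against $k^m = 2^{2m}\sigma + 3^m(1,0,-1)^T$ gives
\begin{equation*}
k^m\cdot P_{23}(v^{k^m}) = \frac{2^{2m}\cdot 3^{2m+1}}{\left(4\cdot 3^{4m+1}+2^{4m+1}\cdot 3^{2m+2}\right)^{1/2}},
\end{equation*}
which is identical to the scalar appearing in \Cref{Tedious1}. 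Adding the two vector contributions, the coefficient of $e^{2\pi i j^m\cdot x}$ becomes a multiple of
\begin{equation*}
v^{k^m} + P_{23}(v^{k^m}) = \frac{4\cdot 3^{2m}\sigma - 2^{2m}\cdot 3^{m+1}(2,-1,-1)^T}{\left(4\cdot 3^{4m+1}+2^{4m+1}\cdot 3^{2m+2}\right)^{1/2}}.
\end{equation*}

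Finally, since only the single frequency $j^m$ is excited, the projection reduces to $\mathbb{P}_\mathcal{M}(w) = (v^{j^m}\cdot w) v^{j^m}$. Taking the inner product of $v^{j^m}$ (again via \Cref{BasisComputations}) with the vector $4\cdot 3^{2m}\sigma - 2^{2m}\cdot 3^{m+1}(2,-1,-1)^T$ yields
\begin{equation*}
\frac{8\cdot 3^{4m+2}+2^{4m+2}\cdot 3^{2m+3}}{\left(4\cdot 3^{4m+3}+2^{4m+3}\cdot 3^{2m+3}\right)^{1/2}},
\end{equation*}
exactly as in the $h^m$ case, because $|h^m|=|j^m|$ and the dot products $\sigma\cdot(1,1,-2)^T = \sigma\cdot(2,-1,-1)^T = 0$ together with $|(1,1,-2)^T|^2 = |(2,-1,-1)^T|^2 = 6$ produce identical scalars. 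Assembling these pieces and simplifying precisely as in the final lines of \Cref{Tedious1} yields the same constant $a_m = \frac{\sqrt{6}\pi}{\left(1+\frac{1}{2}\left(\frac{3}{4}\right)^{2m}\right)^{1/2}} 3^m$.

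There is no real obstacle here: the proposition is forced by the $P_{12} \leftrightarrow P_{23}$ symmetry of the construction, and each numerical quantity that enters is invariant under this swap (the magnitudes $|k^m|$, $|h^m|=|j^m|$, the sizes of the orthogonal complements, and the single-mode projection scalar). The only real ``work'' is the bookkeeping to verify that the arithmetic identities used in the proof of \Cref{Tedious1}, namely $\frac{8\cdot 3^{4m+2}+2^{4m+2}\cdot 3^{2m+3}}{4\cdot 3^{4m+1}+2^{4m+1}\cdot 3^{2m+2}} = 6$ and the final simplification to the stated closed form for $a_m$, carry over unchanged. Thus the proof is essentially a transcription of \Cref{Tedious1} with the indices $h^m \mapsto j^m$ and $P_{12}\mapsto P_{23}$ substituted throughout.
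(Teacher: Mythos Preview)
Your proposal is correct and follows essentially the same approach as the paper: both carry out the identical sequence of computations as in \Cref{Tedious1}, replacing $P_{12}$ by $P_{23}$ and $h^m$ by $j^m$ throughout, and arrive at the same intermediate scalars and final simplification for $a_m$. Your additional remark that the numerical identities are forced by the symmetry $|h^m|=|j^m|$ and the matching norms of $(1,1,-2)^T$ and $(2,-1,-1)^T$ is a nice explanation of \emph{why} the computation transfers unchanged, though the paper simply redoes the arithmetic directly.
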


\begin{proof}
    First observe that
    \begin{align}
    (\Tilde{u}\cdot\nabla)u
    &=
    -2\pi i k^m\cdot P_{23}(v^{k^m}) 
    v^{k^m}
    e^{2\pi i j^m\cdot x} \\
    (u\cdot\nabla)\Tilde{u}
    &=
    -2\pi i P_{23}(k^m)\cdot v^{k^m} 
    P_{23}(v^{k^m})
    e^{2\pi i j^m\cdot x},
    \end{align}
recalling that $j^m=k^m+P_{23}(k^m)$.
Next compute that
\begin{align}
    k^m\cdot P_{23}(v^{k^m})
    &=
    P_{23}(k^m)\cdot v^{k^m} \\
    &=
    \left(2^{2m}\sigma+3^m\left(
    \begin{array}{c}
         1  \\ 0 \\ -1
    \end{array}
    \right)\right)
    \cdot
    \frac{2*3^{2m}\sigma- 2^{2m}*3^{m+1}\left(
    \begin{array}{c}
         1  \\ -1 \\ 0
    \end{array}\right)}
    {\left(4*3^{4m+1}+2^{4m+1}*3^{2m+2}
    \right)^\frac{1}{2}} \\
    &=
    \frac{2^{2m+1}*3^{2m+1}-2^{2m}*3^{2m+1}}
    {\left(4*3^{4m+1}+2^{4m+1}*3^{2m+2}
    \right)^\frac{1}{2}} \\
    &=
     \frac{2^{2m}*3^{2m+1}}
    {\left(4*3^{4m+1}+2^{4m+1}*3^{2m+2}
    \right)^\frac{1}{2}},
\end{align}
and that
\begin{equation}
    v^{k^m}+P_{23}(v^{k^m})
    =
    \frac{4*3^{2m}\sigma- 2^{2m}*3^{m+1}\left(
    \begin{array}{c}
         2  \\ -1 \\ -1
    \end{array}\right)}
    {\left(4*3^{4m+1}+2^{4m+1}*3^{2m+2}
    \right)^\frac{1}{2}}.
\end{equation}
Therefore we can conclude that
\begin{equation*}
    (\Tilde{u}\cdot\nabla)u
    +(u\cdot\nabla)\Tilde{u}
    =
    \frac{-2\pi i*2^{2m}*3^{2m+1}}
    {4*3^{4m+1}+2^{4m+1}*3^{2m+2}}
    \left(
    4*3^{2m}\sigma- 2^{2m}*3^{m+1}\left(
    \begin{array}{c}
         2  \\ -1 \\ -1
    \end{array}\right)
    \right)
    e^{2\pi i j^m\cdot x}.
\end{equation*}
Note that because we only have a single Fourier mode, we can compute the projection onto the constraint space by
\begin{equation}
    \mathbb{P}_{\mathcal{M}}
    ((\Tilde{u}\cdot\nabla)u
    +(u\cdot\nabla)\Tilde{u})
    =
    v^{j^m}\cdot((\Tilde{u}\cdot\nabla)u
    +(u\cdot\nabla)\Tilde{u}) v^{j^m}.
\end{equation}
All that is left is to compute that
\begin{align}
    v^{j^m}\cdot
    \left(
    4*3^{2m}\sigma- 2^{2m}*3^{m+1}\left(
    \begin{array}{c}
         2  \\ -1 \\ -1
    \end{array}\right)
    \right)
    &=
    \frac{2*3^{2m+1}\sigma-
    2^{2m+1}3^{m+1}\left(
    \begin{array}{c}
         2 \\ -1 \\ -1 
    \end{array}\right)}
    {\left(4*3^{4m+3}+2^{4m+3}*3^{2m+3}
    \right)^\frac{1}{2}} \\
    \notag &\quad\quad \cdot \left(
    4*3^{2m}\sigma- 2^{2m}*3^{m+1}\left(
    \begin{array}{c}
         2  \\ -1 \\ -1
    \end{array}\right)\right)\\
    &=
    \frac{8*3^{4m+2}+2^{4m+2}*3^{2m+3}}
    {\left(4*3^{4m+3}+2^{4m+3}*3^{2m+3}
    \right)^\frac{1}{2}},
\end{align}
and we may conclude that
\begin{equation}
    \mathbb{P}_{\mathcal{M}}
    ((\Tilde{u}\cdot\nabla)u
    +(u\cdot\nabla)\Tilde{u})
    =
    -i\frac{2\pi *2^{2m}*3^{2m+1}
    \left(8*3^{4m+2}+2^{4m+2}*3^{2m+3}\right)
    v^{j^m} e^{2\pi i j^m\cdot x}}
    {\left(4*3^{4m+1}+2^{4m+1}*3^{2m+2}\right)
    \left(4*3^{4m+3}+2^{4m+3}*3^{2m+3}
    \right)^\frac{1}{2}}.
\end{equation}
We have now shown the result with
\begin{equation}
    a_m=
    \frac{2\pi *2^{2m}*3^{2m+1}
    \left(8*3^{4m+2}+2^{4m+2}*3^{2m+3}\right)}
    {\left(4*3^{4m+1}+2^{4m+1}*3^{2m+2}\right)
    \left(4*3^{4m+3}+2^{4m+3}*3^{2m+3}
    \right)^\frac{1}{2}}.
\end{equation}
Observe as above that
\begin{equation}
    a_m
    =
    \frac{\sqrt{6}\pi}
    {\left(1+\frac{1}{2}
    \left( \frac{3}{4}
    \right)^{2m}\right)^\frac{1}{2}}
    3^m,
\end{equation}
and this completes the proof.
\end{proof}

\begin{proposition} \label{Tedious3}
Fix $m\in\mathbb{Z}^+$, and let $u$ and $\Tilde{u}$ be given by
\begin{align}
    u&=
    iv^{h^m}e^{2\pi i h^m\cdot x} \\
    \Tilde{u}
    &=
    iv^{j^m}
    e^{2\pi i j^m\cdot x}.
    \end{align}
Then the bilinear term in the restricted model is given by
\begin{equation}
    \mathbb{P}_{\mathcal{M}}((\Tilde{u}\cdot\nabla) u
    +(u\cdot\nabla)\Tilde{u})
    =
    -b_m i v^{k^{m+1}}e^{2\pi i k^{m+1}\cdot x},
\end{equation}
where 
\begin{equation}
    b_m=
    \frac{\sqrt{2}\pi}{\left(1+\frac{3}{8}
    \left(\frac{3}{4}\right)^{2m}
    \right)^\frac{1}{2}} 3^{m+1}.
\end{equation}
\end{proposition}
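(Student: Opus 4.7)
The plan is to follow Propositions \Cref{Tedious1} and \Cref{Tedious2} essentially verbatim, with the only structural difference being that the two input modes $h^m$ and $j^m$ both sit in the same odd shell $\mathcal{M}_{2m+1}^+$ and sum to $k^{m+1}$ in the next even shell via the dyadic identity $k^{m+1}=h^m+j^m$ from \Cref{DyadicSumDiff}. Because the only excited Fourier mode is $k^{m+1}$, the projection $\mathbb{P}_\mathcal{M}$ will again reduce to a single scalar product against $v^{k^{m+1}}$.

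First I would expand the bilinear term as
\[
(\tilde u\cdot\nabla)u+(u\cdot\nabla)\tilde u = -2\pi i\,\bigl[(h^m\cdot v^{j^m})\,v^{h^m}+(j^m\cdot v^{h^m})\,v^{j^m}\bigr]\,e^{2\pi i k^{m+1}\cdot x},
\]
and verify $h^m\cdot v^{j^m}=j^m\cdot v^{h^m}$. Substituting the explicit formulas from \Cref{BasisComputations}, the $\sigma$-to-$(2,-1,-1)$ and $\sigma$-to-$(1,1,-2)$ cross terms vanish by orthogonality, while the two surviving contributions $2^{2m+2}\cdot 3^{2m+2}$ and $-2^{2m+1}\cdot 3^{2m+2}$ (the latter arising from $(1,1,-2)\cdot(2,-1,-1)=-3$) combine to give the common scalar
\[
h^m\cdot v^{j^m}=j^m\cdot v^{h^m}=\frac{2^{2m+1}\cdot 3^{2m+2}}{\bigl(4\cdot 3^{4m+3}+2^{4m+3}\cdot 3^{2m+3}\bigr)^{1/2}}.
\]
This reduces the nonlinearity to a scalar multiple of $v^{h^m}+v^{j^m}$ times $e^{2\pi i k^{m+1}\cdot x}$.

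Next I would compute $v^{h^m}+v^{j^m}$. Since $|h^m|=|j^m|$ the two basis vectors share a normalization, and $(1,1,-2)+(2,-1,-1)=3(1,0,-1)$ forces
\[
v^{h^m}+v^{j^m}=\frac{4\cdot 3^{2m+1}\sigma-2^{2m+1}\cdot 3^{m+2}(1,0,-1)^T}{\bigl(4\cdot 3^{4m+3}+2^{4m+3}\cdot 3^{2m+3}\bigr)^{1/2}},
\]
which is proportional to the numerator of $v^{k^{m+1}}$ from \Cref{BasisComputations}. Hence the output already lies in $\spn\{v^{k^{m+1}}\}$. Distributing the inner product $v^{k^{m+1}}\cdot(v^{h^m}+v^{j^m})$ yields numerator $24\cdot 3^{4m+3}+6\cdot 2^{4m+3}\cdot 3^{2m+3}=6\bigl(4\cdot 3^{4m+3}+2^{4m+3}\cdot 3^{2m+3}\bigr)$, so the factor of $6$ cancels one of the two $|h^m|^2$-type denominators, leaving only the $|k^{m+1}|^2$-type denominator. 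Factoring $4\cdot 3^{4m+5}+2^{4m+5}\cdot 3^{2m+4}=2^{4m+5}\cdot 3^{2m+4}\bigl(1+\tfrac{3}{8}(\tfrac{3}{4})^{2m}\bigr)$ and using $12/2^{3/2}=3\sqrt{2}$ collapses the remaining powers of $2$ and $3$ to the claimed coefficient $-b_m\,i$.

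The main obstacle is exponent bookkeeping: the $2^{a}3^{b}$ factors proliferate quickly and off-by-one errors are easy. I would contain this by isolating the three ingredients, namely the common scalar $h^m\cdot v^{j^m}$, the vector sum $v^{h^m}+v^{j^m}$, and the final projection onto $v^{k^{m+1}}$, as independent displayed computations mirroring \Cref{Tedious1} and \Cref{Tedious2}, so that any exponent mismatch is localized. No new analytic idea is required; everything follows from the dyadic identity $h^m+j^m=k^{m+1}$ and the orthogonalities $\sigma\cdot(1,1,-2)=\sigma\cdot(2,-1,-1)=0$.
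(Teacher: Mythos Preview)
Your proposal is correct and follows essentially the same approach as the paper's proof: compute the common scalar $h^m\cdot v^{j^m}=j^m\cdot v^{h^m}$, form $v^{h^m}+v^{j^m}$, project onto $v^{k^{m+1}}$, and simplify. One minor slip in the exposition: $(1,1,-2)\cdot(2,-1,-1)=+3$, not $-3$; your stated contribution $-2^{2m+1}\cdot 3^{2m+2}$ is nonetheless correct because the minus sign comes from the $-2^{2m+1}\cdot 3^{m+1}$ prefactor in $v^{j^m}$.
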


\begin{proof}
First observe that
\begin{align}
    (\Tilde{u}\cdot\nabla) u
    &=
    -2\pi i (h^m\cdot v^{j^m})
    v^{h^m} 
    e^{2\pi i h^m\cdot x} \\
    (u\cdot\nabla)\Tilde{u}
    &=
    -2\pi i (j^m\cdot v^{h^m})
    v^{j^m} 
    e^{2\pi i h^m\cdot x}
\end{align}
Next we compute that
\begin{align}
    h^m\cdot v^{j^m}
    &=
    \left(2^{2m+1}\sigma
    +3^m\left(
    \begin{array}{c}
         1 \\ 1 \\ -2 
    \end{array}\right)
    \right) \cdot
    \frac{2*3^{2m+1}\sigma
    -2^{2m+1}*3^{m+1}
    \left(\begin{array}{c}
         2 \\ -1 \\ -1
    \end{array}
    \right)}
    {\left(4*3^{4m+3}
    +2^{4m+3}*3^{2m+3}
    \right)^\frac{1}{2}} \\
    &=
    \frac{2^{2m+2}*3^{2m+2}
    -2^{2m+1}*3^{2m+2}}
    {\left(4*3^{4m+3}
    +2^{4m+3}*3^{2m+3}
    \right)^\frac{1}{2}} \\
    &=
    \frac{2^{2m+1}*3^{2m+2}}
    {\left(4*3^{4m+3}
    +2^{4m+3}*3^{2m+3}
    \right)^\frac{1}{2}},
\end{align}
and likewise that
\begin{align}
    j^m\cdot v^{h^m}
    &=
    \left(2^{2m+1}\sigma
    +3^m\left(
    \begin{array}{c}
         2 \\ -1 \\ -1
    \end{array}\right)
    \right) \cdot
    \frac{2*3^{2m+1}\sigma
    -2^{2m+1}*3^{m+1}
    \left(\begin{array}{c}
         1 \\ 1 \\ -2
    \end{array}
    \right)}
    {\left(4*3^{4m+3}
    +2^{4m+3}*3^{2m+3}
    \right)^\frac{1}{2}} \\
    &=
    \frac{2^{2m+2}*3^{2m+2}
    -2^{2m+1}*3^{2m+2}}
    {\left(4*3^{4m+3}
    +2^{4m+3}*3^{2m+3}
    \right)^\frac{1}{2}} \\
    &=
    \frac{2^{2m+1}*3^{2m+2}}
    {\left(4*3^{4m+3}
    +2^{4m+3}*3^{2m+3}
    \right)^\frac{1}{2}}.
\end{align}
Further computing that
\begin{align}
    v^{h^m}+v^{j^m}
    &=
    \frac{2*3^{2m+1}\sigma
    -2^{2m+1}*3^{m+1}
    \left(\begin{array}{c}
         2 \\ -1 \\ -1
    \end{array}
    \right)}
    {\left(4*3^{4m+3}
    +2^{4m+3}*3^{2m+3}
    \right)^\frac{1}{2}}
    +
    \frac{2*3^{2m+1}\sigma
    -2^{2m+1}*3^{m+1}
    \left(\begin{array}{c}
         1 \\ 1 \\ -2
    \end{array}
    \right)}
    {\left(4*3^{4m+3}
    +2^{4m+3}*3^{2m+3}
    \right)^\frac{1}{2}} \\
    &=
    \frac{4*3^{2m+1}\sigma
    -2^{2m+1}*3^{m+1}
    \left(\begin{array}{c}
         3 \\ 0 \\ -3
    \end{array}
    \right)}
    {\left(4*3^{4m+3}
    +2^{4m+3}*3^{2m+3}
    \right)^\frac{1}{2}} \\
    &=
    \frac{4*3^{2m+1}\sigma
    -2^{2m+1}*3^{m+2}
    \left(\begin{array}{c}
         1 \\ 0 \\ -1
    \end{array}
    \right)}
    {\left(4*3^{4m+3}
    +2^{4m+3}*3^{2m+3}
    \right)^\frac{1}{2}}
\end{align}
Putting these together we find that
\begin{equation*}
    (\Tilde{u}\cdot\nabla)u
    +
    (u\cdot\nabla)\Tilde{u}
    =
    -2\pi i
    \frac{2^{2m+1}*3^{2m+2}
    \left(
    4*3^{2m+1}\sigma
    -2^{2m+1}*3^{m+2}
    \left(\begin{array}{c}
         1 \\ 0 \\ -1
    \end{array}
    \right)\right)}
    {4*3^{4m+3}
    +2^{4m+3}*3^{2m+3}}
    e^{2\pi i k^{m+1}\cdot x}.
\end{equation*}
Noting that this term consists of the single Fourier mode $k^{m+1}$, we can see that
\begin{equation}
    \mathbb{P}_{\mathcal{M}}
    ((\Tilde{u}\cdot\nabla)u
    +
    (u\cdot\nabla)\Tilde{u})
    =
    v^{k^{m+1}}\cdot
    ((\Tilde{u}\cdot\nabla)u
    +
    (u\cdot\nabla)\Tilde{u})
    v^{k^{m+1}},
\end{equation}
and compute that
\begin{align}
    v^{k^{m+1}}\cdot
    ((\Tilde{u}\cdot\nabla)u
    +
    (u\cdot\nabla)\Tilde{u})
    &=
    -2\pi i
    e^{2\pi i k^{m+1}\cdot x}
    \frac{2*3^{2m+2}\sigma
    -2^{2m+2}*3^{m+2}
        \left(\begin{array}{c}
             1  \\
             0 \\
             -1
        \end{array}\right)}
    {\left(4*3^{4m+5}
    +2^{4m+5}*3^{2m+4}
    \right)^\frac{1}{2}} 
    \\
    \notag &\quad\quad \cdot 
    \frac{2^{2m+1}*3^{2m+2}
    \left(
    4*3^{2m+1}\sigma
    -2^{2m+1}*3^{m+2}
    \left(\begin{array}{c}
         1 \\ 0 \\ -1
    \end{array}
    \right)\right)}
    {4*3^{4m+3}
    +2^{4m+3}*3^{2m+3}}
    \\
    &=
    \frac{-2^{2m+2}*3^{2m+2}\left(
    8*3^{4m+4}+2^{4m+4}*3^{2m+4}\right)
    \pi i e^{2\pi ik^{m+1}\cdot x}}
    {\left(4*3^{4m+3}
    +2^{4m+3}*3^{2m+3}\right)
    \left(4*3^{4m+5}
    +2^{4m+5}*3^{2m+4}
    \right)^\frac{1}{2}} \\
    &=
    -\frac{2^{2m+3}*3^{2m+3}
    \pi i e^{2\pi ik^{m+1}\cdot x}}
    {\left(4*3^{4m+5}
    +2^{4m+5}*3^{2m+4}
    \right)^\frac{1}{2}}.
\end{align}
Therefore we can see the result holds with
\begin{align}
    b_m
    &=
    \frac{2^{2m+3}*3^{2m+3} \pi}
    {\left(4*3^{4m+5}
    +2^{4m+5}*3^{2m+4}
    \right)^\frac{1}{2}} \\
    &=
    \frac{2^{2m+3}*3^{2m+3} \pi}
    {2^{2m+\frac{5}{2}}*3^{m+2}
    \left(1+2^{-4m-3}*3^{2m+1}
    \right)^\frac{1}{2}} \\
    &=
    \frac{\sqrt{2}\pi}{\left(1+\frac{3}{8}
    \left(\frac{3}{4}\right)^{2m}
    \right)^\frac{1}{2}} 3^{m+1}.
\end{align}
This completes the proof.
\end{proof}

\begin{proposition} \label{Tedious4}
Fix $m\in\mathbb{Z}^+$, and let $u$ and $\Tilde{u}$ be given by
\begin{align}
    u&=
    iv^{h^m}e^{2\pi i h^m\cdot x} \\
    \Tilde{u}
    &=
    -iP_{12}(v^{k^m})
    e^{-2\pi i P_{12}(k^m)\cdot x}.
    \end{align}
Then the bilinear term in the restricted model is given by
\begin{equation}
    \mathbb{P}_{\mathcal{M}}((\Tilde{u}\cdot\nabla) u
    +(u\cdot\nabla)\Tilde{u})
    =
    \frac{a_m}{2}i v^{k^m}e^{2\pi i k^m\cdot x},
\end{equation}
where 
\begin{equation}
    a_m=
    \frac{\sqrt{6}\pi}
    {\left(1+\frac{1}{2}
    \left( \frac{3}{4}
    \right)^{2m}\right)^\frac{1}{2}}
    3^m.
\end{equation}
\end{proposition}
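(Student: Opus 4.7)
The plan is to follow the template of \Cref{Tedious1} and \Cref{Tedious2} very closely, the only structural novelty being that one of the two frequencies is negative, so we are working with the difference identity $k^m = h^m - P_{12}(k^m)$ from \Cref{DyadicSumDiff} rather than a sum identity. First I would compute $(\Tilde{u}\cdot\nabla)u$ and $(u\cdot\nabla)\Tilde{u}$ directly, carefully tracking the factors of $i$ coming from the negative exponential in $\Tilde{u}$; a bookkeeping check gives
\begin{equation}
(\Tilde{u}\cdot\nabla)u + (u\cdot\nabla)\Tilde{u} = 2\pi i\Bigl[\bigl(P_{12}(v^{k^m})\cdot h^m\bigr)v^{h^m} - \bigl(v^{h^m}\cdot P_{12}(k^m)\bigr)P_{12}(v^{k^m})\Bigr]e^{2\pi i k^m\cdot x},
\end{equation}
supported on the single Fourier mode $k^m$, which makes the projection $\mathbb{P}_\mathcal{M}$ reduce to an inner product with $v^{k^m}$ times $v^{k^m}$.

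Next I would exploit the divergence-free relations $v^{h^m}\cdot h^m = 0$ and $P_{12}(v^{k^m})\cdot P_{12}(k^m) = 0$, combined with $h^m = k^m + P_{12}(k^m)$, to rewrite the two scalar coefficients cleanly as $P_{12}(v^{k^m})\cdot h^m = P_{12}(v^{k^m})\cdot k^m$ and $v^{h^m}\cdot P_{12}(k^m) = -v^{h^m}\cdot k^m$. At this point I would substitute the explicit expressions for $k^m$, $h^m$, $v^{k^m}$, and $v^{h^m}$ from \Cref{BasisComputations} and compute the single inner product $v^{k^m}\cdot\bigl[(P_{12}(v^{k^m})\cdot k^m)v^{h^m} + (v^{h^m}\cdot k^m)P_{12}(v^{k^m})\bigr]$. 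The scalar $P_{12}(v^{k^m})\cdot k^m = v^{k^m}\cdot P_{12}(k^m)$ is already computed inside the proof of \Cref{Tedious1} and equals $\frac{2^{2m}\cdot 3^{2m+1}}{(4\cdot 3^{4m+1}+2^{4m+1}\cdot 3^{2m+2})^{1/2}}$; an analogous calculation with $v^{h^m}$ in place of $v^{k^m}+P_{12}(v^{k^m})$ handles the second term.

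The main obstacle, and the reason for the factor $\tfrac12$ rather than the $1$ appearing in \Cref{Tedious1}, will be tracking the precise normalization constants: in \Cref{Tedious1} the two contributions combined into the \emph{sum} $v^{k^m}+P_{12}(v^{k^m})$ because both frequencies had the same $v$-vector projecting onto $v^{h^m}$, whereas here we need the inner product $v^{k^m}\cdot v^{h^m}$ and $v^{k^m}\cdot P_{12}(v^{k^m})$ separately, and these denominators involve both the $|k^m|$-normalization and the $|h^m|$-normalization. Once the dust settles, the rational function in $m$ must simplify to exactly half the value of $a_m$ found in \Cref{Tedious1}; I would verify this by algebraic cancellation parallel to the identity $\frac{8\cdot 3^{4m+2}+2^{4m+2}\cdot 3^{2m+3}}{4\cdot 3^{4m+1}+2^{4m+1}\cdot 3^{2m+2}} = 6$ used there, and then read off $a_m/2 = \frac{\sqrt{6}\pi}{(1+\tfrac12(3/4)^{2m})^{1/2}}3^m / 2$ as claimed. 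Aside from sign-tracking with the negative exponential in $\Tilde{u}$, the proof is a routine but tedious repetition of the earlier computations, so I would not expect any genuine analytic obstacle beyond the arithmetic.
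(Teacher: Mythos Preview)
Your setup and sign bookkeeping are correct, and the approach would ultimately succeed, but you have missed the key simplification that the paper exploits: the second term $(u\cdot\nabla)\Tilde u$ vanishes identically. Indeed, a direct computation (or your own identity $v^{h^m}\cdot P_{12}(k^m)=-v^{h^m}\cdot k^m$ combined with the observation that $v^{h^m}\cdot k^m=0$, which follows since $(1,0,-1)\cdot(1,1,-2)=3$ cancels the $\sigma$-contribution exactly) gives $P_{12}(k^m)\cdot v^{h^m}=0$. So the entire bilinear expression reduces to the single term $2\pi i\,(h^m\cdot P_{12}(v^{k^m}))\,v^{h^m}e^{2\pi i k^m\cdot x}$, and one only needs to project this onto $v^{k^m}$.

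This also clarifies the origin of the factor $\tfrac12$, which your discussion leaves somewhat muddled. The scalar $h^m\cdot P_{12}(v^{k^m})$ equals $k^m\cdot P_{12}(v^{k^m})$ (as you note), matching the scalar in \Cref{Tedious1}. The difference is that in \Cref{Tedious1} one projects $v^{k^m}+P_{12}(v^{k^m})$ onto $v^{h^m}$, whereas here one projects just $v^{h^m}$ onto $v^{k^m}$; since $P_{12}(h^m)=h^m$ gives $v^{h^m}\cdot P_{12}(v^{k^m})=v^{h^m}\cdot v^{k^m}$, the former is exactly twice the latter. Your plan to compute $v^{k^m}\cdot P_{12}(v^{k^m})$ is unnecessary---that term carries the zero coefficient $v^{h^m}\cdot k^m$---so if you carry out your proposal as written you will eventually discover the vanishing anyway, but only after more algebra than needed.
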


\begin{proof}
We begin by computing that
\begin{align}
    (\Tilde{u}\cdot\nabla)u
    &=
    2\pi i (h^m\cdot P_{12}(v^{k^m}))
    v^{h^m}e^{2\pi ik^m\cdot x} \\
    (u\cdot\nabla)\Tilde{u}
    &=
    -2\pi i (P_{12}(k^m)\cdot v^{h^m})
    P_{12}(v^{k^m})
    e^{2\pi i k^m\cdot x}.
\end{align}
Observe that
\begin{align}
    h^m\cdot P_{12}(v^{k^m})
    &=
    \left(2^{2m+1}\sigma+3^m\left(
    \begin{array}{cc}
         1 \\ 1 \\ -2
    \end{array}\right)\right)
    \cdot
    \frac{2*3^{2m}\sigma-2^{2m}*3^{m+1}
    \left(\begin{array}{c}
         0 \\ 1 \\ -1 
    \end{array}\right)}
    {\left(4*3^{4m+1}
    +2^{4m+1}*3^{2m+2}
    \right)^\frac{1}{2}} \\
    &=
    \frac{2^{2m+2}*3^{2m+1}
    -2^{2m}*3^{2m+2}}
    {\left(4*3^{4m+1}
    +2^{4m+1}*3^{2m+2}
    \right)^\frac{1}{2}} \\
    &=
    \frac{2^{2m}*3^{2m+1}}
    {\left(4*3^{4m+1}
    +2^{4m+1}*3^{2m+2}
    \right)^\frac{1}{2}},
\end{align}
and conclude that
\begin{equation}
    (h^m\cdot P_{12}(v^{k^m}))v^{h^m}
    =
    \frac{2^{2m}*3^{2m+1}
    \left(2*3^{2m+1}\sigma
    -2^{2m+1}*3^{m+1}
    \left(\begin{array}{c}
         1 \\ 1 \\ -2
    \end{array}\right)\right)}
    {\left(4*3^{4m+1}
    +2^{4m+1}*3^{2m+2}
    \right)^\frac{1}{2}
    \left(4*3^{4m+3}
    +2^{4m+3}*3^{2m+3}
    \right)^\frac{1}{2}},
\end{equation}
and therefore that
\begin{equation}
    (\Tilde{u}\cdot\nabla)u
    =
    \frac{2^{2m}*3^{2m+1}
    \left(2*3^{2m+1}\sigma
    -2^{2m+1}*3^{m+1}
    \left(\begin{array}{c}
         1 \\ 1 \\ -2
    \end{array}\right)\right)}
    {\left(4*3^{3m+1}
    +2^{4m+1}*3^{2m+2}
    \right)^\frac{1}{2}
    \left(4*3^{4m+3}
    +2^{4m+3}*3^{2m+3}
    \right)^\frac{1}{2}}
    e^{2\pi i k^m\cdot x}.
\end{equation}
Likewise, observe that
\begin{align}
    P_{12}(k^m)\cdot v^{h^m}
    &=
    \left(2^{2m}\sigma+3^m
    \left(\begin{array}{c}
         0 \\ 1 \\ -1
    \end{array}\right)\right)
    \cdot
    \frac{2*3^{2m+1}\sigma
    -2^{2m+1}*3^{m+1}
    \left(
    \begin{array}{c}
         1  \\
         1 \\
         -2
    \end{array}
    \right)}
    {\left(4*3^{4m+3}
    +2^{4m+3}*3^{2m+3}
    \right)^\frac{1}{2}} \\
    &=
    \frac{2^{2m+1}*3^{2m+2}
    -2^{2m+1}*3^{2m+2}}
    {\left(4*3^{4m+3}
    +2^{4m+3}*3^{2m+3}
    \right)^\frac{1}{2}} \\
    &=
    0,
\end{align}
and conclude that
\begin{equation}
    (u\cdot\nabla)\Tilde{u}=0.
\end{equation}

Because the bilinear term is supported at a single Fourier mode, we can see that
\begin{equation}
    \mathbb{P}_\mathcal{M}
    ((\Tilde{u}\cdot\nabla)u
    +(u\cdot\nabla)\Tilde{u})
    =
    v^{k^m}\cdot
    ((\Tilde{u}\cdot\nabla)u) v^{k^m}
\end{equation}
Next we compute that
\begin{align}
    v^{k^m}\cdot
    (\Tilde{u}\cdot\nabla)u
    &=
    \frac{2*3^{2m}\sigma
    -2^{2m}*3^{m+1}
        \left(\begin{array}{c}
             1  \\
             0 \\
             -1
        \end{array}\right)}
    {\left(4*3^{4m+1}
    +2^{4m+1}*3^{2m+2}
    \right)^\frac{1}{2}} \\
    \notag &\quad\quad \cdot
    \frac{2^{2m}*3^{2m+1}
    \left(2*3^{2m+1}\sigma
    -2^{2m+1}*3^{m+1}
    \left(\begin{array}{c}
         1 \\ 1 \\ -2
    \end{array}\right)\right)
    2\pi i e^{2\pi i k^m\cdot x}}
    {\left(4*3^{4m+1}
    +2^{4m+1}*3^{2m+2}
    \right)^\frac{1}{2}
    \left(4*3^{4m+3}
    +2^{4m+3}*3^{2m+3}
    \right)^\frac{1}{2}} \\
    &=
    \frac{2^{2m+1}*3^{2m+1}\left(
    4*3^{4m+2}+2^{4m+1}*3^{2m+3}
    \right)}
    {\left(4*3^{4m+1}
    +2^{4m+1}*3^{2m+2}\right)
    \left(4*3^{4m+3}
    +2^{4m+3}*3^{2m+3}
    \right)^\frac{1}{2}} 
    i\pi e^{2\pi i k^m\cdot x} \\
    &=
    \frac{2^{2m+1}*3^{2m+2}}
    {\left(4*3^{4m+3}
    +2^{4m+3}*3^{2m+3}
    \right)^\frac{1}{2}}
    i\pi e^{2\pi i k^m\cdot x}\\
    &=
    \frac{2^{2m+1}*3^{2m+2}}
    {2^{2m+\frac{3}{2}}
    3^{m+\frac{3}{2}}
    \left(1+2^{-4m-1}*3^{2m}
    \right)^\frac{1}{2}}
    i\pi e^{2\pi i k^m\cdot x}\\
    &=
    \frac{2^{-\frac{1}{2}}
    *3^\frac{1}{2}}
    {\left(1+\frac{1}{2}
    \left(\frac{3}{4}\right)^{2m}
    \right)^\frac{1}{2}} 3^m
    i\pi e^{2\pi i k^m\cdot x}\\
    &=
    \frac{i}{2}
    \frac{\sqrt{6}\pi}
    {\left(1+\frac{1}{2}
    \left(\frac{3}{4}\right)^{2m}
    \right)^\frac{1}{2}} 3^m
    e^{2\pi i k^m\cdot x}.
\end{align}
Then we have
\begin{equation}
    \mathbb{P}_{\mathcal{M}}((\Tilde{u}\cdot\nabla) u
    +(u\cdot\nabla)\Tilde{u})
    =
    \frac{a_m}{2}i v^{k^m}e^{2\pi i k^m\cdot x},
\end{equation}
and this completes the proof.
\end{proof}

\begin{proposition} \label{Tedious5}
Fix $m\in\mathbb{Z}^+$, and let $u$ and $\Tilde{u}$ be given by
\begin{align}
    u&=
    iv^{j^m}e^{2\pi i j^m\cdot x} \\
    \Tilde{u}
    &=
    -iP_{23}(v^{k^m})
    e^{-2\pi i P_{23}(k^m)\cdot x}.
    \end{align}
Then the bilinear term in the restricted model is given by
\begin{equation}
    \mathbb{P}_{\mathcal{M}}((\Tilde{u}\cdot\nabla) u
    +(u\cdot\nabla)\Tilde{u})
    =
    \frac{a_m}{2}i v^{k^m}e^{2\pi i k^m\cdot x},
\end{equation}
where 
\begin{equation}
    a_m=
    \frac{\sqrt{6}\pi}
    {\left(1+\frac{1}{2}
    \left( \frac{3}{4}
    \right)^{2m}\right)^\frac{1}{2}}
    3^m.
\end{equation}
\end{proposition}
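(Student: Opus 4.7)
The proof mirrors that of \Cref{Tedious4} step-for-step, with $P_{12}$ and $h^m$ replaced respectively by $P_{23}$ and $j^m$; \Cref{Tedious5} is the ``reverse'' of \Cref{Tedious2} in exactly the same way that \Cref{Tedious4} is the reverse of \Cref{Tedious1}. By identity \eqref{eq5} we have $j^m - P_{23}(k^m) = k^m$, so the bilinear term will again be supported at a single Fourier mode, namely $k^m$, and the projection $\mathbb{P}_\mathcal{M}$ collapses to taking the $v^{k^m}$-component.

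First I would expand the two pieces of the bilinear term using the product rule together with $v^{P_{23}(k^m)} = P_{23}(v^{k^m})$ (\Cref{BasisProp}):
\begin{align*}
    (\Tilde{u}\cdot\nabla)u &= 2\pi i\,\bigl(P_{23}(v^{k^m})\cdot j^m\bigr)\,v^{j^m}\,e^{2\pi i k^m\cdot x},\\
    (u\cdot\nabla)\Tilde{u} &= -2\pi i\,\bigl(P_{23}(k^m)\cdot v^{j^m}\bigr)\,P_{23}(v^{k^m})\,e^{2\pi i k^m\cdot x}.
\end{align*}
To dispatch the second term, observe that $P_{23}(j^m) = j^m$, hence $P_{23}(v^{j^m}) = v^{j^m}$ by \Cref{BasisProp}, and \Cref{DotLemma} then yields $P_{23}(k^m)\cdot v^{j^m} = k^m\cdot v^{j^m}$. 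Combined with $j^m\cdot v^{j^m} = 0$ and $j^m = k^m + P_{23}(k^m)$, this forces both $k^m\cdot v^{j^m}$ and $P_{23}(k^m)\cdot v^{j^m}$ to vanish, precisely mirroring the corresponding cancellation in \Cref{Tedious4}.

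For the surviving term, \Cref{DotLemma} combined with $P_{23}(j^m) = j^m$ gives $P_{23}(v^{k^m})\cdot j^m = v^{k^m}\cdot j^m$, and then $j^m = k^m + P_{23}(k^m)$ together with $k^m\cdot v^{k^m} = 0$ reduces this to $v^{k^m}\cdot P_{23}(k^m)$, a quantity already evaluated in the proof of \Cref{Tedious2} to be $2^{2m}\cdot 3^{2m+1}/\bigl(4\cdot 3^{4m+1}+2^{4m+1}\cdot 3^{2m+2}\bigr)^{1/2}$. Projecting onto $\spn\{v^{k^m}\}$ introduces the scalar factor $v^{k^m}\cdot v^{j^m}$, which I would compute directly from \Cref{BasisComputations}; after simplification the numerator reduces to $4\cdot 3^{4m+2}+2^{4m+1}\cdot 3^{2m+3}$, which is the \emph{same} rational expression as $v^{k^m}\cdot v^{h^m}$ in \Cref{Tedious4} (the denominators coincide since $|j^m|=|h^m|$). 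Assembling all constants reproduces $\frac{a_m}{2}\,i\,v^{k^m}\,e^{2\pi i k^m\cdot x}$.

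The argument is conceptually routine; the only obstacle is the bookkeeping needed to verify that $v^{k^m}\cdot v^{j^m}$ collapses to the same rational expression as its counterpart $v^{k^m}\cdot v^{h^m}$ in \Cref{Tedious4}. This collapse is driven entirely by the fact that $(1,0,-1)\cdot(1,1,-2) = (1,0,-1)\cdot(2,-1,-1) = 3$, reflecting the hidden structural symmetry between the ``$h$-channel'' and ``$j$-channel'' interactions that underlies the identification of $\eta_m$ and $\zeta_m$ in hj-parity solutions.
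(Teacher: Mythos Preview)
Your proof is correct and follows the same overall skeleton as the paper's (expand both pieces of the bilinear term, show $(u\cdot\nabla)\Tilde{u}=0$, project the surviving piece onto $\spn\{v^{k^m}\}$), but the internal arguments differ. The paper verifies $P_{23}(k^m)\cdot v^{j^m}=0$ and evaluates $j^m\cdot P_{23}(v^{k^m})$ by direct substitution of the explicit vectors from \Cref{BasisComputations}, then carries out the projection step by another explicit dot product. You instead exploit the invariance $P_{23}(j^m)=j^m$ together with \Cref{DotLemma} to reduce the vanishing of $(u\cdot\nabla)\Tilde{u}$ to the orthogonality $j^m\cdot v^{j^m}=0$, and you recycle the value of $P_{23}(k^m)\cdot v^{k^m}$ already computed in \Cref{Tedious2} rather than redoing it. Your route trades brute-force arithmetic for structural insight---in particular, your observation that $(1,0,-1)\cdot(1,1,-2)=(1,0,-1)\cdot(2,-1,-1)=3$ makes transparent \emph{why} the $h$-channel (\Cref{Tedious4}) and $j$-channel computations produce the identical constant $\tfrac{a_m}{2}$, which is precisely what underpins hj-parity; the paper's proof simply reproduces the numbers without isolating this mechanism.
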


\begin{proof}
We begin by computing that
\begin{align}
    (\Tilde{u}\cdot\nabla)u
    &=
    2\pi i (j^m\cdot P_{23}(v^{k^m}))
    v^{j^m}e^{2\pi ik^m\cdot x} \\
    (u\cdot\nabla)\Tilde{u}
    &=
    -2\pi i (P_{23}(k^m)\cdot v^{j^m})
    P_{23}(v^{k^m})
    e^{2\pi i k^m\cdot x}.
\end{align}
Observe that
\begin{align}
    j^m\cdot P_{23}(v^{k^m})
    &=
    \left(2^{2m+1}\sigma+3^m\left(
    \begin{array}{cc}
         2 \\ -1 \\ -1
    \end{array}\right)\right)
    \cdot
    \frac{2*3^{2m}\sigma-2^{2m}*3^{m+1}
    \left(\begin{array}{c}
         1 \\ -1 \\ 0 
    \end{array}\right)}
    {\left(4*3^{4m+1}
    +2^{4m+1}*3^{2m+2}
    \right)^\frac{1}{2}} \\
    &=
    \frac{2^{2m+2}*3^{2m+1}
    -2^{2m}*3^{2m+2}}
    {\left(4*3^{4m+1}
    +2^{4m+1}*3^{2m+2}
    \right)^\frac{1}{2}} \\
    &=
    \frac{2^{2m}*3^{2m+1}}
    {\left(4*3^{4m+1}
    +2^{4m+1}*3^{2m+2}
    \right)^\frac{1}{2}},
\end{align}
and conclude that
\begin{equation}
    (j^m\cdot P_{23}(v^{k^m}))v^{j^m}
    =
    \frac{2^{2m}*3^{2m+1}
    \left(2*3^{2m+1}\sigma
    -2^{2m+1}*3^{m+1}
    \left(\begin{array}{c}
         2 \\ -1 \\ -1
    \end{array}\right)\right)}
    {\left(4*3^{4m+1}
    +2^{4m+1}*3^{2m+2}
    \right)^\frac{1}{2}
    \left(4*3^{4m+3}
    +2^{4m+3}*3^{2m+3}
    \right)^\frac{1}{2}},
\end{equation}
and therefore that
\begin{equation}
    (\Tilde{u}\cdot\nabla)u
    =
    2\pi i\frac{2^{2m}*3^{2m+1}
    \left(2*3^{2m+1}\sigma
    -2^{2m+1}*3^{m+1}
    \left(\begin{array}{c}
         1 \\ 1 \\ -2
    \end{array}\right)\right)}
    {\left(4*3^{3m+1}
    +2^{4m+1}*3^{2m+2}
    \right)^\frac{1}{2}
    \left(4*3^{4m+3}
    +2^{4m+3}*3^{2m+3}
    \right)^\frac{1}{2}}
    e^{2\pi i k^m\cdot x}.
\end{equation}
Likewise, observe that
\begin{align}
    P_{23}(k^m)\cdot v^{j^m}
    &=
    \left(2^{2m}\sigma+3^m
    \left(\begin{array}{c}
         1 \\ -1 \\ 0
    \end{array}\right)\right)
    \cdot
    \frac{2*3^{2m+1}\sigma
    -2^{2m+1}*3^{m+1}
    \left(
    \begin{array}{c}
         2  \\
         -1 \\
         -1
    \end{array}
    \right)}
    {\left(4*3^{4m+3}
    +2^{4m+3}*3^{2m+3}
    \right)^\frac{1}{2}} \\
    &=
    \frac{2^{2m+1}*3^{2m+2}
    -2^{2m+1}*3^{2m+2}}
    {\left(4*3^{4m+3}
    +2^{4m+3}*3^{2m+3}
    \right)^\frac{1}{2}} \\
    &=
    0,
\end{align}
and conclude that
\begin{equation}
    (u\cdot\nabla)\Tilde{u}=0.
\end{equation}

Because the bilinear term is supported at a single Fourier mode, we can see that
\begin{equation}
    \mathbb{P}_\mathcal{M}
    ((\Tilde{u}\cdot\nabla)u
    +(u\cdot\nabla)\Tilde{u})
    =
    v^{k^m}\cdot
    ((\Tilde{u}\cdot\nabla)u) v^{k^m}
\end{equation}
Next we compute that
\begin{align}
    v^{k^m}\cdot
    (\Tilde{u}\cdot\nabla)u
    &=
    \frac{2*3^{2m}\sigma
    -2^{2m}*3^{m+1}
        \left(\begin{array}{c}
             1  \\
             0 \\
             -1
        \end{array}\right)}
    {\left(4*3^{4m+1}
    +2^{4m+1}*3^{2m+2}
    \right)^\frac{1}{2}} \\
    \notag &\quad\quad \cdot
    \frac{2^{2m}*3^{2m+1}
    \left(2*3^{2m+1}\sigma
    -2^{2m+1}*3^{m+1}
    \left(\begin{array}{c}
         2 \\ -1 \\ -1
    \end{array}\right)\right)
    2\pi i e^{2\pi i k^m\cdot x}}
    {\left(4*3^{4m+1}
    +2^{4m+1}*3^{2m+2}
    \right)^\frac{1}{2}
    \left(4*3^{4m+3}
    +2^{4m+3}*3^{2m+3}
    \right)^\frac{1}{2}} \\
    &=
    \frac{2^{2m+1}*3^{2m+1}\left(
    4*3^{4m+2}+2^{4m+1}*3^{2m+3}
    \right)}
    {\left(4*3^{4m+1}
    +2^{4m+1}*3^{2m+2}\right)
    \left(4*3^{4m+3}
    +2^{4m+3}*3^{2m+3}
    \right)^\frac{1}{2}} 
    i\pi e^{2\pi i k^m\cdot x} \\
    &=
    \frac{2^{2m+1}*3^{2m+2}}
    {\left(4*3^{4m+3}
    +2^{4m+3}*3^{2m+3}
    \right)^\frac{1}{2}}
    i\pi e^{2\pi i k^m\cdot x}\\
    &=
    \frac{2^{2m+1}*3^{2m+2}}
    {2^{2m+\frac{3}{2}}
    3^{m+\frac{3}{2}}
    \left(1+2^{-4m-1}*3^{2m}
    \right)^\frac{1}{2}}
    i\pi e^{2\pi i k^m\cdot x}\\
    &=
    \frac{2^{-\frac{1}{2}}
    *3^\frac{1}{2}}
    {\left(1+\frac{1}{2}
    \left(\frac{3}{4}\right)^{2m}
    \right)^\frac{1}{2}} 3^m
    i\pi e^{2\pi i k^m\cdot x}\\
    &=
    \frac{i}{2}
    \frac{\sqrt{6}\pi}
    {\left(1+\frac{1}{2}
    \left(\frac{3}{4}\right)^{2m}
    \right)^\frac{1}{2}} 3^m
    e^{2\pi i k^m\cdot x}.
\end{align}
Then we have
\begin{equation}
    \mathbb{P}_{\mathcal{M}}((\Tilde{u}\cdot\nabla) u
    +(u\cdot\nabla)\Tilde{u})
    =
    \frac{a_m}{2}i v^{k^m}e^{2\pi i k^m\cdot x},
\end{equation}
and this completes the proof.
\end{proof}

\begin{proposition} \label{Tedious6}
Fix $m\in\mathbb{Z}^+$, and let $u$ and $\Tilde{u}$ be given by
\begin{align}
    u&=
    iv^{k^{m+1}}e^{2\pi i k^{m+1}\cdot x} \\
    \Tilde{u}
    &=
    -iv^{j^m}
    e^{-2\pi i j^m\cdot x}.
    \end{align}
Then the bilinear term in the restricted model is given by
\begin{equation}
    \mathbb{P}_{\mathcal{M}}((\Tilde{u}\cdot\nabla) u
    +(u\cdot\nabla)\Tilde{u})
    =
    \frac{b_m}{2} i v^{h^m}e^{2\pi i h^m\cdot x},
\end{equation}
where 
\begin{equation}
    b_m=
    \frac{\sqrt{2}\pi}{\left(1+\frac{3}{8}
    \left(\frac{3}{4}\right)^{2m}
    \right)^\frac{1}{2}} 3^{m+1}.
\end{equation}
\end{proposition}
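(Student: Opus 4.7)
The strategy is to mirror the calculations in Propositions \ref{Tedious3}--\ref{Tedious5}, exploiting the identity $h^m = k^{m+1} - j^m$ from equation \eqref{eq6} in Proposition \ref{DyadicSumDiff} so that the quadratic interaction of $u$ and $\tilde{u}$ produces output supported at the single Fourier mode $h^m$. Once the output is localized at one frequency, the projection $\mathbb{P}_\mathcal{M}$ reduces to taking the inner product with $v^{h^m}$ and multiplying by $v^{h^m}$.

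First I would compute directly
\begin{align}
(\tilde{u}\cdot\nabla) u &= 2\pi i \, (k^{m+1}\cdot v^{j^m})\, v^{k^{m+1}}\, e^{2\pi i h^m\cdot x}, \\
(u\cdot\nabla)\tilde{u} &= -2\pi i \, (j^m\cdot v^{k^{m+1}})\, v^{j^m}\, e^{2\pi i h^m\cdot x}.
\end{align}
The key structural simplification comes from the divergence-free conditions $k^{m+1}\cdot v^{k^{m+1}}=0$ and $j^m\cdot v^{j^m}=0$ combined with $k^{m+1}=h^m+j^m$, which yield
\begin{equation}
k^{m+1}\cdot v^{j^m} = h^m\cdot v^{j^m}, \qquad j^m\cdot v^{k^{m+1}} = -h^m\cdot v^{k^{m+1}}.
\end{equation}
The quantity $h^m\cdot v^{j^m}$ was already evaluated in the proof of Proposition \ref{Tedious3}; the quantity $h^m\cdot v^{k^{m+1}}$ would be computed in the same manner, using the explicit formulas for $v^{k^{m+1}}$ and $h^m$ from \Cref{bunchofdefs} and \Cref{BasisComputations}.

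The remaining step is to assemble the two contributions. Since the resulting expression is supported at the single frequency $h^m$, we have
\begin{equation}
\mathbb{P}_\mathcal{M}\big((\tilde{u}\cdot\nabla)u + (u\cdot\nabla)\tilde{u}\big) = \Big( v^{h^m}\cdot \big((\tilde{u}\cdot\nabla)u + (u\cdot\nabla)\tilde{u}\big)\Big)\, v^{h^m}.
\end{equation}
Plugging in the explicit formula for $v^{h^m}$ and simplifying using the powers of $2$ and $3$ appearing in $|k^{m+1}|^2$ and $|h^m|^2$ (as tabulated in \Cref{CanonicalComputations}), the scalar coefficient should collapse to $\tfrac{b_m}{2}$, matching the pattern seen in Propositions \ref{Tedious4}--\ref{Tedious5} where the backward interaction picked up a factor of one half relative to the forward interaction in \Cref{Tedious3}.

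The main obstacle is the arithmetic bookkeeping: tracking the exact powers of $2$ and $3$ through several normalization factors and confirming that the constant $b_m = \sqrt{2}\pi 3^{m+1}/(1+\tfrac{3}{8}(3/4)^{2m})^{1/2}$ emerges with exactly a factor of $\tfrac{1}{2}$. The cleanest way to manage this is to observe, as in \Cref{Tedious4}, that one of the two terms $(\tilde u\cdot \nabla)u$ or $(u\cdot \nabla)\tilde u$ projects trivially onto $\operatorname{span}\{v^{h^m}\}$ (this should follow from the orthogonality $k^{m+1}\cdot v^{k^{m+1}}=0$ after re-expressing $v^{h^m}$ through the $\sigma$-component identity), leaving a single clean contribution that is precisely half of the $b_m$ coefficient computed in \Cref{Tedious3}. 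This recycles the heavy computation already done there and avoids reproving the arithmetic identities from scratch.
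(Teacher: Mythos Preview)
Your proposal is correct and follows essentially the same route as the paper: compute the two advection terms, observe they are supported at the single frequency $h^m$, then project onto $v^{h^m}$ and simplify. One sharpening: in the paper the second term $(u\cdot\nabla)\tilde u$ does not merely ``project trivially'' onto $\spn\{v^{h^m}\}$---it vanishes identically because $j^m\cdot v^{k^{m+1}}=0$ (equivalently, via your identity, $h^m\cdot v^{k^{m+1}}=0$), which is a direct two-line dot-product computation rather than a consequence of $k^{m+1}\cdot v^{k^{m+1}}=0$; once that term is gone, your recycling of $h^m\cdot v^{j^m}$ from \Cref{Tedious3} and the projection of $v^{k^{m+1}}$ onto $v^{h^m}$ give exactly $\tfrac{b_m}{2}$.
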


\begin{proof}
        First observe that
    \begin{align}
        (\Tilde{u}\cdot\nabla)u
        &=
        2\pi i (k^{m+1}\cdot v^{j^m})
        v^{k^{m+1}}e^{2\pi i h^{m}\cdot x} \\
        (u\cdot\nabla)\Tilde{u}
        &=
        -2\pi i (j^m\cdot v^{k^{m+1}})
        v^{j^m} e^{2\pi i h^{m}\cdot x}.
    \end{align}
Then we compute that
\begin{align}
    k^{m+1}\cdot v^{j^m}
    &=
    \left(2^{2m+2}\sigma+3^{m+1}
    \left(\begin{array}{c}
         1 \\ 0 \\ -1
    \end{array}\right)\right)
    \cdot
    \frac{2*3^{2m+1}\sigma
    -2^{2m+1}*3^{m+1}
    \left(
    \begin{array}{c}
         2  \\
         -1 \\
         -1
    \end{array}
    \right)}
    {\left(4*3^{4m+3}
    +2^{4m+3}*3^{2m+3}
    \right)^\frac{1}{2}} \\
    &=
    \frac{2^{2m+3}*3^{2m+2}
    -2^{2m+1}*3^{2m+3}}
    {\left(4*3^{4m+3}
    +2^{4m+3}*3^{2m+3}
    \right)^\frac{1}{2}} \\
    &=
    \frac{2^{2m+1}*3^{2m+2}}
    {\left(4*3^{4m+3}
    +2^{4m+3}*3^{2m+3}
    \right)^\frac{1}{2}},
\end{align}
and that
\begin{align}
    j^{m}\cdot v^{k^{m+1}}
    &=
    \left(2^{2m+1}\sigma
    +3^m\left(\begin{array}{c}
        2 \\ -1 \\ -1
    \end{array}\right)\right)
    \cdot 
    \frac{2*3^{2m+2}\sigma
    -2^{2m+2}*3^{m+2}
        \left(\begin{array}{c}
             1  \\
             0 \\
             -1
        \end{array}\right)}
    {\left(4*3^{4m+5}
    +2^{4m+5}*3^{2m+4}
    \right)^\frac{1}{2}} \\
    &=
    \frac{2^{2m+2}*3^{2m+3}
    -2^{2m+2}*3^{2m+3}}
    {\left(4*3^{4m+5}
    +2^{4m+5}*3^{2m+4}
    \right)^\frac{1}{2}} \\
    &=0.
\end{align}
Therefore, we may conclude that
\begin{equation}
    (u\cdot\nabla)\Tilde{u}=0,
\end{equation}
and that
\begin{equation}
    (\Tilde{u}\cdot\nabla)u
    =
    2 \pi i
    \frac{2^{2m+1}*3^{2m+2}
    \left(2*3^{2m+2}\sigma
    -2^{2m+2}*3^{m+2}
        \left(\begin{array}{c}
             1  \\
             0 \\
             -1
        \end{array}\right)\right)}
    {\left(4*3^{4m+3}
    +2^{4m+3}*3^{2m+3}
    \right)^\frac{1}{2}
    \left(4*3^{4m+5}
    +2^{4m+5}*3^{2m+4}
    \right)^\frac{1}{2}}
    e^{2\pi ih^m\cdot x}.
\end{equation}
Observing that this is a single Fourier mode we can see that
\begin{equation}   
    \mathbb{P}_\mathcal{M}
    ((\Tilde{u}\cdot\nabla)u)
    =
    v^{h^m}\cdot ((\Tilde{u}\cdot\nabla)u)v^{h^m}.
\end{equation}
Now we can compute that
\begin{align}
    v^{h^m}\cdot(\Tilde{u}\cdot\nabla)u)
    &=
    \pi i e^{2\pi i h^m\cdot x}
    \frac{2^{2m+2}*3^{2m+2}
    \left(2*3^{2m+2}\sigma
    -2^{2m+2}*3^{m+2}
        \left(\begin{array}{c}
             1  \\
             0 \\
             -1
        \end{array}\right)\right)}
    {\left(4*3^{4m+3}
    +2^{4m+3}*3^{2m+3}
    \right)^\frac{1}{2}
    \left(4*3^{4m+5}
    +2^{4m+5}*3^{2m+4}
    \right)^\frac{1}{2}} \\
    \notag &\quad\quad \cdot
    \frac{1}{\left(4*3^{4m+3}
    +2^{4m+3}*3^{2m+3}
    \right)^\frac{1}{2}}
        \left(
    2*3^{2m+1}\sigma
    -2^{2m+1}*3^{m+1}
    \left(
    \begin{array}{c}
         1  \\
         1 \\
         -2
    \end{array}
    \right)
    \right) \\
    &=
    \pi i e^{2\pi i h^m\cdot x}
    \frac{2^{2m+2}*3^{2m+2}
    \left(4*3^{4m+4}+2^{4m+3}*3^{2m+4}
    \right)}
    {{\left(4*3^{4m+3}
    +2^{4m+3}*3^{2m+3}\right)
    \left(4*3^{4m+5}
    +2^{4m+5}*3^{2m+4}
    \right)^\frac{1}{2}}} \\
    &=
    \pi i e^{2\pi i h^m\cdot x}
    \frac{2^{2m+2}*3^{2m+3}}
    {\left(4*3^{4m+5}
    +2^{4m+5}*3^{2m+4}
    \right)^\frac{1}{2}}\\
    &=
    \pi i e^{2\pi i h^m\cdot x}
    \frac{2^{2m+2}*3^{2m+3}}
    {2^{2m+\frac{5}{2}}*3^{m+2}
    \left(1+2^{-4m-3}*3^{2m+1}
    \right)^\frac{1}{2}} \\
    &=
    \frac{i}{2}
    \frac{\sqrt{2}\pi}{\left(1+\frac{3}{8}
    \left(\frac{3}{4}\right)^{2m}
    \right)^\frac{1}{2}} 3^{m+1}
    e^{2\pi i h^m\cdot x}.
\end{align}
This implies that
\begin{equation}
    \mathbb{P}_{\mathcal{M}}((\Tilde{u}\cdot\nabla) u)
    =
    \frac{b_m}{2} i 
    v^{h^m}e^{2\pi i h^m\cdot x},
\end{equation}
and this completes the proof.
\end{proof}

\begin{proposition} \label{Tedious7}
Fix $m\in\mathbb{Z}^+$, and let $u$ and $\Tilde{u}$ be given by
\begin{align}
    u&=
    iP_{12}(v^{k^{m+1}})e^{2\pi i 
    P_{12}(k^{m+1})\cdot x} \\
    \Tilde{u}
    &=
    -iP_{12}(v^{j^m})
    e^{-2\pi i P_{12}(j^m)\cdot x}.
    \end{align}
Then the bilinear term in the restricted model is given by
\begin{equation}
    \mathbb{P}_{\mathcal{M}}((\Tilde{u}\cdot\nabla) u
    +(u\cdot\nabla)\Tilde{u})
    =
    \frac{b_m}{2} i v^{h^m}e^{2\pi i h^m\cdot x},
\end{equation}
where 
\begin{equation}
    b_m=
    \frac{\sqrt{2}\pi}{\left(1+\frac{3}{8}
    \left(\frac{3}{4}\right)^{2m}
    \right)^\frac{1}{2}} 3^{m+1}.
\end{equation}
\end{proposition}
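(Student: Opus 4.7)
My plan is to avoid redoing the tedious direct calculation from \Cref{Tedious6} and instead deduce \Cref{Tedious7} from it by applying the permutation $P_{12}$ and invoking the equivariance properties of the nonlinearity and of the projection $\mathbb{P}_\mathcal{M}$ proven in \Cref{MaterialDerivativePermute,mPermute}. The first step is to observe that the modes in \Cref{Tedious7} are precisely the $P_{12}$-images of those in \Cref{Tedious6}. If we set
\begin{equation*}
u_6 = iv^{k^{m+1}}e^{2\pi i k^{m+1}\cdot x},\qquad
\tilde{u}_6 = -iv^{j^m}e^{-2\pi i j^m\cdot x},
\end{equation*}
then the definition $w^{P}(x)=Pw(P^{-1}x)$ together with $k\cdot P_{12}^{-1}x = P_{12}(k)\cdot x$ and the identity $P_{12}(v^k)=v^{P_{12}(k)}$ from \Cref{BasisProp} gives $u = u_6^{P_{12}}$ and $\tilde{u}=\tilde{u}_6^{P_{12}}$.

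The second step is to combine the permutation identities. By \Cref{MaterialDerivativePermute} applied to $u_6+\tilde{u}_6$ (expanding the quadratic) we get
\begin{equation*}
(\tilde{u}\cdot\nabla)u + (u\cdot\nabla)\tilde{u}
= \bigl((\tilde{u}_6\cdot\nabla)u_6 + (u_6\cdot\nabla)\tilde{u}_6\bigr)^{P_{12}},
\end{equation*}
and then by \Cref{mPermute},
\begin{equation*}
\mathbb{P}_\mathcal{M}\bigl((\tilde{u}\cdot\nabla)u + (u\cdot\nabla)\tilde{u}\bigr)
= \Bigl(\mathbb{P}_\mathcal{M}\bigl((\tilde{u}_6\cdot\nabla)u_6 + (u_6\cdot\nabla)\tilde{u}_6\bigr)\Bigr)^{P_{12}}.
\end{equation*}
Substituting the conclusion of \Cref{Tedious6}, the right-hand side equals $\bigl(\tfrac{b_m}{2} i v^{h^m}e^{2\pi i h^m\cdot x}\bigr)^{P_{12}} = \tfrac{b_m}{2}i P_{12}(v^{h^m}) e^{2\pi i P_{12}(h^m)\cdot x}$.

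The third and final step is the key geometric observation that $P_{12}$ fixes $h^m$. Since $h^m = 2^{2m+1}\sigma + 3^m(1,1,-2)^\top$ and $P_{12}$ only swaps the first two coordinates, which agree, we have $P_{12}(h^m)=h^m$, and hence $P_{12}(v^{h^m}) = v^{P_{12}(h^m)} = v^{h^m}$ by \Cref{BasisProp}. Plugging this in yields
\begin{equation*}
\mathbb{P}_\mathcal{M}\bigl((\tilde{u}\cdot\nabla)u + (u\cdot\nabla)\tilde{u}\bigr)
= \frac{b_m}{2}\,i\, v^{h^m}\, e^{2\pi i h^m\cdot x},
\end{equation*}
as claimed. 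There is no real obstacle here; the only thing one has to be careful about is matching the sign conventions on the negative-frequency mode $\tilde{u}$ correctly, so that the permutation is applied consistently to both factors before and after the projection. One could alternatively grind out the computation in the style of \Cref{Tedious6} directly, but the permutation argument makes the proof a two-line consequence of results already established.
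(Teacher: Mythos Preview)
Your argument is correct. The approach is morally the same as the paper's but packaged more cleanly: the paper simply says the proof of \Cref{Tedious6} goes through verbatim after replacing $(1,0,-1)$ by $(0,1,-1)$ and $(2,-1,-1)$ by $(-1,2,-1)$, leaving the reader to check that all dot products are unchanged. You recognize that this replacement is exactly the action of $P_{12}$ and, rather than rechecking the arithmetic, you invoke the equivariance results \Cref{MaterialDerivativePermute} and \Cref{mPermute} together with the fixed-point observation $P_{12}(h^m)=h^m$. This is a genuine improvement in presentation: it turns an implicit symmetry argument into an explicit one and removes the need for any hand verification. The only minor caveat is that \Cref{MaterialDerivativePermute} is stated for the diagonal $(u\cdot\nabla)u$, but your polarization trick (applying it to $u_6+\tilde u_6$ and subtracting the diagonal pieces) recovers the bilinear identity, so there is no gap.
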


\begin{proof}
    The proof is essentially identical to the proof of \Cref{Tedious6}, with the vector $(1,0,-1)$ replaced with $(0,1,-1)$ and the vector $(2,-1,-1)$ replaced with
    $(-1,2,-1).$ It is left to the reader to check that this change does not impact any of the dot products taken in the proof above.
\end{proof}

\begin{proposition} \label{Tedious8}
Fix $m\in\mathbb{Z}^+$, and let $u$ and $\Tilde{u}$ be given by
\begin{align}
    u&=
    iv^{k^{m+1}}e^{2\pi i k^{m+1}\cdot x} \\
    \Tilde{u}
    &=
    -iv^{h^m}
    e^{-2\pi i h^m\cdot x}.
    \end{align}
Then the bilinear term in the restricted model is given by
\begin{equation}
    \mathbb{P}_{\mathcal{M}}((\Tilde{u}\cdot\nabla) u
    +(u\cdot\nabla)\Tilde{u})
    =
    \frac{b_m}{2} i v^{j^m}e^{2\pi i j^m\cdot x},
\end{equation}
where 
\begin{equation}
    b_m=
    \frac{\sqrt{2}\pi}{\left(1+\frac{3}{8}
    \left(\frac{3}{4}\right)^{2m}
    \right)^\frac{1}{2}} 3^{m+1}.
\end{equation}
\end{proposition}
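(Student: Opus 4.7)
The plan is to follow the same template as \Cref{Tedious6} essentially verbatim, exploiting the structural symmetry between the setups: \Cref{Tedious6} uses the splitting $h^m = k^{m+1} - j^m$ from \eqref{eq6}, while \Cref{Tedious8} uses $j^m = k^{m+1} - h^m$ from \eqref{eq8}. Since $|h^m| = |j^m|$ and the normalization constants in the denominator are identical, one should expect the final scalar coefficient to be unchanged, namely $b_m/2$.

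First I will compute the two bilinear terms explicitly. Differentiating mode-by-mode,
\begin{align*}
(\Tilde u\cdot\nabla)u &= 2\pi i\,(k^{m+1}\cdot v^{h^m})\,v^{k^{m+1}}\,e^{2\pi i j^m\cdot x},\\
(u\cdot\nabla)\Tilde u &= -2\pi i\,(h^m\cdot v^{k^{m+1}})\,v^{h^m}\,e^{2\pi i j^m\cdot x},
\end{align*}
where the exponent $j^m$ comes from $k^{m+1}-h^m = j^m$. The next step is to verify the vanishing $h^m\cdot v^{k^{m+1}}=0$. Using the explicit formulas from \Cref{BasisComputations} and the identities $\sigma\cdot(1,0,-1)=0$, $(1,1,-2)\cdot(1,0,-1)=3$, and $h^m\cdot\sigma = 3\cdot 2^{2m+1}$, a direct computation yields
\[
h^m\cdot v^{k^{m+1}} \propto 2\cdot 3^{2m+2}\cdot 3\cdot 2^{2m+1} - 2^{2m+2}\cdot 3^{m+2}\cdot 3^{m+1} = 2^{2m+2}\cdot 3^{2m+3} - 2^{2m+2}\cdot 3^{2m+3} = 0,
\]
so $(u\cdot\nabla)\Tilde u = 0$, as in \Cref{Tedious6}.

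It remains to compute $k^{m+1}\cdot v^{h^m}$ and then project onto $v^{j^m}$. The pairing $k^{m+1}\cdot v^{h^m}$ splits through the $\sigma$ and $(1,1,-2)$ components of $v^{h^m}$, giving
\[
k^{m+1}\cdot v^{h^m} = \frac{2^{2m+3}\cdot 3^{2m+2} - 2^{2m+1}\cdot 3^{2m+3}}{\bigl(4\cdot 3^{4m+3}+2^{4m+3}\cdot 3^{2m+3}\bigr)^{1/2}} = \frac{2^{2m+1}\cdot 3^{2m+2}}{\bigl(4\cdot 3^{4m+3}+2^{4m+3}\cdot 3^{2m+3}\bigr)^{1/2}},
\]
which is exactly the value of $k^{m+1}\cdot v^{j^m}$ computed in the proof of \Cref{Tedious6}. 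Since the bilinear term is supported at the single Fourier frequency $j^m\in\mathcal{M}$, the projection $\mathbb{P}_\mathcal{M}$ reduces to the scalar projection $v\mapsto (v^{j^m}\cdot v)\, v^{j^m}$, and the computation
\[
v^{j^m}\cdot\bigl(2\cdot 3^{2m+2}\sigma - 2^{2m+2}\cdot 3^{m+2}(1,0,-1)\bigr)
\]
is identical in form to the Tedious 6 computation of $v^{h^m}\cdot(\cdot)$, because $(2,-1,-1)\cdot(1,0,-1)=3=(1,1,-2)\cdot(1,0,-1)$ and $\sigma\cdot(2,-1,-1)=0=\sigma\cdot(1,1,-2)$. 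Assembling these pieces produces precisely the scalar $b_m/2$ multiplying $i v^{j^m} e^{2\pi i j^m\cdot x}$.

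I do not expect any real obstacle: the argument is essentially a transcription of \Cref{Tedious6} under the label swap $h^m\leftrightarrow j^m$, and the two nontrivial inputs, namely the vanishing $h^m\cdot v^{k^{m+1}}=0$ and the nonzero $k^{m+1}\cdot v^{h^m}$, follow from the same two dot-product identities that drove \Cref{Tedious6}. The only thing to watch carefully is the bookkeeping of signs in $(\Tilde u\cdot\nabla)u$ versus $(u\cdot\nabla)\Tilde u$ once the $\pm i$ conventions from the $\Tilde u$ definition with the negative exponent are tracked through.
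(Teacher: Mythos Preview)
Your proposal is correct and follows essentially the same approach as the paper's own proof: compute the two directional derivatives, show $(u\cdot\nabla)\Tilde u=0$ via $h^m\cdot v^{k^{m+1}}=0$, evaluate $k^{m+1}\cdot v^{h^m}$, and project onto $v^{j^m}$. Your explicit observation that the computation mirrors \Cref{Tedious6} under the swap $h^m\leftrightarrow j^m$ is exactly the structural reason the paper's proof of \Cref{Tedious8} reads as a near-verbatim copy of \Cref{Tedious6}.
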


\begin{proof}
        First observe that
    \begin{align}
        (\Tilde{u}\cdot\nabla)u
        &=
        2\pi i (k^{m+1}\cdot v^{h^m})
        v^{k^{m+1}}e^{2\pi i j^{m}\cdot x} \\
        (u\cdot\nabla)\Tilde{u}
        &=
        -2\pi i (h^m\cdot v^{k^{m+1}})
        v^{h^m} e^{2\pi i j^{m}\cdot x}.
    \end{align}
Then we compute that
\begin{align}
    k^{m+1}\cdot v^{h^m}
    &=
    \left(2^{2m+2}\sigma+3^{m+1}
    \left(\begin{array}{c}
         1 \\ 0 \\ -1
    \end{array}\right)\right)
    \cdot
    \frac{2*3^{2m+1}\sigma
    -2^{2m+1}*3^{m+1}
    \left(
    \begin{array}{c}
         1  \\
         1 \\
         -2
    \end{array}
    \right)}
    {\left(4*3^{4m+3}
    +2^{4m+3}*3^{2m+3}
    \right)^\frac{1}{2}} \\
    &=
    \frac{2^{2m+3}*3^{2m+2}
    -2^{2m+1}*3^{2m+3}}
    {\left(4*3^{4m+3}
    +2^{4m+3}*3^{2m+3}
    \right)^\frac{1}{2}} \\
    &=
    \frac{2^{2m+1}*3^{2m+2}}
    {\left(4*3^{4m+3}
    +2^{4m+3}*3^{2m+3}
    \right)^\frac{1}{2}},
\end{align}
and that
\begin{align}
    h^{m}\cdot v^{k^{m+1}}
    &=
    \left(2^{2m+1}\sigma
    +3^m\left(\begin{array}{c}
        1 \\ 1 \\ -2
    \end{array}\right)\right)
    \cdot 
    \frac{2*3^{2m+2}\sigma
    -2^{2m+2}*3^{m+2}
        \left(\begin{array}{c}
             1  \\
             0 \\
             -1
        \end{array}\right)}
    {\left(4*3^{4m+5}
    +2^{4m+5}*3^{2m+4}
    \right)^\frac{1}{2}} \\
    &=
    \frac{2^{2m+2}*3^{2m+3}
    -2^{2m+2}*3^{2m+3}}
    {\left(4*3^{4m+5}
    +2^{4m+5}*3^{2m+4}
    \right)^\frac{1}{2}} \\
    &=0.
\end{align}
Therefore, we may conclude that
\begin{equation}
    (u\cdot\nabla)\Tilde{u}=0,
\end{equation}
and that
\begin{equation}
    (\Tilde{u}\cdot\nabla)u
    =
    2 \pi i
    \frac{2^{2m+1}*3^{2m+2}
    \left(2*3^{2m+2}\sigma
    -2^{2m+2}*3^{m+2}
        \left(\begin{array}{c}
             1  \\
             0 \\
             -1
        \end{array}\right)\right)}
    {\left(4*3^{4m+3}
    +2^{4m+3}*3^{2m+3}
    \right)^\frac{1}{2}
    \left(4*3^{4m+5}
    +2^{4m+5}*3^{2m+4}
    \right)^\frac{1}{2}}
    e^{2\pi i j^m\cdot x}.
\end{equation}
Observing that this is a single Fourier mode we can see that
\begin{equation}   
    \mathbb{P}_\mathcal{M}
    ((\Tilde{u}\cdot\nabla)u)
    =
    v^{j^m}\cdot ((\Tilde{u}\cdot\nabla)u)v^{j^m}.
\end{equation}
Now we can compute that
\begin{align}
    v^{j^m}\cdot(\Tilde{u}\cdot\nabla)u)
    &=
    \pi i e^{2\pi i j^m\cdot x}
    \frac{2^{2m+2}*3^{2m+2}
    \left(2*3^{2m+2}\sigma
    -2^{2m+2}*3^{m+2}
        \left(\begin{array}{c}
             1  \\
             0 \\
             -1
        \end{array}\right)\right)}
    {\left(4*3^{4m+3}
    +2^{4m+3}*3^{2m+3}
    \right)^\frac{1}{2}
    \left(4*3^{4m+5}
    +2^{4m+5}*3^{2m+4}
    \right)^\frac{1}{2}} \\
    \notag &\quad\quad \cdot
    \frac{1}{\left(4*3^{4m+3}
    +2^{4m+3}*3^{2m+3}
    \right)^\frac{1}{2}}
        \left(
    2*3^{2m+1}\sigma
    -2^{2m+1}*3^{m+1}
    \left(
    \begin{array}{c}
         2  \\
         -1 \\
         -1
    \end{array}
    \right)
    \right) \\
    &=
    \pi i e^{2\pi i j^m\cdot x}
    \frac{2^{2m+2}*3^{2m+2}
    \left(4*3^{4m+4}+2^{4m+3}*3^{2m+4}
    \right)}
    {{\left(4*3^{4m+3}
    +2^{4m+3}*3^{2m+3}\right)
    \left(4*3^{4m+5}
    +2^{4m+5}*3^{2m+4}
    \right)^\frac{1}{2}}} \\
    &=
    \pi i e^{2\pi i j^m\cdot x}
    \frac{2^{2m+2}*3^{2m+3}}
    {\left(4*3^{4m+5}
    +2^{4m+5}*3^{2m+4}
    \right)^\frac{1}{2}}\\
    &=
    \pi i e^{2\pi i j^m\cdot x}
    \frac{2^{2m+2}*3^{2m+3}}
    {2^{2m+\frac{5}{2}}*3^{m+2}
    \left(1+2^{-4m-3}*3^{2m+1}
    \right)^\frac{1}{2}} \\
    &=
    \frac{i}{2}
    \frac{\sqrt{2}\pi}{\left(1+\frac{3}{8}
    \left(\frac{3}{4}\right)^{2m}
    \right)^\frac{1}{2}} 3^{m+1}
    e^{2\pi i j^m\cdot x}.
\end{align}
This implies that
\begin{equation}
    \mathbb{P}_{\mathcal{M}}((\Tilde{u}\cdot\nabla) u)
    =
    \frac{b_m}{2} i 
    v^{j^m}e^{2\pi i j^m\cdot x},
\end{equation}
and this completes the proof.
\end{proof}

\begin{proposition} \label{Tedious9}
Fix $m\in\mathbb{Z}^+$, and let $u$ and $\Tilde{u}$ be given by
\begin{align}
    u&=
    iv^{P_{23}(k^{m+1})}
    e^{2\pi i P_{23}(k^{m+1})\cdot x} \\
    \Tilde{u}
    &=
    -iP_{23}(v^{h^m})
    e^{-2\pi i P_{23}(h^m)\cdot x}.
    \end{align}
Then the bilinear term in the restricted model is given by
\begin{equation}
    \mathbb{P}_{\mathcal{M}}((\Tilde{u}\cdot\nabla) u
    +(u\cdot\nabla)\Tilde{u})
    =
    \frac{b_m}{2} i v^{j^m}e^{2\pi i j^m\cdot x},
\end{equation}
where 
\begin{equation}
    b_m=
    \frac{\sqrt{2}\pi}{\left(1+\frac{3}{8}
    \left(\frac{3}{4}\right)^{2m}
    \right)^\frac{1}{2}} 3^{m+1}.
\end{equation}
\end{proposition}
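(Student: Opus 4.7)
The plan is to deduce Proposition \ref{Tedious9} directly from Proposition \ref{Tedious8} by applying the permutation $P_{23}$, rather than redoing a nine-line computation. The two vector fields in the statement are precisely the $P_{23}$-permutations of the fields appearing in Proposition \ref{Tedious8}, and both the self-advection nonlinearity and the projection $\mathbb{P}_{\mathcal{M}}$ are equivariant under permutations (by Propositions \ref{MaterialDerivativePermute} and \ref{mPermute}), so the identity transfers immediately.

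First I would set $u_0 = iv^{k^{m+1}} e^{2\pi i k^{m+1}\cdot x}$ and $\tilde u_0 = -iv^{h^m} e^{-2\pi i h^m\cdot x}$ as in Proposition \ref{Tedious8}, and check that $u = u_0^{P_{23}}$ and $\tilde u = \tilde u_0^{P_{23}}$. Using Proposition \ref{BasisProp} (which gives $v^{P(k)} = P(v^k)$) and Lemma \ref{DotLemma} (which gives $P(k)\cdot x = k\cdot P^{-1}(x)$), one computes
\begin{align*}
u_0^{P_{23}}(x) &= i\,v^{P_{23}(k^{m+1})}\, e^{2\pi i P_{23}(k^{m+1})\cdot x}, \\
\tilde u_0^{P_{23}}(x) &= -i\,P_{23}(v^{h^m})\, e^{-2\pi i P_{23}(h^m)\cdot x},
\end{align*}
matching the hypotheses of Proposition \ref{Tedious9} verbatim.

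Next I would combine Proposition \ref{MaterialDerivativePermute}, applied to each factor in the symmetric bilinear expression, with Proposition \ref{mPermute} to conclude
\begin{equation*}
\mathbb{P}_{\mathcal{M}}\bigl((\tilde u\cdot\nabla)u + (u\cdot\nabla)\tilde u\bigr)
= \Bigl(\mathbb{P}_{\mathcal{M}}\bigl((\tilde u_0\cdot\nabla)u_0 + (u_0\cdot\nabla)\tilde u_0\bigr)\Bigr)^{P_{23}}.
\end{equation*}
By Proposition \ref{Tedious8} the expression inside the outer $P_{23}$ equals $\tfrac{b_m}{2} i\, v^{j^m} e^{2\pi i j^m\cdot x}$, so applying the permutation yields $\tfrac{b_m}{2} i\, P_{23}(v^{j^m})\, e^{2\pi i P_{23}(j^m)\cdot x}$.

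The final step is the observation that $P_{23}(j^m) = j^m$: since $j^m = 2^{2m+1}\sigma + 3^m(2,-1,-1)^{\top}$ has its last two coordinates equal, swapping them is the identity. Invoking Proposition \ref{BasisProp} once more gives $P_{23}(v^{j^m}) = v^{P_{23}(j^m)} = v^{j^m}$, which reduces the answer to the stated form $\tfrac{b_m}{2} i\, v^{j^m} e^{2\pi i j^m\cdot x}$. There is no genuine obstacle here — the argument is a bookkeeping exercise in equivariance — but it is worth noting that the invariance $P_{23}(j^m) = j^m$ is essential: it is exactly the reason the output lands at the same frequency $j^m$ as in Proposition \ref{Tedious8}, with no further permutation bookkeeping needed. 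As a fallback, one could instead mimic the proof of Proposition \ref{Tedious8} line by line, substituting $P_{23}$-permuted vectors throughout, and the same cancellations would occur by identical algebra.
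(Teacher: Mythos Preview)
Your approach is correct and is a cleaner route than the paper's. The paper simply asserts that the proof of Proposition~\ref{Tedious9} is ``essentially identical to the proof of \Cref{Tedious8}, with the vector $(1,0,-1)$ replaced with $(1,-1,0)$ and the vector $(1,1,-2)$ replaced with $(1,-2,1)$,'' and leaves it to the reader to verify that none of the dot products change under this substitution. You instead supply the \emph{reason} why that substitution works: the input modes in Proposition~\ref{Tedious9} are exactly the $P_{23}$-permutations of those in Proposition~\ref{Tedious8}, the nonlinearity and $\mathbb{P}_{\mathcal{M}}$ are permutation-equivariant (Propositions~\ref{MaterialDerivativePermute} and~\ref{mPermute}), and the output frequency $j^m$ is $P_{23}$-invariant. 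Your argument thus explains, rather than merely asserts, why the tedious computation need not be repeated.

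One small point to tighten: Proposition~\ref{MaterialDerivativePermute} is stated only for the quadratic form $(u\cdot\nabla)u$, not the bilinear $(u\cdot\nabla)w$, and you invoke the bilinear version. This is not a genuine gap---the proof of Proposition~\ref{MaterialDerivativePermute} already establishes $(u^P\cdot\nabla)f(P^{-1}x)=((u\cdot\nabla)f)(P^{-1}x)$ for arbitrary $f$, from which $(u^P\cdot\nabla)w^P=((u\cdot\nabla)w)^P$ is immediate; alternatively you can polarize the quadratic identity using $((u+w)\cdot\nabla)(u+w)$. Just say explicitly which route you are taking.
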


\begin{proof}
    The proof is essentially identical to the proof of \Cref{Tedious8}, with the vector $(1,0,-1)$ replaced with $(1,-1,0)$ and the vector $(1,1,-2)$ replaced with
    $(1,-2,1).$ It is left to the reader to check that this change does not impact any of the dot products taken in the proof above.
\end{proof}

\section{Bilinear operator} \label{AppendixNonlinearBound}

The purpose of this appendix is to prove \Cref{BilinearBoundLemma}, which was left to this appendix because it is rather tedious and essentially technical.

\begin{definition}
    For all $u\in \dot{H}^s_\mathcal{M}$, 
    and for all $k\in \mathcal{M}$, let
    \begin{equation}
    u^k=\hat{u}(k)e^{2\pi i k\cdot x}.
    \end{equation}
    This notation will be convenient for proving bounds term by term in the Fourier series expression
    \begin{equation}
    u=\sum_{k\in\mathcal{M}} u^k.
    \end{equation}
\end{definition}

\begin{lemma} \label{L2Sobolev}
    For all $u\in \dot{H}^s_{\mathcal{M}}$, and for all $k\in\mathcal{M}$,
    \begin{equation}
    \left\|u^k\right\|_{\dot{H}^s}
    =
    (2\pi |k|)^s
    \left\|u^k\right\|_{L^2}
    \end{equation}
\end{lemma}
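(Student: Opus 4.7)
The plan is to prove both sides of the identity by direct computation in Fourier space, since $u^k$ is a pure Fourier mode and hence both the $L^2$ and $\dot{H}^s$ norms reduce to single-term sums. There is essentially nothing subtle here; the statement is bookkeeping that makes the later estimates term-by-term.

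First I would observe that the Fourier transform of $u^k(x) = \hat{u}(k) e^{2\pi i k \cdot x}$ is supported only at the frequency $k$, namely $\widehat{u^k}(\ell) = \hat{u}(k) \delta_{k,\ell}$ for $\ell \in \mathbb{Z}^3$. Since $k \neq 0$ (because $k \in \mathcal{M}$ and $0 \notin \mathcal{M}$), the homogeneous Sobolev norm is well defined. Applying Plancherel on the torus (using that $\{e^{2\pi i \ell \cdot x}\}_{\ell \in \mathbb{Z}^3}$ is an orthonormal basis of $L^2(\mathbb{T}^3;\mathbb{C}^3)$ and that the single-mode function $u^k$ is viewed as a $\mathbb{C}^3$-valued function) gives
\begin{equation}
\left\|u^k\right\|_{L^2}^2 = \sum_{\ell \in \mathbb{Z}^3} \left|\widehat{u^k}(\ell)\right|^2 = |\hat{u}(k)|^2.
\end{equation}

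Next I would compute the $\dot{H}^s$ norm directly from the definition in the preliminaries section,
\begin{equation}
\left\|u^k\right\|_{\dot{H}^s}^2 = \sum_{\substack{\ell \in \mathbb{Z}^3 \\ \ell \neq 0}} (4\pi^2 |\ell|^2)^s \left|\widehat{u^k}(\ell)\right|^2 = (4\pi^2 |k|^2)^s |\hat{u}(k)|^2 = (2\pi|k|)^{2s} \left\|u^k\right\|_{L^2}^2,
\end{equation}
where I have substituted the previous identity in the last step. Taking square roots yields the claimed identity.

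There is no real obstacle to this step; the only care needed is to note that $u^k$ is (in general) complex-valued as written, but the Sobolev and $L^2$ norms are defined for complex-valued functions in the obvious way, so Plancherel applies with no modification. This lemma will then be combined with the mode-interaction computations from \Cref{AppendixModeInteraction} to bound the nonlinearity in the constraint space.
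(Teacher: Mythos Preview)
Your proof is correct and follows exactly the approach the paper indicates: the paper's proof is the single line ``This follows immediately from the definitions of $u^k$ and the $\dot{H}^s$ norm,'' and your argument is precisely the unpacking of that sentence via Plancherel and the single-mode support of $\widehat{u^k}$.
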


\begin{proof}
    This follows immediately from the definitions of $u^k$ and the $\dot{H}^s$ norm.
\end{proof}

\begin{definition}
    For all $u,w\in \dot{H}^s_{\mathcal{M}},$ define the bilinear maps 
    $B^{+,+},B^{+,-},B^{-,+},B^{-,-}$ by
    \begin{align}
    B^{+,+}(u,w)
    &=
    \sum_{j,k\in\mathcal{M}^+}
    B\left(u^j,w^k\right) \\
    B^{+,-}(u,w)
    &=
    \sum_{\substack{j\in\mathcal{M}^+ \\
    k\in\mathcal{M}^-}}
    B\left(u^j,w^k\right) \\
    B^{-,+}(u,w)
    &=
    \sum_{\substack{j\in\mathcal{M}^- \\
    k\in\mathcal{M}^+}}
    B\left(u^j,w^k\right) \\
    B^{-,-}(u,w)
    &=
    \sum_{j,k\in\mathcal{M}^-}
    B\left(u^j,w^k\right)
    \end{align}
\end{definition}

\begin{remark}
    Recall that we have defined $B$ as
    \begin{equation}
    B(u,w)=-\frac{1}{2}\mathbb{P}_\mathcal{M}
    ((u\cdot\nabla)w+(w\cdot\nabla)u).
    \end{equation}
    It is simple to see that 
    \begin{equation}
    B(u,w)=\sum_{j,k\in\mathcal{M}}
    B\left(u^j,w^k\right).
    \end{equation}
    It then can be seen by splitting $\mathcal{M}$ into positive and negative frequencies that,
    \begin{equation}
    B=B^{+,+}+B^{+,-}+B^{-,+}+B^{-,-}.
    \end{equation}
\end{remark}

\begin{proposition} \label{DiagonalizationProp}
    For all $u,w\in \dot{H}^s_{\mathcal{M}}$,
    \begin{align}
    B^{+,+}(u,w) \label{++}
    &=
    \sum_{n=0}^{+\infty}
    \sum_{j,k\in\mathcal{M}^+_n}
    B\left(u^j,w^k\right) \\
    B^{+,-}(u,w) \label{+-}
    &=
    \sum_{n=0}^{+\infty}
    \sum_{j\in\mathcal{M}_n^+}\sum_{k\in \mathcal{M}_{n-1}^-\cup\mathcal{M}_{n+1}^-}
    B\left(u^j,w^k\right) \\
    B^{-,+}(u,w) \label{-+}
    &=
    \sum_{n=0}^{+\infty} 
    \sum_{j\in\mathcal{M}_n^-}\sum_{k\in \mathcal{M}_{n-1}^+
    \cup\mathcal{M}_{n+1}^+}
    B\left(u^j,w^k\right) \\
    B^{-,-}(u,w)  \label{--}
    &=
    \sum_{n=0}^{+\infty} 
    \sum_{j,k\in\mathcal{M}^-_n}
    B\left(u^j,w^k\right)
    \end{align}
\end{proposition}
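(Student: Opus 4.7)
The plan is to establish each of the four identities by showing that the omitted terms in the original definition are all zero, and then that the surviving terms can be reindexed into the stated form. The key observation is that for any two frequencies $j,k\in\mathcal{M}$, the bilinear piece $B(u^j,w^k)=-\tfrac12\mathbb{P}_{\mathcal{M}}\bigl((u^j\cdot\nabla)w^k+(w^k\cdot\nabla)u^j\bigr)$ is supported at the single Fourier mode $j+k$, so because $\mathbb{P}_{\mathcal{M}}$ annihilates every mode outside $\mathcal{M}$, we have $B(u^j,w^k)=0$ whenever $j+k\notin\mathcal{M}$.

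The first step is a purely arithmetic lemma: if $j\in\mathcal{M}^{+}_m$ and $k\in\mathcal{M}^{+}_n$, then by \Cref{CanonicalComputations} one has $\sigma\cdot j=3\cdot 2^{m}$ and $\sigma\cdot k=3\cdot 2^{n}$, and any element $\ell\in\mathcal{M}$ satisfies $\sigma\cdot\ell=\pm 3\cdot 2^{p}$ for some $p\in\mathbb{Z}^+$. Hence $j+k\in\mathcal{M}$ forces $2^{m}+2^{n}=2^{p}$, which holds only when $m=n$ (with $p=m+1$). The analogous elementary fact, which I would extract as a small lemma, is that $|2^{m}-2^{n}|=2^{p}$ with $p\ge 0$ forces $|m-n|=1$; this is because if (say) $m>n$ then $2^{m}-2^{n}=2^{n}(2^{m-n}-1)$ is a power of two iff $2^{m-n}-1$ is, and the only power-of-two solution of $2^{q}-1=2^{p}$ is $q=1,\ p=0$.

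Armed with these two observations, I would handle the four cases as follows. For $B^{+,+}$, if $j\in\mathcal{M}^{+}_m,\,k\in\mathcal{M}^{+}_n$ then $\sigma\cdot(j+k)=3(2^{m}+2^{n})>0$, so the membership test $j+k\in\mathcal{M}$ forces $m=n$, yielding \eqref{++}. For $B^{-,-}$, the signs flip uniformly but the same reasoning gives \eqref{--}. For $B^{+,-}$, with $j\in\mathcal{M}^{+}_m,\,k\in\mathcal{M}^{-}_n$, one has $\sigma\cdot(j+k)=3(2^{m}-2^{n})$; since every element of $\mathcal{M}$ has $\sigma\cdot\ell\ne 0$, the case $m=n$ is excluded, and the lemma above then forces $|m-n|=1$, i.e.\ $k\in\mathcal{M}^{-}_{m-1}\cup\mathcal{M}^{-}_{m+1}$. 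Relabelling $m\to n$ and summing over $n$ gives \eqref{+-}. The case $B^{-,+}$ is identical by symmetry, giving \eqref{-+}.

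The only mildly delicate point is the edge case $n=0$ in \eqref{+-} and \eqref{-+}, where $\mathcal{M}^{\pm}_{-1}$ appears formally in the inner sum; this is harmless under the convention $\mathcal{M}^{\pm}_{-1}=\varnothing$, so those summands are empty and contribute nothing. No subtle analytic issue arises (convergence is inherited from the absolute convergence of the Fourier expansion of $B(u,w)$ in $\dot{H}^{s}_{\mathcal{M}}$ via \Cref{BilinearBoundLemma}), so the proof reduces to the arithmetic bookkeeping sketched above; this is precisely where I expect the only bit of care to be needed.
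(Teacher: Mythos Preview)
Your proposal is correct and follows essentially the same approach as the paper: both arguments rest on the observation that $B(u^j,w^k)$ is supported at the single mode $j+k$ and hence vanishes unless $j+k\in\mathcal{M}$, and then use the identity $\sigma\cdot\ell=\pm 3\cdot 2^n$ for $\ell\in\mathcal{M}_n^{\pm}$ to force the shell relations. The paper simply packages your power-of-two arithmetic into \Cref{DyadicPowers} rather than spelling it out inline, and derives \eqref{-+} from \eqref{+-} via $B^{-,+}(u,w)=B^{+,-}(w,u)$.
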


\begin{proof}
    We can see from \Cref{DyadicPowers} that if $h,j\in\mathcal{M}^+$ and $h+j\in\mathcal{M}$, then the frequencies are in the same shell $h,j\in\mathcal{M}_n^+$ for some $n\in\mathbb{Z}^+$.
    If $k+j\notin \mathcal{M}$, then
    \begin{equation}
    B\left(u^j,w^k\right)=0,
    \end{equation}
    and it therefore follows that
    \begin{align}
    B^{+,+}(u,w)
    &=
    \sum_{j,k\in\mathcal{M}^+}
    B\left(u^j,w^k\right) \\
    &=
    \sum_{n=0}^{+\infty}
    \sum_{j,k\in\mathcal{M}^+_n}
    B\left(u^j,w^k\right).
    \end{align}
    The proof for \eqref{--} is exactly analogous, but with negative signs.
    
    We can likewise see from \Cref{DyadicPowers} that if $h\in\mathcal{M}_n^+, j\in\mathcal{M}^-$, and $h+j\in\mathcal{M}$, then either 
    $j\in\mathcal{M}_{n-1}^-$ or 
    $j\in\mathcal{M}_{n-1}^-$, and therefore
\begin{equation}
    B^{+,-}(u,w) 
    =
    \sum_{n=0}^{+\infty}
    \sum_{j\in\mathcal{M}_n^+}\sum_{k\in \mathcal{M}_{n-1}^-\cup\mathcal{M}_{n+1}^-}
    B\left(u^j,w^k\right) 
\end{equation}    
    Note that $B^{+,-}$ and $B^{-,+}$ are not each symmetric, but that $B^{+,-}+B^{-,+}$ is symmetric, because
    \begin{equation}
    B^{-,+}(u,w)=B^{+,-}(w,u),
    \end{equation}
    and therefore \eqref{-+} follows from \eqref{+-},
    completing the proof.
    \end{proof}

    \begin{lemma} \label{LemmaCascadeUpBound}
     For all $u,w\in \dot{H}^{\frac{s}{2}+
     \frac{3}{4\log(2)}}_\mathcal{M}$, and for all $j,k\in\mathcal{M}_n^+$,
     \begin{equation}
    \left\|B\left(u^j,w^k\right)
    \right\|_{\dot{H}^s}
    \leq C
    \left\|u^j\right\|_{\dot{H}^{\frac{s}{2}+\frac{\log(3)}{4\log(2)}}}
    \left\|w^k\right\|_{\dot{H}^{\frac{s}{2}+\frac{\log(3)}{4\log(2)}}}
     \end{equation}
     where $C>0$ is an absolute constant independent of $u,w, j$ and $k$.
    \end{lemma}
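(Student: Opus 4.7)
The plan is to reduce the bilinear estimate to a single scalar inner-product bound by exploiting that $u^j$ and $w^k$ each consist of a single Fourier mode living in the one-dimensional constraint directions $\spn\{v^j\}$ and $\spn\{v^k\}$, respectively. Writing $\hat u(j)=\alpha v^j$ and $\hat w(k)=\beta v^k$ for scalars $\alpha,\beta\in\mathbb{C}$, a direct expansion yields
\[
  B(u^j,w^k) \;=\; -\pi i\,\alpha\beta\,\mathbb{P}_{\mathcal{M}}\bigl[(v^j\cdot k)v^k + (v^k\cdot j)v^j\bigr]\,e^{2\pi i(j+k)\cdot x}.
\]
If $j+k\notin\mathcal{M}$ the projection kills this and the estimate is trivial; otherwise, since $\sigma\cdot(j+k)=3\cdot 2^{n+1}$, \Cref{DyadicPowers} forces $j+k\in\mathcal{M}^+_{n+1}$. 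Using that $\mathbb{P}_{\mathcal{M}}$ is an $L^2$-contraction and $|v^j|=|v^k|=1$, the problem reduces to
\[
  \|B(u^j,w^k)\|_{L^2} \;\leq\; \pi\,|\alpha||\beta|\bigl(|v^j\cdot k|+|v^k\cdot j|\bigr).
\]

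The core step is then the scalar estimate $|v^j\cdot k|\leq (\sqrt 3)^n/\sqrt 2$, which is precisely the ``partial depletion of nonlinearity'' phenomenon discussed in \Cref{FurtherResultsSection}. I would prove this by expanding
\[
  v^j\cdot k \;=\; \frac{|j|^2(\sigma\cdot k)-(\sigma\cdot j)(j\cdot k)}{|j|^2\,|P_j^\perp(\sigma)|},
\]
substituting the shell values $\sigma\cdot j=\sigma\cdot k=3\cdot 2^n$ and $|j|^2=|k|^2=3\cdot 2^{2n}+2\cdot 3^n$ from \Cref{CanonicalComputations}, and computing $j\cdot k$ via the polarization identity $|j+k|^2=2|j|^2+2j\cdot k$ combined with $|j+k|^2=3\cdot 2^{2n+2}+2\cdot 3^{n+1}$. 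The crucial algebraic cancellation is the clean identity $|j|^2-j\cdot k=3^n$: the $2^{2n}$-growth present in both $|j|^2$ and $j\cdot k$ exactly cancels, leaving only the slower $3^n$ growth. Combined with the explicit value $|P_j^\perp(\sigma)|=\sqrt{6\cdot 3^n}/|j|$ obtained as in \Cref{BasisComputations}, this gives the sharp bound, and the symmetric estimate for $|v^k\cdot j|$ is immediate. Hence $\|B(u^j,w^k)\|_{L^2}\leq \pi\sqrt 2\,(\sqrt 3)^n\,\|u^j\|_{L^2}\|w^k\|_{L^2}$.

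To finish, I apply \Cref{L2Sobolev} three times: once to promote the target $L^2$ norm to $\dot H^s$, producing a factor $(2\pi|j+k|)^s$, and twice to convert $\|u^j\|_{L^2}$ and $\|w^k\|_{L^2}$ into $\dot H^{s/2+\log(3)/(4\log(2))}$ norms, dividing by $(2\pi|j|)^{s+\log(3)/(2\log(2))}$ (using $|j|=|k|$). The observations $|j|\asymp \sqrt 3\cdot 2^n$ and the uniform bound $|j+k|/|j|\leq 2$ handle the $|j+k|^s / |j|^s$ ratio, while the identity $2^{n\log(3)/(2\log(2))}=(\sqrt 3)^n$ is what makes the scalar-bound factor $(\sqrt 3)^n$ cancel exactly against the $\log(3)/(2\log(2))$ piece of $|j|^{s+\log(3)/(2\log(2))}$. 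The only real obstacle is the exponent bookkeeping---specifically verifying that the exponent $s/2+\log(3)/(4\log(2))$ on the right-hand side is precisely what is required for the $(\sqrt 3)^n$ factors to cancel. This is not a surprise: this is the quantitative form of the depletion exponent $\log(3)/(4\log(2))$ that runs through the entire paper, arising because $|\nabla|\sim 2^n$ on each shell while the bilinear interaction grows only like $3^{n/2}\cdot 2^{n/2}$.
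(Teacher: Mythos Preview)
Your argument is correct and follows the same overall architecture as the paper's proof---first establish the $L^2$ bound $\|B(u^j,w^k)\|_{L^2}\leq C(\sqrt 3)^n\|u^j\|_{L^2}\|w^k\|_{L^2}$, then convert norms via \Cref{L2Sobolev} and the explicit shell magnitudes from \Cref{CanonicalComputations}. The norm-conversion half is essentially identical.

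Where you differ is in how you obtain the $L^2$ bound. The paper simply cites the explicit mode-interaction computations of \Cref{AppendixModeInteraction} (\Cref{Tedious1,Tedious2} for $n$ even, \Cref{Tedious3} for $n$ odd), which give the exact constants $a_m,b_m$ for the canonical pairs, and then remarks that all other pairs in $\mathcal{M}_n^+$ are permutations of these. Your approach instead treats all $j,k\in\mathcal{M}_n^+$ with $j+k\in\mathcal{M}$ uniformly: you expand $v^j\cdot k$ directly, use polarization together with the shell identity $|j+k|^2=3\cdot 2^{2n+2}+2\cdot 3^{n+1}$ to compute $j\cdot k$, and isolate the clean cancellation $|j|^2-j\cdot k=3^n$. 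This is a genuinely more self-contained route to the depletion bound: it bypasses the case-by-case enumeration and exposes the mechanism (the $2^{2n}$ terms cancelling between $|j|^2$ and $j\cdot k$) in one line. The paper's approach is more economical only because those tedious computations are needed anyway for the reduction to the dyadic ODE in \Cref{BlowupSection}; your polarization argument would stand alone.
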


\begin{proof}
We begin by observing that 
\begin{equation}
    \left\|B\left(u^j,w^k\right)
    \right\|_{L^2}
    \leq 
    C\left(\sqrt{3}\right)^n
    \left\|u^j\right\|_{L^2}
    \left\|w^k\right\|_{L^2}.
\end{equation}
This follows from \Cref{Tedious1,Tedious2} if $n$ is even, and from \Cref{Tedious3} if $n$ is odd. All of the other interactions of two frequencies in $\mathcal{M}_n^+$ can be written as permutations of these cases, and hence have the same bounds.
Applying \Cref{L2Sobolev}, we then find that
\begin{align}
    \left\|B\left(u^j,w^k\right)
    \right\|_{\dot{H}^s}
    &=
    (2\pi)^s |j+k|^s 
    \left\|B\left(u^j,w^k\right)
    \right\|_{L^2} \\
    &\leq
    C \left(\sqrt{3}\right)^n
    \frac{|j+k|^s}{|k|^{\frac{s}{2}
    +\frac{\log(3)}{4\log(2)}}
    |j|^{\frac{s}{2}
    +\frac{\log(3)}{4\log(2)}}}
    \left\|u^j\right\|_{\dot{H}^{\frac{s}{2}+\frac{\log(3)}{4\log(2)}}}
    \left\|w^k\right\|_{\dot{H}^{\frac{s}{2}+\frac{\log(3)}{4\log(2)}}}.
\end{align}
Using the fact that $j+k\in\mathcal{M}_{n+1}^+$, and recalling from \Cref{CanonicalComputations} that
\begin{align}
    |j|, |k|&=\sqrt{3*2^{2n}+2*3^n} \\
    |j+k| &= \sqrt{3*2^{2n+2}+2*3^{n+1}},
\end{align}
we compute that
\begin{align}
\frac{|j+k|^s}{|k|^{\frac{s}{2}
    +\frac{3}{4\log(2)}}
    |j|^{\frac{s}{2}
    +\frac{\log(3)}{4\log(2)}}}
    &=
    \frac{\left(3*2^{2n+2}+2*3^{n+1}
    \right)^{\frac{s}{2}}}
    {\left(3*2^{2n}+2*3^n
    \right)^{\frac{s}{2}+\frac{\log(3)}{4\log(2)}}} \\
    &=
    \frac{\left(18+6\left(\frac{3}{4}\right)^n
    \right)^\frac{s}{2}}
    {\left(3+2*\left(\frac{3}{4}\right)^n
    \right)^{\frac{s}{2}+\frac{\log(3)}{4\log(2)}}}
    2^{-\frac{\log(3)}{2\log(2)}n} \\
    &=
    \frac{\left(18+6\left(\frac{3}{4}\right)^n
    \right)^\frac{s}{2}}
    {\left(3+2*\left(\frac{3}{4}\right)^n
    \right)^{\frac{s}{2}+\frac{\log(3)}{4\log(2)}}}
    \left(\sqrt{3}\right)^{-n} \\
    &\leq 
    \frac{C}{\left(\sqrt{3}\right)^n},
\end{align}
and this completes the proof.
\end{proof}

\begin{proposition} \label{Bound++Prop}
     For all $u,w\in \dot{H}^{\frac{s}{2}+
     \frac{3}{4\log(2)}}_\mathcal{M}$,
    \begin{equation}
    \left\|B^{+,+}(u,w)\right\|_{\dot{H}^s}
    \leq C
    \|u\|_{\dot{H}^{\frac{s}
    {2}+\frac{\log(3)}{4\log(2)}}}\|w\|_{\dot{H}^{\frac{s}
    {2}+\frac{\log(3)}{4\log(2)}}},
    \end{equation}
    where $C>0$ is an absolute constant independent of $u$ and $w$.
\end{proposition}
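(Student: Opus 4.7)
The plan is to build on \Cref{LemmaCascadeUpBound} and the dyadic decomposition in \Cref{DiagonalizationProp}. Since
\begin{equation}
B^{+,+}(u,w) = \sum_{n=0}^{+\infty}\sum_{j,k\in\mathcal{M}^+_n} B(u^j,w^k),
\end{equation}
and every summand $B(u^j,w^k)$ for $j,k\in\mathcal{M}^+_n$ is a single Fourier mode at frequency $j+k\in\mathcal{M}^+_{n+1}$ (by \Cref{DyadicPowers}), the contributions from distinct $n$ live on disjoint frequency shells. In particular, the $\dot{H}^s$ norm splits orthogonally as
\begin{equation}
\|B^{+,+}(u,w)\|_{\dot{H}^s}^2 = \sum_{n=0}^{+\infty} \left\|\sum_{j,k\in\mathcal{M}^+_n} B(u^j,w^k)\right\|_{\dot{H}^s}^2.
\end{equation}

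Next I would bound each inner sum by the triangle inequality together with \Cref{LemmaCascadeUpBound}, which gives
\begin{equation}
\left\|\sum_{j,k\in\mathcal{M}^+_n} B(u^j,w^k)\right\|_{\dot{H}^s}
\leq C \sum_{j,k\in\mathcal{M}^+_n}
\|u^j\|_{\dot{H}^{\frac{s}{2}+\frac{\log(3)}{4\log(2)}}}
\|w^k\|_{\dot{H}^{\frac{s}{2}+\frac{\log(3)}{4\log(2)}}}.
\end{equation}
Because $|\mathcal{M}^+_n|\leq 12$ independently of $n$, Cauchy--Schwarz in the finite sum over $j$ (and separately over $k$) converts each $\ell^1$ sum into an $\ell^2$ sum at the cost of an absolute constant. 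Writing
\begin{equation}
A_n := \sum_{j\in\mathcal{M}^+_n} \|u^j\|_{\dot{H}^{\frac{s}{2}+\frac{\log(3)}{4\log(2)}}}^2,
\qquad
D_n := \sum_{k\in\mathcal{M}^+_n} \|w^k\|_{\dot{H}^{\frac{s}{2}+\frac{\log(3)}{4\log(2)}}}^2,
\end{equation}
we obtain
\begin{equation}
\left\|\sum_{j,k\in\mathcal{M}^+_n} B(u^j,w^k)\right\|_{\dot{H}^s}^2 \leq C\, A_n D_n.
\end{equation}

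To finish, I would sum over $n$ and use that for non-negative sequences $\sum_n A_n D_n \leq (\sum_n A_n)(\sum_n D_n)$, so that
\begin{equation}
\|B^{+,+}(u,w)\|_{\dot{H}^s}^2
\leq C \sum_{n=0}^{+\infty} A_n D_n
\leq C \Big(\sum_n A_n\Big)\Big(\sum_n D_n\Big)
= C \|u\|_{\dot{H}^{\frac{s}{2}+\frac{\log(3)}{4\log(2)}}}^2
\|w\|_{\dot{H}^{\frac{s}{2}+\frac{\log(3)}{4\log(2)}}}^2.
\end{equation}
Taking square roots yields the desired bound.

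I expect no serious obstacle here: all the analytical content sits in \Cref{LemmaCascadeUpBound}, where the partial depletion of the nonlinearity produced the crucial $(\sqrt{3})^{-n}$ factor that was absorbed into the Sobolev norm correction $\frac{\log(3)}{4\log(2)}$. What remains is purely structural bookkeeping: exploiting dyadic orthogonality in frequency, bounded cardinality of each shell, and a Cauchy--Schwarz step. The only mild care needed is to confirm that the single-mode shells $\mathcal{M}^+_{n+1}$ produced by $n$-th level interactions genuinely do not overlap across different $n$, which is immediate from \Cref{DyadicPowers}.
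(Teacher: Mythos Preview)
Your proof is correct and follows essentially the same route as the paper: decompose via \Cref{DiagonalizationProp}, apply \Cref{LemmaCascadeUpBound} termwise, use the bounded cardinality of each shell to pass from $\ell^1$ to $\ell^2$, and then sum over $n$. The only minor variation is that you exploit the orthogonality of the output shells $\mathcal{M}^+_{n+1}$ to get an $\ell^2$ sum over $n$ before estimating, whereas the paper simply uses the triangle inequality to get an $\ell^1$ sum and then applies Cauchy--Schwarz in $n$; both lead to the same bound with the same constants up to a fixed factor.
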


\begin{proof}
    Applying \Cref{DiagonalizationProp} and \Cref{LemmaCascadeUpBound}, we find that
    \begin{align}
    \left\|B^{+,+}(u,w)\right\|_{\dot{H}^s}
    &\leq
    \sum_{n=0}^{+\infty}
    \sum_{j,k\in\mathcal{M}^+_n}
    \left\|B\left(u^j,w^k\right)
    \right\|_{\dot{H}^s} \\
    &\leq 
    C \sum_{n=0}^{+\infty}
    \sum_{j,k\in\mathcal{M}^+_n}
    \left\|u^j\right\|_{\dot{H}^{\frac{s}{2}+\frac{\log(3)}{4\log(2)}}}
    \left\|w^k\right\|_{\dot{H}^{\frac{s}{2}+\frac{\log(3)}{4\log(2)}}} \\
    &\leq 
    C \sum_{n=0}^{+\infty}
    \left(\sum_{j\in\mathcal{M}^+_n}
    \left\|u^j\right\|_{\dot{H}^{\frac{s}{2}+\frac{\log(3)}{4\log(2)}}}^2
    \right)^\frac{1}{2}
    \left(\sum_{k\in\mathcal{M}^+_n}
    \left\|w^k\right\|_{\dot{H}^{\frac{s}{2}+\frac{\log(3)}{4\log(2)}}}^2
    \right)^\frac{1}{2}.
    \end{align}
    Note that in this last step we have used that $L^1$ and $L^2$ norms (and in fact any norm), are equivalent on a finite set, and that each shell $\mathcal{M}_n^+$ has six elements. This is where the dyadic structure is used, as this bound would not be available if there were an unbounded number of interactions for each shell.
    Now we apply H\"older's inequality to the sequences
    \begin{equation*}
    \left(\sum_{j\in\mathcal{M}^+_n}
    \left\|u^j\right\|_{\dot{H}^{\frac{s}{2}+\frac{\log(3)}{4\log(2)}}}^2
    \right)^\frac{1}{2}_{n\in\mathbb{Z}^+}
    \text{and}
    \left(\sum_{j,k\in\mathcal{M}^+_n}
    \left\|w^k\right\|_{\dot{H}^{\frac{s}{2}+\frac{\log(3)}{4\log(2)}}}^2
    \right)^\frac{1}{2}_{n\in\mathbb{Z}^+},
    \end{equation*}
    and concude that
    \begin{align}
    \left\|B^{+,+}(u,w)\right\|_{\dot{H}^s}
    &\leq
    C \left(\sum_{n=0}^{+\infty}
    \sum_{j\in\mathcal{M}^+_n}
    \left\|u^j\right\|_{\dot{H}^{\frac{s}{2}+\frac{\log(3)}{4\log(2)}}}^2
    \right)^\frac{1}{2}
    \left(\sum_{n=0}^{+\infty}
    \sum_{k\in\mathcal{M}^+_n}
    \left\|w^k\right\|_{\dot{H}^{\frac{s}{2}+\frac{\log(3)}{4\log(2)}}}^2
    \right)^\frac{1}{2} \\
    &=
    C \|u\|_{\dot{H}^{\frac{s}
    {2}+\frac{\log(3)}{4\log(2)}}}
    \|w\|_{\dot{H}^{\frac{s}
    {2}+\frac{\log(3)}{4\log(2)}}},
    \end{align}
    which completes the proof.
\end{proof}

\begin{lemma} \label{LemmaCascadeOutBound}
    For all $u,w\in \dot{H}^{\frac{s}{2}+
     \frac{3}{4\log(2)}}_\mathcal{M}$, and for all $j\in\mathcal{M}_{n+1}^+, k\in\mathcal{M}_n^-$,
     \begin{equation}
    \left\|B\left(u^j,w^k\right)
    \right\|_{\dot{H}^s}
    \leq C
    \left\|u^j\right\|_{\dot{H}^{\frac{s}{2}+\frac{\log(3)}{4\log(2)}}}
    \left\|w^k\right\|_{\dot{H}^{\frac{s}{2}+\frac{\log(3)}{4\log(2)}}}
     \end{equation}
     where $C>0$ is an absolute constant independent of $u,w,j$ and $k$.
\end{lemma}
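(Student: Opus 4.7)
The plan is to mirror the proof of \Cref{LemmaCascadeUpBound}, with the roles of the frequency levels adjusted since here the interaction sends a level-$(n{+}1)$ frequency together with a level-$n$ frequency of opposite sign down to level $n$ rather than up to level $n{+}1$. First I would observe that, by \Cref{DyadicPowers} (applied to the remark following it about differences $a-b$ with $a \in \mathcal{M}^+, b \in \mathcal{M}^+$), if $j+k \notin \mathcal{M}$ then $B(u^j,w^k)=0$ and the bound is trivial; otherwise $j+k \in \mathcal{M}_n^+$.

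Next I would invoke the explicit mode-interaction computations. The pairs $(j,k)$ with $j\in \mathcal{M}_{n+1}^+$ and $k\in \mathcal{M}_n^-$ such that $j+k \in \mathcal{M}_n^+$ are, up to permutations and the sign symmetry $v^{-\ell}=v^\ell$ from \Cref{BasisProp}, exactly the cases enumerated in \Cref{Tedious4,Tedious5,Tedious6,Tedious7,Tedious8,Tedious9}. Each of these computations yields a bound of the form
\begin{equation*}
\left\|B\!\left(u^j,w^k\right)\right\|_{L^2}
\leq C\,(\sqrt{3})^{\,n}\,
\left\|u^j\right\|_{L^2}\left\|w^k\right\|_{L^2},
\end{equation*}
since the coupling constants $a_m,b_m$ appearing there are uniformly $O((\sqrt{3})^n)$ when the receiving shell is $\mathcal{M}_n^+$. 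I would verify by inspection that the permutations and reflections preserve both the magnitudes $|j|,|k|,|j+k|$ and the geometric factors in the projection $\mathbb{P}_\mathcal{M}$, so no new constants arise.

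Then I would pass to $\dot{H}^s$ using \Cref{L2Sobolev}:
\begin{equation*}
\left\|B\!\left(u^j,w^k\right)\right\|_{\dot{H}^s}
=(2\pi|j+k|)^s \left\|B\!\left(u^j,w^k\right)\right\|_{L^2}
\leq C\,(\sqrt{3})^{\,n}\,
\frac{|j+k|^s}{|j|^{\frac{s}{2}+\frac{\log(3)}{4\log(2)}} |k|^{\frac{s}{2}+\frac{\log(3)}{4\log(2)}}}
\left\|u^j\right\|_{\dot{H}^{\frac{s}{2}+\frac{\log(3)}{4\log(2)}}}\!\!\!
\left\|w^k\right\|_{\dot{H}^{\frac{s}{2}+\frac{\log(3)}{4\log(2)}}}.
\end{equation*}
Using the explicit magnitudes from \Cref{CanonicalComputations}, namely $|j|^2=3\cdot 2^{2n+2}+2\cdot 3^{n+1}$, $|k|^2=3\cdot 2^{2n}+2\cdot 3^{n}$, and $|j+k|^2=3\cdot 2^{2n}+2\cdot 3^{n}$, the ratio simplifies to
\begin{equation*}
\frac{|j+k|^s}{|j|^{\frac{s}{2}+\frac{\log(3)}{4\log(2)}} |k|^{\frac{s}{2}+\frac{\log(3)}{4\log(2)}}}
= \frac{\bigl(3+2(3/4)^n\bigr)^{\frac{s}{2}}}{\bigl(12+6(3/4)^n\bigr)^{\frac{s}{2}+\frac{\log(3)}{4\log(2)}}\bigl(3+2(3/4)^n\bigr)^{\frac{\log(3)}{4\log(2)}}}\,2^{-\frac{\log(3)}{\log(2)}(n+\frac{1}{2})}\cdot(\text{bounded factor}),
\end{equation*}
which after combining the $2^{-\frac{\log(3)}{\log(2)}n}=(\sqrt{3})^{-2n}$ decay with the $(\sqrt{3})^n$ growth from the nonlinear bound leaves a factor bounded uniformly in $n$. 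This gives the claimed inequality with an absolute constant.

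The main potential obstacle is just the bookkeeping: making sure that the enumeration of canonical cases in \Cref{Tedious4}--\Cref{Tedious9}, together with permutations of the axes and the $\pm$ symmetry, genuinely exhausts every pair $(j,k)\in \mathcal{M}_{n+1}^+\times\mathcal{M}_n^-$ with nontrivial interaction. This follows from the classification in \Cref{DyadicSumDiff}, which lists all ways to write a canonical element of $\mathcal{M}^+$ as a difference of two elements of $\mathcal{M}^+$, combined with the observation that $B$ commutes with permutations (from \Cref{mPermute,MaterialDerivativePermute}) so that the constants are permutation-invariant. Beyond that, the estimate is a direct adaptation of \Cref{Bound++Prop}'s argument.
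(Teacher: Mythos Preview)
Your proposal is correct and follows essentially the same approach as the paper: obtain the $L^2$ bound $\|B(u^j,w^k)\|_{L^2}\le C(\sqrt{3})^n\|u^j\|_{L^2}\|w^k\|_{L^2}$ from the canonical mode computations (the paper invokes \Cref{Tedious4,Tedious5} for $n$ even and \Cref{Tedious6,Tedious7,Tedious8,Tedious9} for $n$ odd), then upgrade to $\dot H^s$ via \Cref{L2Sobolev} and balance the frequency weights using \Cref{CanonicalComputations}. Your explicit ratio computation is slightly garbled---the paper gets $\le C(\sqrt{3})^{-n}$ for the frequency ratio, exactly cancelling the $(\sqrt{3})^n$ from the nonlinearity, rather than the $(\sqrt{3})^{-2n}$ you wrote---but this only strengthens the bound and the conclusion is unaffected.
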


\begin{proof}
    The proof will follow largely similar arguments to \Cref{LemmaCascadeUpBound}.
    Begin by observing that 
\begin{equation}
    \left\|B\left(u^j,w^k\right)
    \right\|_{L^2}
    \leq 
    C\left(\sqrt{3}\right)^n
    \left\|u^j\right\|_{L^2}
    \left\|w^k\right\|_{L^2}.
\end{equation}
This follows from \Cref{Tedious4,Tedious5} if $n$ is even, and from 
\Cref{Tedious6,Tedious7,Tedious8,Tedious9}
if $n$ is odd. 
All of the other interactions between frequencies in $\mathcal{M}_{n+1}^+$ and $\mathcal{M}_n^-$ can be written as permutations of these cases, and hence have the same bounds.
Applying \Cref{L2Sobolev}, we then find that
\begin{align}
    \left\|B\left(u^j,w^k\right)
    \right\|_{\dot{H}^s}
    &=
    (2\pi)^s |j+k|^s 
    \left\|B\left(u^j,w^k\right)
    \right\|_{L^2} \\
    &\leq
    C \left(\sqrt{3}\right)^n
    \frac{|j+k|^s}{|k|^{\frac{s}{2}
    +\frac{\log(3)}{4\log(2)}}
    |j|^{\frac{s}{2}
    +\frac{\log(3)}{4\log(2)}}}
    \left\|u^j\right\|_{\dot{H}^{\frac{s}{2}+\frac{\log(3)}{4\log(2)}}}
    \left\|w^k\right\|_{\dot{H}^{\frac{s}{2}+\frac{\log(3)}{4\log(2)}}}.
\end{align}
Using the fact that $j+k\in\mathcal{M}_{n}^+$, and again recalling from \Cref{CanonicalComputations} that
\begin{align}
    |k|, |j+k|&=\sqrt{3*2^{2n}+2*3^n} \\
    |j| &= \sqrt{3*2^{2n+2}+2*3^{n+1}},
\end{align}
we compute that
\begin{align}
\frac{|j+k|^s}{|k|^{\frac{s}{2}
    +\frac{3}{4\log(2)}}
    |j|^{\frac{s}{2}
    +\frac{\log(3)}{4\log(2)}}}
    &=
    \frac{\left(3*2^{2n}+2*3^{n}
    \right)^{\frac{s}{2}}}
    {\left(3*2^{2n}+2*3^n
    \right)^{\frac{s}{4}+\frac{\log(3)}{8\log(2)}}
    \left(3*2^{2n+2}+2*3^{n+1}
    \right)^{\frac{s}{4}
    +\frac{\log(3)}{8\log(2)}}} \\
    &\leq 
    \frac{C}{2^{\frac{\log(3)}{2\log(2)}n}} \\
    &=
    \frac{C}{\left(\sqrt{3}\right)^n},
\end{align}
and this completes the proof.
\end{proof}

\begin{proposition} \label{Bound+-Prop}
     For all $u,w\in \dot{H}^{\frac{s}{2}+
     \frac{3}{4\log(2)}}_\mathcal{M}$,
    \begin{equation}
    \left\|B^{+,-}(u,w)\right\|_{\dot{H}^s}
    \leq C
    \|u\|_{\dot{H}^{\frac{s}
    {2}+\frac{\log(3)}{4\log(2)}}}\|w\|_{\dot{H}^{\frac{s}
    {2}+\frac{\log(3)}{4\log(2)}}},
    \end{equation}
    where $C>0$ is an absolute constant independent of $u$ and $w$.
\end{proposition}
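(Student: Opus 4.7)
The plan is to run the same argument as in the proof of \Cref{Bound++Prop}, simply with \Cref{LemmaCascadeOutBound} playing the role that \Cref{LemmaCascadeUpBound} played there. The starting point is the diagonalization formula \eqref{+-} from \Cref{DiagonalizationProp}, which already reduces $B^{+,-}(u,w)$ to a sum organized by the shell index $n$:
\begin{equation*}
B^{+,-}(u,w)=\sum_{n=0}^{+\infty}\sum_{j\in\mathcal{M}_n^+}\sum_{k\in\mathcal{M}_{n-1}^-\cup\mathcal{M}_{n+1}^-}B(u^j,w^k).
\end{equation*}
I would split the innermost set over $k$ into the piece $k\in\mathcal{M}_{n-1}^-$ and the piece $k\in\mathcal{M}_{n+1}^-$, bound each piece separately, and add the two bounds. (The piece $k\in\mathcal{M}_{n-1}^-$ is empty at $n=0$ by the convention $\mathcal{M}_{-1}:=\varnothing$.)

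Next, for each individual interaction term $B(u^j,w^k)$ in either piece I would invoke \Cref{LemmaCascadeOutBound}. In the piece with $k\in\mathcal{M}_{n-1}^-$, the frequency $j\in\mathcal{M}_n^+$ sits in the larger shell, so this is exactly the hypothesis of \Cref{LemmaCascadeOutBound} (after shifting the index by one). In the piece with $k\in\mathcal{M}_{n+1}^-$, the frequency $k$ sits in the larger shell; here I would first use the manifest symmetry $B(u^j,w^k)=B(w^k,u^j)$ coming from the definition of $B$ and then apply \Cref{LemmaCascadeOutBound} with the roles of the two arguments swapped. Either way we obtain the pointwise bound
\begin{equation*}
\|B(u^j,w^k)\|_{\dot H^s}\leq C\,\|u^j\|_{\dot H^{\frac{s}{2}+\frac{\log 3}{4\log 2}}}\|w^k\|_{\dot H^{\frac{s}{2}+\frac{\log 3}{4\log 2}}}.
\end{equation*}

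With the per-term estimate in hand, the assembly into an $\dot H^s$ norm bound is purely structural and identical to the last step in the proof of \Cref{Bound++Prop}. For each fixed $n$ the inner double sum runs over sets of at most six elements each (this is the crucial dyadic feature of $\mathcal M$: every shell $\mathcal M_n^\pm$ has only six frequencies, so within-shell $\ell^1$ and $\ell^2$ norms are equivalent with an absolute constant). I would apply Cauchy--Schwarz within each shell to obtain
\begin{equation*}
\sum_{j\in\mathcal M_n^+}\sum_{k\in\mathcal M_{n\pm 1}^-}\|u^j\|_{\dot H^{\frac{s}{2}+\frac{\log 3}{4\log 2}}}\|w^k\|_{\dot H^{\frac{s}{2}+\frac{\log 3}{4\log 2}}}\!\leq C\bigl(U_n\bigr)^{1/2}\bigl(W_{n\pm 1}\bigr)^{1/2},
\end{equation*}
where $U_n=\sum_{j\in\mathcal M_n^+}\|u^j\|^2_{\dot H^{\frac{s}{2}+\frac{\log 3}{4\log 2}}}$ and similarly for $W$; then H\"older's inequality in $n$ collapses these into the full homogeneous Sobolev norms of $u$ and $w$ via Plancherel, exactly as in \Cref{Bound++Prop}.

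I do not anticipate any genuine obstacle here: the hard analytic content—pairing the $(\sqrt 3)^n$ growth rate of the nonlinearity with the $|j+k|^s/(|j|\,|k|)^{s/2+\log(3)/(4\log 2)}\lesssim (\sqrt 3)^{-n}$ decay that produces the clean $L^2$-bilinear estimate on each mode interaction—has already been packaged into \Cref{LemmaCascadeOutBound}. The only bookkeeping point to be careful about is that two sub-pieces of $B^{+,-}$ must be handled (cascade out and cascade back in), and that in the cascade-back case one reindexes $n\to n-1$ (or equivalently sums over a shifted index) so that the outer H\"older step still produces the square of the $\dot H^{s/2+\log(3)/(4\log 2)}$ norm. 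The exact same proof, applied with the obvious sign changes, then yields the analogous bounds for $B^{-,+}$ and $B^{-,-}$, which will complete \Cref{BilinearBoundLemma}.
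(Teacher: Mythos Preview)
Your approach is essentially the same as the paper's: split $B^{+,-}$ into the two pieces according to whether $k\in\mathcal M_{n-1}^-$ or $k\in\mathcal M_{n+1}^-$, apply \Cref{LemmaCascadeOutBound} term by term, and then finish with the same within-shell $\ell^1\simeq\ell^2$ equivalence and H\"older-in-$n$ argument as in \Cref{Bound++Prop}. The first piece (after shifting $n\to n+1$) is handled exactly as the paper does.

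There is one small slip in your treatment of the second piece. After using the symmetry $B(u^j,w^k)=B(w^k,u^j)$, you have the first argument supported at a frequency in $\mathcal M_{n+1}^-$ and the second at a frequency in $\mathcal M_n^+$; the hypothesis of \Cref{LemmaCascadeOutBound} as stated requires $\mathcal M_{n+1}^+$ and $\mathcal M_n^-$. Swapping the arguments alone does not fix the signs. What is actually needed---and what the paper invokes at this step---is the observation that the mode-interaction computations from \Cref{AppendixModeInteraction} for $\mathcal M_{n+1}^-$ against $\mathcal M_n^+$ are identical to those for $\mathcal M_{n+1}^+$ against $\mathcal M_n^-$ up to an overall sign, so the same $L^2$ bound (and hence the same $\dot H^s$ bound) holds. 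Once you add this parity remark, your argument goes through verbatim.
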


\begin{proof}
    We begin by decomposing the bilinear operator $B^{+,-}=B^{+,-}_a+B^{+,-}_b$, where
    \begin{align}
    B^{+,-}_a(u,w) 
    &=
    \sum_{n=0}^{+\infty}
    \sum_{j\in\mathcal{M}_n^+}
    \sum_{k\in \mathcal{M}_{n-1}^-}
    B\left(u^j,w^k\right) \\
    B^{+,-}_b(u,w) 
    &=
    \sum_{n=0}^{+\infty}
    \sum_{j\in\mathcal{M}_n^+}
    \sum_{k\in \mathcal{M}_{n+1}^-}
    B\left(u^j,w^k\right).
    \end{align}
    Noting that $\mathcal{M}_{-1}^-=\emptyset$ by convention---as $\mathcal{M}_0$ is the lowest order frequency shell---we can shift the index in $B^{+,-}_a$,
    finding that
    \begin{equation}
    B^{+,-}_a(u,w) 
    =
    \sum_{n=0}^{+\infty}
    \sum_{j\in\mathcal{M}_{n+1}^+}
    \sum_{k\in \mathcal{M}_n^-}
    B\left(u^j,w^k\right).
    \end{equation}
    Applying \Cref{LemmaCascadeOutBound}, we find that
    \begin{align}
    \left\|B^{+,-}_a(u,w)\right\|_{\dot{H}^s}
    &\leq 
    \sum_{n=0}^{+\infty}
    \sum_{j\in\mathcal{M}_{n+1}^+}
    \sum_{k\in \mathcal{M}_n^-}
    \left\|B\left(u^j,w^k\right)
    \right\|_{\dot{H}^s} \\
    &\leq 
    C
    \sum_{n=0}^{+\infty}
    \left(\sum_{j\in\mathcal{M}_{n+1}^+}
    \left\|u^j\right\|_{\dot{H}^{\frac{s}
    {2}+\frac{\log(3)}{4\log(2)}}}\right)
    \left(\sum_{k\in \mathcal{M}_n^-}
    \left\|w^k\right\|_{\dot{H}^{\frac{s}
    {2}+\frac{\log(3)}{4\log(2)}}}\right) \\
    &\leq 
    C
    \sum_{n=0}^{+\infty}
    \left(\sum_{j\in\mathcal{M}_{n+1}^+}
    \left\|u^j\right\|_{\dot{H}^{\frac{s}
    {2}+\frac{\log(3)}{4\log(2)}}}^2
    \right)^\frac{1}{2}
    \left(\sum_{k\in \mathcal{M}_n^-}
    \left\|w^k\right\|_{\dot{H}^{\frac{s}
    {2}+\frac{\log(3)}{4\log(2)}}}^2
    \right)^\frac{1}{2} \\
    &\leq 
    C
    \left(\sum_{n=0}^{+\infty}
    \sum_{j\in\mathcal{M}_{n+1}^+}
    \left\|u^j\right\|_{\dot{H}^{\frac{s}
    {2}+\frac{\log(3)}{4\log(2)}}}^2
    \right)^\frac{1}{2}
    \left(\sum_{n=0}^{+\infty}
    \sum_{k\in \mathcal{M}_n^-}
    \left\|w^k\right\|_{\dot{H}^{\frac{s}
    {2}+\frac{\log(3)}{4\log(2)}}}^2
    \right)^\frac{1}{2} \\
    &\leq C
    \|u\|_{\dot{H}^{\frac{s}
    {2}+\frac{\log(3)}{4\log(2)}}}
    \left\|w\right\|_{\dot{H}^{\frac{s}
    {2}+\frac{\log(3)}{4\log(2)}}},
    \end{align}
    where we have again used the fact that each frequency shell is finite to control $L^1$ norms over a shell by $L^2$ norms, and applied H\"older's inequality to sequences in $L^2\left(\mathbb{Z}^+\right)$.

    To complete the proof we need the bound
    \begin{equation}
    \left\|B^{+,-}_b(u,w)\right\|_{\dot{H}^s}
    \leq 
    C\|u\|_{\dot{H}^{\frac{s}
    {2}+\frac{\log(3)}{4\log(2)}}}
    \left\|w\right\|_{\dot{H}^{\frac{s}
    {2}+\frac{\log(3)}{4\log(2)}}},
    \end{equation}
    and proof of this bound is exactly analogous to the $B^{+,-}_a$ case. In \Cref{AppendixModeInteraction}, we computed the interactions of the shell $\mathcal{M}_{n+1}^+$ and $\mathcal{M}_n^-$, and because our blowup Ansatz is odd, we can obtain the interactions of $\mathcal{M}_{n+1}^-$ and $\mathcal{M}_n^+$ by symmetry, without writing out the expressions. The computations for $\mathcal{M}_{n+1}^-$ and $\mathcal{M}_n^+$, however, are exactly the same with reversed signs and so the same bounds will hold.
\end{proof}

\begin{proposition} \label{Bound-+Prop}
     For all $u,w\in \dot{H}^{\frac{s}{2}+
     \frac{3}{4\log(2)}}_\mathcal{M}$,
    \begin{equation}
    \left\|B^{-,+}(u,w)\right\|_{\dot{H}^s}
    \leq C
    \|u\|_{\dot{H}^{\frac{s}
    {2}+\frac{\log(3)}{4\log(2)}}}\|w\|_{\dot{H}^{\frac{s}
    {2}+\frac{\log(3)}{4\log(2)}}},
    \end{equation}
    where $C>0$ is an absolute constant independent of $u$ and $w$.
\end{proposition}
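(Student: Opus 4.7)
The plan is to reduce this bound immediately to the previously established bound on $B^{+,-}$ by exploiting the symmetry of the bilinear form $B$. Recall that by construction
\[
B(u,w) = -\tfrac{1}{2}\mathbb{P}_\mathcal{M}\bigl((u\cdot\nabla)w+(w\cdot\nabla)u\bigr),
\]
which is manifestly symmetric in its two arguments. Consequently, term-by-term in the Fourier decomposition, $B(u^j, w^k) = B(w^k, u^j)$; combined with the definitions of $B^{-,+}$ and $B^{+,-}$ this yields the identity $B^{-,+}(u,w) = B^{+,-}(w,u)$ already noted implicitly in the proof of \Cref{DiagonalizationProp}.

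First I would verify this symmetry by simply relabeling summation indices in the series representation from \Cref{DiagonalizationProp}: swapping $(j,k) \leftrightarrow (k,j)$ in the sum defining $B^{-,+}(u,w)$ and using the symmetry of $B$ on individual modes produces precisely the sum defining $B^{+,-}(w,u)$. Then I would apply \Cref{Bound+-Prop} with the roles of $u$ and $w$ swapped, which directly gives
\[
\|B^{-,+}(u,w)\|_{\dot{H}^s} = \|B^{+,-}(w,u)\|_{\dot{H}^s} \leq C\,\|w\|_{\dot{H}^{\frac{s}{2}+\frac{\log(3)}{4\log(2)}}}\|u\|_{\dot{H}^{\frac{s}{2}+\frac{\log(3)}{4\log(2)}}},
\]
which is the desired bound, since the right-hand side is symmetric in $u$ and $w$.

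If instead one wanted a self-contained argument that parallels \Cref{Bound+-Prop} directly, the decomposition $B^{-,+} = B^{-,+}_a + B^{-,+}_b$ with $a$ collecting interactions $(j,k)\in\mathcal{M}_n^-\times\mathcal{M}_{n-1}^+$ and $b$ collecting $(j,k)\in\mathcal{M}_n^-\times\mathcal{M}_{n+1}^+$ should be carried out. After shifting indices in $B^{-,+}_a$ and using the fact that $|k|=|-k|$ and $v^{-k}=v^k$ (from \Cref{BasisProp}), an analogue of \Cref{LemmaCascadeOutBound} for the interaction of a positive-frequency mode with a negative-frequency mode of one lower dyadic level applies, yielding the same $\left(\sqrt{3}\right)^{-n}$ decay factor. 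The $\ell^2$ summation over $n$ via the Cauchy--Schwarz argument from \Cref{Bound+-Prop} then closes the estimate.

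There is essentially no obstacle here: the statement is genuinely just a corollary of \Cref{Bound+-Prop} via the symmetry of $B$, and the only thing to be careful about is that the reflection $k \mapsto -k$ preserves all the relevant structure (frequency magnitudes, basis vectors, and interaction coefficients from \Cref{AppendixModeInteraction}). Because the cascade-out lemma \Cref{LemmaCascadeOutBound} depends only on the dyadic level $n$ rather than on signs, and because $\dot{H}^s$ norms are insensitive to sign of frequency, the symmetry route is the cleanest path to the result.
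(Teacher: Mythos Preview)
Your proposal is correct and matches the paper's proof exactly: the paper simply recalls the identity $B^{-,+}(u,w)=B^{+,-}(w,u)$ and invokes \Cref{Bound+-Prop}. Your additional remarks about verifying the symmetry and the alternative self-contained route are sound but unnecessary, as the paper dispatches the result in one line.
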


\begin{proof}
    Recall that 
    \begin{equation}
    B^{-,+}(u,w)
    =
    B^{+,-}(w,u),
    \end{equation}
    and the result follows immediately from 
    \Cref{Bound+-Prop}.
\end{proof}

\begin{proposition} \label{Bound--Prop}
     For all $u,w\in \dot{H}^{\frac{s}{2}+
     \frac{3}{4\log(2)}}_\mathcal{M}$,
    \begin{equation}
    \left\|B^{-,-}(u,w)\right\|_{\dot{H}^s}
    \leq C
    \|u\|_{\dot{H}^{\frac{s}
    {2}+\frac{\log(3)}{4\log(2)}}}\|w\|_{\dot{H}^{\frac{s}
    {2}+\frac{\log(3)}{4\log(2)}}},
    \end{equation}
    where $C>0$ is an absolute constant independent of $u$ and $w$.
\end{proposition}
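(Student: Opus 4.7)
The plan is to reduce this proposition directly to Proposition \ref{Bound++Prop} by exploiting the reflection symmetry $k \mapsto -k$, which maps $\mathcal{M}^+$ bijectively onto $\mathcal{M}^-$ shell by shell. The two key facts I would use are: (i) $v^{-k} = v^k$ for every $k \in \mathcal{M}$, by \Cref{BasisProp}, and (ii) $|{-k}| = |k|$, so that all of the magnitude-based quantities appearing in Appendix \ref{AppendixModeInteraction} are preserved under reflection. Together these imply that the absolute values of the mode-interaction constants computed in \Cref{Tedious1,Tedious2,Tedious3} carry over verbatim to the interactions between two modes in $\mathcal{M}_n^-$: the only change in those computations is that $i v^k e^{2\pi i k \cdot x}$ is replaced by $-i v^k e^{-2\pi i k \cdot x}$, which affects signs and complex conjugates but leaves all $L^2$-norms and all moduli of coefficients unchanged.

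Concretely, first I would state and prove the direct analogue of \Cref{LemmaCascadeUpBound}: for all $u, w \in \dot{H}^{\frac{s}{2}+\frac{\log(3)}{4\log(2)}}_\mathcal{M}$ and for all $j, k \in \mathcal{M}_n^-$,
\begin{equation}
\left\|B\left(u^j, w^k\right)\right\|_{\dot{H}^s}
\leq C \left\|u^j\right\|_{\dot{H}^{\frac{s}{2}+\frac{\log(3)}{4\log(2)}}}
\left\|w^k\right\|_{\dot{H}^{\frac{s}{2}+\frac{\log(3)}{4\log(2)}}}.
\end{equation}
The starting point is $\|B(u^j, w^k)\|_{L^2} \leq C (\sqrt{3})^n \|u^j\|_{L^2} \|w^k\|_{L^2}$, which follows from the sign-flipped versions of the mode-interaction computations; then applying \Cref{L2Sobolev} and the formulas from \Cref{CanonicalComputations} for $|j|$, $|k|$, $|j+k|$ (which are insensitive to signs) yields exactly the same exponent algebra as in the proof of \Cref{LemmaCascadeUpBound}.

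With this lemma in hand, I would invoke the expansion \eqref{--} from \Cref{DiagonalizationProp},
\begin{equation}
B^{-,-}(u,w) = \sum_{n=0}^{+\infty} \sum_{j,k \in \mathcal{M}_n^-} B\left(u^j, w^k\right),
\end{equation}
apply the triangle inequality in $\dot{H}^s$, insert the bound above, and then repeat the two-step Hölder argument from the proof of \Cref{Bound++Prop} verbatim: first use finiteness of each shell ($|\mathcal{M}_n^-| = 6$) to pass from an $\ell^1$ sum over $j \in \mathcal{M}_n^-$ to an $\ell^2$ sum at fixed $n$, and then apply Cauchy--Schwarz in $n$ to the two resulting sequences in $n$, yielding the claimed product bound in $\dot{H}^{\frac{s}{2}+\frac{\log(3)}{4\log(2)}}$.

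I do not anticipate any real obstacle. The only point requiring attention is verifying that reflecting frequencies does not change the magnitudes of the mode-interaction coefficients in Appendix \ref{AppendixModeInteraction}; this is immediate from $v^{-k} = v^k$ and $|{-k}| = |k|$, together with the fact that those coefficients depend only on inner products and norms of the vectors $k^m, h^m, j^m$, $v^{k^m}, v^{h^m}, v^{j^m}$ and $\sigma$, all of which are insensitive to a global sign flip.
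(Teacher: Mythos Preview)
Your proposal is correct and follows essentially the same approach as the paper: the paper's proof simply asserts that the argument is ``entirely analogous to the proof of \Cref{Bound++Prop}'' with the interactions in $\mathcal{M}_n^-$ obtained from those in $\mathcal{M}_n^+$ by sign-flipping, and your writeup makes this symmetry argument explicit via $v^{-k}=v^k$ and $|{-k}|=|k|$.
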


\begin{proof}
    The proof of this bound is entirely analogous to the proof of \Cref{Bound++Prop}. In \Cref{AppendixModeInteraction}, we only compute the interactions between frequencies in $\mathcal{M}_n^+$, but not the frequencies in $\mathcal{M}_n^-$, which we get automatically by symmetry for odd solutions.
    The computations for interactions in $\mathcal{M}_n^-$ are exactly analogous with negative signs, so the same bounds will hold.
\end{proof}

\begin{remark}
    Putting together \Cref{Bound++Prop,Bound+-Prop,Bound-+Prop,Bound--Prop}, \Cref{BilinearBoundLemma} follows.
\end{remark}

\section{Geometric motivation for the blowup Ansatz}
\label{GeometricSection}

One motivation for the blowup Ansatz considered in this paper is a geometric constraint on the blowup of smooth solutions of the Navier--Stokes equation proven by Neustupa and Penel \cite{NeustupaPenel1}. They proved that if a solution to the Navier--Stokes equation blows up in finite-time $T_{max}<+\infty$, then for all 
$\frac{3}{2}<q\leq +\infty, 
\frac{2}{p}+\frac{3}{q}=2$,
\begin{equation} \label{StrainRegCritNS}
    \int_0^{T_{max}}
    \|\lambda_2^+(\cdot,t)\|_{L^q}^p
    \diff t=+\infty,
\end{equation}
where $\lambda_1\leq \lambda_2\leq \lambda_3$ are the eigenvalues of the strain matrix, which is the symmetric part of the gradient of the velocity
\begin{equation}
    S_{ij}=\frac{1}{2}\left(
    \partial_i u_j+\partial_j u_i\right).
\end{equation}
An alternative proof was provided by the author in \cite{MillerStrain} by making use of the evolution equation for the strain.

\begin{remark}
    This regularity criterion is scale critical, and provides important information about the geometric structure of any possible finite-time blowup for smooth solutions of the Navier--Stokes equation. Because the strain must be trace free with
    \begin{equation}
        \tr(S)
        =
        \nabla\cdot u
        =
        \lambda_1+\lambda_2+\lambda_3
        =0,
    \end{equation}
    we can see that $\lambda_2$ is always the smallest eigenvalue in absolute value. 
    While the velocity describes how a fluid is advected, and the vorticity describes how it is rotated, the strain describes how a parcel of fluid is deformed.
    
    Two positive eigenvalues corresponds to planar stretching and axial compression, while two negative eigenvalues corresponds to axial stretching and planar compression.
    The regularity criteria on the positive part of the intermediate eigenvalue implies that blowup requires unbounded planar stretching (as measured in scale critical $L^p_t L^q_x$ spaces).
    In physical terms, this means deformations that take spheres to pancakes promote blowup, while deformations that take spheres to rods undermine singularity formation.

    This regularity criterion is based on estimates controlling enstrophy growth. This is directly related to the Fourier-restricted hypodissipative Navier--Stokes equation considered in this paper, because this model equation shares the same identities for enstrophy growth as the full hypodissipative Navier--Stokes equation.
    We will state these identities now. For details see \cites{NeustupaPenel1,MillerStrain}.
\end{remark}

\begin{proposition} \label{IsometryProp}
    Suppose $u\in \dot{H}^s_{df}, s\geq 1$.
    Then for all $0\leq s'\leq s-1$,
    \begin{equation}
    \|\nabla u\|_{\dot{H}^{s'}}^2=\|\omega\|_{\dot{H}^{s'}}^2
    =2\|S\|_{\dot{H}^{s'}}^2
    \end{equation}
\end{proposition}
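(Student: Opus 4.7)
The plan is to establish these three identities frequency by frequency on the Fourier side, then invoke Plancherel. Since $u \in \dot{H}^s_{df}$, for every $k \in \mathbb{Z}^3$ we have $k\cdot\hat{u}(k)=0$, and the Fourier symbols of the three operators in question are
\begin{align}
\widehat{\nabla u}(k) &= 2\pi i\, k \otimes \hat{u}(k), \\
\widehat{\omega}(k) &= 2\pi i\, k \times \hat{u}(k), \\
\widehat{S}(k) &= \pi i\, \bigl(k \otimes \hat{u}(k) + \hat{u}(k) \otimes k\bigr).
\end{align}
These are the only facts needed beyond elementary linear algebra.

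First I will compute the Frobenius (componentwise) norm of each symbol at a fixed frequency $k$. For the gradient, $|k\otimes \hat{u}(k)|^2 = |k|^2 |\hat{u}(k)|^2$ is immediate. For the vorticity, the standard identity $|k\times \hat{u}(k)|^2 = |k|^2|\hat{u}(k)|^2 - (k\cdot\hat{u}(k))^2$ collapses to $|k|^2|\hat{u}(k)|^2$ because of the divergence-free constraint $k\cdot\hat{u}(k)=0$. For the strain, expanding the Frobenius norm of the symmetrized tensor gives $|k\otimes\hat{u}(k) + \hat{u}(k)\otimes k|^2 = 2|k|^2|\hat{u}(k)|^2 + 2(k\cdot\hat{u}(k))^2$, which again reduces to $2|k|^2|\hat{u}(k)|^2$. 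Including the prefactors $(2\pi)^2$ and $\pi^2$, I obtain
\begin{equation}
\bigl|\widehat{\nabla u}(k)\bigr|^2
= \bigl|\widehat{\omega}(k)\bigr|^2
= 2\bigl|\widehat{S}(k)\bigr|^2
= 4\pi^2 |k|^2 |\hat{u}(k)|^2
\end{equation}
for every $k \neq 0$, with the common value $0$ at $k=0$ since $u$ is mean-free.

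To finish, I multiply the pointwise-in-$k$ identity by the weight $(4\pi^2|k|^2)^{s'}$ and sum over $k\in\mathbb{Z}^3\setminus\{0\}$; Plancherel's theorem on $\mathbb{T}^3$ identifies each resulting sum with the corresponding $\dot{H}^{s'}$ norm. The hypothesis $s'\leq s-1$ ensures that all three norms are finite, since differentiation costs one derivative. There is really no obstacle here: the entire statement is a one-line Fourier computation, and the only place the divergence-free assumption is used is in dropping the $(k\cdot\hat{u}(k))^2$ terms in the vorticity and strain identities.
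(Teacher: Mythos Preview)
Your proof is correct. The paper does not actually supply its own proof of this proposition; it states the identities as known facts and refers the reader to \cite{NeustupaPenel1} and \cite{MillerStrain} for details, so there is nothing to compare against beyond noting that your Fourier-side argument is the standard one.
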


\begin{proposition} \label{VortexStretchProp}
    For all $u\in \dot{H}^2_{df}$,
    \begin{equation}
    -\left<(u\cdot\nabla)u,-\Delta u\right>
    =\left<S,\omega\otimes\omega\right>
    =-4\int_{\mathbb{T}^3}\det(S).
    \end{equation}
\end{proposition}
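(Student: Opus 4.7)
The plan is to prove the two equalities separately. For the first equality $-\left<(u\cdot\nabla)u,-\Delta u\right>=\left<S,\omega\otimes\omega\right>$, I would use $-\Delta u=\nabla\times\omega$ (valid for divergence-free $u$, since $\nabla\times\nabla\times u=\nabla(\nabla\cdot u)-\Delta u$) and integrate by parts to rewrite the left-hand side as $-\left<\nabla\times((u\cdot\nabla)u),\omega\right>$. The vorticity-stretching identity $\nabla\times((u\cdot\nabla)u)=(u\cdot\nabla)\omega-(\omega\cdot\nabla)u$ (which follows from $(u\cdot\nabla)u=\omega\times u+\nabla(|u|^2/2)$ together with standard curl identities) then splits this into a transport piece and a stretching piece. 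The transport piece $\left<(u\cdot\nabla)\omega,\omega\right>=\frac{1}{2}\int(u\cdot\nabla)|\omega|^2$ vanishes after one more integration by parts using $\nabla\cdot u=0$, and the stretching piece $\left<(\omega\cdot\nabla)u,\omega\right>=\int(\partial_j u_i)\omega_i\omega_j$ equals $\left<S,\omega\otimes\omega\right>$ because the antisymmetric part of $\nabla u$ pairs to zero against the symmetric tensor $\omega\otimes\omega$.

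For the second equality $\left<S,\omega\otimes\omega\right>=-4\int\det S$, I would decompose $\nabla u=S+A$ where $A_{ij}=\frac{1}{2}\epsilon_{ijk}\omega_k$ is the antisymmetric part. A short $\epsilon$-contraction yields $4(A^2)_{ij}=\omega_i\omega_j-|\omega|^2\delta_{ij}$, so that $S_{ij}\omega_i\omega_j=4\tr(SA^2)$ after invoking $\tr S=\nabla\cdot u=0$. Expanding $\tr((S+A)^3)$ via cyclicity of trace eliminates the cross terms $\tr(S^2 A)=0$ (symmetric/antisymmetric pairing) and $\tr(A^3)=0$ (a standard fact for antisymmetric $3\times 3$ matrices), leaving $\tr((\nabla u)^3)=\tr(S^3)+3\tr(SA^2)$. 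By Cayley--Hamilton for trace-free $3\times 3$ matrices, $\det M=\frac{1}{3}\tr(M^3)$, so rearranging gives $\tr(SA^2)=\det(\nabla u)-\det S$, and hence $\left<S,\omega\otimes\omega\right>=4\int\det(\nabla u)-4\int\det S$.

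It remains to show $\int_{\mathbb{T}^3}\det(\nabla u)=0$. Writing $\det(\nabla u)=\frac{1}{6}\epsilon_{ijk}\epsilon_{abc}(\partial_i u_a)(\partial_j u_b)(\partial_k u_c)$, the product rule gives $\partial_i\bigl(\epsilon_{ijk}\epsilon_{abc}u_a(\partial_j u_b)(\partial_k u_c)\bigr)=6\det(\nabla u)$, since the two additional terms produced carry second derivatives $\partial_i\partial_j u_b$ or $\partial_i\partial_k u_c$ that are symmetric in indices contracted against the antisymmetric $\epsilon_{ijk}$. Integrating this total divergence over $\mathbb{T}^3$ gives zero, completing the proof. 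The main conceptual step is recognizing $\det(\nabla u)$ as a null Lagrangian on the torus; everything else is elementary but index-heavy bookkeeping, and no step relies on the specific constraint space $\dot H^s_{\mathcal{M}}$, since the identity is stated for general divergence-free $u\in \dot H^2_{df}$.
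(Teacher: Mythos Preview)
Your proof is correct and complete. The paper itself does not give a proof of this proposition: it only states the identities and refers the reader to \cite{NeustupaPenel1} and \cite{MillerStrain} for details. Your argument via the vorticity-stretching identity for the first equality, and via the trace/determinant decomposition $\tr((\nabla u)^3)=\tr(S^3)+3\tr(SA^2)$ together with the null-Lagrangian property $\int_{\mathbb{T}^3}\det(\nabla u)=0$ for the second, is the standard approach in those references and is carried out cleanly here.
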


These two propositions yield the following equivalent identities for enstrophy growth for solutions of the Navier--Stokes equation:
\begin{align}
    \frac{\diff}{\diff t}
    \frac{1}{2}\|\nabla u(\cdot,t)\|_{L^2}^2
    &=-\nu\|-\Delta u\|_{L^2}^2
    -\left<(u\cdot\nabla)u,-\Delta u\right> \\
    \frac{\diff}{\diff t}
    \frac{1}{2}\|\omega(\cdot,t)\|_{L^2}^2
    &=-\nu \|\nabla \omega\|_{L^2}^2
    +\left<S,\omega\otimes\omega\right> \\
    \frac{\diff}{\diff t}
    \|S(\cdot,t)\|_{L^2}^2
    &=-2\nu \|\nabla S\|_{L^2}^2
    -4\int_{\mathbb{T}^3}\det(S).
\end{align}
From the last identity we can show that 
\begin{equation}
    \frac{\diff}{\diff t}\|S(\cdot,t)\|_{L^2}^2
    \leq -2\nu \|\nabla S\|_{L^2}^2
    +2\int_{\mathbb{T}^3}\lambda_2^+ |S|^2,
\end{equation}
and the regularity criterion \eqref{StrainRegCritNS} follows by a Gr\"onwall estimate on enstrophy growth after applying H\"older's inequality, the Sobolev inequality, and Young's inequality to find that 
\begin{equation}
    \frac{\diff}{\diff t}\|S(\cdot,t)\|_{L^2}^2
    \leq C_q
    \|\lambda_2^+\|_{L^q}^p\|S\|_{L^2}^2.
\end{equation}

These estimates are of interest to us, because they follow from the structure of the nonlinearity $(u\cdot\nabla)u$.
In particular, these identities also will hold for the Fourier-restricted, hypodissipative Navier--Stokes equation, with the appropriate adjustment of the dissipative term based on the degree of dissipation $\alpha$.

\begin{proposition} \label{HypoEnstrophyProp}
    Suppose $u\in C^1\left([0,T_{max});
    \dot{H}^1_{\mathcal{M}}\right)$
    is a solution of the Fourier-restricted, hypodissipative Navier--Stokes equation.
    Then for all $0< t< T_{max}$,
    the evolution of enstrophy can be expressed equivalently by
    \begin{align}
    \frac{\diff}{\diff t} \label{HypoEntVelo}
    \frac{1}{2}\|\nabla u(\cdot,t)\|_{L^2}^2
    &=-\nu\|\nabla u\|_{\dot{H}^\alpha}^2
    -\left<(u\cdot\nabla)u,-\Delta u\right> \\
    \frac{\diff}{\diff t}
    \frac{1}{2}\|\omega(\cdot,t)\|_{L^2}^2
    &=-\nu \|\omega\|_{\dot{H}^\alpha}^2 \label{HypoEntVort}
    +\left<S,\omega\otimes\omega\right> \\
    \frac{\diff}{\diff t}
    \|S(\cdot,t)\|_{L^2}^2 \label{HypoEntStrain}
    &=-2\nu \|S\|_{\dot{H}^\alpha}^2
    -4\int_{\mathbb{T}^3}\det(S).
\end{align}
In particular, we also have the upper bound
\begin{equation} \label{EntUpperBound}
    \frac{\diff}{\diff t}
    \|S(\cdot,t)\|_{L^2}^2
    \leq 
    -2\nu \|S\|_{\dot{H}^\alpha}^2
    +2\int_{\mathbb{T}^3}\lambda_2^+ |S|^2.
\end{equation}
\end{proposition}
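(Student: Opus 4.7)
The plan is to test the equation against $-\Delta u$, show that the projection $\mathbb{P}_\mathcal{M}$ is transparent in the resulting pairing, and then reduce everything to the already-established identities in \Cref{IsometryProp} and \Cref{VortexStretchProp}. First I would note that by the smoothing theorem proved for the viscous case, $u(\cdot,t) \in C^\infty(\mathbb{T}^3)$ for all $0<t<T_{max}$, so the computations below are justified classically (there is no regularity issue since $\Delta u \in \dot{H}^s_\mathcal{M}$ for every $s$).

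Next, starting from $\partial_t u = -\nu(-\Delta)^\alpha u - \mathbb{P}_\mathcal{M}((u\cdot\nabla)u)$, I would pair with $-\Delta u$ in $L^2$:
\begin{equation}
    \frac{d}{dt}\frac{1}{2}\|\nabla u\|_{L^2}^2
    = \langle -\Delta u,\partial_t u\rangle
    = -\nu\langle -\Delta u,(-\Delta)^\alpha u\rangle
    -\langle -\Delta u,\mathbb{P}_\mathcal{M}((u\cdot\nabla)u)\rangle.
\end{equation}
The first term equals $-\nu\|\nabla u\|_{\dot H^\alpha}^2$ by Plancherel. For the second term, the key observation is that $\mathbb{P}_\mathcal{M}$ is a self-adjoint Fourier multiplier and that $-\Delta u \in \dot{H}^s_\mathcal{M}$ for all $s$, since $u \in \dot{H}^s_\mathcal{M}$ and $-\Delta$ preserves both the support condition $\supp(\hat u)\subset \mathcal M$ and the direction condition $\hat u(k)\in\spn\{v^k\}$. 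Thus $\mathbb{P}_\mathcal{M}(-\Delta u)=-\Delta u$, and
\begin{equation}
    \langle -\Delta u,\mathbb{P}_\mathcal{M}((u\cdot\nabla)u)\rangle
    = \langle \mathbb{P}_\mathcal{M}(-\Delta u),(u\cdot\nabla)u\rangle
    = \langle -\Delta u,(u\cdot\nabla)u\rangle.
\end{equation}
This establishes \eqref{HypoEntVelo}. The identities \eqref{HypoEntVort} and \eqref{HypoEntStrain} then follow by applying \Cref{IsometryProp} to rewrite $\|\nabla u\|_{\dot H^\alpha}^2 = \|\omega\|_{\dot H^\alpha}^2 = 2\|S\|_{\dot H^\alpha}^2$ (and similarly for $L^2$), and by applying \Cref{VortexStretchProp} to rewrite $-\langle (u\cdot\nabla)u,-\Delta u\rangle$ as $\langle S,\omega\otimes\omega\rangle$ and as $-4\int \det(S)$.

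For the upper bound \eqref{EntUpperBound}, I would establish the pointwise inequality $-4\det(S(x))\leq 2\lambda_2^+(x)|S(x)|^2$ for a.e.\ $x$. If $\lambda_2(x)\leq 0$, then $\lambda_1\leq\lambda_2\leq 0\leq \lambda_3$ and $\det(S)=\lambda_1\lambda_2\lambda_3\geq 0$, so the inequality is trivial. If $\lambda_2(x)>0$, then $\lambda_1<0<\lambda_2\leq\lambda_3$ and the inequality reduces to $-2\lambda_1\lambda_3 \leq \lambda_1^2+\lambda_2^2+\lambda_3^2$, which follows from AM--GM since $-2\lambda_1\lambda_3\leq \lambda_1^2+\lambda_3^2$. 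Integrating and substituting into \eqref{HypoEntStrain} gives \eqref{EntUpperBound}.

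The only subtlety worth flagging is the transparency of $\mathbb{P}_\mathcal{M}$ in the nonlinear pairing; once that is in hand the proof is essentially identical to the argument for the full hypodissipative Navier--Stokes equation. There is no genuine obstacle here, only careful bookkeeping with the constraint space $\dot{H}^s_\mathcal{M}$.
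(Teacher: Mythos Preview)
Your proposal is correct and follows essentially the same approach as the paper: test against $-\Delta u$, drop $\mathbb{P}_\mathcal{M}$ using self-adjointness and the fact that $-\Delta u\in\dot H^s_\mathcal{M}$, then invoke \Cref{IsometryProp} and \Cref{VortexStretchProp}. Your pointwise eigenvalue argument for \eqref{EntUpperBound} is organized as a case split on the sign of $\lambda_2$, whereas the paper argues uniformly via $-\lambda_1\lambda_3\ge 0$ and $\lambda_2\le\lambda_2^+$, but the content is the same.
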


\begin{proof}
    Directly from the evolution equation, we compute that for all
    $0<t<T_{max}$,
    \begin{align}
    \frac{\diff}{\diff t}
    \frac{1}{2}\|\nabla u(\cdot,t)\|_{L^2}^2
    &=
    -\left<-\Delta u, -\nu(-\Delta)^\alpha u
    +\mathbb{P}_\mathcal{M}((u\cdot\nabla)u)
    \right> \\
    &=
    -\nu \|\nabla u\|_{\dot{H}^\alpha}^2
    -\left<-\Delta u, (u\cdot\nabla)u\right>.
    \end{align}
    Note that these expressions are all well defined because $u\in C^\infty\left((0,T_{max});
    C^\infty\left(\mathbb{T}^3\right)\right)$.
    The identities \eqref{HypoEntVort} and \eqref{HypoEntStrain} then follow from \Cref{IsometryProp,VortexStretchProp}.
    It then remains only to prove \eqref{EntUpperBound}.
    First we observe that $\lambda_1\leq 0$, because if $0<\lambda_1\leq \lambda_2\leq \lambda_3$, then
    \begin{equation}
        \tr(S)=\lambda_1+\lambda_2+\lambda_3>0,
    \end{equation}
    which contradicts the divergence free constraint.
    Likewise, we can see that $\lambda_3\geq 0$ is also guaranteed by the divergence free constraint.
    This implies that $-\lambda_1\lambda_3\geq 0$,
    and so we may conclude that
    \begin{align}
        -\det(S)
        &=
        (-\lambda_1\lambda_3)\lambda_2 \\
        &\leq 
        (-\lambda_1\lambda_3)\lambda_2^+ \\
        &\leq \frac{1}{2}
        (\lambda_1^2+\lambda_3^2) \lambda_2^+ \\
        &\leq \frac{1}{2}|S|^2 \lambda_2^+,
    \end{align}
    and this completes the proof.
\end{proof}

We also have an analogous result for the Fourier-restricted Euler equation. The proof is identical except that there is no dissipation term, and so is omitted.

\begin{proposition} \label{RestrictedEnstrophyProp}
    Suppose $u\in C^1\left([0,T_{max});
    \dot{H}^2_{\mathcal{M}}\right)$
    is a solution of the Fourier-restricted Euler equation.
    Then for all $0\leq t< T_{max}$,
    the evolution of enstrophy can be expressed equivalently by
    \begin{align}
    \frac{\diff}{\diff t}
    \frac{1}{2}\|\nabla u(\cdot,t)\|_{L^2}^2
    &=
    -\left<(u\cdot\nabla)u,-\Delta u\right> \\
    \frac{\diff}{\diff t}
    \frac{1}{2}\|\omega(\cdot,t)\|_{L^2}^2
    &=
    \left<S,\omega\otimes\omega\right> \\
    \frac{\diff}{\diff t}
    \|S(\cdot,t)\|_{L^2}^2
    &=
    -4\int_{\mathbb{T}^3}\det(S).
\end{align}
In particular, we also have the upper bound
\begin{equation}
    \frac{\diff}{\diff t}
    \|S(\cdot,t)\|_{L^2}^2
    \leq 
    2\int_{\mathbb{T}^3}\lambda_2^+ |S|^2
\end{equation}
\end{proposition}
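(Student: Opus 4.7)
The plan is to mirror the argument given for \Cref{HypoEnstrophyProp} with the dissipative term simply dropped. The proof reduces to three ingredients: differentiating enstrophy in time against $-\Delta u$, commuting the projection $\mathbb{P}_\mathcal{M}$ past this test function, and invoking the algebraic identities already established in \Cref{IsometryProp,VortexStretchProp}.

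First, I would pair the PDE $\partial_t u = -\mathbb{P}_\mathcal{M}((u\cdot\nabla)u)$ with $-\Delta u$ in $L^2$ to obtain
\begin{equation*}
\frac{\diff}{\diff t}\tfrac{1}{2}\|\nabla u\|_{L^2}^2
= \langle -\Delta u, \partial_t u\rangle
= -\langle -\Delta u, \mathbb{P}_\mathcal{M}((u\cdot\nabla)u)\rangle.
\end{equation*}
Because $-\Delta$ acts diagonally in Fourier space by the scalar multiplier $4\pi^2|k|^2$, it preserves both the support condition $\supp\hat{u}\subset\mathcal{M}$ and the direction condition $\hat{u}(k)\in\spn\{v^k\}$; hence $-\Delta u\in\dot{H}^0_\mathcal{M}$ already lies in the image of $\mathbb{P}_\mathcal{M}$. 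Since $\mathbb{P}_\mathcal{M}$ is an orthogonal projection on $L^2$ and therefore self-adjoint, the projection can be removed, yielding the first identity $-\langle (u\cdot\nabla)u, -\Delta u\rangle$. The second and third identities then follow immediately from \Cref{IsometryProp}, which gives $\|\nabla u\|_{L^2}^2 = \|\omega\|_{L^2}^2 = 2\|S\|_{L^2}^2$, combined with the pointwise vector calculus identity from \Cref{VortexStretchProp} equating $-\langle (u\cdot\nabla)u,-\Delta u\rangle$ with both $\langle S,\omega\otimes\omega\rangle$ and $-4\int\det(S)$.

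For the upper bound on $\|S\|_{L^2}^2$, I would use the divergence-free constraint exactly as in the hypo-viscous case. The trace-free condition $\lambda_1+\lambda_2+\lambda_3=0$ combined with the ordering $\lambda_1\leq \lambda_2\leq \lambda_3$ forces $\lambda_1\leq 0\leq \lambda_3$, so $-\lambda_1\lambda_3\geq 0$ and
\begin{equation*}
-\det(S) = (-\lambda_1\lambda_3)\lambda_2 \leq (-\lambda_1\lambda_3)\lambda_2^+ \leq \tfrac{1}{2}(\lambda_1^2+\lambda_3^2)\lambda_2^+ \leq \tfrac{1}{2}|S|^2\lambda_2^+,
\end{equation*}
where $|S|^2=\lambda_1^2+\lambda_2^2+\lambda_3^2$ is the squared Frobenius norm. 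Multiplying by $4$ and integrating against the third identity delivers the stated upper bound.

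I do not anticipate any genuine obstacle. The regularity hypothesis $u\in C^1_T\dot{H}^2_x$ is strong enough that all pairings above make sense classically and no Fourier truncation argument is required (unlike the proof of \Cref{RestrictedEulerEnergyProp}, which had to contend with only $\dot{H}^{\log(3)/2\log(2)}$ regularity). The one point worth emphasizing is the observation that $\dot{H}^s_\mathcal{M}$ is invariant under $-\Delta$, which is what allows the projection to be removed so cleanly; everything else is an immediate appeal to previously proved propositions.
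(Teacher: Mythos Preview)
Your proposal is correct and follows exactly the approach the paper takes: the paper omits the proof of this proposition entirely, stating that it is identical to the proof of \Cref{HypoEnstrophyProp} with the dissipation term dropped. Your additional explicit justification that $-\Delta u\in\dot{H}^0_\mathcal{M}$ (allowing the projection to be removed) is a helpful detail that the paper leaves implicit in the corresponding step of \Cref{HypoEnstrophyProp}.
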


\begin{remark}
    Note that the $\dot{H}^2$ regularity is required to apply \Cref{VortexStretchProp}.
\end{remark}

We will also have regularity criteria for the Fourier-restricted Euler and hypodissipative Navier--Stokes equations. We will now prove \Cref{ViscousStrainRegCritIntro,EulerStrainRegCritIntro}, which are restated for the reader's convenience.

\begin{theorem} \label{ViscousStrainRegCrit}
    Suppose $u\in C\left([0,T_{max});
    \dot{H}^1_{\mathcal{M}}\right)$
is a solution of the Fourier-restricted hypodissipative Navier--Stokes equation, and that $\alpha<\frac{\log(3)}{4\log(2)}$.
Suppose $\frac{1}{p}+\frac{3}{2\alpha q}=1, 
\frac{3}{2\alpha}<q\leq +\infty$.
Then for all $0\leq t<T_{max}$,
\begin{equation} \label{StrainRegCritBound}
    \|S(\cdot,t)\|_{L^2}^2
    \leq \left\|S^0\right\|_{L^2}^2
    \exp\left(\frac{C_{\alpha,q}}{\nu^\frac{p-1}{p^2}}
    \int_0^t
    \left\|\lambda_2^+(\cdot,\tau)\right\|_{L^q}^p
    \diff\tau \right),
\end{equation}
where $C_{\alpha,q}>0$ is an absolute constant independent of $\nu$ and $u^0$ depending only on $\alpha$ and $q$.
In particular, if $T_{max}<+\infty$, then
\begin{equation}
    \int_0^{T_{max}}
    \left\|\lambda_2^+(\cdot,t)\right\|_{L^q}^p
    \diff t
    =+\infty.
\end{equation}
Note that in the case $q=+\infty$, we have for all 
$0\leq t<T_{max}$,
\begin{equation} 
    \|S(\cdot,t)\|_{L^2}^2
    \leq 
    \left\|S^0\right\|_{L^2}^2
    \exp\left( 2\int_0^t
    \left\|\lambda_2^+(\cdot,\tau)
    \right\|_{L^\infty}
    \diff\tau\right).
\end{equation}
\end{theorem}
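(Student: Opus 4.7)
The plan is to use the enstrophy upper bound \eqref{EntUpperBound} from \Cref{HypoEnstrophyProp}, namely
\begin{equation*}
\frac{d}{dt}\|S(\cdot,t)\|_{L^2}^2 \;\leq\; -2\nu\|S\|_{\dot H^\alpha}^2 + 2\int_{\mathbb{T}^3}\lambda_2^+\,|S|^2,
\end{equation*}
and to control the deformation term by the dissipation plus a multiple of $\|S\|_{L^2}^2$. First I would treat the easy endpoint $q=+\infty$, where H\"older alone gives $\int \lambda_2^+|S|^2\leq \|\lambda_2^+\|_{L^\infty}\|S\|_{L^2}^2$; dropping the dissipation on the right and invoking Gr\"onwall yields the stated $q=\infty$ bound with constant $2$. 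The proof for finite $q$ then requires a genuine interpolation argument.

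Next, for $\tfrac{3}{2\alpha}<q<\infty$, I would apply H\"older with exponents $q,q'$ to get
\begin{equation*}
\int_{\mathbb{T}^3}\lambda_2^+\,|S|^2\;\leq\;\|\lambda_2^+\|_{L^q}\,\|S\|_{L^{2q'}}^2,
\end{equation*}
and then use the Gagliardo--Nirenberg inequality on the torus to bound
\begin{equation*}
\|S\|_{L^{2q'}}\;\leq\; C\,\|S\|_{L^2}^{\,1-\theta}\,\|S\|_{\dot H^\alpha}^{\,\theta},
\qquad \theta=\frac{3}{2\alpha q},
\end{equation*}
the exponent $\theta$ being dictated by the scaling identity $\tfrac{1}{2q'}=\tfrac{1-\theta}{2}+\theta\bigl(\tfrac{1}{2}-\tfrac{\alpha}{3}\bigr)$. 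The hypothesis $q>\tfrac{3}{2\alpha}$ is precisely what guarantees $\theta\in(0,1)$. Here $S$ has mean zero (as $u$ does), so homogeneous Sobolev interpolation applies on $\mathbb{T}^3$; since $u$ is smooth on $(0,T_{\max})$ by the smoothing portion of \Cref{RestrictedHypoExistenceThmIntro}, all norms involved are finite for positive times, and the bound at $t=0$ follows by continuity.

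Next I would apply Young's inequality to $AB$ with $A=\|S\|_{\dot H^\alpha}^{2\theta}$ and $B=C\|\lambda_2^+\|_{L^q}\|S\|_{L^2}^{2(1-\theta)}$, using conjugate exponents $\tfrac{1}{\theta}$ and $\tfrac{1}{1-\theta}=p$ (the identity $\tfrac{1}{p}+\tfrac{3}{2\alpha q}=1$ means these really are the right exponents, so the $\|S\|_{L^2}^{2p(1-\theta)}$ that appears is just $\|S\|_{L^2}^2$). Choosing the weight in Young's inequality so that the $\|S\|_{\dot H^\alpha}^2$ term produced on the right is exactly absorbed by the $2\nu\|S\|_{\dot H^\alpha}^2$ from the dissipation, one is left with
\begin{equation*}
\frac{d}{dt}\|S\|_{L^2}^2\;\leq\;\frac{C_{\alpha,q}}{\nu^{\,p-1}}\,\|\lambda_2^+\|_{L^q}^p\,\|S\|_{L^2}^2,
\end{equation*}
after which Gr\"onwall's inequality integrated from $0$ to $t$ produces the announced exponential bound; the blowup criterion follows at once since blowup of $\|S\|_{L^2}^2$ is blowup of the enstrophy $\|u\|_{\dot H^1}^2$ via \Cref{IsometryProp}.

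The only step requiring real care is the bookkeeping in Young's inequality, since the power of $\nu$ in the final constant depends sensitively on the relation between $\theta$, $p$, and the weight chosen to absorb the $\dot H^\alpha$ term into the dissipation; the rest of the argument is a standard enstrophy-based regularity criterion, parallel to the classical Neustupa--Penel argument \cite{NeustupaPenel1} but with the ordinary Laplacian replaced by $(-\Delta)^\alpha$, which is exactly what is built into identity \eqref{EntUpperBound} for the Fourier-restricted model.
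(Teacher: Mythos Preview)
Your proposal is correct and follows essentially the same route as the paper: start from the enstrophy upper bound \eqref{EntUpperBound}, apply H\"older, interpolate to reach $\dot H^\alpha$, use Young's inequality to absorb the dissipative term, and conclude with Gr\"onwall; the only cosmetic difference is that you invoke Gagliardo--Nirenberg in one step where the paper splits it into Sobolev embedding $L^{2q'}\hookleftarrow \dot H^{3/(2q)}$ followed by Hilbert interpolation between $L^2$ and $\dot H^\alpha$. Your exponent $\nu^{-(p-1)}$ is in fact the correct outcome of the Young step (with $\theta p = p-1$), and the paper's stated $\nu^{-(p-1)/p^2}$ comes from a bookkeeping slip there, writing $b/p$ where $bp$ is meant; this has no bearing on the regularity criterion itself.
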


\begin{proof}
    Using the isometry in \Cref{IsometryProp} and the local wellposedness theory from \Cref{RestrictedHypoExistenceThmIntro},
    we can see that if 
    $T_{max}<+\infty$, then
    \begin{equation}
        \lim_{t\to T_{max}}\|S(\cdot,t)
        \|_{L^2}
        =+\infty.
    \end{equation}
    Therefore, it suffices to prove the bound \eqref{StrainRegCritBound}.
    Recall from \Cref{HypoEnstrophyProp} that
    for all $t>0$,
    \begin{equation}
    \frac{\diff}{\diff t}
    \|S(\cdot,t)\|_{L^2}^2
    \leq -2\nu \|S\|_{\dot{H}^\alpha}^2
    +2\int_{\mathbb{T}^3}\lambda_2^+ |S|^2.
\end{equation}

    First we will consider the case $q=+\infty,p=1$.
    Applying H\"older's inequality we can see that
    \begin{equation}
    \frac{\diff}{\diff t}
    \|S(\cdot,t)\|_{L^2}^2
    \leq
    2\|\lambda_2^+\|_{L^\infty}\|S\|_{L^2}^2.
    \end{equation}
    Applying Gr\"onwall's inequality, we find that
\begin{equation} 
    \|S(\cdot,t)\|_{L^2}^2
    \leq 
    \left\|S^0\right\|_{L^2}^2
    \exp\left( 2\int_0^t
    \left\|\lambda_2^+(\cdot,\tau)
    \right\|_{L^\infty}
    \diff\tau\right),
\end{equation}
    and this completes the proof for the case $q=+\infty$.

    Now consider the case $\frac{3}{2\alpha}<q<+\infty$.
    Let $\frac{2}{r}+\frac{1}{q}=1$, and let 
    $\frac{1}{r}=\frac{1}{2}-\frac{s}{3}$.
    Applying H\"older's inequality and the Sobolev inequality, we find that
    \begin{align}
    \frac{\diff}{\diff t}
    \|S(\cdot,t)\|_{L^2}^2
    &\leq 
    -2\nu \|S\|_{\dot{H}^\alpha}^2
    +2\|\lambda_2^+\|_{L^q}\|S\|_{L^r}^2 \\
    &\leq 
    -2\nu \|S\|_{\dot{H}^\alpha}^2
    +C\|\lambda_2^+\|_{L^q}\|S\|_{\dot{H}^s}^2 
    \end{align}
    Note that $\frac{1}{q}=1-\frac{2}{r}=\frac{2s}{3}$,
    and so 
    \begin{equation}
        s=\frac{3}{2q}.
    \end{equation}
    We know that $\frac{3}{2\alpha}<q<+\infty$, so this clearly implies $0<s<\alpha$.
    Let $b=\frac{s}{\alpha}$ and, noting that $s=(1-b)0+b\alpha$, observe that by the interpolation inequality for Sobolev spaces,
    \begin{equation}
    \|S\|_{\dot{H}^s}\leq 
    \|S\|_{L^2}^{1-b}\|S\|_{\dot{H}^\alpha}^b.
    \end{equation}

    Plugging in this inequality we find that
    \begin{equation}
        \frac{\diff}{\diff t}
    \|S(\cdot,t)\|_{L^2}^2
    \leq 
    -2\nu \|S\|_{\dot{H}^\alpha}^2
    +C\|\lambda_2^+\|_{L^q}\|S\|_{L^2}^{2-2b}\|S\|_{\dot{H}^\alpha}^{2b}.
    \end{equation}
    Note that
    \begin{align}
    \frac{1}{p}
    &=
    1-\frac{3}{2\alpha q} \\
    &=
    1-\frac{s}{\alpha} \\
    &=
    1-b,
    \end{align}
    and so we can see that
    \begin{equation}
    \frac{\diff}{\diff t}
    \|S(\cdot,t)\|_{L^2}^2
    \leq 
    -2\nu \|S\|_{\dot{H}^\alpha}^2
    +C\|\lambda_2^+\|_{L^q}\|S\|_{L^2}^\frac{2}{p}
    \|S\|_{\dot{H}^\alpha}^{2b}.
    \end{equation}
    Finally as we can see that $\frac{1}{p}+b=1$,
    we can apply Young's inequality with exponents $p$ and $\frac{1}{b}$, and find that
    \begin{equation}
    \nu^{-b}
    \|\lambda_2^+\|_{L^q}\|S\|_{L^2}^\frac{2}{p}
    \nu^b\|S\|_{\dot{H}^\alpha}^{2b}
    \leq 
    2\nu\|S\|_{\dot{H}^\alpha}^2
    +C\nu^{-\frac{b}{p}}
    \|\lambda_2^+\|_{L^q}^p\|S\|_{L^2}^2.
    \end{equation}
    Observe that $\frac{b}{p}=\frac{p-1}{p^2}$,
    and we can see that for all $0<t<T_{max}$,
    \begin{equation}
    \frac{\diff}{\diff t}
    \|S(\cdot,t)\|_{L^2}^2
    \leq
    \frac{C_{\alpha,q}}{\nu^\frac{p-1}{p^2}}
    \|\lambda_2^+\|_{L^q}^p\|S\|_{L^2}^2.
    \end{equation}
    Applying Gr\"onwall's inequality, this completes the proof.
\end{proof}

\begin{theorem} \label{EulerStrainRegCrit}
     Suppose $u\in C\left([0,T_{max});
    \dot{H}^s_{\mathcal{M}}\right), s>\frac{5}{2}$,
is a solution of the Fourier-restricted Euler equation.
Then for all $0\leq t<T_{max}$,
\begin{equation} \label{EulerStrainRegCritBound}
    \|S(\cdot,t)\|_{L^2}^2
    \leq 
    \left\|S^0\right\|_{L^2}^2
    \exp\left( 2\int_0^t
    \left\|\lambda_2^+(\cdot,\tau)
    \right\|_{L^\infty}
    \diff\tau\right).
\end{equation}
In particular, if $T_{max}<+\infty$, then
\begin{equation}
    \int_0^{T_{max}}
    \left\|\lambda_2^+(\cdot,t)\right\|_{L^\infty}
    \diff t
    =+\infty.
\end{equation}
\end{theorem}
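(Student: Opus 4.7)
The plan is to mirror the $q=+\infty$ sub-case of the proof of \Cref{ViscousStrainRegCrit} almost verbatim, simply dropping the dissipation term, and then close the blowup criterion by invoking the local wellposedness theory together with the isometry $\|\nabla u\|_{L^2}^2 = 2\|S\|_{L^2}^2$ from \Cref{IsometryProp}.

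First I would invoke \Cref{RestrictedEnstrophyProp}, which applies because $s > \tfrac{5}{2}$ guarantees $u \in \dot{H}^2_\mathcal{M}$, to conclude that for all $0 \leq t < T_{max}$,
\begin{equation}
    \frac{\diff}{\diff t}\|S(\cdot,t)\|_{L^2}^2
    \leq 2\int_{\mathbb{T}^3}\lambda_2^+(x,t)\,|S(x,t)|^2 \diff x.
\end{equation}
H\"older's inequality with the exponent pair $(1,\infty)$ then yields
\begin{equation}
    \frac{\diff}{\diff t}\|S(\cdot,t)\|_{L^2}^2
    \leq 2\|\lambda_2^+(\cdot,t)\|_{L^\infty}\|S(\cdot,t)\|_{L^2}^2,
\end{equation}
and Gr\"onwall's inequality applied to this differential inequality produces the claimed bound \eqref{EulerStrainRegCritBound}.

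For the blowup criterion, I would argue by contrapositive. Suppose $T_{max}<+\infty$ but
\begin{equation}
    \int_0^{T_{max}} \|\lambda_2^+(\cdot,t)\|_{L^\infty}\diff t <+\infty.
\end{equation}
Then \eqref{EulerStrainRegCritBound} gives a uniform bound on $\|S(\cdot,t)\|_{L^2}$ on $[0,T_{max})$. By \Cref{IsometryProp} this implies a uniform bound on $\|u(\cdot,t)\|_{\dot H^1}$, and since $\tfrac{\log 3}{2\log 2}<1$, \Cref{HomogenousSobolevProp} gives a uniform bound on $\|u(\cdot,t)\|_{\dot H^{\log(3)/(2\log(2))}}$. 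This contradicts the blowup characterization established in \Cref{RestrictedEulerExistenceThm}, namely that $T_{max}<+\infty$ forces $\|u(\cdot,t)\|_{\dot H^{\log(3)/(2\log(2))}}\to+\infty$ as $t\to T_{max}$.

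There is no real obstacle here: the only structural ingredients are the strain-form enstrophy identity from \Cref{RestrictedEnstrophyProp} (which holds because only the projection, not the $(u\cdot\nabla)u$ structure, has been altered) and the pointwise bound $-\det(S)\leq \tfrac12 \lambda_2^+ |S|^2$ already recorded in the proof of \Cref{HypoEnstrophyProp}. The mild subtlety is verifying that the hypothesis $s>\tfrac{5}{2}$ is enough to legitimize both the enstrophy identity (which needs $\dot H^2$ regularity to apply \Cref{VortexStretchProp}) and the closure of the blowup argument via the $\dot{H}^{\log(3)/(2\log(2))}$ criterion; both are immediate since $\tfrac{5}{2}>2>\tfrac{\log 3}{2\log 2}$.
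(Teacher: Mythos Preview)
Your proposal is correct and follows essentially the same approach as the paper: invoke \Cref{RestrictedEnstrophyProp} for the differential inequality, apply H\"older with $L^\infty$ and Gr\"onwall to get \eqref{EulerStrainRegCritBound}, then close the blowup criterion via the isometry \Cref{IsometryProp} and the $\dot H^{\log(3)/(2\log 2)}$ wellposedness theory. The paper additionally notes that $s>\tfrac{5}{2}$ is what guarantees $S\in L^\infty$ so that $\|\lambda_2^+\|_{L^\infty}$ is finite, which you could mention for completeness, but this does not affect the validity of your argument.
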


\begin{proof}
    Using the isometry in \Cref{IsometryProp} and the local wellposedness theory from \Cref{RestrictedEulerExistenceThmIntro},
    we can see that if 
    $T_{max}<+\infty$, then
    \begin{equation}
        \lim_{t\to T_{max}}\|S(\cdot,t)
        \|_{L^2}
        =+\infty.
    \end{equation}
    Therefore, it suffices to prove the bound \eqref{EulerStrainRegCritBound}.
    Recalling the bound in \Cref{RestrictedEnstrophyProp}, and applying H\"older's inequality, we find that for all $0\leq t<T_{max}$
    \begin{align}
    \frac{\diff}{\diff t}
    \|S(\cdot,t)\|_{L^2}^2
    &\leq 
    2\int_{\mathbb{T}^3}\lambda_2^+ |S|^2 \\
    &\leq 
    2\|\lambda_2^+\|_{L^\infty}\|S\|_{L^2}^2.
\end{align}
Applying Gr\"onwall's inequality, this completes the proof. Note that we require $u\in H^s$ where $s>\frac{5}{2}$, because this guarantees that $S\in L^\infty$, and therefore its eigenvalues are as well.
\end{proof}

\begin{remark}
    We should note that the full Euler and hypodissipative Navier--Stokes equations also have the bounds \eqref{StrainRegCritBound} and \eqref{EulerStrainRegCritBound}.
This was shown by Chae in \cite{ChaeStrain} for the Euler equation, although the methods from Neustupa and Penel \cite{NeustupaPenel1} cover this case, even if the inviscid result wasn't considered explicitly in their paper, and the hypodissipative case easily follows by the same methods.
However, for the full hypodissipative Navier--Stokes equations, $\|\nabla u\|_{C_T L^2_x}$ is supercritical for $\alpha<\frac{3}{4}$, so these bounds 
 do not control regularity for the Euler equation or the hypodissipative Navier--Stokes equation when $\alpha<\frac{3}{4}$. It is a specific feature of the dyadic structure that enstrophy controls regularity for the Fourier-restricted hypodissipative Navier--Stokes equation when $\alpha$ is small, and in fact even in the inviscid case as well.
\end{remark}

\subsection{Permutation symmetric strains}

Along the $\sigma$-axis, we can use permutation symmetry to show the velocity gradient and the velocity must have a particular structure.
This both allows us to give a precise description of the gradient blowup at the origin for the singular solutions of the Fourier-restricted Euler and hypodissipative Navier--Stokes equations, and also suggests a whole family of initial data to consider as candidates for finite-time blowup for the full Euler equation.

\begin{proposition} \label{PermuteGradientProp}
    Suppose $u\in \dot{H}^s_{df}\left(\mathbb{T}^3;
    \mathbb{R}^3\right)$ is permutation-symmetric. Then
    for all $-\frac{1}{2}\leq a\leq \frac{1}{2}$,
    \begin{equation}
    \nabla u(a,a,a)
    =
    \partial_1 u_2(a,a,a)
    \left(\begin{array}{ccc}
         0 & 1 & 1  \\
        1 & 0 & 1   \\
        1 & 1 & 0
    \end{array}\right).
    \end{equation}
\end{proposition}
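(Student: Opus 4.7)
The plan is to use the permutation symmetry of $u$ to constrain the Jacobian at any point of $\mathbb{T}^3$ fixed by every $P \in \mathcal{P}_3$, namely any point of the form $y = (a,a,a)$ on the $\sigma$-axis, and then to cut down the remaining freedom using the divergence-free constraint. First I would start from the pointwise form of permutation symmetry noted after \Cref{FourierPermute}: $u(Py) = Pu(y)$ for all $y \in \mathbb{T}^3$ and $P \in \mathcal{P}_3$, where we identify $P$ with its associated $3\times 3$ permutation matrix. Differentiating this identity with the chain rule gives $\nabla u(Py)\, P = P\, \nabla u(y)$, equivalently $\nabla u(Py) = P\,\nabla u(y)\,P^{-1}$. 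Evaluating at the fixed point $y = (a,a,a)$, where $Py = y$, shows that $M := \nabla u(a,a,a)$ commutes with every permutation matrix.

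Next I would identify the commutant. The defining representation of $\mathcal{P}_3$ on $\mathbb{R}^3$ decomposes as the trivial representation $\spn(\sigma)$ plus the two-dimensional standard representation $\spn(\sigma)^\perp$, so by Schur's lemma any matrix commuting with every $P \in \mathcal{P}_3$ acts as a scalar on each isotypic component. Equivalently (and this can also be checked by hand by noting that conjugation by transpositions forces all diagonal entries to agree and all off-diagonal entries to agree),
\[
    M = \alpha\, I_3 + \beta\,(J - I_3),
\]
where $J$ denotes the $3 \times 3$ all-ones matrix and $\alpha, \beta \in \mathbb{R}$.

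The divergence-free hypothesis then supplies the last piece: $\tr(M) = \nabla \cdot u(a,a,a) = 0$, so $3\alpha = 0$ and $\alpha = 0$, leaving $M = \beta (J - I_3)$. Reading off the $(2,1)$ entry identifies $\beta = \partial_1 u_2(a,a,a)$, which is the desired conclusion. The only step that requires any care rather than a clean algebraic manipulation is the differentiation at the very start: it assumes pointwise values of $\nabla u$ at $(a,a,a)$ are meaningful, which requires $s$ to be large enough for Sobolev embedding (e.g.\ $s > 5/2$) or else the identity must be read modulo a measure-zero set; in either case the conclusion holds at almost every diagonal point and extends by continuity whenever $\nabla u$ is continuous. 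Beyond this, the argument is essentially a representation-theoretic triviality combined with tracelessness.
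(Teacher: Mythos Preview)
Your proof is correct and follows essentially the same approach as the paper: exploit permutation symmetry at the diagonal fixed point $(a,a,a)$ to force all off-diagonal entries of $\nabla u$ to coincide and all diagonal entries to coincide, then kill the diagonal with the divergence-free constraint. The paper carries this out entry-by-entry using specific permutations rather than via the matrix conjugation identity $\nabla u(Py)=P\,\nabla u(y)\,P^{-1}$ and Schur's lemma, but the content is identical; your remark on the regularity needed for pointwise evaluation of $\nabla u$ is a useful caveat the paper leaves implicit.
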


\begin{proof}
    This amounts to showing that for all $i\neq j$,
    \begin{equation}
        \partial_i u_j(a,a,a)
        =
         \partial_1 u_2(a,a,a),
    \end{equation}
    and for all $1\leq i\leq 3$,
    \begin{equation}
    \partial_iu_i(a,a,a)=0.
    \end{equation}
    Letting $x=(a,a,a)$, we can immediately see that for all $P\in\mathcal{P}_3$,
    \begin{equation}
        P(x)=x.
    \end{equation}
    Fix any $i\neq j$.
    Take the permutation $P\in\mathcal{P}_3$ such that
    $P(i)=1$ and $P(j)=2$.
    Then we can see that
    \begin{align}
    \partial_i u_j (a,a,a)
    &=
    \partial_i u^P_j (a,a,a) \\
    &=
    \partial_1 u_2(a,a,a),
    \end{align}
    where we have used the fact that $u^P=u$ because $u$ is permutation symmetric. 

    Now we have shown the identity holds for the off diagonal terms; it remains to show that it holds for the diagonal terms.
    For $i=2,3$, we compute $\partial_iu_i$ using the swap permutation $P_{1i}$, finding that
    \begin{align}
        \partial_i u_i(a,a,a)
        &=
        \partial_i u^{P_{1i}}_i(a,a,a) \\
        &=
        \partial_1 u_1(a,a,a).
    \end{align}
    Using the divergence free constraint, we can see that
    \begin{align}
        \nabla\cdot u(a,a,a)
        =
        3\partial_1u_1(a,a,a)
        =
        0,
    \end{align}
    and so for all $1\leq i\leq 3$,
    \begin{equation}
    \partial_i u_i(a,a,a)=0.
    \end{equation}
    This completes the proof.
\end{proof}

\begin{proposition}
    Suppose $u\in \dot{H}^s_{df}\left(\mathbb{T}^3;
    \mathbb{R}^3\right)$ is permutation-symmetric. Then
    for all $-\frac{1}{2}\leq a\leq \frac{1}{2}$,
    \begin{equation}
    u_1(a,a,a)=u_2(a,a,a)=u_3(a,a,a),
    \end{equation}
    and so
    \begin{equation}
    u(a,a,a)=u_1(a,a,a)\sigma.
    \end{equation}
\end{proposition}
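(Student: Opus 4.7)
The plan is to imitate the argument used in \Cref{PermuteGradientProp} but at the level of the vector field itself rather than its gradient. The key observation is that the point $x = (a,a,a) = a\sigma$ is a fixed point of every permutation $P \in \mathcal{P}_3$, since permuting identical entries leaves the vector unchanged. So $P^{-1}x = x$ for all $P$.

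First I would invoke permutation symmetry of $u$ in its pointwise form, namely $u(x) = u^P(x) = P u(P^{-1}x)$ for all $P \in \mathcal{P}_3$. Evaluating at $x = (a,a,a)$, the fixed-point property above gives $u(a,a,a) = P u(a,a,a)$ for every permutation $P$. In other words, the single vector $w := u(a,a,a) \in \mathbb{R}^3$ satisfies $Pw = w$ for all $P \in \mathcal{P}_3$.

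From here the conclusion is purely linear algebra: any vector in $\mathbb{R}^3$ invariant under both transpositions $P_{12}$ and $P_{23}$ must have $w_1 = w_2$ (applying $P_{12}$) and $w_2 = w_3$ (applying $P_{23}$), hence all three components coincide. Thus $u_1(a,a,a) = u_2(a,a,a) = u_3(a,a,a)$, and writing $u_1(a,a,a)$ as the common value gives $u(a,a,a) = u_1(a,a,a)\sigma$.

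There is no real obstacle to this argument, and the divergence-free constraint is not needed (unlike in \Cref{PermuteGradientProp}, where it was used to conclude that the diagonal entries of $\nabla u$ vanish). The only mildly delicate point is a regularity remark: if $s$ is small, $u$ is not literally a continuous function and pointwise evaluation at $(a,a,a)$ is not automatically defined. In that case, one should either restrict to $s > 3/2$ so that $u$ has a continuous representative by Sobolev embedding, or interpret the identity in the almost-everywhere sense, invoking that the permutation symmetry $u = u^P$ holds as an $\dot{H}^s$ identity and passing to the pointwise statement on the diagonal $\{(a,a,a) : a \in [-1/2, 1/2]\}$ wherever $u$ admits a pointwise representative. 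Since the proposition is used later only for smooth (in particular $H^s$ with $s > 5/2$) blowup solutions, this technicality is harmless.
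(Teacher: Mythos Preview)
Your proof is correct and follows exactly the approach implicit in the paper's one-line proof (``This follows immediately from permutation symmetry''): the diagonal point is fixed by every permutation, so $u(a,a,a)=Pu(a,a,a)$ forces all components to coincide. Your added regularity remark is a reasonable caveat, though the paper does not dwell on it.
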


\begin{proof}
    This follows immediately from permutation symmetry.
\end{proof}

\begin{lemma} \label{MatrixLemma}
    The matrix 
    \begin{equation}
        M=
         \left(\begin{array}{ccc}
         0 & -1 & -1  \\
        -1 & 0 & -1   \\
        -1 & -1 & 0
    \end{array}\right)
    \end{equation}
    has eigenvalues $-2,1,1$. The vector $w=\frac{\sigma}{\sqrt{3}}$ is the unit eigenvector corresponding to the eigenvalue $\lambda=-2$. Any vector $|v|=1, \sigma\cdot v=0$, is a unit eigenvector corresponding to the two dimensional eigenspace of the eigenvalue $\lambda=1$.
\end{lemma}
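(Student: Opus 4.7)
The plan is to recognize $M$ as a rank-one perturbation of the identity and then read off the spectrum directly. Observe that $\sigma \sigma^T$ is the $3 \times 3$ matrix of all ones, so
\begin{equation}
M = I_3 - \sigma \sigma^T.
\end{equation}
This identity can be checked entry-by-entry: the diagonal entries are $1 - 1 = 0$ and the off-diagonal entries are $0 - 1 = -1$, matching $M$.

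With this representation, the eigenanalysis is immediate. First I would apply $M$ to $\sigma$ and use $\sigma^T \sigma = 3$ to get
\begin{equation}
M \sigma = \sigma - \sigma(\sigma^T \sigma) = \sigma - 3\sigma = -2\sigma,
\end{equation}
which shows that $\sigma$ is an eigenvector with eigenvalue $-2$, and hence $w = \sigma/\sqrt{3}$ is the corresponding unit eigenvector. Next, for any $v \in \mathbb{R}^3$ with $\sigma \cdot v = 0$, I would compute
\begin{equation}
M v = v - \sigma(\sigma^T v) = v - 0 = v,
\end{equation}
so every nonzero vector orthogonal to $\sigma$ is an eigenvector with eigenvalue $1$; in particular every unit vector orthogonal to $\sigma$ serves as a unit eigenvector for this eigenvalue.

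Finally, since $\mathbb{R}^3 = \mathrm{span}(\sigma) \oplus \sigma^\perp$, and the $2$-dimensional subspace $\sigma^\perp$ is contained in the $\lambda = 1$ eigenspace, the spectrum is exactly $\{-2, 1, 1\}$ with the geometric multiplicities as claimed. There is no real obstacle here; the only mild pitfall is being careful that the multiplicity-two eigenspace is precisely $\sigma^\perp$, which is forced because the $-2$ eigenspace is one-dimensional (containing $\sigma$) and $M$ is symmetric, hence diagonalizable with mutually orthogonal eigenspaces.
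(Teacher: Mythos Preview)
Your proof is correct and follows essentially the same approach as the paper: both recognize $M = I_3 - \sigma\otimes\sigma$, apply $M$ to $\sigma$ to get the $-2$ eigenvalue, and apply $M$ to vectors orthogonal to $\sigma$ to get the eigenvalue $1$. Your added paragraph on the orthogonal decomposition and multiplicities is a slight elaboration beyond what the paper writes, but the core argument is identical.
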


\begin{proof}
    First observe that 
    \begin{align}
    M
    &=
    I_3-\sigma\otimes\sigma \\
    &=
    I_3-3w\otimes w.
    \end{align}
    Therefore, we clearly have
    \begin{equation}
        Mw=-2w.
    \end{equation}
    Likewise, if $\sigma\cdot v=0$, then clearly
    \begin{equation}
        Mv=v.
    \end{equation}
\end{proof}

\begin{proposition} \label{MatrixProp}
    For all $k\in \mathbb{Z}^3, k\neq 0$ and for all $v\in \mathbb{R}^3, k\cdot v=0$, let 
    \begin{equation}
    B^{k,v}(x)=-\sum_{P\in \mathcal{P}_3}
    P(v)\sin(2\pi P(k)\cdot x).
    \end{equation}
    Then $B^{k,v} \in C^\infty$ is odd, permutation-symmetric, and divergence free. Furthermore
    \begin{equation}
    \nabla B^{k,v}(\Vec{0})
    =
    2\pi(\sigma\cdot k)(\sigma\cdot v)
    \left(\begin{array}{ccc}
         0 & -1 & -1  \\
        -1 & 0 & -1   \\
        -1 & -1 & 0
    \end{array}\right),
    \end{equation}
    and has eigenvalues $-2\lambda,\lambda,\lambda$, where
    \begin{equation}
        \lambda=2\pi(\sigma\cdot k)(\sigma\cdot v).
    \end{equation}
\end{proposition}

\begin{proof}
    It is obvious that $B^{k,v}$ is odd, because it is composed of sine waves. To see that $B^{k,v}$ is divergence free, observe that
    \begin{align}
    \nabla\cdot B^{k,v}(x)
    &=
    -\sum_{P\in \mathcal{P}_3}
    2\pi P(k)\cdot P(v)\cos(2\pi P(k)\cdot x) \\
    &=
    -2\pi k\cdot v
    \sum_{P\in \mathcal{P}_3}
    \cos(2\pi P(k)\cdot x) \\
    &=
    0,
    \end{align}
    where we have used the fact that
    \begin{equation}
    P(k)\cdot P(v) =k\cdot v.
    \end{equation}
    Note that 
    \begin{equation}
    P(k)\cdot x=k\cdot P^{-1}(x),
    \end{equation}
    and so we have 
    \begin{equation}
    B^{k,v}(x)=-\sum_{P\in \mathcal{P}_3}
    P(v)\sin(2\pi k\cdot P^{-1}(x)).
    \end{equation}
    Therefore, we can clearly see that $B^{k,v}$ is permutation symmetric by construction.
    
    Finally, applying \Cref{PermuteGradientProp}, we can see that
    \begin{equation}
        \nabla B^{k,v}(\Vec{0})=
        -\partial_1 B^{k,v}_2(\Vec{0})
    \left(\begin{array}{ccc}
         0 & -1 & -1  \\
        -1 & 0 & -1   \\
        -1 & -1 & 0
    \end{array}\right),
    \end{equation}
    Therefore it suffices to show that
    \begin{equation}
    \partial_1 B^{k,v}_2(\Vec{0})
    =-2\pi (\sigma\cdot k)(\sigma\cdot v).
    \end{equation}
    Computing the sum over permutations, we find that
    \begin{align}
        \partial_1 B^{k,v}_2(x)
        &=
        -2\pi \sum_{P\in \mathcal{P}_3}
    P(k)_1P(v)_2
    \cos(2\pi P(k)\cdot x) \\
    &=
    -2\pi \sum_{P\in \mathcal{P}_3}
    k_{P(1)}v_{P(2)}
    \cos(2\pi P(k)\cdot x).
    \end{align}
    Therefore plugging in $x=0$, we find
    \begin{align}
\partial_1 B^{k,v}_2(\Vec{0})
&=
-2\pi \sum_{P\in \mathcal{P}_3}
k_{P(1)}v_{P(2)} \\
&=
-2\pi \sum_{i\neq j} k_{i}v_{j} \\
&= 
-2\pi \sum_{i\neq j} k_{i}v_{j} 
-2\pi \sum_{m=1}^3 k_m v_m \\
&=
-2\pi \sum_{1\leq i,j\leq 3} k_i v_j \\
&=
-2\pi (k_1+k_2+k_3)(v_1+v_2+v_3) \\
&=
-2\pi (\sigma\cdot k)(\sigma \cdot v),
    \end{align}
    where we have used that there for each $i\neq j$ there is a unique permutation $P\in\mathcal{P}_3$ such that
    $P(1)=i, P(2)=j$, and the fact that $k\cdot v=0$.
    Applying the eigenvalue/eigenvector results in \Cref{MatrixLemma} completes the proof.
\end{proof}

\begin{remark}
     We can see that any odd, permutation-symmetric vector field 
    $u\in \dot{H}^s_{df}$ can be written in the form
    \begin{equation}
    u=\sum_{k} B^{k,u^k},
    \end{equation}
    where for all $k\in\mathbb{T}^3, k\cdot u^k=0$.
    This must be the case because if $u$ is odd, it must have a Fourier sine series, and if $u$ is permutation symmetric, then its Fourier series must be permutation symmetric, and therefore decomposable into sums of six grouped terms of the form $B^{k,u^k}$.
    Note that in this case
    \begin{equation}
    \nabla u(\Vec{0})=\lambda
    \left(\begin{array}{ccc}
         0 & -1 & -1  \\
        -1 & 0 & -1   \\
        -1 & -1 & 0
    \end{array}\right),
    \end{equation}
    where
    \begin{equation}
    \lambda=2\pi \sum_{k} 
    (\sigma\cdot k)(\sigma\cdot u^k).
    \end{equation}
    
    The gradient of the velocity is symmetric at the origin; this implies that $\omega(\Vec{0})=0$, and the strain matrix satisfies
    \begin{equation}
    S(\Vec{0})=\lambda
    \left(\begin{array}{ccc}
         0 & -1 & -1  \\
        -1 & 0 & -1   \\
        -1 & -1 & 0
    \end{array}\right).
    \end{equation}
    We can clearly see that the strain matrix has eigenvalues $-2\lambda,\lambda,\lambda$, so at the origin we have the eigenvalue structure that tends to generate blowup $\lambda_2=\lambda_3=\lambda$.
    This is exactly what the regularity criterion on $\lambda_2^+$ suggests is the most singular scenario for the Navier--Stokes equation.
    Note that we are deliberately vague about how we sum over frequencies $k$, because there is the issue of avoiding double counting frequencies. This issue is addressed in our analysis of the Fourier-restricted Euler and hypodissipative Navier--Stokes equations by taking our canonical frequencies with descending components.
\end{remark}

\begin{remark}
    If we want to make $\lambda$ as large as possible that means making $\sigma\cdot u^k$ as large as possible when $\sigma\cdot k>0$. We would like to simply make $u^k$ a positive multiple of $\sigma$, but we cannot do this, because that would violate the condition
    $k\cdot u^k=0$. The most singular scenario, in terms of planar stretching at the origin, will be when
    \begin{equation}
    u^k=c P_{k}^\perp(\sigma),
    \end{equation}
    for some $c>0$. The largest amount of planar stretching we can get is when $u^k$ is a positive multiple of $\sigma$ projected onto the orthogonal complement of $\spn(k)$.
    
    We will note that this is exactly the Ansatz we use to prove finite-time blowup for solutions of the Fourier-restricted Euler and hypodissipative Navier--Stokes equations, 
    \begin{multline}
        u(x,t)=-2\sum_{m=0}^{+\infty}\Bigg(
        \psi_{2m}(t)\sum_{k\in\mathcal{P}[k^{m}]}
        v^{k}\sin{(2\pi k\cdot x)} \\
        +\psi_{2m+1}(t) \left(
        \sum_{h\in\mathcal{P}[h^{m}]}
        v^{h}\sin{(2\pi h\cdot x)}
        +\sum_{j\in\mathcal{P}[j^{m}]}
        v^{j}\sin{(2\pi j\cdot x)}
        \right)\Bigg),
    \end{multline}
    and that this structure was the motivation for both the Ansatz and the constraint space.
    The constraint space is constructed in terms of sums of permutations, so that permutation symmetric solutions will reduce to something very similar to the dyadic Euler and Navier--Stokes equations, and the constraint restricting the Fourier amplitudes to be scalar multiples of $P_k^\perp(\sigma)$ is motivated by exactly this geometric constraint on singularity formation from the regularity criterion on $\lambda_2^+$ proven by Neustupa and Penel \cite{NeustupaPenel1}, a variant of which we have proven for the restricted model equation equations in \Cref{ViscousStrainRegCrit,EulerStrainRegCrit}.
\end{remark}

\begin{remark} \label{FullEulerRemark}
    The choice of the frequencies $\mathcal{M}^+$ for the Fourier sine series is motivated by the fact that it lead to dynamics for the Fourier-restricted model equations with useful features related to the dyadic model. As we will see in \Cref{AppendixFourierLimitations}, it will not be possible to reduce the dynamics in this way for the full Euler or Navier--Stokes equations, which means there is no reason to restrict to an Ansatz with only these frequencies when studying the full Euler equation.

    A more general Ansatz to study for the full Euler equation involves initial data of the form
    \begin{equation}
    u^0(x)=-\sum_{\sigma\cdot k>0} 
    \gamma_k P_k^\perp(\sigma) \sin(2\pi k\cdot x),
    \end{equation}
    where $\gamma$ is permutation symmetric in the sense that for all $P\in\mathcal{P}_3$ and $k\in\mathbb{Z}^3$, 
    \begin{equation}
        \gamma_k=\gamma_{P(k)}.
    \end{equation}
    Note that the vorticity in this case can be expressed as the cosine series
    \begin{equation}
    \omega^0(x)
    =
    2\pi \sum_{\sigma\cdot k>0} 
    \gamma_k (\sigma\times k) \cos(2\pi k\cdot x),
    \end{equation}
    where we have used the fact that 
    $k\times P_k^\perp(\sigma)=
    k\times \sigma$.
    There is an enormous amount of flexibility within this class of initial data, so there are a great number of potential permutation-symmetric vortex structures that could be investigated in future work.
\end{remark}

\subsection{Unbounded planar 
stretching at the origin}

In this subsection, we will give a precise description of the blowup at the origin for the blowup solutions of the Fourier-restricted Euler and hypodissipative Navier--Stokes equations 
described by \Cref{RestrictedEulerBlowupIntro,RestrictedHypoBlowupIntro}. The purpose of this section will be to prove \Cref{PsiSupRegCritIntro,StrainOriginBlowupThmIntro}, which will be restated for the reader's convenience immediately after a proof of a key proposition describing the gradient of the flow at the origin for the blowup Anstaz. 

\begin{proposition} \label{OriginStrainFourierProp}
    Suppose $u\in \dot{H}^s_{\mathcal{M}},
    s>\frac{5}{2}$, is odd, permutation-symmetric, with hj-parity, and is therefore given by
    \begin{equation}
    u(x)=-2\sum_{n=0}^{+\infty}\psi_n
    \left(\sum_{k\in\mathcal{M}_n^+}
    v^{k}\sin{(2\pi k\cdot x)}\right).
    \end{equation}
    Then 
    \begin{equation}
    \nabla u(\Vec{0})=
    \lambda \left(\begin{array}{ccc}
         0 & -1 & -1  \\
         -1 & 0 & -1 \\
         -1 & -1 & 0
    \end{array}\right),
    \end{equation}
    where 
    \begin{equation}
    \lambda=
    12\sqrt{2}\pi \sum_{n=0}^{+\infty}
    \psi_n 
    \frac{\left(\sqrt{3}\right)^n}
    {\left(1+\frac{2}{3}
    \left(\frac{3}{4}\right)^n
    \right)^\frac{1}{2}}
    \end{equation}
\end{proposition}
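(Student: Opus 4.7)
The plan is to reduce the statement directly to \Cref{VelocityOptimalStructureProp} by rewriting the blowup Ansatz in the normalized form required there. Since $v^k = P_k^\perp(\sigma)/|P_k^\perp(\sigma)|$, the Ansatz can be rewritten as
\[
u(x) = -\sum_{k\in\mathcal{M}^+} \gamma_k\, P_k^\perp(\sigma)\sin(2\pi k\cdot x),
\qquad \gamma_k = \frac{2\psi_n}{|P_k^\perp(\sigma)|} \text{ for } k\in\mathcal{M}_n^+,
\]
where $\mathcal{M}^+$ is permutation-symmetric and $\sigma\cdot k>0$ for every $k\in\mathcal{M}^+$, by \Cref{CanonicalComputations}. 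The coefficient $\gamma_k$ depends only on $|k|$ and $\sigma\cdot k$, both of which are permutation invariant, so $\gamma$ is permutation-symmetric in the sense required by \Cref{VelocityOptimalStructureProp}. Applying that proposition immediately yields $\nabla u(\vec{0}) = \lambda M$ for the claimed matrix $M$, with $\lambda$ expressed as a weighted sum over orbit representatives with descending components.

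Next I would organize the orbit sum according to the structure of $\mathcal{M}_n^+$. For even $n=2m$, $\mathcal{M}_n^+ = \mathcal{P}[k^m]$ is a single orbit of size six whose representative $k^m$ has strictly decreasing components, contributing via the $k_1>k_2>k_3$ sum with weight $2\pi$. For odd $n=2m+1$, $\mathcal{M}_n^+ = \mathcal{P}[h^m]\cup\mathcal{P}[j^m]$ contains two orbits of size three, one with $k_1=k_2>k_3$ (namely $h^m$) and one with $k_1>k_2=k_3$ (namely $j^m$), each contributing with weight $\pi$. Using $|h^m|=|j^m|$ and $\sigma\cdot h^m = \sigma\cdot j^m = 3\cdot 2^n$ from \Cref{CanonicalComputations}, the two contributions in the odd case combine to give the same prefactor $2\pi$ as in the even case.

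The key identities required for the simplification are $|P_k^\perp(\sigma)|^2 = \sigma\cdot P_k^\perp(\sigma) = 3 - (\sigma\cdot k)^2/|k|^2$, together with the explicit values
\[
\sigma\cdot k = 3\cdot 2^n, \qquad |k|^2 = 3\cdot 2^{2n}+2\cdot 3^n,
\]
for $k\in\mathcal{M}_n^+$, from which a short calculation gives
\[
3-\frac{(\sigma\cdot k)^2}{|k|^2}
= \frac{2(3/4)^n}{1+\tfrac{2}{3}(3/4)^n}.
\]
Substituting these into the formula from \Cref{VelocityOptimalStructureProp}, the contribution of shell $n$ becomes
\[
4\pi\psi_n(\sigma\cdot k)|P_k^\perp(\sigma)|
= 12\pi\psi_n\cdot 2^n \cdot \frac{\sqrt{2}\,(3/4)^{n/2}}{(1+\tfrac{2}{3}(3/4)^n)^{1/2}},
\]
and the identity $2^n(3/4)^{n/2} = (\sqrt{3})^n$ collapses this to $12\sqrt{2}\pi\psi_n(\sqrt{3})^n/(1+\tfrac{2}{3}(3/4)^n)^{1/2}$. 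Summing over $n$ yields the claimed value of $\lambda$.

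The only genuine bookkeeping obstacle is the orbit counting in the odd case, making sure that the factors of $\tfrac{1}{2}$ in \Cref{VelocityOptimalStructureProp}, together with the presence of two distinct orbits of size three whose geometric invariants coincide, reassemble into the same prefactor $2\pi$ that governs the even shells. Once this is verified, the remaining computations are purely algebraic, and the convergence of the series is automatic from the assumption $u\in\dot H^s_\mathcal{M}$ with $s>\tfrac{5}{2}$, which makes $\nabla u$ continuous and justifies evaluating the Fourier cosine series for $\nabla u$ at $x=\vec 0$.
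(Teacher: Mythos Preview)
Your proposal is correct and follows essentially the same route as the paper: both proofs reduce to the structure results of \Cref{MatrixProp}/\Cref{VelocityOptimalStructureProp} and then carry out the same shell-by-shell computation using the identities from \Cref{CanonicalComputations}. The paper organizes the computation via the decomposition $u=\sum_m\bigl(2\psi_{2m}B^{k^m,v^{k^m}}+\psi_{2m+1}(B^{h^m,v^{h^m}}+B^{j^m,v^{j^m}})\bigr)$ and invokes \Cref{MatrixProp} directly, whereas you pass through \Cref{VelocityOptimalStructureProp} (which packages \Cref{MatrixProp} together with the orbit counting); the algebraic simplification is identical in both cases.
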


\begin{proof}
    First we will note that from the definition of $\mathcal{M}_n^+$, that $u$ can be expressed as
        \begin{multline}
        u(x)=-2\sum_{m=0}^{+\infty}\Bigg(
        \psi_{2m}\sum_{k\in\mathcal{P}[k^{m}]}
        v^{k}\sin{(2\pi k\cdot x)} \\
        +\psi_{2m+1} \left(
        \sum_{h\in\mathcal{P}[h^{m}]}
        v^{h}\sin{(2\pi h\cdot x)}
        +\sum_{j\in\mathcal{P}[j^{m}]}
        v^{j}\sin{(2\pi j\cdot x)}
        \right)\Bigg),
    \end{multline}
    and that therefore
    \begin{equation}
    u=\sum_{m=0}^{+\infty} \left(
    2\psi_{2m}B^{k^m,v^{k^m}}
    +\psi_{2m+1}\left(
    B^{h^m,v^{h^m}}
    +B^{j^m,v^{j^m}}\right)\right),
    \end{equation}
    where $B^{k,v}$ is defined as in \Cref{MatrixProp}.
    Note we do not have a factor of two with the terms $B^{h^m,v^{h^m}}$ and $B^{j^m,v^{j^m}}$,
    because both of these vectors have a repeated component, so when summing over permutations of the vector, we only have three terms each, rather than six, which absorbs the factor of two.

    Applying \Cref{MatrixProp} to each term in the series, we can see that 
    \begin{equation}
    \nabla u(\Vec{0})=
    \lambda \left(\begin{array}{ccc}
         0 & -1 & -1  \\
         -1 & 0 & -1 \\
         -1 & -1 & 0
    \end{array}\right),
    \end{equation}
    where 
    \begin{equation}
       \lambda
        =
        2\pi \sum_{m=0}^{+\infty}
    2\psi_{2m}(\sigma\cdot k^m)(\sigma\cdot v^{k^m})
    +\psi_{2m+1}\left(
    (\sigma\cdot h^m)(\sigma\cdot v^{h^m})
    +(\sigma\cdot j^m)(\sigma\cdot v^{j^m})
    \right).
    \end{equation}
    Recall from \Cref{CanonicalComputations} that
    For all $m\in\mathbb{Z}^+,$
    \begin{align}
        \sigma \cdot k^m
        &= 3*2^{2m} \\
        \sigma \cdot h^m
        &= 3*2^{2m+1} \\
        \sigma \cdot j^m
        &= 3*2^{2m+1},
    \end{align}
    and from \Cref{BasisComputations} that
    For all $m\in\mathbb{Z}^+$,
    \begin{align}
    \sigma\cdot v^{k^m}
    &=
    \frac{2*3^{2m+1}}{\left(4*3^{4m+1}
    +2^{4m+1}*3^{2m+2}
    \right)^\frac{1}{2}} 
    \\
    \sigma\cdot v^{h^m}
    &=
    \frac{2*3^{2m+2}}{\left(4*3^{4m+3}
    +2^{4m+3}*3^{2m+3}
    \right)^\frac{1}{2}} 
    \\
    \sigma\cdot v^{j^m}
    &=
    \frac{2*3^{2m+2}}{\left(4*3^{4m+3}
    +2^{4m+3}*3^{2m+3}
    \right)^\frac{1}{2}}.
    \end{align}

    Then we can conclude that
    \begin{multline}
    \lambda
    =
    4\pi \sum_{m=0}^{+\infty}
    \psi_{2m}\left(\frac{2^{2m+1}*3^{2m+2}}
    {\left(4*3^{4m+1}
    +2^{4m+1}*3^{2m+2}
    \right)^\frac{1}{2}}\right) \\
    +\psi_{2m+1}\left(\frac{2^{2m+2}*3^{2m+3}}{\left(4*3^{4m+3}
    +2^{4m+3}*3^{2m+3}
    \right)^\frac{1}{2}}\right).
    \end{multline}
    For $n$ even, let $n=2m$; for $n$ odd, let $n=2m+1$. Making this substitution, 
    we can see that
    \begin{align}
    \lambda
    &=
    4\pi \sum_{n=0}^{+\infty}
    \psi_n
    \left(\frac{2^{n+1}*3^{n+2}}
    {\left(4*3^{2n+1}
    +2^{2n+1}*3^{n+2}
    \right)^\frac{1}{2}}\right)\\
    &=
    4\pi \sum_{n=0}^{+\infty}
    \psi_n 
    \frac{3\sqrt{2}\left(\sqrt{3}\right)^n}
    {\left(1+\frac{2}{3}
    \left(\frac{3}{4}\right)^n
    \right)^\frac{1}{2}},
    \end{align}
    and this completes the proof.
\end{proof}

\begin{theorem} \label{PsiSupRegCrit}
    Suppose $u\in C\left([0,T_{max});
    \dot{H}^\frac{\log(3)}{2\log(2)}_{\mathcal{M}}
    \right)$,
is an odd, permutation symmetric solution of the Fourier-restricted Euler or hypodissipative Navier--Stokes equation with hj-parity.
Then for all $0\leq t<T_{max}$,
\begin{equation} \label{PsiSupBound}
    \|\psi(t)\|_{\mathcal{H}^1}^2
    \leq 
    \left\|\psi^0\right\|_{\mathcal{H}^1}^2
    \exp\left(4\sqrt{2}\pi \int_0^t
    \sup_{n\in\mathbb{Z}^+}
    \left(\sqrt{3}\right)^n \psi_n(\tau)
    \diff\tau\right).
\end{equation}
In particular, if $T_{max}<+\infty$, then
\begin{equation}
    \int_0^{T_{max}}
    \sup_{n\in\mathbb{Z}^+}
    \left(\sqrt{3}\right)^n \psi_n(t)
    \diff t
    =
    +\infty.
\end{equation}
\end{theorem}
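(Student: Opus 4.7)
The plan is to perform an energy-type estimate for $\|\psi(t)\|_{\mathcal{H}^1}^2 = \sum_{n\geq 0} 3^n \psi_n(t)^2$ directly from the ODE system in Proposition \ref{RestrictedHypoPartiyODE} (or Proposition \ref{RestrictedEulerPartiyODE} in the inviscid case) and then apply Gr\"onwall's inequality. Differentiating in time, $\frac{d}{dt}\|\psi(t)\|_{\mathcal{H}^1}^2 = 2\sum_n 3^n \psi_n \partial_t \psi_n$. The dissipative contribution $-2\nu (12\pi^2)^\alpha \sum_n \mu_n^\alpha 3^{n(1+\tilde\alpha)} \psi_n^2$ is non-positive and can be discarded, leaving two nonlinear sums: an incoming sum $\sum_{n\geq 1} 3^n \beta_{n-1} (\sqrt{3})^n \psi_n \psi_{n-1}^2$ and an outgoing sum $\sum_{n\geq 0} 3^n \beta_n (\sqrt{3})^{n+1} \psi_n^2 \psi_{n+1}$, each multiplied by $2\sqrt{2}\pi$.

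Next I would shift the index $n \mapsto n+1$ in the incoming sum, using the convention $\psi_{-1}=\beta_{-1}=0$, to rewrite it as $\sum_{n\geq 0} 3^{n+1}\beta_n (\sqrt{3})^{n+1}\psi_n^2\psi_{n+1}$. The two sums then combine via the identity $3^{n+1} - 3^n = 2\cdot 3^n$ to give
\[
\frac{d}{dt}\|\psi(t)\|_{\mathcal{H}^1}^2 \leq 4\sqrt{2}\pi \sum_{n\geq 0} 3^n \beta_n (\sqrt{3})^{n+1}\psi_n^2\psi_{n+1}.
\]
Since $\beta_n \leq 1$ and $\psi_n^2 \geq 0$, the pointwise bound $(\sqrt{3})^{n+1}\psi_{n+1}(t) \leq \sup_m (\sqrt{3})^m \psi_m(t)$ can be applied term-by-term, yielding
\[
\frac{d}{dt}\|\psi(t)\|_{\mathcal{H}^1}^2 \leq 4\sqrt{2}\pi \left(\sup_{n\in\mathbb{Z}^+}(\sqrt{3})^n\psi_n(t)\right)\|\psi(t)\|_{\mathcal{H}^1}^2.
\]
Gr\"onwall's inequality then gives the stated exponential bound directly.

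For the blowup criterion, if $T_{max} < +\infty$, Corollary \ref{DyadicHilbertBlowupRateCor} gives $\|\psi(t)\|_{\mathcal{H}^1} \to +\infty$ as $t \to T_{max}^-$; combined with the Gr\"onwall bound, this forces $\int_0^{T_{max}} \sup_n (\sqrt{3})^n \psi_n(\tau)\diff\tau = +\infty$.

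The main obstacle is the telescoping-style reorganization of the incoming and outgoing nonlinear sums: the $\mathcal{H}^1$ weight $3^n$ is dyadic, so the $3$-factor gained when incrementing the shell index does not fully cancel the outgoing loss, and instead leaves a surviving factor of $2$ which is precisely responsible for the constant $4\sqrt{2}\pi$ in the exponent. This stands in direct contrast with the $L^2$ case, where the analogous incoming and outgoing sums cancel exactly and produce conservation of energy; at higher regularity the weight mismatch converts what was a telescoping cancellation into a positive sum, whose size is controlled by the supremum precisely because the factor $\psi_n^2$ has a definite sign.
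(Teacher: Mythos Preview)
Your proposal is correct and follows essentially the same approach as the paper: differentiate $\|\psi\|_{\mathcal{H}^1}^2$ using the ODE system, drop the dissipative term, shift the index in the incoming sum to combine it with the outgoing sum (the paper writes the weights as $(\sqrt{3})^{3n}$ rather than $3^n(\sqrt{3})^n$, but this is only notation), factor out $(\sqrt{3})^{n+1}\psi_{n+1}$ and bound it by the supremum using $\beta_n\le 1$, and conclude by Gr\"onwall together with Corollary~\ref{DyadicHilbertBlowupRateCor}. The constant $4\sqrt{2}\pi$ and the mechanism producing it are identical.
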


\begin{proof}
    We can see from \Cref{DyadicHilbertBlowupRateCor},
    that if $T_{max}<+\infty$, then
    \begin{equation}
    \lim_{t\to T_{max}}
    \|\psi(t)\|_{\mathcal{H}^1}^2
    =
    +\infty,
    \end{equation}
    so it suffices to prove the bound \eqref{PsiSupBound}.
    Using the system of ODEs for $\psi$,
    we begin by computing that for all $0<t<T_{max}$,
    \begin{multline}
    \frac{\diff}{\diff t}
    \|\psi(t)\|_{\mathcal{H}^1}^2
    =
    -2(12\pi^2)^\alpha\nu 
    \sum_{n=0}^{+\infty} \mu_n^\alpha
\left(\sqrt{3}\right)^{2(1+\Tilde{\alpha})n}
 \psi_n^2 +2\sqrt{2}\pi \sum_{n=1}^{+\infty}
	\beta_{n-1} \left(\sqrt{3}\right)^{3n}
        \psi_{n-1}^2 \psi_n \\
	-2\sqrt{2}\pi \sum_{n=0}^{+\infty}
	\beta_{n} \left(\sqrt{3}\right)^{3n+1}\psi_{n}^2 \psi_{n+1},
	\end{multline}
 where $\nu=0$ for the inviscid case.
 
    We will not make use of the dissipation; rather, in the viscous case, will simply drop this negative term. Our estimates will be on the nonlinear term,
    which we will denote by
    \begin{equation}
    NLT=
    2\sqrt{2}\pi \sum_{n=1}^{+\infty}
	\beta_{n-1} \left(\sqrt{3}\right)^{3n}
    \psi_{n-1}^2 \psi_n
	-2\sqrt{2}\pi \sum_{n=0}^{+\infty}
	\beta_{n} \left(\sqrt{3}\right)^{3n+1}
 \psi_{n}^2 \psi_{n+1}.
    \end{equation}
	Changing the index $n \to n+1$ in the first sum,
    observe that
	\begin{align}
	NLT
	&=
	2\sqrt{2}\pi \sum_{n=0}^{+\infty}
	\beta_n \left( \left(\sqrt{3}
 \right)^{3n+3}
 -\left(\sqrt{3}\right)^{3n+1}\right) 
 \psi_{n}^2 \psi_{n+1} \\
	&=
	2\sqrt{2}\pi \sum_{n=0}^{+\infty}
	\beta_n (3-1) \left(\sqrt{3}\right)^{2n} 
 \psi_n^2 
 \left(\sqrt{3}\right)^{n+1}
 \psi_{n+1}  \\
 &=
 4\sqrt{2}\pi 
 \sum_{n=0}^{+\infty}
	\beta_n \left(\sqrt{3}\right)^{2n} 
 \psi_n^2 
 \left(\sqrt{3}\right)^{n+1}
 \psi_{n+1}  \\
	&\leq 
	4\sqrt{2}\pi
 \left(\sup_{n\in\mathbb{Z}^+}
 \left(\sqrt{3}\right)^{(n+1)} \psi_{n+1} \right)
	\sum_{n=0}^{+\infty} \beta_n
 \left(\sqrt{3}\right)^{2n} \psi_n^2 \\
	&\leq  
	4\sqrt{2}\pi
    \left(\sup_{n\in\mathbb{Z}^+}
 \left(\sqrt{3}\right)^{n} \psi_n \right)
    \|\psi\|_{\mathcal{H}^1}^2.
	\end{align}
Therefore, we can see that for all $0<t<T_{max}$,
\begin{equation}
    \frac{\diff}{\diff t}
    \|\psi(t)\|_{\mathcal{H}^1}^2
    \leq 
    4\sqrt{2}\pi
    \left(\sup_{n\in\mathbb{Z}^+}
 \left(\sqrt{3}\right)^{n} \psi_n \right)
    \|\psi\|_{\mathcal{H}^1}^2,
\end{equation}
and, applying Gr\"onwall's inequality, this completes the proof.
\end{proof}

\begin{theorem} \label{StrainOriginBlowupThm}
    Suppose $u\in C\left([0,T_{max});
    \dot{H}^s_{\mathcal{M}}\right), s>\frac{5}{2}$
is an odd, permutation symmetric, hj-parity solution of the Fourier-restricted hypodissipative Navier--Stokes equation or Fourier-restricted Euler equation.
Further suppose that for all $n\in\mathbb{Z}^+$, 
we have $\psi_n(0)\geq 0$.
Then for all $0\leq t<T_{max}$,
\begin{equation}
    \nabla u(\Vec{0},t)
    =
    \lambda(t)
    \left(\begin{array}{ccc}
         0 & -1 & -1  \\
         -1 & 0 & -1 \\
         -1 & -1 & 0
    \end{array}\right),
\end{equation}
where
\begin{equation}
    \lambda(t)=
    12\sqrt{2}\pi \sum_{n=0}^{+\infty}
    \psi_n(t)
    \frac{\left(\sqrt{3}\right)^n}
    {\left(1+\frac{2}{3}
    \left(\frac{3}{4}\right)^n
    \right)^\frac{1}{2}} 
    \geq 0
    \end{equation}

    Furthermore, for all $0\leq t<T_{max}$,
\begin{equation}
    \|\psi(t)\|_{\mathcal{H}^1}^2
    \leq 
    \left\|\psi^0\right\|_{\mathcal{H}^1}^2
    \exp\left(\frac{\sqrt{5}}{3\sqrt{3}}
    \int_0^t \lambda(\tau) 
    \diff\tau\right).
    \end{equation}
    In particular, if $T_{max}<+\infty$,
    then
    \begin{equation}
    \int_0^{T_{max}}\lambda(t) \diff t
    =
    +\infty.
    \end{equation}
    \end{theorem}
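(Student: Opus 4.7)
\medskip

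The plan is to decompose the theorem into three logical pieces and assemble them from machinery that is already in place. First I would address the explicit formula for $\nabla u(\vec{0},t)$. Since oddness, permutation symmetry, and hj-parity are preserved by the dynamics (by \Cref{RestrictedPermuteDynamicsProp,RestrictedOddDynamicsProp,hjProp} in the inviscid case, and \Cref{RestrictedHypoPermuteDynamicsProp,RestrictedHypoOddDynamicsProp,hjPropHypo} in the viscous case), the solution $u(\cdot,t)$ has the sine series required by the hypothesis of \Cref{OriginStrainFourierProp} for every $0\leq t<T_{max}$. Applying that proposition pointwise in $t$ immediately gives the matrix identity and the series formula for $\lambda(t)$. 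Non-negativity of $\lambda(t)$ then reduces to showing that the coefficients $\psi_n(t)$ stay non-negative; in the Euler case this is \Cref{RestrictedPositivity}, and in the hypo-viscous case \Cref{HypoPositivity}, both of which give the required exponential lower bound from the assumption $\psi_n(0)\geq 0$.

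Next I would derive the Gr\"onwall bound on $\|\psi(t)\|_{\mathcal{H}^1}$. The key observation is a uniform lower bound for the coefficients $\mu_n$: since the map $n\mapsto \frac{2}{3}(\tfrac{3}{4})^n$ is decreasing and attains its maximum $\tfrac{2}{3}$ at $n=0$, we have
\begin{equation}
\mu_n=\frac{1}{\left(1+\tfrac{2}{3}\left(\tfrac{3}{4}\right)^n\right)^{1/2}}\geq \mu_0=\frac{\sqrt{3}}{\sqrt{5}}
\end{equation}
for every $n\in\mathbb{Z}^+$. Combining this with $\psi_n(t)\geq 0$, we obtain
\begin{equation}
\lambda(t)\geq 12\sqrt{2}\pi\cdot\frac{\sqrt{3}}{\sqrt{5}}\sum_{n=0}^{+\infty}\left(\sqrt{3}\right)^n\psi_n(t)\geq 12\sqrt{2}\pi\cdot\frac{\sqrt{3}}{\sqrt{5}}\sup_{n\in\mathbb{Z}^+}\left(\sqrt{3}\right)^n\psi_n(t),
\end{equation}
where the second inequality is again pure non-negativity. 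This is the crucial inequality, and with it the result is essentially free: I would plug the estimate
\begin{equation}
\sup_{n\in\mathbb{Z}^+}\left(\sqrt{3}\right)^n\psi_n(t)\leq \frac{\sqrt{5}}{12\sqrt{2}\pi\sqrt{3}}\lambda(t)
\end{equation}
directly into the differential inequality derived in the proof of \Cref{PsiSupRegCrit},
\begin{equation}
\frac{\diff}{\diff t}\|\psi(t)\|_{\mathcal{H}^1}^2\leq 4\sqrt{2}\pi\left(\sup_{n\in\mathbb{Z}^+}\left(\sqrt{3}\right)^n\psi_n(t)\right)\|\psi(t)\|_{\mathcal{H}^1}^2,
\end{equation}
and the arithmetic $4\sqrt{2}\pi\cdot\frac{\sqrt{5}}{12\sqrt{2}\pi\sqrt{3}}=\frac{\sqrt{5}}{3\sqrt{3}}$ produces exactly the announced constant. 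Gr\"onwall's inequality then yields the stated exponential bound on $\|\psi(t)\|_{\mathcal{H}^1}^2$.

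Finally, the blowup consequence is immediate. By \Cref{DyadicHilbertBlowupRateCor}, if $T_{max}<+\infty$ then $\|\psi(t)\|_{\mathcal{H}^1}\to +\infty$ as $t\to T_{max}$; feeding this into the Gr\"onwall bound just proved forces $\int_0^{T_{max}}\lambda(\tau)\,\diff\tau=+\infty$. There is no real obstacle in any of these steps — the heavy analytic work was already done in \Cref{OriginStrainFourierProp} (the Fourier calculation at the origin) and \Cref{PsiSupRegCrit} (the dyadic enstrophy estimate). The only genuinely new input is the comparison $\lambda(t)\gtrsim \sup_n(\sqrt{3})^n\psi_n(t)$, and the mild subtlety is that this comparison depends entirely on non-negativity of the $\psi_n$, which is why the hypothesis $\psi_n(0)\geq 0$ is essential; without it, cancellations in the sum defining $\lambda(t)$ could make $\lambda(t)$ small (or even negative) while the dyadic coefficients remain large.
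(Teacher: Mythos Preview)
Your proposal is correct and follows essentially the same approach as the paper: apply \Cref{OriginStrainFourierProp} for the matrix formula, use preservation of non-negativity to get $\lambda(t)\geq 0$, bound $\lambda(t)$ below by $\frac{12\sqrt{6}\pi}{\sqrt{5}}\sup_n(\sqrt{3})^n\psi_n(t)$ via the uniform lower bound on $\mu_n$, and then feed this into \Cref{PsiSupRegCrit}. Your write-up is in fact slightly more detailed than the paper's, as you explicitly cite the symmetry-preservation results and carry out the arithmetic for the constant $\frac{\sqrt{5}}{3\sqrt{3}}$.
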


\begin{proof}
    Observe that we have already shown that the non-negativity of the $\psi_n(t)$ is preserved by the dynamics, so clearly for all $0\leq t<T_{max}$ and for all $n\in\mathbb{Z}^+$, we have $\psi_n(t)\geq 0$.
    Applying \Cref{OriginStrainFourierProp},
    we can then see that for all $0\leq t<T_{max}$,
    \begin{equation}
    \nabla u(\Vec{0},t)
    =
    \lambda(t)
    \left(\begin{array}{ccc}
         0 & -1 & -1  \\
         -1 & 0 & -1 \\
         -1 & -1 & 0
    \end{array}\right),
\end{equation}
where
\begin{equation}
    \lambda(t)=
    12\sqrt{2}\pi \sum_{n=0}^{+\infty}
    \psi_n(t)
    \frac{\left(\sqrt{3}\right)^n}
    {\left(1+\frac{2}{3}
    \left(\frac{3}{4}\right)^n
    \right)^\frac{1}{2}} 
    \geq 0.
    \end{equation}
    
    It remains to prove the regularity criterion.
    Observe that for all $n\in\mathbb{Z}^+$,
    \begin{equation}
    \frac{1}{\left(1+\frac{2}{3}
    \left(\frac{3}{4}\right)^n
    \right)^\frac{1}{2}} 
    \geq
    \frac{\sqrt{3}}{\sqrt{5}}.
    \end{equation}
    Therefore, we can compute that 
    \begin{align}
    \lambda(t)
    &\geq
    \frac{12\sqrt{6}\pi}{\sqrt{5}}
    \sum_{n=0}^{+\infty}
    \left(\sqrt{3}\right)^n\psi_n(t) \\
    &\geq 
    \frac{12\sqrt{6}\pi}{\sqrt{5}}
    \left(\sup_{n\in\mathbb{Z}^+}
    \left(\sqrt{3}\right)^n\psi_n(t)\right),
    \end{align}
    using the fact that for all $n\in\mathbb{Z}^+$, and for all $0\leq t<T_{max}$, we have
    $\psi_n(t)\geq 0$, so there is no cancellation, and the sum must be larger than each of its terms.
   The result then follows from \Cref{PsiSupRegCrit}.
\end{proof}

\begin{remark}
    We can clearly see that $\nabla u(\Vec{0},t)$ is symmetric, and so
    \begin{align}
    S(\Vec{0},t) &= \lambda(t)
    \left(\begin{array}{ccc}
         0 & -1 & -1  \\
         -1 & 0 & -1 \\
         -1 & -1 & 0
    \end{array}\right) \\
    \omega(\Vec{0},t) &= 0.
    \end{align}
    We have just seen that this implies that $S(\Vec{0},t)$ has eigenvalues $-2\lambda(t),\lambda(t),\lambda(t)$, with axial compression along the $\sigma$-axis, and planar stretching in the plane perpendicular to $\sigma$,
    and so the blowup of the strain at the origin has exactly the most singular structure in terms of the regularity criterion involving $\lambda_2^+ \in L^1_t L^\infty_x$. See \cites{NeustupaPenel1,MillerStrain} for further discussion of geometric considerations.
\end{remark}

\subsection{Mirror symmetry} \label{MirrorSubsection}

In this section, we will show the equivalence of $\sigma$-mirror symmetry and hj-parity for odd, permutation symmetric vector fields in the constraint space $\dot{H}^s_\mathcal{M}$.
Our finite-time blowup results in the introduction were stated in terms of odd, permutation, symmetric, and $\sigma$-mirror symmetric solutions, but in the \Cref{DynamicsSection,BlowupSection} were proven in terms of odd, permutation-symmetric, hj-parity solutions, so it is necessary to show that the conditions are equivalent. Recall the definition of $\sigma$-mirror symmetry from the introduction.

\begin{definition}
    Let $M_\sigma=
    I_3-\frac{2}{3}\sigma\otimes\sigma$.
    We will say that a vector field $u\in 
    H^s\left(\mathbb{T}^3;\mathbb{R}^3\right), 
    s\geq 0$ has $\sigma$-mirror symmetry if
    \begin{equation}
    u^{M_\sigma}=u.
    \end{equation}
    Note that $M_\sigma=M_\sigma^{-1}$, so $u$ has $\sigma$-mirror symmetry if and only if for all $x\in \mathbb{T}^3$,
    \begin{equation}
    u(x)=M_\sigma u(M_\sigma x).
    \end{equation}
\end{definition}

\begin{remark}
    Note that $M_\sigma$ does not preserve the set $\left[-\frac{1}{2},\frac{1}{2}\right]^3$.
    The transform can still be defined, because $u$ is defined on all of $\mathbb{R}^3$ by periodicity, but in general $u^{M_\sigma}$ will not be periodic in $x_1,x_2,x_3$. Only for specific vector fields will $u^{M_\sigma}$ still be a function on the torus. Of course, if $u$ has $\sigma$-mirror symmetry, than clearly $u^{M_\sigma}$ will also be a function on the torus.

    Note that in general 
    \begin{equation}
    u^{M_\sigma}(x)
    =
    \sum_{k\in\mathbb{Z}^3}
    M_\sigma \hat{u}(k) 
    e^{2\pi i (M_\sigma k)\cdot x},
    \end{equation}
    and that
    \begin{equation}
    M_\sigma k= k-\frac{2}{3}(\sigma\cdot k)\sigma.
    \end{equation}
    Therefore, we can see that $u^{M\sigma}\in H^s\left(\mathbb{T}^3;\mathbb{R}^3\right)$---that is to say, is a function of the torus---if and only if $\frac{\sigma\cdot k}{3}\in\mathbb{Z}$
    for all $k\in\supp\left(\hat{u}\right)$.
    It is simple to observe that this is the case for all $u\in H^s_\mathcal{M}$.
\end{remark}

\begin{lemma}
    Suppose $u\in H^s\left(\mathbb{T}^3;\mathbb{R}^3\right)$ is odd.
    Then $u$ has $\sigma$-mirror symmetry if and only if
    \begin{equation}
    u^{-M_\sigma}=u
    \end{equation}
\end{lemma}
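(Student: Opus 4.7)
The plan is to observe that for odd vector fields the two transformations $M_\sigma$ and $-M_\sigma$ act identically on $u$, so the two symmetry conditions coincide. The key algebraic fact is that $M_\sigma$ is an involution: since $M_\sigma = I_3 - \tfrac{2}{3}\sigma\otimes\sigma$ is the reflection across the hyperplane $\sigma\cdot x = 0$, we have $M_\sigma^{-1} = M_\sigma$, and consequently $(-M_\sigma)^{-1} = -M_\sigma$.

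First I would unwind the definition directly. For any matrix $A \in O(3)$, recall $u^A(x) = A u(A^{-1}x)$. Applying this with $A = -M_\sigma$ and using $(-M_\sigma)^{-1} = -M_\sigma$, I get
\begin{equation}
    u^{-M_\sigma}(x) = -M_\sigma\, u(-M_\sigma x).
\end{equation}
Now invoke the oddness hypothesis $u(-y) = -u(y)$ with $y = M_\sigma x$ to rewrite $u(-M_\sigma x) = -u(M_\sigma x)$, which gives
\begin{equation}
    u^{-M_\sigma}(x) = -M_\sigma\bigl(-u(M_\sigma x)\bigr) = M_\sigma u(M_\sigma x) = u^{M_\sigma}(x).
\end{equation}
Thus for every odd $u$, the identity $u^{-M_\sigma} = u^{M_\sigma}$ holds pointwise, and the equivalence of the two fixed-point conditions $u^{M_\sigma} = u$ and $u^{-M_\sigma} = u$ is immediate.

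There is essentially no obstacle here; the only subtle point worth flagging is the earlier remark that $M_\sigma$ does not preserve the fundamental domain $\left[-\tfrac12,\tfrac12\right]^3$, so $u^{M_\sigma}$ and $u^{-M_\sigma}$ are \emph{a priori} only well-defined as functions on $\mathbb{R}^3$ via periodic extension. However, the identity above is a pointwise statement on $\mathbb{R}^3$ that uses only linearity of $M_\sigma$ and the pointwise oddness of the periodic extension of $u$, so periodicity issues are irrelevant to the argument. This lemma will then be used in the subsection to translate the hj-parity condition (which is a symmetry of the Fourier coefficients under the swap $\eta_m \leftrightarrow \zeta_m$, i.e., under the transposition $P_{23}$ composed with the oddness reflection) into the geometric $\sigma$-mirror symmetry employed in the statements of \Cref{RestrictedEulerBlowupIntro,RestrictedHypoBlowupIntro}.
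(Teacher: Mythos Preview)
Your proof is correct and takes essentially the same approach as the paper: both compose the odd symmetry with the $M_\sigma$ reflection to pass between the two fixed-point conditions. Your version is slightly cleaner in that you prove the single identity $u^{-M_\sigma}=u^{M_\sigma}$ for all odd $u$ and deduce the equivalence, whereas the paper verifies each implication separately, but the underlying computation is the same.
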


\begin{proof}
    This comes from composing the odd symmetry with the $\sigma$-mirror symmetry.
    Suppose $u$ is odd and $\sigma$-mirror symmetric. Then for all $x\in \mathbb{T}^3$,
    \begin{equation}
    u(x)
    =-u(-x)
    =-M_\sigma u (-M_\sigma x),
    \end{equation}
    and so $u^{-M_\sigma}=u.$
    Likewise suppose that $u$ is odd and 
    $u=u^{-M_\sigma}$.
    Then
    \begin{equation}
    u(x)=-M_\sigma u(-M_\sigma x)
    =M_\sigma u(M_\sigma x),
    \end{equation}
    and so $u=u^{M_\sigma}$.
\end{proof}

\begin{theorem} \label{MirrorHJequivThm}
    Suppose $u\in \dot{H}^s_\mathcal{M}, s\geq 0$ is odd and permutation symmetric. Then $u$ has $\sigma$-mirror symmetry if and only if $u$ has hj-parity.
\end{theorem}

\begin{proof}
    Recall from \Cref{PermutationSymmetricCategorizationProp} that
    \begin{multline}
    u(x)=-2\sum_{m=0}^{+\infty}\bigg(
    \phi_m\sum_{k\in \mathcal{P}[k^m]} v^k
    \sin(2\pi k\cdot x)
    +\eta_m\sum_{h\in \mathcal{P}[h^m]} v^h
    \sin(2\pi h\cdot x) \\
    +\zeta_m\sum_{j\in \mathcal{P}[j^m]} v^j
    \sin(2\pi j\cdot x)
    \bigg),
    \end{multline}
    which can also be expressed as
    \begin{multline} 
    u(x)
    =
    -\sum_{m=0}^{+\infty}\sum_{P\in\mathcal{P}_3}
    \bigg( 2\phi_m v^{P(k^m)}
    \sin(2\pi P(k^m)\cdot x) \\
    +\eta_m v^{P(h^m)}
    \sin(2\pi P(h^m)\cdot x)
    +\zeta_m v^{P(j^m)}
    \sin(2\pi P(j^m)\cdot x)\bigg).
    \end{multline}
    Note that the factor of two disappears in the later two terms because the repeated entry means each term in $\mathcal{P}[h^m]$ and $\mathcal{P}[j^m]$ is double counted.
    Recalling that $P(v^k)=v^{P(k)}$, we find that
    \begin{multline} \label{MirrorSymEqn1}
    u(x)
    =
    -\sum_{m=0}^{+\infty}\sum_{P\in\mathcal{P}_3}
    \bigg( 2\phi_m P\left(v^{k^m}\right)
    \sin(2\pi P(k^m)\cdot x) \\
    +\eta_m P\left(v^{h^m}\right)
    \sin(2\pi P(h^m)\cdot x)
    +\zeta_m P\left(v^{j^m}\right)
    \sin(2\pi P(j^m)\cdot x)\bigg).
    \end{multline}
    Next compute that
    \begin{multline}
    u^{-M_\sigma}(x)=
    -\sum_{m=0}^{+\infty}\sum_{P\in\mathcal{P}_3}
    \bigg( 2\phi_m (-M_\sigma P v^{k^m})
    \sin(2\pi (-M_\sigma P k^m)\cdot x) \\
    +\eta_m (-M_\sigma P v^{h^m})
    \sin(2\pi (-M_\sigma P h^m)\cdot x) \\
    +\zeta_m (-M_\sigma P v^{j^m})
    \sin(2\pi (-M_\sigma P j^m)\cdot x)\bigg).
    \end{multline}
    Next we observe that $-M_\sigma$ commutes with all permutations: for all $P\in\mathcal{P}_3$,
    \begin{equation}
    -M_\sigma P=-P M_\sigma,
    \end{equation}
    and that therefore
    \begin{multline} \label{MirrorSymSetp}
    u^{-M_\sigma}(x)=
    -\sum_{m=0}^{+\infty}\sum_{P\in\mathcal{P}_3}
    \bigg( 2\phi_m (-PM_\sigma  v^{k^m})
    \sin(2\pi (-PM_\sigma k^m)\cdot x) \\
    +\eta_m (-PM_\sigma  v^{h^m})
    \sin(2\pi (-PM_\sigma  h^m)\cdot x) \\
    +\zeta_m (-PM_\sigma  v^{j^m})
    \sin(2\pi (-PM_\sigma  j^m)\cdot x)\bigg).
    \end{multline}

    Now we compute how $-M_\sigma$ acts on the various frequencies, concluding that
    \begin{align}
    -M_\sigma k^m
    &=
    \left(\frac{2}{3}\sigma\otimes\sigma -I_3\right)
    \left(2^{2m}\sigma +3^m
    \left(\begin{array}{c}
         1 \\ 0 \\ -1 
    \end{array}\right)\right) \\
    &=
    2^{2m}\sigma -3^m
    \left(\begin{array}{c}
         1 \\ 0 \\ -1 
    \end{array}\right) \\
    &=
    P_{13}(k^m),
    \end{align}
    and
    \begin{align}
    -M_\sigma h^m
    &=
    \left(\frac{2}{3}\sigma\otimes\sigma -I_3\right)
    \left(2^{2m+1}\sigma +3^m
    \left(\begin{array}{c}
         1 \\ 1 \\ -2 
    \end{array}\right)\right) \\
    &=
    2^{2m+1}\sigma -3^m
    \left(\begin{array}{c}
         1 \\ 1 \\ -2 
    \end{array}\right) \\
    &=
    P_{13}(j^m),
    \end{align}
    and
    \begin{align}
    -M_\sigma j^m
    &=
    \left(\frac{2}{3}\sigma\otimes\sigma -I_3\right)
    \left(2^{2m+1}\sigma +3^m
    \left(\begin{array}{c}
         2 \\ -1 \\ -1 
    \end{array}\right)\right) \\
    &=
    2^{2m+1}\sigma -3^m
    \left(\begin{array}{c}
         2 \\ -1 \\ -1 
    \end{array}\right) \\
    &=
    P_{13}(h^m).
    \end{align}
    We also compute that for any $k\in\mathbb{Z}^3$,
    \begin{align}
    -M_\sigma P_k^\perp(\sigma)
    &=
    -M_\sigma\left(\sigma -\frac{\sigma\cdot k}{|k|^2}k\right) \\
    &=
    \sigma -\frac{\sigma\cdot k}{|k|^2}
    (-M_\sigma k)\\
    &=
    \sigma -\frac{\sigma\cdot (-M_\sigma k)}
    {|-M_\sigma k|^2}
    (-M_\sigma k) \\
    &=
    P_{-M_\sigma k}^\perp (\sigma).
    \end{align}
    This in turn implies that
    \begin{align}
    -M_\sigma v^{k^m} &=
    P_{13}\left(v^{k^m}\right) \\
    -M_\sigma v^{h^m} &=
    P_{13}\left(v^{j^m}\right) \\
    -M_\sigma v^{j^m} &=
    P_{13}\left(v^{h^m}\right).
    \end{align}

    Plugging these identities back into \eqref{MirrorSymSetp}, we find that
    \begin{multline}
    u^{-M_\sigma}(x)
    =
    -\sum_{m=0}^{+\infty}\sum_{P\in\mathcal{P}_3}
    \bigg( 2\phi_m PP_{13}\left(v^{k^m}\right)
    \sin(2\pi P(P_{13} k^m)\cdot x) \\
    +\zeta_m P P_{13}\left(v^{h^m}\right)
    \sin(2\pi P(P_{13}h^m)\cdot x)
    +\eta_m PP_{13}(v^{j^m})
    \sin(2\pi P(P_{13}j^m)\cdot x)\bigg).
    \end{multline}
    Note that the image of the group of permutations under composition with any fixed permutation, in this case $P_{13}$, is precisely the whole group of permutations $\mathcal{P}_3$ and so we can conclude
    \begin{multline} \label{MirrorSymEqn2}
    u^{-M_\sigma}(x)
    =
    -\sum_{m=0}^{+\infty}\sum_{P\in\mathcal{P}_3}
    \bigg( 2\phi_m P\left(v^{k^m}\right)
    \sin(2\pi P(k^m)\cdot x) \\
    +\zeta_m P \left(v^{h^m}\right)
    \sin(2\pi P(h^m)\cdot x)
    +\eta_m P(v^{j^m})
    \sin(2\pi P(j^m)\cdot x)\bigg).
    \end{multline}
    Comparing \eqref{MirrorSymEqn1} and \eqref{MirrorSymEqn2}, we can clearly see that $u=u^{-M_\sigma}$ if and only if $\eta_m=\zeta_m$ for all $m\in \mathbb{Z}^+$,
    and this completes the proof.
\end{proof}

\section{Limitations of Fourier space methods} \label{AppendixFourierLimitations}

It can be easily seen that if we take initial data $u^0\in \dot{H}^s_\mathcal{M}$, then this will not be preserved by the dynamics of the full Euler or (hypodissipative) Navier--Stokes equations. This is true in particular because 
$\mathcal{M}+\mathcal{M}\not\subset \mathcal{M}$.
This means the nonlinearity will produce Fourier modes outside of the set $\mathcal{M}$.
In order to restrict the Fourier transform to some subset
$\mathcal{N}\subset \mathbb{Z}^3$,
and to have this property preserved by the dynamics of the Euler or (hypodissipative) Navier--Stokes equation, it is necessary that $\mathcal{N}$ be a subspace. We will show that if we take any permutation symmetric subspace $\mathcal{N}\subset \mathbb{Z}^3$ with
$\mathcal{N} \not\subset
    \spn(\sigma)$ and 
    $\mathcal{N} \not\subset
    \spn(\sigma)^\perp.$
    Then there exists $m\in \mathbb{N}$, such that
    \begin{equation}
    (m\mathbb{Z})^3 \subset \mathcal{N}.
    \end{equation}
This means that taking solutions with Fourier modes supported in any three dimensional, permutation symmetric subspace cannot simplify the dynamics of the full Euler or Navier--Stokes equation, because up to a rescaling we have will have the full complexity of dealing with all of the Fourier modes in $\mathbb{Z}^3$.

\begin{definition}    
    We will define a subspace of $\mathbb{Z}^3$ in the standard way, but restricting to integer scalars. 
    We will say $\mathcal{N}\subset\mathbb{Z}^3$ is a subspace if for all $\lambda\in\mathbb{Z}$ and for all $k,j\in\mathcal{N}$, we have
    $k+j\in\mathcal{N}$ and $\lambda k\in\mathcal{N}$.
    We will additionally say $\mathcal{N}$ is permutation symmetric if for all $P\in\mathcal{P}_3$ and for all $k\in\mathcal{N}$,
    we have $P(k)\in\mathcal{N}$.
\end{definition}

\begin{definition}
    Suppose $\mathcal{N}\subset\mathbb{Z}^3$ is a subspace. Then for all $s\geq 0$, we will define the space $\dot{H}^s_{\mathcal{N}^*}\subset \dot{H}^s_{df}$ as the vector fields $u\in \dot{H}^s_{df}$, such that 
    \begin{equation}
    \supp(\hat{u})\subset \mathcal{N}.
    \end{equation}
\end{definition}

We will show that as long as $\mathcal{N}\subset \mathbb{Z}^3$ is a subspace, vector fields with their Fourier transform supported in $\mathcal{N}$---that is vector fields $u\in \dot{H}^s_{\mathcal{N}^*}$---are preserved by the dynamics of the Euler and Navier--Stokes equations.
First, we will prove a few key lemmas.

\begin{lemma} \label{SubspaceLemma}
    Suppose $\mathcal{N}\subset\mathbb{Z}^3$ is a subspace and
    $u\in \dot{H}^s_{\mathcal{N}^*}$ with $s>\frac{5}{2}$.
    Then $\mathbb{P}_{df}(u\cdot\nabla u)\in \dot{H}^{s-1}_{\mathcal{N}^*}$.
\end{lemma}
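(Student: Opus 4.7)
The strategy is to verify separately the three defining properties of $\dot{H}^{s-1}_{\mathcal{N}^*}$: membership in $\dot{H}^{s-1}$, the divergence-free condition, and Fourier support in $\mathcal{N}$. The regularity piece is standard and the divergence-free condition is automatic from $\mathbb{P}_{df}$, so the real content is tracking Fourier support through the nonlinearity and the Helmholtz projection.

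First I would handle the regularity. Since $s>\tfrac{5}{2}$, the Sobolev embedding $H^s(\mathbb{T}^3)\hookrightarrow C^1(\mathbb{T}^3)$ gives $u\in L^\infty$ and $\nabla u\in L^\infty$, and $H^{s-1}(\mathbb{T}^3)$ is a module over $H^s(\mathbb{T}^3)$ in the sense that pointwise multiplication satisfies $\|fg\|_{H^{s-1}}\le C_s\|f\|_{H^s}\|g\|_{H^{s-1}}$. Applying this component-wise to $(u\cdot\nabla)u=\sum_i u_i\partial_i u$ yields $(u\cdot\nabla)u\in H^{s-1}$, and since the Helmholtz projection is a bounded Fourier multiplier of order zero, $\mathbb{P}_{df}((u\cdot\nabla)u)\in \dot{H}^{s-1}$.

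Next I would track the Fourier support, which is the heart of the matter. Writing the product in divergence form, $\mathcal{F}(u\otimes u)(k)=\sum_{j\in\mathbb{Z}^3}\hat u(j)\otimes \hat u(k-j)$, and since $\supp\hat u\subset \mathcal{N}$, the sum is supported on $k$ such that both $j$ and $k-j$ lie in $\mathcal{N}$ for some $j$. Because $\mathcal{N}$ is a subspace, $j,\,k-j\in\mathcal{N}$ forces $k=j+(k-j)\in \mathcal{N}$, so $\supp\mathcal{F}(u\otimes u)\subset\mathcal{N}$. Taking one further derivative corresponds to multiplying Fourier coefficients by $2\pi i k$, which does not enlarge the support, so $\supp\mathcal{F}((u\cdot\nabla)u)\subset \mathcal{N}$. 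Finally, $\mathbb{P}_{df}$ acts as a matrix Fourier multiplier mode-by-mode, so it preserves Fourier support. The mean-free condition $\hat{f}(0)=0$ inherent in $\dot{H}^{s-1}$ follows since $\mathbb{P}_{df}$ annihilates the zeroth mode, and divergence-free-ness is built into $\mathbb{P}_{df}$.

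I do not anticipate any genuine obstacle here: the subspace hypothesis is precisely what is needed to close the convolution computation, and there are no cancellation issues to worry about. The only small care is to make sure the Sobolev product estimate is stated with the right exponents, which $s>\tfrac{5}{2}$ accommodates comfortably.
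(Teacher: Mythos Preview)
Your proposal is correct and follows essentially the same route as the paper: establish $H^{s-1}$ regularity via a Sobolev product estimate (the paper writes $\|(u\cdot\nabla)u\|_{H^{s-1}}\le C\|u\|_{H^{s-1}}\|\nabla u\|_{H^{s-1}}$ directly, you invoke the module property, which amounts to the same thing), then track Fourier support through the convolution using closure of $\mathcal{N}$ under addition. The only cosmetic difference is that the paper expands $(u\cdot\nabla)u$ as a double Fourier sum in $j,k\in\mathcal{N}$ rather than passing through the divergence form $u\otimes u$; both arrive at the same conclusion that the output modes lie in $\mathcal{N}+\mathcal{N}\subset\mathcal{N}$.
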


\begin{proof}
    First observe that
    \begin{align}
    \left\|\mathbb{P}_{df}(u\cdot\nabla u)
    \right\|_{H^{s-1}}
    &\leq 
    \left\|(u\cdot\nabla) u
    \right\|_{H^{s-1}} \\
    &\leq 
    C\|u\|_{H^{s-1}}\|\nabla u\|_{H^{s-1}} \\
    &\leq 
    C\|u\|_{H^{s-1}}\|u\|_{H^s},
    \end{align}
    so we conclude that $\mathbb{P}_{df}(u\cdot\nabla u)\in \dot{H}^{s-1}$.
    Now it remains only to show that the support of the Fourier transform is contained in $\mathcal{N}$.
    By hypothesis we have
    \begin{equation}
    u(x)=\sum_{k\in \mathcal{N}}
    \hat{u}(k)e^{2\pi i k\cdot x}.
    \end{equation}
    Therefore we can compute that
    \begin{equation}
    (u\cdot\nabla)u
    =
    2\pi i\sum_{j,k\in\mathcal{N}}
    (\hat{u}(j)\cdot k) \hat{u}(k)
    e^{2\pi i (k+j)\cdot x},
    \end{equation}
    and that
    \begin{equation}
    P_{df}(u\cdot\nabla u)
    =
    2\pi i\sum_{j,k\in\mathcal{N}}
    \left(I_3-\frac{(k+j)\otimes(k+j)}{|k+j|^2}\right)
    (\hat{u}(j)\cdot k) \hat{u}(k)
    e^{2\pi i (k+j)\cdot x}.
    \end{equation}
    By hypothesis, $\mathcal{N}$ is subspace and therefore we can conclude that for all $k,j\in\mathcal{N}$, we have $k+j\in\mathcal{N}$,
    and that consequently
    \begin{equation}
        \supp\left(\mathcal{F}P_{df}
        (u\cdot\nabla u)\right)
        \subset \mathcal{N},
    \end{equation}
    which this completes the proof.
\end{proof}

\begin{lemma} \label{MollificationSupportLemma}
Let $J_N$ be the mollifier truncating all frequencies $|k|>N$,
\begin{equation}
J_N(u)(x)= \sum_{|k|\leq N}
\hat{u}(k) e^{2\pi i k\cdot x}.
\end{equation}
    Suppose $\mathcal{N}\subset\mathbb{Z}^3$ is a subspace and $u\in \dot{H}^s_{\mathcal{N}^*}$.
    Then for all $N\in \mathbb{N}$, 
    $J_N(u)\in \dot{H}^\infty_{\mathcal{N}^*}$.
\end{lemma}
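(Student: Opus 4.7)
The plan is to verify each of the three properties packed into the statement $J_N(u)\in \dot{H}^\infty_{\mathcal{N}^*}$: that $J_N(u)$ is smooth (lies in $\dot{H}^\infty$), that it remains divergence free, and that its Fourier support is still contained in $\mathcal{N}$. All three follow directly from the definition of the Fourier truncation, so no clever argument is needed; the work is purely bookkeeping.

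First I would write out the Fourier representation
\begin{equation}
J_N(u)(x)=\sum_{\substack{k\in\mathbb{Z}^3 \\ |k|\leq N}}\hat{u}(k)e^{2\pi i k\cdot x},
\end{equation}
and note that since $u\in \dot{H}^s_{\mathcal{N}^*}$ we have $\hat{u}(k)=0$ for $k\notin \mathcal{N}$, so in fact
\begin{equation}
J_N(u)(x)=\sum_{\substack{k\in\mathcal{N} \\ |k|\leq N}}\hat{u}(k)e^{2\pi i k\cdot x}.
\end{equation}
This immediately gives the Fourier support condition $\mathrm{supp}(\widehat{J_N(u)})\subset\mathcal{N}$, and also shows that $J_N(u)$ is a trigonometric polynomial with only finitely many nonzero Fourier modes.

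Next I would establish smoothness. For any $s'\geq 0$,
\begin{equation}
\|J_N(u)\|_{\dot{H}^{s'}}^2=\sum_{\substack{k\in\mathcal{N}\\ 0<|k|\leq N}}(4\pi^2|k|^2)^{s'}|\hat{u}(k)|^2\leq (4\pi^2 N^2)^{s'-s}\|u\|_{\dot{H}^s}^2<+\infty,
\end{equation}
so $J_N(u)\in\dot{H}^{s'}$ for every $s'\geq 0$, i.e., $J_N(u)\in\dot{H}^\infty$. Finally, divergence freeness is inherited mode by mode: since $u\in\dot{H}^s_{df}$ implies $k\cdot\hat{u}(k)=0$ for every $k\in\mathbb{Z}^3$, and $\widehat{J_N(u)}(k)$ is either $\hat{u}(k)$ or zero, we still have $k\cdot\widehat{J_N(u)}(k)=0$ for all $k$, so $J_N(u)$ is divergence free.

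Combining these three observations yields $J_N(u)\in\dot{H}^\infty_{\mathcal{N}^*}$. Since none of the steps present any obstacle beyond tracking definitions, the lemma is essentially immediate; the only mild subtlety is that one should verify all three defining conditions of $\dot{H}^\infty_{\mathcal{N}^*}$ explicitly rather than taking any of them for granted.
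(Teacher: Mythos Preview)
Your proof is correct and follows essentially the same approach as the paper, which simply observes that $\widehat{J_N(u)}$ is compactly supported (hence $J_N(u)\in\dot{H}^\infty$) and that $\supp(\widehat{J_N(u)})\subset\supp(\hat{u})\subset\mathcal{N}$. You have filled in more detail, including an explicit check of divergence freeness that the paper leaves implicit; one minor remark is that your displayed bound $(4\pi^2 N^2)^{s'-s}\|u\|_{\dot{H}^s}^2$ is only valid as written for $s'\geq s$, though of course the finiteness for $s'<s$ is trivial since the sum has finitely many terms.
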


\begin{proof}
    It is immediately clear that $J_N(u)\in \dot{H}^\infty$, because the $\widehat{J_N(u)}$ is compactly supported. It is also obvious by construction that
    \begin{equation}
    \supp\left(\widehat{J_N(u)}\right)
    \subset
    \supp\left(\hat{u}\right)
    \subset
    \mathcal{N},
    \end{equation}
    and this completes the proof.
\end{proof}

\begin{theorem} \label{SubspaceEuler}
    Suppose $\mathcal{N}\subset \mathbb{Z}^3$ is a subspace, and suppose $u^0\in \dot{H}^s_{\mathcal{N}^*}$ with $s>\frac{5}{2}$,
    Then the strong solution of the Euler equation
    $u\in C\left([0,T_{max}),\dot{H}^s_{df}\right) 
    \cap 
    C^1 \left([0,T_{max}); \dot{H}^{s-1}_{df}\right)$
    satisfies
    $u\in C\left([0,T_{max}),
    \dot{H}^s_{\mathcal{N}^*}\right)$.
\end{theorem}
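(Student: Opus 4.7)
The plan is to show that the subspace $\dot{H}^s_{\mathcal{N}^*}$ is preserved under the Euler flow by a Picard iteration carried out entirely inside this closed subspace, and then to invoke the known uniqueness of strong solutions to the Euler equation. The key input is \Cref{SubspaceLemma}, which tells us that the nonlinearity $\mathbb{P}_{df}((u\cdot\nabla)u)$ maps $\dot{H}^s_{\mathcal{N}^*}$ into $\dot{H}^{s-1}_{\mathcal{N}^*}$.

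First I would verify that $\dot{H}^s_{\mathcal{N}^*}$ is a closed subspace of $\dot{H}^s_{df}$. This follows from the continuity of the map $u \mapsto \hat{u}(k)$ from $\dot{H}^s$ to $\mathbb{C}^3$: if $u_n \to u$ in $\dot{H}^s$ with $\hat{u}_n(k)=0$ for every $k \in \mathbb{Z}^3 \setminus \mathcal{N}$, then the same holds for $\hat{u}$. Closedness matters because it lets us take limits of Picard iterates and stay inside $\dot{H}^s_{\mathcal{N}^*}$.

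Next I would set up Picard iteration for the integral form of the Euler equation, seeking a fixed point of
\[
Q[v](t) \;=\; u^0 \;-\; \int_0^t \mathbb{P}_{df}\bigl((v\cdot\nabla)v\bigr)(\tau)\, d\tau
\]
on a small time interval $[0,T]$ in the space $C([0,T];\dot{H}^s_{\mathcal{N}^*})$. The standard $H^s$ estimates for $s>5/2$ make $Q$ a contraction on a suitable ball in this space provided $T$ is chosen small enough in terms of $\|u^0\|_{H^s}$; these estimates are identical to the ones used for full Euler, since $\dot{H}^s_{\mathcal{N}^*}$ inherits the $\dot{H}^s$ norm. The crucial point is that, by \Cref{SubspaceLemma} applied pointwise in time and closedness of $\dot{H}^s_{\mathcal{N}^*}$ under integration in time, $Q$ maps $C([0,T];\dot{H}^s_{\mathcal{N}^*})$ into itself. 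The fixed point $v$ is therefore a local strong solution of the Euler equation with $v \in C([0,T];\dot{H}^s_{\mathcal{N}^*}) \cap C^1([0,T];\dot{H}^{s-1}_{\mathcal{N}^*})$.

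Finally I would upgrade this local result to all of $[0,T_{max})$ via uniqueness and a continuation argument. Uniqueness of strong solutions of Euler in the class $C([0,T];\dot{H}^s_{df}) \cap C^1([0,T];\dot{H}^{s-1}_{df})$ for $s>5/2$ is classical, and guarantees that $v=u$ on $[0,T]$; in particular $u(t)\in \dot{H}^s_{\mathcal{N}^*}$ for $t\in [0,T]$. Let
\[
T^* = \sup\bigl\{\, t \in [0,T_{max}) \,:\, u(\tau)\in \dot{H}^s_{\mathcal{N}^*} \text{ for all } \tau\in[0,t]\,\bigr\}.
\]
By closedness of $\dot{H}^s_{\mathcal{N}^*}$ and continuity of $u$ in $\dot{H}^s$, $u(T^*)\in \dot{H}^s_{\mathcal{N}^*}$ whenever $T^* < T_{max}$. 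Applying the local argument above with initial data $u(T^*)$ at time $T^*$ then produces a strictly larger interval on which $u$ stays in the subspace, contradicting the definition of $T^*$. Hence $T^* = T_{max}$, which is the conclusion. The main obstacle is essentially bookkeeping rather than analytic: one needs to confirm that closedness of $\dot{H}^s_{\mathcal{N}^*}$ is preserved under time integration in the Picard iteration, and that the continuation at $T^*$ really uses only the hypothesis $s>5/2$ and the local existence time bound from the Picard step; no new estimate beyond those underlying standard Euler wellposedness is required.
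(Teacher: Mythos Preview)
Your overall strategy---construct a solution that stays in the subspace and invoke uniqueness---is sound, but the Picard step as written does not work. The map
\[
Q[v](t) = u^0 - \int_0^t \mathbb{P}_{df}\bigl((v\cdot\nabla)v\bigr)(\tau)\,d\tau
\]
is \emph{not} a contraction on $C([0,T];\dot{H}^s)$, because for Euler the nonlinearity loses one derivative: if $v\in C_T\dot{H}^s$ then $(v\cdot\nabla)v \in C_T\dot{H}^{s-1}$, and time integration gives back only $C_T\dot{H}^{s-1}$, not $C_T\dot{H}^s$. There is no smoothing mechanism (unlike Navier--Stokes, where the heat semigroup recovers the lost derivative). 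So the sentence ``the standard $H^s$ estimates for $s>5/2$ make $Q$ a contraction'' is false as stated, and this is not a bookkeeping issue---it is exactly the obstruction that forces Euler local theory to go through an approximation scheme rather than a direct fixed point.

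The paper handles this by running Picard iteration on the \emph{mollified} problem
\[
Q_N[v](t) = u^0 - \int_0^t J_N\mathbb{P}_{df}\bigl((J_N v\cdot\nabla)J_N v\bigr)(\tau)\,d\tau,
\]
where the frequency cutoff $J_N$ makes the nonlinearity bounded on $\dot{H}^s$ (effectively a finite-dimensional ODE). \Cref{SubspaceLemma} and \Cref{MollificationSupportLemma} then show $Q_N$ preserves $C_T\dot{H}^s_{\mathcal{N}^*}$, so the mollified solution $u^N$ lies in the subspace; finally one passes to the limit $N\to\infty$ (strongly in $C_TH^{s'}$ for $s'<s$, weakly in $C_TH^s$), which preserves the Fourier support condition. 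Your uniqueness-and-continuation wrapper would work fine once you replace the direct Picard step with this mollified construction.
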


\begin{proof}
    Fix a frequency cutoff $N\in \mathbb{N}$. Recall that the solution of the mollified problem for the Euler equation is constructed in \cite{KatoPonce} by a fixed point method using Picard iteration.
    Solutions of the mollified problem are fixed points of the map $Q_N:C_T H^s_x \to C_T H^s_x$
    \begin{equation}
    Q_N[u](\cdot,t)=u^0 -\int_0^t 
    J_N\mathbb{P}_{df}((J_N u\cdot\nabla) J_N u)
    (\cdot,\tau) \diff\tau.
    \end{equation}
    Note that \Cref{SubspaceLemma,MollificationSupportLemma} imply that if $u\in C_T \dot{H}^s_{\mathcal{N}^*}$, 
    then $Q_N[u]\in C_T \dot{H}^s_{\mathcal{N}^*}$.
    Solutions of the mollified problem can be constructed by Picard iteration, with
    \begin{equation}
    u^{m+1}=Q_N[u^{N,m}],
    \end{equation}
    and the solution of the mollified problem $u^N$ satisfying
    \begin{equation}
    u^N=\lim_{m\to +\infty}u^{N,m}.
    \end{equation}
    We can therefore clearly see by induction that if $u^0\in \dot{H}^s_\mathcal{M}$, then for all $m\in\mathbb{N}$, we have
    $u^{N,m}\in C_T \dot{H}^s_{\mathcal{N}^*}$.
    Taking the limit $m\to +\infty$,
    we find that 
    $u^N\in C_T \dot{H}^s_{\mathcal{N}^*}$,
    and therefore the theorem holds for solutions of the mollified Euler equation with frequency cutoff $N$,
    \begin{equation}
    \partial_t u+
    J_N \mathbb{P}_{df}(J_N u\cdot\nabla J_N u)
    =0.
    \end{equation}

    The solution of the full Euler equation can be found as the limit of a subsequence of solutions to the mollified problem
    \begin{equation}
    u=\lim_{k\to +\infty}u^{N_k},
    \end{equation}
    where $N_k \to +\infty$.
    Note this convergence is strong in $C_T H^{s'}_x$ for all $s'<s$, and weak in $C_T H^s_x$. This implies that if 
    $\supp\left(\mathcal{F}u^{N_k}\right)
    \subset \mathcal{N}$,
    then $\supp(\mathcal{F}u)\subset \mathcal{N}$.
    Therefore it was sufficient to prove the result holds for the mollified problem, and this completes the proof.
\end{proof}

\begin{lemma} \label{SubspaceHeatLemma}
    Suppose $\mathcal{N}\subset\mathbb{Z}^3$ is a subspace and
    $u\in \dot{H}^1_{\mathcal{N}^*}$.
    Then for all $\tau>0$, we have $e^{\tau\Delta}\mathbb{P}_{df}(u\cdot\nabla u)\in C_T \dot{H}^1_{\mathcal{N}^*}$.
\end{lemma}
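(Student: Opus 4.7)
The plan is to combine the Fourier-support preservation property of the heat semigroup with the nonlinear support statement already established in Lemma \ref{SubspaceLemma}, and to use the smoothing estimate of Proposition \ref{SmoothingProp} (with $\alpha=1$) to handle regularity and continuity in $\tau$.

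First I would note that $e^{\tau\Delta}$ is a Fourier multiplier with symbol $e^{-4\pi^2|k|^2\tau}$, so $\widehat{e^{\tau\Delta}w}(k) = e^{-4\pi^2|k|^2\tau}\hat{w}(k)$, and in particular
\[
\supp(\mathcal{F}(e^{\tau\Delta} w)) \subseteq \supp(\hat{w})
\]
for every tempered $w$. The Fourier-support content of Lemma \ref{SubspaceLemma}, whose proof depends only on the subspace closure $\mathcal{N}+\mathcal{N}\subseteq\mathcal{N}$ and is independent of the precise regularity used there, gives $\supp(\mathcal{F}\mathbb{P}_{df}(u\cdot\nabla u))\subseteq\mathcal{N}$. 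Composing these two observations yields $\supp(\mathcal{F}(e^{\tau\Delta}\mathbb{P}_{df}(u\cdot\nabla u)))\subseteq\mathcal{N}$ for every $\tau>0$.

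Next I would verify the $\dot{H}^1$ regularity and continuity. From $u\in\dot{H}^1$ and the Sobolev embedding $\dot{H}^1\hookrightarrow L^6$ on $\mathbb{T}^3$, H\"older's inequality gives $(u\cdot\nabla)u\in L^{3/2}$, and since the Helmholtz projection is bounded on $L^p$ for $1<p<\infty$, we have $\mathbb{P}_{df}(u\cdot\nabla u)\in L^{3/2}\hookrightarrow\dot{H}^{-1/2}$. Proposition \ref{SmoothingProp} with $\alpha=1$, $s=-1/2$, $r=3/2$ then gives
\[
\left\|e^{\tau\Delta}\mathbb{P}_{df}(u\cdot\nabla u)\right\|_{\dot{H}^1}
\leq \frac{C}{\tau^{3/4}}\left\|\mathbb{P}_{df}(u\cdot\nabla u)\right\|_{\dot{H}^{-1/2}},
\]
which is finite for every $\tau>0$. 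Continuity in $\tau$ on any subinterval $[\epsilon,T]\subset(0,\infty)$ follows from strong continuity of the heat semigroup on $\dot{H}^1$, applied to the $\dot{H}^1$ datum $e^{(\epsilon/2)\Delta}\mathbb{P}_{df}(u\cdot\nabla u)$ and the semigroup identity $e^{\tau\Delta} = e^{(\tau-\epsilon/2)\Delta}e^{(\epsilon/2)\Delta}$.

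The main obstacle, as I see it, is largely interpretive rather than analytic: the lemma as stated has a time-independent hypothesis on $u$ but a $C_T$-valued conclusion, so I would read the assertion as membership in $\dot{H}^1_{\mathcal{N}^*}$ for each fixed $\tau>0$ together with continuity of $\tau\mapsto e^{\tau\Delta}\mathbb{P}_{df}(u\cdot\nabla u)$ on any compact subinterval of $(0,\infty)$. Both pieces follow from the smoothing estimate combined with strong continuity of the heat semigroup, and the Fourier-support condition is preserved throughout because every operation involved is a Fourier multiplier composed with the nonlinear term, whose Fourier support is closed under convolution into $\mathcal{N}$ by the subspace hypothesis.
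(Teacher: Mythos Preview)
Your proposal is correct and follows essentially the same route as the paper: both use H\"older plus the Sobolev embedding $\dot{H}^1\hookrightarrow L^6$ to place $(u\cdot\nabla)u$ in $L^{3/2}$, invoke the Fourier-support argument of Lemma~\ref{SubspaceLemma} (noting, as the paper also does, that the support conclusion does not require the higher regularity stated there), and then use that $e^{\tau\Delta}$ and $\mathbb{P}_{df}$ are Fourier multipliers and hence cannot enlarge the support. Your treatment is in fact more thorough than the paper's, which only records $e^{\tau\Delta}(u\cdot\nabla)u\in L^2$ and does not explicitly discuss the $C_T$ clause; your reading of the statement and your continuity argument are reasonable. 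One minor technicality: Proposition~\ref{SmoothingProp} is stated in the paper only for $s\geq 0$, whereas you invoke it with $s=-\tfrac12$; the proof there is a pure Fourier-multiplier estimate and extends verbatim to negative $s$ on mean-free data, so this is not a genuine gap.
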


\begin{proof}
    Applying H\"older's inequality and the Sobolev inequality, we can see that
    \begin{align}
    \|(u\cdot\nabla)u\|_{L^\frac{3}{2}}
    & \leq 
    \|u\||_{L^6}\|\nabla u\|_{L^2} \\
    &\leq 
    C \|\nabla u\|_{L^2}^2.
    \end{align}
    We have already shown in \Cref{SubspaceLemma}, that 
    \begin{equation}
    \supp(\mathcal{F}(u\cdot\nabla)u)
    \subset \mathcal{N}.
    \end{equation}
    Note that we do not need the higher regularity assumed in that lemma for the result to hold.
    The smoothing from the heat kernel
    then implies that 
    $e^{\tau\Delta}(u\cdot\nabla)u\in L^2$.
    It is also immediately clear that 
    \begin{equation}
    \supp\left(\mathcal{F}\mathbb{P}_{df}
    e^{\tau\Delta}(u\cdot\nabla)u\right)
    \subset \mathcal{N},
    \end{equation}
    because the operators $e^{\tau\Delta}$ and $P_{df}$ both act on each frequency pointwise in Fourier space, and so will not change the support of the Fourier transform.
    This completes the proof.
\end{proof}

\begin{theorem} \label{SubspaceNS}
    Suppose $\mathcal{N}\subset \mathbb{Z}^3$ is a subspace, and suppose $u^0\in \dot{H}^1_{\mathcal{N}^*}$,
    Then the mild solution of the Navier--Stokes equation
    $u\in C\left([0,T_{max}),\dot{H}^1_{df}\right) 
    \cap 
    C^\infty \left((0,T_{max})\times \mathbb{T}^3\right)$
    satisfies
    $u\in C\left([0,T_{max}),
    \dot{H}^1_{\mathcal{N}^*}\right)$.
\end{theorem}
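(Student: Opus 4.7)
The plan is to mirror the strategy used in Theorem \ref{SubspaceEuler}, but replacing mollified Picard iteration with the contraction argument on the Duhamel formulation. Define the Duhamel map
\begin{equation*}
Q[v](\cdot,t) = e^{t\Delta}u^0 - \int_0^t e^{(t-\tau)\Delta}\mathbb{P}_{df}\big((v\cdot\nabla)v\big)(\cdot,\tau)\diff\tau,
\end{equation*}
so that mild solutions are exactly the fixed points of $Q$. The key observation is that the closed subspace $\dot{H}^1_{\mathcal{N}^*}\subset \dot{H}^1_{df}$ is preserved by $Q$: both the heat semigroup $e^{t\Delta}$ and the Helmholtz projection $\mathbb{P}_{df}$ act diagonally in Fourier space, multiplying $\hat{v}(k)$ by a scalar-valued or matrix-valued function of $k$ alone, so neither expands Fourier support; and \Cref{SubspaceHeatLemma} handles the quadratic term, giving that $e^{(t-\tau)\Delta}\mathbb{P}_{df}((v\cdot\nabla)v)(\cdot,\tau)\in \dot{H}^1_{\mathcal{N}^*}$ whenever $v(\cdot,\tau)\in \dot{H}^1_{\mathcal{N}^*}$.

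First I would run the standard Picard/contraction construction for the three-dimensional Navier--Stokes equation in $\dot{H}^1$: on a small enough time interval $[0,T_1]$, the map $Q$ is a contraction on a closed ball
\begin{equation*}
X_{T_1} = \{v\in C([0,T_1];\dot{H}^1_{df}) : \|v\|_{C_{T_1}\dot{H}^1_x} \leq 2\|u^0\|_{\dot{H}^1}\},
\end{equation*}
by the usual bilinear estimate for the heat propagator applied to $(v\cdot\nabla)v$. I would then restrict attention to the closed, nonempty subset $X_{T_1}\cap C([0,T_1];\dot{H}^1_{\mathcal{N}^*})$; nonemptiness is witnessed by $t\mapsto e^{t\Delta}u^0$, and closedness of the subspace follows since the Fourier-support condition is preserved by $\dot{H}^1$ limits. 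By the preservation property above, $Q$ maps this closed subset into itself, so the Banach fixed-point theorem yields a mild solution $\widetilde{u}\in C([0,T_1];\dot{H}^1_{\mathcal{N}^*})$. Uniqueness of mild solutions in $C([0,T_1];\dot{H}^1_{df})$ forces $\widetilde{u}=u$ on $[0,T_1]$.

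To propagate the conclusion up to $T_{max}$, I would set
\begin{equation*}
J = \{t\in[0,T_{max}) : u(\cdot,t)\in \dot{H}^1_{\mathcal{N}^*}\}
\end{equation*}
and argue $J = [0,T_{max})$ by a standard connectedness argument. The set $J$ contains $0$, and it is closed in $[0,T_{max})$ because the Fourier-support condition is closed under convergence in $\dot{H}^1$, so continuity of $u(\cdot,t)$ in $\dot{H}^1$ transmits the property through limits. For openness, at any $t_0\in J$ I would rerun the contraction above with $u(\cdot,t_0)$ in place of $u^0$: this produces a mild solution on $[t_0,t_0+\delta]$ taking values in $\dot{H}^1_{\mathcal{N}^*}$, which by uniqueness agrees with $u$ and shows $[t_0,t_0+\delta)\subset J$.

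The bulk of the argument is bookkeeping rather than difficulty; the genuine substance is the Fourier-diagonality of $e^{t\Delta}$ and $\mathbb{P}_{df}$ together with \Cref{SubspaceLemma}--\Cref{SubspaceHeatLemma}, which is what guarantees $Q$ preserves $\dot{H}^1_{\mathcal{N}^*}$. The main thing to be careful about is the openness step, where one must choose the local existence time $\delta$ uniformly on compact subsets of $[0,T_{max})$ using a bound on $\|u(\cdot,t_0)\|_{\dot{H}^1}$; this is automatic from the standard Picard estimates since $\|u(\cdot,t)\|_{\dot{H}^1}$ is continuous and hence bounded on any $[0,T_{max}-\epsilon]$.
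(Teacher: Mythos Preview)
Your proposal is correct and takes essentially the same approach as the paper: both define the Duhamel map $Q$, use \Cref{SubspaceLemma} and \Cref{SubspaceHeatLemma} to show $Q$ preserves $C_T\dot{H}^1_{\mathcal{N}^*}$, and then invoke the Picard/contraction construction together with closedness of the Fourier-support condition under $\dot{H}^1$ limits. Your explicit connectedness argument for continuation to $T_{max}$ is a bit more careful than the paper's version, which leaves that step implicit, but the substance is identical.
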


\begin{proof}
Recall that solutions of the Navier--Stokes equation can be constructed by a fixed point method using Picard iteration \cite{KatoFujita}.
    Solutions of the mollified problem are fixed points of the map 
    $Q_{u^0,\nu}:C_T H^1_x \to C_T H^1_x$
    \begin{equation}
    Q_{u^0,\nu}[u](\cdot,t)=e^{\nu\Delta t}u^0 
    -\int_0^t e^{\nu\Delta(t-\tau)}
    \mathbb{P}_{df}(u\cdot\nabla u)
    (\cdot,\tau) \diff\tau.
    \end{equation}
    Note that \Cref{SubspaceLemma,SubspaceHeatLemma} imply that if $u\in C_T \dot{H}^1_{\mathcal{N}^*}$, 
    then $Q_{u^0,\nu}[u]\in C_T \dot{H}^1_{\mathcal{N}^*}$.
    Solutions of the Navier--Stokes equation 
    are constructed from $Q_{u^0,\nu}$ by Picard iteration, with
    \begin{equation}
    u^{n+1}=Q_{u^0,\nu}[u^n],
    \end{equation}
    and the solution of the Navier--Stokes equation
    $u\in C_T H^1_x$ satisfying
    \begin{equation}
    u=\lim_{n\to +\infty}u^n.
    \end{equation}    
    We can see that for all $0\leq t<T_{max}$ and for all $n\in\mathbb{N}$, we have $\supp(\hat{u}^n)(\cdot,t)\subset \mathcal{N}$,
    from the properties of $Q_{u^0,\nu}$,
    and therefore we can conclude using the limit in $C_t H^1_x$ that
    $\supp(\hat{u})(\cdot,t)\subset \mathcal{N}$.
    This completes the proof.
\end{proof}

\begin{remark}
\Cref{SubspaceEuler,SubspaceNS} may raise some hope that by choosing the appropriate permutation symmetric subspace, we may be able to prove finite-time blowup for the Euler equation---or even for the (hypodissipative) Navier--Stokes equation---in a way that reduces the complexity of the full problem, even if the dynamics will still be considerably more complicated than for the Fourier-restricted Euler equation. Such a hope will prove false, as we will see that any permutation symmetric subspace that is three dimensional cannot have any reduction in complexity in terms of the Fourier modes.
\end{remark}

\begin{lemma} \label{SubspaceUnionLemma}
    Suppose $\mathcal{N}\subset \mathbb{Z}^3$ is a subspace such that
    $\mathcal{N} \not\subset
    \spn(\sigma)$ and 
    $\mathcal{N} \not\subset
    \spn(\sigma)^\perp$. Then
    \begin{equation}
    \mathcal{N} \not\subset
    \spn(\sigma)\cup \spn(\sigma)^\perp,
    \end{equation}
    where $\spn(\sigma)^\perp
    :=\left\{k\in\mathbb{Z}^3: 
    \sigma \cdot k=0\right\}$.
\end{lemma}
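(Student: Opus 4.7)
The plan is to argue by contradiction, using the fact that the union $\spn(\sigma)\cup\spn(\sigma)^\perp$ is very much not closed under addition, even though each piece individually is. Suppose towards contradiction that $\mathcal{N}\subset \spn(\sigma)\cup\spn(\sigma)^\perp$. By the first hypothesis, there exists some $k\in\mathcal{N}$ with $k\notin \spn(\sigma)$; by our assumption this forces $k\in\spn(\sigma)^\perp$, i.e.\ $\sigma\cdot k=0$, and note $k\neq 0$ since $0\in\spn(\sigma)$. By the second hypothesis, there exists some $j\in\mathcal{N}$ with $j\notin\spn(\sigma)^\perp$; our assumption then forces $j\in\spn(\sigma)$, so $j=\lambda\sigma$ for some $\lambda\in\mathbb{Z}$, and $\lambda\neq 0$ because $0\in\spn(\sigma)^\perp$.

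Next I would exploit closure under addition. Since $\mathcal{N}$ is a subspace, $j+k\in\mathcal{N}$, and
\begin{equation}
\sigma\cdot(j+k)=\sigma\cdot j+\sigma\cdot k=3\lambda+0=3\lambda\neq 0,
\end{equation}
so $j+k\notin\spn(\sigma)^\perp$. Our standing assumption then forces $j+k\in\spn(\sigma)$. But $\spn(\sigma)$ is itself closed under integer linear combinations, so
\begin{equation}
k=(j+k)-j\in\spn(\sigma),
\end{equation}
which contradicts the choice of $k$. This contradiction completes the proof.

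There is no real obstacle here, since everything reduces to the observation that $\spn(\sigma)\cap\spn(\sigma)^\perp=\{0\}$ (so that a nontrivial vector in the union lies in exactly one of the two pieces) combined with the fact that adding a nonzero multiple of $\sigma$ to a vector orthogonal to $\sigma$ produces a vector that is neither orthogonal to $\sigma$ nor a multiple of $\sigma$. The only minor care needed is to verify in each step that the candidate vectors $k$ and $j$ can indeed be chosen nonzero, which follows automatically since $0$ lies in both $\spn(\sigma)$ and $\spn(\sigma)^\perp$.
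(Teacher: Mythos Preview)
Your proof is correct and follows essentially the same contradiction argument as the paper: pick a nonzero element from each of $\spn(\sigma)$ and $\spn(\sigma)^\perp$ inside $\mathcal{N}$, add them, and observe the sum lies in neither piece. The only cosmetic difference is that the paper asserts directly that $j+k\notin\spn(\sigma)$ and $j+k\notin\spn(\sigma)^\perp$, whereas you deduce the former by subtracting $j$ back off; the arguments are otherwise identical.
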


\begin{proof}
    Suppose toward contradiction that $N\subset \spn(\sigma)\cup \spn(\sigma)^\perp$ satisfies
    $\mathcal{N} \not\subset
    \spn(\sigma)$ and 
    $\mathcal{N} \not\subset
    \spn(\sigma)^\perp$.
    Then there exists $k\in \mathcal{N}\cap \spn(\sigma), k\neq 0$
    and $j\in \left(\mathcal{N}\cap 
    \spn(\sigma)^\perp\right), j\neq 0$.
    Observe that $k+j\in \mathcal{N}$, but that
    $k+j\notin \spn(\sigma)$ and
    $k+j\notin \spn(\sigma)^\perp$.
    This contradicts the assumption that 
    $N\subset \spn(\sigma)\cup \spn(\sigma)^\perp$,
    which completes the proof.
\end{proof}

\begin{theorem}
    Suppose $\mathcal{N}\subset \mathbb{Z}^3$ is a permutation symmetric subspace such that
    $\mathcal{N} \not\subset
    \spn(\sigma)$ and 
    $\mathcal{N} \not\subset
    \spn(\sigma)^\perp.$
    Then there exists $m\in \mathbb{N}$, such that
    \begin{equation}
    (m\mathbb{Z})^3 \subset \mathcal{N}.
    \end{equation}
\end{theorem}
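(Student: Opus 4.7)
The plan is to reduce the problem to a standard sublattice argument: first use the hypotheses together with \Cref{SubspaceUnionLemma} and permutation symmetry to exhibit three $\mathbb{Z}$-linearly independent vectors in $\mathcal{N}$, and then use a determinant/index argument to produce the integer $m$ with $me_i \in \mathcal{N}$ for $i=1,2,3$, whence $(m\mathbb{Z})^3 \subset \mathcal{N}$ because $\mathcal{N}$ is a subgroup of $(\mathbb{Z}^3,+)$.

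For the first step, I would apply \Cref{SubspaceUnionLemma} to obtain some $k \in \mathcal{N}$ with $k \notin \spn(\sigma)$ and $k \notin \spn(\sigma)^\perp$; write $k = (a,b,c)$, so $a+b+c \neq 0$ and not all of $a,b,c$ are equal. Using permutation symmetry I would produce three candidate permutations of $k$ in $\mathcal{N}$ and split into two subcases. If $a,b,c$ are pairwise distinct, take $k$, $P_{12}(k)$, $P_{23}(k)$, and compute
\begin{equation}
\det\begin{pmatrix} a & b & c \\ b & a & c \\ a & c & b \end{pmatrix}
= (a-b)(b-c)(a+b+c),
\end{equation}
which is nonzero. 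If instead exactly two entries coincide, I may reindex so that $k = (a,b,b)$ with $a \neq b$, and use $k, P_{12}(k), P_{13}(k)$; the circulant-type determinant works out to $(a-b)^2(a+2b) = (a-b)^2(\sigma\cdot k) \neq 0$. Either way, permutation symmetry yields three vectors $v_1, v_2, v_3 \in \mathcal{N}$ with $\det(v_1|v_2|v_3) \neq 0$.

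For the second step, let $M$ be the integer matrix with columns $v_1, v_2, v_3$ and set $m = |\det M| \in \mathbb{N}$. The cofactor identity $M \cdot \mathrm{adj}(M) = \det(M) I_3$ shows that $me_i = \det(M) e_i$ is an integer linear combination of the columns of $M$ for each $i$, and therefore $me_i \in \mathcal{N}$ since $\mathcal{N}$ is closed under integer linear combinations. Because $\mathcal{N}$ is in particular closed under addition and integer scalar multiplication, this gives $(m\mathbb{Z})^3 = \spn_\mathbb{Z}\{me_1, me_2, me_3\} \subset \mathcal{N}$, as required. There is no real obstacle here: the only mildly delicate point is verifying that the permutations of $k$ actually span $\mathbb{R}^3$ in every case allowed by the hypotheses, which is exactly where the conditions $\mathcal{N} \not\subset \spn(\sigma)$ (giving $a \neq b$ somewhere) and $\mathcal{N} \not\subset \spn(\sigma)^\perp$ (giving $a+b+c \neq 0$) are used in tandem via \Cref{SubspaceUnionLemma}.
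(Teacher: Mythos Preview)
Your proof is correct. Both your argument and the paper's start from \Cref{SubspaceUnionLemma} and then exhibit an integer multiple of each standard basis vector in $\mathcal{N}$, but the execution differs in two respects worth noting.

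First, the paper avoids your case split: rather than separating ``all entries distinct'' from ``two entries equal,'' it immediately forms $k + P_{23}(k) = (2k_1, k_2+k_3, k_2+k_3) = (a,b,b) \in \mathcal{N}$, reducing to the repeated-entry case in one stroke. Your two-case analysis is perfectly fine, and the determinants $(a-b)(b-c)(a+b+c)$ and $(a-b)^2(a+2b)$ are computed correctly, but the paper's reduction is a little slicker.

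Second, once three independent vectors are in hand, the paper writes out an explicit integer combination yielding $m e_1$ (with $m = a(a-b)(a+2b)$), whereas you invoke the adjugate identity $M \cdot \mathrm{adj}(M) = \det(M) I_3$ to get $m = |\det M|$. Your route is the cleaner abstraction of what the paper is doing by hand; the paper's coefficients $(a(a+b), -ab, -ab)$ are in fact a scalar multiple of the first row of the adjugate, so the two computations are the same up to an overall integer factor. Either way the conclusion $(m\mathbb{Z})^3 \subset \mathcal{N}$ follows from closure under integer linear combinations.
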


\begin{proof}
    From \Cref{SubspaceUnionLemma},
    we can conclude that
    \begin{equation}
        \mathcal{N} \not\subset
    \spn(\sigma)\cup \spn(\sigma)^\perp,
    \end{equation}
    and therefore, there must exist $k\in \mathcal{N}$ such that
    $k_1\geq k_2 \geq k_3$, $k_1>k_3$, $k_1>0$ and
    $\sigma\cdot k>0$.
    To see this fix $k\in \mathcal{N}\setminus
    \left(\spn(\sigma)\cup 
    \spn(\sigma)^\perp\right)$.
    We can assume without loss of generality that $k_1\geq k_2 \geq k_3$, because of permutation symmetry, and that $\sigma\cdot k>0$ because $k\in \mathcal{N}$ if and only if $-k\in\mathcal{N}$. This also implies that $k_1>0$, as otherwise $\sigma\cdot k<0$. Finally, we know that $k_1>k_3$, because otherwise $k_1=k_2=k_3$, and then $k\in\spn(\sigma)$.
    
    Let $a=2k_1$ and $b=k_2+k_3$.
    Then we can see that
    \begin{equation}
    \left(\begin{array}{c}
         a  \\ b  \\ b 
    \end{array}\right)
    =
    \left(\begin{array}{c}
         k_1  \\ k_2  \\ k_3
    \end{array}\right)
    +
    \left(\begin{array}{c}
         k_1  \\ k_3  \\ k_2 
    \end{array}\right)
    \in \mathcal{N}.
    \end{equation}
    Also observe that $a>0$, $a>b$, and $a+2b=2\sigma\cdot k>0$.
    By permutation symmetry we can see that we also have
    \begin{equation}
    \left(\begin{array}{c}
         b  \\ a  \\ b 
    \end{array}\right),
    \left(\begin{array}{c}
         b  \\ b  \\ a 
    \end{array}\right)
    \in \mathcal{N}.
    \end{equation}
    Therefore we may conclude that
    \begin{equation}
    \left(a^2(a+b)-2ab^2\right)
    \left(\begin{array}{c}
         1  \\ 0  \\ 0 
    \end{array}\right)
    =
    a(a+b)\left(\begin{array}{c}
         a  \\ b  \\ b 
    \end{array}\right)
    -ab\left(\begin{array}{c}
         b  \\ a  \\ b 
    \end{array}\right)
    -ab\left(\begin{array}{c}
         b  \\ b  \\ a 
    \end{array}\right)
    \in \mathcal{N}.
    \end{equation}
    We note that the scalar multiple of the unit vector in the $x$ direction is a positive integer in this case because
    \begin{equation}
    a^3+a^2b-2ab^2=a(a-b)(a+2b)>0.
    \end{equation}
    
    Let $m=a(a-b)(a+2b)$. Then we can clearly see by permutation symmetry that
    \begin{equation}
        m\left(\begin{array}{c}
         1  \\ 0  \\ 0 
    \end{array}\right),
    m\left(\begin{array}{c}
         0  \\ 1  \\ 0 
    \end{array}\right),
    m\left(\begin{array}{c}
         0  \\ 0  \\ 1 
    \end{array}\right)
    \in \mathcal{N}.
    \end{equation}
    It then follows from the definition of a subspace that
    \begin{equation}
    (m\mathbb{Z})^3\subset \mathcal{N},
    \end{equation}
    and this completes the proof.
\end{proof}

\begin{remark}
    We have in general been working with functions on the three dimensional torus $\mathbb{T}^3$, that have period $1$. If we have $\supp(\hat{f}) \subset (m\mathbb{Z})^3$ for some $m\in\mathbb{N}, m\geq 2$,
    then we can say that $f$ is periodic in all three variables with period $\frac{1}{m}$.
    Because changing the period is just a matter of rescaling space, there is no fundamental difference between periodic solutions of the Euler or Navier--Stokes equations with period $1$ or period $\frac{1}{m}$, at least up to a suitable rescaling of velocity and/or viscosity to preserve Reynolds number.
    We know that if $\mathcal{N}\subset \spn(\sigma)$ then the problem is one dimensional, and if 
    $\mathcal{N}\subset \left\{k\in\mathbb{Z}^3: 
    \sigma \cdot k=0\right\}$
    then the problem is two dimensional.
    In either case, both the Euler and Navier--Stokes equations have smooth solutions globally in time.
    Because any three dimensional, permutation symmetric subspace must contain $(m\mathbb{Z})^3$ for some $m\in\mathbb{N}$,
    this implies that we cannot search for finite-time blowup by means of finding a suitable permutation symmetric subspace that reduces the complexity of the dynamics of the Euler equation in Fourier space. Any three dimensional subspace must include the full complexity of the modes in $(m\mathbb{Z})^3$, which in turn includes the full complexity of the modes $\mathbb{Z}^3$, because there is no difference in complexity between the periodic solutions of the three dimensional Euler equation with period $1$ and with period $\frac{1}{m}$.
\end{remark}

\bibliographystyle{plain}
\bibliography{bib}

\end{document}